\documentclass[reqno]{amsart}

\usepackage{amssymb, amsfonts}
\usepackage{verbatim}
\usepackage{fullpage}

\usepackage[hidelinks]{hyperref}
\usepackage{url}
\usepackage{cleveref}
\usepackage{mathrsfs}
\usepackage{color}   

\hypersetup{
    colorlinks=true, 
    linktoc=all,     
    linkcolor=blue,  
}

\usepackage{tikz-cd}

\usepackage{mathtools}
\usepackage{amssymb, amsfonts}
\usepackage[utf8]{inputenc}
\usepackage{amsthm}
\usepackage{amsmath}
\usepackage{amsxtra}
\usepackage{amscd}
\usepackage{amssymb}
\usepackage{color}
\usepackage{mathtools}
\usepackage{xspace}
\usepackage{amsfonts}
\usepackage{mathrsfs}
\usepackage{todonotes}
\usepackage{enumerate}
\usepackage{multirow}
\usepackage{tikz}
\usepackage{hyperref}
\usepackage{cancel}
\usepackage{tikz-cd}
\usepackage[all]{xy}
\usepackage{amscd}
\usepackage{caption}
\usepackage{epsfig}
\usepackage{graphicx,transparent}
\usepackage{here}
\usepackage{hyperref}
\usepackage{setspace}
\usepackage{wasysym}
\usepackage{xparse}
\usepackage{manfnt}
\usepackage{fullpage}

\usepackage{url}
\usepackage{cleveref}
\usepackage{mathrsfs}

\usepackage{tikz-cd}

\usepackage{mathtools}


\usepackage{amsmath}
\usepackage{tikz}
\usepackage{mathdots}
\usepackage{yhmath}
\usepackage{cancel}
\usepackage{color}
\usepackage{siunitx}
\usepackage{array}
\usepackage{multirow}
\usepackage{amssymb}
\usepackage{tabularx}
\usepackage{extarrows}
\usepackage{booktabs}
\usetikzlibrary{fadings}
\usetikzlibrary{patterns}
\usetikzlibrary{shadows.blur}
\usetikzlibrary{shapes}

\newtheorem{theorem}[equation]{Theorem}
\newtheorem{lemma}[equation]{Lemma}
\newtheorem{proposition}[equation]{Proposition}
\newtheorem{corollary}[equation]{Corollary}

\theoremstyle{definition}
\newtheorem{definition}[equation]{Definition}
\newtheorem{example}[equation]{Example}

\theoremstyle{remark}
\newtheorem{remark}[equation]{Remark}


\numberwithin{equation}{section}





\DeclareMathOperator{\lcm}{lcm}
\DeclareMathOperator{\Ind}{Ind}

\newcommand{\cR}{\mathcal{R}}
\newcommand{\cC}{\mathcal{C}}
\newcommand{\cD}{\mathcal{D}}
\newcommand{\cP}{\mathcal{P}}
\newcommand{\cM}{\mathcal{M}}
\newcommand{\cQ}{\mathcal{Q}}

\newcommand{\cH}{\mathcal{H}}
\newcommand{\cF}{\mathcal{F}}
\newcommand{\ZZ}{\mathbb{Z}}
\newcommand{\PP}{\mathbb{P}}
\newcommand{\CC}{\mathbb{C}}

\newcommand{\RR}{\mathbb{R}}

\newcommand{\MIN}{single-zero }
\newcommand{\NMIN}{double-zero }

\setcounter{tocdepth}{1}

\begin{document}

\title[]{Connected components of strata of residueless meromorphic differentials}
\author[M. Lee]{Myeongjae Lee}
\address{Stony Brook University, Department of Mathematics, Stony Brook, NY 11794-3651}
\email{myeongjae.lee@stonybrook.edu}
\keywords{Meromorphic differentials, Translation surfaces, Moduli of differentials}
\thanks{}

\begin{abstract}
Generalized strata of meromorphic differentials are loci in the usual strata of differentials, where certain sets of residues sum up to zero. They appear naturally in the boundary of the multi-scale compactification of the usual strata. Enumerating the connected components of generalized strata is necessary to understand the boundary complex of the multi-scale compactification. In this paper we classify connected components of the strata of residueless meromorphic differentials, which are the strata with the maximum possible number of conditions imposed on the residues of the poles. 
\end{abstract}

\maketitle

\tableofcontents

\section{Introduction} \label{sec:intro}

In the present paper, we investigate the flat surfaces that arise from meromorphic differentials on compact Riemann surfaces with vanishing residues at each pole. We prove that the connected components of strata of residueless meromorphic differentials are classified by the well-known topological invariants called {\em hyperellipticity}, {\em spin parity}, and {\em rotation number}. This is the first step of a more general project on the classification of connected components of generalized strata, which are loci of the usual strata where the residues of the poles satisfy certain linear conditions. These generalized strata appear in the description of the boundary of the moduli space of {\em multi-scale differentials}, which is a smooth compactification of the stratum $\PP\cH_g(\mu)$ with normal crossing boundary. The final goal of the study in this direction is to compute the boundary complex of the moduli space of multi-scale differentials, which may lead to new results on the top-weight cohomology of $\PP\cH_g(\mu)$, parallel to the recent results \cite{CGP} on $\mathcal{M}_g$ and \cite{brandt2022topweight} on $\mathcal{A}_g$. 

For integers $m,n> 0$ and $g\geq 0$, let $\mu \coloneqq (a_1,\dots, a_m, -b_1,\dots, -b_n)$ be a partition of $2g-2$, where $-b_n\leq \dots \leq -b_1 <0 \leq a_1\leq \dots \leq a_m$. Recall that the stratum of meromorphic differentials of type $\mu$, denoted by $\cH_g (\mu)$, is defined to be the moduli space of meromorphic differentials on genus $g$ compact Riemann surfaces with orders of zeroes and poles prescribed by $\mu$. More precisely, an element of $\cH_g (\mu)$ is a pair $\left((X, \boldsymbol{z}\sqcup\boldsymbol{p}),\omega\right)$ where $(X, \boldsymbol{z}\sqcup\boldsymbol{p})\in \cM_{g,m+n}$ is a $(m+n)$-pointed compact Riemann surface of genus $g$, and $\omega$ is a meromorphic differential on $X$, where $\boldsymbol{z}=\{z_1,\dots,z_m\}$, $\boldsymbol{p}=\{p_1,\dots,p_n\}$ are the sets of zeroes and poles of $\omega$ with orders prescribed by $\mu$. In other words,  $\operatorname{div}(\omega)=\sum_i a_i z_i - \sum_j b_j p_j$. An element $\left((X, \boldsymbol{z}\sqcup\boldsymbol{p}),\omega\right)$ of the stratum is called a {\em flat surface}, since the differential $\omega$ defines a flat metric on $X\setminus \boldsymbol{p}$ with conical singularities at the points in $\boldsymbol{z}$. In the present paper, we simply denote the flat surface by $(X,\omega)$, or $X$ when we are not focusing on the differential $\omega$ itself and no confusion can arise. The space $\cH_g (\mu)$ has a smooth complex orbifold structure given by the period map $\operatorname{Per} : H_1 (X\setminus{\boldsymbol{p}}, \boldsymbol{z}; \ZZ) \to \CC$ obtained by integrating the meromorphic form $\omega$. The local coordinate neighborhood can be identified with a $\CC$-vector space $H^1 (X\setminus{\boldsymbol{p}}, \boldsymbol{z}; \CC)$ of dimension $2g+m+n-2$, and in particular $\operatorname{dim}_{\mathbb{C}}\cH_g (\mu)=2g+m+n-2$.

The connected components of the usual strata $\cH_g(\mu)$ of differentials are well-known. In \cite{kozo1}, Kontsevich and Zorich classified the connected components of the strata of holomorphic differentials. In this case, it turns out that there are at most three connected components of each stratum, distinguished by two topological invariants called {\em spin parity} and {\em hyperellipticity}. The main tools in \cite{kozo1}, which will also be used in the present paper, are the flat-geometric surgeries called {\em breaking up a zero} and {\em bubbling a handle}. In \cite{lanneauquad}, Lanneau classified the connected components of the strata of holomorphic quadratic differentials. In the quadratic case, there are at most two connected components for each stratum. In \cite{boissymero} and \cite{boissy}, Boissy classified the connected components of the strata of meromorphic differentials and the strata of meromorphic differentials with marked horizontal separatrices. As in the holomorphic case, there are at most three connected components for each stratum, unless $g=1$. For $g=1$, a new topological invariant called {\em rotation number} is required to classify the connected components, and strata with arbitrarily many components exist.

In the present paper, we will expand the discussion on the connected components to certain types of loci of $\cH_g(\mu)$ that we introduce now. Let $R$ be a partition of the set $\boldsymbol{p}$ of poles of the differential. A meromorphic differential $(X,\omega)\in \cH_g (\mu)$ is said to satisfy {\em the residue condition} given by $R$ if $\sum_{p\in P} \operatorname{res}_{p} \omega = 0$ for each part $P$ of $R$. In particular, if a singleton $\{p\}$ is a part of $R$, then $\operatorname{res}_{p} \omega=0$, and the pole $p$ is said to be {\em residueless}. A {\em generalized stratum} $\cH^R_g (\mu)$ is defined to be the subspace of $\cH_g (\mu)$ consisting of differentials satisfying the residue conditions given by $R$. If $R$ is a partition $\boldsymbol{p}=P_1 \sqcup \dots \sqcup P_r$ and $\mu_i=(-b_j)_{p_j\in P_i}$, the generalized stratum $\cH^R_g (\mu)$ is also denoted by $\cH_g (a_1,\dots, a_m ; \mu_1 ;\dots; \mu_r)$. The residue $\operatorname{res}_{p}\omega$ at the pole $p$ is obtained by integrating $\omega$ along a small circle around $p$. For each part $P_i$, let $\alpha_i$ be a closed curve only enclosing the poles in $P_i$, with trivial homology class $[\alpha_i]$ in $H_1(X, \boldsymbol{z};\ZZ)$. The residue condition given by $R$ is the linear condition $\int_{\alpha_i}\omega=0$ for each $i$. So in the period coordinates of $\cH_g (\mu)$, the generalized stratum $\cH^R_g (\mu)$ is a linear subvariety given by the vector subspace $H^1 (X\setminus{\boldsymbol{p}}, \boldsymbol{z}; \CC)^R$ of $H^1 (X\setminus\boldsymbol{p}, \boldsymbol{z}; \CC)=\operatorname{Hom}(H_1(X\setminus{\boldsymbol{p}}, \boldsymbol{z}; \ZZ),\CC)$, consisting of the linear functions that vanish on the classes $[\alpha_i]$. We denote $H_1(X\setminus{\boldsymbol{p}}, \boldsymbol{z}; \ZZ)^R \coloneqq H_1(X\setminus{\boldsymbol{p}}, \boldsymbol{z}; \ZZ)/\langle [\alpha_i] \rangle$. Thus the period coordinates on $\cH^R_g (\mu)$ can be identified with $\operatorname{Hom}(H_1(X\setminus{\boldsymbol{p}}, \boldsymbol{z}; \ZZ)^R,\CC)$. If $|R|$ is the number of parts of $R$, the codimension of $\cH^R_g (\mu)$ in $\cH_g (\mu)$ is equal to $|R|-1$, if it is nonempty. 

If $R$ is the finest possible partition consisting of $n$ singletons, then the residue condition given by $R$ is that every pole of $\omega$ is residueless. We denote the corresponding generalized stratum $\cH^R _g (\mu) = \cH_g (a_1,\dots,a_m;-b_1;\dots;-b_n)$ also by $\cR_g (\mu)$, and call it a {\em residueless stratum}. In the present paper, we will simply call it a {\em stratum} when no confusion can arise. The dimension of the residueless stratum $\cR_g(\mu)$ is equal to $2g+m-1$. In this case, the quotient space $H_1 (X\setminus{\boldsymbol{P}}, \boldsymbol{z}; \ZZ)^R$ can be identified with $H_1 (X, \boldsymbol{z}; \ZZ)$ by the map $H_1 (X\setminus{\boldsymbol{p}}, \boldsymbol{z}; \ZZ) \to H_1 (X, \boldsymbol{z}; \ZZ)$ induced by the inclusion $X\setminus{\boldsymbol{p}} \hookrightarrow X$. If there is only one pole (which is then necessarily of order $>1$), then automatically $\cH_g (\mu)=\cR_g (\mu)$.  

If $m=1$, then the residueless genus zero stratum $\cR_0(\mu)$ is empty. The finest possible partition $R$ such that $\cH^R_0(\mu)$ is nonempty consists of $n-2$ singletons and one part of cardinality two. This case also plays an important role in the present paper, so we introduce the notation for it. By relabeling the poles, assume that $\{p_{n-1},p_n\}$ is the only part of $R$ with two elements. Then we denote $\cH^R_0 (\mu)$ by $\cR_0 (a_1, -b_1,\dots, -b_{n-2}; -b_{n-1}, -b_n)$. 

A simple pole cannot be residueless. So if $R$ contains a singleton consisting of a simple pole, then $\cH^R_g (\mu)=\emptyset$. In particular, $\cR_g (\mu)=\emptyset$ if $b_1=1$. Throughout the paper, we assume that a stratum is strictly meromorphic (i.e. $n>0$) and that the orders of all residueless poles are at least two.

It is a bit harder to describe the deformations of flat surfaces in the generalized strata than in the usual strata, since we need to make sure that the deformation does not change the residue condition. This is why the arguments in \cite{kozo1} and \cite{boissymero} cannot be directly applied to the generalized strata. In order to deal with this difficulty, we will be benefited from the existence of $GL^+(2,\mathbb{R})$-action and the multi-scale compactification of the generalized strata. The surgeries from \cite{kozo1} will be interpreted as the degeneration to the boundary of the multi-scale compactification, introduced in \cite{BCGGM} and applied to the residueless cases in \cite{mresidueless}. The degeneration technique is motivated by the principal boundary introduced in \cite{emz}, and its interpretation in terms of the compactification in \cite{ccprincipal}. We will prove that we can approach to the principal boundary by applying certain $GL^+(2,\mathbb{R})$-action from a general point. Then we will be much benefited from the general construction of the multi-scale compactification, which allows us to navigate around the boundary. 

\subsection{Main results}

As mentioned above, we will prove that hyperellipticity, spin parity and rotation number still suffice to distinguish all connected components of the generalized strata, with few exceptional cases. However, there could be {\em many} hyperelliptic connected components, depending on the singularity type $\mu$. Before giving the statements classifying the connected components, we introduce the topological invariants of connected components.

\begin{definition}
A flat surface $(X,\omega)$ is called {\em hyperelliptic} if $X$ has an involution $\sigma$ such that $X/\sigma \cong \PP^1$ and $\sigma^* \omega = -\omega$. A connected component $\cC$ of $\cR_g (\mu)$ is said to be {\em hyperelliptic} if every flat surface contained in $\cC$ is hyperelliptic. Otherwise $\cC$ is said to be {\em non-hyperelliptic}.
\end{definition}

\begin{definition}
A stratum $\cR_g(\mu)$ is said to be of {\em even type} if all zeroes and poles have even orders. 
\end{definition}

\begin{definition} \label{def:ramiprof}
For a stratum $\cR_g(\mu)$ with $m\leq 2$ zeroes, an involution $\cP$ on $\boldsymbol{z}\sqcup \boldsymbol{p}$ is called a {\em ramification profile} of $\cR_g(\mu)$ if the following holds
\begin{itemize}
    \item If $m=1$, then $\cP$ fixes the unique zero $z$. If $m=2$, then $a_1=a_2$, and $\cP$ interchanges $z_1$ and $z_2$.
    \item If $\cP(p_i)=p_j$, then $b_i = b_j$. 
    \item $\cP$ fixes at most $2g+2$ marked points, only of even orders.
\end{itemize}
\end{definition}

Note that for a given stratum $\cR_g(\mu)$, a ramification profile $\cP$ is determined by its restriction to $\boldsymbol{p}$. Since the poles are labeled by $\{1,\dots,n\}$, we will identify $\cP$ with an involution (i.e. an element of order two) in $Sym_n$ when no confusion can arise. Our first result is the classification of the hyperelliptic connected components of $\cR_g(\mu)$. 

\begin{theorem} \label{mainhyper}
For a stratum $\cR_g(\mu)$ of genus $g>0$, there is a one-to-one correspondence between the hyperelliptic connected components and the ramification profiles of $\cR_g(\mu)$.
\end{theorem}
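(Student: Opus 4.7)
The plan is to construct the bijection explicitly by associating a ramification profile to each hyperelliptic component via the anti-invariant involution $\sigma$, and to prove bijectivity by reducing connectedness of the hyperelliptic locus with given profile to a configuration-space problem on the quotient $\PP^1$. I first set up the forward map: given a hyperelliptic component $\cC$, pick any $(X,\omega)\in \cC$ with involution $\sigma$ (unique for $g\geq 2$; for $g=1$ the requirement $\sigma^*\omega = -\omega$ together with the prescribed action on the poles determines $\sigma$) and define $\cP \coloneqq \sigma|_{\boldsymbol{z}\sqcup \boldsymbol{p}}$. Since $\sigma$ depends continuously on $(X,\omega)$, the discrete invariant $\cP$ is constant on $\cC$. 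To check that $\cP$ satisfies the axioms in Definition~\ref{def:ramiprof}: writing $\sigma(w)=-w$ at a fixed point, the relation $\sigma^*\omega=-\omega$ forces the Laurent expansion of $\omega$ to contain only odd-degree terms, so the order at a fixed marked point is even and the residue at a fixed pole vanishes automatically (consistent with residuelessness); the order-matching condition on swapped pairs is immediate since $\sigma$ is holomorphic; and the $2g+2$ bound is Riemann--Hurwitz. The restriction $m\leq 2$ and the specific form of the action on zeros come from requiring that the hyperelliptic sublocus be a full component rather than a proper subvariety, which reduces to a dimension count.

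Injectivity is the main step. Given two hyperelliptic flat surfaces in $\cR_g(\mu)$ with the same profile $\cP$, I construct a continuous path between them inside the hyperelliptic locus using the double-cover description. The square $\omega^2$ descends to a quadratic differential $q$ on $\PP^1 = X/\sigma$, whose order at the image of each marked point is determined by $\mu$ and $\cP$, and which has simple poles at the $2g+2-f$ unmarked Weierstrass images, where $f$ is the number of fixed marked points. So the hyperelliptic locus with profile $\cP$ is parametrized by a choice of labeled configuration of points on $\PP^1$ (the $2g+2$ branch points plus the images of each swapped pair) together with an anti-invariant meromorphic differential $\omega$ on the double cover satisfying the prescribed order conditions at marked points and the residueless condition at each swapped pair. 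The configuration space is the complement of a divisor inside a product of copies of $\PP^1$ modulo $\mathrm{Aut}(\PP^1)$ and is connected; over each configuration, the space of valid $\omega$'s is carved out of the linear space of anti-invariant differentials with prescribed orders by finitely many linear equations (one for each swapped pair of poles), hence connected once shown nonempty.

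Surjectivity and nonemptiness go together and constitute the technical heart of the proof. For each ramification profile $\cP$, I choose a labeled configuration on $\PP^1$ and search for an anti-invariant meromorphic differential on the double cover satisfying the prescribed orders and the residueless condition. The main obstacle is to verify that this linear system is consistently solvable: the order conditions at marked points together with the residueless conditions at swapped pairs of poles must be simultaneously satisfiable, and the resulting locus must be full-dimensional inside $\cR_g(\mu)$. My approach is a Riemann--Roch style computation, parametrizing anti-invariant meromorphic differentials on the double cover as rational functions $F$ on $\PP^1$ multiplied by the canonical anti-invariant section $dx/y$, and matching the dimension of the space of such $F$ with prescribed zero/pole structure against the codimension imposed by the residueless constraints. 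The same dimension comparison certifies that $\cP$ realizes a full-dimensional component rather than a proper sublocus, completing the bijection.
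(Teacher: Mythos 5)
Your forward map, and your check that it lands in ramification profiles, match the paper's Proposition~\ref{hyperprofile1}. Your surjectivity/nonemptiness strategy (building an anti-invariant differential on the double cover and doing a dimension count against the residue constraints) is also essentially the paper's, which invokes the surjectivity of the residual application from \cite{getaquadratic} to produce a quadratic differential in $\cQ^R_0(\nu)$ and pulls it back.

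The gap is in injectivity. You claim the hyperelliptic locus with a fixed profile $\cP$ is connected because the configuration space of branch points on $\PP^1$ is connected and, over each configuration, the space of valid $\omega$'s is cut out by ``finitely many linear equations, hence connected.'' But on $\PP^1$, once you fix the positions and orders of all zeros and poles, the quadratic differential $q$ is determined up to a global scalar; so there is no freedom in the fiber for the residue conditions to act on. The $2$-residue conditions at the images of swapped pole pairs are therefore conditions on the \emph{configuration}, not on the fiber, and they carve out a codimension-$f$ subvariety of the configuration space whose connectedness is exactly the nontrivial content of the theorem. Equivalently: the paper shows $\PP\cC \cong \PP\cD$ for a connected component $\cD$ of $\cQ^R_0(\nu)$, so uniqueness of $\cC$ is \emph{equivalent} to connectedness of $\cQ^R_0(\nu)$; your argument asserts this connectedness without justification, which begs the question. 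The paper's proof of uniqueness (end of Section~\ref{sec:chc}) does not try to prove $\cQ^R_0(\nu)$ connected directly; it runs an induction on $\dim\cR_g(\mu)$ using the multi-scale compactification: it shrinks a multiplicity-one saddle connection (or, when $\cP$ fixes $2g+2$ marked points, a pair of saddle connections bounding the polar domain of a fixed pole, via Proposition~\ref{double}), lands on a two-level boundary stratum whose top-level piece lies in a lower-dimensional stratum, and controls the remaining freedom (the bottom-level component sits in a connected genus-zero stratum, and the prong-matching class is forced by compatibility with the hyperelliptic involution). That machinery --- Theorem~\ref{simple}, Propositions~\ref{double}, \ref{hyper1}, \ref{hyper2}, and the genus-one base case in Proposition~\ref{hyperclass1} --- is what your proposal is missing.
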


\begin{example}
We observe that it is possible for $\cR_g (\mu)$ to have multiple ramification profiles. For example, consider the stratum $\cR_g (bn+2g-2,-b^n)$, where the exponent $n$ means that the stratum has $n$ poles of order $b>1$, as usual. Assume that $n>2g+1$ and $b$ is even. For any $1\leq r\leq 2g+2$ such that $n+1-r$ is even, $\mu$ has a ramification profile $\cP$ that fixes exactly $r-1$ poles. In particular, there exist $\frac{n!}{2^{(n-r)/2}}$ such ramification profiles. 
\end{example}

For strata of genus $g>1$, the non-hyperelliptic connected components of $\cR_g(\mu)$ are classified by the following

\begin{theorem}\label{main1}
For $g>1$ and a stratum $\cR_g (\mu)$ is of even type, then $\cR_g (\mu)$ has {\em two} non-hyperelliptic connected components distinguished by spin parity. Otherwise if $g>1$ and $\cR_g (\mu)$ is not of even type, then $\cR_g (\mu)$ has a {\em unique} non-hyperelliptic connected component.
\end{theorem}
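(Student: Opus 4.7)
The plan is to adapt the Kontsevich--Zorich strategy of \cite{kozo1} and its meromorphic extension by Boissy \cite{boissymero} to the residueless setting. First I would verify that spin parity is well-defined and locally constant on $\cR_g(\mu)$ when $\mu$ is of even type: the usual definition via the Arf invariant of a quadratic form on $H_1(X;\ZZ/2)$ associated to the square root of $\omega$ extends verbatim, and since period coordinates on $\cR_g(\mu)$ are just a linear slice of those on $\cH_g(\mu)$, the parity is constant on each connected component. Hyperelliptic components are already enumerated by \Cref{mainhyper}, so it remains to show that two non-hyperelliptic flat surfaces $(X,\omega),(X',\omega')\in \cR_g(\mu)$ lie in the same component whenever they share the same spin parity (in the even case) or when no parity invariant exists (otherwise).

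Next I would set up an induction, most naturally on the genus $g$ (secondarily on the number $m$ of zeros, after breaking zeros apart). The base case would be $g=2$, together with strata having a single zero where explicit normal forms can be exhibited. For the inductive step, the two surgeries I plan to use are the residueless variants of (i) \emph{breaking up a zero}, which replaces a zero of order $a_1+a_2$ by two nearby zeros of orders $a_1,a_2$ in a small flat neighborhood, so residues at the poles are automatically preserved, and (ii) \emph{bubbling a handle} at a zero, which trades a zero of order $a$ for two zeros whose orders sum to $a+2$ plus an extra handle, again a purely local operation. Both surgeries have predictable behavior under spin parity and hyperellipticity, well-documented in \cite{kozo1,boissymero}.

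To actually realize these surgeries inside $\cR_g(\mu)$, I would interpret each of them as a degeneration to a specific codimension-one boundary divisor of the multi-scale compactification of $\cR_g(\mu)$ described in \cite{mresidueless}, and smooth inside a neighborhood of that divisor. The key technical input advertised in the introduction is that, starting from a generic flat surface $(X,\omega)\in \cR_g(\mu)$, one can reach an arbitrary principal-boundary configuration by applying an appropriate element of $GL^+(2,\RR)$ followed by a real rescaling. This circumvents the difficulty that a naive Euclidean surgery might alter residues. Granting this, the induction reduces to: any two non-hyperelliptic residueless flat surfaces with matching invariants can be degenerated to multi-scale surfaces sharing a common boundary stratum, whose lower-complexity pieces (of strictly smaller genus or fewer zeros) are connected by the inductive hypothesis, after which a path in the boundary, plus smoothing, supplies the desired path in $\cR_g(\mu)$.

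The main obstacle will be the base case together with a finite list of low-complexity ``sporadic'' strata on which the standard surgeries fail, for instance strata with very few zeros or where every $GL^+(2,\RR)$-orbit meets the hyperelliptic locus; for these I would need explicit normal-form representatives produced from cylinder decompositions after a suitable rotation-and-stretch, then compare against the hyperelliptic count from \Cref{mainhyper}. A related subtlety is distinguishing a non-hyperelliptic component from a hyperelliptic one of the same spin parity, which \Cref{mainhyper} allows to be numerous; I would settle this by producing in each candidate non-hyperelliptic component a configuration (e.g.\ an asymmetric cylinder decomposition or a saddle connection whose hyperelliptic image is not itself) that rules out the existence of a global hyperelliptic involution.
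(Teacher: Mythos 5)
Your high-level strategy matches the paper's: induct via the two surgeries (breaking up a zero, bubbling a handle), realized as degenerations to the multi-scale boundary of \cite{mresidueless}, with spin parity as the separating invariant in the even case, and with the hyperelliptic count supplied by \Cref{mainhyper}. The mechanism of reaching the principal boundary via a contraction flow (\Cref{short}) is also exactly what the paper uses. So the outline is sound.

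However, there is a genuine gap in your identification of the base case. You propose $g=2$ (with explicit normal forms), but the unavoidable base of the induction here is the genus-one residueless \MIN{} strata, for a reason specific to the residueless setting that your plan does not confront: there are \emph{no} genus-zero residueless \MIN{} flat surfaces, so unbubbling a handle from genus $g$ terminates at genus~$1$, not genus~$0$ as in \cite{kozo1,boissymero}. Consequently the base case is not governed by spin parity at all but by the rotation number, and it carries the exceptional strata (e.g.\ $\cR_1(12,-3^4)$, $\cR_1(2n,-2^n)$, $\cR_1(2n+2,-2^{n-1},-4)$) whose classification occupies the bulk of the paper (\Cref{sec:g1m}--\Cref{sec:g1n}). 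Your plan treats the base case as a finite, sporadic list to be dispatched by cylinder decompositions and normal forms; in fact it is an infinite family with its own invariant and its own combinatorial boundary analysis, and the claimed inductive step for $g=2$ cannot even be stated until the genus-one picture (and in particular \Cref{main2minimal}, \Cref{unbalance}--\Cref{Tchoice}) is in place. A second, smaller omission: the degeneration arguments you invoke only produce a usable two-level boundary point if the starting surface has a \emph{multiplicity-one} saddle connection, and establishing this for every non-hyperelliptic component (save the hyperelliptic ones with $2g+2$ fixed marked points) is itself a substantial theorem (\Cref{simple}); your proposal takes this for granted. Finally, the bridge from rotation number in genus one to spin parity in higher genus -- that bubbling a handle changes spin parity by $s+1$ and that rotation number and spin parity agree up to a shift in the even genus-one case (\Cref{rotationspin}, \Cref{bubblespin}) -- is needed to make the invariants match across the inductive step, and is missing from the plan.
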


For strata of genus $g=1$, the non-hyperelliptic connected components of $\cR_1(\mu)$ are classified by the following

\begin{theorem}\label{main2}
For a stratum $\cR_1 (\mu)$ of genus one, denote $d \coloneqq \operatorname{gcd}(a_1,\dots, a_m, b_1,\dots, b_n)$ and let $r$ be a positive integer divisor of $d$. Then $\cR_1 (\mu)$ has a unique non-hyperelliptic connected component $\cC_r$ with rotation number $r$, except for the following cases:

\begin{itemize}
    \item The strata $\cR_1(r,-r)$ does not have a non-hyperelliptic component with rotation number $r$.
    \item The strata $\cR_1(2n,-2^n)$ and $\cR_1(n,n,-2^n)$ have no non-hyperelliptic components.
    \item The strata $\cR_1(2r,-2r),\cR_1(2r,-r,-r),\cR_1(r,r,-2r)$ and $\cR_1(r,r,-r,-r)$ have no non-hyperelliptic components with rotation number $r$.
    \item The stratum $\cR_1(12,-3^4)$ has {\em two} non-hyperelliptic components with rotation number 3.
\end{itemize}
\end{theorem}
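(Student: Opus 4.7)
The plan is to first establish that the rotation number is a well-defined topological invariant on connected components of $\cR_1(\mu)$, following the definition of Boissy in \cite{boissymero}. For a flat surface $(X,\omega) \in \cR_1(\mu)$, one chooses a symplectic basis of $H_1(X;\ZZ)$ represented by simple closed curves avoiding the zeroes and poles; the rotation number $r$ is the gcd of the indices of the Gauss map along these curves. Since the local index contribution around a zero of order $a_i$ (resp.\ pole of order $b_j$) is divisible by $a_i$ (resp.\ $b_j$), one obtains $r \mid d$. Constancy of $r$ under deformation within $\cR_1(\mu)$ is immediate from its integrality, giving the upper bound of at most one non-hyperelliptic component for each divisor $r$ of $d$.

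For each admissible $r \mid d$ I would construct an explicit flat surface in $\cR_1(\mu)$ with rotation number $r$ by a polygonal gluing: either by inserting residueless genus-zero pieces (from $\cR_0$-type strata and the two-pole generalized strata $\cR_0(a_1,-b_1,\dots,-b_{n-2};-b_{n-1},-b_n)$) into a torus model via saddle-connection surgery, or by an $r$-fold cyclic cover of a simpler genus-zero residueless surface. The excluded cases should emerge precisely as those for which such constructions fail. For instance, the strata $\cR_1(2n,-2^n)$ and $\cR_1(n,n,-2^n)$ should be verified by dimension comparison against the list of ramification profiles in Theorem \ref{mainhyper} to coincide entirely with their hyperelliptic locus, while rigid strata such as $\cR_1(r,-r)$ or $\cR_1(2r,-2r)$ are too constrained for a non-hyperelliptic $\cC_r$ to exist separately from the corresponding hyperelliptic component.

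The main obstacle is proving uniqueness: that any two non-hyperelliptic flat surfaces in $\cR_1(\mu)$ with equal rotation number lie in the same connected component. My strategy is induction on the total number $m+n$ of marked points. The key flat-geometric tools are the surgeries of \cite{kozo1}, breaking up a zero and bubbling a handle, reinterpreted in the residueless setting as degenerations to the principal boundary of the multi-scale compactification of \cite{BCGGM} in the version developed in \cite{mresidueless}. Applying a suitable element of $GL^+(2,\RR)$, which preserves the residue conditions because it acts $\CC$-linearly on periods, I would flow a general point toward a boundary stratum where the surface degenerates into lower-complexity residueless strata. The connected components of those lower strata are already controlled by the inductive hypothesis together with Theorems \ref{mainhyper} and \ref{main1}, and the local structure of the multi-scale compactification then permits connecting boundary strata with matching invariants back to open parts of $\cR_1(\mu)$, completing the inductive step.

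Finally I would treat each exceptional case by separate analysis. For the strata having no non-hyperelliptic components, a dimension check establishes that $\cR_1(\mu)$ is entirely covered by the hyperelliptic loci produced in Theorem \ref{mainhyper}. For the cases where only a specific rotation number $r$ is missing, one must verify that every surface realizing that rotation number is forced into a hyperelliptic component for rigidity reasons. The most delicate case is $\cR_1(12,-3^4)$, where I expect an additional $\ZZ/3$-type covering structure visible only when $n=4$ and every pole has the common order $3$ to distinguish two non-hyperelliptic components of rotation number $3$; each of these must then be shown to be internally connected by the same inductive surgery argument used in the generic case.
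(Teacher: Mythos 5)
Your opening claim contains a fundamental logical error that undermines the entire proposed structure. You write that ``constancy of $r$ under deformation within $\cR_1(\mu)$ is immediate from its integrality, giving the upper bound of at most one non-hyperelliptic component for each divisor $r$ of $d$.'' This is not correct. Constancy of the rotation number on connected components only says that the map from the set of connected components to the set of divisors of $d$ is well defined; it gives no injectivity whatsoever, so you have no upper bound at all. In fact the theorem's own exceptional case $\cR_1(12,-3^4)$, which has \emph{two} non-hyperelliptic components of rotation number $3$, is a direct counterexample to the claimed ``at most one.'' The genuinely difficult content of the theorem is precisely this injectivity, and the paper spends all of Sections~6 and~8 on it: for the single-zero case ($m=1$) the argument classifies multi-scale differentials in the $0$-dimensional boundary of $\PP\cR_1(\mu)$ by explicit combinatorial data $X(t,\tau,{\bf C},Pr)$, defines boundary-navigation operators $T_1,T_2$, and then carries out a multi-stage reduction (unbalancing $Q_1<Q_2$, bounding $Q_2-Q_1\le 2r$, then showing independence of ${\bf C}$, of $Pr$, and of $\tau$); for the multiple-zero case it proves that the breaking-up-the-zero map is surjective onto non-hyperelliptic components and then computes exactly when two components of the single-zero stratum merge after breaking. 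None of this is supplied or replaced by what you sketch.

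Your induction-on-$m+n$ scheme also doesn't match what is actually needed. The paper's multiple-zero argument is an induction on $m$ via merging zeroes, with the $m=1$ case established by a completely non-inductive boundary-navigation argument, not by further reduction in $n$. There is no lower-dimensional residueless genus-one single-zero stratum to induct into (removing a pole changes the residue condition and the dimension count), so the base of your proposed induction would still face the whole combinatorial problem unaided. Finally, for $\cR_1(12,-3^4)$ you conjecture a $\ZZ/3$-covering structure distinguishing the two components; the paper does not do this---it only proves an upper bound of two components there (Prop.~\ref{special1}) and cites a separate paper~\cite{LT} for the existence of both. That deferral is itself a sign that the existence of two components requires a finer argument than the general machinery; it cannot be dispatched by a covering-structure heuristic without substantial justification.
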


Since $\cR_1(r,-r)=\cH_1(r,-r)$, the first genus one exceptional cases follows from \cite{boissymero}. The second cases are treated in \Cref{exception0} and \Cref{nnspecial}. The third cases are treated in \Cref{exception1}, \Cref{exception2} and \Cref{nonexistspecial}. The last case, $\cR_1 (12,-3^4)$, is treated in \Cref{special1} and \cite{LT}.

In summary, except for these special cases listed above, as in the case of usual meromorphic strata $\cH_g(\mu)$, the connected components of the residueless stratum $\cR_g(\mu)$ can be classified by hyperellipticity (though now with multiple hyperelliptic components with different ramification profiles), and spin parity (if $g>1$) or rotation number (if $g=1$). 

\subsection{Outline of the paper}

\begin{itemize}
    \item In \Cref{sec:fs}, we give basic definitions related to flat surfaces, and recall the $GL^+(2,\RR)$-action on the strata and related concepts such as cores and polar domains. We will classify zero-dimensional (projectivized) generalized strata. 
    \item In \Cref{sec:msc}, we recall the definition and the properties of the multi-scale compactification of the generalized strata. Then we describe how we can shrink a collection of parallel saddle connections using the contraction flow. 
    \item In \Cref{sec:pb}, we recall the definition of the principal boundary of strata and how a flat surface degenerates to the principal boundary. We will explain that two surgeries introduced in \cite{kozo1} can be considered as {\em smoothing} processes from certain multi-scale differentials, and they can be reversed by degeneration into the principal boundary under certain conditions.
    \item In \Cref{sec:hc}, we describe hyperelliptic components of the stratum and their principal boundary. 
    \item In \Cref{sec:ssc}, we prove the existence of a flat surface with a multiplicity one saddle connection for all connected component but hyperelliptic components with $2g+2$ fixed marked points. 
    \item In \Cref{sec:g1m}, we classify the non-hyperelliptic components of genus one \MIN strata.
    \item In \Cref{sec:chc}, we classify the hyperelliptic components of strata, completing the proof of \Cref{mainhyper}.
    \item In \Cref{sec:g1n}, we classify the connected components of genus one multiple-zero strata, completing the proof of \Cref{main2}.
    \item In \Cref{sec:hg}, we classify the non-hyperelliptic components of strata of higher genus, completing the proof of \Cref{main1}.
\end{itemize}

\paragraph{\bf Acknowledgements.} This research was partially supported by Kwanjeong Educational Foundation and also by Simons Foundation International, LTD. The author would like to thank his advisor, Samuel Grushevsky for introducing him to the theory of strata of differentials, and encouraging him to work on this project. The author is grateful to Benjamin Dozier, Corentin Boissy and Yiu Man Wong for many valuable discussions. The author would like to thank to Martin M\"oller and Guillaume Tahar for useful comments on an earlier version of this text. The author would also like to show gratitude to reviewers for carefully reading the proofs and for providing comments that greatly improved the manuscript. 

\section{Flat surfaces and their deformations} \label{sec:fs}

In this section, we will recall and introduce basic properties of flat surfaces and their deformations that will be used in later sections. Recall that a {\em saddle connection} of a flat surface $(X,\omega)$ is a straight line with respect to the flat structure connecting two (possibly identical) zeroes of $\omega$ that does not contain any other zeroes of $\omega$ in its interior. The saddle connections play a main role in understanding the flat structure of $(X,\omega)$ and their deformations. 

\subsection{\texorpdfstring{$\operatorname{GL}^{+}(2,\RR)$}{TEXT}-action and the contraction flow}

There is a natural $\operatorname{GL}^{+}(2,\RR)$-action on the meromorphic stratum $\cH_g (\mu)$. For $u\in X\setminus{(\boldsymbol{z}\sqcup \boldsymbol{p})}$, let $z=x+iy$ be a local flat coordinate at $u$ given by $\omega$. That is, $z(u)=0$ and $\omega=dz$ in a neighborhood of $u$.   

For a matrix 
$M=\begin{pmatrix}
a & b \\
c & d 
\end{pmatrix}\in \operatorname{GL}^{+}(2,\RR)$, we can associate another complex coordinate $$z'=(ax+by)+i(cx+dy)$$ at $u$. This can be done for any $u\in X\setminus{(\boldsymbol{z}\sqcup \boldsymbol{p})}$, and these local patches give a new complex structure $X'$. Also the new flat structure given by $z'$ is equivalent to the meromorphic differential $\omega'$ on $X'$, locally determined by $\omega'=dz'$. This new flat surface $(X',\omega')$ is also contained in the stratum $\cH_g (\mu)$. The $\operatorname{GL}^{+}(2,\RR)$-action on $\cH_g (\mu)$ is defined by $M\circ (X,\omega)\coloneqq(X',\omega')$. A remarkable property of the $\operatorname{GL}^{+}(2,\RR)$-action is that the action preserves the straight lines. In other words, if $\gamma$ is a straight line on $(X,\omega)$, then its image in $M\circ (X,\omega)$ is also a straight line. 

Given two distinct directions $\alpha, \theta \in S^1$, the contraction flow $C^t_{\alpha,\theta}$ is given by contracting $\theta$ direction and preserving $\alpha$ direction of flat surfaces. More precisely, it is the action of the semigroup of matrices of the form
$$
C^t_{\alpha,\theta} = \begin{pmatrix}
\cos\theta & \sin\theta \\
\cos\alpha & \sin\alpha 
\end{pmatrix}\begin{pmatrix}
e^{-t} & 0 \\
0 & 1 
\end{pmatrix}\begin{pmatrix}
\cos\theta & \sin\theta \\
\cos\alpha & \sin\alpha 
\end{pmatrix}^{-1}
\in \operatorname{GL}^{+}(2,\RR)$$ for $t\in \RR_+$.

If $(X,\omega)$ satisfies some residue condition $R$, then $M\circ (X,\omega)$ also satisfies $R$ for any $M\in \operatorname{GL}^{+}(2,\RR)$. In other words, the generalized stratum $\cH^R_g(\mu)$ is a $\operatorname{GL}^{+}(2,\RR)$-invariant subvariety of $\cH_g(\mu)$. In particular, the generalized stratum $\cH^R_g(\mu)$ also has the contraction flows. 

\subsection{Flat surfaces with degenerate core} \label{subsec:core}

For a flat surface $(X,\omega)$, a subset $Y\subset X$ is said to be {\em convex} if any straight line joining two points in $Y$ is also contained in $Y$. The {\em convex hull} of a subset $Y$ is the smallest convex subset of $X$ containing $Y$. Recall from \cite{Tah} that the {\em core} $C(X)$ of $(X,\omega)$ is defined to be the convex hull of $\boldsymbol{z}$. In particular, $C(X)$ contains all zeroes and saddle connections of $(X,\omega)$. In \cite{Tah}, Tahar established the following properties of the core, allowing us to decompose every flat surface into the core and the polar domains:

\begin{proposition} \label{degenerate}
For any flat surface $(X,\omega)\in \cH_g (\mu)$, $\partial C(X)$ is a finite union of saddle connections. The complement $X\setminus C(X)$ has exactly $n$ connected components, each of which is homeomorphic to a disk containing one pole $p_i$ of $\omega$. 
\end{proposition}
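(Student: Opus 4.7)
The plan is to prove the two assertions separately: first, that $\partial C(X)$ is locally and globally a finite union of saddle connections, and second, that the components of $X \setminus C(X)$ are disk-shaped polar domains in bijection with the poles.

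For the boundary structure, I would argue locally. Pick $q \in \partial C(X)$ with $q \notin \boldsymbol{z}$. The flat metric is Euclidean in a small neighborhood $D$ of $q$, so the intersection $C(X) \cap D$ is convex in the ordinary Euclidean sense, hence $\partial C(X) \cap D$ is locally a straight segment. Extending such a segment maximally in either direction: at a regular point the Euclidean picture persists, so the segment can always be continued; the only way to terminate is at a point of $\boldsymbol{z}$. Hence each maximal straight piece of $\partial C(X)$ is a saddle connection. Finiteness follows because at each zero $z_i$ of order $a_i$, the total cone angle is $2\pi(a_i+1)$, and each incoming boundary saddle connection occupies a positive angular interval on the $\partial C(X)$-side, so only finitely many can appear.

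For the complement, each pole $p_j$ admits arbitrarily small punctured neighborhoods disjoint from the compact set $C(X) \subset X \setminus \boldsymbol{p}$, so every pole lies in some component of $X \setminus C(X)$. Let $U$ be a component, and consider its closure $\bar U$, a compact surface with piecewise-geodesic boundary made of saddle connections from Step~1, and whose interior singularities are exactly the poles lying in $U$. I would then invoke Gauss--Bonnet for $\bar U$: curvature is zero away from singularities, each pole of order $b_j$ contributes a concentrated curvature $+2\pi b_j$ (from the cone defect at a pole), and each corner contributes $\pi$ minus the interior angle of $\bar U$ at that corner. Crucially, convexity of $C(X)$ forces the interior angle of $\bar U$ at every corner to be at least $\pi$ (otherwise a shortcut would exit $C(X)$), so the corner contributions are nonpositive. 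Combining with $\chi(\bar U) \leq 1$ for a connected surface with nonempty boundary, and the identity $\sum a_i - \sum b_j = 2g-2$, one forces each $U$ to contain exactly one pole and $\chi(\bar U)=1$, i.e.\ $\bar U$ is a topological disk.

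The main obstacle I expect is the Gauss--Bonnet bookkeeping: correctly identifying the sign conventions for cone defects at poles, handling the possibility that $\partial \bar U$ is not a single simple closed curve but a more intricate graph of saddle connections (so $\bar U$ may need to be viewed as a surface with corners whose boundary is a wedge of loops), and converting the convexity of $C(X)$ into a rigorous lower bound of $\pi$ on every corner angle of $\bar U$. Once these angle estimates are in place, the same inequality simultaneously yields uniqueness of the pole in each component and the disk topology, completing the proof.
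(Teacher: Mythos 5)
The paper does not prove this proposition; it is quoted directly from Tahar's work \cite{Tah}, so you are necessarily taking an independent route. Your Step~1 (local straightness of $\partial C(X)$ away from zeroes, maximal extension to saddle connections, and finiteness via angle-counting) is the right idea, but it silently assumes the core is compact. Without compactness, the maximal straight pieces could have infinite length or accumulate, and the finiteness argument via bounded length fails. Compactness of the convex hull is itself a nontrivial fact for flat surfaces of infinite area; it is part of what Tahar establishes and needs to be addressed, not assumed.

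The more serious problem is in Step~2, and it is not the "bookkeeping" you flag but the direction of the resulting inequality. Your sign conventions are internally consistent: one can check against the paper's polar domains $P_1(b)$ (one corner of angle $2\pi(b-1)+\pi$, disk) and $P_2(C,D)$ (two corners of angles $2\pi C,\,2\pi D$, disk, $b=C+D$) that the formal Gauss--Bonnet identity for $\bar U$ with corner angles $\theta_i$ and poles of orders $b_j$ is indeed
\[
\sum_i(\pi-\theta_i)\;+\;2\pi\sum_j b_j\;=\;2\pi\,\chi(\bar U).
\]
Convexity of $C(X)$ gives $\theta_i\ge\pi$, so $\sum_i(\pi-\theta_i)\le 0$, which yields $\chi(\bar U)\le\sum_j b_j$. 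Since $\chi(\bar U)\le1$ always, while $\sum_j b_j\ge 2$ whenever $\bar U$ contains a pole, this inequality is vacuous: it does not force $\bar U$ to contain a single pole, nor does it force $\chi(\bar U)=1$. Invoking $\sum a_i-\sum b_j=2g-2$ does not rescue this as stated, because that identity is exactly what makes the global Gauss--Bonnet count close up and contains no component-by-component information. To conclude, you would need a genuinely different input --- for instance, a structure theorem for the infinite-area ends near poles (each pole owns a canonical ``horizontal'' infinite domain whose boundary is a broken geodesic, and these domains are pairwise disjoint), combined with a count showing these account for the entire complement of $C(X)$. That is essentially the route Tahar takes, and it cannot be replaced by the curvature inequality you have set up.
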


For a pole $p$ of $\omega$, the connected component of $X\setminus C(X)$ containing $p$ is called the {\em polar domain} of $p$.

A flat surface $(X,\omega)$ is said to have {\em degenerate core} if the core $C(X)$ has empty interior. Since $C(X)$ is closed, it is equivalent to saying $C(X)=\partial C(X)$. By \cite[Lemma 5.15]{Tah}, we can construct a flat surface with degenerate core contained in any connected component of a stratum. 

A consequence of the above proposition is the following
    
\begin{proposition} \label{saddlehomology}
For any flat surface $(X,\omega)$, there exist a finite collection of saddle connections $\gamma_1,\dots,\gamma_N$ of $(X,\omega)$ such that their homology classes generate $H_1 (X\setminus{\boldsymbol{p}}, \boldsymbol{z} ; \ZZ)$. 
\end{proposition}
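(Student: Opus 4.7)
The plan is to reduce the claim to a cellular homology computation on the core. The first step is to invoke \Cref{degenerate}: the complement $X\setminus C(X)$ is a disjoint union of $n$ open topological disks, each containing exactly one pole. Removing the pole from each polar domain produces a punctured disk that deformation retracts onto its boundary, and the boundary is a concatenation of saddle connections lying in $\partial C(X)$. Splicing these retractions together gives a strong deformation retraction of $X\setminus\boldsymbol{p}$ onto $C(X)$, and hence an isomorphism
\[
H_1(X\setminus\boldsymbol{p},\boldsymbol{z};\ZZ)\ \cong\ H_1(C(X),\boldsymbol{z};\ZZ).
\]
It therefore suffices to generate the right-hand side by classes of saddle connections.

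Next, I would equip $C(X)$ with a finite CW structure whose $0$-skeleton is $\boldsymbol{z}$ and whose $1$-cells are saddle connections $\gamma_1,\dots,\gamma_N$. When $C(X)$ has two-dimensional interior, one takes any geodesic triangulation of $C(X)$ with vertex set $\boldsymbol{z}$ (for instance a Delaunay triangulation in the flat metric); every edge is then automatically a saddle connection, and the $2$-cells are Euclidean triangles. When $C(X)$ is degenerate, $C(X)=\partial C(X)$ is already a graph of saddle connections and yields a CW complex with no $2$-cells. In the resulting relative cellular chain complex $C_\bullet(C(X),\boldsymbol{z};\ZZ)$ one has $C_0=0$, $C_1$ free abelian on $\gamma_1,\dots,\gamma_N$, and $C_2$ free abelian of finite rank (possibly zero). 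Therefore $H_1(C(X),\boldsymbol{z};\ZZ)$ is a quotient of $C_1$, generated by the classes $[\gamma_1],\dots,[\gamma_N]$. Combining with the isomorphism above proves the proposition.

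The only nontrivial ingredient is the existence of a geodesic triangulation of $C(X)$ whose vertex set is exactly $\boldsymbol{z}$ and whose edges are saddle connections (so that no auxiliary vertices are introduced). This is standard for compact flat surfaces with piecewise-geodesic boundary, for example via the Delaunay construction adapted to the flat metric; and in the degenerate case no triangulation is required at all, so there is no serious obstacle.
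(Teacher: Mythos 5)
Your proof is correct but follows a genuinely different route from the paper's. Both arguments begin the same way — using \Cref{degenerate} to see that $X\setminus\boldsymbol{p}$ deformation retracts onto the core $C(X)$ — but they diverge afterward. The paper handles only the case of \emph{degenerate} core directly (where $C(X)$ is a graph of saddle connections, so the conclusion is immediate), and then reduces the general case to the degenerate one dynamically: it applies a generic contraction flow $C^t_{\alpha,\theta}$, invoking \cite[Lemma~5.15]{Tah} to land on a degenerate-core surface as $t\to\infty$, and uses the fact that the $GL^+(2,\RR)$-action preserves saddle connections and their relative homology classes to transport the generating set back to $(X,\omega)$. You instead handle arbitrary $C(X)$ in one shot by exhibiting a CW structure with $0$-skeleton $\boldsymbol{z}$, $1$-cells saddle connections (via a Delaunay-type geodesic triangulation), and $2$-cells Euclidean triangles; the vanishing of $C_0$ then makes $H_1(C(X),\boldsymbol{z};\ZZ)$ a quotient of the free module on the $1$-cells. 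Your approach avoids the contraction flow and the perturbation lemma entirely, at the modest cost of appealing to the existence of a geodesic triangulation of the core with vertex set exactly $\boldsymbol{z}$ — a standard but not entirely free ingredient, particularly since $C(X)$ can have one-dimensional (degenerate) parts and non-embedded boundary, so what one builds is really a $\Delta$-complex or CW structure rather than a simplicial triangulation. Worth spelling that caveat out, but the argument is sound.
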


\begin{proof}
First, we will prove this for flat surfaces $(X,\omega)$ with degenerate core. Then $C(X)$ is a union of finitely many saddle connections. By \Cref{degenerate}, $X\setminus C(X)$ is a disjoint union of polar domains. Each polar domain is homeomorphic to a disk containing one pole. Any path in $X\setminus \boldsymbol{p}$ between two zeroes can be homotoped to a union of saddle connections in $C(X)$. Therefore, the saddle connections of $(X,\omega)$ generate $H_1 (X\setminus{\boldsymbol{p}}, \boldsymbol{z} ; \ZZ)$.

In general, any flat surface $(X,\omega)$ can be deformed to a flat surface $(X',\omega')$ with degenerate core by a contraction flow $C^t_{\alpha,\theta}$ with general $\alpha,\theta$ as $t\to \infty$ (See \cite[Lemma 5.15]{Tah}). Any saddle connection $\gamma'$ of $(X',\omega')$ is a limit of some saddle connection $\gamma$ of $(X,\omega)$. Since $\operatorname{GL}^{+}(2,\RR)$-action preserves the homology class of saddle connections, $\gamma'$ and $\gamma$ have the same homology class. Therefore, we can find a set of saddle connections of $(X,\omega)$ generating $H_1 (X\setminus{\boldsymbol{p}}, \boldsymbol{z} ; \ZZ)$.
\end{proof}
    
\subsection{Flat surfaces in zero-dimensional (projectivized) strata}

A stratum $\cH^R_g (\mu)$ has a natural $\CC^{\star}$ action given by scaling of the differentials.  The projectivized generalized stratum $\PP\cH^R_g(\mu)$ is defined by the quotient $\cH^R_g (\mu)/\CC^{\star}$. Let $\pi:\cH^R_g(\mu)\to \PP\cH^R_g(\mu)$ be the quotient map. Flat surfaces in zero-dimensional projectivized strata will play a role of building blocks. Each connected component of such a stratum is just a point, thus the number of connected components can be computed by counting isomorphism classes of flat surfaces up to scaling. By looking at the dimension formula $\dim \PP\cR_g (\mu) = 2g+m+n-r-2$, we see that there are two types of zero-dimensional projectivized generalized strata. 

The first cases are the genus zero residueless strata with two zeroes. That is, $(g,m)=(0,2)$ and $r=n$. In general, such a stratum is of the form $\PP\cR_0 (a_1,a_2,-b_1,\dots,-b_n)$. By \cite[Proposition 2.3]{ccprincipal}, each connected component of this stratum correspond to a {\em configuration of type I} of parallel saddle connections. For the future use, we can paraphrase the proposition there into the following

\begin{proposition} \label{zerodim1} 
Let $(\PP^1,\omega)\in \cR_0 (a_1,a_2,-b_1,\dots,-b_n)$. Then it has exactly $n$ saddle connections joining $z_1$ and $z_2$, parallel to each other. Also, $(\PP^1,\omega)$ is uniquely determined up to scaling by following information:

\begin{itemize}
    \item A cyclic order on $\boldsymbol{p}$ given by a permutation $\tau\in Sym_n$.
    \item A tuple of integer ${\bf C}= (C_1,\dots,C_n)$ such that $1\leq C_i\leq b_i-1$ for each $i$, satisfying $\sum_i C_i = a_1+1$. Remark that if we denote $D_i \coloneqq b_i-C_i$, then $\sum_i D_i = a_2+1$. 
\end{itemize}
\end{proposition}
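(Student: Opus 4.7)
The plan is to exploit that $\omega = dF$ for a rational function $F \colon \PP^1 \to \PP^1$, which is possible because $g=0$ and all residues vanish, so $\omega$ is exact. The ramification of $F$ is prescribed by $\mu$: order $a_j+1$ at each $z_j$ and order $b_i-1$ at each $p_i$, so $\deg F = \sum_i (b_i - 1)$. Any path $\gamma$ from $z_1$ to $z_2$ in $X \setminus \boldsymbol{p}$ satisfies $\int_\gamma \omega = F(z_2) - F(z_1)$ independently of $\gamma$, hence every saddle connection joining $z_1$ and $z_2$ has the common period $v \coloneqq F(z_2) - F(z_1)$ and is therefore parallel of length $|v|$. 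A saddle connection from $z_i$ to itself would have period $0$ and thus zero length, which is impossible, so every saddle connection joins $z_1$ to $z_2$.

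Next I would count the saddle connections. Since $m=2$, the core $C(X)$ is the convex hull of $\{z_1,z_2\}$. If $C(X)$ had nonempty interior, its shape would vary within the stratum and contribute a positive-dimensional moduli; but the projectivized stratum is zero-dimensional, so $C(X)$ must be degenerate, i.e., the union of the parallel saddle connections. By \Cref{degenerate}, the complement $X\setminus C(X)$ then has exactly $n$ open-disk components, one per pole. Taking $V=\{z_1,z_2\}$, $E=$\,(saddle connections), $F=$\,(polar disks) as a CW decomposition of $X \cong \PP^1$, the Euler relation $V - E + F = 2$ yields $|E|=n$.

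For the invariants, cutting along the $n$ saddle connections produces $n$ polar disks, each containing one pole $p_i$. The cyclic arrangement of these disks around $z_1$ (equivalently around $z_2$) in the flat structure defines $\tau \in Sym_n$. Record $C_i$ as the number of horizontal prongs from $z_1$ in the $+F$-direction attributable to the polar disk of $p_i$ (counting separatrices flowing into $p_i$ together with the appropriate side of each incident saddle connection). Summing, $\sum_i C_i = a_1+1$, the total number of such prongs; at $z_2$ the symmetric count gives $D_i \coloneqq b_i - C_i$ with $\sum_i D_i = a_2+1$. The bounds $1 \le C_i \le b_i-1$ come from the local flat model at a residueless pole of order $b_i$, which can accommodate at least one and at most $b_i - 1$ prongs from each zero on a given side.

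Finally, for uniqueness and reconstruction, given $(\tau,\mathbf{C})$, each polar disk for $p_i$ is, up to scaling, uniquely determined by the triple $(C_i, D_i, |v|)$ as a standard cyclic gluing of half-planes around a residueless pole; gluing the $n$ such disks cyclically according to $\tau$ along the $n$ parallel saddle connections recovers $X$. The main obstacle is the combinatorial rigidity --- verifying that distinct data $(\tau,\mathbf{C})$ produce non-isomorphic flat surfaces and that every flat surface in the stratum arises in this way --- which reduces to the local rigidity of residueless polar neighborhoods and is worked out in detail in \cite[Proposition~2.3]{ccprincipal}.
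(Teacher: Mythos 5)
Your argument is correct and gives a genuinely different, self-contained proof: the paper itself deduces \Cref{zerodim1} simply by citing \cite[Proposition~2.3]{ccprincipal}, whereas you actually derive the statement from first principles. The exactness $\omega = dF$ is the right key idea: it immediately forces all saddle connections to join $z_1$ and $z_2$ with the common period $v = F(z_2) - F(z_1)$, and $v \neq 0$ comes for free --- if $F(z_1) = F(z_2)$, the fiber over that value would carry total multiplicity at least $(a_1+1)+(a_2+1) = \sum_i b_i$, exceeding $\deg F = \sum_i(b_i-1)$. The Euler-characteristic count $V-E+F = 2$ with $V=2$ vertices and $F=n$ polar disks then cleanly yields exactly $n$ saddle connections.

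The one soft spot is your justification that the core is degenerate. The sentence ``its shape would vary within the stratum and contribute a positive-dimensional moduli'' is suggestive but not a proof (a priori the area of the core might be constrained in some other way). You can close the gap precisely with material already in the paper: $\dim_{\mathbb{C}} \PP\cR_0(a_1,a_2,-b_1,\dots,-b_n) = 0$, so each connected component of the projectivized stratum is a single point, and by \cite[Lemma~5.15]{Tah} (invoked at the end of Section~2.2) each connected component contains a flat surface with degenerate core; since degeneracy is scaling-invariant, every surface in the stratum has degenerate core. With that fixed, the closing deferral to \cite[Proposition~2.3]{ccprincipal} is unnecessary: once you know $X$ is a cyclic gluing of polar domains $P_2(C_i,D_i)$, each determined up to scaling by the pair $(C_i,D_i)$ and all rescaled to common boundary length $|v|$, the reconstruction of $X$ from $(\tau,{\bf C})$ is explicit and uniqueness is immediate.
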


The second cases of zero-dimensional projectivized generalized strata are of the form $\PP\cR_0 (a,-b_1,\dots,-b_{n-1} ; -b_{n-1},-b_n)$. That is, $(g,m)=(0,1)$ and $|R|=n-1$. By \cite[Proposition 3.8]{ccprincipal}, each component of this stratum is given by a {\em configuration of type II} of parallel saddle connections. We can paraphrase the proposition there into the following

\begin{proposition} \label{zerodim2}
Let $(\PP^1,\omega)\in \cR_0(a,-b_1,\dots,-b_{n-2} ; -b_{n-1},-b_n)$. Then it has $n-1$ saddle connections, parallel to each other. Also, $(\PP^1,\omega)$ is determined uniquely up to scaling by following information:

\begin{itemize}
    \item A permutation $\tau \in Sym_{n-2}$ on the set of $n-2$ residueless poles.
    \item A tuple of integer ${\bf C}= (C_1,\dots,C_{n-2})$ such that $1\leq C_i\leq b_i-1$ for each $i$.
\end{itemize}
\end{proposition}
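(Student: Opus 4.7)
The plan is to mirror the proof of \Cref{zerodim1} given in~\cite{ccprincipal}, adapted to handle the pair of poles $p_{n-1},p_n$ carrying nonzero opposite residues. First, since $\dim_{\CC}\cR_0(a,-b_1,\dots,-b_{n-2};-b_{n-1},-b_n)=1$, the $\ZZ$-module $H_1(\PP^1\setminus\boldsymbol{p},\{z\};\ZZ)^R$ has rank one. By \Cref{saddlehomology}, this module is generated by homology classes of saddle connections, so all saddle connection periods are real multiples of a fixed $w_0\in\CC^{\ast}$; in particular they are parallel. By \Cref{degenerate}, $\partial C(X)$ is a finite union of these parallel saddle connections, and since a convex region of positive area cannot be bounded solely by mutually parallel segments, the core $C(X)$ is one-dimensional. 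Viewing $C(X)$ as a connected graph in $\PP^1$ with single vertex $\{z\}$ and complement a disjoint union of $n$ disk-type polar domains, the Euler characteristic identity $V-E+F=2$ becomes $1-E+n=2$, yielding $E=n-1$ saddle connections.

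Second, I would extract the combinatorial data. The $n-1$ loops at $z$ create $2(n-1)$ sectors in cyclic order, each lying in the closure of exactly one polar domain. For each residueless pole $p_i$ of order $b_i$ with $i\le n-2$, its polar domain subtends $b_i$ consecutive sectors, split by two bounding half-edges into two runs of sizes $C_i$ and $b_i-C_i$; both runs must be strictly positive (otherwise two adjacent half-edges belonging to the same polar domain would coincide), giving the bounds $1\le C_i\le b_i-1$. The paired poles $p_{n-1},p_n$ together occupy the remaining sector positions in the cyclic order, forming a single ``cylinder-like'' structural piece with $p_{n-1}$ and $p_n$ at its two ends. Recording the cyclic arrangement of the residueless poles relative to this paired piece then yields the permutation $\tau\in Sym_{n-2}$.

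Finally, given the data $(\tau,\mathbf{C})$, I would reconstruct the flat surface explicitly by assembling standard polar-domain pieces---slit half-planes for each residueless pole, and a half-infinite cylinder pair for $(p_{n-1},p_n)$---glued along their parallel boundary edges in the prescribed cyclic order, and show uniqueness up to $\CC^{\ast}$-scaling by comparing periods: two surfaces realizing the same combinatorial data have saddle connection lengths in the same ratio, hence are $\CC^{\ast}$-equivalent. The main obstacle is the paired-pole piece: unlike the residueless pieces, which are rigid once the $C_i$ partitions are fixed, the cylinder a priori carries a continuous residue parameter, and one must verify that this parameter is pinned down (up to the global $\CC^{\ast}$-action) by the constraint $\operatorname{res}_{p_{n-1}}\omega+\operatorname{res}_{p_n}\omega=0$ together with the pole orders prescribed by $\mu$, so that no additional discrete data beyond $(\tau,\mathbf{C})$ is required.
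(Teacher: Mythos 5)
Your proposal is essentially correct, and the route you take — rank-one relative homology forces all saddle connections parallel, the core is then degenerate, an Euler characteristic count gives $n-1$ saddle connections, and cutting along them yields the polar-domain decomposition that encodes $(\tau,\mathbf{C})$ — is the natural one and is what underlies \cite[Prop.~3.8]{ccprincipal}, which the paper cites in lieu of a proof. So your blind reconstruction hits the right mechanism.

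A few points where the write-up could be tightened. First, the assertion ``a convex region of positive area cannot be bounded solely by mutually parallel segments'' is true but needs care, because $C(X)$ sits in a flat surface with a cone point at $z$, not in $\RR^2$; at a cone point of angle $2\pi(a+1)$ the interior angle of a convex set can exceed $\pi$, so the usual planar Gauss-map argument does not transfer verbatim. A cleaner way to see degeneracy of the core that uses only machinery already in the paper: the contraction flow $C^t_{w_0,w_0^\perp}$ preserves all saddle connection periods, hence (by \Cref{saddlehomology} and injectivity of period coordinates) preserves the flat surface, yet strictly contracts core area; therefore the core area is zero. Second, the description of the paired-pole piece as ``a single cylinder-like structural piece with $p_{n-1}$ and $p_n$ at its two ends'' is inaccurate: $p_{n-1}$ and $p_n$ sit in two \emph{separate} polar domains, each of type $P_1(b_{n-1})$ resp. $P_1(b_n)$, and these are the two end pieces of a \emph{linear} (not cyclic) chain whose interior pieces are the type-II domains $P_2(C_{\tau(i)},b_{\tau(i)}-C_{\tau(i)})$. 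There is no cylinder in this configuration. Third, the worry about a free ``residue parameter'' dissolves immediately: the residue at $p_{n-1}$ (equivalently $p_n$) \emph{is} the period of the bounding saddle connection of its $P_1$-piece, and since all $n-1$ saddle connections represent the same generator of the rank-one module, their periods coincide; normalizing that common period to $1$ fixes everything, and this normalization is exactly the $\CC^\ast$-scaling. No additional discrete or continuous datum appears beyond $(\tau,\mathbf{C})$. With those corrections the argument closes the loop and matches the statement.
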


\subsection{Polar domains}

In this subsection, we define two types of polar domains that will be used for some constructions in the later sections. 

\begin{figure}
    \centering
    \tikzset{every picture/.style={line width=0.75pt}} 

\begin{tikzpicture}[x=0.75pt,y=0.75pt,yscale=-1,xscale=1]

\draw [color={rgb, 255:red, 200; green, 200; blue, 200 }  ,draw opacity=1 ]   (321.16,46.72) -- (321.16,189.22) ;
\draw [fill={rgb, 255:red, 200; green, 200; blue, 200 }  ,fill opacity=1 ]   (321.16,46.72) .. controls (236.31,65.35) and (233.31,167.35) .. (321.16,189.22) ;
\draw [fill={rgb, 255:red, 200; green, 200; blue, 200 }  ,fill opacity=1 ]   (321.16,46.72) .. controls (400.31,68.85) and (408.31,158.35) .. (321.16,189.22) ;
\draw    (304.81,184.44) .. controls (314.31,173.44) and (325.81,172.85) .. (335.31,182.85) ;
\draw    (303.81,52.35) .. controls (316.81,61.85) and (325.81,61.94) .. (337.81,53.44) ;
\draw   (316.34,118.09) .. controls (316.34,117.02) and (317.19,116.15) .. (318.24,116.15) .. controls (319.29,116.15) and (320.14,117.02) .. (320.14,118.09) .. controls (320.14,119.16) and (319.29,120.03) .. (318.24,120.03) .. controls (317.19,120.03) and (316.34,119.16) .. (316.34,118.09) -- cycle ;
\draw   (253.32,112.59) -- (257.53,106.52) -- (260.28,113.37) ;
\draw   (379.82,111.58) -- (382.07,104.55) -- (386.7,110.31) ;
\draw [fill={rgb, 255:red, 200; green, 200; blue, 200 }  ,fill opacity=1 ]   (117.66,178.72) .. controls (0.95,115.55) and (67.18,73.82) .. (117.17,74.61) .. controls (167.16,75.41) and (237.5,102.96) .. (122.65,178.72) ;
\draw    (104.31,170.94) .. controls (115.31,159.85) and (124.31,159.85) .. (134.81,169.85) ;
\draw   (110.84,110.59) .. controls (110.84,109.52) and (111.69,108.65) .. (112.74,108.65) .. controls (113.79,108.65) and (114.64,109.52) .. (114.64,110.59) .. controls (114.64,111.66) and (113.79,112.53) .. (112.74,112.53) .. controls (111.69,112.53) and (110.84,111.66) .. (110.84,110.59) -- cycle ;
\draw   (70.91,152.02) -- (68.91,144.91) -- (75.94,147.16) ;
\draw  [fill={rgb, 255:red, 0; green, 0; blue, 0 }  ,fill opacity=1 ] (318.66,189.22) .. controls (318.66,187.86) and (319.78,186.77) .. (321.16,186.77) .. controls (322.54,186.77) and (323.65,187.86) .. (323.65,189.22) .. controls (323.65,190.57) and (322.54,191.67) .. (321.16,191.67) .. controls (319.78,191.67) and (318.66,190.57) .. (318.66,189.22) -- cycle ;
\draw  [fill={rgb, 255:red, 0; green, 0; blue, 0 }  ,fill opacity=1 ] (318.66,46.72) .. controls (318.66,45.36) and (319.78,44.27) .. (321.16,44.27) .. controls (322.54,44.27) and (323.65,45.36) .. (323.65,46.72) .. controls (323.65,48.07) and (322.54,49.17) .. (321.16,49.17) .. controls (319.78,49.17) and (318.66,48.07) .. (318.66,46.72) -- cycle ;
\draw  [fill={rgb, 255:red, 0; green, 0; blue, 0 }  ,fill opacity=1 ] (117.66,178.72) .. controls (117.66,177.36) and (118.78,176.27) .. (120.16,176.27) .. controls (121.54,176.27) and (122.65,177.36) .. (122.65,178.72) .. controls (122.65,180.07) and (121.54,181.17) .. (120.16,181.17) .. controls (118.78,181.17) and (117.66,180.07) .. (117.66,178.72) -- cycle ;

\draw (319.65,69.67) node    {$2\pi D$};
\draw (321.65,163.67) node    {$2\pi C$};
\draw (324.22,106.76) node [anchor=north west][inner sep=0.75pt]  [font=\scriptsize]  {$p$};
\draw (120.65,137.67) node    {$2\pi ( b-1) +\pi $};
\draw (118.72,99.26) node [anchor=north west][inner sep=0.75pt]  [font=\scriptsize]  {$p$};
\draw (292.67,217.67) node [anchor=north west][inner sep=0.75pt]    {$P_{2}( C,D)$};
\draw (101.67,219.17) node [anchor=north west][inner sep=0.75pt]    {$P_{1}( b)$};

\end{tikzpicture} 
    \caption{Polar domains of type I and II} \label{fig201}
\end{figure}
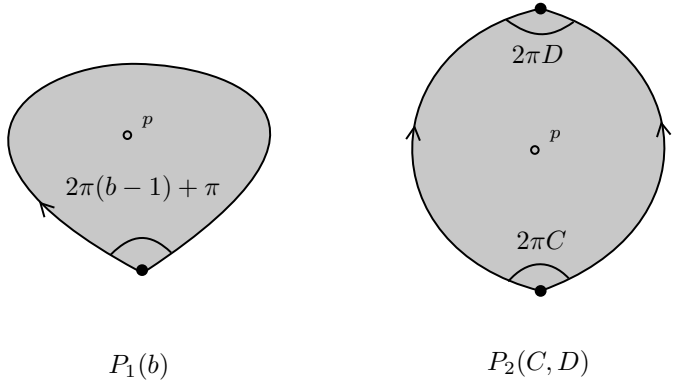

\begin{definition}
Let $(\PP^1, \omega)\in\cH_0 (b+1,-1,-b)$, whose residue at the simple pole is equal to 1. By cutting along the unique saddle connection, the Riemann sphere $\PP^1$ is separated into two regions. The region containing the pole of order $b$ is called  the {\em polar domain of type I} and denoted by $P_1(b)$. 
\end{definition}

Note that the boundary of $P_1(b)$ is a straight line joining the unique singularity $z$ to itself, forming an angle equal to $2\pi (b-1)+\pi$. The polar domain $P_1(b)$ is depicted in the left of \Cref{fig201}.

\begin{definition}
Let $(\PP^1, \omega)\in \cH_0 (C-1,D-1,-b)$, whose period over the unique saddle connection is equal to 1. The surface obtained by cutting along the unique saddle connection is called the {\em polar domain of type II} and denoted by $P_2 (C,D)$.
\end{definition}

Note that $P_2(C,D)$ is bounded by two straight lines joining two singularities $z_1,z_2$. They form two angles at $z_1$ and $z_2$, equal to $2\pi C$ and $2\pi D$, respectively. The polar domain $P_2(C,D)$ depicted in the right of \Cref{fig201}.
    
\section{The multi-scale compactification and degenerations of flat surfaces} \label{sec:msc}

The projectivized stratum $\PP \cH_g (\mu)$ has a smooth compactification $\PP \overline{\cH}_g (\mu)$ with normal crossings boundary, called the (projectivized) {\em moduli space of multi-scale differentials}. This is constructed in \cite{BCGGM} by Bainbridge, Chen, Gendron, Grushevsky and Möller. The construction can be generalized by a simple modification to the projectivized generalized stratum $\PP\cH^R_g(\mu)$, and we obtain a smooth compactification $\PP\overline{\cH}^R_g(\mu)$. For example, Mullane dealt with the strata of residueless multi-scale differentials $\PP\overline{\cR}_g(\mu)$ in \cite{mresidueless}. 

Since $\PP \overline{\cH}^R_g(\mu)$ is a smooth compactification with normal crossings boundary divisor, the closure $\overline{\cC}$ of a connected component $\cC \subset \cH^R_g(\mu)$ in $\overline{\cH}^R_g(\mu)$ is also a connected component therein. Therefore, there is a one-to-one correspondence between the connected components of $\cH^R_g(\mu)$ and the connected components of $\overline{\cH}^R_g(\mu)$. 

In this section, we will briefly recall the notion of the generalized stratum of multi-scale differentials $\overline{\cH}^R_g(\mu)$ and discuss how a flat surface in $\cH^R_g(\mu)$ degenerates to the boundary of $\overline{\cH}^R_g(\mu)$. The multi-scale differentials in the boundary consist of flat surfaces in the strata with smaller dimensions and a combinatorial datum called (the equivalence classes of) {\em prong-matchings}. Therefore the degeneration will provide us a way to use the induction on the dimension of the strata. 

\subsection{The moduli space of multi-scale differentials}

We recall some notions related to the multi-scale differentials.

\paragraph{\bf Enhanced level graph} Let $(\overline{X},\boldsymbol{z}\sqcup \boldsymbol{p})\in \overline{\cM}_{g,n+m}$ be a stable $\boldsymbol{z}\sqcup \boldsymbol{p}$-pointed curve. Recall that the {\em enhanced level structure} on the dual graph $\Gamma$ of $(\overline{X},\boldsymbol{z}\sqcup \boldsymbol{p})$ is given by 

1. A weak order $\preceq$ on the set of vertices $V(\Gamma)$. It is equivalent to a surjective {\em level function} $\ell : V(\Gamma)\to \{0,-1,\dots,-L\}$. An edge $e\in E(\Gamma)$ is called {\em vertical} if it is joining vertices in the distinct levels. Otherwise $e$ is called {\em horizontal}. We denote the set of vertical edges of $\Gamma$ by $E^v(\Gamma)$.

2. An assignment of a positive integer $\kappa_e$ for each edge $e\in E^v(\Gamma)$. 

For each $v\in V(\Gamma)$, we denote $g_v$ the genus of the irreducible component $X_v$ of $\overline{X}$. Then it satisfies $$2g_v-2=\sum_{z_i\mapsto v}a_i - \sum_{p_j\mapsto v}b_j + \sum_{e\in E^+(v)}(\kappa_e-1) - \sum_{e\in E^-(v)}(\kappa_e+1)$$ where the first (second, respectively) sum is over all zeroes (poles) incident to $v$, and $E^+(v)$ ($E^-(v)$, respectively) are the set of edges incident to $v$ that are going from $v$ to a lower (upper) level vertex. 

\paragraph{\bf Twisted differentials} A twisted differential on $(\overline{X},\boldsymbol{z}\sqcup \boldsymbol{p})\in \overline{\cM}_{g,n+m}$ compatible with the enhanced level graph $\Gamma$ is a collection of meromorphic differentials $\eta=\{\eta_v\}$, one for each irreducible component $X_v$ of $\overline{X}$, compatible with $\Gamma$. There are several conditions that ensure that $(\overline{X},\boldsymbol{z}\sqcup \boldsymbol{p},\eta)$ is compatible with $\Gamma$. One condition is that at the node corresponding to a vertical edge $e$, the differential $\eta_{+}$ on the upper component has a zero of order $\kappa_e-1$ and the differential $\eta_{-}$ on the lower component has a pole of order $-\kappa_e-1$. The other condition is the Global Residue Condition, which forces a sum of residues of certain poles at the nodes is equal to zero. See \cite{BCGGM} for the full detail. 

\paragraph{\bf Prong-matching} At the node $q$ corresponding to a vertical edge $e\in E^v(\Gamma)$, the upper level component has a zero $q^+$ of order $\kappa_e-1$. That is, the cone angle at the node is equal to $2\pi\kappa_e$. The lower level component has a pole $q^-$ of order $-\kappa_e-1$, which also has the cone angle $2\pi\kappa_e$. At each of $q^+$ and $q^-$, there are exactly $\kappa_e$ {\em prongs}, that is, choices of a horizontal direction. The {\em prong-matching} at $q$ is an orientation-reversing one-to-one correspondence between the prongs at $q^+$ and the prongs at $q^-$. There are exactly $\kappa_e$ prong-matchings, usually indexed by $\ZZ/\kappa_e\ZZ$. The set of prong-matchings $P_{\Gamma}\coloneqq \prod_{e\in E^v(\Gamma)} \ZZ/\kappa_e\ZZ$ is called the {\em prong rotation group}. The {\em level rotation action} is a homomorphism $\ZZ^L\to \prod_{e\in E^v(\Gamma)} P_{\Gamma}$ given by $\boldsymbol{n}\mapsto (n_{\ell (e^+)} - n_{\ell (e^-)} \mod \kappa_e)_{e\in E^v(\Gamma)}$. Two prong-matchings are called {\em equivalent} if they are contained in the same coset of the image of the level rotation action. 

\paragraph{\bf Multi-scale differentials}

The moduli space $\overline{\cH}^R_g(\mu)$ parameterizes {\em multi-scale differentials} $(\overline{X},\boldsymbol{z},\boldsymbol{p}, \eta, Pr)$ of type $\mu$ with residue condition $R$, consisting of the following data: 

\begin{itemize}
    \item A stable pointed curve $(\overline{X},\boldsymbol{z}\sqcup \boldsymbol{p})\in \overline{\cM}_{g,n+m}$ with an enhanced level structure on the dual graph $\Gamma$ of $\overline{X}$.
    \item A twisted differential $(\overline{X},\boldsymbol{z},\boldsymbol{p}, \eta)$ of type $\mu$, compatible with the enhanced level graph $\Gamma$ and satisfying the residue condition $R$.
    \item A prong-matching equivalence class $Pr$.
\end{itemize}

In this paper, we simply denote this multi-scale differential by $(\overline{X},\eta)$ or $\overline{X}$, when no confusion can arise.

The boundary divisors of $\overline{\cH}^R_g (\mu)$ are the closures of subspaces $D_{\Gamma}$ of multi-scale differentials compatible with $\Gamma$, where $\Gamma$ ranges over all enhanced level graphs with two levels and no horizontal edges, or with one level and one horizontal edge. 

\subsection{Plumbing construction}

Let $(\overline{X},\eta)\in \partial\overline{\cH}^R_g(\mu)$. The neighborhood of $(\overline{X},\eta)$ can be described by the {\em plumbing construction}. We can plumb any horizontal node, or plumb the level transition, that is, the collection of all nodes between chosen levels. The moduli parameters and smoothing parameters form a nice system of complex-analytic coordinates, see \cite{DozierSaddle} for detail. Here, we recall the explicit description for simple cases that we will use in this paper. Since we are only plumbing one horizontal node or the level transition between two levels, there is only one smoothing parameter, that we denote $t\in \CC$ here. More precisely, we will treat plumbing of a horizontal node and plumbing of the level transition between two levels, while there are at most two poles with non-zero residues at the bottom level. These cases can be described as combination of the following local constructions.

\paragraph{\bf Horizontal node}

A horizontal node $h$ represents a pair of simple poles $h_1$ and $h_2$ with opposite residues $\pm r$. The neighborhoods of each simple pole is a half-infinite flat cylinder, where the period of the closed geodesic enclosing the cylinder measures the residue of the pole. Since the residues at two poles are opposite, we can cut each cylinder along a closed geodesic and glue two boundaries. 

We can make this more precise using the standard coordinates. We can find a unique coordinate $u$ at $h_1$ so that $\omega=\frac{r}{u}du$. Also we can find a unique coordinate $v$ at $h_2$ so that $\omega=-\frac{r}{v}dv$. For some $\epsilon>0$, the standard coordinate neighborhoods contains each of the disks $U\coloneqq \{|u|<\epsilon\}$ and $V\coloneqq \{|v|<\epsilon\}$. We can remove two small disks $\{|u|\leq |t|\}\subset U$ and $\{|v|\leq |t|\}\subset V$, containing $q^+$ and $q^-$ respectively. For remaining points $u\in U$ and $v\in V$, we glue $u$ and $v$ whenever $uv=s$. Note that $\omega$ is invariant under the coordinate change $v=\frac{s}{u}$. 

\paragraph{\bf Node between two levels, no residue}

Assume the simplest possible situation, where we only have two irreducible components $(X_0,\omega_0)$ and $(X_{-1},\omega_{-1})$, at the level 0 and -1, respectively. Suppose that there is only one node $q$ between them. The prong rotation group is isomorphic to $\ZZ/\kappa\ZZ$. Fix a prong-matching, and suppose that two prongs $v^-$ and $v^+$, respectively at $q^-$ and $q^+$, are matched by this prong-matching. 

If the top level component $X_0$ only contains, if any, residueless marked poles, then by Global Residue Condition, the residues of the pole of $\omega_{-1}$ at $q$ is equal to zero. Let $\kappa$ be the number of prongs at $q$. We denote the scaling parameter by $s$, so we need to plumb at $q$ between two flat surfaces $(X_0,\omega_0)$ and $(X_{-1},s\omega_{-1})$ for small $|s|$. 

We can choose a standard coordinate $v$ at $q^-$ so that $\omega_{-1}=d(v^{-\kappa})=-\kappa v^{-\kappa-1}$. There are $\kappa$ choice of such coordinates, because whenever $v$ satisfies the above, $\xi^i_{\kappa} v$ also satisfies the same equation. For the given prong $v^-$ (i.e. a horizontal direction) at $q^-$, we can choose a unique standard coordinate $v$ such that $\{v\in \RR_+\}$ is the ray toward the direction of the prong $v^-$. Similarly, we can uniquely choose a standard coordinate $u$ at $q^+$, so that $\omega_0=d(u^{\kappa})=\kappa u^{\kappa-1}du$ and $\{u\in \RR_+\}$ is the ray toward the direction of the prong $v^+$. 

For some $\epsilon>0$, the standard coordinate neighborhoods contains each of the disks $U\coloneqq \{|u|<\epsilon\}$ and $V\coloneqq \{|v|<\epsilon\}$. We can remove two small disks $\{|u|\leq |s|\}\subset U$ and $\{|v|\leq |s|\}\subset V$, containing $q^+$ and $q^-$ respectively. For remaining points $u\in U$ and $v\in V$, we glue $u$ and $v$ whenever $uv=s$. Consequently we can identify $\omega_0=d\left(\frac{s}{v}\right)^{\kappa} = s^{\kappa} d(v^{-\kappa})=s^{\kappa} \omega_{-1}$. 

\paragraph{\bf Node between two levels with a non-zero residue}

Now we assume that $X_0$ contains a non-residueless pole, say $p$. Then Global Residue Condition does not apply and the pole $q^-$ of $\omega_{-1}$ may have a nonzero residue $r\neq 0$. By scaling $\omega_{-1}$, we may assume that $r=1$. In order to plumb at $q$, we need to choose a {\em modification differential} $\xi$ on $X_0$ that has only two (simple) poles at $p$ and $q^+$, so that the residue of $\xi$ at $q^+$ is equal to $-1$. 

We can choose a standard coordinate $v$ at $q^-$ so that $\omega_{-1}=-(v^{-\kappa} -1)\frac{dv}{v}$. There are $\kappa$ choice of such coordinates, because whenever $v$ satisfies the above, $\xi^i_{\kappa} v$ also satisfies the same equation. For the given prong $v^-$ (i.e. a horizontal direction) at $q^-$, we can choose a unique standard coordinate $v$ such that $\{v\in \RR_+\}$ is the ray toward the direction of the prong $v^-$. Similarly, we can uniquely choose a standard coordinate $u$ at $q^+$, so that $\omega_0+s^{\kappa}\xi= (u^{\kappa}-s^{\kappa})\frac{du}{u}$ and $\{u\in \RR_+\}$ is the ray toward the direction of the prong $v^+$. 

As in the previous case, we glue two annulus centered at $q^+$ and $q^-$ by identifying $u$ and $v$ whenever $uv=s$. Consequently we can identify $\omega_0+s^{\kappa}\xi=s^{\kappa} \omega_{-1}$. 

A pole of $\omega_0$ is also a pole of the differential $\omega_0+s^{\kappa} \xi$ with the same order. It has one additional simple pole at $q$. A zero $z$ of $\omega_0$ is no longer a zero of $\omega_0+s^{\kappa} \xi$. However, $\omega_0+s^{\kappa} \xi$ has $\kappa$ distinct zeroes in a small neighborhood of $z$. They are given by $u=s\xi^i_{\kappa}$ for $i=1,\dots, \kappa$ where $\xi_{\kappa}$ is a primitive $\kappa$-th root of unity. Thus they are all contained in the small disk $\{|u|\leq |s|\}$ which was removed. 

\paragraph{\bf Pair of nodes between two levels, with Global Residue Condition}

Suppose that there are two nodes $s_1$ and $s_2$ between $X_0$ and $X_{-1}$. If the top level component $(X_0,\omega_0)$ only contains, if any, residueless marked poles, then by Global Residue Condition, the residues of the poles of $\omega_{-1}$ at $s^-_1$ and $s^-_2$ are opposite to each other. By scaling $\omega_{-1}$, we assume that the residue at $s^-_1$ is equal to $1$.  

Let $\kappa_1$ and $\kappa_2$ be the numbers of prongs at the nodes $s_1, s_2$, respectively. We denote $\kappa=\lcm(\kappa_1,\kappa_2)$. A prong-matching between two levels is represented by an element of a prong-rotation group $\ZZ/\kappa_1\ZZ \times \ZZ/\kappa_2\ZZ$. Fix a prong-matching, and suppose that the prongs $v^-$ ($w^-$, resp) at $s^-_1$ ($s^-_2$), is matched to $v^+$ ($w^+$) at $s^+_1$ ($s^+_2$), by this prong-matching. 

To plumb two nodes $s_1$ and $s_2$, we need to choose a {\em modification differential} $\xi$ on $X_0$ that has only two (simple) poles at $s^+_1$ and $s^+_2$, so that the residue of $\xi$ at $s^+_1$ is equal to $1$. 

We can choose a standard coordinate $v$ at $s^-_1$, so that $\omega_{-1}=-(v^{-\kappa_1} -1)\frac{dv}{v}$, uniquely determined by the prong $v^-$ (i.e. a horizontal direction) at $s^-_1$. Similarly, we can choose a standard coordinate $u$ at $s^+_1$, so that $\omega_0-s^{\kappa}\xi= (u^{\kappa_1}-s^{\kappa})\frac{du}{u}$, uniquely determined by the prong $v^+$. Similar to the previous situation, we glue two annulus centered at $s^+_1$ and $s^-_1$ by identifying $u$ and $v$ whenever $uv=s^{\frac{\kappa}{\kappa_1}}$. Consequently we can identify $\omega_0-s^{\kappa}\xi=s^{\kappa} \omega_{-1}$. This plumbs the node $s_1$. Simultaneously, we can plumb the node $s_2$ in the same way, using the prongs $w^-$ and $w^+$. 

\subsection{Parallel saddle connections} 

The main tool that we will use in the proof of the main theorems is the degeneration to the principal boundary of $\cH^R_g(\mu)$. In \cite{kozo1}, the principal boundary is defined to be the set of surfaces obtained by shrinking a family of parallel saddle connections in a flat surface in the given stratum. In \cite{ccprincipal}, the principal boundary is described as a certain subspace of the twisted differentials in the boundary of the incidence variety compactification of $\cH^R_g(\mu)$. The description can be refined for the case of the multi-scale compactification $\overline{\cH}^R_g(\mu)$. There can be various ways to shrink a given saddle connection, but in this paper we will use the contraction flow that contracts the direction of the saddle connection. 

Two saddle connections of $(X,\omega)$ are said to be {\em parallel} if the ratio of the periods of $\omega$ over them is a real number. In particular, if two saddle connections are homologous (i.e. they represent the same class in $H_1 (X\setminus \boldsymbol{p}, \boldsymbol{z}; \ZZ)$), then the periods of $\omega$ over them are equal, thus they are parallel in particular. The converse is not necessarily true, since the periods of $\omega$ over two non-homologous saddle connections can be $\RR$-proportional. For general flat surfaces in $\cH^R_g(\mu)$, it is still true that non-homologous saddle connections are not parallel, due to the fact that the periods of $\omega$ over a certain set of saddle connections provide complex local coordinates for the stratum $\cH^R_g(\mu)$, and we can always slightly perturb the period coordinates in the stratum. Since the period coordinates give a map $\operatorname{Per} : H_1 (X\setminus \boldsymbol{p}, \boldsymbol{z}; \ZZ)^R \to \CC$, we have the following 

\begin{proposition} \label{parallel}
For any stratum $\cH^R_g (\mu)$, there exists an open dense subset $W\subset \cH^R_g (\mu)$ such that for any flat surface $(X,\omega)\in W$, two saddle connections of $(X,\omega)$ are parallel if and only if they are homologous in $H_1 (X\setminus \boldsymbol{p}, \boldsymbol{z}; \ZZ)^R$.
\end{proposition}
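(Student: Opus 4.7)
\emph{Plan.} The ``if'' direction is immediate: if $[\gamma_1] = [\gamma_2]$ in $H_1(X\setminus\boldsymbol{p},\boldsymbol{z};\ZZ)^R$ then $\operatorname{Per}(\gamma_1) = \operatorname{Per}(\gamma_2)$, so the ratio of their periods equals $1 \in \RR$ and the two saddle connections are parallel. The content lies in the ``only if'' direction, for which I plan a countable genericity argument in period coordinates.

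For each pair of classes $[\alpha], [\beta] \in H_1(X\setminus\boldsymbol{p},\boldsymbol{z};\ZZ)^R$ that is not $\QQ$-proportional, consider the real-analytic function
\[
f_{[\alpha],[\beta]}(X,\omega) \coloneqq \operatorname{Im}\bigl(\operatorname{Per}(\alpha)\, \overline{\operatorname{Per}(\beta)}\bigr)
\]
on $\cH^R_g(\mu)$. Writing $\operatorname{Per}(\alpha) = \sum c_j z_j$ and $\operatorname{Per}(\beta) = \sum d_j z_j$ with $z_j = x_j + iy_j$ in local period coordinates, one computes
\[
f_{[\alpha],[\beta]} = \sum_{j<k} (c_j d_k - c_k d_j)(y_j x_k - x_j y_k),
\]
which is a nontrivial real polynomial in the period coordinates precisely because the integer vectors $(c_j)$ and $(d_j)$ are not $\QQ$-proportional. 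Hence the zero locus $Z_{[\alpha],[\beta]}$ is a closed proper real-analytic subvariety of $\cH^R_g(\mu)$. Since $H_1(X\setminus\boldsymbol{p},\boldsymbol{z};\ZZ)^R$ is countable, the complement $W \coloneqq \cH^R_g(\mu) \setminus \bigcup Z_{[\alpha],[\beta]}$ is a dense $G_\delta$; an open dense representative can then be extracted by working within charts and using local finiteness of saddle connections of bounded length, which suffices for the later applications.

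On $W$, any two parallel saddle connections $\gamma_1, \gamma_2$ must have $\QQ$-proportional homology classes, so $[\gamma_1] = c[\gamma_2]$ for some $c \in \QQ^*$. To promote $\QQ$-proportionality to equality, I would apply the relative boundary map $\partial : H_1(X\setminus\boldsymbol{p},\boldsymbol{z};\ZZ)^R \to \ZZ\langle\boldsymbol{z}\rangle$. If $\gamma_2$ joins distinct zeroes $z_a \neq z_b$, then $\partial[\gamma_1] = c(z_b - z_a)$ together with the constraint that $\partial[\gamma_1]$ is itself of the form $\pm(z_{b'} - z_{a'})$ forces $c = \pm 1$, and a compatible orientation of both saddle connections then yields $[\gamma_1] = [\gamma_2]$.

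The main obstacle is the closed-saddle-connection case, where $\partial$ vanishes on both classes and the boundary argument alone is vacuous. I would handle it by arguing that the homology class of a closed saddle connection is primitive in $H_1(X\setminus\boldsymbol{p};\ZZ)^R$: if $[\gamma_2] = k [\alpha]$ with $k \geq 2$, then a covering-space/monodromy argument shows that the straight-line representative $\gamma_2$ must pass through the base zero at an intermediate fraction $1/k$ of its length, contradicting the saddle-connection hypothesis. If any residual failure remains at special surfaces, one enlarges the bad locus to include the countable family of proper subvarieties parameterizing such ``accidental'' non-primitive pairs; this remains a countable union of proper real-analytic subsets and so preserves the density of $W$.
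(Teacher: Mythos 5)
Your approach tracks the paper's own (very terse) proof, which cites \cite[Proposition 3.1]{emz} and asserts that the same transversality argument carries over to generalized strata. Your calculation that $f_{[\alpha],[\beta]}$ is a nontrivial polynomial in period coordinates exactly when $[\alpha]$ and $[\beta]$ are not $\QQ$-proportional, and your boundary-map argument for saddle connections joining distinct zeroes, are both correct and are the heart of the EMZ argument.

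But two steps do not go through as written. First, the set $W$ you construct is a dense $G_\delta$, not open, and in fact no open dense $W$ with the stated property can exist: given \emph{any} surface carrying saddle connections $\gamma_1,\gamma_2$ whose classes in $H_1(X\setminus\boldsymbol{p},\boldsymbol{z};\ZZ)^R$ are $\ZZ$-linearly independent, the condition $\operatorname{Im}\bigl(\operatorname{Per}(\gamma_1)\overline{\operatorname{Per}(\gamma_2)}\bigr)=0$ is a real-codimension-one locus in the free period coordinates, so an arbitrarily small perturbation lands in the ``bad'' set while keeping both $\gamma_i$. Hence the bad set is dense and $W$ has empty interior; your appeal to local finiteness of short saddle connections does not upgrade $G_\delta$ to open. (This imprecision is already in the paper's wording; its later uses only invoke density.) Second, the covering-space/monodromy argument for primitivity of closed saddle connection classes is tailored to the flat torus, where winding number and homology coincide, and does not apply on a general surface; a simple closed geodesic based at a zero with a divisible class need not pass through that zero at intermediate times. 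The correct reason a simple closed curve $\gamma$ has $[\gamma]$ primitive in $H_1(X\setminus\boldsymbol{p};\ZZ)$ is intersection duality --- one produces a transversal $\delta$ with $[\gamma]\cdot\delta=\pm1$. Even granting that, the proposition concerns the quotient $H_1(X\setminus\boldsymbol{p},\boldsymbol{z};\ZZ)^R$, where a primitive class can in principle fail to stay primitive after quotienting by $\langle[\alpha_i]\rangle$; one must additionally arrange $\delta\cdot[\alpha_i]=0$ for all $i$ (say by keeping $\delta$ away from small loops around the poles) so that the pairing relation descends. Your fallback of ``enlarging the bad locus by accidental non-primitive pairs'' is not available: the relation $[\gamma_1]=c[\gamma_2]$ with $c\neq\pm1$ is a topological invariant of the pair, so it persists on the open set where both $\gamma_i$ survive, and does not cut out a proper analytic subvariety that could be discarded.
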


\begin{proof}
For the usual strata $\cH_g(\mu)$, this is proved in \cite[Proposition 3.1]{emz}. The same argument applies to the generalized strata. 
\end{proof}

The following definition of multiplicity of saddle connection is analogous to \cite{lanneauquad}, but adjusted to the setup of the generalized strata. 

\begin{definition}
A saddle connection $\gamma$ of $X\in \cH^R_g(\mu)$ is said to have {\em multiplicity} $k$ if there exists exactly distinct $k$ saddle connections of $X$ homologous to $\gamma$ in $H_1 (X\setminus{\boldsymbol{P}}, \boldsymbol{z}; \ZZ)^R$.
\end{definition}

If two saddle connections $\gamma_1$ and $\gamma_2$ are homologous in $H_1 (X\setminus{\boldsymbol{P}}, \boldsymbol{z}; \ZZ)^R$, then they have the same endpoints. Moreover, the closed curve $\gamma_1 \cup \gamma_2$ has trivial homology class. In other words, $X\setminus (\gamma_1 \cup \gamma_2)$ has two connected components, because any non-separating closed curve represents nonzero homology class. In the case of residueless strata $\cR_g(\mu)$, recall that $H_1 (X\setminus{\boldsymbol{P}}, \boldsymbol{z}; \ZZ)^R$ can be identified with $H_1 (X, \boldsymbol{z}; \ZZ)$. Therefore, \Cref{parallel} implies that two saddle connections of a general residueless flat surface are parallel if and only if they are homologous in $H_1 (X, \boldsymbol{z}; \ZZ)$.

\subsection{Shrinking a saddle connection}

Let $\cC$ be a connected component of $\cH^R_g (\mu)$, and let $(X,\omega)\in \cC$ be a general flat surface in the sense of \Cref{parallel}. For a given saddle connection $\gamma$ on $(X,\omega)$, we now describe how to {\em shrink} it. 

\begin{proposition}\label{short}
For any $\epsilon >0$, the flat surface $(X,\omega)\in \cC$ can be continuously deformed within its $\operatorname{GL}^{+}(2,\RR)$-orbit so that $|\gamma|<\epsilon|\gamma'|$ for any other saddle connection $\gamma'$ not homologous to $\gamma$. 
\end{proposition}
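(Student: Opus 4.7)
The natural approach is to apply a contraction flow in the direction of $\gamma$. After a preliminary rotation within the $\operatorname{GL}^{+}(2,\RR)$-orbit we may assume $\gamma$ is horizontal; let $\theta$ denote this direction and take $\alpha = \theta + \pi/2$. The flow $C^t_{\alpha,\theta}$ scales the horizontal component of every period by $e^{-t}$ while fixing the vertical component, producing a continuous family $(X_t,\omega_t)$ inside the orbit. In particular $|\gamma|_t = e^{-t}|\gamma|_0 \to 0$ as $t \to \infty$.

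For any saddle connection $\gamma'$ not homologous to $\gamma$, \Cref{parallel} (applied to the original general $(X,\omega)$; the open dense subset $W$ is $\operatorname{GL}^{+}(2,\RR)$-invariant since parallelism and homology are both preserved, so the preliminary rotation is harmless) ensures that $\gamma'$ is not parallel to $\gamma$. Writing the period $\int_{\gamma'}\omega = a + bi$ in these coordinates, we have $b = b(\gamma') \neq 0$, hence $|\gamma'|_t = \sqrt{e^{-2t}a^2 + b^2} \geq |b| > 0$. Thus for each fixed $\gamma'$ the ratio $|\gamma|_t/|\gamma'|_t$ tends to zero, but achieving uniformity over the infinite family of saddle connections requires more care.

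The crucial observation is that any potentially ``bad'' $\gamma'$ violating $|\gamma|_t < \epsilon|\gamma'|_t$ must satisfy
$$|a(\gamma')| \leq |\gamma|_0/\epsilon \quad\text{and}\quad |b(\gamma')| \leq e^{-t}|\gamma|_0/\epsilon,$$
so in particular $|\gamma'|_0 \leq \sqrt{2}\,|\gamma|_0/\epsilon$. Since any flat surface carries only finitely many saddle connections below a fixed length bound (a standard fact), the set of bad candidates for $\gamma'$ is finite and independent of $t$. Setting $b_{\min}$ to be the minimum of $|b(\gamma')|$ over this finite set, which is positive since each $b(\gamma')\neq 0$, and choosing $t$ large enough that $e^{-t}|\gamma|_0/\epsilon < b_{\min}$ makes the second bound incompatible, ruling out any bad $\gamma'$ and yielding the desired inequality for every non-homologous saddle connection simultaneously.

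The main obstacle is precisely the uniformity step: a priori an infinite sequence of non-homologous $\gamma'_n$ could have $|b(\gamma'_n)| \to 0$ (being nearly parallel to $\gamma$ without being homologous to it), so the pointwise estimate alone would not suffice. The reduction of bad candidates to a finite set via the bound on $|\gamma'|_0$, combined with the standard finiteness of short saddle connections on a fixed flat surface, bypasses this difficulty and makes the argument essentially a short calculation.
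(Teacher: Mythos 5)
Your proposal is correct and follows essentially the same route as the paper: apply the contraction flow in the direction of $\gamma$, invoke \Cref{parallel} to get a nonzero imaginary part for each non-homologous $\gamma'$, and use finiteness of saddle connections below a length bound to reduce to finitely many candidates whose minimal imaginary part is positive. The paper organizes this by first splitting saddle connections into those of length $>L$ and $<L$ for a fixed $L<1/\epsilon$, while you instead derive a priori bounds on the "bad" candidates; this is a cosmetic reorganization of the same argument.
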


\begin{proof}
Since the length of the saddle connection $\gamma$ is equal to $\left|\int_{\gamma}\omega\right|$, we have $\int_{\gamma}\omega \neq 0$. By scaling $\omega$ if necessary, we may assume that the period over $\gamma$ is equal to $1$. Fix $L>0$ such that $\frac{1}{L}<\epsilon$. There are only finitely many saddle connections $\alpha_1,\dots, \alpha_M$ in $(X,\omega)$ of length $<L$. 

Let $C^t$ be the contraction flow that contracts the real direction and preserves the imaginary direction. By applying $C^t$ to $(X,\omega)$, we obtain a family of flat surfaces $(X^t,\omega^t)\coloneqq C^t \circ (X,\omega) \subset \cC$ containing a saddle connection $\gamma^t$ that comes from $\gamma$. The period of $\omega^t$ over $\gamma^t$ is equal to $e^{-t}$. Any saddle connection $\beta$ of $(X,\omega)$of length $>L$ deforms to a saddle connection $\beta^t$ of length $>Le^{-t}$. So for such $\beta$, we have $$\frac{|\gamma^t|}{|\beta^t|}<\frac{1}{L}<\epsilon.$$

For the saddle connections $\alpha_j$, we denote $\int_{\alpha_j}\omega \coloneqq a_j+ib_j$ for some real $a_j$ and $b_j\neq 0$ for each $j=1,\dots,M$. Let $$\delta \coloneqq \frac{1}{2} \operatorname{min}_{1\leq j\leq M} b_j.$$ 

By applying $C^t$, we obtain $$\int_{\alpha^t_j } \omega^t =a_j e^{-t} +i b_j.$$ For large enough $t$, the real part of the period becomes negligible and the length of $\alpha^t_j$ is then larger than $\frac{1}{2}b_j$. Also, we may assume that $e^{-t} < \delta \epsilon$. Therefore, the ratio between $|\gamma^t|$ and $|\alpha^t_j|$ is smaller than
$$\frac{2e^{-t}}{b_j}<\frac{2 \delta \epsilon}{2\delta} =\epsilon.$$ 
\end{proof}

\section{The principal boundary of residueless strata} \label{sec:pb}

In \cite{emz}, Eskin-Masur-Zorich described the principal boundary of strata of abelian differentials. It is a subspace of $\cH_g(\mu)$ that parameterises flat surfaces containing a collection of short parallel saddle connections. In \cite{ccprincipal}, D. Chen and Q. Chen described the principal boundary in terms of the twisted (holomorphic) differentials, as a certain subspace in the boundary of the incidence variety compactification. In this section, we will introduce the principal boundary and describe them as boundary strata of the multi-scale compactification --- in particular, in terms of the corresponding enhanced level graphs. The principal boundary of type I is obtained by shrinking a collection of parallel saddle connections joining two distinct zeroes. Thus it is only defined for the multiple-zero strata. The principal boundaries of type II is obtained by shrinking a collection of saddle connections joining a zero to itself. They are defined for any strata, but we will only describe them for \MIN strata as we only need these cases. 

At the end of this section, we prove that every connected component $\cC$ of $\cR_g (\mu)$ with $g>0$ has some principal boundary in its closure $\overline{\cC}$ in $\overline{\cR}_g (\mu)$. More precisely, $\overline{\cC}$ contains some principal boundary of type I if $\cR_g (\mu)$ is a multiple-zero stratum, and some principal boundary of type II if $\cR_g (\mu)$ is a \MIN stratum.

\subsection{Configurations of type I} \label{subsec:ct1}

Let $X\in\cR_g(\mu)$ be a general flat surface in the sense of \Cref{parallel}, with at least two zeroes. Suppose $X$ has a multiplicity $k$ saddle connection joining $z_1$ and $z_2$. That is, there are precisely $k$ saddle connections $\gamma_1,\dots, \gamma_k$ in the given direction, labeled in the clockwise order at $z_1$. We set $\gamma_{k+1}\coloneqq \gamma_1$ for convenience. We observe that the angles between $\gamma_i$ and $\gamma_{i+1}$ are $2\pi C_i$ at $z_1$ and $2\pi D_i$ at $z_2$, for some positive integers $C_i,D_i$. Denote ${\bf C}\coloneqq(C_1,\dots, C_k)$, ${\bf D}\coloneqq(D_1,\dots, D_k)$. The angles satisfy $\sum_i C_i = a_1 +1$ and $\sum_i D_i = a_2 +1$.

Let $\boldsymbol{p}_i$ ($\boldsymbol{z}_i$, respectively) denote the set of poles (zeroes), contained in the region bounded by $\gamma_i$ and $\gamma_{i+1}$. Also denote $\boldsymbol{z}_{-1}\coloneqq \{z_1,z_2\}$. Then $\boldsymbol{z}_{-1}\sqcup \boldsymbol{z}_1\sqcup\dots \sqcup \boldsymbol{z}_k$ is a partition of $\boldsymbol{z}$ and $\boldsymbol{p}_1\sqcup\dots\sqcup \boldsymbol{p}_k$ is a partition of $\boldsymbol{p}$. 

The data $\cF\coloneqq (a_1,a_2, {\bf C}, {\bf D}, \{\boldsymbol{z}_i\}, \{\boldsymbol{p}_i\})$ given by the collection of saddle connections $\gamma_i$ is said to be a {\em configuration of type I}. We say that $X$ {\em has a configuration $\cF$} if there exists a collection of saddle connections of $X$ that gives $\cF$. 

\subsection{Graphs of configurations of type I} \label{subsec:gt1}

Given a configuration $\cF$ of type I, we introduce the {\em configuration graph} $\Gamma(\cF)$ to describe the enhanced level graph of the multi-scale differentials in the principal boundary corresponding to $\cF$. This graph is already introduced in \cite{CMSZ}, as a {\em backbone} graph. 

For each $i=1,\dots,k$, consider a non-negative integer $$g_i \coloneqq \frac{1}{2}\left[ \sum_{z_j \in \boldsymbol{z}_i} a_j - \sum_{p_j \in \boldsymbol{p}_i} b_j + C_i + D_i\right].$$ If $\boldsymbol{z}_i=\emptyset$, $g_i=0$ and $\boldsymbol{p}_i=\{q_i\}$ for some pole $q_i$, then the region bounded by $\gamma_i$ and $\gamma_{i+1}$ is isomorphic to the polar domain $P_2(C_i,D_i)$. In this case, the order of the pole $q_i$ is equal to $C_i+D_i$. Let $J\subset\{1,\dots,k\}$ be the set of indices for which the region bounded by $\gamma_i$ and $\gamma_{i+1}$ is {\em not} isomorphic to such a polar domain, and denote $\boldsymbol{p}_{-1}=\{q_i|i\notin J\}$.

We define the {\em configuration graph} $\Gamma(\cF)$ as follows:
\begin{itemize}
    \item The set of vertices is $V(\cF)\coloneqq \{v_{-1}\}\cup \{v_i|i\in J\}$. 
    \item The set of edges is $E(\cF)\coloneqq \{e_i|i\in J\}$ where each $e_i$ is joining $v_{-1}$ and $v_i$. 
    \item  The vertex $v_i$ has half-edges marked by $\boldsymbol{z}_i$ and $\boldsymbol{p}_i$, for each $i\in \{-1\}\cup J$.
    \item We assign to each vertex $v_i$, $i\in J$, an integer $g_i\geq 0$. 
    \item We assign to each edge $e_i$, $i\in J$, an integer $\kappa_i \coloneqq C_i + D_i-1>0$.
    \item The level function $\ell : V(\cF)\to \{0,-1\} $ is given by $\ell(v_{-1})=-1$ and $\ell(v_i)=0$ for each $i\in J$.
\end{itemize}
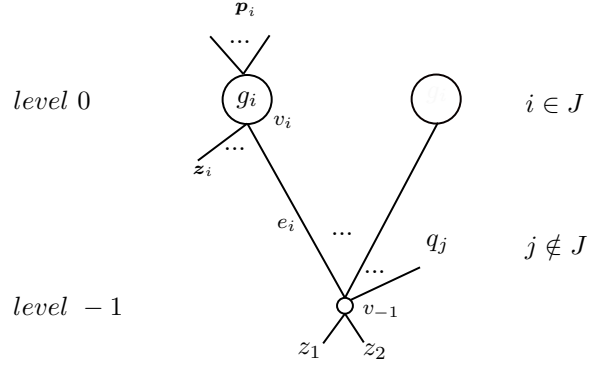
\begin{figure}
    \centering
    \tikzset{every picture/.style={line width=0.75pt}} 

\begin{tikzpicture}[x=0.75pt,y=0.75pt,yscale=-1,xscale=1]

\draw   (186.83,169.5) .. controls (186.83,167.29) and (188.62,165.5) .. (190.83,165.5) .. controls (193.04,165.5) and (194.83,167.29) .. (194.83,169.5) .. controls (194.83,171.71) and (193.04,173.5) .. (190.83,173.5) .. controls (188.62,173.5) and (186.83,171.71) .. (186.83,169.5) -- cycle ;
\draw    (179.83,188.5) -- (190.83,173.5) ;
\draw    (200.33,188) -- (190.83,173.5) ;
\draw    (141.58,77.5) -- (190.83,165.5) ;
\draw    (228.7,150.1) -- (193.83,166.5) ;
\draw    (237.7,77.1) -- (190.83,165.5) ;
\draw    (141.58,77.5) -- (116.64,96.22) ;
\draw    (122.83,33.5) -- (139.58,52.5) ;
\draw    (152.83,33) -- (139.58,52.5) ;

\draw    (141.5, 65.4) circle [x radius= 12.38, y radius= 12.38]   ;
\draw (141.5,65.4) node   [align=left] {$\displaystyle g_{i}$};
\draw (22,163.17) node [anchor=north west][inner sep=0.75pt]    {$level\ -1$};
\draw (22.17,59.33) node [anchor=north west][inner sep=0.75pt]    {$level\ 0$};
\draw (189.49,133) node   [align=left] {...};
\draw (137.82,36) node   [align=left] {...};
\draw (205.82,152.33) node   [align=left] {...};
\draw (237.6,138.41) node    {$q_{j}$};
\draw (135.34,90) node   [align=left] {...};
\draw (134,15.17) node [anchor=north west][inner sep=0.75pt]  [font=\scriptsize]  {$\boldsymbol{p}_{i}$};
\draw (112.64,97.22) node [anchor=north west][inner sep=0.75pt]  [font=\scriptsize]  {$\boldsymbol{z}_{i}$};
\draw (155.17,124.17) node [anchor=north west][inner sep=0.75pt]  [font=\footnotesize]  {$e_{i}$};
\draw (153.33,72) node [anchor=north west][inner sep=0.75pt]  [font=\footnotesize]  {$v_{i}$};
\draw (165.33,186.5) node [anchor=north west][inner sep=0.75pt]    {$z_{1}$};
\draw (198.33,187) node [anchor=north west][inner sep=0.75pt]    {$z_{2}$};
\draw (297.32,139.67) node    {$j\notin J$};
\draw (296.68,67.17) node    {$i\in J$};
\draw (198.33,167.5) node [anchor=north west][inner sep=0.75pt]  [font=\footnotesize]  {$v_{-1}$};
\draw  [color={rgb, 255:red, 9; green, 2; blue, 2 }  ,draw opacity=1 ]  (236.83, 65.17) circle [x radius= 12.38, y radius= 12.38]   ;
\draw (230.33,57.17) node [anchor=north west][inner sep=0.75pt]  [color={rgb, 255:red, 249; green, 249; blue, 249 }  ,opacity=1 ] [align=left] {$\displaystyle g_{i}$};

\end{tikzpicture}
    \caption{The graph $\Gamma(\cF)$ of a configuration $\cF$ of type I}\label{fig401}
\end{figure}

The enhanced level graph $\Gamma(\cF)$ is shown in \Cref{fig401}. It has two levels and no horizontal edges. The principal boundary $D(\Gamma(\cF))$ of $\overline{\cR}_g (\mu)$ is the subspace of multi-scale differentials compatible with the level graph $\Gamma(\cF)$. For a multi-scale differential $(\overline{X},\eta)\in D(\Gamma(\cF))$, we denote the irreducible component corresponding to the vertex $v_i$ by $(X_i,\eta_i)$ for each $i\in J\cup\{-1\}$.

\subsection{Configuration of type II} \label{subsec:ct2}

Assume that $\cR_g(\mu)$ is a \MIN stratum with unique zero $z$ of order $a$. Let $X\in \cR_g (\mu)$ be a general flat surface in the sense of \Cref{parallel}. Suppose that $X$ has a multiplicity $k$ saddle connection. That is, there are precisely $k$ saddle connections $\gamma_1,\dots, \gamma_k$ in the given direction, labeled in the clockwise order at $z$. Remark that the homology class $[\gamma_i]$ is nontrivial in $H_1(X;\ZZ)$ and $X\setminus (\cup_i \gamma_i)$ has $k$ connected regions. There is precisely one region with a pair of holes boundary, which is the union of $\gamma_1$ and $\gamma_k$. We observe that the angles between $\gamma_i$ and $\gamma_{i+1}$ are $2\pi C_i$ and $2\pi D_i$, for some positive integers $C_i,D_i$. Denote ${\bf C}\coloneqq(C_1,\dots, C_k)$, ${\bf D}\coloneqq(D_1,\dots, D_k)$. The angle bounded by $\gamma_1$ is $2\pi Q_1 + \pi$ and the angle bounded by $\gamma_k$ is $2\pi Q_2 +\pi$ for some non-negative integers $Q_1,Q_2$. Also these angles satisfy $\sum_i (C_i+D_i) + Q_1 + Q_2 = a$.

Let $\boldsymbol{p}_i$ denote the set of poles contained in the region bounded by $\gamma_i$ and $\gamma_{i+1}$. Also, Let $\boldsymbol{p}_0$ denote the set of poles contained in the region bounded by $\gamma_1$ and $\gamma_k$. Then $\boldsymbol{p}_0\sqcup\dots\sqcup \boldsymbol{p}_{k-1}$ is a partition of $\boldsymbol{p}$. 

The data $\cF\coloneqq (a, {\bf C},{\bf D},Q_1,Q_2, \{\boldsymbol{p}_i\})$ given by the collection of saddle connections $\gamma_i$ is said to be a {\em configuration of type II}. We say that $X$ {\em has a configuration $\cF$} if there exists a collection of saddle connections of $X$ that forms $\cF$. 

\subsection{Graphs of configurations of type II} \label{subsec:gt2}

Given a configuration $\cF$ of type II, we introduce the {\em configuration graph} $\Gamma(\cF)$ to describe the enhanced level graph of the multi-scale differentials in the principal boundary corresponding to $\cF$. 

For each $i=1,\dots,k-1$, consider a non-negative integer $$g_i\coloneqq \frac{1}{2}\left[C_i + D_i- \sum_{p_j \in \boldsymbol{p}_i} b_j \right]$$ and  $$g_0 \coloneqq \frac{1}{2}\left[Q_1 + Q_2 - \sum_{p_j\in \boldsymbol{p}_0} b_j\right].$$ If $g_i=0$ and $\boldsymbol{p}_i=\{q_i\}$ for some pole $q_i$, then the region bounded by $\gamma_i$ and $\gamma_{i+1}$ is isomorphic to the polar domain $P_2(C_i,D_i)$. Let $J\subset\{1,\dots,k-1\}$ be the set of indices for which the region bounded by $\gamma_i$ and $\gamma_{i+1}$ is {\em not} isomorphic to such a polar domain, and denote $\boldsymbol{p}_{-1}=\{q_i|i\notin J\}$.

Suppose that $X$ has a configuration $\cF$. All possible local patterns at $z$ are given in \cite[Sec.~3.1]{ccprincipal}. In other words, the regions of $X\setminus (\cup_i \gamma_i)$, listed in a clockwise order at $z$, are one of the following three possibilities:

(i) A cylinder, followed by surfaces of genus $g_i$ for each $i\in J$ with a figure eight boundary, followed by a cylinder. Two cylinders at the beginning and the end are the same. This is the case when $Q_1=Q_2=0$.

(ii) A cylinder, followed by surfaces of genus $g_i$ for each $i\in J$ with a figure eight boundary, followed by another surface with a pair of holes boundary. This case cannot happen because the cylinder at the beginning and the surface at the end should be the same. Therefore, it is impossible to have the cases $Q_1=0$ and $Q_2>0$, or $Q_1>0$ and $Q_2=0$.

(iii) A surface of genus $g_0$ with a pair of holes boundary, followed by surfaces of genus $g_i$ for each $i\in J$ with a figure eight boundary, followed by the surface of genus $g_0$. This is the case when $Q_1,Q_2>0$.

If $Q_1=Q_2=0$, then the local pattern at $z$ follows (i) above, and in this case we define the {\em configuration graph} $\Gamma(\cF)$ as follows:

\begin{itemize}
    \item The set of vertices is $V(\cF)\coloneqq \{v_{-1}\}\cup \{v_i|i\in J\}$. 
    \item The set of edges is $E(\cF)\coloneqq \{f\} \cup \{e_i|i\in J\}$ where each $e_i$ is joining $v_{-1}$ and $v_i$, and $f$ is joining $v_{-1}$ to itself.
    \item Each vertex $v_i$ has half-edges marked by $\boldsymbol{p}_i$.
    \item We assign to each $v_i$, $i\in  J$, an integer $g_i\geq 0$. 
    \item We assign to each $e_i$, $i\in J$, an integer $\kappa_i \coloneqq C_i+D_i-1>0$.
    \item The level function $\ell:V(\cF)\to \{0,-1\}$ is defined by $\ell(v_{-1})=-1$ and $\ell(v_i)=0$ for each $i\in J$.
\end{itemize}
The enhanced level graph $\Gamma(\cF)$ is depicted in \Cref{fig402}.

\begin{figure}
    \centering
    \tikzset{every picture/.style={line width=0.75pt}} 

\begin{tikzpicture}[x=0.75pt,y=0.75pt,yscale=-1,xscale=1]

\draw   (190.67,168.67) .. controls (190.67,166.46) and (192.46,164.67) .. (194.67,164.67) .. controls (196.88,164.67) and (198.67,166.46) .. (198.67,168.67) .. controls (198.67,170.88) and (196.88,172.67) .. (194.67,172.67) .. controls (192.46,172.67) and (190.67,170.88) .. (190.67,168.67) -- cycle ;
\draw    (194.67,189.67) -- (194.67,172.67) ;
\draw    (140.92,78.67) -- (194.67,164.67) ;
\draw    (235.7,149.2) -- (194.67,164.67) ;
\draw    (235.7,77.7) -- (194.67,164.67) ;
\draw    (124.17,34.67) -- (140.92,53.67) ;
\draw    (154.17,34.17) -- (140.92,53.67) ;
\draw     ;
\draw    (190.67,165.67) .. controls (147.2,143.9) and (149.7,193.9) .. (190.67,168.67) ;

\draw (21.83,161) node [anchor=north west][inner sep=0.75pt]    {$level\ -1$};
\draw (21.33,60.5) node [anchor=north west][inner sep=0.75pt]    {$level\ 0$};
\draw (298.68,67.33) node    {$i\in J$};
\draw (298.32,137.83) node    {$j\notin J$};
\draw    (140.92, 65.92) circle [x radius= 12.38, y radius= 12.38]   ;
\draw (140.92,65.92) node   [align=left] {$\displaystyle g_{i}$};
\draw (193.16,127.67) node   [align=left] {...};
\draw (140.16,37.67) node   [align=left] {...};
\draw (211.66,149.67) node   [align=left] {...};
\draw (240.6,137.41) node    {$q_{j}$};
\draw (141.24,22.43) node  [font=\scriptsize]  {$\boldsymbol{p}_{i}$};
\draw (157.62,119.26) node  [font=\footnotesize]  {$e_{i}$};
\draw (159.23,78.26) node  [font=\footnotesize]  {$v_{i}$};
\draw (195.11,193.17) node    {$z$};
\draw (209.38,169.26) node  [font=\footnotesize]  {$v_{-1}$};
\draw (149.11,168.26) node  [font=\footnotesize]  {$f$};
\draw  [color={rgb, 255:red, 9; green, 2; blue, 2 }  ,draw opacity=1 ]  (236.83, 64.83) circle [x radius= 12.38, y radius= 12.38]   ;
\draw (230.33,56.83) node [anchor=north west][inner sep=0.75pt]  [color={rgb, 255:red, 249; green, 249; blue, 249 }  ,opacity=1 ] [align=left] {$\displaystyle g_{i}$};

\end{tikzpicture} 
    \caption{The graph $\Gamma(\cF)$ of a configuration $\cF$ of type II when $Q_1=Q_2=0$} \label{fig402}
\end{figure}
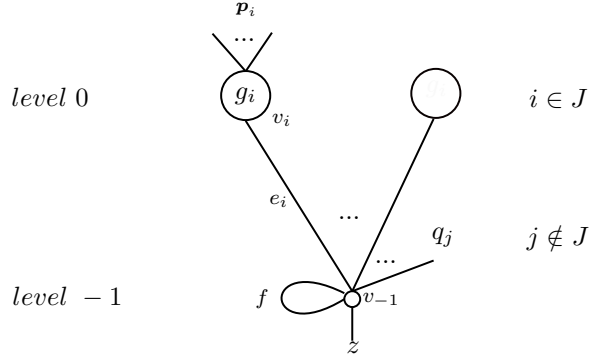

\begin{remark}
Note that it is possible that $J=\emptyset$ if $g=1$. In this case there exists no top level component, and $\Gamma(\cF)$ is in fact a single-level graph. We will still call the level containing $v_{-1}$ the {\em bottom level} for convenience. 
\end{remark}

If $Q_1,Q_2>0$, then $X$ can be constructed by the pattern (iii) above, and we define the configuration graph $\Gamma(\cF)$ as follows:
\begin{itemize}
    \item The set of vertices is $V(\cF)\coloneqq \{v_{-1},v_0\}\cup \{v_i|i\in J\}$. 
    \item The set of edges is $E(\cF)\coloneqq \{f_1,f_2\} \cup \{e_i|i\in J\}$ where each $e_i$ is joining $v_{-1}$ and $v_i$ and $f_1$ and $f_2$ is joining $v_{-1}$ and $v_0$.
    \item Each vertex $v_i$ has half-edges marked by $\boldsymbol{p}_i$.
    \item We assign to each $v_i$, $i\in \{0\}\cup J$, an integer $g_i\geq 0$. 
    \item We assign to each $e_i$, $i\in \{0\}\cup J$, an integer $\kappa_i \coloneqq C_i + D_i-1>0$. Also we assign $Q_1$ and $Q_2$ to $f_1$ and $f_2$, respectively. 
    \item The level function $\ell :V(\cF)\to\{0,-1\}$ is defined by $\ell(v_{-1})=-1$ and $\ell(v_i)=0$ for any $i\in \{0\}\cup J$.
\end{itemize}
The enhanced level graph $\Gamma(\cF)$ is described in \Cref{fig403}; it has two levels and no horizontal edge. 

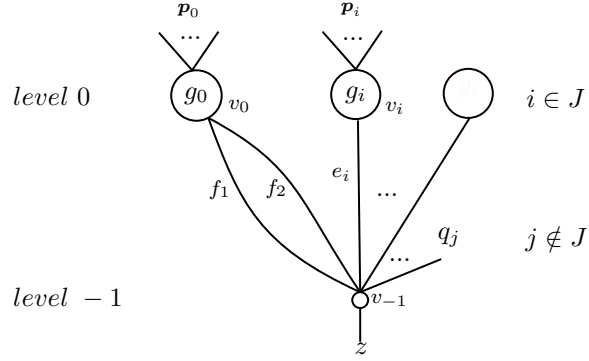
\begin{figure}
    \centering
    \tikzset{every picture/.style={line width=0.75pt}} 

\begin{tikzpicture}[x=0.75pt,y=0.75pt,yscale=-1,xscale=1]

\draw   (194.83,169) .. controls (194.83,166.79) and (196.62,165) .. (198.83,165) .. controls (201.04,165) and (202.83,166.79) .. (202.83,169) .. controls (202.83,171.21) and (201.04,173) .. (198.83,173) .. controls (196.62,173) and (194.83,171.21) .. (194.83,169) -- cycle ;
\draw    (198.83,190) -- (198.83,173) ;
\draw    (197.7,78.3) -- (198.83,165) ;
\draw    (239.7,148.3) -- (198.83,165) ;
\draw    (254.2,76.8) -- (198.83,165) ;
\draw    (179.83,33.5) -- (196.58,52.5) ;
\draw    (209.83,33) -- (196.58,52.5) ;
\draw    (122.2,77) .. controls (138.2,120.5) and (147.7,141.5) .. (198.83,165) ;
\draw    (122.2,77) .. controls (167.95,101.5) and (168.2,119.5) .. (198.83,165) ;
\draw    (97.33,34) -- (114.08,53) ;
\draw    (127.33,33.5) -- (114.08,53) ;

\draw    (196.58, 65.25) circle [x radius= 12.38, y radius= 12.38]   ;
\draw (196.58,65.25) node   [align=left] {$\displaystyle g_{i}$};
\draw (22.33,161) node [anchor=north west][inner sep=0.75pt]    {$level\ -1$};
\draw (22.33,60) node [anchor=north west][inner sep=0.75pt]    {$level\ 0$};
\draw (205.33,113.5) node [anchor=north west][inner sep=0.75pt]   [align=left] {...};
\draw (195.32,37) node   [align=left] {...};
\draw (218.82,148.5) node   [align=left] {...};
\draw (243.76,137.24) node    {$q_{j}$};
\draw (194.41,22.26) node  [font=\scriptsize]  {$\boldsymbol{p}_{i}$};
\draw (189.29,105.59) node  [font=\footnotesize]  {$e_{i}$};
\draw (216.4,72.09) node  [font=\footnotesize]  {$v_{i}$};
\draw (199.28,193.5) node    {$z$};
\draw (297.51,67) node    {$i\in J$};
\draw (213.55,169.59) node  [font=\footnotesize]  {$v_{-1}$};
\draw    (116.25, 65.74) circle [x radius= 12.73, y radius= 12.73]   ;
\draw (116.25,65.74) node   [align=left] {$\displaystyle g_{0}$};
\draw (128.03,113.09) node  [font=\footnotesize]  {$f_{1}$};
\draw (156.53,112.59) node  [font=\footnotesize]  {$f_{2}$};
\draw (138,71.09) node  [font=\footnotesize]  {$v_{0}$};
\draw (297.82,137.83) node    {$j\notin J$};
\draw (113.32,36.5) node   [align=left] {...};
\draw (112.43,22.76) node  [font=\scriptsize]  {$\boldsymbol{p}_{0}$};
\draw  [color={rgb, 255:red, 9; green, 2; blue, 2 }  ,draw opacity=1 ]  (253.33, 64.67) circle [x radius= 12.38, y radius= 12.38]   ;
\draw (246.83,56.67) node [anchor=north west][inner sep=0.75pt]  [color={rgb, 255:red, 249; green, 249; blue, 249 }  ,opacity=1 ] [align=left] {$\displaystyle g_{i}$};

\end{tikzpicture} 
    \caption{The graph $\Gamma(\cF)$ of a configuration $\cF$ of type II when $Q_1,Q_2>0$} \label{fig403}
\end{figure}

The principal boundary $D(\Gamma(\cF))$ of $\overline{\cR}_g (\mu)$ is the subspace of multi-scale differentials compatible with the level graph $\Gamma(\cF)$. For a multi-scale differential $(\overline{X},\eta)\in D(\Gamma(\cF))$, we denote the irreducible component corresponding to the vertex $v_i$ by $(X_i,\eta_i)$ for each $i\in J\cup\{-1,0\}$.

\subsection{Degeneration to the principal boundary}

Suppose that $X\in \cR_g(\mu)$ has a configuration $\cF$ formed by $k$ parallel saddle connections $\gamma_1,\dots, \gamma_k$. By \Cref{short}, for any $\epsilon>0$, $X$ can be continuously deformed to another flat surface, which by abuse of notation we will henceforth denote $X$, such that the saddle connections $\gamma_i$ have length $<\epsilon$ and any other saddle connections of $X$ have length $>3\epsilon$. By \cite[Thm.~2.1 and Thm.~3.4]{ccprincipal}, we have the following

\begin{proposition} \label{shrink}
Suppose that $X\in \cR_g(\mu)$ has a configuration $\cF$. Then there exists a continuous degeneration to the principal boundary $D(\cF)\subset \partial\overline{\cR}_g(\mu)$. Conversely, any multi-scale differential in $D(\cF)$ can be smoothed to a flat surface in $\cR_g(\mu)$ that has a configuration $\cF$.
\end{proposition}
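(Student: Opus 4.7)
My approach would combine the contraction flow of \Cref{short} (which allows shrinking the $k$ parallel saddle connections of the configuration $\cF$ to arbitrarily small length) with the explicit plumbing coordinates reviewed in Section \ref{sec:msc}. The two directions of the proposition correspond respectively to (a) identifying the limit of the contraction as a point of the boundary divisor $D(\Gamma(\cF))$, and (b) inverting the plumbing construction to produce a smooth nearby flat surface with the prescribed configuration.

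For the forward direction, starting with $X \in \cR_g(\mu)$ having configuration $\cF$ with parallel saddle connections $\gamma_1,\dots,\gamma_k$, I would first apply \Cref{short} to reduce to the case where the $\gamma_i$ are much shorter than every other saddle connection of $X$. Let $C^t$ denote the contraction flow that collapses the common direction of the $\gamma_i$ while fixing the transverse direction, and analyze the limit as $t \to \infty$. Each connected region of $X \setminus \bigcup_i \gamma_i$ that is not isometric to a polar domain $P_2(C_i,D_i)$ becomes, after rescaling by $e^t$ in the collapsed direction, a top-level component $(X_i,\eta_i)$ with $i \in J$; each region isometric to $P_2(C_i,D_i)$ with $i \notin J$ collapses to a marked pole $q_j$ on the bottom-level component $(X_{-1},\eta_{-1})$ of order $C_i + D_i$; and the neighborhood of $\bigcup_i \gamma_i$ rescaled so that the $|\gamma_i|$ stay of bounded length becomes $(X_{-1},\eta_{-1})$ itself. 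The enhancement $\kappa_{e_i} = C_i + D_i - 1$ at each vertical edge is read off from the angles $2\pi C_i, 2\pi D_i$ at the endpoints of $\gamma_i$, and the genera $g_i$ are forced by the degree formula. The Global Residue Condition is automatic: every top-level pole is residueless by hypothesis, so the residues of $\eta_{-1}$ at the nodes attached to each $v_i$ (for $i \in J$) vanish, and in the type II case with $Q_1, Q_2 > 0$ the two residues at $v_0$ cancel because together they enclose the residueless top-level surface.

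For the reverse direction, given a multi-scale differential $(\overline{X},\eta,\mathrm{Pr}) \in D(\Gamma(\cF))$, I would apply the plumbing construction of Section \ref{sec:msc} simultaneously to every vertical edge of $\Gamma(\cF)$ with smoothing parameter $s$, using a representative of the prong-matching equivalence class $\mathrm{Pr}$. The plumbing formulae produce a smooth surface lying in $\cR_g(\mu)$ for small $|s|$, and in the plumbed annulus glued at each edge $e_i$ the unique short geodesic joining the zero(es) to themselves in the direction prescribed by $\mathrm{Pr}$ contributes a saddle connection whose length is proportional to $|s|^{\kappa/\kappa_{e_i}}$ (with $\kappa = \lcm$ of the $\kappa_{e_i}$ entering the same plumbing step). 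After collecting these short saddle connections coming from the $|J|$ edges $e_i$ plus the $k - |J|$ polar-domain pieces that were already present in $\overline{X}$, one checks that the resulting $k$ saddle connections are parallel, cyclically ordered at the zero(es) according to the combinatorics of $\Gamma(\cF)$, and subtend exactly the angles $2\pi C_i,2\pi D_i$ (and $2\pi Q_1 + \pi, 2\pi Q_2 + \pi$ in the type II case), so that the smoothed surface has configuration $\cF$.

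The main obstacle is handling the case distinctions entering the definition of $\Gamma(\cF)$ uniformly, in particular (i) type I versus type II, (ii) within type II, the split between $Q_1 = Q_2 = 0$ (in which the level graph acquires a horizontal loop $f$ at $v_{-1}$, plumbed by the horizontal-node recipe) and $Q_1, Q_2 > 0$ (in which a second top-level vertex $v_0$ is connected to $v_{-1}$ by two edges $f_1, f_2$ plumbed together under the Global Residue Condition as in the last paragraph of Section \ref{sec:msc}), and (iii) the correct identification of prong-matching equivalence classes with the cyclic orderings that define $\cF$. Once these cases are checked one by one, the argument reduces to the incidence-variety statements \cite[Thm.~2.1 and Thm.~3.4]{ccprincipal}, transported to the multi-scale setting by means of the explicit plumbing coordinates, together with the fact that $\overline{\cR}_g(\mu)$ is a smooth modification of the incidence variety compactification that only refines the combinatorial data at the boundary (the prong-matching equivalence class) without changing the underlying twisted differentials.
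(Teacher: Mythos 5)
Your proposal takes essentially the same route as the paper. The paper's own justification is much terser: it simply applies \Cref{short} to make the $\gamma_i$ short relative to all other saddle connections and then cites \cite[Thm.~2.1 and Thm.~3.4]{ccprincipal} directly, whereas you spell out in detail what those theorems produce (the identification of the contraction-flow limit with a point of $D(\Gamma(\cF))$ and the inverse plumbing step), but the underlying ingredients and the division into forward/converse directions are identical.
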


In particular, if $\cR(\mu)$ is a multiple-zero stratum, then any connected component $\cC$ of $\cR(\mu)$ has a principal boundary of type I. In case of a \MIN stratum, any connected component $\cC$ has a principal boundary of type II.

\subsection{Degeneration to a multi-scale differential with two irreducible components}

Using the degeneration to the principal boundary, we can prove the following statement that will be useful when we apply the induction on the dimension of $\cR_g(\mu)$.

\begin{proposition} \label{break}
Suppose that $\dim_{\mathbb{C}} \cR_g(\mu)> 2$. Then for any connected component $\cC$ of $\cR_g (\mu)$, there exists a multi-scale differential $\overline{Y}\in \partial\overline{\cC}$ satisfying the following:

\begin{itemize}
    \item $\overline{Y}$ has exactly two irreducible components $Y_0$ and $Y_{-1}$ which are at different levels. Let $\boldsymbol{p_0}$ and $\boldsymbol{p_{-1}}$ denote the set of marked poles contained in $Y_0$ and $Y_{-1}$, respectively. 
    \item The components $Y_0$ and $Y_{-1}$ intersect at only one node $q$. 
    \item Moreover, if the number of zeroes $m>1$, for any given pair of zeroes, say $z_1,z_2$ by relabeling zeroes, the bottom level component $Y_{-1}$ contains $z_1,z_2$.
\end{itemize}
\end{proposition}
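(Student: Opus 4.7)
The plan is to apply \Cref{shrink} to a carefully chosen configuration $\cF$ on some $X\in\cC$, so that the configuration graph $\Gamma(\cF)$ has exactly two vertices joined by a single vertical edge. Horizontal self-loops at $v_{-1}$ are acceptable, since they represent internal nodes of $Y_{-1}$ rather than intersections of $Y_0$ with $Y_{-1}$.

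\textbf{Case $m \geq 2$.} My first goal is to find $X \in \cC$ equipped with a type I configuration given by a single saddle connection $\gamma$ of multiplicity one joining $z_1$ and $z_2$. The computation in \Cref{subsec:gt1} then shows that the unique bounded region has genus $g_1 = g$ and contains the remaining $m-2$ zeroes together with all $n$ poles. Since $\dim\cR_g(\mu)>2$ forces $2g+m>3$, either $g\geq 1$ or $m\geq 3$, so this region cannot be a polar domain $P_2(C_1,D_1)$. Hence $J = \{1\}$, the graph $\Gamma(\cF)$ has the required two-vertex one-edge form with $\boldsymbol{z}_{-1} = \{z_1,z_2\}$ on the bottom, and \Cref{shrink} supplies the desired $\overline{Y} \in \partial\overline{\cC}$.

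\textbf{Case $m = 1$.} Here $\dim > 2$ forces $g \geq 2$. My goal is to find $X \in \cC$ carrying a type II configuration of $k=2$ parallel loop saddle connections at $z$, whose outer region is a flat cylinder (so $Q_1=Q_2=0$ and $\boldsymbol{p}_0=\emptyset$) and whose interior region carries all $n$ poles and is not a polar domain. The resulting graph has $v_{-1}$ at the bottom with $z$ as a half-edge and a horizontal self-loop $f$ from the cylinder, a single top vertex $v_1$, and a single vertical edge $e_1$, yielding $\overline{Y}$ with exactly two irreducible components meeting at a single node.

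\textbf{Producing the configurations.} The chief obstacle is exhibiting such configurations inside every connected component $\cC$. For a given $Y \in \cC$, my plan is to deform $Y$ under $\operatorname{GL}^+(2,\RR)$ to a general surface in the sense of \Cref{parallel}, then apply a contraction flow along a generically chosen direction $\theta$ so that a chosen saddle connection becomes much shorter than all others. For Case $m \geq 2$ in a non-hyperelliptic component, genericity of the period coordinates will produce a multiplicity-one saddle connection joining $z_1$ and $z_2$. For Case $m = 1$, the desired cylinder-plus-interior configuration can be obtained by contracting $Y$ until its core degenerates (\Cref{subsec:core}) and then isolating a cylindrical neighborhood of a pole whose polar domain abuts a region of positive genus. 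The hardest sub-step will be the hyperelliptic components: when the hyperelliptic involution $\sigma$ swaps $z_1$ and $z_2$ no multiplicity-one saddle connection between them exists, and I anticipate needing a separate construction --- for example, directly assembling a two-component multi-scale differential from a lower-genus hyperelliptic piece and a rational piece, and verifying it lies in $\overline{\cC}$ by matching the hyperelliptic invariants of its smoothings to those of $\cC$.
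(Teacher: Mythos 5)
Your strategy hinges on producing, inside \emph{every} connected component $\cC$, a multiplicity-one saddle connection between $z_1$ and $z_2$ (for $m\geq 2$) or a $k=2$ type~II configuration with cylindrical outer region (for $m=1$). This is precisely where the argument breaks down. First, the claim that ``genericity of the period coordinates will produce a multiplicity-one saddle connection joining $z_1$ and $z_2$'' does not follow from \Cref{parallel}: genericity guarantees that parallel saddle connections are homologous, but it says nothing about the \emph{existence} of a saddle connection between two prescribed zeroes, nor about its multiplicity being one. Second — and more fundamentally — multiplicity-one saddle connections between $z_1$ and $z_2$ do \emph{not} exist in hyperelliptic components with $2g+2$ fixed marked points (every saddle connection comes in involution-symmetric pairs), which is exactly why \Cref{simple} excludes those components. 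You acknowledge this at the end but leave it as an unsolved sub-step, yet \Cref{break} must hold for all components. Worse, the natural way to fill your gap — invoking \Cref{simple} — is circular: the proof of \Cref{simple} uses \Cref{break} via the induction on dimension.

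The paper's proof avoids multiplicity-one saddle connections entirely. It starts from \emph{any} saddle connection (supplied by \Cref{saddlehomology}), shrinks the full parallel family forming its configuration, and if the resulting level graph has more than one top-level vertex, it regroups levels: keep one top-level component at level $0$, push the others to level $-1$, send the old bottom component to level $-2$, then plumb the $(-1,-2)$ transition to merge them into a single bottom component. An induction on $\dim\cR_g(\mu)$ (using the smaller strata appearing at higher levels) steers the chosen saddle connection to connect $z_1$ and $z_2$. This sidesteps both the multiplicity constraint and the hyperelliptic obstruction. To repair your proposal you would essentially need to reproduce this regrouping argument, at which point the multiplicity-one requirement becomes superfluous.
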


In other words, if $\dim_{\mathbb{C}} \cR_g(\mu)> 2$, we can break a flat surface in $\cC$ into two flat surfaces contained in the residueless strata of smaller dimensions. The level graph of $\overline{Y}$ is illustrated in \Cref{fig405}.

\begin{figure}
    \centering
    \tikzset{every picture/.style={line width=0.75pt}} 

\begin{tikzpicture}[x=0.75pt,y=0.75pt,yscale=-1,xscale=1]

\draw    (138.67,198.83) -- (149.67,183.83) ;
\draw    (159.17,198.33) -- (149.67,183.83) ;
\draw    (133.92,78.83) -- (150.17,151.33) ;
\draw    (181.14,133.08) -- (150.17,151.33) ;
\draw    (133.92,78.83) -- (108.67,96.83) ;
\draw    (117.17,33.33) -- (133.92,52.33) ;
\draw    (147.17,32.83) -- (133.92,52.33) ;

\draw    (133.07, 65.57) circle [x radius= 13.09, y radius= 13.09]   ;
\draw (133.07,65.57) node   [align=left] {$\displaystyle g_{0}$};
\draw (22.33,160.17) node [anchor=north west][inner sep=0.75pt]    {$level\ -1$};
\draw (21.17,60.5) node [anchor=north west][inner sep=0.75pt]    {$level\ 0$};
\draw (133.16,38.83) node   [align=left] {...};
\draw (160.16,135.33) node   [align=left] {...};
\draw (126.66,90.83) node   [align=left] {...};
\draw (128.17,16.83) node [anchor=north west][inner sep=0.75pt]  [font=\scriptsize]  {$\boldsymbol{p}_{0}$};
\draw (118.17,99.83) node [anchor=north west][inner sep=0.75pt]  [font=\scriptsize]  {$\boldsymbol{z}_{0}$};
\draw (144.67,104.83) node [anchor=north west][inner sep=0.75pt]  [font=\footnotesize]  {$e$};
\draw (125.17,196.33) node [anchor=north west][inner sep=0.75pt]    {$z_{1}$};
\draw (158.17,196.83) node [anchor=north west][inner sep=0.75pt]    {$z_{2}$};
\draw    (150.38, 167.57) circle [x radius= 15.81, y radius= 15.81]   ;
\draw (150.38,167.57) node   [align=left] {$\displaystyle g_{-1}$};
\draw (240.19,167.57) node    {$( Y_{-1} ,\zeta _{-1})$};
\draw (232.58,69.07) node    {$( Y_{0} ,\zeta _{0})$};
\draw (165.67,117.33) node [anchor=north west][inner sep=0.75pt]  [font=\scriptsize]  {$\boldsymbol{p}_{-1}$};
\draw (149.66,195.83) node   [align=left] {...};
\draw (143.17,217.83) node [anchor=north west][inner sep=0.75pt]  [font=\scriptsize]  {$\boldsymbol{z}_{-1}$};

\end{tikzpicture} 
    \caption{Breaking a flat surface into two irreducible components} \label{fig405}
\end{figure}
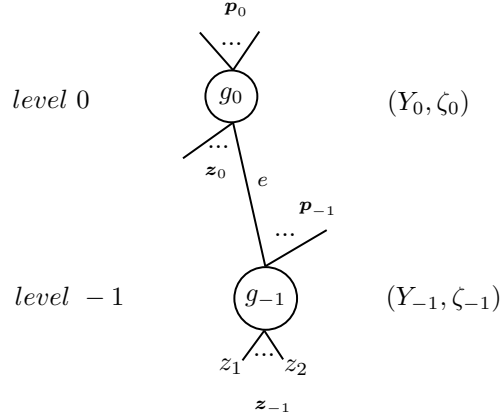

\begin{proof}
We will navigate the boundary of $\overline{\cC}$, until we obtain $\overline{Y}\in \partial\overline{\cC}$ satisfying the desired condition. Use the induction on $\dim_{\mathbb{C}} \cR_g(\mu)=2g+m-1$. 

First, we assume $m>1$. We need to show that there exists $X\in \cC$ that contains a saddle connection joining $z_1$ and $z_2$. If $m=2$, then this is trivial by \Cref{saddlehomology}. So assume that $m>2$. By \Cref{saddlehomology}, any flat surface $X\in \cC$ has a saddle connection $\gamma$ joining $z_1$ and $z_j$ for some $j\neq 1$. If $j=2$, then it is done. So assume $j>2$. Suppose that $\gamma$ is contained in a collection of parallel saddle connections that forms a configuration $\cF$ of type I. By \Cref{shrink}, we can shrink $\gamma$ and other saddle connections in the collection, obtaining $\overline{X}\in D(\cF)\subset \partial\overline{\cC}$. Then $\overline{X}$ has unique bottom level component $X_{-1}$ containing only two zeroes $z_1,z_j$, and $|J|$ top level components $X_i$ for $i\in J$. There exists some top level component $X_i$ that contains $z_2$. Since $X_i$ contains less than $m$ zeroes, by induction hypothesis, there exists a saddle connection $\gamma_i$ in $X_i$ joining $z_2$ and the node $q_i$ between $X_i$ and $X_{-1}$. At $q^+_i$, there exists a outgoing prong $u$ that corresponds to $\gamma_i$. At $q^-_i$, there are at least one incoming prong $v$ that comes from $z_2$. See \Cref{fig603}. We can choose a prong-matching of $\overline{X}$ that sends $v$ to $u$. By plumbing the level transition with this prong-matching, we obtain a flat surface in $\cC$ and $\gamma_i$ deforms to a saddle connection joining $z_1$ and $z_2$.

So we can assume that $\gamma$ is joining $z_1$ and $z_2$. By shrinking $\gamma$, we obtain $\overline{X}$ such that $X_{-1}$ contains only two zeroes $z_1$ and $z_2$. If $|J|=1$, then we can take $\overline{Y}=\overline{X}$. If $|J|>1$, then we further degenerate $\overline{X}$ to a three-level multi-scale differential $\overline{X'}\in \partial\overline{\cC}$ as follows. We keep one top level component, say $X_1$. We send $X_{-1}$ to the lowest level -2 and all other components are sent to the level -1. Plumb the level transition between the levels -1 and -2. As a result, we obtain $\overline{Y}$ as desired. 

Now we assume $m=1$. Then we have $g>1$ by assumption. We choose a flat surface $X\in \cC$ and a saddle connection $\gamma$ of $X$. Then $\gamma$ is contained in a collection of parallel saddle connections that forms a configuration $\cF$ of type II. By shrinking $\gamma$, we can obtain $\overline{X}\in D(\cF)\subset  \partial\overline{\cC}$. If $Q_1=Q_2=0$, then the bottom level component has a pair of simple poles. If $J=\emptyset$, then we have $g=1$ and $\dim_{\mathbb{C}} \cR_g(\mu)=2$, which contradicts the assumption. If $J\neq \emptyset$, then there exists a top level component, say $X_1$. As in the previous paragraph, we can degenerate $\overline{X}$ further into three-level differential so that $X_1$ is the only component at level 0. By plumbing the level transition between the levels -1 and -2, we obtain $\overline{Y}$ as desired. 

If $Q_1>0$ or $Q_2>0$, then there exists a top level component $X_0$ that intersects $X_{-1}$ at two nodes. If $J\neq \emptyset$, we can obtain $\overline{Y}$ by the same argument as in the previous paragraph. If $J=\emptyset$ and $X_0$ is the unique top level component, then the genus of $X_0$ is equal to $g-1>0$. Therefore by induction hypothesis, we can degenerate $X_0$ into a multi-scale differential $\overline{Z}$ with two irreducible components intersecting at one node. Together with $X_{-1}$ at level -2, they form a three-level multi-scale differential. By plumbing the level transition between the levels -1 and -2, we obtain $\overline{Y}$ as desired. 
\end{proof}

\subsection{Breaking up a zero and merging zeroes} \label{subsec:breakmerge}

Now we can explain how the two main surgeries in \cite{kozo1}, breaking up a zero and bubbling a handle, are related to the degeneration to the principal boundary of $\cR_g (\mu)$.

\begin{remark}
Let $\gamma$ be a multiplicity one saddle connection in $X\in \cC$, which forms a configuration $\cF$. In this case, the bottom level component $X_{-1}$ is contained in the stratum $\cR_0 (a_1, a_2, -a_1 -a_2-2)$ (if $\gamma$ has two distinct endpoints) or $\cR_0 (a ;-Q_1-1,-Q_2-1)$ (if $\gamma$ is a simple closed curve). Both of these strata are connected, which makes it easy to keep track of the connected component $\cC$. Therefore, multiplicity one saddle connections will play an important role in classification of the connected components. 
\end{remark}

The first surgery used in \cite{kozo1} is called {\em breaking up a zero}. For a flat surface $X\in \cR_g (\mu)$ with a zero $z$ of order $a>0$, breaking up the zero $z$ constructs a flat surface $X'\in \cR _g (\mu')$ where $\mu'$ is obtained by replacing $a$ with two integers $a_1,a_2\geq 0$ such that $a_1+a_2=a$. This surgery from the point of view of the multi-scale compactification is described in \cite{ChenGendronComponents}. Let $(\PP^1, \eta_{-1})$ be the unique (up to scaling) element of $\cH_0 (a_1,a_2,-a-2)$. We identify the unique zero $z$ of $X$ with the pole $p\in \PP^1$ of $\omega_{-1}$ to obtain a multi-scale differential in $\partial\overline{\cR}_g (\mu')$. The enhanced level graph of the differential, illustrated in \Cref{fig406}, consists of two vertices at distinct levels, and one edge connecting them (Note that there is a unique prong-matching equivalence class). By plumbing the level transition, we obtain a flat surface in $X'\in \cR_g (\mu')$. This surgery is called {\em breaking up a zero} $z$. The connected component of $X'$ depends only on $a_1$ and the connected component $\cC$ of $X$. 

\begin{figure}
    \centering
    \tikzset{every picture/.style={line width=0.75pt}} 

\begin{tikzpicture}[x=0.75pt,y=0.75pt,yscale=-1,xscale=1]

\draw   (218.83,121.67) .. controls (218.83,119.46) and (220.62,117.67) .. (222.83,117.67) .. controls (225.04,117.67) and (226.83,119.46) .. (226.83,121.67) .. controls (226.83,123.88) and (225.04,125.67) .. (222.83,125.67) .. controls (220.62,125.67) and (218.83,123.88) .. (218.83,121.67) -- cycle ;
\draw    (217.7,78.9) -- (222.83,117.67) ;
\draw    (217.08,79.17) -- (191.83,97.17) ;
\draw    (199.83,39.67) -- (216.58,58.67) ;
\draw    (229.83,39.17) -- (216.58,58.67) ;
\draw     ;
\draw    (211.83,141.17) -- (222.83,126.17) ;
\draw    (232.33,140.67) -- (222.83,126.17) ;
\draw    (66.08,69.17) -- (74.83,86.67) ;
\draw    (66.08,69.17) -- (40.83,87.17) ;
\draw    (49.33,29.17) -- (66.08,48.17) ;
\draw    (79.33,28.67) -- (66.08,48.17) ;
\draw   (66.83,159.67) .. controls (66.83,157.46) and (68.62,155.67) .. (70.83,155.67) .. controls (73.04,155.67) and (74.83,157.46) .. (74.83,159.67) .. controls (74.83,161.88) and (73.04,163.67) .. (70.83,163.67) .. controls (68.62,163.67) and (66.83,161.88) .. (66.83,159.67) -- cycle ;
\draw    (70.64,134.13) -- (70.83,155.67) ;
\draw     ;
\draw    (59.83,179.17) -- (70.83,164.17) ;
\draw    (80.33,178.67) -- (70.83,164.17) ;
\draw    (386.08,106.67) -- (386.83,126.17) ;
\draw    (386.08,106.67) -- (360.83,124.67) ;
\draw    (368.33,66.67) -- (385.08,85.67) ;
\draw    (398.33,66.17) -- (385.08,85.67) ;
\draw    (386.08,106.67) -- (402.33,124.67) ;
\draw    (109.14,104.63) -- (166.14,104.63) ;
\draw [shift={(168.14,104.63)}, rotate = 180] [color={rgb, 255:red, 0; green, 0; blue, 0 }  ][line width=0.75]    (10.93,-3.29) .. controls (6.95,-1.4) and (3.31,-0.3) .. (0,0) .. controls (3.31,0.3) and (6.95,1.4) .. (10.93,3.29)   ;
\draw    (270.64,102.13) -- (327.64,102.13) ;
\draw [shift={(329.64,102.13)}, rotate = 180] [color={rgb, 255:red, 0; green, 0; blue, 0 }  ][line width=0.75]    (10.93,-3.29) .. controls (6.95,-1.4) and (3.31,-0.3) .. (0,0) .. controls (3.31,0.3) and (6.95,1.4) .. (10.93,3.29)   ;

\draw    (216.66, 68.67) circle [x radius= 10, y radius= 10]   ;
\draw (216.66,68.67) node   [align=left] {$\displaystyle g$};
\draw (215.82,48.17) node [anchor=south] [inner sep=0.75pt]   [align=left] {...};
\draw (211.32,96.67) node [anchor=south] [inner sep=0.75pt]   [align=left] {...};
\draw (206.1,146.91) node    {$a_{1}$};
\draw (239.1,147.41) node    {$a_{2}$};
\draw    (65.66, 58.67) circle [x radius= 10, y radius= 10]   ;
\draw (65.66,58.67) node   [align=left] {$\displaystyle g$};
\draw (64.82,38.67) node [anchor=south] [inner sep=0.75pt]   [align=left] {...};
\draw (63.32,80.17) node   [align=left] {...};
\draw (77.78,90.17) node    {$a$};
\draw (59.1,186.41) node    {$a_{1}$};
\draw (86.6,186.41) node    {$a_{2}$};
\draw (48.33,118.17) node [anchor=north west][inner sep=0.75pt]    {$-a-2$};
\draw (68.83,100.67) node [anchor=north west][inner sep=0.75pt]    {$+$};
\draw (238.17,97.76) node  [font=\footnotesize]  {$a+1$};
\draw    (384.66, 96.17) circle [x radius= 10, y radius= 10]   ;
\draw (384.66,96.17) node   [align=left] {$\displaystyle g$};
\draw (383.32,69.17) node   [align=left] {...};
\draw (378.32,124.17) node [anchor=south] [inner sep=0.75pt]   [align=left] {...};
\draw (385.6,130.91) node    {$a_{1}$};
\draw (409.6,130.91) node    {$a_{2}$};
\draw (136.13,93.5) node   [align=left] {Identify};
\draw (295.63,91) node   [align=left] {Plumb};
\draw (300.63,113) node   [align=left] {level transition};
\draw (136.13,114.5) node   [align=left] {zero \& pole};

\end{tikzpicture} 
    \caption{Breaking up a zero} \label{fig406}
\end{figure}
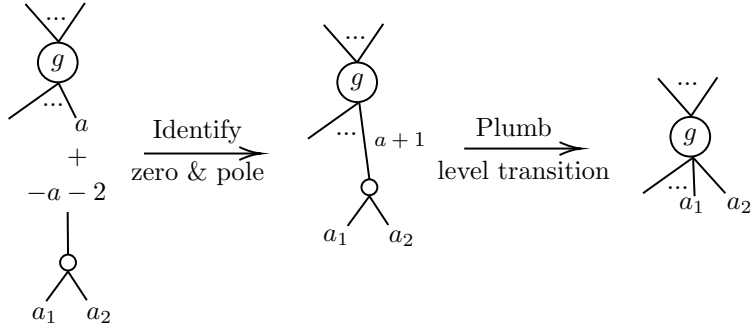

Conversely, if $\overline{\cC}$ contains a multi-scale differential $\overline{X}$ with level graph given in the middle of \Cref{fig406}, then a flat surface in $\cC$ can be obtained by breaking up a zero of the top level component $X_1$. Therefore a degeneration to this boundary divisor is the inverse operation to breaking up a zero. We will call this operation {\em merging zeroes} $z_1$ and $z_2$. We have the following result about merging zeroes. 

\begin{proposition}
\label{merging}
Suppose that $X\in \cR_g (\mu)$ is a general flat surface and it has two zeroes $z_1, z_2$ of orders $a_1, a_2$, respectively. If there exists a multiplicity one saddle connection $\gamma$ joining $z_1$ and $z_2$, then we can merge two zeroes $z_1$ and $z_2$ to obtain a flat surface with a zero of order $a_1 + a_2$. 
\end{proposition}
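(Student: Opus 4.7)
The plan is to apply Proposition \ref{shrink} to the configuration of type I cut out by the single saddle connection $\gamma$. Since $\gamma$ has multiplicity one in $H_1(X,\boldsymbol{z};\ZZ)$, it forms on its own a configuration $\cF$ of type I with $k=1$, whose only parameters are $C_1 = a_1+1$ and $D_1 = a_2+1$, read off from the cone angles on the two sides of $\gamma$ at $z_1$ and $z_2$.

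Next I would read off the configuration graph $\Gamma(\cF)$. It has exactly two vertices $v_{-1}, v_1$ joined by one vertical edge $e_1$ of enhancement $\kappa_1 = C_1+D_1-1 = a_1+a_2+1$. The bottom-level vertex carries the two zeros $z_1, z_2$ and represents the unique (up to scaling) residueless flat surface in $\cH_0(a_1,a_2,-a_1-a_2-2)$, as described in \Cref{zerodim1}. For the top-level vertex, a direct genus check
\[
g_1 = \tfrac{1}{2}\Big[\sum_{j\geq 3} a_j - \sum_j b_j + C_1 + D_1\Big] = \tfrac{1}{2}\big[(2g-2)+2\big] = g,
\]
together with the fact that the node contributes a zero of order $\kappa_1 - 1 = a_1 + a_2$ on the top component, shows that $v_1$ parametrizes flat surfaces in the stratum
\[
\cR_g(a_1+a_2, a_3,\dots,a_m,-b_1,\dots,-b_n),
\]
i.e., precisely the stratum in which $z_1$ and $z_2$ have been merged into a single zero of the required order.

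With this in hand, Proposition \ref{shrink} provides a continuous degeneration of $X$ to a multi-scale differential $\overline{X} \in D(\cF) \subset \partial \overline{\cR}_g(\mu)$ whose top-level component $X_1$ is exactly the desired merged flat surface. This is literally the picture of \Cref{fig406} from \Cref{subsec:breakmerge} read in reverse, so the construction is the inverse of breaking up a zero, which is what the proposition asserts. I do not anticipate a serious obstacle: the multiplicity one hypothesis is precisely what forces the configuration graph to have a single bottom-level $\PP^1$-component carrying both zeros, giving a clean two-vertex level graph with no residue complications. The only point to verify carefully is that the general-position hypothesis from \Cref{parallel} survives the $\operatorname{GL}^+(2,\RR)$-deformations used in \Cref{short} to shrink $\gamma$, but this is immediate because the contraction flow preserves homology classes of saddle connections and hence preserves multiplicity.
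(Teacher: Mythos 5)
Your proposal is correct and follows essentially the same route as the paper: apply \Cref{shrink} with $k=1$ to the singleton configuration determined by $\gamma$, observe that the configuration graph $\Gamma(\cF)$ is exactly the two-vertex, one-edge graph from the middle of \Cref{fig406}, and conclude that the degeneration is the inverse of breaking up a zero. You simply spell out more of the bookkeeping (the values $C_1=a_1+1$, $D_1=a_2+1$, $\kappa_1-1=a_1+a_2$, and the genus check $g_1=g$) that the paper's one-line proof leaves implicit.
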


\begin{proof}
By applying \Cref{shrink} in the case $k=1$, we can easily see that the enhanced level graph $\Gamma(\cF)$ is equal to the enhanced level graph that appears in the middle of \Cref{fig406}. Thus $X$ can be obtained from a flat surface with a zero of order $a_1+a_2$ by breaking up this zero.
\end{proof}

\subsection{Bubbling and unbubbling a handle} \label{subsec:bubble}

The second surgery used in \cite{kozo1} is called {\em bubbling a handle}. For a flat surface $X\in \cR_g (\mu)$ with a zero $z$ of order $a$, bubbling a handle at $z$ produces a flat surface $X'\in \cR_{g+1} (\mu')$ where $\mu'$ is obtained by replacing $a$ with $a+2$.

Bubbling a handle can also be considered as plumbing a node from a certain multi-scale differential. Take a flat surface $(\PP^1 ,\eta_{-1})\in \cR_0 (a +2, -a -2; -1, -1)$. Note that by \Cref{zerodim2}, up to scaling, such a flat surface is uniquely determined by the angle $2\pi s$, $1\leq s\leq a+1$, between the two half-infinite cylinders. Denote $p\in \PP^1$ the residueless pole of order $a+2$. We identify the zero $z$ of $X$ and $p\in \PP^1$. We also identify two simple poles in $\PP^1$ to obtain a multi-scale differential in $\partial\overline{\cR}_g (\mu')$. The enhanced level graph is illustrated in \Cref{fig407} (Note that there is a unique prong-matching equivalence class). By plumbing the horizontal edge and the level transition, we obtain a flat surface $X'\in \cR_{g+1} (\mu')$. This surgery is called {\em bubbling a handle} at $z$. The connected component of $X'$ depends only on the connected component $\cC$ of $X$ and the choice of $1\leq s\leq a+1$. We denote this connected component by $\cC\oplus_{z} s$, as in \cite{lanneauquad} and \cite{boissymero}. Note that the flat surface $(\PP^1 ,\eta_{-1})\in \overline{\cH}_1 (a+2,-a-2)$ we used for bubbling a handle is contained in the boundary of the connected component of $\cH_1(a+2,-a-2)$ with rotation number $\gcd(a+2,s)$. The definition of rotation number will be recalled in \Cref{subsec:rot}. Therefore, we have $\cC'\oplus_z s_1=\cC'\oplus_z s_2$ when $\gcd(a+2,s_1)=\gcd(a+2,s_2)$. 

\begin{figure}
    \centering
    \tikzset{every picture/.style={line width=0.75pt}} 

\begin{tikzpicture}[x=0.75pt,y=0.75pt,yscale=-1,xscale=1]

\draw    (241.2,95.2) -- (247.5,123.83) ;
\draw    (240.75,94.83) -- (215.5,112.83) ;
\draw    (223,56.33) -- (239.75,75.33) ;
\draw    (253,55.83) -- (239.75,75.33) ;
\draw    (83.75,66.83) -- (92.5,84.33) ;
\draw    (83.75,66.83) -- (58.5,84.83) ;
\draw    (66.5,27.33) -- (83.25,46.33) ;
\draw    (96.5,26.83) -- (83.25,46.33) ;
\draw    (88,147.83) -- (88,164.33) ;
\draw     ;
\draw    (408.25,125.33) -- (388.2,142.7) ;
\draw    (391.5,75.83) -- (408.25,94.83) ;
\draw    (421.5,75.33) -- (408.25,94.83) ;
\draw    (408.25,125.33) -- (424.5,143.33) ;
\draw    (88,172.33) -- (88,189.33) ;
\draw    (127.64,116.63) -- (184.64,116.63) ;
\draw [shift={(186.64,116.63)}, rotate = 180] [color={rgb, 255:red, 0; green, 0; blue, 0 }  ][line width=0.75]    (10.93,-3.29) .. controls (6.95,-1.4) and (3.31,-0.3) .. (0,0) .. controls (3.31,0.3) and (6.95,1.4) .. (10.93,3.29)   ;
\draw    (294.64,115.62) -- (351.64,115.62) ;
\draw [shift={(353.64,115.62)}, rotate = 180] [color={rgb, 255:red, 0; green, 0; blue, 0 }  ][line width=0.75]    (10.93,-3.29) .. controls (6.95,-1.4) and (3.31,-0.3) .. (0,0) .. controls (3.31,0.3) and (6.95,1.4) .. (10.93,3.29)   ;
\draw   (84,168.33) .. controls (84,166.12) and (85.79,164.33) .. (88,164.33) .. controls (90.21,164.33) and (92,166.12) .. (92,168.33) .. controls (92,170.54) and (90.21,172.33) .. (88,172.33) .. controls (85.79,172.33) and (84,170.54) .. (84,168.33) -- cycle ;
\draw    (84.89,165.75) .. controls (65.27,153.54) and (67.71,183.46) .. (85.33,171.17) ;
\draw     ;
\draw    (247.5,131.83) -- (247.5,148.83) ;
\draw   (243.5,127.83) .. controls (243.5,125.62) and (245.29,123.83) .. (247.5,123.83) .. controls (249.71,123.83) and (251.5,125.62) .. (251.5,127.83) .. controls (251.5,130.04) and (249.71,131.83) .. (247.5,131.83) .. controls (245.29,131.83) and (243.5,130.04) .. (243.5,127.83) -- cycle ;
\draw    (244.39,125.25) .. controls (224.77,113.04) and (227.21,142.96) .. (244.83,130.67) ;

\draw    (240.25, 85.08) circle [x radius= 10, y radius= 10]   ;
\draw (240.25,85.08) node   [align=left] {$\displaystyle g$};
\draw (238.99,66.34) node [anchor=south] [inner sep=0.75pt]   [align=left] {...};
\draw (235.99,113.84) node [anchor=south] [inner sep=0.75pt]   [align=left] {...};
\draw    (83.25, 56.58) circle [x radius= 10, y radius= 10]   ;
\draw (83.25,56.58) node   [align=left] {$\displaystyle g$};
\draw (82.99,36.84) node [anchor=south] [inner sep=0.75pt]   [align=left] {...};
\draw (79.49,84.34) node [anchor=south] [inner sep=0.75pt]   [align=left] {...};
\draw (96.44,89.84) node    {$a$};
\draw (86.39,138.34) node    {$-a-2$};
\draw (82,106.33) node [anchor=north west][inner sep=0.75pt]    {$+$};
\draw (261.34,107.93) node  [font=\footnotesize]  {$a+1$};
\draw    (408.25, 110.08) circle [x radius= 15.53, y radius= 15.53]   ;
\draw (408.25,110.08) node  [font=\LARGE,color={rgb, 255:red, 255; green, 255; blue, 255 }  ,opacity=1 ] [align=left] {$\displaystyle g$};
\draw (406.99,85.84) node [anchor=south] [inner sep=0.75pt]   [align=left] {...};
\draw (406.99,141.84) node [anchor=south] [inner sep=0.75pt]   [align=left] {...};
\draw (427.94,151.34) node    {$a+2$};
\draw (85.94,197.34) node    {$a+2$};
\draw (154.63,104.5) node   [align=left] {Identify};
\draw (321.63,104.49) node   [align=left] {Plumb};
\draw (248.44,156.84) node    {$a+2$};
\draw (154.63,127) node   [align=left] {zero \& pole};
\draw (326.13,134.49) node   [align=left] {level transition \&\\horizontal node};
\draw (400.95,111) node    {$g$};
\draw (411.95,110) node    {$+1$};

\end{tikzpicture} 
    \caption{Bubbling a handle} \label{fig407}
\end{figure}
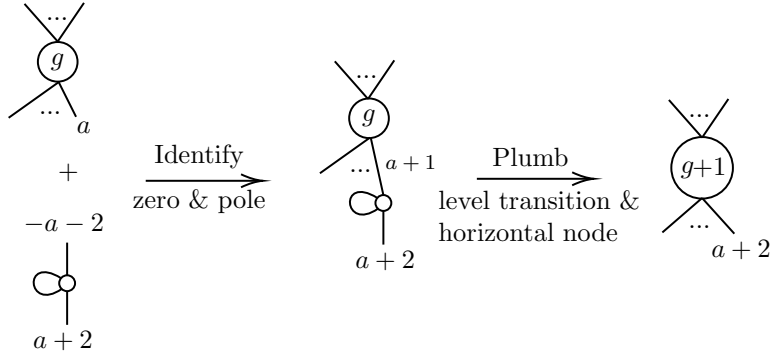

Conversely, if $\overline{\cC}$ contains a multi-scale differential with level graph in the middle of \Cref{fig407}, then a flat surface in $\cC$ can be obtained by bubbling a handle at a zero $z$ of the top level component. If $\cD$ is the connected component containing the top level component, then $\cC=\cD\oplus_{z} s$ for some $s$. Therefore a degeneration to this boundary divisor is the inverse operation of bubbling a handle. We will call this operation {\em unbubbling a handle}. Unlike merging zeroes, the operation of unbubbling a handle requires vanishing of periods of {\em two} independent classes in $H_1 (X, \ZZ)$. So we will achieve this by shrinking two non-parallel multiplicity one saddle connections in \Cref{sec:hg}. 

\section{Hyperelliptic components} \label{sec:hc}

Recall that a connected component $\cC$ of $\cR_g (\mu)$ is called {\em hyperelliptic} if every flat surface $(X,\omega)$ in $\cC$ is hyperelliptic. That is, the curve $X$ has an involution $\sigma$ such that $X/\sigma \cong \PP^1$ and $\sigma^* \omega = -\omega$. In this section, we will enumerate the hyperelliptic components of $\cR_g (\mu)$.

The involution $\sigma$ above permutes the poles, giving an involution $\mathcal{P}\in Sym_n$. It is immediate that $\mathcal{P}$ is a ramification profile of $\cR_g (\mu)$ if $m\leq 2$, as defined in \Cref{def:ramiprof}. In this case, the connected component $\cC$ is said to {\em have the ramification profile} $\cP$. Obviously, the ramification profile is a topological invariant of hyperelliptic components. That is, two hyperelliptic components are distinct if they have different ramification profiles. The number $0\leq r\leq 2g+2$ of marked points (i.e. poles and zeroes) fixed by $\cP$ is also a topological invariant. The hyperelliptic connected component $\cC$ is said to have {\em $r$ fixed marked points} if the number of poles plus the number of zeroes fixed by the ramification profile $\cP$ of $\cC$ is equal to $r$.

\subsection{Existence of hyperelliptic components}

We now prove the existence part of \Cref{mainhyper}.

\begin{proposition} \label{hyperprofile1}
A stratum $\cR_g (\mu)$ with $g>0$ has a hyperelliptic component if and only if $\cR_g(\mu)$ has $m\leq 2$ zeroes and it has a ramification profile. Moreover, for each ramification profile $\cP$ of $\cR_g(\mu)$, there exists at least one hyperelliptic component with ramification profile $\cP$.
\end{proposition}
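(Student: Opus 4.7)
My plan is to prove both directions by parametrizing the hyperelliptic locus in $\cR_g(\mu)$ via descent to $\PP^1$, and then matching the resulting dimension against $\dim\cR_g(\mu)=2g+m-1$.

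For the necessity direction, I take any $(X,\omega)\in\cC$ with hyperelliptic involution $\sigma$ (so $\sigma^*\omega=-\omega$), and extract the induced involution $\cP$ on $\boldsymbol{z}\sqcup\boldsymbol{p}$. A local model computation at a fixed point (where $\sigma$ acts by $z\mapsto -z$) forces the order of $\omega$ there to be even, while a swapped pair of marked points must share its order. Riemann--Hurwitz gives exactly $2g+2$ fixed points of $\sigma$ on $X$. Letting $f,f'$ be the numbers of fixed zeroes and poles, the hyperelliptic locus of type $(f,f')$ is parametrized by configurations of $f+f'+(m-f)/2+(n-f')/2+B$ labeled points on $\PP^1$ modulo $PGL_2$ (with $B=2g+2-f-f'$ extra branch points), the anti-invariant $\omega$ with prescribed divisor being determined up to scale by Riemann--Roch. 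After imposing the $(n-f')/2$ residueless conditions at paired poles, a direct count yields dimension $(m-f)/2+2g$. Setting this equal to $\dim\cR_g(\mu)$ forces $f=2-m$, hence $m\leq 2$, with $f=1$ for $m=1$ and $f=0$ for $m=2$. Combined with the even-order condition on fixed points, $\cP$ is then a ramification profile in the sense of Definition~\ref{def:ramiprof}.

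For the existence direction, given a ramification profile $\cP$ with parameters $(f,f')$, I reverse this construction: choose generic labeled points on $\PP^1$ (images of marked points plus the $B$ additional branch points), form the double cover, and let $\omega$ be the anti-invariant differential with divisor determined by $\mu$ and $\cP$. Imposing residuelessness at the pairs of non-fixed poles produces a subfamily of dimension $(m-f)/2+2g=\dim\cR_g(\mu)$ consisting entirely of hyperelliptic flat surfaces with profile $\cP$. The configuration space of labeled distinct points on $\PP^1$ modulo $PGL_2$ is connected, so this subfamily is connected. Since being hyperelliptic with a given $\cP$ is a closed condition on $\cR_g(\mu)$, this full-dimensional connected closed locus is automatically a single connected component.

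I expect the dimension formula to be the main obstacle, specifically verifying that (i) the space of anti-invariant meromorphic differentials on a hyperelliptic $X$ with prescribed divisor is one-dimensional --- which follows from Riemann--Roch together with the fact that $H^0(X,K_X)$ is entirely anti-invariant on hyperelliptic curves --- and (ii) the $(n-f')/2$ residue conditions at pairs of non-fixed poles are transverse, amounting to the surjectivity of the residue map restricted to the anti-invariant part. A minor subtlety is the two-fold choice of labeling between the two preimages of each paired marked image on $\PP^1$, but these labelings are exchanged by dragging through a branch point, so connectedness of the parameter space is preserved.
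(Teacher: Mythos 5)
Your approach is essentially the same descent-to-$\PP^1$ dimension count that the paper uses, but you work directly with anti-invariant abelian differentials on the double cover, whereas the paper passes through quadratic differentials on $\PP^1$. The paper's route is cleaner for two reasons. First, the existence of the anti-invariant $\omega$ with prescribed divisor is not supplied by Riemann--Roch alone: Riemann--Roch gives uniqueness, but existence requires knowing the prescribed divisor is canonical, which is automatic once you observe there is always a quadratic differential on $\PP^1$ with prescribed zero/pole orders summing to $-4$, whose pullback has a square root. Second, and more seriously, your step (ii) --- the transversality of the residue conditions, i.e. the surjectivity of the residue map restricted to the anti-invariant part --- is not a minor subtlety to be checked at the end; it is the crucial non-formal input. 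The paper invokes \cite[Theorem~1.3]{getaquadratic} exactly here to get both nonemptiness and the precise codimension $(n-f')/2$. Without the exact codimension your necessity argument only yields an inequality $m-1\geq m_2$ (automatically true), not the constraint $m_1+m_2=1$ forcing $m\leq 2$; and without nonemptiness your existence argument produces a possibly empty family. So (ii) is a genuine gap, not a verification you can defer.

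A smaller issue: your claim that the parametrizing family is connected because $\cM_{0,N}$ is connected does not follow, since the residue conditions carve out a proper subvariety of $\cM_{0,N}$ that could a priori be disconnected. Fortunately this claim is not needed for the proposition: for both the characterization and the existence of at least one hyperelliptic component with profile $\cP$, it suffices that the hyperelliptic locus is a nonempty closed full-dimensional subset of $\cR_g(\mu)$, hence a union of components. Indeed the paper explicitly defers connectedness (uniqueness of the component per profile) to Section~\ref{sec:chc}, so you should not assert it here.
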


\begin{proof}
We first deal with the case of \MIN strata. Then the profile $\cP$ must fix the unique zero $z$. By relabeling the poles, we may assume that $\cP$ fixes $1,\dots,r-1$ and $\cP(r+2i)=r+2i-1$ for each $i=1,\dots,f$ where $f=\frac{n-r-1}{2}$.

Denote $$\nu \coloneqq (a-1,-b_1-1, \dots, -b_{r-1} -1, -1^{2g+2-r}, -2b_{r},-2b_{r+2} \dots, -2b_{r+2f}).$$ This is a partition of $-4$, so there exists a stratum $\cQ_0 (\nu)$ of meromorphic quadratic differentials on $\PP^1$. Since a quadratic differential on $\PP^1$ is uniquely given (up to scaling) by the locations of its poles and zeroes, $\PP \cQ_0 (\nu)$ is isomorphic to $\cM_{0,2g+2+f}$. So $\dim_{\mathbb{C}} \cQ_0 (\nu)=2g+f$. Let $w$ denote the unique zero of order $a-1$, $u_i$ denote the pole of order $-b_i-1$, $v_j$ denote the pole of order $-2b_{r+2j}$, and $s_{\ell}$ denote the simple poles of the quadratic differentials in $\cQ_0 (\nu)$. The 2-residues of a quadratic differential $\xi\in \cQ_0 (\nu)$ automatically vanish at all odd order poles $u_i$ and $s_{\ell}$. Recall from \cite{getaquadratic} that the residual application $\mathscr{R}^2_0 (\nu) : \cQ_0 (\nu) \to \CC^f$ is a map sending a quadratic differential $\xi$ to $(\operatorname{Res}^2_{v_j}\xi)_j$. Let $\cQ^R_0 (\nu)$ denote the subspace of $\cQ_0 (\nu)$ of quadratic differentials with vanishing 2-residues at all $v_j$. By \cite[Theorem~1.3]{getaquadratic}, $\mathscr{R}^2_0 (\nu)$ is surjective and $\cQ^R_0 (\nu)\subset\cQ_0 (\nu)$ is a nonempty subspace of codimension $f$. So $\dim_{\mathbb{C}} \cQ^R_0 (\nu)=2g$.

Given a quadratic differential $\xi \in \cQ_0 (\nu)$, consider a double covering $\phi : C\to \PP^1$ ramified at the first $2g+2$ marked points. Note that $\phi$ is ramified exactly over the poles of odd orders. The curve $C$ is hyperelliptic (or elliptic if $g=1$) and $\phi$ is unique up to post-composing the hyperelliptic involution $\sigma$ on $C$, assuming $g>1$. The pullback $\phi^{\star} \xi$ is a square of an abelian differential $\omega$ on $C$, uniquely determined up to multiplication by $\pm 1$ (see \cite{lanneauquad}). We have

\begin{equation*}
    \operatorname{ord}_{\phi(x)} \xi =
    \begin{cases*}
        \operatorname{ord}_x \omega -1& if $\sigma$ fixes $x$ \\
        2\operatorname{ord}_x \omega & otherwise
    \end{cases*}
\end{equation*}

We label the preimages of $v_j$ by $p_{r+2j}$ and $p_{r+2j+1}$ and the preimage of $u_i$ by $p_i$. Then $(C,\omega)$ is contained in the stratum $\cH_g (\mu)$. Since $\phi$ is compatible with $\sigma$, we have $\sigma^* \omega=-\omega$ and therefore $(C,\omega)$ is a hyperelliptic flat surface. Also, $\operatorname{Res}^2_{\phi(x)} \xi =(\operatorname{Res}_x \omega)^2$. Therefore if $(\PP^1,\xi) \in \cQ^R_0 (\nu)$, then $(C,\omega) \in \cR_g(\mu)$. This gives an injective morphism $\Phi: \PP\cQ^R _0 (\nu) \to \PP\cR_g (\mu)$ whose image $\operatorname{Im} \Phi$ consists of hyperelliptic flat surfaces. Since $\dim_{\mathbb{C}} \PP \cQ^R _0 (\nu)=\dim_{\mathbb{C}} \PP \cR_g (\mu)=2g$, $\operatorname{Im} \Phi$ is a connected component of $\PP\cR_g (\mu)$.

Suppose now that $\cR_g (\mu)$ is a \NMIN stratum. So $m=2$ and $a_1=a_2=a$. By relabeling the poles, we may assume that $\cP$ fixes $1,\dots, r$ and $\cP(r+2i-1)=r+2i$ for each $i=1,\dots, f$, where $f=\frac{n-r}{2}$. As in the \MIN hyperelliptic case, denote $$\nu \coloneqq (2a, -b_1 -1, \dots, -b_r -1, -1^{2g+2-r}, -2b_{r+2}, \dots, -2b_f)$$ and we find a hyperelliptic component using the double covering of $\PP^1$ ramified at the odd poles of a quadratic differentials in $\cQ^R_0 (\nu)$.

Conversely, suppose that $\cR_g (\mu)$ has a hyperelliptic connected component $\cC$. The hyperelliptic involution gives an involution $\cP$ on the set of poles and zeroes. Therefore, in order to prove that $\cR_g (\mu)$ is of hyperelliptic type, it is enough to show that $m\leq 2$ and $\cP$ interchanges two zeroes when $m=2$. 

For any $(X,\omega)\in \cC$, there exists a hyperelliptic involution $\sigma$ such that $\sigma^\star \omega = -\omega$. Let $\phi :X\to X/\sigma \simeq \PP^1$ be the quotient map. There exists a unique quadratic differential $\xi$ on $\PP^1$ such that $\phi^\star \xi=(\omega)^2$. If $\sigma$ fixes a point $p\in X$, choose a local complex coordinate $x$ on~$X$ at $p$ such that locally $\sigma$ is given by $x\mapsto -x$. If the order of $\omega$ at $p$ is equal to $b$, then $\omega = (c_b x^b + c_{b+1}x^{b+1}+\dots) dx$ and $\sigma^\star \omega = (c_b (-x)^b + \dots) d(-x) = ((-1)^{b+1} c_b x^b +\dots) dx =-\omega$. Therefore, $c_i=0$ for all $i$ even. In particular, $b$ is even and $p$ is residueless if $b<0$. Also, $\operatorname{ord}_{\phi(p)}\xi=\operatorname{ord}_p \omega-1=b-1$ is odd. If $\sigma(p_1)=p_2$, then $p_1$ and $p_2$ have the same order and $\operatorname{ord}_{\phi(p_1)}\xi= 2\operatorname{ord}_{p_1} \omega$ is even. Moreover, $\operatorname{Res}^2_{\phi(p_1)}\xi =(\operatorname{Res}_{p_1} \omega)^2$. Therefore, the 2-residue of $\xi$ at $\phi(p_1)$ vanishes if and only if $p_1$ is residueless. In particular, the singularity type $\nu$ and the 2-residue condition of $\xi$ is completely determined by $\cP$. 

Since all orders of singularities of $\xi$ are integers, they are constant along deformations of $(X,\omega)\in \cC$ and thus we obtain a morphism $\Phi' : \PP\cC \to \PP \cQ^R_0 (\nu)$. Since $\PP\cC$ is connected, the image is also contained in a connected component of $\PP \cQ^R_0 (\nu)$. We denote this connected component by $\PP \cD$. Conversely, for any quadratic differential $(\PP^1,\xi)\in \cD$, consider the double covering $X\to \PP^1$ ramified at the odd poles. The pullback $\phi^\star \xi$ is a square of an abelian meromorphic differential $\omega$ on $X$, uniquely determined up to multiplication by $\pm 1$, which defines a morphism $\Phi: \PP\cD \to \PP\cR_g (\mu)$. The morphisms $\Phi,\Phi'$ are the inverses to each other, thus $\PP\cC$ and $\PP\cD$ are isomorphic. In particular, the dimensions of $\cR_g (\mu)$ and $\cQ^R_0 (\nu)$ must be equal. 

The dimension of $\cR_g (\mu)$ is equal to $2g+m-1$. Suppose that $\cP$ fixes $m_1$ zeroes and $n_1$ poles, and interchanges $m_2$ pairs of zeroes and $n_2$ pairs of poles. Then $m=m_1+2m_2$, $n=n_1+2n_2$. The dimension of $\cQ_0 (\nu)$ is equal to $2g+m_2+n_2$. The codimension of $\cQ^R_0 (\nu)$ in $\cQ_0 (\nu)$ is equal to the number of even order poles. There are exactly $n_2$ of them. So $\dim_{\mathbb{C}}\cQ^R_0 (\nu) =2g+m_2$ and we must have $2g+m-1=2g+m_2$. That is, $m_1+m_2=1$. So $\cR_g (\mu)$ has either a unique zero ($m_1=1,m_2=0$), or a pair of zeroes interchanged by $\cP$ ($m_1=0,m_2=1$).
\end{proof}

In order to complete the proof of \Cref{mainhyper}, we need to show that for a given ramification profile $\cP$ of $\mu$, there is a unique corresponding hyperelliptic component. In \Cref{sec:chc}, together with the techniques developed throughout \Cref{sec:ssc} and \Cref{sec:g1m}, we will prove the remaining part of \Cref{mainhyper} by using the description in this subsection.

\subsection{Boundary of hyperelliptic components} \label{subsec:pbhc}

In this subsection, we will give an immediate description of some boundary elements of hyperelliptic components. Let $\cC$ be a hyperelliptic component of $\cR_g (\mu)$. The symmetry given by the hyperelliptic involution allows us to describe the multi-scale differentials in $\partial\overline{\cC}$. They also have an involution compatible with the level structure and the prong-matching equivalence class. For example, if $\dim_{\mathbb{C}} \cR_g(\mu)>2$, then by \Cref{break} there exists a two-level differential $\overline{Y}$ with two irreducible component intersecting at a node $q$. Then both components have the symmetries compatible to each other. In conclusion, we have the following equivalent condition to being contained in a hyperelliptic component. 

\begin{lemma} \label{hyper1}
Suppose that $\dim_{\mathbb{C}} \cR_g(\mu)>2$. Let $\cC$ be a hyperelliptic connected component of $\cR_g (\mu)$, and $\overline{Y}\in \partial \overline{\cC}$ be a two-level differential obtained by \Cref{break}. Then both $Y_0$ and $Y_{-1}$ are hyperelliptic flat surfaces and their hyperelliptic involutions $\sigma_0$ and $\sigma_{-1}$ are compatible with the ramification profile of $\cC$. Moreover, both involutions fix the node $q$. 

Conversely, any multi-scale differential $(\overline{Y},\eta)$ satisfying the above conditions is contained in the principal boundary of {\em some} hyperelliptic component of $\cR_g (\mu)$ with ramification profile $\cP$. 
\end{lemma}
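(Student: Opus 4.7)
The plan is to realize $\overline{Y}$ as the central fiber of a one-parameter smoothing family $(X_t,\omega_t)\in \cC$ coming from the plumbing construction of Section~\ref{sec:msc}, and to track the hyperelliptic involution through this degeneration in both directions.

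For the forward direction, each smooth $X_t$ carries a unique involution $\sigma_t$ with $\sigma_t^\star\omega_t=-\omega_t$ and $X_t/\sigma_t\cong\PP^1$. Since the moduli of multi-scale differentials is a smooth DM stack with a universal family and the automorphism scheme of a family of stable pointed curves is proper, the $\sigma_t$ specialize to an involution $\sigma$ of the central fiber $(\overline{Y},\eta)$ with $\sigma^\star\eta=-\eta$. Such a $\sigma$ is by construction an automorphism of the multi-scale differential, so it preserves the level function, the enhanced level graph and the prong-matching equivalence class. Because $Y_0$ and $Y_{-1}$ sit at distinct levels, $\sigma$ cannot interchange them, hence restricts to involutions $\sigma_0$ and $\sigma_{-1}$ on the two components, and the unique node $q$ is fixed. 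The quotient $X_t/\sigma_t\cong \PP^1$ degenerates to the nodal curve $\overline{Y}/\sigma$, which is then forced to consist of two $\PP^1$'s meeting at one node; this shows that $Y_i/\sigma_i\cong \PP^1$, so both components are hyperelliptic. Marked points do not change components under specialization and the permutation they induce is locally constant on $\overline{\cC}$, so the involution of $\boldsymbol{z}\sqcup \boldsymbol{p}$ induced by $\sigma_0\sqcup \sigma_{-1}$ agrees with the ramification profile $\cP$ of $\cC$.

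For the converse, suppose $(\overline{Y},\eta,\sigma_0,\sigma_{-1})$ satisfies the hypotheses. In the standard plumbing coordinate $u$ at $q^+$ with $\omega_0=d(u^\kappa)$, the condition $\sigma_0^\star\omega_0=-\omega_0$ together with $\sigma_0(q^+)=q^+$ forces $\sigma_0(u)=\zeta u$ for some $2\kappa$-th root of unity $\zeta$ with $\zeta^\kappa=-1$; the analogous computation in the coordinate $v$ at $q^-$ gives $\sigma_{-1}(v)=\eta v$ with $\eta^\kappa=-1$. After choosing a $\sigma$-invariant representative of the prong-matching equivalence class (this is always possible because $\sigma_0,\sigma_{-1}$ act by rotation on the prongs with opposite rotation numbers modulo $\kappa$), we can normalize so that $\eta=\zeta^{-1}$; then the plumbing relation $uv=s$ is $\sigma$-equivariant for any $s$, and $\sigma$ extends to a holomorphic involution on the plumbed smooth surface $(X,\omega)$ with $\sigma^\star\omega=-\omega$. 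The quotient $X/\sigma$ is obtained by smoothing the node of $Y_0/\sigma_0\cup Y_{-1}/\sigma_{-1}\cong \PP^1\cup_{[q]}\PP^1$, which is $\PP^1$. Hence $(X,\omega)$ is hyperelliptic and lies in some hyperelliptic component $\cC'$, whose ramification profile is $\cP$ by construction.

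The main obstacle I expect is the compatibility at the node in the converse: one must check that a prong-matching representative can genuinely be chosen $\sigma$-invariant, i.e.\ that the rotation numbers through which $\sigma_0$ and $\sigma_{-1}$ act on the prongs at $q^+$ and $q^-$ are compatible modulo the level rotation action. This reduces to a direct check from $\sigma_i^\star\eta_i=-\eta_i$ together with $\sigma_i(q)=q$, but it is the only place where the global hypotheses on $(\overline{Y},\eta)$ genuinely enter, and the rest of the argument is a routine application of the explicit plumbing formulas recalled earlier.
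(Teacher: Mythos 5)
The paper states this lemma without a written proof, presenting it as an immediate consequence of the preceding paragraph, so there is no argument to compare against directly; your proposal is a genuine reconstruction and the overall strategy is right. The forward direction is correct as written.

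The converse, however, has a gap in the local analysis at the node. You derive $\sigma_0(u)=\zeta u$ with $\zeta^\kappa=-1$ and $\sigma_{-1}(v)=\eta v$ with $\eta^\kappa=-1$, but you never impose $\sigma_0^2=\operatorname{id}$ and $\sigma_{-1}^2=\operatorname{id}$. Doing so forces $\zeta^2=\eta^2=1$, hence $\zeta=\eta=-1$, and combined with $\zeta^\kappa=-1$ this also forces $\kappa$ odd, which is exactly the parity constraint (used in the proof of Proposition~\ref{hyperprofile1}) that a fixed point of the hyperelliptic involution supports a differential of even order. Once you know $\sigma_0(u)=-u$ and $\sigma_{-1}(v)=-v$, the plumbing relation $uv=s$ is $\sigma$-equivariant outright, since $(-u)(-v)=uv$: there is no normalization $\eta=\zeta^{-1}$ to arrange because it holds automatically, and the equivariance does not depend on the prong-matching coordinate at all. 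As a consequence, your parenthetical claim that $\sigma_0,\sigma_{-1}$ act by rotation on the prongs with opposite rotation numbers modulo $\kappa$ is not correct: with $\kappa$ odd, multiplication by $-1$ sends each outgoing prong direction $e^{2\pi ik/\kappa}$ to $-e^{2\pi ik/\kappa}$, which lies halfway between two adjacent outgoing prongs (it is an incoming separatrix), so $\sigma_0$ does not permute the outgoing prongs and there is no well-defined ``$\sigma$-invariant prong-matching representative'' in the sense you describe. None of this obstructs the lemma --- and indeed, with a single node between two levels the level rotation action on $\ZZ/\kappa\ZZ$ is transitive, so the prong-matching equivalence class is unique, which is why no prong-matching condition appears in this statement whereas Lemma~\ref{hyper2} (two nodes) does state one. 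In short, the step you flag as the ``main obstacle'' is actually trivial once $\sigma^2=\operatorname{id}$ is used, but the route you propose for closing it does not work as stated and should be replaced by the direct computation above.
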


\begin{figure}
    \centering
    \tikzset{every picture/.style={line width=0.75pt}} 

\begin{tikzpicture}[x=0.75pt,y=0.75pt,yscale=-1,xscale=1]

\draw    (182.67,173.33) -- (193.67,158.33) ;
\draw    (203.17,172.83) -- (193.67,158.33) ;
\draw    (194.1,81.6) -- (194.67,126.33) ;
\draw    (220.14,102.9) -- (194.67,126.33) ;
\draw    (178.17,35.83) -- (194.92,54.83) ;
\draw    (208.17,35.33) -- (194.92,54.83) ;

\draw    (194.07, 68.07) circle [x radius= 13.09, y radius= 13.09]   ;
\draw (194.07,68.07) node   [align=left] {$\displaystyle g_{0}$};
\draw (48.66,146.84) node    {$level\ -1$};
\draw (42.21,67.84) node    {$level\ 0$};
\draw (194.16,40.83) node   [align=left] {...};
\draw (203.16,105.83) node   [align=left] {...};
\draw (194.76,25.09) node  [font=\scriptsize]  {$\boldsymbol{p}_{0}$};
\draw (185.11,94.93) node  [font=\footnotesize]  {$\kappa $};
\draw (181.49,181.07) node    {$a$};
\draw (205.49,181.57) node    {$a$};
\draw (150.99,145.57) node    {$\sigma _{-1} \curvearrowright $};
\draw (147.24,69.07) node    {$\sigma _{1} \curvearrowright $};
\draw (216.76,93.09) node  [font=\scriptsize]  {$\boldsymbol{p}_{-1}$};
\draw    (195.07, 142.57) circle [x radius= 15.81, y radius= 15.81]   ;
\draw (195.07,142.57) node   [align=left] {$\displaystyle g_{-1}$};

\end{tikzpicture} 
    \caption{The involutions of $\overline{Y}$} \label{fig501}
\end{figure}
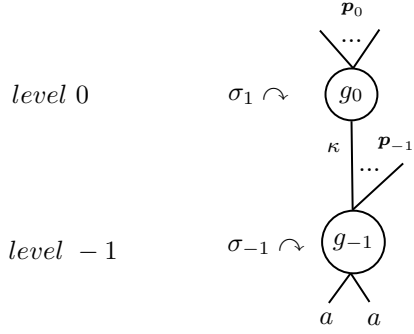

The level graph of $\overline{Y}$ and the involutions on each level are illustrated in \Cref{fig501}. Similarly, for a two-level differential with two irreducible components intersecting at two nodes, we also have the condition equivalent to being contained in the boundary of a hyperelliptic component. 
 
\begin{lemma} \label{hyper2}
Let $\cC$ be a hyperelliptic connected component of $\cR_g (\mu)$, and $\overline{Y}\in \partial \overline{\cC}$ be a two-level differential with two nodes $s_1,s_2$ between the levels. Then both $Y_0$ and $Y_{-1}$ are hyperelliptic flat surfaces and their hyperelliptic involutions $\sigma_0$ and $\sigma_{-1}$ are compatible with the ramification profile of $\cC$. Moreover, both involutions interchange the nodes $s_1,s_2$ and the prong-matching classes are compatible with the involution.

Conversely, any multi-scale differential $\overline{Y}$ satisfying the above conditions is contained in the principal boundary of {\em some} hyperelliptic component of $\cR_g (\mu)$ with ramification profile $\cP$. 
\end{lemma}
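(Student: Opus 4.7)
The proof will parallel \Cref{hyper1}, with the extra point being that the involutions must interchange the two nodes rather than fix them. The main new ingredient is a residue obstruction ruling out the fixed case, together with an equivariant version of the two-node plumbing from \Cref{sec:msc}.

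For necessity, I would take a smoothing family $(X_t,\omega_t)\in\cC$ converging to $\overline{Y}$ in $\overline{\cR}_g(\mu)$ as $t\to 0$, each equipped with its hyperelliptic involution $\sigma_t$ satisfying $\sigma_t^*\omega_t=-\omega_t$ and inducing $\cP$ on the marked points. Since the automorphism group of a stable pointed curve with differential acts properly on $\overline{\cR}_g(\mu)$, after extracting a subsequence the $\sigma_t$ converge to an automorphism $\sigma$ of $\overline{Y}$ preserving the enhanced level structure and the prong-matching equivalence class. The restriction of $\sigma$ to each irreducible component gives the required $\sigma_0, \sigma_{-1}$ with $\sigma_i^*\eta_i=-\eta_i$ and compatible with $\cP$. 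A priori $\sigma$ either fixes both of $s_1,s_2$ or swaps them. To exclude the first possibility, I use the Global Residue Condition on $Y_{-1}$: the residues of $\eta_{-1}$ at $s_1^-$ and $s_2^-$ sum to zero and, in the two-node plumbing described in \Cref{sec:msc}, are necessarily nonzero. But if $\sigma_{-1}$ fixed $s_1^-$, in a local coordinate $x$ with $\sigma_{-1}(x)=-x$ the equation $\sigma_{-1}^*\eta_{-1}=-\eta_{-1}$ would force the Laurent expansion of $\eta_{-1}$ to have only even-power coefficients, so the residue would vanish -- a contradiction. Hence $\sigma_{-1}$ swaps $s_1^-$ and $s_2^-$; by continuity $\sigma_0$ swaps $s_1^+$ and $s_2^+$, and compatibility of the prong-matching class is inherited from $\sigma$ being a global automorphism of $\overline{Y}$.

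For sufficiency, given such $\overline{Y}$ I would construct a symmetric plumbing. Normalize $\eta_{-1}$ so that $\operatorname{res}_{s_1^-}\eta_{-1}=1$; then $\sigma_{-1}^*\eta_{-1}=-\eta_{-1}$ together with $\sigma_{-1}(s_1^-)=s_2^-$ automatically gives $\operatorname{res}_{s_2^-}\eta_{-1}=-1$, so the setup of the ``Pair of nodes'' plumbing in \Cref{sec:msc} applies. The space of modification differentials $\xi$ on $Y_0$ with simple poles at $s_1^+,s_2^+$ and residues $1,-1$ is a nonempty affine subspace; averaging any such $\xi_0$ as $\tfrac{1}{2}(\xi_0-\sigma_0^*\xi_0)$ produces a modification differential $\xi$ with $\sigma_0^*\xi=-\xi$. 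Using standard coordinates at the four node points determined by an equivariant representative of the prong-matching class (which exists because $\sigma$ preserves the class), the two local plumbings at $s_1$ and $s_2$ can be performed with the same smoothing parameter $s$, and the formulas $\omega_0-s^\kappa\xi=s^\kappa\omega_{-1}$ on the glued neighborhoods are preserved by the combined involution. Consequently $\sigma_0$ and $\sigma_{-1}$ glue to a global involution $\sigma_s$ of the smoothed surface $(X_s,\omega_s)$ with $\sigma_s^*\omega_s=-\omega_s$. The quotient $X_s/\sigma_s$ has genus zero, since both $Y_0/\sigma_0$ and $Y_{-1}/\sigma_{-1}$ do and the gluing is equivariant, so $X_s$ is hyperelliptic with ramification profile $\cP$, placing $\overline{Y}$ in the boundary of a hyperelliptic component with profile $\cP$.

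The main technical obstacle is verifying that the prong-matching equivalence class admits an equivariant representative, or equivalently that the combinatorial symmetry required for equivariant plumbing is consistent with the level rotation quotient; this amounts to checking that the involution of the prong rotation group $\ZZ/\kappa_1\ZZ\times\ZZ/\kappa_2\ZZ$ induced by swapping $s_1,s_2$ has a fixed coset in the relevant equivalence class, which follows from $\kappa_1=\kappa_2$ (forced by $\sigma$ swapping the nodes) and the diagonal structure of the level rotation action.
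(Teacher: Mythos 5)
The paper does not actually supply a proof for this lemma (it is asserted as an analogue of \Cref{hyper1}, which is also left unproved), so the question is whether your argument is correct on its own terms.

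The structure of your proof is the right one: take a smoothing family, extract a limit automorphism $\sigma$ preserving levels, restrict to each component, show the nodes must be swapped rather than fixed, and then for the converse perform an equivariant plumbing. The equivariant plumbing argument and the prong-matching remark at the end are essentially sound, given that the hypotheses of the lemma already hand you compatibility of the prong-matching class.

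However, there is a genuine gap in the step where you rule out $\sigma$ fixing both nodes. You argue that the residues of $\eta_{-1}$ at $s_1^-, s_2^-$ are ``necessarily nonzero'' because of the two-node plumbing with Global Residue Condition in \Cref{sec:msc}. That does not follow: the GRC only says that the two residues sum to zero (since $Y_0$ carries no non-residueless marked poles), and this is perfectly consistent with both residues vanishing. If $g_{-1}\geq 1$, the bottom component can lie in a residueless stratum and both node-residues can be zero; in that case your Laurent-expansion argument produces no contradiction at all, because the residue is zero exactly as it should be for a pole fixed by the involution. The plumbing construction in \Cref{sec:msc} for the nonzero-residue case is a description of how to smooth when the residue is nonzero, not a proof that it is nonzero, so citing it here is circular.

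The clean way to exclude the fixed-nodes case makes no reference to residues at all. Since $\sigma$ is the limit of the hyperelliptic involutions $\sigma_t$, the quotient $\overline{Y}/\sigma$ is the central fibre of a flat family of curves whose general fibre is $\PP^1$, hence has arithmetic genus $0$. If $\sigma$ preserved each of $Y_0$ and $Y_{-1}$ (forced by level-preservation) and fixed both nodes, then $\overline{Y}/\sigma$ would be a nodal curve with two irreducible components joined at two nodes, of arithmetic genus at least $1$; this is impossible. Therefore $\sigma$ must interchange $s_1$ and $s_2$, so $\overline{Y}/\sigma$ has two components meeting at a single node and arithmetic genus $0$, which is consistent. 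This argument works uniformly, whether or not the residues at the nodes vanish. You should also phrase the converse so as not to assume $\operatorname{res}_{s_1^-}\eta_{-1}\neq 0$: in the vanishing-residue case the equivariant plumbing still goes through, using the simpler ``no residue'' local model at each node, so no modification differential is needed.
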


The level graph of $\overline{Y}$ and the involutions on each level are illustrated in \Cref{fig502}.

\begin{figure}
    \centering
    \tikzset{every picture/.style={line width=0.75pt}} 

\begin{tikzpicture}[x=0.75pt,y=0.75pt,yscale=-1,xscale=1]

\draw    (168.33,173) -- (168.33,156) ;
\draw    (160.6,107.47) -- (168.33,124) ;
\draw    (167.6,77.47) .. controls (144.6,88.97) and (140.6,108.47) .. (168.33,124) ;
\draw    (167.6,77.47) .. controls (186.6,84.97) and (199.6,105.97) .. (168.33,124) ;
\draw    (151.33,32) -- (168.08,51) ;
\draw    (181.33,31.5) -- (168.08,51) ;

\draw (21.83,136) node [anchor=north west][inner sep=0.75pt]    {$level\ -1$};
\draw (21.83,60.5) node [anchor=north west][inner sep=0.75pt]    {$level\ 0$};
\draw (170.82,110) node   [align=left] {...};
\draw (138.03,98.59) node  [font=\footnotesize]  {$Q$};
\draw (201.03,98.59) node  [font=\footnotesize]  {$Q$};
\draw (167.32,35.5) node   [align=left] {...};
\draw (167.43,20.76) node  [font=\scriptsize]  {$\boldsymbol{p}_{0}$};
\draw (168.99,179) node    {$a$};
\draw (171.76,100.09) node  [font=\scriptsize]  {$\boldsymbol{p}_{-1}$};
\draw (123.99,141.07) node    {$\sigma _{-1} \curvearrowright $};
\draw (120.24,66.07) node    {$\sigma _{0} \curvearrowright $};
\draw    (168.57, 64.07) circle [x radius= 13.09, y radius= 13.09]   ;
\draw (168.57,64.07) node   [align=left] {$\displaystyle g_{0}$};
\draw    (168.57, 140.1) circle [x radius= 15.81, y radius= 15.81]   ;
\draw (168.57,140.1) node   [align=left] {$\displaystyle g_{-1}$};
\draw (235.61,100.36) node  [font=\footnotesize]  {$Pr$};

\end{tikzpicture} 
    \caption{The involutions of $\overline{Y}$ with two nodes} \label{fig502}
\end{figure}

\section{Multiplicity one saddle connections} \label{sec:ssc}

In \Cref{subsec:breakmerge}, we showed that the existence of a certain set of multiplicity one saddle connections enables us to merge zeroes and unbubble a handle from a flat surface. In this section, we will show that every connected component of $\cR_g(\mu)$, except for the hyperelliptic components with $2g+2$ fixed marked points, contains a flat surface with a multiplicity one saddle connection. The main goal of this section is the following

\begin{theorem} \label{simple}
Let $\cC$ be a connected component of a residueless stratum $\cR_g(\mu)$ of genus $g>0$. Assume that $\cC$ is {\em not} a hyperelliptic component with $2g+2$ fixed marked points. Then for any given pair $z_i, z_j$ of (possibly identical when $m=1$) zeroes, $\cC$ contains a flat surface with a multiplicity one saddle connection joining $z_i$ and $z_j$. 
\end{theorem}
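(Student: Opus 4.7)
The plan is to proceed by induction on the complex dimension $\dim_{\mathbb{C}} \cR_g(\mu) = 2g+m-1$, treating the hyperelliptic and non-hyperelliptic cases differently. The base cases, of dimension $2$ (so $(g,m)=(1,1)$), will be handled by direct construction: either by exhibiting an explicit cylinder decomposition, or in the hyperelliptic situation by pulling back a short generic saddle connection of a quadratic differential under the double cover of \Cref{hyperprofile1}.

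For the inductive step in the non-hyperelliptic case, I would first apply \Cref{break} to obtain a two-level multi-scale differential $\overline{Y}\in\partial\overline{\cC}$ whose bottom component $Y_{-1}$ contains the prescribed pair $z_i,z_j$ (the last bullet of \Cref{break} gives this when $m\geq 2$, and it is automatic when $m=1$). Then $Y_{-1}$ lies in a residueless stratum of strictly smaller dimension, with an additional residueless pole at the node $q$ between the levels. By the induction hypothesis, the connected component of $Y_{-1}$ contains a flat surface carrying a multiplicity one saddle connection $\gamma$ joining $z_i$ and $z_j$. After plumbing the level transition at $q$, $\gamma$ deforms to a saddle connection on the smoothed surface in $\cC$; by \Cref{parallel} and a period-coordinate perturbation on the smoothing family, no new saddle connection homologous to $\gamma$ in $H_1(X,\boldsymbol{z};\ZZ)$ appears on a generic fibre, so multiplicity one is preserved.

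For the hyperelliptic case with $r<2g+2$ fixed marked points, I would exploit the isomorphism $\PP\cC\simeq \PP\cQ^R_0(\nu)$ established in \Cref{hyperprofile1}. Because $r<2g+2$, at least one Weierstrass point of the double cover $\phi:X\to\PP^1$ is unmarked, equivalently $\nu$ has an odd-order singularity that is not in the list of marked points of $\xi$. Choosing a generic $(\PP^1,\xi)\in\cD$, I would pick a short saddle connection $\delta$ on $\PP^1$ joining $\phi(z_i)$ (or running between $\phi(z_i)=\phi(z_j)$ and $\phi(z_j)$ when $\cP$ swaps them) and passing through such an unmarked odd-order singularity. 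The preimage of $\delta$ under $\phi$ is a single saddle connection $\gamma$ joining $z_i$ and $z_j$ with $\sigma(\gamma)=\gamma$, so the hyperelliptic involution produces no distinct homologous partner; by a generic choice of $\xi$ together with \Cref{parallel}, no other parallel partners exist, and $\gamma$ has multiplicity one.

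\textbf{Main obstacle.} The most delicate point will be to ensure that when \Cref{break} is invoked in the inductive step, the connected component of the bottom-level stratum to which we pass is not itself the exceptional hyperelliptic component with all marked points fixed; this could occur for very symmetric degenerations and would require either varying the choice of degeneration in \Cref{break}, or handling a short list of exceptional sub-cases by hand. A secondary technical issue is the persistence of the multiplicity one property under plumbing near the boundary: one has to verify that the smoothing family generically avoids the codimension-one loci on which an extra saddle connection becomes homologous to $\gamma$, which is a period-coordinate genericity argument but needs to be carried out with care when $Y_{-1}$ has small dimension.
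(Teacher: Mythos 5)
Your overall architecture (induction on $2g+m-1$, degenerating via \Cref{break}, and treating hyperelliptic components separately through the quadratic-differential picture) is sound in outline, but the two places you treat as routine are exactly where the paper's proof does nearly all of its work, and your inductive step has a genuine gap that you do not flag.

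The most serious omission is in the inductive step when $m\geq 2$: after applying \Cref{break} to put $z_i,z_j$ in $Y_{-1}$, the bottom component $Y_{-1}$ may have genus $0$, in which case the induction hypothesis simply does not apply ($\cR_0(\cdot)$ is outside the scope of the theorem, and there is no multiplicity-one statement to inherit). You then cannot conclude. The paper handles this by switching attention to the \emph{top} level component $Y_0$ (which is a residueless \MIN surface of genus $g>0$): either $Y_0$ has a multiplicity-one saddle connection from the node to itself, which \Cref{suitableprong} turns into one from $z_i$ to $z_j$ after plumbing; or $Y_0$ is exceptional hyperelliptic, in which case \Cref{double} produces a multiplicity-two pair bounding a polar domain, leading to a further degeneration and ultimately (for $m=2$) to a careful prong-matching count that uses $a_1<a_2$, and (for $m>2$) to an order inequality $a_i\geq a_j+a_k$ in two incompatible ways, a contradiction. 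None of this is reproduced by "apply the hypothesis to $Y_{-1}$ and plumb." Your "main obstacle" paragraph correctly anticipates the bad hyperelliptic sub-case but misses the more basic genus-zero issue. Separately, your base case $(g,m)=(1,1)$ is declared to follow from "an explicit cylinder decomposition"; in the paper this is \Cref{simple1}, which consumes an entire sub-section of delicate combinatorics on the principal boundary (\Cref{genus1lemma}--\Cref{genus1parallelcor}) precisely because constructing a flat surface with a multiplicity-one saddle connection in a genus-one \MIN residueless stratum is not immediate.

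Your treatment of the hyperelliptic case is a genuinely different route from the paper's. The paper argues indirectly: induct on $g$, invoke \Cref{break} and \Cref{hyper1} so that both levels of $\overline{Y}$ are hyperelliptic with the node fixed by both involutions, and count fixed marked points across the degeneration to derive $2g+2$, a contradiction. Your approach is constructive: exhibit a $\sigma$-invariant saddle connection by pulling back a saddle connection of $\xi$ from the image of the zero to an unmarked Weierstrass point. This is an attractive idea and the geometry is right (the preimage of such a $\delta$ is a single saddle connection from $z$ to $z$ passing through an unramified regular point, fixed setwise by $\sigma$), but the multiplicity-one claim is not established. In a hyperelliptic component every non-$\sigma$-invariant saddle connection comes with a parallel partner $\sigma(\gamma')$, and \Cref{parallel} only identifies "parallel" with "homologous" generically; one would still have to rule out the existence of another saddle connection homologous to $\gamma$, which requires an argument about what bounds in $X$. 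The paper's counting argument bypasses this entirely, which is why it is the cleaner proof even if your construction may well be repairable with more care.
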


We use the induction on $\dim_{\mathbb{C}} \PP \cR_g (\mu) = 2g+m-2>0$. In order to initiate the induction, we will use the degeneration to the boundary of $\cC$, using the following

\begin{proposition} \label{breaksimple}
Let $\overline{X}$ be a multi-scale differential contained in $\partial\overline{\cC}$. Suppose that an irreducible component $X_i$ of $\overline{X}$ contains a multiplicity one saddle connection $\gamma_i$. Then by plumbing all horizontal nodes and the level transitions, $\gamma_i$ deforms into a multiplicity one saddle connection $\gamma$ of some flat surface $X\in \cC$.
\end{proposition}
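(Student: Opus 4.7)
The plan is to plumb $\overline{X}$ into a flat surface $X \in \cC$ following the construction recalled in \Cref{sec:msc}, deform $\gamma_i$ through this family, and verify that for a generic choice of smoothing parameter the resulting saddle connection has multiplicity one. First I would observe that all the plumbing operations (both for horizontal nodes and for vertical level transitions) modify only small standard-coordinate disks around the nodes; since $\gamma_i$ is a compact arc contained in the interior of $X_i$, after shrinking the standard-coordinate neighborhoods if necessary we may assume $\gamma_i$ is disjoint from these disks. Hence $\gamma_i$ extends to a saddle connection $\gamma_t$ of the plumbed surface $X_t$ depending continuously on the plumbing parameters $t$, with $\gamma_t\to\gamma_i$ as $t\to 0$.

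Next I would show that for small generic $t$ the saddle connection $\gamma_t$ has multiplicity one in $X_t$. Choose $t$ so that $X_t$ lies in the dense open subset $W$ of \Cref{parallel}; then homologous and parallel saddle connections on $X_t$ coincide. Suppose, for contradiction, that for a sequence $t_n \to 0$ there exist saddle connections $\gamma'_{t_n}\neq\gamma_{t_n}$ of $X_{t_n}$ with $[\gamma'_{t_n}]=[\gamma_{t_n}]$ in $H_1(X_{t_n},\boldsymbol{z};\ZZ)$. Then $\int_{\gamma'_{t_n}}\omega_{t_n}=\int_{\gamma_{t_n}}\omega_{t_n}$ and $\gamma_{t_n}\cup\gamma'_{t_n}$ bounds a subsurface $Y_{t_n}\subset X_{t_n}$. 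By compactness of $\PP\overline{\cH}^R_g(\mu)$, a subsequence of the $\gamma'_{t_n}$ has a limit $\gamma^\dagger$ in $\overline{X}$ that is a concatenation of saddle connections in the components of $\overline{X}$ joined at nodes, bounding together with $\gamma_i$ a limiting subsurface $Y^\dagger$. The key point is that the period $\int_{\gamma_t}\omega_t$ remains bounded away from zero while, under the plumbing, a saddle connection supported in a component at a level strictly below $\ell_i$ has its period multiplied by a factor $s^\kappa \to 0$ for the corresponding vertical edge. Hence $\gamma^\dagger$ consists only of saddle connections in components at levels $\geq \ell_i$. The homology constraint, combined with the fact that both endpoints of $\gamma_i$ lie in $X_i$ and are not nodes, then forces $Y^\dagger\subseteq X_i$ and $\gamma^\dagger$ to be a saddle connection of $X_i$ homologous to but distinct from $\gamma_i$, contradicting the multiplicity one hypothesis for $\gamma_i$ in $X_i$.

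The main obstacle is making the limiting argument precise: one must control the Gromov--Hausdorff behavior of $\gamma'_{t_n}$ near the plumbing regions, and rule out scenarios in which $Y_{t_n}$ degenerates so as to spread across components other than $X_i$ or thread nontrivially through vertical nodes. This requires careful book-keeping of the prong-matching data and of the scaling of periods at each level. An alternative strategy that bypasses some of this analysis is to argue directly in period coordinates on $\PP\overline{\cH}^R_g(\mu)$: the existence of a parallel companion to $\gamma$ is cut out by the vanishing of a nontrivial period difference on an open neighborhood of $\overline{X}$, and hence defines a proper closed analytic subvariety; the smoothing family $\{X_t\}$ is not contained in this subvariety because the defining period difference depends non-constantly on $t$, so a generic $X_t$ yields the desired $X\in\cC$.
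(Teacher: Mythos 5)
Your approach is essentially the same as the paper's: plumb $\overline{X}$ to a smooth $X$, perturb so that $X$ is general in the sense of \Cref{parallel} (parallel $\Leftrightarrow$ homologous), suppose a parallel companion $\gamma'$ to $\gamma$ exists, and derive a contradiction by degenerating back to $\overline{X}$. The paper's proof is considerably terser: it simply observes that $[\gamma]$ and $[\gamma']$, being locally constant along the plumbing family, remain equal throughout the degeneration, so $\gamma$ and $\gamma'$ limit to distinct parallel saddle connections on $X_i$, contradicting multiplicity one. The localization to $X_i$ — which you correctly flag as the subtle step — is left implicit in the paper; your scaling-of-periods observation (saddle connections at levels below $\ell_i$ have periods vanishing faster, and the endpoints of $\gamma'$ agree with those of $\gamma_i$, hence lie in $X_i$ away from nodes) is exactly the justification one would supply. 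One small imprecision: you say $\int_{\gamma_t}\omega_t$ stays bounded away from zero, which is only true after rescaling to the scale of $X_i$; what actually matters is that $\gamma'_{t_n}$ has the same period as $\gamma_{t_n}$, hence cannot collapse onto lower levels. The period-coordinate alternative you sketch is a valid but different route and is not what the paper does; it is also not quite complete as written, since you would need to argue that for some fixed homology class the period-difference function is not identically zero on the plumbing family, which is again a statement about genericity that ultimately rests on \Cref{parallel}.
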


\begin{proof}
By perturbation of the parameters for plumbing, we may assume that $X$ is general in the sense of \Cref{parallel}. Suppose that $X$ contains another saddle connection $\gamma'$ parallel to $\gamma$.  Then $\gamma$ and $\gamma'$ are homologous. If we degenerate $X$ to $\overline{X}$ by reversing the plumbing construction, $\gamma$ and $\gamma'$ remains homologous throughout the deformation. Thus they degenerate to parallel saddle connections on $X_i$, a contradiction. 
\end{proof}

That is, if we want to find a flat surface with a multiplicity one saddle connection, we can go to the boundary and look into the flat surfaces in each level. We prove one useful lemma below for future use. 

\begin{lemma}\label{suitableprong}
Let $\overline{Y}$ be a two-level multi-scale differential with two irreducible components $Y_0$ and $Y_{-1}$ at distinct levels, intersecting at one node $q$. Suppose that $\gamma$ is a saddle connection of $Y_0$ joining $q$ to itself, and $Y_{-1}$ is a genus zero residueless flat surface with two zeroes, $z_1$ and $z_2$. Then by plumbing the level transition with a suitable choice of prong-matching at $q$, $\gamma$ deforms to a saddle connection joining $z_1$ and $z_2$.  
\end{lemma}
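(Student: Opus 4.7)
The plan is to describe $Y_{-1}$ explicitly using Proposition~\ref{zerodim1} and to exploit the $\kappa_e$-fold freedom in the choice of prong-matching at $q$, so as to align $\gamma$'s two ends with horizontal rays in $Y_{-1}$ that terminate at $z_1$ and $z_2$.

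By Proposition~\ref{zerodim1}, $Y_{-1}$ has $n$ parallel horizontal saddle connections joining $z_1$ to $z_2$, partitioning $Y_{-1}$ into polar domains $P_2(C_i,D_i)$, one around each pole. Let $P_2(C,D)$ with $C+D=\kappa_e+1$ be the polar domain containing $q^-$. Each of the $\kappa_e$ prongs at $q^-$ is an outward horizontal direction whose corresponding horizontal ray stays in $P_2(C,D)$ and terminates at either $z_1$ or $z_2$ (the only zeroes on the boundary of $P_2(C,D)$). Using the standard coordinate $v$ at $q^-$ with $\omega_{-1}=d(v^{-\kappa_e})$, an explicit placement (e.g.\ $z_1=0$, $z_2=1$, $q^-=\infty$, $\omega_{-1}=c\,z^{C-1}(z-1)^{D-1}\,dz$) shows that both the set $S_1$ of prongs ending at $z_1$ and the set $S_2$ of prongs ending at $z_2$ are nonempty; in the canonical placement, exactly the prong aligned with the saddle-connection direction points to $z_2$, while the other $\kappa_e-1$ prongs point to $z_1$.

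The saddle connection $\gamma$ determines two distinct prongs $p_1,p_2\in\ZZ/\kappa_e\ZZ$ at $q^+$, namely the two outward horizontal directions at its endpoints (distinct as oriented cone directions because $\gamma$ is a nondegenerate straight line). A prong-matching is a rotation $k\in\ZZ/\kappa_e\ZZ$ identifying prong $i$ at $q^+$ with prong $i+k$ at $q^-$, and by the plumbing construction of Section~\ref{sec:msc} each outward horizontal ray at $q^+$ continues through the neck into a horizontal ray at $q^-$ along the matched prong. Choose $k$ so that $p_1+k$ equals the unique prong at $q^-$ pointing to $z_2$; then $p_2+k$, being different from $p_1+k$, lies among the remaining $\kappa_e-1$ prongs, all pointing to $z_1$. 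Plumbing with this prong-matching thus extends $\gamma$ at its two endpoints into horizontal rays in $Y_{-1}$ terminating at $z_1$ and $z_2$ respectively; the concatenation of these extensions with $\gamma$ is a single horizontal straight line in the flat metric of the plumbed surface joining $z_1$ to $z_2$, which is the required saddle connection. The only delicate point is the claim $p_1\not\equiv p_2\pmod{\kappa_e}$, which is forced by the fact that the two outward directions of a genuine saddle connection from $q^+$ to itself cannot coincide as oriented directions in the cone at $q^+$.
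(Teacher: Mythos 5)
Your overall strategy is the same as the paper's: use the $\ZZ/\kappa_e\ZZ$-freedom in the prong-matching to steer the two ends of $\gamma$ onto prongs of $q^-$ emanating from $z_1$ and from $z_2$ respectively. However, the key quantitative claim on which your argument hinges is incorrect. You assert that, at $q^-$, exactly one prong (the one ``aligned with the saddle-connection direction'') points to $z_2$ and the remaining $\kappa_e-1$ point to $z_1$. This is false in general. If $2\pi C$ and $2\pi D$ are the angles of the polar domain of $q^-$ at $z_1$ and $z_2$ (so $C+D=\kappa_e+1$), then the prongs split as $(C-1,D)$ or $(C,D-1)$ into those emanating from $z_1$ and from $z_2$ (which of the two depends on orientation conventions); this equals $(\kappa_e-1,1)$ only in the degenerate case $C=\kappa_e$, equivalently $D=1$. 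For instance, with $C=D=3$ (so $\kappa_e=5$) the split is $(2,3)$, not $(4,1)$. Consequently your crucial step --- ``Choose $k$ so that $p_1+k$ equals the unique prong pointing to $z_2$; then $p_2+k$, being different, points to $z_1$'' --- is not justified: when more than one prong emanates from $z_2$, it can happen that $p_1+k$ and $p_2+k$ both land there.

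The paper's proof sidesteps this. It lets $s$ be the (a priori unknown) number of prongs of $q^-$ from $z_1$, uses only that the prongs from $z_1$ and from $z_2$ form two complementary contiguous arcs of the cyclic set $\{v_1,\dots,v_{\kappa_e}\}$, and notes that $\gamma$ encloses a proper sub-arc $\{u_1,\dots,u_t\}$ of the prongs of $q^+$ with $1<t\le\kappa_e$. It then picks the rotation index $m$ to satisfy $1\le m\le s$ and $s+1\le t+m-1\le\kappa_e$, whose existence is an elementary inequality check valid for every split with $1\le s\le\kappa_e-1$. So the paper's argument is robust to the actual value of $s$, where yours only works in the special case $s=\kappa_e-1$.

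A secondary imprecision: you say $\gamma$ determines ``two distinct prongs $p_1,p_2$ at $q^+$, namely the two outward horizontal directions at its endpoints.'' The two outward directions of $\gamma$ at $q^+$ differ by $\pi$ modulo $2\pi$, so if one is a prong (an ``east'' horizontal direction) the other is an anti-prong; they do not both belong to the $\kappa_e$-element set of prongs. The paper instead encodes $\gamma$ via the arc of prongs it encloses, which is the cleaner bookkeeping.
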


\begin{proof}
Let $\kappa$ be the number of prongs at $q$. Then there are $\kappa$ outgoing prongs in $Y_0$ at $q$, denoted by $u_1,\dots,u_{\kappa}$. We can label them in clockwise order so that the saddle connection $\gamma$ encloses from $u_1$ to $u_t$. There are $\kappa$ incoming prongs in $Y_{-1}$ at $q$, denoted by $v_1,\dots, v_\kappa$ in counterclockwise order. We can assume that first $s$ prongs are coming from $z_1$ and the others are coming from $z_2$. Since $1<t\leq \kappa$, we can choose a prong-matching that sends $u_1$ to $v_m$ so that $1\leq m\leq s$ and $s+1\leq t+m-1\leq \kappa$. By plumbing the level transition with this prong-matching, $\gamma$ deforms to a saddle connection $\gamma'$ joining $z_1$ and $z_2$.
\end{proof}

We first deal with the base case: $g=1$ and $m=1$ (i.e. genus one \MIN strata). 

\begin{proposition} \label{simple1}
Let $\cC$ be a connected component of a genus one \MIN stratum $\cR_1(\mu)$. Assume that $\cC$ is {\em not} a hyperelliptic component with four fixed marked points. Then $\cC$ contains a flat surface with a multiplicity one saddle connection.
\end{proposition}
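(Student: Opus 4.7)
The plan is a case analysis based on whether $\cC$ is hyperelliptic.

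\emph{Hyperelliptic case with $r \leq 3$ fixed marked points.} I would exploit the double-cover description from the proof of \Cref{hyperprofile1}: each $X \in \cC$ arises as a double cover $\phi : X \to \PP^1$ of a quadratic differential $\xi \in \cQ^R_0(\nu)$, ramified at the four Weierstrass points of the hyperelliptic involution $\sigma$. Since only $r \leq 3$ of them are marked, there exists an unmarked Weierstrass point $w \in X$, whose image $u := \phi(w)$ is a simple pole of $\xi$. The plan is to construct $\xi$ with the position of $u$ chosen very close to the zero $\phi(z)$, so that the straight segment in the flat structure of $\xi$ from $\phi(z)$ to $u$ has length strictly smaller than any other straight segment between singularities of $\xi$. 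Pulling this segment back under $\phi$ gives a saddle connection $\gamma$ on $X$ from $z$ to $z$ passing through $w$, whose length is twice that of the downstairs segment. By construction $\gamma$ is $\sigma$-invariant and is the unique shortest saddle connection on $X$. Since any saddle connection homologous to $\gamma$ must share its period (and therefore its length), uniqueness of the shortest saddle connection forces $\gamma$ to have multiplicity one.

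\emph{Non-hyperelliptic case.} Pick an arbitrary $X \in \cC$ and let $\gamma_0$ be a shortest saddle connection of $X$. If $\gamma_0$ has multiplicity one, we are done. Otherwise, the saddle connections homologous to $\gamma_0$ all have the same length as $\gamma_0$, and by the discussion following \Cref{parallel} they partition $X$ into a finite family of cylinders. I would then perturb the period coordinates of $X$ within $\cC$, for example by shearing or resizing one of these cylinders, in a way that breaks the length-equality between $\gamma_0$ and its homologous partners. Because $\cC$ is non-hyperelliptic, no involutive symmetry of $X$ forces these lengths to remain equal under deformation, so after a small generic perturbation within $\cC$ the resulting surface $X'$ has a unique shortest saddle connection, which is then of multiplicity one by the same argument as in the hyperelliptic case.

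The main obstacle will be making rigorous the claim, in the non-hyperelliptic case, that the locus of flat surfaces in $\cC$ whose shortest saddle connection has multiplicity $\geq 2$ is a \emph{proper} closed subvariety of $\cC$. Without invoking the classification of non-hyperelliptic components proved in later sections, the cleanest approach is a contrapositive: if every $X \in \cC$ had the property that each pair of length-matching saddle connections persists throughout $\cC$, one could assemble these matchings into an involution $\sigma$ of a general $X$ with $\sigma^*\omega = -\omega$, forcing $\cC$ to be hyperelliptic. This symmetry-based dichotomy between genus-one components is what makes the base case of the induction work, and also explains why the $r = 4$ hyperelliptic case (where every $\sigma$-invariant saddle connection would have to pass through a marked pole, which is impossible) is the only genuine exception.
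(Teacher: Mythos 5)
Your proposal diverges substantially from the paper's proof, which is a combinatorial navigation through the boundary of $\overline{\cR}_1(\mu)$ using the explicit description $X(t,\tau,{\bf C},Pr)$ from \Cref{genus1lemma}, the prong-matching criterion of \Cref{genus1parallelcor}, and the operators $T_1, T_2$ of \Cref{genus1connect}. Both of your cases contain gaps.

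In the non-hyperelliptic case, your perturbation step does not work. Two saddle connections that are homologous in $H_1(X,\boldsymbol{z};\ZZ)$ have by definition the \emph{same period}, hence exactly the same length, and this identity is preserved under every deformation within the stratum, including shearing and resizing. Period coordinates evaluate a cohomology class on homology classes, so you cannot ``break the length-equality'' between homologous saddle connections by moving in period coordinates. Consequently the locus where $\gamma_0$ has multiplicity $\geq 2$ is not a proper closed subvariety that a generic perturbation avoids: for a flat surface whose shortest saddle connection has multiplicity $k$, every nearby surface still carries $k$ homologous saddle connections of the same length (until a bifurcation event, i.e.\ until some saddle connection collides with a singularity, which is precisely when one must go to the boundary). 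This is exactly why the paper must run the argument at $\partial\overline{\cC}$, tracking the combinatorial data $(t,\tau,{\bf C},Pr)$ and applying $T_1,T_2$ and the level rotation action to either reach a prong-matching satisfying \Cref{genus1parallelcor} or derive the identity $C_i=D_{j+2-i}$ that forces hyperellipticity. Your contrapositive sketch (``assemble the matchings into an involution'') is the right intuition, but it is not a proof; it is precisely the content of the argument one needs to make rigorous, and the rigor lives in the boundary combinatorics.

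In the hyperelliptic case, the double-cover idea is appealing but the shortest-saddle-connection claim is not justified. A saddle connection of $\omega$ that avoids the ramification points covers a \emph{pair} of segments of $\xi$ of equal length $\ell'$, giving two saddle connections upstairs each of length $\ell'$, whereas the segment through the simple pole $u$ pulls back to a single saddle connection of length $2\ell$. So placing $u$ at distance $\ell$ from $\phi(z)$ makes the invariant saddle connection have length $2\ell$, which is not automatically shorter than everything else: any other saddle connection of $\xi$ of length under $2\ell$ lifts to shorter ones. One would need to control the entire flat geometry of $\xi$, not just the distance from $\phi(z)$ to $u$, and this starts to require genuine work about the stratum $\cQ^R_0(\nu)$. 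The paper sidesteps all of this by degenerating to the boundary and reading the multiplicity off from the prong-matching data. Your observation that a uniquely shortest saddle connection has multiplicity one is correct (on the generic locus of \Cref{parallel}), but constructing such a surface directly is harder than it first appears.

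There is also a smaller imprecision: the homologous partners of $\gamma_0$ do not ``partition $X$ into a finite family of cylinders''; the paper only asserts that $X\setminus(\gamma_1\cup\gamma_2)$ is disconnected when $\gamma_1,\gamma_2$ are homologous, and the pieces need not be cylinders.
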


This case is very difficult compared to the remaining cases, because \Cref{break} is not applied. In order to use the degeneration techniques, we need to investigate the principal boundary of each connected component of $\PP\cR_1 (\mu)$. Since $\dim_{\mathbb{C}}\PP\cR_1(\mu)=1$, its boundary is a finite collection of points. We will prove that a connected component $\cC$ of $\PP\cR_1 (\mu)$ contains a principal boundary obtained by shrinking a multiplicity one saddle connection if $\cC$ is {\em not} a hyperelliptic component with four fixed marked points. The proof of \Cref{simple1} will be given in \Cref{subsec:simple_base}. 

\begin{remark}
One case that we can easily find a multiplicity one saddle connection is when a \MIN flat surface $X\in \cC$ contains a flat cylinder. The cross curve of the cylinder joining the unique zero to itself is automatically a multiplicity one saddle connection. The flat cylinder is closely related to the horizontal boundary of $\overline{\cC}$. Suppose that $\overline{X}\in \partial\overline{\cC}$ and the level graph $\Gamma(\overline{X})$ has a horizontal edge. Then we can plumb the node corresponding to this horizontal edge and obtain a flat surface in $\cC$ containing a flat cylinder. Therefore, if $\overline{\cC}$ contains a horizontal boundary divisor, then $\cC$ contains a flat surface with a multiplicity one saddle connection. 
\end{remark}

\subsection{The principal boundary of genus one \MIN strata} \label{subsec:pbg1m}

By \Cref{shrink}, every connected component of a \MIN stratum of dimension$>1$ has a principal boundary of type II. In the case of genus one \MIN strata, we can prove the following stronger statement. 

\begin{proposition} \label{bpb}
Each multi-scale differential in the boundary of a genus one \MIN stratum $\cR_1(\mu)$ is contained in the principal boundary of type II. Moreover, a two-level multi-scale differential in the boundary consists of two irreducible component intersecting at two nodes. 
\end{proposition}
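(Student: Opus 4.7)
The plan is to use dimension counting together with a case analysis on the enhanced level graph. Since $\dim_{\mathbb{C}} \PP\cR_1(\mu) = 2g + m - 2 = 1$, every boundary multi-scale differential lies on a codimension-one boundary divisor $D_\Gamma$; the codimension of $D_\Gamma$ equals $(L - 1) + h$, where $L$ is the number of levels of $\Gamma$ and $h$ is the number of horizontal edges, so only two types of level graphs arise: (i) single-level graphs with exactly one horizontal edge, or (ii) two-level graphs with no horizontal edges. I would show that (i) matches \Cref{fig402} with $J = \emptyset$ and (ii) matches \Cref{fig403} with $J = \emptyset$; together these are precisely the principal boundaries of type II described in \Cref{subsec:gt2}.

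For case (i), the key point is that a horizontal edge corresponds to a pair of simple poles with residues $\pm r$, where $r \neq 0$. If the two endpoints lay on distinct components, the residue theorem applied to each component together with the residueless condition on all marked poles would force $r = 0$, a contradiction. Hence the horizontal edge is a loop on a single vertex $v$, and the genus identity $\sum_v g_v + b_1(\Gamma) = 1$ with $b_1(\Gamma) = 1$ forces $g_v = 0$. The type formula for $v$ then yields exactly the configuration of \Cref{fig402} with $J = \emptyset$.

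For case (ii), I would first establish that the marked zero $z$ lies on the bottom level. Substituting into the type formula for a hypothetical top-level vertex $v$ containing $z$ and using the global degree constraint $\sum_p b_p = a$ (from $g = 1$) together with $\kappa_e \geq 1$ forces $g_v = 1$, $\kappa_e = 1$ for every attached edge, and $I_v = \boldsymbol{p}$; the type formula applied to any bottom-level vertex then forces it to carry a single attached smooth edge with no marked points, which is unstable in $\overline{\cM}_{g,n+m}$. With $z$ on the bottom, I would then enumerate the possible 2-level graphs. Any top-level genus-zero vertex $v$ carries only residueless marked poles and nodes (appearing as zeros from $v$'s side), so $\omega_v$ lies in a residueless genus-zero stratum; combining stability $s_v \geq 3$ with the emptiness result $\cR_0(\mu) = \emptyset$ for $m = 1$, $n \geq 2$ recalled in \Cref{sec:intro}, no top vertex can have only one descending edge. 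The identity $\sum_v g_v + b_1(\Gamma) = 1$ then admits only the solution with one top vertex $v_0$, one bottom vertex $v_{-1}$, exactly two edges between them, and $g_0 = g_{-1} = 0$, matching \Cref{fig403} with $J = \emptyset$.

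The main obstacle is the enumeration in (ii), where stability of the underlying stable curve, non-emptiness of residueless genus-zero strata on top-level vertices, and the genus identity $\sum_v g_v + b_1(\Gamma) = 1$ must all be applied simultaneously. The emptiness result from \Cref{sec:intro} is the decisive tool, ruling out configurations with any top-level genus-zero vertex attached by a single edge---precisely the ``breaking-up-a-zero'' style degenerations one might naively expect to occur, but which are incompatible with the residueless condition in genus one.
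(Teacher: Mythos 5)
Your approach differs genuinely from the paper's. The paper observes that the bottom-level component $X_{-1}$, which carries the unique zero, cannot degenerate further because $\dim_\CC \PP\cR_1(\mu)=1$; by the classification of zero-dimensional projectivized generalized strata in Section 2.3, $X_{-1}$ therefore has exactly two poles with nonzero residue, which must be the two nodes, and the Global Residue Condition plus the genus count then force a unique top-level component. You instead enumerate enhanced level graphs directly via the type formula, stability of the underlying stable pointed curve, and the emptiness of genus-zero single-zero residueless strata. Both lines get the right answer, and your treatment of the horizontal case (a residue-theorem argument forcing the horizontal edge to be a loop, with genus zero following from the first Betti number) is correct and more explicit than the paper's ``obviously.''

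However, there is a genuine gap in your two-level enumeration. You prove that a \emph{genus-zero} top-level vertex cannot have a single descending edge, but you then invoke the conclusion ``no top vertex can have only one descending edge'' without qualification and feed it into the identity $\sum_v g_v + b_1(\Gamma)=1$. A genus-one top-level vertex $v$ with a single descending edge is not ruled out by your lemma: the type formula for $v$ gives $0 = -\sum_{p\in v} b_p + \sum_e(\kappa_e-1)$, so $v$ carries no marked poles and $\kappa_e = 1$ for its unique edge, and the genus identity forces $b_1(\Gamma)=0$; this is a perfectly stable level graph. To exclude it you must look at the bottom vertex $v_{-1}$, which then carries $z$, all $n$ marked residueless poles, and a residueless double pole at the node (residueless by the Global Residue Condition applied to $v$), hence lies in a genus-zero single-zero residueless stratum, which is empty. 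This is the same emptiness ingredient you already use, but applied to the bottom component rather than the top one; without this step the dimension and genus bookkeeping alone does not single out the claimed graph. A smaller omission of the same flavor is that you take for granted that there is only one bottom-level vertex; a bottom vertex not carrying $z$ has a differential with no zeros, and the type formula then forces genus zero, no marked poles, and a unique node with $\kappa_e = 1$, which is unstable --- this should be said rather than assumed.
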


\begin{proof}
Since the boundary of $\PP\overline{\cR}_1(\mu)$ is a finite collection of points, each multi-scale differential in the boundary is either horizontal or a two-level multi-scale differential. A horizontal multi-scale differential is obviously contained in the principal boundary of type II (see \Cref{fig403}). 

Let $\overline{X}$ be a two-level multi-scale differential. Since there is only one marked zero, it has only one bottom level component, which we denote by $X_{-1}$. The component $X_{-1}$ cannot admit a further degeneration, because $\dim_{\mathbb{C}}\PP\cR_1(\mu)=1$. So $X_{-1}$ is contained in a zero-dimensional (projectivized) generalized stratum. In particular, $X_{-1}$ contains only two poles $s_1$ and $s_2$ with nonzero residues. Since all marked poles are residueless, $s_1$ and $s_2$ are the nodes of $\overline{X}$. Also, each top level component gives a Global Residue Condition. Thus two nodes $s_1$ and $s_2$ must be contained in the same top level component, which we denote by $X_0$. If $X_{-1}$ has a pole other than $s_1$, $s_2$, then it is a residueless pole. Thus it is a marked pole or a unique node between $X_{-1}$ and some top level component. Therefore, if there exists a top level component other than $X_0$, then it is contained in a genus zero \MIN stratum. This is impossible because there exist no genus zero \MIN residueless flat surfaces. Therefore, $\overline{X}$ consists of two irreducible component intersecting at two nodes, thus contained in the principal boundary of type II (see \Cref{fig402}). 
\end{proof}

Let $\cR_1(\mu)$ be a genus one \MIN stratum. In order to prove \Cref{simple1}, we will navigate the boundary of the connected component $\cC$ of $\cR_1(\mu)$ until we find a principal boundary obtained by shrinking a multiplicity one saddle connection. In this subsection, we will give a complete description of the principal boundary of $\cR_1(\mu)$. Let $X\in \cR_1(\mu)$ be a general flat surface in the sense of \Cref{parallel} and let $\gamma$ be a saddle connection of $X$, forming a configuration $\cF$. By \Cref{shrink}, we can shrink $\gamma$ and obtain $\overline{X}\in D(\cF)$. Assume that $\overline{X}$ is not a horizontal boundary. The following lemma provides the combinatorial description of $\overline{X}$.

\begin{lemma} \label{genus1lemma}
A two-level multi-scale differential $\overline{X} \in \partial\overline{\cR}_1 (\mu)$ is determined by the following combinatorial data:
\begin{itemize}
\item An integer $1\leq t\leq n$, the number of marked poles on the unique top level component.
\item  A permutation $\tau$ on $\{1,\dots,n\}$, the set of marked poles.
\item A tuple of integers ${\bf C}=(C_1,\dots,C_n)$ such that $1\leq C_i \leq b_i -1$ for each $i$. The number of prongs at two nodes $s_1,s_2$ on the top component $X_0$ are given by $Q_1 = \sum_{i=1}^t C_{\tau(i)}$ and $Q_2=\sum_{i=1}^t b_{\tau(i)} -Q_1$.
\item A prong-matching equivalence class $Pr$ represented by a prong-matching $(u,v)\in \ZZ/Q_1\ZZ \times \ZZ/Q_2 \ZZ$. 
\end{itemize}
\end{lemma}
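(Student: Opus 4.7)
The plan is to apply \Cref{bpb} to reduce $\overline{X}$ to its two irreducible components, and then to invoke \Cref{zerodim1} and \Cref{zerodim2} for the top and bottom components respectively; the prong-matching class is the only remaining discrete datum.

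By \Cref{bpb}, every non-horizontal two-level differential $\overline{X}\in\partial\overline{\cR}_1(\mu)$ has two irreducible components $X_0,X_{-1}$ meeting at exactly two nodes $s_1,s_2$. The identity $g_0+g_{-1}+1=g=1$ (coming from the single cycle in the dual graph) forces $g_0=g_{-1}=0$, so both components are copies of $\PP^1$. The unique marked zero $z$ must lie on $X_{-1}$, since at each vertical edge the upper side carries a zero and both of the zero slots on $X_0$ are already occupied by the node-zeros $s_1^+$, $s_2^+$ of orders $Q_1-1,Q_2-1$, where $Q_1,Q_2$ denote the prong numbers at $s_1,s_2$. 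Writing $t$ for the number of marked poles on $X_0$, I have $1\leq t\leq n$: the lower bound holds because $\PP^1$ admits no meromorphic differential having only zeros and no poles. I would then fix a permutation $\tau$ of $\{1,\ldots,n\}$ whose first $t$ values index the marked poles of $X_0$ and whose remaining values index the marked poles of $X_{-1}$, with the internal order within each block chosen to match the orderings produced by the classifications below.

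For the top component, $X_0$ lies in the zero-dimensional residueless stratum $\cR_0(Q_1-1,Q_2-1,-b_{\tau(1)},\ldots,-b_{\tau(t)})$, so \Cref{zerodim1} identifies it (up to scaling) with the permutation $\tau|_{\{1,\ldots,t\}}$ together with a tuple $(C_{\tau(1)},\ldots,C_{\tau(t)})$ satisfying $1\leq C_{\tau(i)}\leq b_{\tau(i)}-1$ and $\sum_{i=1}^t C_{\tau(i)}=(Q_1-1)+1=Q_1$; combined with the degree identity $\sum_{i=1}^t b_{\tau(i)}=Q_1+Q_2$, this produces the formulas for $Q_1$ and $Q_2$ stated in the lemma. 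For the bottom component, the Global Residue Condition applied to the top vertex forces the residues of $\omega_{-1}$ at $s_1^-,s_2^-$ to be opposite, so $X_{-1}$ lies in the generalized stratum $\cR_0(a,-b_{\tau(t+1)},\ldots,-b_{\tau(n)};-Q_1-1,-Q_2-1)$, which \Cref{zerodim2} classifies by the permutation $\tau|_{\{t+1,\ldots,n\}}$ together with a tuple $(C_{\tau(t+1)},\ldots,C_{\tau(n)})$ with $1\leq C_{\tau(i)}\leq b_{\tau(i)}-1$. Amalgamating yields the single permutation $\tau\in Sym_n$ and the single tuple ${\bf C}=(C_1,\ldots,C_n)$ stated in the lemma.

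Finally, the multi-scale structure on $\overline{X}$ records a prong-matching equivalence class. With two vertical edges of enhancements $Q_1,Q_2$, the prong-matching group is $\ZZ/Q_1\ZZ\times\ZZ/Q_2\ZZ$; since there is only one positive level, the level rotation group $\ZZ$ acts diagonally, and an equivalence class is represented by a pair $(u,v)\in\ZZ/Q_1\ZZ\times\ZZ/Q_2\ZZ$. Conversely, given any datum $(t,\tau,{\bf C},(u,v))$ satisfying the constraints, \Cref{zerodim1} and \Cref{zerodim2} reconstruct $X_0$ and $X_{-1}$ uniquely, and the chosen prong-matching glues them into a genuine multi-scale differential in $\partial\overline{\cR}_1(\mu)$, so the correspondence is a bijection. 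I anticipate that the main obstacle will be the careful bookkeeping between cone angles at the nodes and integer tuples: one must verify that the normalization conventions of \Cref{zerodim1} and \Cref{zerodim2} (each of which fixes a specific saddle-connection length) amalgamate to a single $\CC^*$-ambiguity on $\overline{X}$ rather than a larger redundancy, and that the identity $Q_1=\sum_{i=1}^t C_{\tau(i)}$ arises with no hidden shift by one coming from the node-zero orders being $Q_i-1$ rather than $Q_i$.
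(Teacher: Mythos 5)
Your proposal follows the same route as the paper: apply \Cref{bpb} to reduce to two genus-zero components meeting at two nodes, identify the top component via \Cref{zerodim1} and the bottom via \Cref{zerodim2}, amalgamate the two classifications into a single $(\tau,{\bf C})$, and record the prong-matching class as a pair in $\ZZ/Q_1\ZZ\times\ZZ/Q_2\ZZ$. The ``main obstacle'' you flag at the end --- the bookkeeping between cone angles and the integer tuples, and fixing a canonical labeling of prongs so that the pair $(u,v)$ is unambiguous --- is precisely what the paper's proof spends most of its length on (the clockwise labeling of saddle connections $\alpha_i$, $\beta_j$ and the associated prong conventions $v^\pm_i$, $w^\pm_j$ in \Cref{fig601}), and it does so because these conventions are load-bearing for \Cref{genus1parallel} and \Cref{genus1connect} later; so your sketch is correct but, to match the role the lemma plays in the section, you would need to actually carry out that normalization rather than just anticipating it. One small imprecision: your argument that $z$ must lie on $X_{-1}$ via ``both zero slots on $X_0$ are already occupied'' is not literal ($\PP^1$ admits differentials with any number of zeros); the clean reason is a degree count, as if $z$ were on $X_0$ then $\omega_{-1}$ would have only poles of total degree $-2$, which is impossible since $Q_1+Q_2+\sum_{i>t}b_{\tau(i)}>0$.
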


We denote this multi-scale differential by $X(t,\tau,{\bf C},Pr)$. 

\begin{proof}
By \Cref{bpb}, $\overline{X}$ consists of two irreducible components. The number of marked poles contained in the top level component $X_0$ determines the datum $1\leq t\leq n$.

We define $\tau$ and $C_i$ as follows. By \Cref{zerodim1}, there are $t$ parallel saddle connections $\alpha_0, \dots, \alpha_{t-1}$ from $s_1$ to $s_2$, labeled clockwise at $s_1$. By cutting the surface $X_0$ along all $\alpha_i$, we obtain $t$ connected components of $X_0 \setminus (\alpha_0\cup\dots\cup\alpha_{t-1})$. The component bounded by $\alpha_{i-1}$ and $\alpha_i$ contains one pole, which we denote by $p_{\tau(i)}$. This component is isomorphic to the polar domain $P_2 (C_{\tau(i)}, b_{\tau(i)} - C_{\tau(i)})$ for some integer $1\leq C_{\tau(i)} \leq b_{\tau(i)} -1$. Now we have $k$ angles $2\pi C_{\tau(i)}$ for $i=1,\dots,t$ at $s_1$, given by the saddle connections. Therefore the total angle $2\pi Q_1$ at $s_1$ is equal to $2\pi \sum_{i=1}^t C_{\tau(i)}$, and we have $Q_1=\sum_{i=1}^t C_{\tau(i)}$ and $Q_2 = \sum_{i=1}^t b_{\tau(i)} -Q_1$. The bottom level component $X_{-1}$ contains the other $n-t$ marked poles and two more unmarked poles at $s_1$ and $s_2$ of orders $Q_1+1$ and $Q_2+1$, respectively. By \Cref{zerodim2}, there are $n-t+1$ parallel saddle connections $\beta_{t},\dots, \beta_{n}$, labeled in clockwise order at $z$ so that $\beta_{t}$ bounds the polar domain of $s_1$. By cutting $X_{-1}$ along all $\beta_i$, we obtain $n-t+2$ connected components of $X_{-1} \setminus (\beta_t\cup\dots\cup\beta_n)$. Two components bounded by each of $\beta_n$ and $\beta_t$ are isomorphic to the polar domains $P_1(Q_2+1)$ and $P_1(Q_1+1)$, respectively. For $i=t+1,\dots,n$, the component bounded by $\beta_{i-1}$ and $\beta_i$ contains only one pole, denoted by $p_{\tau(i)}$. This component is isomorphic to the polar domain $P_2 (C_{\tau(i)}, b_{\tau(i)}-C_{\tau(i)})$ for some integer $1\leq C_{\tau(i)} \leq b_{\tau(i)}-1$. It is easy to see that $\tau$ is a permutation on $\{1,\dots, n\}$, and thus we have defined $C_i$ for each $i$ above. 

To determine $Pr$, we will label the prongs at the nodes. By scaling the differential of $X_0$ if necessary, we may assume that the periods of $\alpha_i$ are equal to $-1$. In $X_{-1}$, consider the $Q_1$ incoming prongs at $s_1$. They can be represented by half-infinite rays emanating from $z$. Let $v^-_1,\dots, v^-_{Q_1}$ denote them in clockwise order at $z$. Similarly, there are $Q_2$ outgoing prongs at $s_2$, denoted by $w^-_1,\dots, w^-_{Q_2}$ in counterclockwise order at $z$. In $X_0$, there are $Q_1$ prongs at $s_1$ denoted by $v^+_0,\dots, v^+_{Q_1 -1}$ in clockwise order, where $\alpha_0$ is in the direction of $v^+_0$. Similarly, there are $Q_2$ incoming prongs at $s_2$ denoted by $w^+_0,\dots, w^+_{Q_2-1}$ in counterclockwise order, where $\alpha_0$ is in the direction of $w^+_0$. The prongs at the nodes are illustrated in \Cref{fig601}. The prong rotation group $P_{\Gamma(\cF)}$ is isomorphic to $\ZZ/Q_1\ZZ \times \ZZ/Q_2\ZZ$. A prong-matching is determined by the images of $v^-_1$ and $w^-_{Q_2}$. If they are mapped to $v^+_u$ and $w^+_v$, respectively, we identify the prong-matching with an element $(u,v)\in \ZZ/Q_1\ZZ \times \ZZ/Q_2\ZZ$. This represents the prong-matching equivalence class $Pr$ of $\overline{X}$. 
\end{proof}

\begin{figure}
    \centering
    \input{diagram601} 
    \caption{The prongs at the nodes of $\overline{X}$} \label{fig601}
\end{figure}

\begin{remark}
We also introduce more notation related to the differential $X(t,\tau, {\bf C}, Pr)$ in \Cref{genus1lemma} for later uses. We denote $D_i \coloneqq b_i-C_i$ and ${\bf D}\coloneqq (D_1,\dots,D_n)$. Also, we denote $c_i \coloneqq \sum_{j=1} ^i C_{\tau(j)}$ and $d_i \coloneqq \sum_{j=1} ^i D_{\tau(j)}$. For convenience, we denote $c_0=d_0=0$. Then the saddle connection $\alpha_i$ lies between the prongs $v^+_{c_i-1}$ and $v^+_{c_i}$ at $s_1$ and between $w^+_{d_i -1}$ and $w^+_{d_i}$ at $s_2$ as depicted in \Cref{fig601}. 
\end{remark}

\begin{remark}\label{expression1}
The combinatorial data in \Cref{genus1lemma} is not uniquely determined by $\overline{X}$. It is only unique up to the choice of the labeling of two nodes $s_1, s_2$ and the labeling of the $t$ saddle connections $\alpha_i$ of $X_0$. We can describe the relations between data that give the same multi-scale differential. 

We can relabel the saddle connections $\alpha_i$ so that the cyclic order is remained unchanged. Any such relabeling is generated by shifting the labeling by one. In this case, new labeling gives a permutation $\tau'=\tau \circ \tau_1$ on the poles where $\tau_1 = \left(\begin{smallmatrix}
1 & 2 & \dots & t & t+1 & \dots & n\\
2 & 3 & \dots & 1 & t+1 & \dots & n
\end{smallmatrix}\right)$. Since other information is unchanged, we have $X(t,\tau, {\bf C}, Pr) = X(t,\tau \circ \tau_1, {\bf C}, Pr)$.

We can change the labeling of two nodes $s_1$ and $s_2$. Then the angles $2\pi C_i$ and $2\pi D_i$ also change the roles. The saddle connections $\alpha_i$ and $\beta_j$ are relabeled in the inverse order. So new labeling gives a permutation $\tau''=\tau\circ \tau_2$ on the poles where $\tau_2 = \left(\begin{smallmatrix}
1 & \dots & t & t+1 & \dots & n\\
t & \dots & 1 & n & \dots & t+1
\end{smallmatrix}\right)$. The prong-matching $(u,v)\in \ZZ/Q_1\ZZ \times \ZZ/Q_2\ZZ$ is sent to $(-v,-u)\in \ZZ/Q_2\ZZ\times \ZZ/Q_1\ZZ$ with new labeling, so we have $X(t,\tau, {\bf C}, [(u,v)]) = X(t,\tau \circ \tau_2, {\bf D}, [(-v,-u)])$.
\end{remark}

\begin{remark}
The level rotation group $\ZZ$ acts on the prong rotation group $ \ZZ/Q_1\ZZ \times \ZZ/Q_2\ZZ$ by $k\cdot (u,v)=(u+k,v-k)$. Recall that two prong-matchings are said to be {\em equivalent} if the level rotation action transforms one prong-matching into the other. So the number of prong-matching equivalence classes is equal to $\gcd (Q_1,Q_2)$. 
\end{remark}

There also exist multi-scale differentials in the horizontal boundary of $\cR_1(\mu)$. Those differentials are given by the elements of the stratum $\cR_0 (a,-b_1,\dots,-b_n;-1,-1)$, except that two simple poles are unmarked (i.e, switching the labeling of two simple poles does not change the differential as a boundary point of $\cR_1(\mu)$). The stratum $\PP \cR_0 (a,-b_1,\dots,-b_n;-1,-1)$ is zero-dimensional and the flat surfaces in this stratum are described in \Cref{zerodim2}. We obtain the following

\begin{lemma} \label{genus1hor}
A horizontal multi-scale differential $\overline{X}$ in the boundary of $\cR_1 (\mu)$ is given by the following combinatorial data:
\begin{itemize}
    \item A permutation $\tau$ on $\{1,\dots,n\}$.
    \item A tuple of integers ${\bf C}=(C_1,\dots, C_n)$ such that $1\leq C_i \leq b_i -1$ for each $i$. 
\end{itemize}
\end{lemma}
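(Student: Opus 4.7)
The plan is to realize each horizontal boundary element as the normalization at the unique horizontal node, which naturally lies in the zero-dimensional generalized stratum $\cR_0(a,-b_1,\dots,-b_n;-1,-1)$, and then invoke \Cref{zerodim2}.

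By the description of boundary divisors of $\overline{\cH}^R_g(\mu)$ recalled earlier, any horizontal element $\overline{X}\in\partial\overline{\cR}_1(\mu)$ has a single-level enhanced level graph with exactly one horizontal edge; let $s$ be the corresponding node, with branches $s_1,s_2$. I claim $s$ is non-separating. Suppose for contradiction that $s$ is separating: then the normalization splits into a component carrying the unique zero $z$ of order $a$ and a second genus-zero component $Y$ carrying the simple pole from one branch of $s$ together with some (possibly empty) subset of the marked poles. The differential on $Y$ has total degree $-2$, so the orders of its singularities other than the simple pole from $s$ must sum to $-1$; but every residueless marked pole has order $\leq -2$ by the standing assumption $b_i\geq 2$, so $Y$ can contain no marked pole, which forces its differential to have total degree $-1\neq -2$, a contradiction. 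Thus $s$ is non-separating and the normalization $(\widetilde{X},\widetilde{\eta})$ is a single genus-zero flat surface carrying the zero $z$, all $n$ residueless marked poles of orders $-b_1,\dots,-b_n$, and the two unmarked simple poles $s_1, s_2$ whose residues are opposite by the Global Residue Condition. Therefore $(\widetilde{X},\widetilde{\eta})\in \cR_0(a,-b_1,\dots,-b_n;-1,-1)$.

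Now I apply \Cref{zerodim2} to this zero-dimensional generalized stratum (with ``$n$'' there being our $n+2$): up to scaling, $(\widetilde{X},\widetilde{\eta})$ is uniquely determined by a permutation $\tau\in\mathrm{Sym}_n$ of the $n$ residueless marked poles and a tuple $\mathbf{C}=(C_1,\dots,C_n)$ with $1\leq C_i\leq b_i-1$, which is precisely the data asserted in the lemma. Recovering $\overline{X}$ from $(\widetilde{X},\widetilde{\eta})$ amounts to identifying the two unmarked branches $s_1,s_2$ into a horizontal node; since each simple-pole branch carries only one prong, no further prong-matching datum is needed. Switching the labels of $s_1$ and $s_2$ yields the same boundary element $\overline{X}$ and induces a redundancy on $(\tau,\mathbf{C})$ analogous to that described in \Cref{expression1}, but introduces no new parameters.

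The main obstacle is the separating-case analysis in the middle step, which is forced by the order assumption $b_i\geq 2$ combined with the total-degree constraint on the hypothetical second component; once that is ruled out, \Cref{zerodim2} is invoked directly to conclude.
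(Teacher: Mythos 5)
Your proof is correct and follows essentially the same route the paper takes (but leaves implicit): the normalization at the horizontal node is identified with an element of the zero-dimensional stratum $\cR_0(a,-b_1,\dots,-b_n;-1,-1)$ with the two simple poles unmarked, and then \Cref{zerodim2} gives the combinatorial data. One small gap in your separating-node argument: when the node separates, you assume the component $Y$ not carrying $z$ has genus zero, but a priori the genus could sit on $Y$ instead; that possibility is equally impossible (a genus-one component carrying only poles would force $\deg(\eta|_Y)=0$ while the simple pole from the node already gives $\deg\leq -1$), and you should state it to make the dichotomy complete.
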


We denote this multi-scale differential by $X(0,\tau, {\bf C})$.

\begin{remark}
The data in \Cref{genus1hor} is only unique up to the choice of the labeling of two simple poles. By changing the roles of two simple poles, we have $X(0,\tau, {\bf C}) = X(0,\tau\circ \tau_2, {\bf D})$ as in \Cref{expression1}, where $\tau_2 = 
\left(\begin{smallmatrix}
1 & 2 & \dots & n \\
n & n-1 & \dots & 1
\end{smallmatrix}\right)$ is a permutation inverting the order.
\end{remark}

\subsection{Plumbing and saddle connections of genus one \MIN flat surfaces}

Let $\overline{X}=X(t,\tau, {\bf C}, Pr)$ be a two-level multi-scale differential in $\partial\overline{\cR}_1(\mu)$, given by the combinatorial data in \Cref{genus1lemma}. Recall that the top level component $X_0$ has $t$ saddle connections $\alpha_i$, $i=0,\dots, t-1$ and the bottom level component $X_{-1}$ has $n-t+1$ saddle connections $\beta_j$, $j=t,\dots,n$. By rescaling the differential $X_0$, we may assume that the period of $\alpha_i$ are equal to $-1$. We can obtain a flat surface $(X,\omega)\in\cR_1 (\mu)$ by plumbing construction with a prong-matching $(u,v)\in Pr$ and the smoothing parameter $s=\epsilon e^{i\theta}\in \mathbb{C}$. We will denote this flat surface by $\overline{X}_s(u,v)$. The periods of the saddle connections $\beta'_j$ deformed from $\beta_j$ are equal to $s$. Also, recall that the periods of $\alpha_i$ are set to be $-1$. Each saddle connection in the components of $\overline{X}$ is the limit of saddle connections in $\overline{X}_s(u,v)$ as $|s|=\epsilon \to 0$. So there exists a saddle connection $\alpha'_i$ that converges to $\alpha_i$. For small $s$, we have $\operatorname{Im} s >0$ if and only if $\operatorname{Im} \left( \int_{\beta'_j} \omega / \int_{\alpha'_i} \omega \right) <0$. We can describe the configuration of the saddle connections of $\overline{X}_s(u,v)$. 

\begin{proposition} \label{saddledegenerate}
Let $\overline{X}=X(t,\tau, {\bf C},[(u,v)])\in \partial\overline{\cR}_1(\mu)$. Consider $\overline{X}_s(u,v)$ for $\operatorname{Im} s\leq 0$. For the saddle connections $\beta_j$ and $\alpha_i$ of each irreducible components of $\overline{X}$, there exists a unique saddle connection $\beta'_j$ and $\alpha'_i$ of the flat surface $\overline{X}_s (u,v)$ that degenerates to $\beta_j$ and $\alpha_i$, respectively, as $|s| \to 0$.
\end{proposition}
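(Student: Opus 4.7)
The plan is to work directly with the plumbing construction from \Cref{sec:msc}, building the candidate saddle connections $\beta'_j$ and $\alpha'_i$ by explicit geometric continuation, and then deducing uniqueness from \Cref{parallel}. I fix standard coordinates $v$ at $s^-_1$ and $u$ at $s^+_1$ compatibly with the representative $(u,v)$ of $Pr$, so that the gluing near the node is $uv=s^{\kappa/Q_1}$, and similarly at $s_2$; throughout I take $|s|$ smaller than any prescribed positive quantity depending only on $\overline{X}$.

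For the $\beta_j$ the argument is essentially direct. Each $\beta_j$ is a straight segment from $z$ to itself on $X_{-1}$ whose length is bounded below by a positive constant depending only on $\overline{X}$. For $|s|$ small, the trajectory of $\beta_j$ is disjoint from the two small disks $\{|v|\leq |s|^{\kappa/Q_1}\}$ at $s^-_1$ and its analogue at $s^-_2$ that are removed in the plumbing, so $\beta_j$ persists unchanged as a saddle connection $\beta'_j$ of $\overline{X}_s(u,v)$. The two boundary saddle connections $\beta_t$ and $\beta_n$ enclosing the polar domains of $s_1$ and $s_2$ lie a distance $O(|s|^{\kappa/Q_1})$ from the nodes and are perturbed only at this scale; they remain saddle connections by the same argument.

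For the $\alpha_i$ the idea is to extend $\alpha_i$ across the plumbing annuli into $X_{-1}$ so that it closes up as a saddle connection from $z$ to itself. By \Cref{fig601}, near $s^+_1$ the saddle connection $\alpha_i$ lies in the sector bounded by the prongs $v^+_{c_i-1}$ and $v^+_{c_i}$ on $X_0$, and the representative $(u,v)$ of $Pr$ identifies this pair with a consecutive pair of prongs at $s^-_1$ on $X_{-1}$, determining a unique sector of the polar domain $P_1(Q_1+1)$ through which the continuation must pass. The $Q_1$ prongs at $s^-_1$ are straight rays from $s_1$ to $z$ partitioning the polar domain into $Q_1$ sectors each having $z$ as a vertex, so the continuation is a straight segment of length $O(|s|)$ terminating at $z$. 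The analogous construction at $s_2$ produces a second segment of length $O(|s|)$ terminating at $z$, and concatenation with $\alpha_i$ yields the desired $\alpha'_i$. The hypothesis $\operatorname{Im} s\leq 0$ fixes $\arg s$ to the lower half-plane, which, relative to the fixed period $-1$ of $\alpha_i$, selects a canonical pair of matched sectors (rather than sectors differing by a full rotation in the prong rotation group) and hence a canonical $\alpha'_i$.

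The main technical step is the compatibility check at the two ends: the straight segment in $X_0$ of argument $\pi$ together with the two short segments in $X_{-1}$ of argument $\arg s$ must concatenate into a single embedded saddle connection ending at $z$ on both ends. This reduces to a bookkeeping using \Cref{genus1lemma}, where the data ${\bf C}$ and the representative $(u,v)$ pin down precisely which sectors at $s^-_1$ and $s^-_2$ the continuation enters, and where $\operatorname{Im} s\leq 0$ synchronizes the two choices so that $\alpha_i$ closes up. Once existence is established, uniqueness of $\alpha'_i$ and $\beta'_j$ is immediate from \Cref{parallel}: after a small perturbation $\overline{X}_s(u,v)$ is general in its stratum, so the homology class of a saddle connection determines it uniquely, and the classes of the $\alpha_i$ and $\beta_j$ extend to linearly independent classes of $\overline{X}_s(u,v)$ in $H_1(X,\boldsymbol{z};\ZZ)$.
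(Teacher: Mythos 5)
The existence construction you sketch is reasonable — continuing $\alpha_i$ across the plumbing annuli, following the prong-matching into the polar domains, is the right geometric picture, although the "main technical step" you defer (closing up the concatenation at both ends) is precisely the bookkeeping that would need to be done. The paper does not prove existence at all: it treats it as given (the $\beta_j$, $\alpha_i$ arise by shrinking saddle connections of $\overline{X}_s(u,v)$ in the degeneration) and only proves uniqueness.

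The real problem is your uniqueness argument. You claim it is ``immediate from \Cref{parallel}'' after a small perturbation. This does not work, for several reasons. First, \Cref{parallel} says that for a flat surface in a dense open $W$, two saddle connections are \emph{parallel} iff \emph{homologous}; it emphatically does not say a saddle connection is determined by its class in $H_1(X,\boldsymbol{z};\ZZ)$ — the paper's entire notion of multiplicity is built on having several saddle connections in the same class. Second, two saddle connections on $\overline{X}_s(u,v)$ degenerating to the same $\alpha_i$ need not have equal homology classes at all: their classes can differ by a multiple of the vanishing class $[\beta'_n]$, since vanishing classes collapse in the limit. Third, and most seriously, your argument never uses the hypothesis $\operatorname{Im} s\leq 0$, which is essential for uniqueness to hold — \Cref{fig602} of the paper shows that when $\operatorname{Im} s>0$ there genuinely is an extra saddle connection $\alpha''_i$ degenerating to $\alpha_i$. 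The paper's proof supplies exactly what your argument is missing: if $[\alpha'_i]=[\alpha''_i]$, the two saddle connections bound a flat parallelogram together with $\beta'_t,\beta'_n$; if $[\alpha'_i]-[\alpha''_i]=k[\beta'_n]$ with $k\neq 0$, they bound a flat triangle with $\beta'_t$ or $\beta'_n$; in either case the period ratio $\int_{\beta'_j}\omega/\int_{\alpha'_i}\omega$ has negative imaginary part, contradicting $\operatorname{Im} s\leq 0$. For $\beta_j$ the paper uses a different argument you also omit: a competing $\beta''_j$ would differ from $\beta'_j$ by a multiple of the vanishing class $[\beta'_j]$ itself, forcing $[\beta''_j]=m[\beta'_j]$, and primitivity of simple-closed-curve classes pins down $m=\pm1$, leading to a contradiction when shrinking both. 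Your argument as written does not reach any of these points, so uniqueness is not established.
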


\begin{proof}
If $\beta'_j$ degenerates to $\beta_j$, then $\overline{X}$ is obtained by shrinking $\beta'_j$. If another saddle connection $\beta''_j$ of $\overline{X}_s (u,v)$ degenerates to $\beta_j$, then the homology classes $[\beta'_j]$ and $[\beta''_j]$ in $H_1(X\setminus \boldsymbol{p},\boldsymbol{z};\ZZ)$ differ by a multiple of the vanishing class, which is equal to $[\beta'_j]$. In particular, $[\beta''_j]$ is an integer multiple of $[\beta'_j]$. However, since both saddle connections are simple closed curves in $\overline{X}_s (u,v)$, their homology classes are primitive. This is a contradiction unless $[\beta'_j]=[\beta''_j]$. So $\beta'_j$ and $\beta''_j$ are homologous, and thus they degenerate to distinct parallel saddle connections in $\overline{X}$ by shrinking $\beta'_j$ and $\beta''_j$ simultaneously. This is a contradiction and therefore $\beta'_j$ is unique. 

Now suppose that $\alpha'_i$ and $\alpha''_i$ degenerate to $\alpha_i$. Then the homology classes $[\alpha'_i]$ and $[\alpha''_i]$ are equal in $H_1(X,\boldsymbol{z};\ZZ)$ or they differ by a multiple of the vanishing class $[\beta'_n]$. If $[\alpha'_i]=[\alpha''_i]$, then $\overline{X}_s (u,v)$ contains a flat parallelogram bounded by $\alpha'_i,\alpha''_i,\beta'_t$ and $\beta'_n$. This means $\operatorname{Im} \left( \int_{\beta'_j} \omega / \int_{\alpha'_i} \omega \right)<0$, a contradiction. So we assume that $[\alpha'_i]=[\alpha''_i]+k[\beta'_n]$ for some $k$. We may assume that an angle between $\alpha'_i$ and $\alpha''_i$ at $z$ is smaller than $\pi$, as this angle converges to zero as $|s|\to 0$. In particular, $\alpha'_i$ and $\alpha''_i$ are two sides of a flat triangle. The last side of this triangle is $\beta'_t$ or $\beta'_n$. This means $\operatorname{Im} \left( \int_{\beta'_j} \omega / \int_{\alpha'_i} \omega \right)<0$, a contradiction. 
\end{proof}

The following proposition determines parallel saddle connections of $\overline{X}_s(u,v)$.

\begin{proposition} \label{genus1parallel}
Let $\overline{X}=X(t,\tau, {\bf C}, [(u,v)])$ be a two-level multi-scale differential in $\partial\overline{\cR}_1(\mu)$. By relabeling the saddle connections, we may assume $c_{t-1}<u\leq Q_1$. If $\operatorname{Im} s\leq 0$, then two saddle connections $\alpha'_i$ and $\alpha'_j$ of $\overline{X}_s(u,v)$ for $i<j$ are parallel if and only if $d_j \leq v<Q_2$ or $0\leq v< d_i$.
\end{proposition}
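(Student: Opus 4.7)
The plan is to translate parallelism into a homological condition and then analyze it via the plumbing construction. By \Cref{parallel}, for generic $s$ the smoothed surface $\overline{X}_s(u,v)$ is generic, so two saddle connections are parallel if and only if they are homologous in $H_1 (X\setminus{\boldsymbol{p}}, \boldsymbol{z}; \ZZ)^R$. In the residueless stratum $\cR_1(\mu)$ with the single zero $z$, this group is identified with $H_1 (X ; \ZZ) \cong \ZZ^2$. Hence the problem reduces to deciding when $[\alpha'_i]=[\alpha'_j]$ in $H_1(X;\ZZ)$.

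To test this, I would exhibit the natural candidate 2-chain. For $i<j$, the saddle connections $\alpha_i$ and $\alpha_j$ on $X_0$ cobound a subregion $R_{ij}\subset X_0$ containing the poles $p_{\tau(i+1)},\dots,p_{\tau(j)}$. Its boundary on $X_0$ consists of $\alpha_j-\alpha_i$ together with two angular arcs at the nodes: one at $s_1$ spanning the prongs $v^+_{c_i},\dots, v^+_{c_j-1}$, and one at $s_2$ spanning the prongs $w^+_{d_i},\dots, w^+_{d_j-1}$. Under the plumbing with prong-matching $(u,v)$, these angular arcs extend through the plumbing annuli and become angular arcs at $z$ on the bottom component $X_{-1}$, spanning prescribed ranges of the $v^-$- and $w^-$-prongs. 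The lifted chain on $X$ has boundary $\alpha'_i-\alpha'_j$ precisely when these two arcs at $z$ can be completed by a subregion of $X_{-1}$ whose only boundary beyond the $X_{-1}$-parts of $\alpha'_i,\alpha'_j$ is the two plumbing arcs; geometrically, the $s_1$-side and $s_2$-side arcs at $z$ must lie in angularly adjacent sectors so that no marked residueless pole of $X_{-1}$ is isolated.

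Translating this to inequalities on $v$: the assumption $c_{t-1}<u\leq Q_1$ places the prong $v^+_u$ in the polar domain of $p_{\tau(t)}$ on $X_0$, which under the prong-matching pins down the $s_1$-side angular arc at $z$ inside the polar domain of $s_1$ in $X_{-1}$ as depicted in \Cref{fig601}. The $s_2$-side arc meets this $s_1$-side arc contiguously at $z$ precisely when $v$ lies outside the cyclic interval $[d_i,d_j)$ of $\ZZ/Q_2\ZZ$, and the sign condition $\operatorname{Im} s\leq 0$ selects the orientation that makes the two branches of the complement give the two disjuncts $d_j\leq v<Q_2$ (the ``above'' branch) and $0\leq v<d_i$ (the ``below,'' wraparound branch). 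The main obstacle is the careful bookkeeping of prong indices: the clockwise and counterclockwise conventions differ at $s_1$ and $s_2$ (see \Cref{fig601}), the prong-matching is orientation-reversing, and the sign of $\operatorname{Im} s$ determines on which side the small near-node flat parallelograms lie; getting the inequalities exactly right, rather than off-by-one shifts or their complements, is the crux of the argument.
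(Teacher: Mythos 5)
Your reduction of parallelism to homology via \Cref{parallel} is correct and matches what the paper does implicitly. After that, however, the approaches diverge. The paper observes that on a genus-one surface with $H_1(X,\boldsymbol{z};\ZZ)\cong H_1(X;\ZZ)\cong\ZZ^2$, two simple closed curves with compatible orientation are homologous \emph{if and only if} their algebraic intersection number vanishes; since $\alpha'_i$ and $\alpha'_j$ meet only at the unique zero $z$, this reduces to a purely local check of whether they cross transversely at $z$, which is then read off from the prong indices $c_i+Q_1-u+1$, $d_i+Q_2-v$, etc. You instead try to produce a cobounding $2$-chain directly.

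The main gap is the converse direction. Exhibiting a subregion of $X$ with boundary $\alpha'_i-\alpha'_j$ proves homology when the construction succeeds, but the failure of \emph{that particular} chain to close up when $d_i\leq v<d_j$ does not, by itself, show the curves are non-homologous --- some other $2$-chain could exist. You write ``precisely when,'' but that biconditional is exactly what needs proof. To close the gap you would need an argument such as the one the paper gives: both $\alpha'_i$ and $\alpha'_j$ are non-separating simple closed curves whose classes lie in a rank-two lattice with a unimodular intersection pairing, so if they cross transversely once at $z$ their classes have intersection number $\pm1\neq 0$ and hence cannot agree. Absent some version of this, the proof only establishes the ``if'' half of the statement. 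The ``if'' half as you sketch it (region $R_{ij}\subset X_0$, extended through the plumbing annuli, closing up in $X_{-1}$) is the right picture and could be made rigorous, but it is doing redundant work once the intersection-number criterion is in hand: the paper never constructs the bounding chain at all, because the transversality check at $z$ already decides homology in both directions.
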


\begin{proof}
Since the surface $\overline{X}_s(u,v)$ has genus one, two simple closed curves in $\overline{X}_s(u,v)$ are homologous if and only if the intersection number between them is equal to zero. Two saddle connections $\alpha'_i$ and $\alpha'_j$ intersect only at the unique zero $z$. So the intersection number between them is equal to zero if and only if they do not intersect at $z$ transversely. We will determine the intersection number in terms of the prong-matching $(u,v)$. 

First, suppose that $d_j \leq v<Q_2$. At the node $s_1$, the prongs $v^+_{c_i}, v^+_{c_j}$ are matched to $v^-_{c_i+Q_1-u+1}, v^-_{c_j+Q_1-u+1}$, respectively. At $s_2$, the prongs $w^+_{d_i}, w^+_{d_j}$ are matched to $w^-_{d_i+Q_2-v}, w^-_{d_j+Q_2-v}$, respectively. The saddle connection $\alpha'_i$ is coming out from $z$ along $v^-_{c_i+Q_1-u+1}$ and going into $z$ along $w^-_{d_i+Q_2-v}$ in \Cref{fig601}. Similarly, $\alpha'_j$ is coming out from $z$ along $v^-_{c_j+Q_1-u+1}$ and going into $z$ along $w^-_{d_j+Q_2-v}$. Note that $1\leq c_i+Q_1-u +1< c_j+Q_1-u+1 \leq Q_1$ and $1\leq d_i+Q_2-v < d_j+Q_2-v\leq Q_2$. Thus $\alpha'_i$ and $\alpha'_j$ do not intersect transversely at $z$. By the same argument, this is also true under the assumption $0\leq v< d_i$. Therefore, the intersection number between $\alpha'_i$ and $\alpha'_j$ is zero. Thus $\alpha'_i$ and $\alpha'_j$ are parallel.

Conversely, suppose that $d_i\leq v<d_j$. At the node $s_1$, the prongs $v^+_{c_i}$ and $ v^+_{c_j}$ are matched to $v^-_{c_i+Q_1-u+1}$ and $v^-_{c_j+Q_1-u+1}$, respectively. At the node $s_2$, the prongs $w^+_{d_i}$ and $w^+_{d_j}$ are matched to $w^-_{d_i+Q_2-v}$ and $w^-_{d_j-v}$, respectively. Note that $1\leq c_i+Q_1-u+1< c_j+Q_1-u+1 \leq Q_1$ and $1\leq d_j-v < d_i+Q_2-v \leq Q_2$. By the similar argument as in the previous paragraph, $\alpha'_i$ and $\alpha'_j$ intersect transversely at $z$ and the intersection number between $\alpha'_i$ and $\alpha'_j$ is equal to one. Thus $\alpha'_i$ and $\alpha'_j$ are not parallel to each other. 
\end{proof}

The configuration of saddle connections $\overline{X}_s(u,v)$ in various cases is depicted in \Cref{fig602}. The shaded area is $C(\overline{X}_s(u,v))$, the core of this flat surface. The other regions are polar domains of the poles. The definitions of the core and the polar domain are recalled in \Cref{subsec:core}. 

Suppose that $c_{t-1
}<u\leq c_t$. Then in \Cref{fig602}, $\alpha'_{t-1}$ is drawn vertically and $\beta'_n$ is drawn horizontally. Since the smoothing parameter $s$ is small, $\int_{\beta'_n}\omega / \int_{\alpha'_{t-1}} \omega$, the ratio between the periods of saddle connections drawn vertically and horizontally, is close to $-s$. In fact, $-\int_{\beta'_n}\omega / \int_{\alpha'_{t-1}} \omega$ is also a part of smooth coordinate system of the stratum that converges to 0 as $s\to 0$. Therefore, by change of the coordinate, we can set up this ratio as a new smoothing parameter, still denoted by $s$ by abuse of notation. It is immediate that this new parameter can take values in the entire $\CC$. From now on, let $\overline{X}_s(u,v)$ denote the flat surface obtained from $\overline{X}$ with this new smoothing parameter. 

\begin{figure}
    \centering
    \input{diagram602} 
    \caption{The saddle connections of $\overline{X}_s(u,v)$} \label{fig602}
\end{figure}

As a consequence we have the following corollary, which will be crucial for the proof of existence of a multiplicity one saddle connection. In particular, we have a criterion for a two-level multi-scale differential to contain a flat surface with a multiplicity one saddle connection in a coordinate neighborhood. 

\begin{corollary} \label{genus1parallelcor}
Let $\overline{X}=X(t,\tau, {\bf C}, [(u,v)])$ be a two-level multi-scale differential in $\partial\overline{\cR}_1(\mu)$. Assume that there exist $i$ such that $c_{i-1}< u \leq c_i$ and $d_i \leq v < d_{i+1}$, or $c_i< u \leq c_{i+1}$ and $d_{i-1} \leq v < d_i$. Then $\alpha'_i$ is a multiplicity one saddle connection of $\overline{X}_s(u,v)$ for $\operatorname{Im} s\leq 0$. 
\end{corollary}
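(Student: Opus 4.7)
My plan is to derive the corollary directly from Proposition \ref{genus1parallel} via a cyclic relabeling of the saddle connections $\alpha_0, \dots, \alpha_{t-1}$ of $X_0$, as permitted by Remark \ref{expression1}.

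From the proposition's criterion (under the assumption $c_{t-1} < u \leq Q_1$), negating parallelism across all partners $j \neq k$ yields two explicit boundary regimes in which a specific saddle connection is forced to be multiplicity one: $\alpha'_0$ is multiplicity one iff $0 \leq v < d_1$, while $\alpha'_{t-1}$ is multiplicity one iff $d_{t-2} \leq v < d_{t-1}$. The first follows because parallelism of $\alpha'_0$ and $\alpha'_j$ (with $j>0$) would force $d_j \leq v < Q_2$, which is excluded once $v < d_1$ (and the alternative $0 \leq v < d_0 = 0$ is vacuous); the second because parallelism of $\alpha'_{t-1}$ with any $\alpha'_j$ (with $j < t-1$) would force $d_{t-1} \leq v < Q_2$ or $0 \leq v < d_j$, both excluded in the range $d_{t-2} \leq v < d_{t-1}$.

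For Case A ($c_{i-1} < u \leq c_i$ and $d_i \leq v < d_{i+1}$), I would cyclically relabel so that old $\alpha_i$ becomes new $\alpha_0$. The convention ``$\alpha_k$ lies between $v^+_{c_k-1}$ and $v^+_{c_k}$'' at $s_1$ (and its analogue at $s_2$) forces the $X_0$-side prong labels to shift by $c_i$ at $s_1$ and by $d_i$ at $s_2$, so that the representative of the prong-matching becomes $(u',v') = (u - c_i, v - d_i) \pmod{(Q_1, Q_2)}$. A short computation gives the new cumulative sums $c'_{t-1} = Q_1 - C_{\tau(i)}$ and $d'_1 = D_{\tau(i+1)}$, and the hypotheses of Case A translate precisely to $u' \in (c'_{t-1}, Q_1]$ and $v' \in [0, d'_1)$, landing us in the first boundary regime. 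Hence new $\alpha'_0$, which is old $\alpha'_i$, has multiplicity one. Case B ($c_i < u \leq c_{i+1}$ and $d_{i-1} \leq v < d_i$) is entirely dual: relabel so that new $\alpha_0$ is old $\alpha_{i+1}$ (hence new $\alpha_{t-1}$ is old $\alpha_i$), shift prongs by $c_{i+1}$ and $d_{i+1}$, and check that the hypotheses translate into the second boundary regime for $\alpha'_{t-1}$.

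The only delicate step is the bookkeeping: confirming that cyclically relabeling the $\alpha_k$'s forces the matching cyclic shifts of the prong labels $v^\pm_k, w^\pm_k$ on the $X_0$-side of each node, so that the representative $(u,v)$ is correctly transformed into $(u',v')$ in the new prong labels while the intrinsic $X_{-1}$-side prong labels $v^-_k, w^-_k$ remain untouched. Once this identification is set up, both cases reduce mechanically to Proposition \ref{genus1parallel}.
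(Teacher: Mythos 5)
Your proposal is correct and takes essentially the same approach as the paper: reduce to Proposition \ref{genus1parallel} via a cyclic relabeling of the $\alpha_k$'s, noting that the $X_0$-side prong labels (and hence the representative $(u,v)$) shift accordingly. The only difference is that the paper dispatches the second case (``$c_i < u \leq c_{i+1}$ and $d_{i-1}\leq v < d_i$'') by swapping the two nodes $s_1\leftrightarrow s_2$, while you instead perform a second cyclic shift placing old $\alpha_i$ at position $t-1$; both are equally valid bookkeeping for the same underlying reduction.
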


\begin{proof}
Suppose that $c_{i-1}< u \leq c_i$ and $d_i \leq v < d_{i+1}$. The other case follows from this by relabeling the nodes $s_1,s_2$. By relabeling the saddle connection if necessary, we may assume that $i=0$. If $t=1$, then $\alpha_0$ is a unique saddle connection of $X_0$, thus $\alpha'_0$ is obviously a multiplicity one saddle connection. So suppose $t\geq 2$. By \Cref{genus1parallel}, the saddle connections each $\alpha'_j$ for $1\leq j\leq t-1$ is not parallel to $\alpha'_0$. Therefore $\alpha'_0$ is a multiplicity one saddle connection. 
\end{proof}

From the configurations of saddle connections described in \Cref{fig602}, we can see that there are at most three collections of parallel saddle connections on the flat surface $\overline{X}_s(u,v)$ obtained by plumbing construction. By shrinking $\beta'_i$, we obtain the original two-level multi-scale differential $\overline{X}$. The following lemma describes the result of shrinking other two collections of saddle connection, providing two ways to navigate the multi-scale differentials in the boundary of a given connected component $\cC$. 

\begin{lemma} \label{genus1connect}
For $\overline{X}=X(t,\tau,{\bf C},[(u,v)])\in \partial\overline{\cC}$, we can find two other multi-scale differentials $T_1^{(u,v)}\overline{X}$ and $T_2^{(u,v)}\overline{X}$ in $\partial\overline{\cC}$ given by the following. 

By relabeling the saddle connection, we may assume that $c_{t-1} < u \leq Q_1$. If $d_{j-1}\leq v < d_j$ for some $j$, then $T_1^{(u,v)} \overline{X}=X(t',\tau', {\bf C'}, Pr')$ is given by the following combinatorial data:

\begin{itemize}
    \item $t'=n-(j-1)$ poles on the top level component.
    \item A permutation $\tau'\in Sym_n$ defined by 
\begin{equation*}
    \tau' (i)=
    \begin{cases*}
        i+j-1 & if $1\leq i \leq t'$ \\
        j-(i-t') & if $t'+1\leq i \leq n$
    \end{cases*}
\end{equation*}
    \item A set of integers 

\begin{equation*}
    C'_i =
    \begin{cases*}
      C_i & if $j+1\leq i \leq t-1$ or $t+1\leq i \leq n$ \\
      D_i        & if $1\leq i\leq j-1$ \\
      C_j+v-d_{j-1} & if $i=j$ \\
      u-c_{t-1} & if $i=t$.
    \end{cases*}
\end{equation*}
In particular, $Q'_1 \coloneqq \sum_{i=1}^{t'} C'_i = u+v+c_n-c_t-c_{j-1}-d_{j-1}$ and $Q'_2\coloneqq \sum_{i=1}^{t'} b_{\tau'(i)}-Q'_1 = d_n+c_t-u-v$.
    \item A prong-matching $(v-d_{j-1},d_t-v)\in \ZZ/Q'_1 \ZZ \times \ZZ/Q'_2 \ZZ$.
\end{itemize}

Also, $T_2^{(u,v)} \overline{X}=X(t'',\tau'', {\bf C''}, Pr'')$ is given by the following combinatorial data:

\begin{itemize}
    \item $n-(t-1-j)$ poles on the top level component.
    \item A permutation $\tau''$ defined by

\begin{equation*}
    \tau'' (i)=
    \begin{cases*}
        i & if $1\leq i \leq j$ \\
        n+(j+1-i) & if $j+1\leq i \leq j+(n-t+1) $ \\
        i-(n-t+1) & if $j+(n-t+1)+1 \leq i \leq n$
    \end{cases*}
\end{equation*}
    \item A set of integers 

\begin{equation*}
    C''_i =
    \begin{cases*}
      C_i & if $1\leq i \leq j-1$ or $j+1\leq i \leq t-1$ \\
      D_i        & if $t+1\leq i\leq n$ \\
      C_j+(d_j-v-1) & if $i=j$ \\
      c_t-u+1 & if $i=t$.
    \end{cases*}
\end{equation*}
In particular, $Q''_1 \coloneqq \sum_{i=1}^{t''} C''_i = c_j+d_j+d_n+c_t-d_t-u-v$ and $Q''_2\coloneqq  \sum_{i=1}^{t''} b_{\tau''(i)}-Q''_1 =u+v+c_n+d_t-c_t-c_j-d_j$.
    \item A prong-matching $(c_j-1,-D_t+1)\in \ZZ/Q''_1 \ZZ \times \ZZ/Q''_2 \ZZ$.
\end{itemize}
\end{lemma}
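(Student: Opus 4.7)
The plan is to exhibit $T_1^{(u,v)}\overline{X}$ and $T_2^{(u,v)}\overline{X}$ as the two other possible degenerations of the flat surface $\overline{X}_s(u,v)$, obtained by shrinking its two other maximal collections of parallel saddle connections. First, I would apply \Cref{shrink} to plumb $\overline{X}$ with the prong-matching $(u,v)$ and a smoothing parameter $s$ with $\operatorname{Im}s\le 0$, obtaining a flat surface $(X,\omega)=\overline{X}_s(u,v)\in\cC$. By \Cref{saddledegenerate}, this surface carries distinguished saddle connections $\alpha'_0,\dots,\alpha'_{t-1}$ and $\beta'_t,\dots,\beta'_n$ coming from those of the two irreducible components of $\overline{X}$.

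By \Cref{genus1parallel} together with the pictures in \Cref{fig602}, under the hypothesis $c_{t-1}<u\le Q_1$ and $d_{j-1}\le v<d_j$, these saddle connections split into exactly three equivalence classes of homologous saddle connections. One of them is $\{\beta'_t,\dots,\beta'_n\}$, whose shrinking (via \Cref{shrink}) recovers $\overline{X}$. I would \emph{define} $T_1^{(u,v)}\overline{X}$ and $T_2^{(u,v)}\overline{X}$ to be the multi-scale differentials in $\partial\overline{\cR}_1(\mu)$ obtained by shrinking the other two classes: roughly, the ``second'' class consists of those $\alpha'_i$ with $i\ge j$ together with the $\beta'_\ell$ that become parallel to them after plumbing, and the ``third'' class consists of the remaining saddle connections. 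Since each class arises as a collection of parallel saddle connections on the single flat surface $(X,\omega)\in\cC$, both degenerations lie in $\partial\overline{\cC}$, which is the key point.

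The remaining task is to read off the combinatorial invariants of $T_1^{(u,v)}\overline{X}$ and $T_2^{(u,v)}\overline{X}$ from the geometric picture in \Cref{fig602} and match them to those stated. Using \Cref{zerodim1} and \Cref{zerodim2}, the cyclic order $\tau'$ (resp.\ $\tau''$) is determined by the clockwise order in which the polar domains of the marked poles appear around the new node between the two levels; this reorders the original poles by an explicit shift governed by which $C_i,D_i$-sectors lie above/below the chosen class of saddle connections. The tuple ${\bf C}'$ (resp.\ ${\bf C}''$) is computed by decomposing the new total angles at the node as sums of $C_i$'s and $D_i$'s according to which sides of the polar domains of $\overline{X}_s(u,v)$ become the boundary of the polar domain of the new node. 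The prong-matching is tracked by following the preferred horizontal direction (the direction of the shrunk saddle connections) through the plumbing.

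The main obstacle is the bookkeeping in this last step. The geometric content is essentially clear from the four sub-configurations in \Cref{fig602}, so the difficulty is not conceptual but combinatorial: verifying the explicit permutations $\tau',\tau''$, the displacement constants $v-d_{j-1}$, $c_t-u+1$, etc.\ in the tuples ${\bf C}',{\bf C}''$, and the precise residues $(v-d_{j-1},d_t-v)$ and $(c_j-1,-D_t+1)$ of the prong-matchings modulo the new $Q'_i,Q''_i$. I expect this to reduce to a case analysis based on the position of $u$ among $c_0,\dots,c_n$ and of $v$ among $d_0,\dots,d_n$, mirroring the four pictures in \Cref{fig602}, and to be checked directly by writing out the cyclic order of sectors at the zero in $(X,\omega)$ and using the relations $c_n=Q_1$, $d_n=Q_2$, $C_i+D_i=b_i$.
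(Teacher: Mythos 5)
Your approach matches the paper's: plumb $\overline{X}$ with the prong-matching $(u,v)$ and $\operatorname{Im}s\le 0$, identify the three maximal collections of parallel saddle connections on $\overline{X}_s(u,v)$, and define $T_1^{(u,v)}\overline{X}$, $T_2^{(u,v)}\overline{X}$ by shrinking the two classes other than $\{\beta'_t,\dots,\beta'_n\}$; since all three degenerations come from one flat surface in $\cC$, they lie in $\partial\overline{\cC}$. The paper records the outcome in the compact identity $\overline{X}_s(u,v)=\bigl(T_1^{(u,v)}\overline{X}\bigr)_{s+1}(v-d_{j-1},d_t-v)=\bigl(T_2^{(u,v)}\overline{X}\bigr)_{-s^{-1}}(c_j-1,-D_t+1)$ and then checks the combinatorics against the three drawings of the same surface in its Figure~7, which is precisely the bookkeeping you flag.

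One correction to your description of the classes: for generic $s$ with $\operatorname{Im}s\le 0$, no $\beta'_\ell$ is parallel to any $\alpha'_i$. By \Cref{genus1parallel} with $d_{j-1}\le v<d_j$ and $c_{t-1}<u\le Q_1$, the three classes of pairwise-parallel saddle connections are exactly $\{\beta'_t,\dots,\beta'_n\}$, $\{\alpha'_0,\dots,\alpha'_{j-1}\}$, and $\{\alpha'_j,\dots,\alpha'_{t-1}\}$; the $\beta'$ do not mix with the $\alpha'$. This should be stated cleanly, since it is what guarantees that exactly three boundary points of $\PP\overline{\cR}_1(\mu)$ are reached from $\overline{X}_s(u,v)$.
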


\begin{proof}
Consider the flat surface $\overline{X}_s(u,v)$ for $\operatorname{Im} s\leq 0$. If $c_{k-1} < u \leq c_k$ and $d_{j-1} \leq v < d_j$, then $\overline{X}_s(u,v)$ has three collections of parallel saddle connections $\{\beta'_t,\dots,\beta'_n\}$, $\{\alpha'_0,\dots, \alpha'_{j-1}\}$ and $\{\alpha'_j,\dots, \alpha'_{k-1}\}$. By shrinking the first, we obtain $\overline{X}$ again. By shrinking the second and the third, we obtain $T_1^{(u,v)} \overline{X}$ and $T_2^{(u,v)} \overline{X}$, respectively in $\partial \overline{\cC}$.

More precisely, we have $\overline{X}_s(u,v)=\left(T_1^{(u,v)} \overline{X}\right)_{s+1}(v-d_{j-1},d_t-v)= \left(T_2^{(u,v)}\overline{X}\right)_{-s^{-1}}(c_j-1,-D_t+1)$. The flat surface $\overline{X}_s(u,v)$ is drawn in three different ways in \Cref{fig606}. By shrinking saddle connections horizontally drawn in \Cref{fig606}, we can obtain three different multi-scale differentials $\overline{X}, T_1^{(u,v)}\overline{X}$ and $T_2^{(u,v)} \overline{X}$.

\begin{figure}
    \centering
    \input{diagram606} 
    \caption{Proof of \Cref{genus1connect}} \label{fig606}
\end{figure}

\end{proof}

We can characterize the hyperelliptic connected components of $\cR_1(\mu)$ in terms of the combinatorial data given in \Cref{genus1lemma}. 

\begin{lemma} \label{genus1hyper}
Let $\cC$ be a hyperelliptic component of $\cR_1(\mu)$. Suppose $\overline{X}=X(t,\tau, {\bf C}, Pr)\in \partial\overline{\cC}$. If $(c_i,v)\in Pr$ for some $0\leq i\leq t-1$, then $v=d_j$ for some $0\leq j\leq t-1$. 
Conversely for any connected component $\cC$ of $\cR_1(\mu)$, if every $X(t,\tau, {\bf C}, Pr) \in \partial\overline{\cC}$ satisfies the above condition on $Pr$, then $\cC$ is a hyperelliptic component. 
\end{lemma}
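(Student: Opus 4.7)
The plan is to exploit the compatibility of the hyperelliptic involution with the prong-matching furnished by \Cref{hyper2}. For the forward implication, assume $\cC$ is hyperelliptic and $\overline{X} = X(t, \tau, {\bf C}, Pr) \in \partial\overline{\cC}$ with $(c_i, v) \in Pr$. By \Cref{hyper2}, $\overline{X}$ carries an involution $\sigma = (\sigma_0, \sigma_{-1})$ preserving each level, interchanging the two nodes $s_1 \leftrightarrow s_2$, and compatible with $Pr$; in particular $Q_1 = Q_2 =: Q$. The strategy is to track where $\sigma$ sends the matched pair $(v^+_{c_i}, v^-_1)$ at the node $s_1$.

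First I would analyze $\sigma_0$: it permutes the saddle connections $\alpha_0, \ldots, \alpha_{t-1}$ of $X_0$ and therefore sends the slot between $v^+_{c_i-1}$ and $v^+_{c_i}$ (the location of $\alpha_i$) to the slot between $w^+_{d_j-1}$ and $w^+_{d_j}$ (the location of some $\alpha_j$) at $s_2^+$. Comparing the clockwise labeling of prongs at $s_1^+$ with the counterclockwise labeling at $s_2^+$ and using that $\sigma_0$ is orientation-preserving, careful bookkeeping yields $\sigma_0(v^+_{c_i}) = w^+_{d_j}$. Next, $\sigma_{-1}$ fixes $z$ and swaps $s_1^- \leftrightarrow s_2^-$; locally near $z$ it acts as a rotation by $\pi$ in cone coordinates, which combined with the clockwise/counterclockwise conventions at $s_1^-$ and $s_2^-$ yields $\sigma_{-1}(v^-_1) = w^-_Q$. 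Compatibility of $\sigma$ with $Pr$ then maps the pair $(v^+_{c_i}, v^-_1)$ to a pair in the matching at $s_2$, namely $(w^+_{d_j}, w^-_Q)$; but the matching at $s_2$ is already $(w^+_v, w^-_Q)$, forcing $v = d_j$.

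For the converse, I would prove the contrapositive: assume $\cC$ is not hyperelliptic and produce a boundary $\overline{X}$ violating the prong-matching condition. The forward argument shows that the condition $(c_i, d_j) \in Pr$ at a boundary $\overline{X}$ encodes exactly the data needed to build compatible prong-level involutions on $X_0$ and $X_{-1}$. If this condition holds at every $\overline{X} \in \partial\overline{\cC}$, applying the hypothesis simultaneously to $\overline{X}$ and to the neighboring multi-scale differentials $T_1^{(u,v)}\overline{X}, T_2^{(u,v)}\overline{X}$ from \Cref{genus1connect} imposes enough further constraints to force a palindromic structure on $(\tau, {\bf C})$, so that the candidate prong-level involutions extend to genuine hyperelliptic involutions on each component; \Cref{hyper2} then assembles them into a hyperelliptic involution on a smoothing, contradicting the non-hyperellipticity of $\cC$. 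The main obstacle is the bookkeeping in the forward direction---in particular pinning down $\sigma_{-1}(v^-_1) = w^-_Q$---where the orientation conventions combined with the $\sigma^*\omega = -\omega$ twist are a persistent source of potential sign errors; for the converse, the delicate part is verifying that applying the prong-matching condition at multiple boundary points really forces palindromicity rather than some weaker partial symmetry.
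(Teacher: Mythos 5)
Your forward argument is essentially the paper's: the top-level involution $\sigma_0$ (furnished by \Cref{hyper2}) sends saddle connections to saddle connections, hence the prong $v^+_{c_i}$ to some $w^+_{d_j}$, and compatibility with $Pr$ forces $v=d_j$. The extra prong-counting bookkeeping you supply (determining $\sigma_{-1}(v^-_1)$, etc.) is correct but not actually needed for the forward implication — the paper compresses it into one sentence.

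The converse is where there is a real gap. You suggest that palindromicity of $(\tau,{\bf C})$ should follow from applying the prong-matching condition ``simultaneously'' at $\overline{X}$, $T_1^{(u,v)}\overline{X}$ and $T_2^{(u,v)}\overline{X}$, and you flag yourself that you are not sure this forces full palindromicity rather than a weaker partial symmetry. The mechanism you are missing is the \emph{level rotation action}, applied at a \emph{single} boundary point $\overline{X}$. If $(c_i,d_j)\in Pr$, then $(c_i+k,d_j-k)\in Pr$ for every $k$; taking $k=C_{i+1}$ and applying the hypothesis to $(c_{i+1},d_j-C_{i+1})$ forces $d_j-C_{i+1}=d_m$ for some $m$. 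If $C_{i+1}<D_j$, then $d_{j-1}<d_j-C_{i+1}<d_j$, which is not a $d_m$, contradiction; if $C_{i+1}>D_j$, a symmetric argument (swapping the labels of $s_1,s_2$) contradicts. Hence $C_{i+1}=D_j$ and $(c_{i+1},d_{j-1})\in Pr$; iterating yields $C_{i+k}=D_{j+1-k}$ for all $k$. In particular $Q_1=Q_2$, and palindromicity gives an explicit involution $\sigma_0$ on $X_0$ exchanging $s_1,s_2$. Only after this does the paper invoke a single $T_1$-move, namely passing to $\overline{X'}=T_1^{(0,0)}\overline{X}$ (or $T_1^{(0,d_1)}\overline{X}$ in the odd-parity case), whose purpose is not to extract further constraints but to land on a boundary point whose bottom-level component lies in a stratum such as $\cH_0(a,-\tfrac{a}{2}-1,-\tfrac{a}{2}-1)$ where a unique compatible involution $\sigma'_{-1}$ visibly exists; compatibility of the pair $(\sigma'_0,\sigma'_{-1})$ with $Pr'$ is then checked directly (via level rotation again), and the converse of \Cref{hyper2} concludes. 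So your plan gets the right shape (palindromicity $\Rightarrow$ involutions $\Rightarrow$ \Cref{hyper2}) but does not locate the step that actually proves palindromicity, and the route you sketch (comparing several $T_1/T_2$-neighbors) both overcomplicates it and leaves the key claim unproved.
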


\begin{proof}
By relabeling the poles if necessary, we may assume that $\tau=Id$. The first part easily follows from the existence of the involution $\sigma_0$ on the top level component. Since $\sigma_0$ sends saddle connections to saddle connections, the prong $v^+_{c_i}$ corresponding to the saddle connection $\alpha_i$ must be mapped to $w^+_{d_j}$ for some $j$ by $\sigma_0$. Since the involutions are compatible with $Pr$, we must have $(c_i,d_j)\in Pr$ and $v=d_j$. 

We now prove the converse. First of all, we prove that if $X(t,Id, {\bf C}, Pr)$ satisfies the condition on $Pr$, then the top level component has an involution $\sigma_0$ that interchanges the two nodes $s_1,s_2$. Suppose that $(c_i, d_j)\in Pr$ for some $i,j$. If $C_{i+1}<D_j$, then by the level rotation action, we obtain a prong-matching $(c_{i+1},d_j-C_{i+1})\in Pr$ and $d_{j-1}<d_j-C_{i+1}<d_j$, a contradiction. Similarly if $C_{i+1}>D_j$, a prong-matching $(c_i+D_j, d_{j-1})\in Pr$ gives a contradiction after interchanging the labeling of $s_1$ and $s_2$. So $C_{i+1}=D_j$ and $(c_{i+1},d_{j-1})\in Pr$. By repeating this, we can conclude that $C_{i+k}=D_{j+1-k}$ for any $k$. In particular, $Q_1 = \sum_{i=1}^t C_i = \sum_{j=1}^t D_j = Q_2$ and the orders of zeroes in $X_0$ at the nodes $s_1,s_2$ are equal. 

If $i+j$ is even, then by relabeling the saddle connections if necessary, we may assume $i+j=0\in \ZZ/t\ZZ$ and $(0,0)\in Pr$. Since $C_k=D_{t+1-k}$ for each $k$, there exists an involution $\sigma_0$ on the top level component that interchanges pairs of saddle connections $\alpha_k$ and $\alpha_{t+1-k}$. That is, $\sigma_0$ interchanges the pair of poles $p_k, p_{t+1-k}$, for each $k=1,\dots,t$. If $t$ is odd, the pole $p_{\frac{t+1}{2}}$ is fixed. 

If $i+j$ is odd, then by relabeling the saddle connections if necessary, we may assume $m=1$ and $(0,d_1)\in Pr$. Since $C_k=D_{t+2-k}$ for each $k$, there exists an involution $\sigma_0$ that interchanges the pair of poles $p_k, p_{t+2-k}$, for each $k=1,\dots,t+1$. Therefore in any case, the top level component $X_0$ has an involution that interchanges two nodes $s_1$ and $s_2$. 

If $(0,0)\in Pr$, then we can take $\overline{X'}\coloneqq T_1^{(0,0)} \overline{X} \in \partial\overline{\cC}$. The top level component contains all $n$ poles and has an involution $\sigma'_0$ interchanging two nodes $s'_1,s'_2$. The bottom level component $X'_{-1}$ is contained in the hyperelliptic stratum $\cH_0 (a,-\frac{a}{2}-1,-\frac{a}{2}-1)$, so it obviously has a unique involution $\sigma'_{-1}$ interchanging $s_1,s_2$. Note that $(0,d_t)\in Pr'$ and $C_k=D_{t+1-k}$ for any $k=1,\dots,t$. By level rotation action, we obtain a prong-matching $(c_{\frac{t}{2}},d_{\frac{t}{2}})\in Pr'$ when $t$ is even and $(c_{\frac{t-1}{2}}, d_{\frac{t+1}{2}})\in Pr'$ when $t$ is odd. So the involutions $\sigma'_0$, $\sigma'_{-1}$ are compatible with the prong-matching class $Pr'$ and $\cC$ is hyperelliptic.

If $(0,d_1)\in Pr$, then we can take $\overline{X'}=T_1^{(0,d_1)} \overline{X} \in \partial\overline{\cC}$. The top level component $X'_0$ contains $n-1$ poles and has an involution $\sigma'_0$ interchanging $s_1,s_2$ as above. In this case, the bottom level component $X'_{-1}$ contains one marked pole $p_1$ and is contained in the stratum $\cR_1 (a,-b_1 ;-\frac{a-b_1}{2}-1,-\frac{a-b_1}{2}-1)$. The pole $p_1$ of order $b_1$ is residueless since every prescribed pole is residueless. Note that $C_1=D_1$ by assumption. So $X'_{-1}$ has a unique involution $\sigma'_{-1}$ that interchanges $s_1,s_2$. Note that $(-D_1,d_t)\in Pr'$ and $C_{k+1}=D_{t-k+1}$ for any $k=1,\dots,t-1$. By level rotation action, we obtain a prong-matching $(c_{\frac{t}{2}-1},d_{\frac{t}{2}+1})\in Pr'$. So the involutions $\sigma'_0$, $\sigma'_{-1}$ are compatible with the prong-matching class $Pr'$ and $\cC$ is hyperelliptic.
\end{proof}

\subsection{Existence of a multiplicity one saddle connection --- base case} \label{subsec:simple_base}

We can finally prove \Cref{simple1} for hyperelliptic components of genus one \MIN strata. 

\begin{lemma}
Let $\cC$ be a hyperelliptic component of a genus one \MIN stratum $\cR_1(\mu)$ with less than four fixed marked points. Then $\cC$ contains a flat surface with a multiplicity one saddle connection.
\end{lemma}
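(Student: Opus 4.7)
The plan is to exploit the double-cover realization of hyperelliptic components established in \Cref{hyperprofile1}. Writing $\cC$ as the image of $\Phi: \PP\cQ^R_0(\nu) \to \PP\cR_1(\mu)$, the singularity data $\nu$ for the quadratic differential on $\PP^1$ contains a zero $w$ of order $a-1$ (image of the unique zero $z$) together with $2g+2-r = 4-r$ simple poles $s_1,\dots,s_{4-r}$, each corresponding to a fixed point of the hyperelliptic involution $\sigma$ that is \emph{not} at a marked point of $X$ (a \emph{free Weierstrass point}). Since $r<4$, there is at least one such simple pole $s_\ell$, and I will produce the desired multiplicity one saddle connection by lifting a half-translation saddle connection on $\PP^1$ through $\sigma$-invariance.

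First I would take a generic $(\PP^1,\xi)\in \cQ^R_0(\nu)$ and use the standard fact (the quadratic-differential analog of \Cref{saddlehomology}) that $(\PP^1,\xi)$ contains a saddle connection $\delta$ joining the zero $w$ to the simple pole $s_\ell$. Lifting $\delta$ through the double cover $\phi: X\to\PP^1$, and noting that $\phi$ is ramified at both endpoints of $\delta$, yields a single closed loop $\gamma := \phi^{-1}(\delta)$ on $X$ from $z$ to $z$ that passes once through the Weierstrass point $\phi^{-1}(s_\ell)$; this $\gamma$ is a saddle connection of $(X,\omega)=\Phi(\PP^1,\xi)\in \cC$ and is $\sigma$-invariant as a set. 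To establish multiplicity one, I would choose $(\PP^1,\xi)$ generic so that $\delta$ is the unique saddle connection in its parallel class on $\PP^1$ (analog of \Cref{parallel} for quadratic strata). Any saddle connection $\gamma'$ of $(X,\omega)$ parallel to $\gamma$ projects to a saddle connection $\phi(\gamma')$ parallel to $\delta$; genericity forces $\phi(\gamma')=\delta$, and since $\delta$ lifts uniquely to $\gamma$ (again because both endpoints are branch points), $\gamma'=\gamma$, so $\gamma$ has multiplicity one.

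The main obstacle I expect is the rigorous justification that (i) a saddle connection from $w$ to a simple pole $s_\ell$ exists on a generic quadratic differential in $\cQ^R_0(\nu)$, and (ii) generic uniqueness in parallel classes holds inside this linear subvariety of quadratic differentials. Both statements are standard analogs of the abelian case, but need care because simple poles behave as half-integer conical singularities. If this turns out to be awkward, an alternative approach is available through the multi-scale compactification: one can degenerate $(X,\omega)$ so that $z$ and the free Weierstrass point $\phi^{-1}(s_\ell)$ come arbitrarily close. This makes the $\sigma$-invariant saddle connection $\gamma$ very short, so by \Cref{parallel} and shrinking, it becomes the unique saddle connection in its homology class, i.e., of multiplicity one. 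The exclusion of the case $r=4$ is then transparent: when all four fixed points of $\sigma$ are at marked points, every saddle connection $\gamma$ on $(X,\omega)$ has $\sigma(\gamma)\ne\gamma$ as a subset (its midpoint cannot be a pole or the unique zero), so $\gamma$ is paired with a distinct parallel saddle connection $\sigma(\gamma)$ and never has multiplicity one.
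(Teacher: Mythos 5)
Your proof attempts a genuinely different route from the paper: instead of the paper's combinatorial analysis of boundary multi-scale differentials $X(t,\tau,{\bf C},Pr)$ via the operators $T_1,T_2$ and \Cref{genus1parallelcor}, you work geometrically with the quadratic-differential double cover $\Phi:\PP\cQ^R_0(\nu)\to\PP\cR_1(\mu)$ from \Cref{hyperprofile1}. The construction of a $\sigma$-invariant saddle connection $\gamma=\phi^{-1}(\delta)$ through a free Weierstrass point is sound (you correctly note that the cone angle $\pi$ at a simple pole makes the lift through the regular point $\phi^{-1}(s_\ell)$ a single straight line), and your explanation of the $r=4$ obstruction---no free Weierstrass point, so every $\gamma$ is paired with a distinct parallel $\sigma(\gamma)$---is a clean conceptual gloss that the paper does not state.

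The gap is in the multiplicity-one step. You reduce it to the claim that for a generic $(\PP^1,\xi)\in\cQ^R_0(\nu)$, the saddle connection $\delta$ is the \emph{unique} one in its parallel class, invoking ``an analog of \Cref{parallel} for quadratic strata.'' But \Cref{parallel} only says \emph{parallel $\Leftrightarrow$ homologous} for generic surfaces; it gives no control on how many saddle connections lie in a fixed homology class. That is exactly the content of multiplicity, so the reduction is circular: you have translated ``$\gamma$ has multiplicity one on $X$'' into ``$\delta$ has multiplicity one on $\PP^1$'' without proving either. The paper's proof (and the whole structure of \Cref{sec:ssc}) exists precisely because this uniqueness is not generic in any obvious way and must be produced by navigating to specific boundary strata where the bottom level is a zero-dimensional, explicitly understood piece. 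Your mention of the degenerate-and-shrink alternative is pointing in the right direction---that is essentially what the paper does---but \Cref{short} only makes $\gamma$ shorter than non-homologous saddle connections, so it still does not resolve the homology-class uniqueness, and you would need the combinatorial prong-matching bookkeeping (\Cref{genus1parallel}, \Cref{genus1parallelcor}) to finish. A secondary, smaller gap is the existence of a single saddle connection $\delta$ from $w$ to $s_\ell$ on a \emph{generic} $(\PP^1,\xi)$: the analog of \Cref{saddlehomology} only gives a chain of saddle connections; one can instead argue that through the regular point $\phi^{-1}(s_\ell)$ on $X$ there are countably many saddle-connection directions and project, but that argument should be spelled out rather than cited as a ``standard fact.''
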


\begin{proof}
Assume the contrary --- that $\cC$ does not contain any flat surface with a multiplicity one saddle connection --- and consider $\overline{X}=X(t,\tau, {\bf C}, Pr)\in \partial\overline{\cC}$ as before. The top level component $X_0$ has an involution $\sigma_0$ interchanging two nodes. If $X_0$ does not contain two fixed poles, then there exists $1\leq m\leq t$ such that $(c_m,d_m)\in Pr$ and the flat surface $\overline{X}(c_m,d_m)\in \cC$ contains a multiplicity one saddle connection by \Cref{genus1parallelcor}. So $X_0$ contains two fixed poles. 

By relabeling the poles if necessary, we may assume that $\tau=Id$ and the pole $p_1\in X_0$ is one of the two fixed poles contained in $X_0$. Then $(0,c_1)\in Pr$ and we can take another multi-scale differential $\overline{X'}=T_1^{(0,c_1)} \overline{X}\in \partial\overline{\cC}$, as in \Cref{genus1connect}, so that the pole $p_1$ is now contained in the bottom level component $X'_{-1}$. Since the top level component $X'_0$ still contains two fixed poles by the argument of the previous paragraph, we can conclude that $\cC$ has three fixed marked poles. Since the unique zero $z$ is always fixed by the involution, $\cC$ has four fixed marked points. 
\end{proof}

Finally, we prove \Cref{simple1} for a non-hyperelliptic component $\cC$ by showing that there exists a multi-scale differential in $\partial \overline{\cC}$ satisfying the assumption of \Cref{genus1parallelcor}. 

\begin{proof}[Proof of \Cref{simple1}]
Let $\cC$ be a non-hyperelliptic component of $\cR_1(\mu)$. Assume the contrary --- that $\cC$ does not contain any flat surface with a multiplicity one saddle connection. By \Cref{genus1hyper}, there exists a multi-scale differential $\overline{X}=X(t, \tau, {\bf C}, Pr)\in \partial\overline{\cC}$ with a prong-matching $(u,v)\in Pr$ that satisfies $u=c_i$ and $d_j< v<d_{j+1}$ for some $i,j$. By relabeling the saddle connections if necessary, we may assume that $i=0$. Also by relabeling the poles, we may assume that $\tau=Id$.

If $j=0$, then by \Cref{genus1parallelcor}, $\overline{X}_s(0,v)\in \cC$ for $\operatorname{Im} s\leq 0$ has a multiplicity one saddle connection. Thus we only have to deal with the case $j>0$. We can choose $\overline{X}$ and $(0,v)\in Pr$ such that this $j>0$ is minimal among all possible choices in $\partial\overline{\cC}$. If $t=1$, then $0<j\leq t-1=0$, a contradiction. So $t>1$. 

Suppose that $C_1<v-d_j$. Then by the level rotation action, we have $(c_1,v-C_1)\in Pr$. If $j>1$, then this contradicts to the minimality of $j$ since $d_j< v-C_1 < d_{j+1}$ and $0<j-1<j$. So we have $j=1$ and the flat surface $\overline{X}_s (c_1,v-C_1)$ for $\operatorname{Im} s\leq 0$ has a multiplicity one saddle connection $\alpha'_1$ by \Cref{genus1parallelcor}. Now suppose that $C_1>v-d_j$. Then we have $(v-d_j, d_j)\in Pr$. If $j>1$, then this contradicts to the minimality of $j$ by relabeling the nodes $s_1$ and $s_2$, since $0<v-d_j<c_1$. So we have $j=1$ and the flat surface $\overline{X}_s (v-d_j, d_j)$ for $\operatorname{Im} s\leq 0$ has a multiplicity one saddle connection $\alpha'_1$ by \Cref{genus1parallelcor}. Finally, suppose that $C_1=v-d_j$. Then we have $(c_1,d_j)\in Pr$. If $j=1$, then $\overline{X}_s(c_1,d_1)$ has a multiplicity one saddle connection $\alpha'_1$ by \Cref{genus1parallelcor}. 

Therefore, we may assume $(c_1,d_j)\in Pr$ and $j>1$. By repeating the argument as above, we have $C_i=D_{j+2-i}$ for each $2\leq i <\frac{j}{2}+1$. Also, by relabeling the nodes $s_1$ and $s_2$ and considering the minimality of $j$, we can further obtain the same equation for $\frac{j}{2}+1\leq i \leq j$. Now we take $\overline{X'}=T_2^{(0,v)} \overline{X}\in \partial\overline{\cC}$ as defined in \Cref{genus1connect}. It has a prong-matching $(c_{j+1},-D_1)\in \ZZ/Q'_1 \ZZ \times \ZZ/Q'_2 \ZZ$. If $D_1<C_{j+1}$, then by the level rotation action, we have $(c_j + (C_{j+1}-D_1), 0)\in Pr'$. Since $c'_{j-1}<c_j + (C_{j+1}-D_1)<c'_j$, this contradicts the minimality of $j$ by relabeling the nodes. If $D_1>C_{j+1}$, then we have $(c_j,-(D_1-C_{j+1}))\in Pr'$. Since $-D_{t'}=-D_1<-(D_1-C_{j+1})<0$, this also contradicts the minimality of $j$. Therefore we have $D_1=C_{j+1}$. 

Now we consider the prong-matching $(c_{j+1}, 0)\in Pr$ of $\overline{X}$. We can take $\overline{X''}=T_1^{(c_{j+1}, 0)}\overline{X} \in\partial\overline{\cC}$. By the same argument as above, we can show $C_1=D_{j+1}$. Then $v=d_j+C_1=d_j+D_{j+1}=d_{j+1}$, a contradiction. 
\end{proof}

\subsection{Existence of a multiplicity one saddle connection}

The next step is to prove \Cref{simple} for hyperelliptic components of genus $g>0$ \MIN strata. 

\begin{lemma}\label{simplehyper}
Let $\cC$ be a hyperelliptic component of a \MIN stratum $\cR_g(\mu)$ with less than $2g+2$ fixed marked points. Then $\cC$ contains a flat surface with a multiplicity one saddle connection.
\end{lemma}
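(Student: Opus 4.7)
The plan is to argue by induction on $g$, with the base case $g=1$ being \Cref{simple1}. For the inductive step, assume $g\geq 2$ and that the lemma holds for all MIN strata of genus strictly less than $g$. Let $\cC$ be a hyperelliptic component with ramification profile $\cP$ fixing $r < 2g+2$ marked points. Since $\dim_{\mathbb{C}}\cR_g(\mu)=2g>2$, \Cref{break} provides a multi-scale differential $\overline{Y}\in\partial\overline{\cC}$ with exactly two irreducible components $Y_0, Y_{-1}$ at adjacent levels, meeting at a single node $q$, and with the unique marked zero $z$ lying in $Y_{-1}$. By \Cref{hyper1}, each component is hyperelliptic with involution $\sigma_0, \sigma_{-1}$ compatible with $\cP$, and both involutions fix~$q$.

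Next I identify each component in its natural MIN residueless stratum. Since every marked pole of $\mu$ lying in $Y_0$ is residueless, the Global Residue Condition at $q$ forces the residue of $Y_{-1}$ at $q^-$ to vanish; consequently $Y_{-1}\in\cR_{g_{-1}}(\nu^{(-1)})$ (with a residueless pole of order $\kappa+1$ at $q^-$) and $Y_0\in\cR_{g_0}(\nu^{(0)})$ (with the zero $q^+$ of order $\kappa-1$), both MIN and residueless. Since genus-zero MIN residueless strata are empty, both $g_0, g_{-1}\geq 1$, and $g_0+g_{-1}=g$, so each is strictly smaller than $g$.

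Now I perform the central count. The $2g_0+2$ Weierstrass points of $\sigma_0$ contain $q^+$ together with the fixed marked poles of $Y_0$, giving $1+r_0^{\mathrm{pole}}\leq 2g_0+2$; similarly $\sigma_{-1}$ fixes $z$, $q^-$, and the fixed marked poles in $Y_{-1}$, so $2+r_{-1}^{\mathrm{pole}}\leq 2g_{-1}+2$. Adding these together yields
\[
  r \;=\; 1+r_0^{\mathrm{pole}}+r_{-1}^{\mathrm{pole}} \;\leq\; 2g+2,
\]
with equality if and only if both partial inequalities are equalities. Since $r<2g+2$, at least one of the components, say $Y_v$, lies in a hyperelliptic component of its own MIN residueless stratum with strictly fewer than $2g_v+2$ fixed marked points. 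The induction hypothesis applied to that component furnishes a flat surface $Y_v'$ there possessing a multiplicity one saddle connection.

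Finally I transport this back to $\cC$. The local product structure of the boundary divisor corresponding to the level graph of $\overline{Y}$ allows me to deform $Y_v$ to $Y_v'$ (keeping the other component and the prong-matching class unchanged) within $\partial\overline{\cC}$, producing $\overline{Y}'\in\partial\overline{\cC}$ in the same boundary divisor but with $Y_v$ replaced by $Y_v'$. Applying \Cref{breaksimple} to $\overline{Y}'$, the plumbing of the unique level transition turns the multiplicity one saddle connection on $Y_v'$ into one on a smooth flat surface in $\cC$. The principal obstacle I anticipate is justifying this deformation inside $\partial\overline{\cC}$ rigorously; this rests on the fact that $\overline{\cC}$ is a connected component of the smooth compactification $\overline{\cR}_g(\mu)$ and that boundary divisors are (up to finite prong-matching data) products of moduli of the individual components, so a connected path in the divisor stays within a single connected component of $\partial\overline{\cC}$.
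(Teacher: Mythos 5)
Your proposal is correct and takes essentially the same approach as the paper: both argue by induction on $g$, invoke \Cref{break} and \Cref{hyper1} to split $\overline{Y}$ into two hyperelliptic \MIN components, count fixed marked points across the node, and use \Cref{breaksimple} to propagate a multiplicity-one saddle connection from a component to $\cC$. You run the count forward rather than by contradiction and are more explicit about why the deformation of one level component stays inside $\partial\overline{\cC}$, but the mathematical content matches the paper's proof.
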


\begin{proof}
We use induction on $g>0$. Assume the contrary that any flat surface in $\cC$ has a multiplicity one saddle connection. By \Cref{break}, we can obtain a two-level multi-scale differential $\overline{Y}\in \partial\overline{\cC}$ with two irreducible components $Y_0,Y_{-1}$ intersecting at one node $q$. By \Cref{hyper1}, both $Y_0$ and $Y_{-1}$ are residueless \MIN hyperelliptic flat surfaces. In particular, their genera, which we denote by $g_{-1}$ and $g_0$, satisfy $1\leq g_{-1},g_0\leq g-1$ and $g=g_{-1}+g_0$. If any of these components has a multiplicity one saddle connection, then we obtain a flat surface in $\cC$ with a multiplicity one saddle connection by \Cref{breaksimple}. Therefore we suppose not. By induction hypothesis, $Y_0$ and $Y_{-1}$ have $2g_0+2$ and $2g_{-1}+2$ fixed marked points, respectively. The node $q$ must be fixed by the involutions of both components, so $\cC$ have $(2g_{-1}+1)+(2g_0+1)=2g+2$ fixed marked points. This is a contradiction.  
\end{proof}

We now observe that even for the strata where a flat surface with a multiplicity one saddle connection does not exist, there always exists a flat surface with a pair of saddle connections, which has no other parallel saddle connection. This will be used in the induction step below to deal with the situation where this stratum appears in the boundary of $\overline{\cR}_g(\mu)$.

\begin{proposition} \label{double}
Let $\cC$ be a hyperelliptic component of $\cR_g(\mu)$ of genus $g>0$ and $p_1$ be a pole fixed by the ramification profile of $\cC$. Then there exists a flat surface $X\in \cC$ and a multiplicity {\em two} saddle connections. More precisely, there exist a pair of parallel saddle connections $\gamma_1, \gamma_2$ of $X$ bounding the polar domain of $p_1$, and there does not exist any other saddle connection parallel to them. 
\end{proposition}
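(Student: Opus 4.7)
We proceed by induction on $\dim_{\CC}\cR_g(\mu) = 2g+m-1$, with $m\in\{1,2\}$ by \Cref{hyperprofile1}, paralleling the structure of \Cref{simple} and \Cref{simplehyper} with ``multiplicity one saddle connection'' replaced by ``multiplicity two configuration bounding the polar domain of $p_1$''. For the base case (lowest dimensional hyperelliptic components with $2g+2$ fixed marked points including $p_1$, starting from $(g,m)=(1,1)$ with strata of the form $\cR_1(a,-b_1,-b_2,-b_3)$), we use the description $\PP\cC\cong\PP\cQ^R_0(\nu)$ from the proof of \Cref{hyperprofile1}. The pole $p_1$ corresponds to an odd-order pole $u_1$ of a quadratic differential $\xi$ on $\PP^1$; we choose $\xi$ so that $u_1$ is cut off from the rest of $\PP^1$ by a short geodesic segment $\delta$ joining $u_1$ to the zero $w$ of $\xi$, with no other geodesic of $\xi$ parallel to $\delta$. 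The preimage $\phi^{-1}(\delta)=\gamma_1\cup\gamma_2$ under the double cover $\phi:X\to\PP^1$ is then the desired pair of parallel saddle connections on $X$, interchanged by the hyperelliptic involution and bounding the polar domain of $p_1$.

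For the inductive step, assume $\dim_{\CC}\cR_g(\mu)>2$. By \Cref{break}, there is a two-level multi-scale differential $\overline{Y}\in\partial\overline{\cC}$ with two irreducible components $Y_0,Y_{-1}$ meeting at a single node $q$. By \Cref{hyper1}, both components are hyperelliptic with compatible involutions. Let $Y_i$ be the component containing $p_1$; its hyperelliptic involution fixes $p_1$, so $p_1$ is a fixed pole in the ramification profile of the connected component $\cC'$ of $Y_i$'s stratum. Since this stratum has strictly smaller dimension than $\cR_g(\mu)$, the induction hypothesis (applied when $g_i>0$; the genus-zero case is handled via the explicit classification of zero-dimensional strata in \Cref{zerodim1} and \Cref{zerodim2}) yields a deformation of $Y_i$ within $\cC'$ containing a pair of parallel saddle connections $\gamma_1,\gamma_2$ bounding the polar domain of $p_1$ with no other parallel saddle connection on $Y_i$. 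Since the corresponding boundary divisor of $\overline{\cC}$ fibers over the product of component strata, this deformation remains in $\partial\overline{\cC}$.

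Plumbing the level transition with a generic small smoothing parameter produces $X\in\cC$ in which $\gamma_1,\gamma_2$ persist as parallel saddle connections bounding the polar domain of $p_1$ (the plumbing takes place near the node $q$, which is disjoint from the polar domain of $p_1$, so this domain is essentially preserved). The main obstacle is verifying that no other saddle connection of $X$ is parallel to them: by \Cref{parallel} such a parallel saddle connection must be homologous to $\gamma_1$ in $H_1(X,\boldsymbol{z};\ZZ)$; upon degenerating back to the deformed $\overline{Y}$, it would either produce a parallel saddle connection on $Y_i$ (ruled out by induction) or pass through the node $q$. The latter is excluded by generic choice of the plumbing parameter together with the period-perturbation argument underlying \Cref{parallel}, analogously to the proof of \Cref{breaksimple}.
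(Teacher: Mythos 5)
Your base case takes a genuinely different route from the paper's: you pass to the quadratic differential picture via the double cover $\phi:X\to\PP^1$ rather than using the combinatorial $T_1,T_2$ navigation of \Cref{genus1connect}. The idea is salvageable, but the description of $\delta$ is not correct. Since $p_1$ is fixed by $\cP$, the branch point $u_1$ is a ramification point of $\phi$, so the object whose preimage is the pair $\gamma_1,\gamma_2$ is a geodesic \emph{loop} of $\xi$ based at $w$ that encircles $u_1$ (a saddle connection of $\xi$ from $w$ to itself), not a segment joining $u_1$ to $w$. Cutting along a segment on $\PP^1$ does not separate $u_1$, and a segment between two ramification points lifts to a single closed geodesic, not a pair of saddle connections sharing endpoints. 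With this correction the base case can be made to work, but you must still argue (along the lines of \Cref{degenerate} and \Cref{parallel} applied to the quadratic stratum) that you can choose $\xi$ so that this loop is the entire boundary of the polar domain of $u_1$ and carries no other parallel geodesic.

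The inductive step has a genuine gap. You propose to apply the induction hypothesis (or, for genus zero, the explicit classification of \Cref{zerodim1}) to whichever component $Y_i$ contains $p_1$. But when $p_1\in Y_{-1}$ and $g_{-1}=0$, the component $Y_{-1}$ lies in a zero-dimensional genus-zero residueless stratum $\cR_0(a_1,a_2,-b_{i_1},\dots,-b_{i_k},-\kappa)$, and by \Cref{zerodim1} \emph{every} saddle connection of such a surface is parallel to every other. So one cannot extract from $Y_{-1}$ a pair bounding the polar domain of $p_1$ with no other parallel saddle connection: as soon as $Y_{-1}$ contains any marked pole besides $p_1$, the pair in question has further parallels, and these persist after plumbing. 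This is precisely why the paper's proof does not stop here: it first shows that one can always navigate to another boundary differential $\overline{Y'}$ in which $p_1$ lies in the \emph{top} level component (which is residueless, \MIN, and of positive genus, so the induction hypothesis applies). That relocation requires a nontrivial sub-argument — either $Y_0$ has a multiplicity one saddle connection, in which case one plumbs and shrinks via \Cref{suitableprong} and \Cref{breaksimple}; or $Y_0$ is hyperelliptic with $2g+2$ fixed marked points, in which case one invokes the induction hypothesis on $Y_0$ for some other fixed pole $p_2$ to produce a pair to shrink. Your proof has no analogue of this step, and without it the case $p_1\in Y_{-1}$, $g_{-1}=0$, $k\ge 2$ is not handled.
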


\begin{proof}
We use the induction on $\dim_{\mathbb{C}} \cR_g(\mu)=2g+m-1$. First, we take care of the base case --- genus one \MIN strata. First, we need to find $\overline{X}\in \partial\overline{\cC}$ such that $p_1$ is contained in the top level component. Suppose that $p_1\in X_{-1}$ for some $\overline{X}=X(t,\tau, {\bf C}, [(0,v)])\in \partial\overline{\cC}$. Then $d_{i-1}\leq v<d_i$ for some $i$. The multi-scale differential $T_1^{(0,v)}\overline{X}\in \partial\overline{\cC}$ as defined in \Cref{genus1connect} contains $p_1$ in the top level component. 

So we can always find $\overline{X}\in \partial\overline{\cC}$ such that $p_1\in X_0$. By relabeling the saddle connections, we may assume that $\tau(1)=1$. Then $C_1=D_1$ and $(0,D_1)\in Pr$. By \Cref{genus1parallel}, the flat surface $\overline{X}_s(0,D_1)$ for $\operatorname{Im} s\leq 0$ has a desired pair of parallel saddle connections $\alpha'_0$ and $\alpha'_1$. 

Now assume that $\dim_{\mathbb{C}} \cR_g(\mu)>2$. by \Cref{break}, there exists a two-level multi-scale differential $\overline{Y}\in \partial\overline{\cC}$ consisting of two hyperelliptic irreducible components $Y_0,Y_{-1}$ intersecting at one node $q$. The genera $g_0,g_{-1}$ of two components satisfy $g=g_0+g_{-1}$. Also, since the node $q$ must be fixed by hyperelliptic involutions, the top level component $Y_0$ is a \MIN hyperelliptic flat surface. Thus $g_0>0$. 

We will prove that $\cC$ contains $\overline{Y}$ with $p_1\in Y_0$. If $Y_0$ contains $p_1$, then we can use the induction hypothesis on the stratum containing $Y_0$ so that it has a desired pair of saddle connections. Assume the contrary that $Y_{-1}$ contains $p_1$.

If $g_{-1}>0$, then we can use the induction hypothesis on the stratum containing $Y_{-1}$ and deform $Y_{-1}$ so that it has a desired pair of saddle connections. So we may assume that $Y_{-1}$ contains $p_1$ and $g_{-1}=0$. Since there do not exist genus zero \MIN residueless flat surfaces, we automatically have $m=2$.

If $Y_0$ has a multiplicity one saddle connection, then by \Cref{breaksimple}, we can obtain a flat surface $(X,\omega)\in \cC$ with a multiplicity one saddle connection. By shrinking this multiplicity one saddle connection, we obtain another multi-scale differential $\overline{Y'}$ that contains $p_1$ in the top level component. If $Y_0$ does not have a multiplicity one saddle connection, then by \Cref{simplehyper}, $Y_0$ has $2g+2$ fixed marked points. By relabeling the poles, we may assume that $p_2$ is a fixed pole in $Y_0$. By induction hypothesis on the stratum containing $Y_0$, $Y_0$ can be deformed so that it contains a pair of parallel saddle connections bounding the polar domain of $p_2$ and there do not exist any other saddle connections parallel to them. After plumbing the level transition of $\overline{Y}$, we obtain a flat surface in $\cC$ with a pair of saddle connections with the same property. By shrinking them, we obtain another multi-scale differential $\overline{Y'}$ that only contains $p_2$ in the bottom level component. So we can always assume that $p_1\in Y_0$. 

However, since $Y_0$ is a \MIN residueless flat surface, it have the genus $g_0>0$. By induction hypothesis on the stratum containing $Y_0$, $Y_0$ can be deformed so that it has a desired pair of saddle connections bounding the polar domain of $p_1$. By plumbing the level transition of $\overline{Y}$, we obtain a flat surface in $\cC$ that still has the desired pair of saddle connections. 
\end{proof}

We are now ready to prove \Cref{simple} for an arbitrary connected component of $\cR_g(\mu)$.

\begin{proof}[Proof of \Cref{simple}]
The case of hyperelliptic components are given by \Cref{simplehyper}. So we may assume that $\cC$ is non-hyperelliptic component. We use the induction on $\dim_{\mathbb{C}} \cR_g (\mu)\geq 2$ with \Cref{simple1} giving the base case $\dim_{\mathbb{C}} \cR_g(\mu)=2$. Suppose $\dim_{\mathbb{C}} \cR_g (\mu)> 2$. Assume the contrary --- that $\cC$ does not contain any flat surface with a multiplicity one saddle connection joining $z_i$ and $z_j$.

By \Cref{break}, there exists a two-level multi-scale differential $\overline{Y}\in\partial\overline{\cC}$ with two irreducible components $Y_{-1}$ and $Y_0$ of genera $g_{-1}$ and $g_0$, respectively, intersecting at one node $q$. Each component is contained in a residueless stratum with dimension smaller than $\dim_{\mathbb{C}}\cR_g (\mu)$. 

First, we assume that $m=1$ and $z_i=z_j=z_1$. Then both components $Y_0$ and $Y_{-1}$ are residueless \MIN flat surfaces. So $g_0,g_{-1}>0$. If both components are hyperelliptic, then $\cC$ is also hyperelliptic by \Cref{hyper1}. So at least one of $Y_0$ or $Y_{-1}$ is contained in a non-hyperelliptic component. By induction hypothesis, that component can be deformed so that is has a saddle connection. After plumbing construction, we can obtain a flat surface in $\cC$ with a multiplicity one saddle connection by \Cref{breaksimple}. 

If $m>1$, then by relabeling the zeroes, we may assume that $(z_i,z_j)=(z_1,z_2)$. By \Cref{break}, we can further assume that the bottom level component $Y_{-1}$ contains $z_1$ and $z_2$. If $Y_{-1}$ has nonzero genus, then by induction hypothesis $Y_{-1}$ can be continuously deformed to have a multiplicity one saddle connection joining $z_1$ and $z_2$. So we assume that $g_{-1}=0$. If $Y_{-1}$ contains more than two zeroes, then we can repeatedly apply \Cref{break} to $Y_{-1}$ so that the bottom level component contains only two zeroes $z_1$ and $z_2$. By plumbing for all level transitions except the bottom one, we may assume that $Y_{-1}$ only contains two zeroes $z_1$ and $z_2$. 

Now we assume that $m=2$. Then $Y_0$ has a unique zero at the node $q$. Suppose that $Y_0$ contains a multiplicity one saddle connection $\gamma$ joining the node $q$ to itself. By \Cref{suitableprong}, there exists a suitable prong-matching at $q$, so that $\gamma$ deforms to a multiplicity one saddle connection $\gamma'$ joining $z_1$ and $z_2$ after plumbing construction. So we may assume that any continuous deformation of $Y_0$ does not have a multiplicity one saddle connection. By induction hypothesis, $Y_0$ is then contained in a hyperelliptic component with $2g+2$ fixed marked points. By \Cref{double}, we can deform $Y_0$ so that it has a pair of parallel saddle connections $\gamma_1,\gamma_2$ with multiplicity two. Also, they bound the polar domain of a fixed pole, say $p_1$. Therefore, both angles at $q$ in the polar domain bounded by $\gamma_1$ and $\gamma_2$ must be equal to $2\pi \frac{b_1}{2}$. By plumbing the level transition, we obtain a flat surface in $\cC$ with saddle connections $\gamma'_1$ and $\gamma'_2$, deformed from $\gamma_1$ and $\gamma_2$. By \Cref{suitableprong}, we can choose a suitable prong-matching for plumbing so that $\gamma'_1$ is joining $z_1$ and $z_2$. By assumption, $\gamma'_1$ must not be a multiplicity one saddle connection and thus $\gamma'_2$ must be parallel to $\gamma_1$. In particular, $\gamma'_2$ is also joining $z_1$ and $z_2$. By shrinking $\gamma'_1$ and $\gamma'_2$, we obtain a multi-scale differential $\overline{Y'}$ with two components intersecting at one node $q'$, whose bottom level component $Y'_{-1}$ contains only one marked pole $p_1$. The top level component $Y'_0$ has unique zero at $q'$. By the same argument as in the previous paragraph, $Y'_0$ is hyperelliptic flat surface with $2g+2$ fixed marked points. Also, $Y'_{-1}$ is contained in the stratum $\cR_0 (a_1,a_2,-b_1,b_1-a_1-a_2-2)$ and $Y'_{-1}$ has two parallel saddle connections that come from $\gamma_1$ and $\gamma_2$ between $z_1$ and $z_2$. Still the angles of the polar domain of $p_1$ are equal to $2\pi \frac{b_1}{2}$. 

Summarizing the previous paragraphs, we can reduce to the case when $Y_0$ is a \MIN hyperelliptic flat surface with $2g+2$ marked points and $Y_{-1}$ is genus zero flat surface containing two zeroes $z_1,z_2$ and only one marked pole $p_1$. Also, the two angles in the polar domain of $p_1$ are equal to $2\pi \frac{b_1}{2}$. If $a_1=a_2$, then $Y_{-1}$ is a genus zero hyperelliptic flat surface and the node $q$ is fixed by the involution. By \Cref{hyper1}, $\cC$ is then a hyperelliptic component with $2g+2$ fixed marked points, which contradicts the assumption. So we must have $a_1<a_2$. Denote the number of prongs at $q$ by $Q\coloneqq a_1+a_2-b_1+1$. The two angles in the polar domain of $q$ are given by $2\pi Q_1$ at $z_1$ and $2\pi Q_2$ at $z_2$, where $Q_1\coloneqq a_1+1- \frac{b_1}{2}$ and $Q_2 \coloneqq a_2+1-\frac{b_1}{2}$. Consider the incoming prongs $v^-_i$, $i=1,\dots, Q$ at $q$. We can label them in counterclockwise order so that $v^-_1,\dots, v^-_{Q_1-1}$ are coming from $z_1$ and $v^-_{Q_1},\dots, v^-_{Q}$ are coming from $z_2$ (see \Cref{fig603} for the configuration of the prongs at $q$ in $Y_{-1}$).

\begin{figure}
    \centering
    \tikzset{every picture/.style={line width=0.75pt}} 

\begin{tikzpicture}[x=0.75pt,y=0.75pt,yscale=-1,xscale=1]

\draw  [fill={rgb, 255:red, 0; green, 0; blue, 0 }  ,fill opacity=1 ] (197.65,107.28) .. controls (197.65,105.93) and (198.77,104.83) .. (200.14,104.83) .. controls (201.52,104.83) and (202.64,105.93) .. (202.64,107.28) .. controls (202.64,108.63) and (201.52,109.73) .. (200.14,109.73) .. controls (198.77,109.73) and (197.65,108.63) .. (197.65,107.28) -- cycle ;
\draw  [fill={rgb, 255:red, 0; green, 0; blue, 0 }  ,fill opacity=1 ] (435.65,106.28) .. controls (435.65,104.93) and (436.77,103.83) .. (438.14,103.83) .. controls (439.52,103.83) and (440.64,104.93) .. (440.64,106.28) .. controls (440.64,107.63) and (439.52,108.73) .. (438.14,108.73) .. controls (436.77,108.73) and (435.65,107.63) .. (435.65,106.28) -- cycle ;
\draw    (195.82,98.56) -- (186.71,72.36) ;
\draw [shift={(186.06,70.47)}, rotate = 70.83] [color={rgb, 255:red, 0; green, 0; blue, 0 }  ][line width=0.75]    (10.93,-3.29) .. controls (6.95,-1.4) and (3.31,-0.3) .. (0,0) .. controls (3.31,0.3) and (6.95,1.4) .. (10.93,3.29)   ;
\draw    (197.65,116.56) -- (188.69,143.57) ;
\draw [shift={(188.06,145.47)}, rotate = 288.36] [color={rgb, 255:red, 0; green, 0; blue, 0 }  ][line width=0.75]    (10.93,-3.29) .. controls (6.95,-1.4) and (3.31,-0.3) .. (0,0) .. controls (3.31,0.3) and (6.95,1.4) .. (10.93,3.29)   ;
\draw    (442.89,95.73) -- (458.82,75.54) ;
\draw [shift={(460.06,73.97)}, rotate = 128.27] [color={rgb, 255:red, 0; green, 0; blue, 0 }  ][line width=0.75]    (10.93,-3.29) .. controls (6.95,-1.4) and (3.31,-0.3) .. (0,0) .. controls (3.31,0.3) and (6.95,1.4) .. (10.93,3.29)   ;
\draw   (238.5,41.62) .. controls (238.5,40.55) and (239.35,39.68) .. (240.4,39.68) .. controls (241.44,39.68) and (242.3,40.55) .. (242.3,41.62) .. controls (242.3,42.69) and (241.44,43.56) .. (240.4,43.56) .. controls (239.35,43.56) and (238.5,42.69) .. (238.5,41.62) -- cycle ;
\draw    (442.39,113.73) -- (455.56,136.74) ;
\draw [shift={(456.56,138.47)}, rotate = 240.22] [color={rgb, 255:red, 0; green, 0; blue, 0 }  ][line width=0.75]    (10.93,-3.29) .. controls (6.95,-1.4) and (3.31,-0.3) .. (0,0) .. controls (3.31,0.3) and (6.95,1.4) .. (10.93,3.29)   ;
\draw   (310.5,110.95) .. controls (310.5,109.88) and (311.35,109.01) .. (312.4,109.01) .. controls (313.44,109.01) and (314.3,109.88) .. (314.3,110.95) .. controls (314.3,112.03) and (313.44,112.9) .. (312.4,112.9) .. controls (311.35,112.9) and (310.5,112.03) .. (310.5,110.95) -- cycle ;
\draw    (200.14,107.28) .. controls (228.94,177.38) and (409.35,176.38) .. (438.14,106.28) ;
\draw    (200.14,107.28) .. controls (244.89,34.23) and (379.39,33.73) .. (438.14,106.28) ;
\draw   (312.19,49.31) -- (317.18,51.94) -- (312.19,54.58) ;
\draw   (314.39,156.71) -- (319.38,159.34) -- (314.39,161.98) ;

\draw (80.67,91.27) node [anchor=north west][inner sep=0.75pt]    {$Y_{-1}$};
\draw (216.96,106.6) node    {$z_{1}$};
\draw (418.96,108.6) node    {$z_{2}$};
\draw (317.2,173.82) node    {$\gamma _{1}$};
\draw (182.57,153.57) node  [font=\scriptsize]  {$v_{1}^{-}$};
\draw (179.87,60.4) node  [font=\scriptsize]  {$v_{Q_{1} -1}^{-}$};
\draw (469.17,143.7) node  [font=\scriptsize]  {$v_{Q}^{-}$};
\draw (450.64,103.73) node  [rotate=-90] [align=left] {...};
\draw (471.47,63.1) node  [font=\scriptsize]  {$v_{Q_{1}}^{-}$};
\draw (313.55,97.62) node  [font=\normalsize]  {$p_{1}$};
\draw (184.64,105.73) node  [rotate=-267.53] [align=left] {...};
\draw (241.78,27.22) node  [font=\normalsize]  {$q$};
\draw (313.2,32.82) node    {$\gamma _{2}$};

\end{tikzpicture} 
    \caption{The prongs in $Y_{-1}$ at $q$} \label{fig603}
\end{figure}
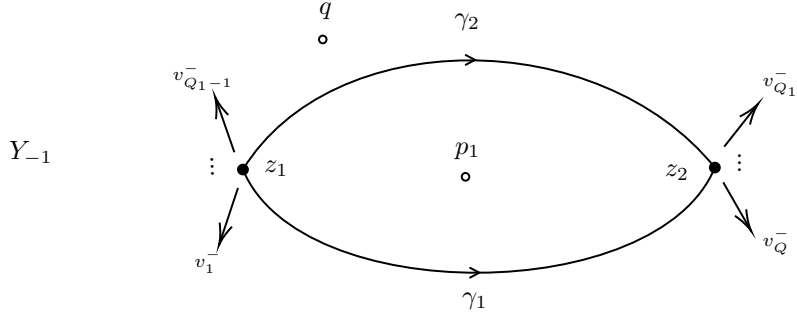

The order of zero of $Y_0$ at $q$ is equal to $Q-1$. By \Cref{double}, we can deform $Y_0$ so that it contains a pair of parallel saddle connections $\alpha_1$ and $\alpha_2$ with multiplicity two, bounding the polar domain of a fixed pole, say $p_2$. First, assume that $Y_0$ is a \MIN hyperelliptic flat surface. Then $q$ is a unique zero of $Y_0$ and $\alpha_1$, $\alpha_2$ are joining $q$ to itself. Since they are parallel, they intersect at $q$ non-transversely. The two angles in the polar domain of $p_2$ bounded by $\alpha_1$ and $\alpha_2$ are equal to $2\pi \frac{b_2}{2}$. Also there are two other angles at $q$ bounded by $\alpha_1$, and $\alpha_2$ themselves. These angles are both equal to $2\pi \frac{Q-b_2}{2}$. We can label the outgoing prongs in $Y_0$ at $q$ in clockwise order, $v^+_i$ for $1\leq i \leq Q-1$ so that $v^+_1,\dots, v^+_{\frac{b_2}{2}}$ are lying between $\alpha_1$ and $\alpha_2$. Then $v^+_{\frac{Q}{2}+1},\dots, v^+_{\frac{Q+b_2}{2}}$ are also lying between $\alpha_2$ and $\alpha_1$, on the other side. The prong-matching at $q$ is determined by the image of $v^-_1$. We can identify the prong-matching that sends $v^-_1$ to $v^+_u$ with $u\in \ZZ/Q\ZZ$ . We will find a proper prong-matching $u$ such that the flat surface $\overline{Y}_s(u)$, obtained by plumbing the level transition with small smoothing parameter $s\in \mathbb{C}$, has a multiplicity one saddle connection. Note that $\frac{Q}{2}=\frac{a_1+a_2-b_1+1}{2} > \frac{2a_1-b_1}{2} =a_1-\frac{b_1}{2}=Q_1-1$. So the set $U\coloneqq\{u|\frac{Q}{2}+1\leq u \leq Q-(Q_1-1)\}$ is nonempty. Moreover, for any $\frac{Q}{2}+1\leq v < Q$, there exists $u\in U$ such that $u\leq v<u+(Q_1-1)$. Since $g_0>0$, we have $Q-1-b_2\geq 2g_0-2\geq 0$. That is, $b_2<Q$ and thus $\frac{Q}{2}+1\leq \frac{Q+b_2}{2} < Q$. So we can take $u\in U$ such that $u\leq \frac{Q+b_2}{2}<u+(Q_1-1)$. Then the flat surface $\overline{Y}(u)$ has a multiplicity one saddle connection $\alpha'_1$, deformed from $\alpha_1$, joining $z_1$ and $z_2$ (see \Cref{fig604}).

\begin{figure}
    \centering
    \input{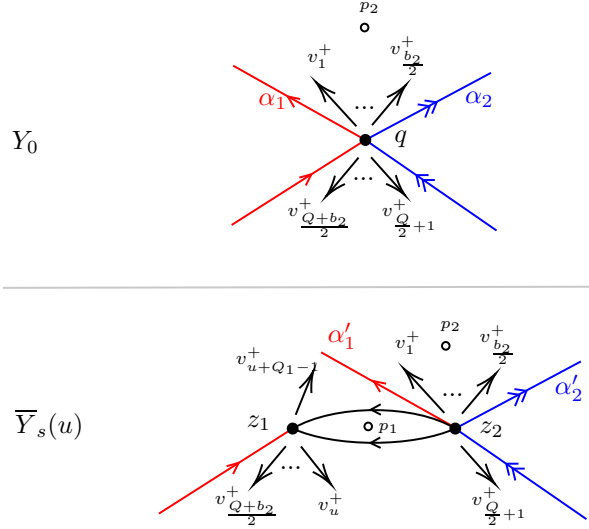} 
    \caption{The prongs at $q$ in \MIN $Y_0$, and after plumbing} \label{fig604}
\end{figure}

Now suppose that $m>2$. In this case, we assume that the bottom level component $Y_{-1}$ of $\overline{Y}$ contains $z_2$ and $z_3$ instead. Then $Y_0$ has at least two zeroes $z_1$ and $q$. If $Y_0$ contains a multiplicity one saddle connection $\gamma$ joining $z_1$ and $q$, then by plumbing with proper choice of prong-matching, $\gamma$ deforms to a multiplicity one saddle connection joining $z_1$ and $z_2$. So we may assume that $Y_0$ cannot be continuously deformed to have a multiplicity one saddle connection joining $z_1$ and $q$. By induction hypothesis, $Y_0$ is \NMIN hyperelliptic flat surface with $2g+2$ fixed marked points. In particular, the order of $q$ is equal to $a_1$. Note that this order is at least $a_2+a_3$, since $Y_{-1}$ is a genus zero flat surface. That is, $a_1\geq a_2+a_3$. Now we repeat this assuming that $Y_{-1}$ contains $z_1$ and $z_3$ instead. In this setting, we also have $a_2\geq a_1+a_3$. This is a contradiction, thus there is a flat surface in $\cC$ with a multiplicity one saddle connection joining $z_1$ and $z_2$. 

\end{proof}

\section{Genus one \MIN strata} \label{sec:g1m}

In this section we classify all non-hyperelliptic connected components of the genus one \MIN stratum $\cR_1 (a,-b_1,\dots,-b_n)$, proving \Cref{main2} for the \MIN cases. Recall that by \Cref{bpb}, every multi-scale differential in the boundary of $\cR_1(\mu)$ is in the principal boundary. As in \Cref{sec:ssc}, we will navigate the principal boundary to prove that the non-hyperelliptic connected components of $\cR_1(\mu)$ are classified by rotation number. 

In \Cref{subsec:pbhc1}, we give a criterion that determines whether a multi-scale differential is contained or not contained in the boundary of a hyperelliptic component. In \Cref{subsec:special}, we first deal with some special strata that our general strategy cannot be applied to. In \Cref{subsec:general}, we finally prove \Cref{main2}. 

In each step, the combinatorial description of multi-scale differentials introduced in \Cref{subsec:pbg1m} will play an important role. Recall that any multi-scale differential in the boundary of $\cR_1 (\mu)$ can be given by the combinatorial datum $X(t,\tau, {\bf C}, Pr)$. This combinatorial datum is uniquely determined only up to the cyclic order of the poles and the markings of the nodes $s_1$ and $s_2$. Throughout this section, we assume that $C_i$ and $b_i$ are indexed by the elements of the cyclic group $\ZZ/n\ZZ$. For example, $C_{n+i}=C_{i}$ for each integer $i$. 

Throughout most of the discussion, we will fix $t=n$ and we will drop it from the notation when no confusion can arise. Also from now on, the bold notation ${\bf X},{\bf X'},\dots$ are reserved for multi-scale differentials of the form ${\bf X}=X(\tau, {\bf C}, Pr)=X(n,\tau, {\bf C}, Pr)$.

\subsection{Rotation number} \label{subsec:rot}

Denote $d\coloneqq \gcd(a,b_1,\dots,b_n)$. Recall from \cite[Definition 4.2]{boissymero} that the {\em rotation number} $r$ of $X\in \cH_1(\mu)$ is defined by the formula $$r \coloneqq \gcd (d, \operatorname{Ind}\alpha, \operatorname{Ind}\beta)$$ where $\alpha, \beta$ are simple closed curves on $X$ which form a symplectic basis of $H_1 (X,\ZZ)$, and the index $\operatorname{Ind}\alpha$ of a closed curve $\alpha$ is defined to be the degree of Gauss map $G_\alpha : S^1 \to S^1$ with respect to the flat structure on $X$. The rotation number is a deformation invariant, thus it is constant in a connected component of $\cH_1 (\mu)$. By \cite[Theorem 4.3]{boissymero}, the connected components of $\cH_1(\mu)$ are indexed by their rotation number. That is, $\cH_1(\mu)=\coprod_{r|d}\cC_r$, where $\cC_r$ is the connected component consisting of flat surfaces of rotation number $r$, except there is no $\cC_d$ for $\cH_1(d,-d)$. 

Note that $\PP\overline{\cR}_1 (\mu)$ is a smooth compactification and the rotation number is a topological invariant of connected components of $\cR_1 (\mu)$. So for a multi-scale differential $\overline{X} \in \partial\overline{\cR}_1 (\mu)$, the rotation number of a flat surface in $\cR_1 (\mu)$ near $\overline{X}$ is constant. We will call this number the {\em rotation number} of $\overline{X}$. We can compute the rotation number of a multi-scale differential in $\partial\overline{\cR}_1 (\mu)$ in terms of its combinatorial data.

\begin{proposition} \label{rotation}
The rotation number of $X(t,\tau, {\bf C}, [(0,v)])\in \partial\overline{\cR}_1 (\mu)$ is equal to $$\gcd(d,Q_1,\sum_{i=1}^n C_{\tau(i)} + v).$$
\end{proposition}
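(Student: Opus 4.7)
The plan is to compute the rotation number of a smooth nearby flat surface $X = \overline{X}_s(0, v) \in \cR_1(\mu)$ for small $|s|$ by identifying a symplectic basis of $H_1(X; \ZZ) \cong \ZZ^2$ and computing the index of each basis curve from sector angles at the unique zero $z$. I would take $\alpha' := \alpha'_0$ and $\beta' := \beta'_n$; they form a symplectic basis because consecutive $\alpha'_i$ cobound an embedded disk containing a single marked pole (trivial in $H_1(X; \ZZ)$), and likewise for the $\beta'_j$, while $\alpha'$ and $\beta'$ intersect transversely only at $z$ with intersection number $\pm 1$.

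For $\operatorname{Ind}(\beta')$: the saddle connection $\beta_n$ on $X_{-1}$ bounds the polar domain $P_1(Q_2 + 1)$ of $s_2^-$, whose sector angle at $z$ equals $2\pi Q_2 + \pi$, and this local structure is preserved under plumbing. Using the standard rule that a geodesic loop with one-sided sector $2\pi k + \pi$ at a zero has index $\mp k$, I obtain $\operatorname{Ind}(\beta') \equiv \pm Q_2 \pmod{d}$. Since $2g - 2 = 0$ forces $a = \sum_i b_i$, we have $Q_1 + Q_2 \equiv 0 \pmod d$, giving $\gcd(d, \operatorname{Ind}(\beta')) = \gcd(d, Q_1)$.

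For $\operatorname{Ind}(\alpha')$: by the prong-matching convention recorded in the proof of \Cref{genus1parallel}, the matching $(0, v)$ pairs $v^+_0 \leftrightarrow v^-_1$ at $s_1$ and $w^+_0 \leftrightarrow w^-_{Q_2 - v}$ at $s_2$, so the two rays of $\alpha'_0$ at $z$ lie along $v^-_1$ and $w^-_{Q_2-v}$. Sweeping clockwise at $z$ from $v^-_1$ to $w^-_{Q_2-v}$, the traversed sector consists of:
\begin{itemize}
\item the bulk of the polar domain of $s_1^-$ past the prongs $v^-_2, \ldots, v^-_{Q_1}$, contributing $2\pi(Q_1 - 1) + \phi_1$;
\item the polar domains of $p_{\tau(t+1)}, \ldots, p_{\tau(n)}$ (separated by crossings of $\beta_{t+1}, \ldots, \beta_n$), contributing $\sum_{i=t+1}^n 2\pi C_{\tau(i)}$;
\item the initial portion of the polar domain of $s_2^-$ from its boundary to $w^-_{Q_2-v}$, contributing $\psi_0 + 2\pi v$.
\end{itemize}
Here $\phi_1, \psi_0$ are boundary fractional angles of the respective $P_1$ polar domains, each pair summing to $3\pi$ (since the full sector is $2\pi Q_i + \pi$ and $Q_i - 1$ interior gaps each contribute $2\pi$). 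A direct inspection of the standard model of $P_1$, combined with the orientations of the prong-matching, yields $\phi_1 + \psi_0 = 3\pi$. Hence the total sector angle equals $2\pi\bigl[\sum_{i=1}^n C_{\tau(i)} + v\bigr] + \pi$, using $Q_1 = \sum_{i=1}^t C_{\tau(i)}$, so $\operatorname{Ind}(\alpha') \equiv \mp\bigl(\sum_{i=1}^n C_{\tau(i)} + v\bigr) \pmod d$. Taking the greatest common divisor, the rotation number equals $\gcd\bigl(d, Q_1, \sum_{i=1}^n C_{\tau(i)} + v\bigr)$, as claimed.

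The main obstacle is the last step: correctly tracking the orientation conventions of the prong-matching against those of the prongs at $s_1^-, s_2^-$, and justifying the identity $\phi_1 + \psi_0 = 3\pi$ for the boundary fractional angles. This requires a careful analysis of the standard local model of $P_1(Q_i + 1)$ (cf.\ \Cref{fig201}) together with the plumbing prescription for a node between two levels as recalled in \Cref{sec:msc}. Once these bookkeeping conventions are pinned down, the sector angle at $z$ computed above is of the required form $2\pi(\cdot) + \pi$ and the claimed formula follows.
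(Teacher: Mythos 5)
Your proof takes essentially the same approach as the paper's: choose $\alpha'_0$ and one of the $\beta'_j$ as a symplectic basis, compute their indices from the sector angles at $z$ on the plumbed surface, and take the $\gcd$ with $d$. The paper uses $\beta'_t$ (bounding $P_1(Q_1+1)$) and states $\operatorname{Ind}\alpha'_0 = Q_1 + \sum D_i - v$ by reading off Figure~\ref{fig602}, whereas you use $\beta'_n$ (bounding $P_1(Q_2+1)$) and derive $\operatorname{Ind}\alpha'_0 \equiv \pm(\sum C_{\tau(i)} + v)$ by an explicit sector decomposition; since $d \mid b_i$ for every $i$ and $Q_1$ is already in the $\gcd$, your formula $\gcd(d,Q_2,\sum C_{\tau(i)}+v)$ and the paper's $\gcd(d,Q_1, Q_1+\sum D_i - v)$ reduce to the same quantity $\gcd(d,Q_1,\sum C_{\tau(i)}+v)$. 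The one place where your write-up is less finished than it could be is the identity $\phi_1+\psi_0 = 3\pi$: as stated, this mixes boundary angles from two \emph{different} $P_1$ polar domains, so the ``each pair sums to $3\pi$'' parenthetical does not literally apply; you correctly flag that pinning this down requires examining the standard model of $P_1$ together with the plumbing orientation, but the paper itself leaves the analogous step to the figure, so the two proofs are at a comparable level of rigor and arrive at the same $\gcd$.
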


\begin{proof}
By relabeling the poles if necessary, we may assume that $\tau=Id$. By plumbing two nodes of $\overline{X}$ with prong-matching $(0,v)$, we obtain a flat surface $\overline{X}(0,v)\in \cR_1 (\mu)$, whose saddle connections are described by \Cref{saddledegenerate}. Also see \Cref{fig602}. The index of the saddle connection $\alpha'_0$ is equal to $Q_1+Q_2+\sum_{i=t+1}^n D_i-v = Q_1+\sum_{i=1}^n D_i -v$. The index of the saddle connection $\beta'_1$ is equal to $Q_1$. Since two curves $\alpha'_0$ and $\beta'_1$ form a symplectic basis of $\overline{X}(0,v)$, the rotation number of $\overline{X}(0,v)$ is equal to $\gcd(d,\Ind \alpha'_0,\Ind\beta'_1)=\gcd(d,Q_1+\sum_{i=1}^n D_i -v,Q_1)=\gcd(d,Q_1,\sum_{i=1}^n C_i + v)$.
\end{proof}

In particular, if $t=n$, then the rotation number of $\overline{X}=X(Id, {\bf C}, [(0,v)])$ is equal to $\gcd(d,Q_1,v)$.

\subsection{The principal boundary of hyperelliptic components} \label{subsec:pbhc1}

If $\cC$ is a hyperelliptic connected component of $\cR_1(\mu)$, then multi-scale differentials in the boundary of $\cC$ must satisfy properties that come from the hyperelliptic involution. In particular, in case of multi-scale differentials $X(\tau,{\bf C},Pr)$ containing all poles in the top level component, we can determine whether it is contained in the boundary of a hyperelliptic or a non-hyperelliptic component by the following propositions. 

\begin{proposition} \label{genus1hyper1}
Let $\cC$ be a hyperelliptic component of $\cR_1(\mu)$ with the ramification profile $\cP$ fixing two or three marked points. After relabeling the poles and the saddle connections so that $\tau=Id$, $X(Id, {\bf C}, Pr)\in \partial\overline{\cC}$ satisfies the following:

\begin{itemize}
    \item $\cP(i)=-i$. In particular, $b_i = b_{-i}$.
    \item $C_i+C_{-i}=b_i$ for each $i\in \ZZ/n\ZZ$. In particular, $Q_1=Q_2=\frac{a}{2}$.
    \item The prong-matching class $Pr$ is represented by $(0,d_{n-1})\in \ZZ/Q_1 \ZZ \times \ZZ/Q_2 \ZZ$.
    
\end{itemize}

Conversely, if a multi-scale differential $X(Id, {\bf C}, Pr)\in \partial\overline{\cR}_1(\mu)$ is given by the data satisfying the above conditions, it lies in the boundary of some hyperelliptic component of $\cR_1(\mu)$ with ramification profile $\cP$. 
\end{proposition}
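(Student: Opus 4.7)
The plan is to apply \Cref{hyper2} in both directions. By \Cref{bpb}, any $X(\mathrm{Id}, {\bf C}, Pr)$ in $\partial\overline{\cR}_1(\mu)$ already consists of two irreducible components $X_0, X_{-1}$ meeting at the two nodes $s_1, s_2$, placing us in the setting of \Cref{hyper2}, which characterises membership in the boundary of a hyperelliptic component via compatible involutions on both components.

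For the forward direction, let $\sigma_0$ and $\sigma_{-1}$ be the involutions furnished by \Cref{hyper2} for $\overline{X} \in \partial\overline{\cC}$. Since $\sigma_{-1}$ swaps the two unmarked poles on $X_{-1}$ of orders $Q_1 + 1$ and $Q_2 + 1$, we obtain $Q_1 = Q_2 = a/2$. Next, $\sigma_0$ acts on the genus-zero top component swapping $s_1$ and $s_2$, which reverses the cyclic order in which the marked poles $p_1, \dots, p_n$ appear around $s_1$; since $\tau = \mathrm{Id}$, this reversal realises the involution $i \mapsto -i$ on $\ZZ/n\ZZ$, so $\cP(i) = -i$. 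The polar domain of $p_i$, bounded by angles $2\pi C_i$ at $s_1$ and $2\pi(b_i - C_i)$ at $s_2$, is mapped by $\sigma_0$ to the polar domain of $p_{-i}$, now bounded by $2\pi(b_i - C_i)$ at $s_1$ and $2\pi C_i$ at $s_2$; equating with $2\pi C_{-i}$ at $s_1$ yields $C_i + C_{-i} = b_i$.

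For the prong-matching, the analysis above gives $\sigma_0(\alpha_i) = \alpha_{-i-1}$, so $\sigma_0$ sends the outgoing prong $v^+_0$ at $s_1$ (the direction of $\alpha_0$) to the outgoing prong $w^+_{d_{n-1}}$ at $s_2$ (the direction of $\alpha_{n-1}$). A parallel analysis of the rotation symmetry $\sigma_{-1}$ on the symmetric genus-zero flat surface $X_{-1}$ identifies the image of $v^-_1$ under $\sigma_{-1}$ as $w^-_{Q_2}$. Writing $Pr = [(u, v)]$ so that the prong-matching sends $v^-_1 \mapsto v^+_u$ and $w^-_{Q_2} \mapsto w^+_v$, compatibility with $\sigma_0, \sigma_{-1}$ forces $(u, v) \equiv (0, d_{n-1})$ modulo the level-rotation action $(u,v) \mapsto (u+k, v-k)$, giving $Pr = [(0, d_{n-1})]$. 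For the converse, the three conditions run the construction backwards: the relations $\cP(i) = -i$ and $C_i + C_{-i} = b_i$ rigidify $X_0$ into a symmetric flat surface via \Cref{zerodim1}, producing $\sigma_0$; the condition $Q_1 = Q_2$ exhibits $X_{-1}$ as symmetric and yields $\sigma_{-1}$; the prong-matching $[(0, d_{n-1})]$ is exactly the one making these two involutions compatible. \Cref{hyper2} then places $\overline{X}$ in the boundary of a hyperelliptic component with profile $\cP$.

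The main obstacle is the prong-matching bookkeeping: the labelling conventions at $s_1$ and $s_2$ are mirrored (clockwise vs counterclockwise), the condition $\sigma^*\omega = -\omega$ rotates horizontal directions by $\pi$, and the prong-matching is only defined up to the level-rotation action; all three subtleties must be tracked consistently to correctly pin down the image of the prong $v^-_1$ under $\sigma_{-1}$ and match it with the image of $v^+_u$ under $\sigma_0$.
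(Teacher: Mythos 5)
Your proof is correct and follows essentially the same path as the paper's: both directions hinge on \Cref{hyper2}, you extract $\cP(i)=-i$ and $C_i+C_{-i}=b_i$ from the action of $\sigma_0$ on saddle connections and polar domains, and you pin down the prong-matching class by tracking the images of $v^-_1$ under $\sigma_{-1}$ and $v^+_0$ under $\sigma_0$, exactly as in the paper. The paper's write-up is terser (it does not spell out the $Q_1=Q_2$ step or the polar-domain picture, and the converse is just asserted), but there is no substantive difference in approach or content.
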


\begin{proof}
The top level component $X_0$ has the hyperelliptic involution $\sigma_0$ that interchanges two zeroes. By relabeling the saddle connections on $X_0$, we may assume that $\sigma_0$ fixes the pole $p_{\tau(n)}$. By relabeling the poles, we may assume that $\tau=Id$. For each $i$, $\sigma_0$ sends the saddle connection $\alpha_i$ to $\alpha_{n-1-i}$, so it also sends the pole $p_i$ the pole $p_{n-i}$. Therefore $\cP(i)=-i$. Moreover, the angle $2\pi C_i$ between $\alpha_{i-1}$ and $\alpha_{i}$ at $s_1$ is equal to the angle $2\pi D_{n-i}$ between $\alpha_{n-i-1}$ and $\alpha_{n-i}$ at $s_2$. Therefore, $C_{-i}=b_{-i}-D_{-i}=b_i-C_{i}$. The prong $v^-_1$ is sent to $w^-_{Q_2}$ by $\sigma_{-1}$, and the prong $v^+_0$ is sent to $w^+_{d_{n-1}}$ by $\sigma_0$ as the saddle connection $\alpha_0$ is sent to $\alpha_{-1}$. Since the prong-matching is compatible with the involutions $\sigma_0$ and $\sigma_{-1}$, the prong-matching is represented by $(0,d_{n-1})$. 

Conversely, if $\overline{X}=X(Id, {\bf C}, Pr)$ is given by the data satisfying the conditions in the proposition, then it is immediate that the top level component $X_0$ has an involution $\sigma_0$ compatible with $\cP$. The bottom level component $X_{-1}\in \cH_0(a,-Q-1,-Q-1)$ also has an involution $\sigma_{-1}$, and the prong-matching $Pr$ is compatible with the involutions $\sigma_0$ and $\sigma_{-1}$. So $\overline{X}$ is contained in the boundary of the hyperelliptic component with ramification profile $\cP$ by \Cref{hyper2}.
\end{proof}

\begin{remark}
Under the assumptions of the above proposition, the number of fixed marked points of the component $\cC$ is determined by the parity of $n$. If $n$ is even, then $\cP$ fixes three marked points. If $n$ is odd, then $\cP$ fixes two marked points. 
\end{remark}

\begin{proposition} \label{genus1hyper0}
Let $\cC$ be a hyperelliptic component of $\cR_1(\mu)$ with ramification profile $\cP$ fixing one marked point. After relabeling the poles and the saddle connections so that $\tau=Id$, $X(Id, {\bf C}, Pr)\in \partial\overline{\cC}$ satisfies the following:

\begin{itemize}
    \item $\cP(i+1)=-i$. In particular, $b_{i+1} = b_{-i}$.
    \item $C_{i+1}+C_{-i}=b_i$ for each $i\in \ZZ/n\ZZ$. In particular, $Q_1=Q_2=\frac{a}{2}$.
    \item The prong-matching class is represented by $(0,0)\in \ZZ/Q_1 \ZZ \times \ZZ/Q_2 \ZZ$.
\end{itemize}

Conversely, if a multi-scale differential $X(Id, {\bf C}, Pr)\in \partial\overline{\cR}_1(\mu)$ satisfies above conditions, it lies in the boundary of some hyperelliptic component of $\cR_1(\mu)$ with ramification profile $\cP$. 
\end{proposition}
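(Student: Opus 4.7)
The proof will closely parallel that of \Cref{genus1hyper1}, with the distinguishing new feature that the hyperelliptic involution $\sigma_0$ on the top level $X_0$ fixes no pole. By \Cref{hyper2}, for $\overline{X}\in\partial\overline{\cC}$ I obtain hyperelliptic involutions $\sigma_0$ on $X_0$ and $\sigma_{-1}$ on $X_{-1}$, both interchanging the nodes $s_1,s_2$, compatible with $\cP$ and with the prong-matching class. The strategy is to unpack the consequences of these two involutions and of their compatibility with $Pr$, and conversely to reconstruct them from the combinatorial data in the proposition.

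For the forward direction, I first analyze the action of $\sigma_0$ on the cyclically arranged pole regions of $X_0$. Reversal of the cyclic order with no fixed region forces $n$ to be even, and after a cyclic relabeling of the $\alpha_i$ (absorbed into the relabeling that makes $\tau=Id$) the pole pairing can be put in the normal form $\cP(i+1)=-i$, which yields $b_{i+1}=b_{-i}$. A short case analysis, exactly of the same flavor as the derivation preceding the angle relation in \Cref{genus1hyper1}, singles out the only possibility $\sigma_0(\alpha_i)=\alpha_{-i}$, since the alternative assignment fails to close up consistently around the cycle. Transporting the angle $2\pi C_{i+1}$ at $s_1$ between $\alpha_i$ and $\alpha_{i+1}$ through $\sigma_0$ identifies it with the angle at $s_2$ in the image region of $p_{-i}$, producing the stated linear identity and, upon summation, $Q_1=Q_2=a/2$.

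The prong-matching computation is the key new point. Since $n$ is even, $\alpha_0$ (and also $\alpha_{n/2}$) is $\sigma_0$-invariant as an unoriented saddle connection with swapped endpoints, so $\sigma_0$ sends the prong $v^+_0$ at $s_1$ (along $\alpha_0$) to $w^+_0$ at $s_2$ (along $\alpha_0$). In contrast, the computation in \Cref{genus1hyper1} used a fixed region and therefore gave $\sigma_0(v^+_0)=w^+_{d_{n-1}}$. The bottom-level action $\sigma_{-1}(v^-_1)=w^-_{Q_2}$ is established exactly as in the proof of \Cref{genus1hyper1}, since $X_{-1}$ lies in an identically structured stratum $\cH_0(a,-Q_1-1,-Q_2-1)$. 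Compatibility of the prong-matching with both involutions now forces the class to be represented by $(0,0)\in\ZZ/Q_1\ZZ\times\ZZ/Q_2\ZZ$.

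For the converse, I would construct the involutions directly from the three combinatorial conditions. The top level $X_0$ sits in a zero-dimensional projectivized generalized stratum by \Cref{zerodim1}, so the symmetry encoded by the pairing and the angle equalities lifts to a genuine holomorphic involution $\sigma_0$ of $X_0$ swapping $s_1$ and $s_2$. The identities $Q_1=Q_2=a/2$ put $X_{-1}$ in the hyperelliptic genus-zero stratum $\cH_0(a,-a/2-1,-a/2-1)$, whose unique involution $\sigma_{-1}$ fixes $z$ and swaps the two poles. One then checks that $(0,0)$ is compatible with both involutions by reversing the prong computation above, and \Cref{hyper2} places $\overline{X}$ in the boundary of a hyperelliptic component with ramification profile $\cP$. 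The main obstacle throughout is really bookkeeping: tracking cyclic indices mod $n$, the clockwise/counterclockwise orientation conventions at $s_1$ versus $s_2$, and the level-rotation quotient on prong-matching classes, so that no spurious non-trivial representative of $[(0,0)]$ slips in.
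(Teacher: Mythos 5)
Your proof is correct and follows essentially the same approach as the paper, which for this proposition simply declares that the argument is parallel to \Cref{genus1hyper1} after relabeling so that $\sigma_0$ interchanges $p_{\tau(n)}$ and $p_{\tau(1)}$. You supply the details that the paper leaves implicit, and your identification of the prong-matching class is the genuinely new computational point: since $\sigma_0(\alpha_i)=\alpha_{-i}$ fixes $\alpha_0$ with swapped endpoints, one gets $\sigma_0(v^+_k)=w^+_{-k}$, so compatibility forces $(u,-u)$, which lies in the level-rotation orbit of $(0,0)$.
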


\begin{proof}
The top level component $X_0$ has the hyperelliptic involution $\sigma_0$ that interchanges two nodes $s_1,s_2$. By relabeling the saddle connections if necessary, we may assume that $\sigma_0$ interchanges two poles $p_{\tau(n)}$ and $p_{\tau(1)}$. The proof follows from the same argument as \Cref{genus1hyper1}.
\end{proof}

\subsection{Connected components of special strata} \label{subsec:special}

In this subsection, we will classify non-hyperelliptic components of certain strata that cannot be dealt with the general strategy which we will follow in the next subsection. They are exactly the \MIN exceptional cases in \Cref{main2}     \smallskip

\paragraph{\bf The stratum ${\bf \cR_1 (2n,-2^n)}$}

Consider the case when the order of every pole is equal to -2. Since $d\coloneqq \gcd(b_i)=2$, there are two possible rotation numbers, $r=1,2$ for flat surfaces in $\cR_1(2n,-2^n)$. We have the following result that makes this case special. 

\begin{proposition} \label{exception0}
{\em Every} flat surface in $\cR_1(2n,-2^n)$ is hyperelliptic. Therefore this stratum has {\em no} non-hyperelliptic components. 
\end{proposition}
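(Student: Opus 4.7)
The plan is to show that every connected component $\cC$ of $\PP\cR_1(2n,-2^n)$ is hyperelliptic, which immediately implies every flat surface in the stratum is hyperelliptic. My strategy is to navigate the (finite) boundary $\partial\overline\cC$ to a multi-scale differential of a very constrained form, and then verify that this form always satisfies the hypotheses of Proposition~\ref{genus1hyper0} or Proposition~\ref{genus1hyper1}; since the compactification is smooth with normal crossings boundary, any such boundary point is in the closure of a unique component, forcing $\cC$ to be hyperelliptic.

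The first reduction is to arrange all poles on the top level component. Since $\dim_\CC \PP\cR_1(2n,-2^n)=1$, the boundary $\partial\overline\cC$ is nonempty and finite. Take any ${\overline X}\in \partial\overline\cC$; after level rotation we may choose a representative $(u,0)$ of the prong-matching class, and then $T_1^{(u,0)}\overline X$ (from Lemma~\ref{genus1connect}) lies in $\partial\overline\cC$ with $j=1$, hence $t'=n$. So we may assume ${\bf X}=X(n,\tau,{\bf C},Pr)$. Because $b_i=2$ forces $1\le C_i\le 1$, we must have $C_i=1$ for every $i$, and hence $Q_1=Q_2=n$. After relabeling the poles, set $\tau=\mathrm{Id}$.

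Next I would enumerate the prong-matching classes up to all allowed relabelings. Beyond the level-rotation action $(u,v)\mapsto(u+k,v-k)$, Remark~\ref{expression1} gives the cyclic shift of saddle connections $\alpha_i\mapsto\alpha_{i+1}$; since $C_1=D_1=1$ this sends $(u,v)\mapsto(u-1,v-1)$. A direct computation shows that the combined group of level rotations and shifts preserves $u+v$ modulo $\gcd(n,2)$, while acting transitively on each residue class. Therefore there is exactly one orbit when $n$ is odd (represented by $(0,0)$) and exactly two orbits when $n$ is even (represented by $(0,0)$ and $(0,n-1)=(0,d_{n-1})$).

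The key verification is that each orbit representative lies in the boundary of some hyperelliptic component. For $(0,0)$ with $n$ even, take the ramification profile $\cP(i+1)=-i$, which has no fixed pole (fixing only the zero); the conditions $b_{i+1}=b_{-i}=2$ and $C_{i+1}+C_{-i}=2$ are automatic, so the converse of Proposition~\ref{genus1hyper0} applies. For $(0,n-1)$, take $\cP(i)=-i$; this fixes $p_0=p_n$ (and $p_{n/2}$ if $n$ is even), and the conditions $C_i+C_{-i}=b_i=2$ and $Pr=(0,d_{n-1})=(0,n-1)$ are automatic, so the converse of Proposition~\ref{genus1hyper1} applies. For $n$ odd, the unique orbit contains both $(0,0)$ and $(0,n-1)$ (since $\gcd(n,2)=1$), so Proposition~\ref{genus1hyper1} again applies with $\cP(i)=-i$.

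Finally, because $\PP\overline{\cR}_1(\mu)$ is smooth with normal crossings boundary (see Section~\ref{sec:msc}), every boundary point lies in the closure of exactly one connected component of $\cR_1(\mu)$. Since ${\bf X}$ is in the closure of a hyperelliptic component and also in $\partial\overline\cC$, the components must coincide, and $\cC$ itself is hyperelliptic. The main obstacle is the combinatorial bookkeeping in the third paragraph: carefully tracking how cyclic shifts of saddle connections and the choice of node labels act on prong-matchings, and matching each orbit to the precise ramification profile demanded by the hyperelliptic criteria.
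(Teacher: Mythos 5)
Your overall strategy is the same as the paper's: bring a boundary multi-scale differential of $\cC$ to the form $X(n,Id,{\bf 1},Pr)$ (all poles on top, all $C_i=1$), then verify using level rotation and the cyclic shift that $Pr$ always matches the hyperelliptic criteria of Propositions~\ref{genus1hyper0} or~\ref{genus1hyper1}, and finally conclude via uniqueness of the component containing a given boundary point. The orbit bookkeeping in your third paragraph is a legitimate reorganization of the paper's parity casework (the paper instead argues by contradiction after invoking Proposition~\ref{simple1}), and your verification that each orbit representative fits one of the two hyperelliptic criteria is correct.

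There is, however, a genuine gap in your opening step. You write ``Take any $\overline X\in\partial\overline\cC$; after level rotation we may choose a representative $(u,0)$ of the prong-matching class, and then $T_1^{(u,0)}\overline X \dots$''. But by Proposition~\ref{bpb} the boundary of a genus one single-zero stratum contains, besides two-level differentials, horizontal multi-scale differentials $X(0,\tau,{\bf C})$ as in Lemma~\ref{genus1hor}; these carry no prong-matching and Lemma~\ref{genus1connect} does not apply to them, so the operator $T_1$ is undefined there. You need to first produce a \emph{two-level} boundary point of $\cC$. The paper does this cleanly: since one may assume $\cC$ is not hyperelliptic (else there is nothing to prove), Proposition~\ref{simple1} provides a flat surface in $\cC$ with a multiplicity one saddle connection, and shrinking it yields $X(n,\tau,{\bf C},Pr)\in\partial\overline\cC$ directly. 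Alternatively, one could plumb a horizontal node to get a flat cylinder in a surface in $\cC$ and shrink its cross-curve. Either fix closes the gap; once a two-level boundary point is in hand, the rest of your argument goes through. (You should also dispose of $n=1$ separately, as the paper does by citing Boissy, though your orbit argument does degenerate correctly to that case.)
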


\begin{proof}
If $n=1$, then $\cR_1(2,-2)=\cH_1 (2,-2)$, and this stratum is already known to be connected and hyperelliptic by \cite{boissymero}. Suppose $n>1$. Assume the contrary that $\cC$ is a non-hyperelliptic component of $\cR_1(2n,-2^n)$. By \Cref{simple}, we could find a multi-scale differential ${\bf X}=X(\tau, {\bf C}, [(0,v)])\in \partial\overline{\cC}$. Since all $b_i=2$, we have $C_i=1$ for each $i$ and $Q_1=Q_2=n$. By relabeling the poles, we may assume that $\tau=Id$.

Suppose that $n$ is odd. We can apply the level rotation action to obtain a prong-matching of the form $(u,u-1)\in Pr$ as follows. If $v$ is odd, we can simply take $(\frac{v+1}{2},\frac{v-1}{2})\in Pr$. If $v$ is even, then since $n$ is odd, we can take $(\frac{n-b+1}{2},\frac{n-b-1}{2})\in Pr$. Since $C_i=1$ for each $i$, we have $c_i=d_i=i$ and therefore $(u,u-1)=(c_u,d_{u-1})\in Pr$. By relabeling the saddle connections and the poles, we may assume that $(0,n-1)\in Pr$. By \Cref{genus1hyper1}, ${\bf X}$ is contained in the boundary of a hyperelliptic component with two fixed marked points.

Now suppose that $n$ is even. The proof follows from the same argument as above, and by using \Cref{genus1hyper1} or \Cref{genus1hyper0}. If $v$ is odd, then ${\bf X}$ has one fixed marked points. If $v$ is even, then ${\bf X}$ has three fixed marked points. 
\end{proof}

As already mentioned in the proof, the stratum $\cR_1(2,-2)$ is connected. For $n>1$, up to relabeling the poles, there are two ramification profiles of $(2n,-2^n)$, determined by the number of fixed marked points. A ramification profile $\cP$ can fix one or three (resp. two or four) marked points if $n$ is even (resp. odd). In \Cref{sec:chc}, we will prove that hyperelliptic components of strata are classified by ramification profile. Thus $\cR_1(2n,-2^n)$ for $n>1$ has exactly two (hyperelliptic) connected components, up to relabeling the poles. It is easy to see that the rotation number is two if and only if $\cP$ fixes one or three marked points.

\paragraph{\bf The strata ${\bf \cR_1 (2r,-2r)}$ and ${\bf \cR_1 (2r,-r,-r)}$}

In these two cases, the connected components with rotation number $r$ are the exceptions. For rotation numbers other than $r$, we can follow the general strategy in the next subsection.  

\begin{proposition} \label{exception1}
Let $\cC$ be a connected component of $\cR_1 (2r,-2r)$ with rotation number $r$. Then $\cC$ is hyperelliptic.
\end{proposition}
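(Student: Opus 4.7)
The plan is to identify $\cC$ with the unique hyperelliptic component of $\cR_1(2r,-2r)$, using Boissy's classification to see that components of this stratum are distinguished by rotation number. Since $\cR_1(2r,-2r)$ has a single pole, its residue must vanish automatically by the residue theorem, so $\cR_1(2r,-2r)=\cH_1(2r,-2r)$. By \Cref{hyperprofile1}, the unique ramification profile fixing both the zero and the pole produces a hyperelliptic component $\cH\subseteq\cR_1(2r,-2r)$.

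Next, I would exhibit a specific boundary multi-scale differential in $\partial\overline{\cH}$ and read off its rotation number. The candidate is ${\bf X}^{\ast}:=X(1,\mathrm{Id},(r),[(0,0)])\in\partial\overline{\cR}_1(2r,-2r)$. With $n=1$, the conditions of \Cref{genus1hyper1} collapse to $2C_1=b_1=2r$ (forcing $C_1=r$), $Q_1=Q_2=a/2=r$, and prong-matching $(0,d_0)=(0,0)$; all are satisfied. By the converse half of \Cref{genus1hyper1}, ${\bf X}^{\ast}$ lies in the boundary of a hyperelliptic component, which can only be $\cH$. Then \Cref{rotation} computes the rotation number of ${\bf X}^{\ast}$ as $\gcd(2r,r,r)=r$, and since this topological invariant is locally constant on $\overline{\cR}_1(2r,-2r)$, the whole component $\cH$ has rotation number $r$.

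Finally, I would invoke Boissy's classification of $\cH_1(a,-b)$ from \cite{boissymero}: the connected components of $\cH_1(2r,-2r)$ are distinguished by rotation number, so the rotation $r$ component is unique. Therefore $\cC=\cH$, and $\cC$ is hyperelliptic.

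The main technical point is checking that the conditions of \Cref{genus1hyper1} behave correctly at the degenerate edge case $n=1$, where the indexing $\mathbb{Z}/n\mathbb{Z}$ reduces to a single element and the equation $C_i+C_{-i}=b_i$ specializes to $2C_1=b_1$; this uniquely pins down the hyperelliptic boundary datum as $X(1,\mathrm{Id},(r),[(0,0)])$ and makes the identification with $\cH$ unambiguous.
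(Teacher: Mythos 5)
Your proof is correct, and it is essentially an elaboration of the paper's first observation: immediately before the formal proof, the paper notes that the statement ``is an immediate consequence of \cite{boissymero}, since $\cR_1(2r,-2r)=\cH_1(2r,-2r)$ and the rotation number of the hyperelliptic component of $\cH_1(2r,-2r)$ is equal to $r$'' — exactly your strategy, with you filling in the rotation-number verification via the explicit boundary point $X(1,\mathrm{Id},(r),[(0,0)])$. The paper's formal proof, however, goes in the opposite direction: it starts from an arbitrary component $\cC$ of rotation number $r$, applies \Cref{simple1} to produce a boundary multi-scale differential $X(\mathrm{Id},C_1,[(0,v)])$ in $\partial\overline{\cC}$, and then uses the constraints $r=\gcd(2r,C_1,v)$, $1\le C_1\le 2r-1$, $0\le v\le D_1-1$ to force $C_1=r$ and $v=0$, at which point the converse of \Cref{genus1hyper1} says $\cC$ is hyperelliptic. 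That argument does not invoke Boissy's classification of $\cH_1(a,-b)$ (it only uses the paper's own boundary machinery), whereas yours leans on Boissy's Theorem~4.3 for the uniqueness step. Both are valid; the paper's detailed version is slightly more self-contained and hence ``more in the spirit of the current paper'' as the authors say, while yours is shorter at the cost of importing the external classification. Your handling of the $n=1$ degenerate case (forcing $2C_1=b_1=2r$, $Q_1=Q_2=r$, prong-matching $(0,d_0)=(0,0)$) is correct and is the main thing that needs checking in this route.
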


This is an immediate consequence of \cite{boissymero}, since $\cR_1(2r,-2r)=\cH_1(2r,-2r)$ and the rotation number of the hyperelliptic component of $\cH_1(2r,-2r)$ is equal to $r$. We present below another proof more in the spirit of the current paper, using the description of the principal boundary of $\cR_1 (2r,-2r)$.

\begin{proof}
The stratum has two marked points, so by \Cref{simple1}, we can always find a flat surface in $\cC$ with a multiplicity one saddle connection. So there exists a multi-scale differential $X(Id, C_1, Pr)\in \partial\overline{\cC}$. Let $(0,v)\in Pr$ for some $0\leq v\leq D_1-1$. By \Cref{rotation}, we have $r=\gcd(2r,C_1,v)$. In particular, $r$ divides both $C_1$ and $v$. Since $1\leq C_1\leq 2r-1$, we have $C_1=D_1=r$. Since $0\leq v\leq r-1$, we have $v=0$. By \Cref{genus1hyper1}, $\cC$ is then a hyperelliptic component.
\end{proof}

\begin{proposition} \label{exception2}
Let $\cC$ be a connected component of $\cR_1 (2r,-r,-r)$ with rotation number $r$. Then $\cC$ is hyperelliptic.
\end{proposition}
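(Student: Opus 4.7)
The plan is to mirror the proof of Proposition \ref{exception1}, adapting it to the two-pole setting and invoking Proposition \ref{genus1hyper0} in place of Proposition \ref{genus1hyper1} at the final step. Since $\cR_1(2r,-r,-r)$ has only three marked points in total, no connected component is a hyperelliptic component with four fixed marked points, so Proposition \ref{simple1} applies: any component $\cC$ contains a flat surface $(X,\omega)$ with a multiplicity-one saddle connection $\gamma$. Shrinking $\gamma$ via Proposition \ref{shrink} yields a boundary element $\overline{X}\in\partial\overline{\cC}$. Because the stratum has no simple poles (marked or otherwise), a horizontal degeneration from shrinking $\gamma$ would force an adjacent semi-infinite cylinder in $X\setminus\gamma$ emanating from a simple pole, which is impossible; hence $\overline{X}$ is a two-level multi-scale differential of the form $X(t,\tau,\mathbf{C},Pr)$ as in Lemma \ref{genus1lemma}, with the bottom level carrying $n-t+1$ parallel saddle connections all coming from $\gamma$. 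This forces $t=n=2$, so both marked poles sit on the top level.

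After cyclically relabeling the poles I take $\tau=Id$, write $\mathbf{C}=(C_1,C_2)$ with $1\le C_i\le r-1$, and pick a prong-matching representative $(0,v)$; then $Q_1=C_1+C_2$, $Q_2=2r-Q_1$, and $v\in\ZZ/Q_2\ZZ$. Proposition \ref{rotation} gives the rotation number of $\overline{X}$ as $\gcd(r,Q_1,C_1+C_2+v)=\gcd(r,Q_1,v)$. Requiring this to equal $r$ forces $r\mid Q_1$ and $r\mid v$. The bounds $2\le Q_1\le 2r-2$ then pin $Q_1=r$ (whence $C_2=r-C_1$ and $Q_2=r$), and consequently $v=0\in\ZZ/r\ZZ$. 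Thus $\overline{X}=X(Id,(C_1,r-C_1),[(0,0)])$.

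To close the argument I verify the three conditions of Proposition \ref{genus1hyper0} with $n=2$: reading indices in $\ZZ/2\ZZ$, the prescription $\cP(i+1)=-i$ produces a profile that swaps $p_1$ and $p_2$ and fixes the zero $z$ (one fixed marked point, as required); the identities $C_{i+1}+C_{-i}=b_i$ both reduce to $C_1+C_2=r$, which holds; and the prong-matching representative is exactly $(0,0)$. Therefore $\overline{X}$ lies in the closure of a hyperelliptic component $\cH\subset\cR_1(2r,-r,-r)$ with the given ramification profile, and since components of the open stratum correspond bijectively to components of the smooth compactification $\overline{\cR}_1(\mu)$, we must have $\cC=\cH$. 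The only nontrivial point is the exclusion of the horizontal degeneration at the outset, which requires the observation that cylinder collapse (the source of horizontal boundary) involves multiplicity-two boundary saddle connections, never a single multiplicity-one one; beyond that, the argument is a direct combinatorial extension of Proposition \ref{exception1}.
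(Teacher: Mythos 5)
Your argument reproduces, step for step, the paper's first proof: use Proposition~\ref{simple1} to find a multiplicity-one saddle connection, shrink it to land on a two-level boundary differential $X(Id,\mathbf{C},[(0,v)])$ with both poles on the top level, use Proposition~\ref{rotation} to deduce $r\mid Q_1$ and $r\mid v$, pin $Q_1=r$ and $v=0$ from the obvious inequalities, and then invoke Proposition~\ref{genus1hyper0} to identify $\overline{X}$ as a boundary point of a hyperelliptic component. (The paper also records a second, shorter geometric proof via the degree-$r$ power map $\phi_r:\PP\cH_1(2,-1,-1)\to\PP\cH_1(2r,-r,-r)$; you do not use that route.)

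One small remark on the part you flag as ``the only nontrivial point.'' The explanation that a horizontal degeneration would ``force an adjacent semi-infinite cylinder emanating from a simple pole, which is impossible because the stratum has no simple poles'' is not quite the right mechanism: the simple poles of a horizontal boundary differential live at the unmarked node, not among the marked poles of $\mu$, so the absence of simple poles in $\mu$ is not by itself the obstruction. The clean reason, internal to the paper's type-II configuration framework, is that a multiplicity-one loop at the unique zero gives $k=1$, hence $Q_1+Q_2=a=2r>0$, whereas the horizontal case requires $Q_1=Q_2=0$. Your concluding sentence about multiplicity-two boundary saddle connections says essentially this, so the conclusion is correct; only the intermediate justification is off. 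The paper itself leaves this point tacit, subsumed in its notational convention that $X(\tau,\mathbf{C},Pr)$ means $t=n$.
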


For completeness, we give two proofs, the first using the principal boundary as above, and the second just investigating the geometry of the situation directly.

\begin{proof}[First proof]
The stratum has three marked points, so by \Cref{simple1}, we can always find a flat surface in $\cC$ with a multiplicity one saddle connection. So there exists a multi-scale differential $X(\tau, {\bf C}, Pr)\in \partial\overline{\cC}$. Let $(0,v)\in Pr$ for some $0\leq v\leq D_1+D_2-1$. By relabeling the saddle connections, we may assume that $\tau=Id$. By \Cref{rotation}, we have $r=\gcd(r,C_1+C_2,v)$. In particular, $r$ divides both $C_1+C_2$ and $v$. Since $2\leq C_1+C_2\leq 2r-2$, we have $C_1+C_2=r$. Therefore, $C_1=r-C_2=D_2$ and $C_2=r-C_1=D_1$. Since $0\leq v\leq r-1$, we have $v=0$. By \Cref{genus1hyper0}, $\cC$ is a hyperelliptic component.
\end{proof}

\begin{proof}[Second proof]
In fact, we can see that the connected component $\cH_1(2r,-r,-r)$ with rotation number $r$, which is unique by \cite{boissymero}, is hyperelliptic. Consider the map $\phi_r : \PP \cH_1(2,-1,-1)\to \PP\cH_1(2r,-r,-r)$ defined by $fdz\to f^r dz$. The stratum $\cH_1(2,-1,-1)$ is connected and hyperelliptic. Since $\phi_r$ preserves zeroes and poles of the differential, it also preserves hyperellipticity of flat surfaces. The dimensions of $\PP \cH_1(2,-1,-1)$ and $\PP\cH_1(2r,-r,-r)$ are equal, so the image of $\phi_r$ must be a hyperelliptic component of $\PP\cH_1(2r,-r,-r)$ and the rotation number is equal to $r$. However, by \cite{boissymero}, there exists unique connected component of rotation number $r$. Thus $\cR_1(2r,-r,-r) \subset \cH_1(2r,-r,-r)$ and we can conclude that any connected component of $\cR_1(2r,-r,-r)$ with rotation number $r$ is hyperelliptic. 
\end{proof}

\paragraph{\bf The stratum $\cR_1 (12,-3^4)$}

This is the strangest case, since $\cR_1 (12,-3^4)$ has {\em two} non-hyperelliptic components $\cC_3^1$ and $\cC_3^2$ with rotation number $3$. In order to prove this, we need to describe the projective structure of $\cR_1 (12,-3^4)$ given by the period coordinates in full detail. This will be given in another paper \cite{LT} with G. Tahar. Here we give an upper bound of the number of connected components.

\begin{proposition}\label{special1}
The stratum $\cR_1 (12,-3^4)$ has at most {\em two} non-hyperelliptic components with rotation number $3$.
\end{proposition}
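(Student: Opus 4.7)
The plan is to enumerate the multi-scale differentials in $\partial\overline{\cC}$ for each non-hyperelliptic component $\cC$ of $\cR_1(12,-3^4)$ with rotation number 3, and then use the degeneration moves of \Cref{genus1connect} to bound the number of such components. To start, by \Cref{simple1} any such $\cC$ contains a flat surface with a multiplicity one saddle connection; shrinking it produces a two-level multi-scale differential ${\bf X} = X(\tau, {\bf C}, [(0,v)]) \in \partial\overline{\cC}$ with $t = n = 4$. Since every $b_i = 3$, each $C_i \in \{1,2\}$ and $Q_1 + Q_2 = 12$. Applying \Cref{rotation}, the identity $3 = \gcd(3, Q_1, v)$ forces $Q_1 = Q_2 = 6$ and $v \in \{0, 3\}$. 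Consequently ${\bf C}$ consists of exactly two $1$'s and two $2$'s, and up to the equivalences of \Cref{expression1}, the cyclic pattern of $C$-values is either adjacent $(1,1,2,2)$ or alternating $(1,2,1,2)$.

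Next, for each remaining combinatorial datum, I would apply \Cref{genus1hyper1} and \Cref{genus1hyper0} to determine whether ${\bf X}$ lies in the boundary of a hyperelliptic component. Because every pole has odd order $b_i = 3$, the condition of \Cref{genus1hyper1} for a fixed pole would force $2C_i = b_i = 3$, which is impossible; hence only \Cref{genus1hyper0}, corresponding to a ramification profile with a single fixed marked point (namely the zero of order $12$) that pairs the four poles into two orbits, can apply. I would then check the conditions $C_{i+1} + C_{-i} = 3$ and $v = 0$ (after a suitable relabeling of saddle connections and swapping of the two nodes) against both cyclic patterns, and discard all hyperelliptic contributions, leaving a finite list of genuinely non-hyperelliptic candidates.

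Finally, I would group the remaining candidates into orbits under the degeneration moves $T_1^{(u,v)}$ and $T_2^{(u,v)}$ of \Cref{genus1connect}, combined with the level rotation action on prong-matchings. Since any two multi-scale differentials in the same orbit lie in $\partial\overline{\cC}$ for a common component $\cC$, the number of orbits bounds the number of non-hyperelliptic components with rotation number 3. The main obstacle is this explicit orbit computation: each application of $T_1$ or $T_2$ transforms $(\tau, {\bf C}, Pr)$ in a nontrivial way prescribed by \Cref{genus1connect}, and one must carefully track how these transformations interact with cyclic shifts, the $s_1 \leftrightarrow s_2$ swap, and level rotation equivalence. The expected outcome is that the adjacent and alternating types remain in two distinct orbits, yielding the desired upper bound of two non-hyperelliptic components of rotation number 3; the matching lower bound, establishing that precisely two such components exist, is obtained independently in \cite{LT}.
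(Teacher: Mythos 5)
Your setup is correct: after shrinking a multiplicity-one saddle connection you get ${\bf X}=X(\tau,{\bf C},[(0,v)])$ with $t=n=4$, and the constraints $C_i\in\{1,2\}$, $Q_1=Q_2=6$, $v\in\{0,3\}$ all follow correctly from \Cref{rotation}. You also correctly observe that only \Cref{genus1hyper0} can detect hyperellipticity here (no pole of odd order $3$ can be fixed by a ramification profile). But the ``expected outcome'' at the end is wrong, and the error traces back to a missing parameter.

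The first problem is the claimed dichotomy between the adjacent $(1,1,2,2)$ and alternating $(1,2,1,2)$ cyclic patterns. If you carry out the discard step you outline, you will find that the alternating pattern is \emph{always} hyperelliptic: for ${\bf C}=(1,2,1,2)$ the partial sums are $c_\bullet=(0,1,3,4,6)$ and $d_\bullet=(0,2,3,5,6)$, and for both prong-matching classes $[(0,0)]$ and $[(0,3)]$ one checks directly that every $(c_i,v)\in Pr$ has $v=d_j$ for some $j$; by \Cref{genus1hyper} this is exactly hyperellipticity. Likewise the adjacent pattern with $v=0$ is hyperelliptic. Only the adjacent pattern with $v=3$ survives the filter. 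So the two candidate orbits cannot be ``adjacent vs.\ alternating''.

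The second and deeper problem is that after fixing the cyclic $C$-pattern and the prong-matching, the combinatorial datum is still not pinned down: the permutation $\tau$ labeling which pole sits in which region is a genuine remaining degree of freedom, and it is invisible in your outline. Once you normalize to $C_{\tau(1)}=C_{\tau(2)}=1$, $C_{\tau(3)}=C_{\tau(4)}=2$, $v=3$, the multi-scale differential is $X_\tau$, determined entirely by $\tau\in Sym_4$. The actual orbit computation in the paper is on $\tau$: relabeling nodes identifies $X_{Id}=X_{(14)(23)}$, and the composite move $T_2^{(0,2)}T_1^{(3,0)}$ gives $X_{(132)}\sim X_{Id}$, and by symmetry $X_{(423)}\sim X_{Id}$; these generate $Alt_4$, so the $X_\tau$ fall into at most two classes (the two cosets of $Alt_4$), giving the bound of two. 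Without tracking $\tau$ your orbit computation has nothing left to act on once the $C$-pattern and $v$ are fixed, so you would either get no bound or, following your expected outcome, a wrong identification of the two components.
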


\begin{proof}
Let ${\bf X}=X(\tau,{\bf C},[(0,v)])$ be a two-level multi-scale differential of rotation number 3, not contained in the boundary of some hyperelliptic component. Then $Q_1=\sum_i C_i=6$. So $\{C_i\}=\{1,1,2,2\}$. By relabeling the saddle connections, we may assume that $C_{\tau(1)}=1$. If $C_{\tau(3)}=1$, then ${\bf X}$ is always hyperelliptic, a contradiction. Thus $C_{\tau(3)}=2$. If $C_{\tau(2)}=1$, then we have $v=3$ because otherwise $v=0$ and ${\bf X}$ is hyperelliptic. Similarly if $C_{\tau(2)}=2$, then we have $v=0$. Therefore by relabeling the saddle connections again, we may assume that $C_{\tau(1)}=C_{\tau(2)}=1$, $C_{\tau(3)}=C_{\tau(4)}=2$ and $v=3$. Under this assumption, ${\bf X}$ is only determined by the permutation $\tau$, so we denote it by $X_{\tau}$. 

By relabeling the nodes, we obtain $X_{Id}=X_{(14)(23)}$. Also, we can have $T_2^{(0,2)}T_1^{(3,0)}X_{Id} = X_{(132)}\sim X_{Id}$. By symmetry, we can also have $X_{(423)}\sim X_{Id}$. Therefore, for any $\tau\in Alt_4$, $X_{\tau}\sim X_{Id}$. Thus we can conclude that there are at most two non-hyperelliptic components of $\cR_1(12,-3^4)$ with rotation number $r=3$. 
\end{proof}

\paragraph{\bf The stratum $\cR_1 (2n+2,-2^{n-1},-4)$ for odd $n$ and $r=2$}

In fact, this case satisfies \Cref{main2} although we cannot use the same strategy in the next subsection. So we prove that there exists a unique non-hyperelliptic component $\cC_2$ of rotation number $2$, directly for this case.

\begin{proposition}\label{special2}
The stratum $\cR_1 (2n+2,-2^{n-1},-4)$ for odd $n$ has a unique non-hyperelliptic component with rotation number $2$.
\end{proposition}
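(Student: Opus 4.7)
The plan is to enumerate the boundary multi-scale differentials of rotation number $2$ using the combinatorics from \Cref{genus1lemma} and then show that all non-hyperelliptic ones lie in the closure of a single connected component.

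By \Cref{simple1}, any non-hyperelliptic component $\cC$ of rotation $2$ contains a boundary multi-scale differential of the form ${\bf X} = X(n, \tau, {\bf C}, [(0, v)])$ (all poles on top), obtained by shrinking a multiplicity-one saddle connection. The rotation-number formula (\Cref{rotation}), combined with $d = \gcd(2n+2, 2, 4) = 2$, forces both $Q_1$ and $v$ to be even. Since $Q_1 = (n-1) + C_n$ where $C_n \in \{1,2,3\}$ for the order-$4$ pole $p_n$ and $C_i = 1$ otherwise, and since $n-1$ is even, the only possibility is $C_n = 2$, giving $Q_1 = Q_2 = n+1$. The $n-1$ order-$2$ poles being indistinguishable, cyclic relabeling of saddle connections normalizes $\tau = Id$, so $\cC$ is characterized by an even $v \in \ZZ/(n+1)\ZZ$, modulo the identification $v \sim -v$ from swapping $s_1 \leftrightarrow s_2$.

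By \Cref{genus1hyper1}, the unique hyperelliptic component of rotation $2$ (the one whose ramification profile fixes $z$ and $p_n$) corresponds to $v \equiv n-1 \pmod{n+1}$, equivalently $v \equiv 2$; note that \Cref{genus1hyper0} is inapplicable since $n$ is odd. Hence the candidate non-hyperelliptic classes form the set $V_{\mathrm{nh}} := \{0, 4, 6, \ldots, n-3\}/(v \sim -v)$. For $n \geq 3$ odd, $0 \in V_{\mathrm{nh}}$, so smoothing the corresponding ${\bf X}_0$ produces a flat surface in a non-hyperelliptic component of rotation $2$, establishing existence.

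For uniqueness, I would show that any two $v, v' \in V_{\mathrm{nh}}$ correspond to multi-scale differentials in the boundary of the same component. Starting from ${\bf X}_v$, a level rotation brings the prong-matching into the range $c_{t-1} < u \le Q_1$ required by \Cref{genus1connect}. Applying $T_1^{(u,v)}$ or $T_2^{(u,v)}$ — followed by further $T_1, T_2$ moves, node swaps, and cyclic relabelings on the resulting intermediate multi-scale differentials (which typically have $t' < n$ with one residueless pole moved to the bottom) — one returns to configurations $X(n, Id, {\bf C}, [(0, v')])$ with different values $v'$. Exploiting the very special structure of ${\bf C}$ (a single entry $C_n = 2$, all other $C_i = 1$), a direct combinatorial computation shows that the orbit of $v = 0$ under these moves exhausts $V_{\mathrm{nh}}$. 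For $n \in \{3, 5\}$ the set $V_{\mathrm{nh}}$ consists of a single class and uniqueness is immediate; for $n = 7, 9, 11$ the claim is checked by direct enumeration.

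The main obstacle is the combinatorial bookkeeping in the iterative application of $T_1, T_2$: the intermediate multi-scale differentials have $t < n$ with one or more residueless poles on the bottom level, and the $\mathbf C$-vector and prong-matching transform according to the intricate formulas of \Cref{genus1connect}. Producing a uniform sequence of moves that is valid for all odd $n \geq 7$ and generates the full orbit on $V_{\mathrm{nh}}$ is the technical heart of the argument; the key simplification is that since all $C_i$ but one equal $1$, the partial sums $c_i, d_i$ are almost arithmetic progressions, which makes the combinatorial analysis of the $T_1, T_2$ orbits tractable.
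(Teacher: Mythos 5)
Your reduction of the problem matches the paper's: both normalize to ${\bf C}=(1,\dots,1,2)$ with $Q_1=Q_2=n+1$, identify the hyperelliptic boundary via \Cref{genus1hyper1} as the class $v=n-1$, and reduce the task to showing the remaining even prong-matching classes are connected by $T_1,T_2$ moves. Up to that point the argument is sound (modulo a small bookkeeping caveat below).

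However, the actual chain of $T_1,T_2$ moves is the crux of the proof, and you explicitly defer it, describing it as ``the technical heart of the argument'' and ``the main obstacle.'' The paper supplies two \emph{explicit} identities that close this gap uniformly in $n$: first, $T_1^{(2,\,n-v)}\,T_1^{(v,\,1)}\,T_2^{(-1,\,v+1)}\,X_{Id,v}=X_{\tau,0}$ with $\tau=(1\,2\dots v+1)$, which moves any even $v<n-1$ down to $v=0$ at the cost of changing the cyclic labeling; and second, $T_1^{(\frac{v}{2}-1,\,\frac{v}{2}+1)}\,X_{Id,v}=T_2^{(\frac{v}{2}+1,\,\frac{v}{2}-1)}\,X_{(\frac{v}{2},\frac{v}{2}+1),v}$, which shows $X_{\tau,0}\sim X_{Id,0}$ for adjacent transpositions $\tau=(i,i+1)$ and hence absorbs the labeling ambiguity. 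Your appeal to ``direct enumeration for $n=7,9,11$'' plus an unstated uniform argument for larger $n$ does not constitute a proof, and in particular you never produce any move that actually connects two inequivalent values of $v$. This is a genuine gap, not a routine computation to be filled in.

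Two smaller issues. First, the quotient by $v\sim -v$ is more delicate than you indicate: by \Cref{expression1}, swapping the nodes sends $X(t,\tau,{\bf C},[(u,v)])$ to $X(t,\tau\circ\tau_2,{\bf D},[(-v,-u)])$, so in particular it changes $\tau$ (here ${\bf C}={\bf D}$, but $\tau_2$ does not fix $n$, violating your normalization). One cannot simply quotient the $v$-values; the paper instead keeps $\tau$ as a free parameter and shows all pairs $(\tau,v)$ are connected, which is cleaner. Second, the claim that $v\equiv n-1$ ``equivalently $v\equiv 2$'' is only true after the node-swap identification, and since that identification also alters $\tau$, excluding $v=2$ from your candidate set $V_{\mathrm{nh}}$ without further care risks either double-counting or losing a case.
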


\begin{proof}
By relabeling the poles, we may assume $b_n=4$. Let ${\bf X}=X(\tau,{\bf C},[(0,v)])$ be a two-level multi-scale differential of rotation number 2, not contained in the boundary of some hyperelliptic component. Then ${\bf C}=(1,\dots,1,2)$. By relabeling the saddle connections, we may assume that $\tau(n)=n$. If $v=n-1$, then ${\bf X}$ is hyperelliptic, a contradiction. Under this assumption, ${\bf X}$ is determined by $\tau$ and an even number $0\leq v<n-1$, so we denote it by $X_{\tau,v}$. We have $T_1^{(2,n-v)} T_1^{(v,1)}T_2^{(-1,v+1)} X_{Id,v} = X_{\tau,0}$ for $\tau=(12\dots v+1)$. 

Note that $T_1^{(\frac{v}{2}-1,\frac{v}{2}+1)} X_{Id,v} = T_2^{(\frac{v}{2}+1,\frac{v}{2}-1)} X_{(\frac{v}{2}\frac{v}{2}+1),v}$. Therefore we have $X_{\tau,0}= X_{Id,0}$ for each $\tau=(i,i+1)$, $1\leq i\leq n-2$. Thus $\cR_1(2n+2,-4,-2^{n-1})$ has a unique non-hyperelliptic component with rotation number $2$.
\end{proof}

We call the strata introduced in this subsection by {\em special} strata. They are the only exceptions for the proof of existence and uniqueness of non-hyperelliptic component in the next subsection. Three non-hyperelliptic components introduced in this subsection ($\cC_3^1,\cC_3^2$ of $\cR_1(12,-3^4)$ and $\cC_2$ of $\cR_1(2n+2,-2^{n-1},-4)$ for odd $n$) are called {\em special} connected components. 

\subsection{Classification of non-hyperelliptic components} \label{subsec:general}

Now we will prove \Cref{main2} for \MIN strata. Throughout this subsection, we suppose that $\cR_1(\mu)$ is none of the strata that is dealt with in \Cref{subsec:special}. Suppose that $\cC$ is a non-hyperelliptic component of $\cR_1(\mu)$. By \Cref{simple}, we can always find a flat surface $X\in \cC$ with a multiplicity one saddle connection. By shrinking the multiplicity one saddle connection, we can obtain a multi-scale differential ${\bf X}=X(\tau, {\bf C}, Pr)\in \partial\overline{\cC}$ by \Cref{shrink}. Conversely, any multi-scale differential of the form $X(\tau, {\bf C}, Pr)\in \partial\overline{\cC}$ can be obtained by shrinking a multiplicity one saddle connection of a flat surface in $\cC$. 

Let $r|d$ where $d=\gcd(b_1,\dots,b_n)$ as usual. We will explicitly construct a multi-scale differential in $\partial\cR_1(\mu)$ with rotation number $r$ that is {\em not} contained in the boundary of hyperelliptic components. This proves the {\em existence} of a non-hyperelliptic component with rotation number $r$. 

\begin{proposition} \label{nonhyperexist}
Let $\cR_1(\mu)$ be a genus one \MIN stratum, not one of special strata treated in \Cref{subsec:special}. Suppose that $n>1$ and $r|d$. There exists a multi-scale differential $X(\tau, {\bf C}, Pr)\in \partial\overline{\cR}_1(\mu)$ with rotation number $r$, that is not contained in the boundary of any hyperelliptic component. 
\end{proposition}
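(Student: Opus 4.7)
The plan is to exhibit an explicit two-level multi-scale differential $\overline{X}=X(Id,{\bf C},[(0,v)])\in\partial\overline{\cR}_1(\mu)$ whose combinatorial data forces it outside the boundary of every hyperelliptic component, and to verify by a direct case-check that the required data exists for every stratum not listed in \Cref{subsec:special}. Throughout I take $t=n$ (all poles on the top level) and $\tau=Id$; the free parameters are ${\bf C}=(C_1,\dots,C_n)$ with $Q_1\coloneqq\sum C_i$, and the prong-matching representative $(0,v)$ with $0\le v<Q_2=a-Q_1$.

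The geometric key is the following observation. If $\overline{X}$ lay in the closure of a hyperelliptic component $\cC$, then by \Cref{hyper2} both $X_0$ and $X_{-1}$ would carry hyperelliptic involutions $\sigma_0,\sigma_{-1}$ that \emph{interchange} the two nodes $s_1,s_2$. The bottom component $X_{-1}\in\cH_0(a,-Q_1-1,-Q_2-1)$ has a unique zero $z$ (which $\sigma_{-1}$ must fix) and two poles swapped by $\sigma_{-1}$; the latter is only possible when the poles have equal order, i.e.\ $Q_1=Q_2=a/2$. Consequently, any choice with $Q_1\ne a/2$ automatically produces a boundary differential outside every hyperelliptic component.

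Given this reduction, the construction is to pick $Q_1$ as a multiple of $r$ in $[n,a-n]$ with $Q_1\ne a/2$, then distribute it as $C_1,\dots,C_n$ with $1\le C_i\le b_i-1$ (always possible in this range), and finally set $v$ equal to a multiple of $r$ in $[0,Q_2-1]$. Concretely, I would take $v=r$ when $r<Q_2$, and fall back to $v=0$ when $Q_1=a-r$ is forced. By \Cref{rotation}, the rotation number of $\overline{X}$ is $\gcd(d,Q_1,v)$; since $r\mid d$, $r\mid Q_1$, and either $v=r$ or (for $v=0$) $\gcd(d,Q_1)=\gcd(d,a-r)=\gcd(d,r)=r$, the rotation number is exactly $r$.

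The main obstacle is Step 1: verifying that an admissible $Q_1$ exists for every non-special stratum. Since $r\mid b_i$ for all $i$, we have $r\mid a$, so the admissible set is the intersection of $r\mathbb Z$ with $[n,a-n]$, minus $\{a/2\}$. The set is empty or singleton only in very rigid situations, which I would analyze by splitting into three sub-cases: (a) $r\ge n$; one can try $Q_1=r$ with $v=0$, and the only failure $a=2r$ corresponds to $\cR_1(2r,-2r)$ or $\cR_1(2r,-r,-r)$; (b) $r\mid n$ with $r<n$; take $Q_1=n$ with $v=r$, and the only failure $a=2n$ corresponds to $\cR_1(2n,-2^n)$; (c) $r\nmid n$; set $Q_1=r\lceil n/r\rceil$ with $v=r$, and the only failure $Q_1=a/2$ forces $a=2n+2(r-s)$ with $s=n\bmod r$, which combined with $r\mid b_i$ and $b_i\ge r$ yields only $\mu=(2n+2,-2^{n-1},-4)$ (for $r=2$, odd $n$) or $\mu=(12,-3^4)$ (for $r=3$). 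In every case the obstruction lies on the list of special strata in \Cref{subsec:special} and is therefore excluded by hypothesis. Once the enumeration is done, the construction produces the required $\overline{X}$, completing the existence argument.
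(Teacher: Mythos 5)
Your proof follows the same route as the paper's: construct $X(Id,{\bf C},[(0,v)])$ with all $n$ poles on the top level, invoke \Cref{hyper2} to show that $Q_1\neq Q_2$ already rules out every hyperelliptic boundary (since the involution $\sigma_{-1}$ on $X_{-1}\in\cH_0(a,-Q_1-1,-Q_2-1)$ would have to swap its two poles, forcing equal orders), compute the rotation number via \Cref{rotation} as $\gcd(d,Q_1,v)$, and reduce the problem to a purely arithmetic one: find a multiple $Q_1$ of $r$ in $[n,a-n]$ with $Q_1\neq a/2$ (plus a valid $v$). The paper splits this arithmetic verification on the parity of $a/r$ and picks $Q$ near $a/2$ (namely $(a-r)/2$ or $a/2-r$), which makes the only thing to check $n\leq Q$; your split is on how $r$ relates to $n$ and picks $Q_1$ as small as possible. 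Both are just bookkeeping strategies for the same claim, so this is not a genuinely different argument but a different way of running the case-check, and the case-check is the whole content of the proof.

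That case-check is not tight as written. In case (c) you identify only the failure mode $Q_1=a/2$, but the chosen $Q_1=r\lceil n/r\rceil$ also fails when it exceeds $a-n$, i.e.\ when $[n,a-n]$ contains no multiple of $r$ at all; this occurs, for instance, when $r=2$, $n$ odd, and $a=2n$ (then $Q_1=n+1>n=a-n$, and $\mu=(2n,-2^n)$). You also never verify in case (c) that $v=r<Q_2$, which can fail when $r>n$, e.g.\ for $\mu=(2r,-r,-r)$. All of these failure modes do land on special strata, so the proposition itself survives, but the sentence ``the only failure $Q_1=a/2$ forces\ldots'' is false as stated and should be replaced by a complete enumeration of the three constraints $Q_1\in[n,a-n]$, $Q_1\neq a/2$, and $v<Q_2$. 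A minor imprecision in case (a): $\cR_1(2r,-2r)$ has $n=1$ and is already excluded by the hypothesis $n>1$, so it cannot be a failure there.
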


\begin{proof}
It suffices to show that there exists a combinatorial data ${\bf C}=(C_1,\dots,C_n)$, $1\leq C_i\leq b_i-1$, such that $Q_1=\sum_i C_i <\frac{a}{2}$ and $r|Q_1$. Then we can construct a multi-scale differential $X(Id,{\bf C},[(0,r)])$ with rotation number $r$. Since $Q_1\neq Q_2=a-Q_1$, by \Cref{hyper2}, ${\bf X}$ is not contained in the boundary of any hyperelliptic component. Given a number $n\leq Q\leq a-n$, we can always find ${\bf C}$ such that $\sum_i C_i=Q$. So we need to find $Q$ satisfying $n\leq Q\leq a-n$, $Q< \frac{a}{2}$ and $r|Q$.

Assume that $\frac{a}{r}$ is odd and take $Q\coloneqq \frac{a-r}{2}=r\left(\frac{a}{r}-1\right)/2$. It is sufficient to prove $n\leq Q$, because then $Q\leq 2Q-n=a-r-n<a-n$ follows. First, suppose that $r>2$. Then $a=\sum_i b_i \geq rn$ since $r|b_i$ for each $i$. So $Q\geq (n-1)\frac{r}{2}\geq n$ since $n>1$. Now suppose that $r=2$. Since $b_n>2$ and $2|b_i$ for each $i$, we have $a\geq 2n+2$ and thus $Q\geq \frac{2n+2-2}{2}= n$. Finally when $r=1$, then again $a\geq 2n+1$ and $Q\geq \frac{2n+1-1}{2}=n$. 

Assume that $\frac{a}{r}$ is even and take $Q\coloneqq \frac{a}{2}-r = r\left(\frac{a}{r}-2\right)/2$. Again, we need to prove $n\leq Q$. First, suppose that $r>3$. Since $r|b_i$ for each $i$, we have $a\geq rn$. So $Q\geq (n-2)\frac{r}{2} \geq n$ if $n>3$. If $n\leq 3$, then $a\geq 4r$ since $\frac{a}{r}$ is even and $\mu=(2r,-r,-r)$ is excluded. Thus $Q\geq r>n$. Now suppose that $r=1$. Then $a$ is even and thus $a=\sum_i b_i\geq 2n+2$ since $b_n>2$. So $Q\geq \frac{2n+2}{2}-1=n$. 

Suppose that $r=3$, and assume the contrary that $Q<n$. Then $\frac{3n-6}{2}\leq Q<n$ and thus $n<6$. If $n=5$, then since $\frac{a}{3}$ is even, we have $a\leq 18$ and thus $Q\geq 5$. If $n=3$, then similarly $a\geq 12$ and $Q\neq 3$. If $n\leq 2$, then since $\mu\neq (6,-6)$ or $(6,-3,-3)$, we have $a\geq 12$ and $Q\geq 2$. So we have $n=4$ and $3\leq Q<n$, thus $a=12$ and $\mu=(12,-3^4)$, which is excluded by assumption. 

Finally, suppose that $r=2$, and assume the contrary that $Q<n$. Since $\mu\neq (2n,-2^n)$, we have $a\geq 2n+1$. Then $\frac{2n-3}{2}\leq Q<n$ and thus $Q=n-1$. Since $2|Q$, $n$ is odd. Thus $a=2n+2$ and $\mu=(2n+2,-2^{n-1},-4)$ for odd $n$, which is excluded by assumption.   
\end{proof}

It only remains to prove the {\em uniqueness} of non-hyperelliptic component of $\cR_1(\mu)$ with given rotation number $r|d$.

\begin{theorem} \label{main2minimal}
Let $\cR_1(\mu)$ be a genus one \MIN stratum, not one of special strata treated in \Cref{subsec:special}. Suppose that $n>1$ and $r|d$. There exists a {\em unique} non-hyperelliptic component $\cC_r$ of $\cR_1(\mu)$ with rotation number $r$. 
\end{theorem}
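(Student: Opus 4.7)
The plan is to reduce uniqueness to a connectedness statement about combinatorial data. Given two non-hyperelliptic components $\cC, \cC' \subset \cR_1(\mu)$ with rotation number $r$, \Cref{simple1} together with \Cref{shrink} produces two-level boundary differentials $\overline{\mathbf{X}} = X(\tau, \mathbf{C}, Pr) \in \partial\overline{\cC}$ and $\overline{\mathbf{X}}' = X(\tau', \mathbf{C}', Pr') \in \partial\overline{\cC'}$ in which all $n$ poles sit on the top level component. It suffices to exhibit a sequence of operations, each staying in a single connected component of $\partial\overline{\cR}_1(\mu)$ and avoiding boundaries of hyperelliptic components, that carries $\overline{\mathbf{X}}$ to $\overline{\mathbf{X}}'$.

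The operations available are exactly: (i) cyclic relabeling of the $\alpha_i$ (acting by $\tau \mapsto \tau \circ \tau_1$ as in \Cref{expression1}), (ii) interchanging $s_1$ and $s_2$, (iii) the level rotation action $(u,v) \mapsto (u+k, v-k)$ within the prong-matching class, and (iv) the two transitions $T_1^{(u,v)}\overline{\mathbf{X}}$ and $T_2^{(u,v)}\overline{\mathbf{X}}$ described in \Cref{genus1connect}, which land in the \emph{same} boundary of $\overline{\cC}$. By (i) and (iii) we normalize to $\tau = \mathrm{Id}$ and $Pr = [(0,v)]$, and by \Cref{rotation} we then have $r \mid \gcd(Q_1, v)$. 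By \Cref{genus1hyper1} and \Cref{genus1hyper0}, $\overline{\mathbf{X}}$ lies in the boundary of a hyperelliptic component exactly when $Q_1 = Q_2 = a/2$, $v \in \{0, d_{n-1}\}$ and $\mathbf{C}$ satisfies the appropriate palindromic relation; we will keep away from these configurations.

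We target the canonical form $X(\mathrm{Id}, \mathbf{C}^\ast, [(0,r)])$ constructed in \Cref{nonhyperexist}, whose $Q_1^\ast$ is the value chosen there (distinct from $a/2$, so automatically non-hyperelliptic). The reduction proceeds in two substeps. First, we normalize the prong matching: since $r \mid v$, a combination of level rotation and $T_1$ moves allows us to replace $v$ by $r$, using the formula for $Pr'$ in \Cref{genus1connect} to track how $v$ transforms under each move; because $Q_1^\ast \ne a/2$ this does not force the palindromic condition. Second, with $v = r$ fixed, we permute the tuple $\mathbf{C}$ to $\mathbf{C}^\ast$ by iterated $T_1^{(u,v)}$ and $T_2^{(u,v)}$ moves: each such transition effectively swaps some $C_i \leftrightarrow D_i$ and cyclically shifts indices, and the subgroup of operations on tuples $(C_1, \ldots, C_n)$ with $1 \le C_i \le b_i - 1$ and fixed $\sum C_i$ generated by these moves is transitive once we have excluded the special $\mu$ of \Cref{subsec:special}. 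Along the way, the $Q_1$ coordinate may have to pass through several multiples of $r$ in the range $[n, a-n]$; the existence of such values is exactly what was established in the proof of \Cref{nonhyperexist}, so the trajectory can always be chosen.

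The principal obstacle is verifying that Step 2 can be carried out entirely within the non-hyperelliptic locus. Each of the strata excluded in \Cref{main2} is one where the combinatorial graph of $(\tau, \mathbf{C}, Pr)$ with fixed rotation number $r$, after quotienting by moves (i)--(iv), either becomes empty (yielding non-existence, as in $\cR_1(2n,-2^n)$ and $\cR_1(r,r,-2r)$) or splits into two orbits (yielding $\cC_3^1 \ne \cC_3^2$ in $\cR_1(12,-3^4)$, handled in \Cref{special1} and \cite{LT}). The bulk of the work is therefore a careful case check, split according to the parity of $a/r$ and the presence of small repeated $b_i$'s, to confirm that outside the listed exceptions every application of a $T_1$ or $T_2$ move needed to transport $\overline{\mathbf{X}}$ to $\overline{\mathbf{X}}^\ast$ avoids the palindromic configurations of \Cref{genus1hyper1} and \Cref{genus1hyper0}. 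Once this is done, $\overline{\mathbf{X}}$ and $\overline{\mathbf{X}}'$ both lie in the boundary of a single non-hyperelliptic component, forcing $\cC = \cC'$.
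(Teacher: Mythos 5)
Your proposal correctly identifies the overall strategy the paper uses—navigate the boundary of $\overline{\cC}$ via the $T_1^{(u,v)}$ and $T_2^{(u,v)}$ transitions from \Cref{genus1connect}, and reduce to a canonical combinatorial datum—but it leaves essentially all of the technical content as unproven assertions. The places you flag as "a combination of level rotation and $T_1$ moves allows us to replace $v$ by $r$" and "the subgroup of operations … is transitive once we have excluded the special $\mu$" are not short case checks; they are precisely the content of Propositions \ref{unbalance}, \ref{Qreduction}, \ref{Cchoice}, \ref{Pchoice} and \ref{Tchoice}, which take several pages of careful argument with auxiliary lemmas (\ref{move12}, \ref{insert}, \ref{insertunbalance}, \ref{Qlemma}, \ref{Creduction}). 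The transitivity of the moves on admissible tuples $\mathbf{C}$ with fixed $Q_1$ is particularly delicate (see the proof of \Cref{Cchoice}, which splits on whether $\gcd(Q_1,Q_2)=r$ or $2r$ and involves the notion of "adjustable pairs") and does not follow from the observation that $T_1$ and $T_2$ "swap some $C_i\leftrightarrow D_i$ and cyclically shift indices."

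There are also two concrete errors in the setup. First, you claim to normalize $\tau=\mathrm{Id}$ via "(i) cyclic relabeling of the $\alpha_i$"; but as \Cref{expression1} makes clear, relabeling the saddle connections only acts by $\tau\mapsto\tau\circ\tau_1$ (a cyclic shift) or $\tau\mapsto\tau\circ\tau_2$ (reversal), which together generate a dihedral subgroup of $Sym_n$, not all of $Sym_n$. The paper requires a separate step (\Cref{Tchoice}) to handle nontrivial $\tau$, and the proof there genuinely uses $T_1$ and $T_2$, not just relabeling. Second, your plan to first fix the prong-matching to $[(0,r)]$ and then permute $\mathbf{C}$ does not cleanly decouple: a $T_1$ or $T_2$ move changes $\tau$, $\mathbf{C}$, $Q_1$, $Q_2$ and the prong-matching \emph{simultaneously} (the explicit formulas in \Cref{genus1connect} make this clear), so there is no invariant subspace on which one can "hold $v$ at $r$" while adjusting $\mathbf{C}$. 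The paper avoids this by constraining $Q_2-Q_1$ first (so that $\gcd(Q_1,Q_2)\in\{r,2r\}$), which controls the number of prong-matching classes and makes the subsequent moves tractable; the ordering of reductions is essential, not a matter of taste.
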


\begin{proof}
By \Cref{nonhyperexist}, there exists a non-hyperelliptic component of $\cR_1(\mu)$ with rotation number $r$. By \Cref{simple}, we can find a multi-scale differential ${\bf X}=X(\tau, {\bf C},Pr)\in \partial\overline{\cR}_1(\mu)$. We denote by ${\bf X} \sim {\bf X'}$ if ${\bf X}$ and ${\bf X'}\coloneqq X(\tau', {\bf C'},Pr')$ are contained in the boundary of the same connected component of $\cR_1(\mu)$. Our goal is to prove that ${\bf X} \sim {\bf X'}$ for {\em any} pair ${\bf X},{\bf X'}$ of non-hyperelliptic multi-scale differentials with rotation number $r$. To this end, we need to be able to show that all combinatorial data of a multi-scale differential can be changed to another one within the connected component, and we do it step by step as follows to achieve the goal. 

The following statements holds for non-hyperelliptic multi-scale differentials ${\bf X}$ and ${\bf X'}$ with rotation number $r$.
\begin{itemize}
    \item ${\bf X} \sim {\bf X''}$ for some ${\bf X''}$ satisfying $0<Q''_2-Q''_1$. See \Cref{unbalance}.
    \item ${\bf X} \sim {\bf X'''}$ for some ${\bf X'''}$ satisfying $0<Q'''_2-Q'''_1\leq 2r$. See \Cref{Qreduction}.
    \item ${\bf X} \sim {\bf X'}$ whenever $\tau=\tau'$, $Q_1=Q'_1$ and $Pr=Pr'$. See \Cref{Cchoice}.
    \item ${\bf X} \sim {\bf X'}$ whenever $\tau=\tau'$. See \Cref{Pchoice}.
    \item ${\bf X} \sim {\bf X'}$. See \Cref{Tchoice}.
\end{itemize}
\end{proof}

The rest of this section is devoted to proving various ingredients that we need to prove each step for \Cref{main2minimal}. We fix ${\bf X}=X(\tau, {\bf C},Pr)\in \partial \overline{\cC}$ as in the statement, and use the operators $T_1,T_2$ defined in \Cref{sec:ssc} to navigate in the boundary of $\cC$. We will also denote ${\bf X'}, {\bf X''}$, and $\overline{X'},\overline{X''}$, etc for other elements of $\partial \overline{\cC}$ and write their combinatorial data correspondingly. 

Following two lemmas provide useful tool to connect multi-scale differentials in $\partial\overline{\cC}$ given by combinatorial data. 

\begin{lemma} \label{move12}
Let $\overline{X}=X(t,Id,{\bf C},[(u,v)])$. Suppose that $c_{t-1}< u\leq c_t$ and $d_{j-1}\leq v < d_j$.

If $D_t,C_j\geq 2$, let $\overline{X'}=X(t,Id, {\bf C'},[(u,v)])$ be a multi-scale differential given by 
\begin{equation*}
    C'_i =
    \begin{cases*}
    C_t+1         & if $i=t$ \\
    C_j-1         & if $i=j$ \\
    C_i           & otherwise.
    \end{cases*}
\end{equation*}
Then $\overline{X'}\sim \overline{X}$. 

Similarly if $C_t,D_j\geq 2$, let $\overline{X''}=X(t,Id, {\bf C''},Pr)$ be a multi-scale differential given by 
\begin{equation*}
    C''_i =
    \begin{cases*}
      C_t-1         & if $i=t$ \\
      C_j+1         & if $i=j$ \\
      C_i           & otherwise 
    \end{cases*}
\end{equation*}
Then $\overline{X''}\sim \overline{X}$. 
\end{lemma}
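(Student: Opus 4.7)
The plan is to realize $\overline{X'}$ (and symmetrically $\overline{X''}$) from $\overline{X}$ by a short sequence of moves within $\partial\overline{\cC}$, using the operators $T_1^{(u,v)}$ and $T_2^{(u,v)}$ of Lemma \ref{genus1connect} together with the level rotation action on prong-matchings. By the hypothesis $c_{t-1}<u\leq c_t$ and $d_{j-1}\leq v<d_j$, Lemma \ref{genus1connect} applies and produces two neighbors $T_1^{(u,v)}\overline{X}$ and $T_2^{(u,v)}\overline{X}$ of $\overline{X}$ in $\partial\overline{\cC}$, corresponding to the two non-trivial collections of parallel saddle connections on $\overline{X}_s(u,v)$ identified in Proposition \ref{genus1parallel}.

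For the first modification ($D_t,C_j\geq 2$), note that $D_t\geq 2$ (equivalently $C_t\leq b_t-2$) ensures that $C'_t=C_t+1$ stays in the admissible range $1\leq C'_t\leq b_t-1$, while $C_j\geq 2$ ensures $C'_j=C_j-1\geq 1$; thus $\overline{X'}$ is a legitimate multi-scale differential. The strategy is to apply $T_1^{(u,v)}$ to push the polar domains of $p_{\tau(1)},\dots,p_{\tau(j-1)}$ to the bottom level, then adjust the prong-matching of the resulting multi-scale differential by a suitable level rotation so that the conditions of Lemma \ref{genus1connect} are satisfied with the new data, and finally apply the inverse operator to return to a boundary element with $n$ poles on the top level and trivial permutation $\tau=Id$, but now realizing the tuple ${\bf C'}$. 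The second case ($C_t,D_j\geq 2$) follows by an entirely symmetric argument, obtained from the first by exchanging the roles of the two nodes $s_1,s_2$ (equivalently, applying $T_2$ throughout in place of $T_1$).

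The main obstacle will be selecting the precise intermediate level rotation so that after applying the second operator we land back on a boundary element with $\tau=Id$, prong-matching equal to $(u,v)$, and exactly the prescribed change $(C_t,C_j)\to(C_t\pm 1,C_j\mp 1)$ in the remaining coordinates. The hypotheses $D_t,C_j\geq 2$ (respectively $C_t,D_j\geq 2$) are exactly what ensure that the needed intermediate prong-matching lies in the admissible range prescribed by Lemma \ref{genus1connect}. Once this rotation is identified explicitly, the composition of the transformation rules for $T_1$ and $T_2$ in Lemma \ref{genus1connect} is a direct, if somewhat involved, combinatorial computation verifying the claimed modification of ${\bf C}$.
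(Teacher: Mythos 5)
Your overall strategy — use the $T_1$/$T_2$ operators of Lemma~\ref{genus1connect} to exhibit a common boundary degeneration of $\overline{X}$ and $\overline{X'}$ — is exactly the paper's. But you set this up as a three-step round trip (apply $T_1^{(u,v)}$, re-rotate the prong-matching on the intermediate object, apply an ``inverse'' operator), and you explicitly leave the key combinatorial verification as an obstacle to be resolved. The paper's actual argument is a single identity that short-circuits all of this: one checks directly that
\[
T_1^{(u-1,v+1)}\,\overline{X'} \;=\; T_1^{(u,v)}\,\overline{X}
\]
(and symmetrically $T_2^{(u+1,v-1)}\,\overline{X''} = T_2^{(u,v)}\,\overline{X}$). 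The shift $(u,v)\mapsto(u-1,v+1)$ is precisely the ``intermediate level rotation'' you say you haven't identified; once you notice that with $C'_t=C_t+1$, $C'_j=C_j-1$ the partial sums shift to $c'_{t-1}=c_{t-1}-1$, $c'_t=c_t$, $d'_{j-1}=d_{j-1}$, $d'_j=d_j+1$, the prong-matching $(u-1,v+1)$ lands in the same relative position for $\overline{X'}$ as $(u,v)$ does for $\overline{X}$, and the two applications of $T_1$ visibly agree. Since each $T_\bullet$ produces a boundary point of the same connected component, $\overline{X}\sim\overline{X'}$ immediately, with no need for a return step.

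One small correction: the hypotheses $D_t,C_j\geq 2$ (resp. $C_t,D_j\geq 2$) are there to ensure that $1\le C'_t\le b_t-1$ and $1\le C'_j\le b_j-1$, i.e., that $\overline{X'}$ is a legitimate multi-scale differential at all. You state this correctly at first, but then in your closing paragraph you re-describe these hypotheses as ensuring that an ``intermediate prong-matching lies in the admissible range,'' which is not what they do; the admissibility of $(u-1,v+1)$ for $\overline{X'}$ comes for free from the inequalities $c_{t-1}<u\le c_t$, $d_{j-1}\le v<d_j$ once $\overline{X'}$ itself is valid.
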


In other words, given a prong-matching satisfying the conditions above, we can increase (decrease) $C_t$ and decrease (increase, resp) $C_j$ by one, while any other data are unchanged, in the same connected component. 

\begin{proof}
If $D_t,C_j\geq 2$, it is straightforward to check that $T_1^{(u-1,v+1)} \overline{X'} = T_1^{(u,v)} \overline{X}$. So $\overline{X'}\sim \overline{X}$. Also if $C_t,D_j\geq 2$, we have $T_2^{(u+1,v-1)} \overline{X''}=T_2^{(u,v)} \overline{X}$. So $\overline{X''}\sim \overline{X}$.
\end{proof}

\begin{lemma} \label{insert}
Let $\overline{X}=X(t,Id,{\bf C},[(0,v)])$ for some $t<n$ and $d_{j-1}<v<d_j$ for some $j$. Then there exists $\overline{X'}=X(t+1,Id,{\bf C'},Pr')\sim \overline{X}$ such that $Q'_1=Q_1+b_{t+1}$ and $Q'_2=Q_2$. 
\end{lemma}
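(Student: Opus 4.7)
The plan is to construct the target $\overline{X'}$ explicitly by a minimal perturbation of ${\bf C}$ that creates a new interval on the top level for the pole $p_{t+1}$, and then prove $\overline{X'} \sim \overline{X}$ by comparing $T_1^{(0,v)}\overline{X}$ and $T_1^{(0,v)}\overline{X'}$, both of which lie in $\partial\overline{\cC}$, using the redistribution moves of \Cref{move12}.

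First, I define
\[
\overline{X'} \coloneqq X(t+1, Id, {\bf C'}, [(0, v)]), \quad C'_j = C_j + 1, \quad C'_{t+1} = b_{t+1} - 1, \quad C'_i = C_i \text{ otherwise}.
\]
The strict inequality $d_{j-1} < v < d_j$ forces $D_j \geq 2$, so $C'_j \leq b_j - 1$, while $b_{t+1} \geq 2$ since $p_{t+1}$ is residueless; hence all admissibility constraints $1 \leq C'_i \leq b_i - 1$ are met. A direct computation gives $Q'_1 = Q_1 + b_{t+1}$ and $Q'_2 = Q_2$, and the prong-matching $(0,v)$ is admissible for $\overline{X'}$ since $0 \leq v < Q_2 = Q'_2$. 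Thus $\overline{X'}$ has the desired numerical invariants, and the remaining task is to show $\overline{X'} \sim \overline{X}$.

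Second, I apply $T_1^{(0, v)}$ to both sides (after the cyclic relabeling of saddle connections implicit in the hypothesis $c_{t-1}<u\le Q_1$ of \Cref{genus1connect}) and compare the two outputs. Substituting the explicit values of ${\bf C}$ and ${\bf C'}$ in the formulas of \Cref{genus1connect}, the two resulting multi-scale differentials $T_1^{(0,v)}\overline{X}$ and $T_1^{(0,v)}\overline{X'}$ differ by exactly one unit of weight transferred between two $C$-entries on the top level, the ones corresponding to $p_j$ and $p_{t+1}$. This is precisely a single \Cref{move12} move, which preserves the connected component, so one obtains $T_1^{(0,v)}\overline{X} \sim T_1^{(0,v)}\overline{X'}$, and chaining with $T_1^{(0,v)}\overline{X} \sim \overline{X}$ and $T_1^{(0,v)}\overline{X'} \sim \overline{X'}$ yields $\overline{X} \sim \overline{X'}$.

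The main obstacle is the edge case $v = d_j - 1$, where $v$ falls exactly at the boundary $d'_j$ of the admissible interval for the transformed $d'$-sequence of $\overline{X'}$, forcing $T_1^{(0,v)}\overline{X'}$ to be computed with the shifted index $j' = j+1$ in \Cref{genus1connect}; then the two outputs have different values of $t^*$ and the one-move comparison above no longer applies directly. I resolve this by a case-dependent construction: when $v = d_j - 1$, I instead set $C'_{j+1} = C_{j+1} + 1$ (rather than modifying $C'_j$), provided $D_{j+1} \geq 2$, and iterate further along the indices past $j$ if $D_{j+1} = 1$; such an accessible index always exists because $v < d_t = Q_2$ leaves $\sum_{k > j} D_k \geq 1$ total room to absorb the unit of weight. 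A secondary bookkeeping task is confirming that $\tau' = Id$ is preserved throughout, which holds because \Cref{move12} fixes $\tau$ and the cyclic shifts introduced by the $T_1$ applications on the two sides cancel when we invert the move.
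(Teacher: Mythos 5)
Your construction of $\overline{X'}$ has the correct numerical invariants, but the comparison argument via $T_1^{(0,v)}$ does not work, because the two outputs $T_1^{(0,v)}\overline{X}$ and $T_1^{(0,v)}\overline{X'}$ do not even have the same $Q_1$. Applying \Cref{genus1connect} with $u=Q_1=c_t$ (after the cyclic relabeling), the output $T_1^{(0,v)}\overline{X}$ has $C^*_{t+1}=C_{t+1}$, since the formula there leaves $C_i$ unchanged for $i\geq t+1$. But the output $T_1^{(0,v)}\overline{X'}$ has $C''_{t+1}=Q'_1-c'_t=C'_{t+1}=b_{t+1}-1$, since in $\overline{X'}$ the pole $p_{t+1}$ now sits at the edge of the \emph{top} level and the $i=t+1$ branch of the $C''$-formula applies to it. Therefore the difference at $p_{t+1}$ is $D_{t+1}-1$, not zero, and together with the $+1$ difference at $p_j$ the net change in $Q_1$ is $D_{t+1}\geq 1$. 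In other words, $Q_1(T_1^{(0,v)}\overline{X'})-Q_1(T_1^{(0,v)}\overline{X})=D_{t+1}$, which you can also see from the closed formula $Q'_1=u+v+c_n-c_t-c_{j-1}-d_{j-1}$ in \Cref{genus1connect}: the term $c_n$ increases by $c'_n-c_n=D_{t+1}$ when passing from $\overline{X}$ to your $\overline{X'}$. Since \Cref{move12} preserves $Q_1$ and $Q_2$, no single \Cref{move12} move, nor any chain of them, can relate the two outputs, so the middle link of your chain $\overline{X}\sim T_1^{(0,v)}\overline{X}\sim T_1^{(0,v)}\overline{X'}\sim\overline{X'}$ breaks.

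The obstruction is structural rather than a bookkeeping slip: $T_1$ treats the index $t+1$ differently depending on whether $p_{t+1}$ sits on the bottom level (in $\overline{X}$) or at the top edge (in $\overline{X'}$), so perturbing ${\bf C}$ to move $p_{t+1}$ up a level necessarily decouples the two $T_1$-images. The edge-case variant where you modify $C'_{j+1}$ instead of $C'_j$ inherits exactly the same problem at $p_{t+1}$. The paper sidesteps the issue entirely by not comparing two branches at all: it defines $\overline{X'}\coloneqq T_2^{(v-d_{j-1}-1,\,d_t-v+1)}T_1^{(0,v)}\overline{X}$ as a single composition of two \Cref{genus1connect} moves, so $\overline{X'}\sim\overline{X}$ holds automatically, and it remains only to check that this $\overline{X'}$ has $t'=t+1$, $\tau'=Id$, $Q'_1=Q_1+b_{t+1}$, $Q'_2=Q_2$. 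That direct construction is the argument you should aim for.
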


\begin{proof}
We can take $\overline{X'}=T_2^{(v-d_{j-1}-1,d_t-v+1)} T_1^{(0,v)} \overline{X}$ and it is straightforward to see that $\overline{X'}$ satisfies the conditions.
\end{proof}

\begin{lemma} \label{insertunbalance}
Let $\overline{X}$ satisfy $Q_1<Q_2$. Then $\overline{X}$ has a prong-matching $(c_i,v)\in Pr$ for some $i$, such that $v\neq d_j$ for any $j$.
\end{lemma}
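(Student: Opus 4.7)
The statement is a counting claim about the prong-matching equivalence class $Pr$, so I would prove it purely combinatorially inside the prong rotation group $\ZZ/Q_1\ZZ \times \ZZ/Q_2\ZZ$. Fix any representative $(u_0,v_0) \in Pr$. Then $Pr$ is exactly the orbit $\{(u_0+k, v_0-k) : k \in \ZZ\}$ of the level rotation action, and this orbit has cardinality $\operatorname{lcm}(Q_1,Q_2)$, so I parametrize $Pr$ by $k \in \ZZ/\operatorname{lcm}(Q_1,Q_2)\ZZ$. Set $q \coloneqq \gcd(Q_1,Q_2)$.

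The key preliminary observation is that the $n$ values $c_0, c_1, \dots, c_{n-1}$ are pairwise distinct modulo $Q_1$ (because each $C_{\tau(i)} \geq 1$ and $c_n = Q_1 \equiv c_0$), and similarly $d_0, d_1, \dots, d_{n-1}$ are pairwise distinct modulo $Q_2$. Therefore the condition ``$u_0 + k \equiv c_i \pmod{Q_1}$ for some $i$'' cuts out exactly $n$ residue classes modulo $Q_1$, which lifts to exactly $n \cdot \operatorname{lcm}(Q_1,Q_2)/Q_1 = nQ_2/q$ values of $k$ modulo $\operatorname{lcm}(Q_1,Q_2)$. Symmetrically, the condition ``$v_0 - k \equiv d_j \pmod{Q_2}$ for some $j$'' cuts out exactly $n Q_1/q$ values of $k$.

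Now I apply the trivial set-difference bound: the number of $k \in \ZZ/\operatorname{lcm}(Q_1,Q_2)\ZZ$ such that $u_0+k \in \{c_0,\dots,c_{n-1}\} \pmod{Q_1}$ but $v_0-k \notin \{d_0,\dots,d_{n-1}\} \pmod{Q_2}$ is at least
\[
\frac{nQ_2}{q} - \frac{nQ_1}{q} = \frac{n(Q_2-Q_1)}{q},
\]
which is strictly positive precisely because $Q_1 < Q_2$ (and $n > 0$). Any such $k$ produces a representative of $Pr$ of the form $(c_i, v)$ with $v \neq d_j$ for all $j$.

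There is no real obstacle here; the only substantive point to verify is that the $c_i$ (respectively $d_j$) really are distinct mod $Q_1$ (resp. $Q_2$), which follows immediately from $C_{\tau(i)} \geq 1$ and the definition $c_n = Q_1$. Once that is in hand, the inclusion-exclusion one-liner above, using the hypothesis $Q_1 < Q_2$ exactly once, finishes the proof.
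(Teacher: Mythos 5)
Your proof is correct and uses essentially the same pigeonhole count as the paper: the prong-matchings in $Pr$ of the form $(c_i,\cdot)$ number $nQ_2/q$ while those of the form $(\cdot,d_j)$ number only $nQ_1/q$, so the former set cannot be contained in the latter once $Q_1<Q_2$. The paper packages this as a proof by contradiction (if every $(c_i,v)\in Pr$ had $v=d_j$, then $|S|$ would equal $nQ_2/q$ yet be bounded by $nQ_1/q$), while you state the same inequality directly as $|A\setminus B|\ge|A|-|B|>0$; the distinctness of the $c_i$ modulo $Q_1$ and of the $d_j$ modulo $Q_2$, which you correctly flag as the one thing to verify, is also what the paper's count implicitly relies on.
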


\begin{proof}
Let $S\coloneqq \{(c_i,d_j)\in Pr|i,j=1,\dots,n\}$ and $q\coloneqq \gcd(Q_1,Q_2)$. For each $d_j$, there are $\frac{Q_1}{q}$ prong-matching given by $(u,d_j)\in Pr$ for some $u$. So $|S|\leq n \frac{Q_1}{q}$. If there exists no prong-matching $(c_i,v)\in Pr$ such that $d_j<v<d_{j+1}$, then $|S| = n \frac{Q_2}{q}$. However, since $Q_1<Q_2$, we have $|S|\leq n \frac{Q_1}{q} < n\frac{Q_2}{q}=|S|$, a contradiction. 
\end{proof}

We are now ready to prove the first step of the proof in \Cref{main2minimal}.

\begin{proposition} \label{unbalance}
Let $\cC$ be a non-hyperelliptic component of a genus one \MIN stratum $\cR_1 (\mu)$, not one of special strata treated in \Cref{subsec:special}. Then there exists a multi-scale differential $X(\tau, {\bf C},Pr)\in \partial\overline{\cC}$ such that $Q_1 < Q_2$. 
\end{proposition}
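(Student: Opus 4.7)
By \Cref{simple1}, we may shrink a multiplicity one saddle connection to produce ${\bf X}=X(\tau,{\bf C},Pr)\in\partial\overline{\cC}$; after relabeling the poles we assume $\tau=Id$, and by swapping $s_1\leftrightarrow s_2$ we assume $Q_1\leq Q_2$. If $Q_1<Q_2$ we are done, so suppose $Q_1=Q_2=a/2$, and fix a level-rotation representative $(0,v)\in Pr$.

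The engine of the argument is the following specialization of \Cref{genus1connect}: if some representative $(Q_1,v')\in Pr$ satisfies $0<v'<D_1$, then the case $j=1,\, t=n,\, u=Q_1$ of $T_1$ applies, and a direct calculation with the formulas of \Cref{genus1connect} shows that $T_1^{(Q_1,v')}{\bf X}\in\partial\overline{\cC}$ has $t'=n$ with $Q_1'=Q_1+v'$ and $Q_2'=Q_2-v'$. Since $v'>0$ this yields $Q_1'>Q_2'$, and swapping the node labels once more produces the desired multi-scale differential ${\bf X''}\in\partial\overline{\cC}$ with $Q_1''<Q_2''$.

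Thus the task reduces to producing such a representative, possibly after combining a cyclic relabeling of the saddle connections (which sends $(0,v)$ to $(0,\,v-(c_m+d_m)\bmod Q_2)$ and $D_1$ to $D_{m+1}$) with \Cref{move12}-moves on ${\bf C}$ (which reshuffle $(C_1,\ldots,C_n)$ while preserving $Q_1=Q_2$ and the prong-matching $Pr$, hence $v$). Concretely, we want some $m\in\{0,1,\ldots,n-1\}$ and some reshuffling with
\begin{equation*}
v\in(c_m+d_m,\,c_m+d_m+D_{m+1})\pmod{Q_2}.
\end{equation*}

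The main obstacle is the case analysis verifying that such data always exist under the hypothesis that $\cC$ is non-hyperelliptic and $\cR_1(\mu)$ avoids the special strata of \Cref{subsec:special}. If $v=0$, then by \Cref{genus1hyper0} the hyperelliptic symmetry $C_{i+1}+C_{-i}=b_i$ would force ${\bf X}$ to lie in the closure of a hyperelliptic component; non-hyperellipticity forces these relations to fail, and one checks directly that a \Cref{move12}-adjustment combined with a nonzero cyclic shift moves $v$ into a strict interior. The case $v=d_{n-1}$ is handled symmetrically using \Cref{genus1hyper1}. For the generic case, the intervals $(c_m+d_m,\,c_m+d_m+D_{m+1})\bmod Q_2$ have total length $Q_2$, and a combinatorial enumeration (paralleling the bound $n\leq Q\leq a-n$ in the proof of \Cref{nonhyperexist}) shows that the only $\mu$ for which no \Cref{move12}-reshuffling places $v$ inside one of these intervals are exactly the special strata excluded by hypothesis, yielding the required contradiction.
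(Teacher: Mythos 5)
Your overall architecture is the same as the paper's: start from $Q_1=Q_2$, use the $T_1$-operation from \Cref{genus1connect} to convert a prong-matching representative with $v$ strictly between consecutive $d_j$'s into an imbalance $Q_1'\neq Q_2'$, and reduce the whole proposition to producing such a representative. Your computation that $T_1^{(Q_1,v')}$ with $0<v'<D_1$ yields $Q_1'=Q_1+v'$, $Q_2'=Q_2-v'$ is correct, and identifying \Cref{move12}, cyclic relabeling, and \Cref{genus1hyper0}/\Cref{genus1hyper1} as the relevant toolkit is also right.

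The gap is precisely where you write ``The main obstacle is the case analysis'' and then do not carry it out. The sentences ``one checks directly that a \Cref{move12}-adjustment combined with a nonzero cyclic shift moves $v$ into a strict interior'' and ``a combinatorial enumeration \ldots shows that the only $\mu$ for which no \Cref{move12}-reshuffling places $v$ inside one of these intervals are exactly the special strata'' are assertions of the theorem's hardest content, not proofs of it. The paper spends roughly two pages on exactly this: it fixes a representative $(c_i,d_i+v)\in Pr$ with $|v|>0$ \emph{minimal} (a choice you never make, and which is what makes the subsequent inequalities close), separates the cases $v<b_1$, $v=b_1$, then descends through subcases $C_1<D_2$, $D_1<C_2$, $b_1\mid b_2$, and eventually forces $\mu=(12,-3^4)$ or $\mu=(2n+2,-2^{n-1},-4)$. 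None of that is reproduced or replaced by a shorter argument in your proposal.

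Two further cautions about the gestures you do make. First, the ``total length $Q_2$'' observation about the intervals $(c_m+d_m,c_m+d_m+D_{m+1})\bmod Q_2$ is not a covering argument: those intervals can and do overlap, so their union need not be all of $\ZZ/Q_2\ZZ$, and no count of how much they miss is given. Second, \Cref{move12} is not a free reshuffle of ${\bf C}$: each application requires the constraint $c_{t-1}<u\leq c_t$, $d_{j-1}\leq v<d_j$ to hold \emph{for the current} $(t,j,u,v)$, and may also alter which $D_1$ your target interval refers to when $j=1$ or $t=1$. The proof of \Cref{Creduction} in the paper illustrates how delicate chaining \Cref{move12}-moves actually is. Finally, the analogy to the bound $n\leq Q\leq a-n$ in \Cref{nonhyperexist} is misleading: there one constructs ${\bf C}$ and $Pr$ freely to realize a target $Q_1$, whereas here one is handed an arbitrary non-hyperelliptic $\cC$ and must navigate within its boundary; the quantifiers are opposite, and the difficulty does not transfer.
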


\begin{proof}
Assume the contrary --- that every ${\bf X}=X(\tau,{\bf C},Pr)\in \partial\overline{\cC}$ satisfies $Q_1=Q_2=\frac{a}{2}$. Fix a multi-scale differential ${\bf X}$. We will navigate the boundary of $\cC$ using ${\bf X}$ to show that $\cR_1(\mu)$ is one of the strata dealt in \Cref{subsec:special}, a contradiction. We can find a prong-matching $(c_i,d_i+v)\in Pr$, $v\neq 0$. Otherwise ${\bf X}$ is hyperelliptic by \Cref{genus1hyper}, a contradiction. We can further assume that $(c_i,d_i+v)$ is chosen so that $|v|>0$ is minimal among all such prong-matchings in $Pr$. By relabeling the poles, the nodes and the saddle connections, we may assume that $\tau=Id$, $i=0$ and $v>0$. So $(0,v)\in Pr$. If $v>b_1$, then $(c_1,d_1+(v-b_1))\in Pr$ with $0<v-b_1<v$. This contradicts the minimality of $|v|$, so $v\leq b_1$. Then $(c_1,d_1-(b_1-v))\in Pr$ with $0\leq b_1-v$. If $v<b_1$, then $v\leq b_1-v$ by the minimality of $|v|$, thus $v\leq \frac{b_1}{2}$. 

First, suppose that $v\leq \frac{b_1}{2}$. If $v<D_1$, then we can take ${\bf X'}=T_1^{(0,v)} {\bf X}\in \partial\overline{\cC}$. This satisfies $C'_1=C_1+v$ and $C'_i=C_i$ for any $i\neq 1$. Therefore, $Q'_1-Q'_2=(Q_1+v)-(Q_2-v)=2v>0$. Similarly, if $v<C_1$, we can take ${\bf X'}=T_1^{(v,0)} {\bf X}\in \partial\overline{\cC}$, satisfying $Q'_2-Q'_1=2v>0$. This contradicts to the assumption, so $v=C_1=D_1=\frac{b_1}{2}$. If $D_i=C_{2-i}$ for each $i=1,\dots,n$, then $\cC$ is hyperelliptic by \Cref{genus1hyper}, a contradiction. Let $i>1$ be the smallest such that $C_i\neq D_{2-i}$ or $D_i\neq C_{2-i}$. Since $c_{i-1}=\sum_{j=1}^{i-1} C_j=\sum_{j=1}^{i-1} D_{2-j}=Q_2-d_{n+2-i}+D_1$, we have $(c_{i-1},d_{n+2-i})\in Pr$. We take $\overline{X'}=T_2^{(c_{i-1},d_{n+2-i})} {\bf X}\in \partial\overline{\cC}$, satisfying $Q'_1=Q'_2=\frac{a}{2}-c_{i-1}$ and $(0,0)\in Pr'$. The bottom level component $X'_{-1}$ has $2i-3$ marked poles $p_{n-i+3},\dots, p_n, p_1,\dots, p_{i-1}$. Suppose $C_i<D_{2-i}$. The other possible cases ($C_i>D_{2-i}$, $D_i<C_{2-i}$, or $D_i>C_{2-i}$) can be treated in a similar way. We have a prong-matching $(C_i-1,-C_i+1)\in Pr'$. If $C_i>1$, then can apply \Cref{move12} to reduce $C_i$ and increase $C_{2-i}$ by one. So we may assume $C_i=1$. Then we take $\overline{X''}=T_2^{(1,-1)} \overline{X'}$ satisfying $Q''_1=\frac{a}{2}$, $Q''_2=\frac{a}{2}-b_i$. The bottom level component $X''_{-1}$ has only one marked pole $p_i$. After relabeling the saddle connections so that $\tau''(1)=1$, we have $(C_1-c_{i-1}-1, c_{i-1})\in Pr''$. Since $C_j=D_{2-j}$ and $D_j=C_{2-j}$ for all $j<i$, we have $(0,C_1-1)\in Pr''$. The process of obtaining $\overline{X'}$ and $\overline{X''}$ are depicted in each row of \Cref{fig701}. 

If $b_1>2$, then $C_1=\frac{b_1}{2}>1$, thus $0<C_1-1<c_1$. By \Cref{insert}, we can obtain some ${\bf X'''}\in \partial\overline{\cC}$ with $Q'''_1- Q'''_2=2b_i>0$, a contradiction. If $b_1=2$, then $C_1=1$ and $(0,0)\in Pr''$. In this case, we take ${\bf X'''}=T_1^{(0,0)}\overline{X''}$ satisfying $Q'''_1-Q'''_2=2>0$, a contradiction. The process of obtaining ${\bf X'''}$ for two cases are depicted in each row of \Cref{fig702}.

\begin{figure}
    \centering
    \input{diagram701} 
    \caption{Proof of \Cref{unbalance}, part I} \label{fig701}
\end{figure}

\begin{figure}
    \centering
    \input{diagram702} 
    \caption{Proof of \Cref{unbalance}, part II} \label{fig702}
\end{figure}

Now we suppose that $v=b_1$. Then $(c_2,d_2-b_2)\in Pr$. By the minimality of $|v|$, we have $b_1=v\leq b_2$. Suppose that $C_1\geq D_2$ and $D_1\geq C_2$. Then $b_1=C_1+D_1\geq C_2+D_2=b_2$, thus $b_1=b_2$. That is, $C_1=D_2$ and $D_1=C_2$. If $C_i=D_{3-i}$ for each $i=1,\dots,n$, then $\cC$ is hyperelliptic by \Cref{genus1hyper}, a contradiction. Let $i>2$ be the smallest such that $C_i\neq D_{3-i}$ or $D_i\neq C_{3-i}$. We can repeat the argument in the previous paragraph, to get a contradiction. So we suppose $C_1<D_2$ or $D_1<C_2$ holds. Suppose $C_1<D_2$. The other possible case $D_1<C_2$ can be treated in a similar way. If $C_1>1$, we can apply \Cref{move12} with the prong-matching $(c_1-1,d_1+1)\in Pr$ to reduce $C_1$ and increase $C_2$ by one. So we may assume $C_1=1$. Since $d_1<b_1=C_1+D_1<d_2$, we have a prong-matching $(0,v)\in Pr$ such that $d_1<v<d_2$. We take $\overline{X'} = T_2^{(0,v)} {\bf X}$ satisfying $Q'_1=b_2$, $Q'_2=b_1$ and $(c_2, 0)\in Pr'$. The top level component $X'_0$ contains only two marked poles $p_1,p_2$. Assume that $q'\coloneqq \gcd(b_1,b_2)<b_1$. If $b_1>2$, then $q'\leq \frac{b_1}{2} < b_1-1$. By the level rotation action, we have $(c_2,q')\in Pr'$ with $0< q' < d'_1=b_1-1$. We take ${\bf X''}=T_2^{(c_2,q')} \overline{X'}$. If $b_1=2$, then $C_1=D_1=1$, $q'=1$ and $b_2>2$. Therefore, $(c_2+1,0)\in Pr'$ and we take ${\bf X''}=T_2^{(c_2,q')} \overline{X'}$. In both cases, ${\bf X''}$ satisfies $Q''_2=Q_2-q'$ and $Q''_1=Q_1+q'$, a contradiction. Therefore $\gcd(b_1,b_2)=b_1$, and thus $b_1|b_2$. We take $\overline{X'''}=T_1^{(0,v)} {\bf X} \in \partial\overline{\cC}$, satisfying $Q'''_1=Q_1$, $Q'''_2=Q_2-b_1$ and $(1,0)\in Pr'''$. The bottom level component $X'''_{-1}$ contains only one marked pole $p_1$. Suppose that $D_2>2$. Then $d'''_1=D_2-1>1$. Consider a prong-matching $(0,1)\in Pr'''$. By \Cref{insert}, there exists ${\bf X''''}\in \partial\overline{\cC}$ with $Q''''_2-Q''''_1=2b_1>0$, a contradiction. So we reduce to the case when $D_2=2$ and $C_1=1$. In particular, $D_2-C_1=1$.

Since we have $b_1|b_2$, $C_2=b_2-2\geq b_1-2=D_1-1$. If $C_2=D_1$, then $b_2=b_1+1$, thus $b_1=1$. This is a contradiction. So $C_2>D_1$ or $C_2=D_1-1$. If $C_2>D_1$, then by the same argument as in the previous paragraph, we can deduce that $C_2-D_1=1$. So $b_2=b_1+2$, thus $b_1=2$ and $b_2=4$. If $C_2=D_1-1$, then $b_1=b_2$. We will prove that $b_i=b_1$ for $i=3,\dots,n$ in both cases. Consider again $\overline{X'''}$ in the previous paragraph. Note that $\gcd(Q'''_1,Q'''_2)=\gcd(b_1,\frac{a}{2})$ divides $b_1$. So by the level rotation action, we obtain $(c'''_1+2,0)\in Pr'''$ since $c'''_1+1=b_2$ is divisible by $b_1$. If $D_3>1$, then $d'''_2=1+D_3>2$ and $(c'''_1,2)\in Pr'''$. By \Cref{insert}, we obtain ${\bf X''''}$ satisfying $Q''''_2-Q''''_1=2b_1>0$. So $D_3=1$ and $d'''_2=2$. Consider a prong-matching $(c'''_1,d'''_2)\in Pr'''$. We take $\overline{X'''''}=T_2^{(c'''_1,d'''_2)} \overline{X'''}$, satisfying $Q'''''_1=b_3$, $Q'''''_2=b_1$. The top level component $X'''''_0$ contains only two marked poles $p_1$ and $p_3$. By repeating the same argument applied to $\overline{X'}$ as in the above, we can deduce that $b_3=b_1$ and there exists a prong-matching $(c'''_2,3)\in Pr'''$. By repeating this, we can conclude that $b_i=b_1$ and $D_i=1$ for each $i=3,\dots, n$. 

If $b_1>2$, then $b_i=b_1$ for each $i$ and $\mu=(nb_1, -b_1^n)$. In this case $Q'_1=(n-1)(b_1-1)$ and $Q'_2=n-1$. Since $Q'_1-Q'_2=b_1$, we have $b_1=\frac{2(n-1)}{n-2})$. The only integer solution for this equation is $n=4$, $b_1=3$. Therefore, $\mu=(12, -3^4)$ and the rotation number of $\cC$ is equal to $\gcd(d,Q_1,v)=3$. This is a contradiction. 

If $b_1=2$ and $b_2=4$, we have $\mu=(2n+2,-2^{n-1},-4)$. Obviously we have $c_1=d_1=1$, $c_2=d_2=3$ and $c_i=d_i=i+1$ for all $i>2$. If $n$ is even, then $Q_1=Q_2=n+1$ is odd. We have a prong-matching $(\frac{n}{2}+1,\frac{n}{2}+2)\in Pr$ by the level rotation action. This contradicts the minimality of $|v|$, thus $n$ is odd and the rotation number of $\cC$ is equal to $\gcd(d,Q_1,v)=2$. This is a contradiction.
\end{proof}

To proceed to the second step in the proof of \Cref{main2minimal}, we need the following lemma.

\begin{lemma} \label{Qlemma}
Let ${\bf X} \in \partial\overline{\cC}$ and $Q_2-Q_1>0$. Choose a prong-matching $(c_i,d_i+v)\in Pr$ with minimal $|v|>0$. If $|v|<\frac{Q_2-Q_1}{2}$, then there exists ${\bf X'}\in \partial\overline{\cC}$ such that $0\leq Q'_2-Q'_1=Q_2-Q_1-2|v|$.
\end{lemma}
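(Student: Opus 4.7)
The strategy is to apply the operator $T_1^{(0,v)}$ of \Cref{genus1connect}, which shrinks the appropriate family of saddle connections, and then to iterate \Cref{insert} to restore the regime $t = n$.

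First I would reduce to the case $v > 0$: swapping the two nodes $s_1, s_2$ (which sends $(u, v)$ to $(-v, -u)$ and ${\bf C}$ to ${\bf D}$) handles $v < 0$. By cyclically relabeling saddle connections (to send the given index $i$ to $0$) and relabeling poles (so $\tau = Id$), I may assume ${\bf X} = X(Id, {\bf C}, [(0, v)])$ with $0 < v < (Q_2 - Q_1)/2$. Let $j$ be the unique index with $d_{j-1} \leq v < d_j$. Interpreting $u = 0$ as $u = Q_1 \in \ZZ/Q_1\ZZ$ (which satisfies the range condition $c_{n-1} < u \leq Q_1$), the $T_1$ formula in \Cref{genus1connect} yields $\overline{Y}_1 := T_1^{(0, v)} {\bf X}$ with $t_1 = n - (j - 1)$, $Q_1(\overline{Y}_1) = Q_1 + v - c_{j-1} - d_{j-1}$, and $Q_2(\overline{Y}_1) = Q_2 - v$.

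When $j = 1$, already $t_1 = n$, so ${\bf X'} := \overline{Y}_1$ satisfies $Q'_2 - Q'_1 = (Q_2 - v) - (Q_1 + v) = Q_2 - Q_1 - 2v$. When $j > 1$, I would iterate \Cref{insert} exactly $j - 1$ times to re-insert the poles that moved to the bottom level. At each intermediate step, \Cref{insertunbalance} provides a prong-matching $(c_\ell, v')$ with $v' \neq d_m$ for any $m$; a cyclic relabeling of saddle connections converts this to the form $(0, v'')$ with $v''$ in the interior of some interval between consecutive $d$-values, so that \Cref{insert} applies and raises $t$ by one while preserving $Q_2$. After $j - 1$ iterations we reach ${\bf X'}$ with $t = n$ and $Q_2({\bf X'}) = Q_2 - v$; since any multi-scale differential with $t = n$ satisfies $Q_1 + Q_2 = a$ (the top component is a genus-zero stratum with all poles together with two zeroes at the nodes), it follows that $Q_1({\bf X'}) = Q_1 + v$ and hence $Q'_2 - Q'_1 = Q_2 - Q_1 - 2v$.

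The main obstacle is verifying that $Q_1 < Q_2$ holds strictly at every intermediate step so that \Cref{insertunbalance} is applicable. This is precisely where the strict inequality $|v| < (Q_2 - Q_1)/2$ enters: the gap $Q_2 - Q_1$ at ${\bf X}$ strictly exceeds $2v$, so after $T_1^{(0,v)}$ the gap becomes $(Q_2 - Q_1) - 2v + c_{j-1} + d_{j-1} > 0$; each subsequent insertion reduces the gap by the order of the re-inserted pole, and the total reduction over all $j - 1$ insertions is exactly $c_{j-1} + d_{j-1}$, leaving a final gap of $Q_2 - Q_1 - 2v \geq 0$. Hence the intermediate gaps remain strictly positive throughout the process.
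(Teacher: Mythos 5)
Your proof is correct, and it takes a genuinely different and cleaner route than the paper's. The paper's proof first uses the minimality of $|v|$ under the level rotation action to pin down $v\leq b_1$, and then splits into the cases $v=b_1$ and $v<b_1$, with the $v=b_1$, $C_1\geq D_2$ subcase handled via a nested induction that repeatedly invokes \Cref{move12} to reshape the ${\bf C}$ vector. You instead apply $T_1^{(0,v)}$ once, uniformly, regardless of which interval $[d_{j-1},d_j)$ the value $v$ falls into, and then re-insert the $j-1$ poles dropped to the bottom level by iterating \Cref{insertunbalance} and \Cref{insert}. The key computation that makes this work — and which the paper does not articulate — is that $T_1^{(Q_1,v)}$ sends the gap $Q_2-Q_1$ to $(Q_2-Q_1)-2v+(c_{j-1}+d_{j-1})$, while the $j-1$ insertions together subtract back exactly $c_{j-1}+d_{j-1}=\sum_{k=1}^{j-1}b_k$; since each $b_k\geq 2$ and the final gap is $\geq 0$, all intermediate gaps are strictly positive, so \Cref{insertunbalance} is applicable at every step. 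Notably, your argument never uses the minimality of $|v|$: that hypothesis appears to be present in the lemma statement only because the paper's own, case-heavier proof invokes it. One small clean-up worth noting: after $\Cref{insertunbalance}$ produces $(c_\ell,v')$ with $v'\neq d_m$ for all $m$, the cyclic relabeling to $(0,v'')$ preserves the property that $v''$ lies strictly between consecutive $d$-values (the shift permutes the cut points), which is exactly the strict-inequality input that \Cref{insert} requires; this is implicit in your write-up and is harmless but would be worth one sentence if you were polishing for publication.
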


This lemma states that we can reduce $Q_2-Q_1>0$ by $2|v|$ whenever we have $|v|<\frac{Q_2-Q_1}{2}$.

\begin{proof}
Assume that $v>0$. The case $v<0$ can be proven in a symmetric way. By relabeling the poles and the saddle connections, we may assume that ${\bf X}=X(Id,{\bf C},[(0,v)])$. By the level rotation action, $(c_1,d_1+(v-b_1))\in Pr$. By the minimality of $|v|$, we have $v\leq b_1$. 

First, suppose that $v=b_1$. Then $(c_1,d_1)\in Pr$. Also $(c_2,d_2-b_2)\in Pr$. By minimality of $|v|$, we have $v=b_1\leq b_2$. Assume that $C_1<D_2$. Then $d_1<v<d_2$ and we can take $\overline{X''}=T_1^{(0,v)} {\bf X}$, satisfying $Q''_2-Q''_1=v$. The bottom level component $X''_{-1}$ contains only one pole $p_1$. By \Cref{insertunbalance} and \Cref{insert}, we can obtain ${\bf X'}\in \partial\overline{\cC}$ with $Q'_2-Q'_1=Q_2-Q_1-2v$. This is the end of the proof, so we may assume that $C_1\geq D_2$. By \Cref{move12} with $(c_1-1,d_1+1)\in Pr$, if $D_2>1$, then we can reduce $D_2$ and increase $D_1$ by one. So we may assume $D_2=1$. If $C_1=1$, then $(0,d_2)\in Pr$. We take $\overline{X''}=T_1^{(0,d_2)}{\bf X}$, satisfying $Q''_1=Q_1-b_2$, $Q''_2=Q_2-b_1$. The bottom level component $X''_{-1}$ contains two marked poles $p_1,p_2$. Since $Q''_2-Q''_1=Q_2-Q_1+(b_2-b_1)>0$, we can apply \Cref{insertunbalance} and \Cref{insert} to obtain a multi-scale differential $\overline{X'''} \in \partial\overline{\cC}$, satisfying $Q'''_1=Q_1$, $Q'''_2=Q_2-b_1$. The bottom level component $X'''_{-1}$ contains only one marked pole $p_1$. Again, since $Q'''_2-Q'''_1=Q_2-Q_1-v>0$, we can apply \Cref{insertunbalance} and \Cref{insert} and obtain a desired multi-scale differential ${\bf X'}\in \partial\overline{\cC}$, satisfying $Q'_2-Q'_1=Q_2-Q_1-2v$. Finally, we reduced to the case when $D_2=1$ and $C_1>1$.

We use the induction on $C_1>1$. Consider a prong-matching $(c_1-2, d_2+1)\in Pr$. If $D_i=1$ for each $i=2,\dots,n$, then $Q_2-Q_1\leq d_2-c_2=D_1+1-C_1-C_2=(D_1+2-C_1)-b_2\leq b_1-b_2\leq 0$, a contradiction. Let $i$ be the smallest number larger than $2$ such that $D_i>1$. We use the induction on $i$ to reduce $C_1$. Consider a prong-matching $(c_1-(i-2),d_1+(i-2))=(c_1+2-i,d_{i-1})\in Pr$. There exists $j\leq i-4$ that $c_{n-j-1}\leq c_1+2-i<c_{n-j}$. If $D_{n-j}>1$, then we can use \Cref{move12} to increase $C_{n-j}$ and decrease $C_{i-1}$ by one. So we assume that $D_k=1$ for all $k=n-j,\dots,n$. If $c_{n-j-1}<c_1+2-i$, then $c_{n-j-1}\leq c_1+1-i$. So by \Cref{move12}, we can decrease $C_{n-j}$ and increase $C_i$ by one. That is, we again have $D_{n-j}>1$. So we assume that $c_{n-j-1}=c_1+2-i$. Now consider $(v+j+1,n-j-1)=(v+j+1,d_{n-j-1})\in Pr$. Since $c_2=C_1+C_2=C_1+b_2-1\geq C_1+b_1-1>b_1=v$, we have $c_2< v+j+1<c_{j+2}\leq c_{i-2}$. There exists $2<k<i$ such that $c_{k-1}\leq v+j+1 < c_k$. So by \Cref{move12}, we can increase $C_{n-j-2}$ and decrease $C_k$ by one. That is, now $D_k>1$ and we can use the induction hypothesis. 

Now, we suppose that $v<b_1$. Then $(c_1,d_1-(b_1-v)\in Pr$ and $b_1-v\geq v$ by the minimality of $|v|$. So $v\leq \frac{b_1}{2}$. If $v<D_1$, then we take ${\bf X'}=T_1^{(0,v)} {\bf X}$. Similarly, if $v<C_1$, we take ${\bf X'}=T_1^{(v,0)} {\bf X}\in \partial\overline{\cC}$. In both cases, ${\bf X'}$ satisfies $Q'_2-Q'_1=Q_2-Q_1-2v$. So we may assume that $v=C_1=D_1=\frac{b_1}{2}$. Again, we take $\overline{X''}=T_1^{(0,v)}{\bf X}$, satisfying $Q''_1=Q_1-v$, $Q''_2=Q_2-v$. The bottom level component $X''_{-1}$ contains only one marked pole $p_1$. By \Cref{insertunbalance}, \Cref{insert}, we can obtain a desired multi-scale differential ${\bf X'}\in \partial\overline{\cC}$, satisfying $Q'_2-Q'_1=Q_2-Q_1-2v$.
\end{proof}

Now we are ready to prove step (2) of \Cref{main2minimal}.

\begin{proposition} \label{Qreduction}
Let $\cC$ be a non-hyperelliptic component of $\cR_1(\mu)$ with rotation number $r$. Then there exists ${\bf X}=X(\tau,{\bf C},Pr) \in \partial\overline{\cC}$ such that $Q_2-Q_1 \leq 2r$. 
\end{proposition}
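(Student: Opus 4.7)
The strategy is to start from a multi-scale differential provided by Proposition~\ref{unbalance}, where $Q_1<Q_2$, and to iteratively invoke Lemma~\ref{Qlemma} to drive $Q_2-Q_1$ below the desired threshold $2r$.

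Since $\cC$ has rotation number $r$, Proposition~\ref{rotation} (with $t=n$) gives $r=\gcd(d,Q_1,v)$ for any representative $(0,v)\in Pr$. In particular $r\mid Q_1$, and combined with $r\mid d\mid a=Q_1+Q_2$ we obtain $r\mid Q_2$, so $r\mid (Q_2-Q_1)$. Moreover, $r$ divides the $v$-value of every representative $(c_i,d_i+v)\in Pr$: passing between representatives via the level-rotation action $(u,v)\mapsto(u+k,v-k)$ and across indices $i$ shifts $v$ only by integer linear combinations of $Q_1$ and the $b_k$'s, each divisible by $r$.

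The iteration proceeds as follows. Suppose ${\bf X}\in\partial\overline{\cC}$ satisfies $Q_2-Q_1>2r$, and let $v_{\min}$ denote the minimum positive value of $|v|$ over all $(c_i,d_i+v)\in Pr$. By the remarks above, the set of attainable $v$-values is a coset of $\gcd(Q_1,d)\ZZ$ inside $\ZZ/Q_2\ZZ$; since $\gcd(v_0,\gcd(Q_1,d))=r$, this coset generates the subgroup $r\ZZ/Q_2\ZZ$. When $\gcd(Q_1,d)=r$ the coset \emph{equals} $r\ZZ/Q_2\ZZ$, so $v_{\min}=r$, and the assumption $Q_2-Q_1>2r$ yields $v_{\min}<(Q_2-Q_1)/2$. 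Lemma~\ref{Qlemma} then produces ${\bf X'}\in\partial\overline{\cC}$ with $Q'_2-Q'_1=Q_2-Q_1-2r$, possibly after re-inserting poles into the top level via Lemma~\ref{insert} to return to the form $X(\tau,{\bf C},Pr)$ with $t=n$. Iterating, $Q_2-Q_1$ descends by $2r$ at each step until it reaches a value $\leq 2r$.

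When $\gcd(Q_1,d)>r$, the argument above may fail because $v_{\min}$ can be as large as $\gcd(Q_1,d)/2$. To handle this case, we first modify ${\bf X}$ within $\partial\overline{\cC}$ so that $\gcd(Q_1,d)$ equals $r$. The idea is to combine Lemma~\ref{move12}, which redistributes the $C_i$'s without altering the prong-matching class, with the level-transition manipulations developed in the proof of Proposition~\ref{unbalance}, which allow poles to be transferred between the top and bottom components; together these operations let us adjust $Q_1$ modulo $d$ to any chosen residue while preserving $Q_2>Q_1$ and avoiding the hyperelliptic configurations of Propositions~\ref{genus1hyper1} and~\ref{genus1hyper0}. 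The main obstacle will be this preparatory adjustment, as one must verify compatibility with the fixed prong-matching equivalence class and exclude the hyperelliptic boundary at each intermediate step; this will parallel the most delicate portions of the proofs of Lemma~\ref{Qlemma} and Proposition~\ref{unbalance}.
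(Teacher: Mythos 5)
Your proposal shares the paper's two main ingredients (Proposition~\ref{unbalance} to get started, Lemma~\ref{Qlemma} to reduce), and your reformulation of the paper's extremality argument as an explicit iteration is fine in spirit. But the termination argument is where both genuine gaps lie.

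First, the claim that the set of attainable $v$-values for the prong-matching class forms a single coset of $\gcd(Q_1,d)\ZZ$ inside $\ZZ/Q_2\ZZ$ is not justified and is in general false. From $(0,v_0)\in Pr$, the representatives of the form $(c_i,d_i+v')$ only give $v'\equiv v_0-p_i \pmod{q}$ where $p_i=\sum_{m\le i} b_{\tau(m)}$ and $q=\gcd(Q_1,Q_2)$; the attainable set is the union of these $q$-cosets, which need not fill out a coset of the smaller group $\gcd(Q_1,d)\ZZ$. For a concrete instance, take $\mu=(30,-5,-25)$, ${\bf C}=(2,4)$ so $Q_1=6$, $Q_2=24$, $q=6$, and $v_0=3$: here $d=5$, $\gcd(Q_1,d)=1=r$, yet the attainable $|v|$-values are $\{2,3,4,8,9,10\}$ with minimum $2$, not $r=1$. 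So $v_{\min}=r$ fails even when $\gcd(Q_1,d)=r$. The paper avoids this entirely: it fixes the boundary point where $Q_2-Q_1$ is minimal, observes via Lemma~\ref{Qlemma} that $v\ge (Q_2-Q_1)/2\ge q/2$, deduces from the minimality of $|v|$ that $v=q$ or $q/2$, and then \emph{proves} $q\mid b_i$ for all $i$ (hence $q\mid d$) by another minimality argument, concluding $r=v$. That last divisibility step, which makes the $2r$ bound drop out, has no counterpart in your proposal.

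Second, the reduction to the case $\gcd(Q_1,d)=r$ is explicitly left unproved. You write that one should ``modify ${\bf X}$ within $\partial\overline{\cC}$ so that $\gcd(Q_1,d)$ equals $r$'' by combining Lemma~\ref{move12} with the $T_1,T_2$ manoeuvres from Proposition~\ref{unbalance}, and acknowledge that ``the main obstacle will be this preparatory adjustment.'' But Lemma~\ref{move12} leaves $Q_1$ fixed, and the level-transition moves change $Q_1$ only by very constrained amounts while also changing $t$, the prong-matching class, and the sign of $Q_2-Q_1$; it is far from clear that one can prescribe $Q_1 \bmod d$ while staying in the boundary of $\cC$, preserving $Q_1<Q_2$, and avoiding the hyperelliptic loci. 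Even if this could be carried out, it would be doing extra work that the paper's argument shows to be unnecessary.
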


\begin{proof}
By \Cref{unbalance}, there exists ${\bf X}\in \partial\overline{\cC}$ such that $Q_1<Q_2$. Suppose that ${\bf X}$ has minimal $Q_2-Q_1>0$ among all elements in $\partial\overline{\cC}$ with $t=n$. We will prove that $Q_2-Q_1 \leq 2r$. Let $q=\gcd(Q_1,Q_2)$ and choose a prong-matching $(c_i,d_i+v)\in Pr$ with minimal $|v|>0$. By relabeling the poles, the nodes and saddle connections, we may assume that $Q_1<Q_2$, $i=0$ and $\tau=Id$. Thus $(0,v)\in Pr$ and the rotation number $r$ is equal to $\gcd(d,Q_1,v)$. 

If $0<Q_2-Q_1-2v$, then by \Cref{Qlemma}, we can further reduce $Q_2-Q_1>0$ and this contradicts to the minimality of $Q_2-Q_1>0$. So $v\geq \frac{Q_2-Q_1}{2}\geq  \frac{q}{2}$. By level rotation action, we have $(0,v+kq)\in Pr$ for any integer $k$. By minimality of $|v|$, $v=q$ or $v=\frac{q}{2}$. We want to prove that $v=r$.

First, suppose that $v=q$. Then we have a prong-matching $(0,0)\in Pr$. So $(c_1,d_1-b_1)\in Pr$. If $b_1$ is not divisible by $q$, then $(c_1,d_1-b')\in Pr$ when $b'>0$ is the remainder of $b_1$ divided by $q$. This contradicts the minimality of $|v|$, so $q|b_1$ and $(c_1,d_1)\in Pr$. We can apply the same argument to each $b_i$ and conclude that $q|b_i$ for each $i$, thus $q|d$. Therefore $r=\gcd(d,Q_1,v)=q=v$. 

Now, suppose that $v=\frac{q}{2}$. Then we have a prong-matching $(0,\frac{q}{2})\in Pr$. If $v>b_1$, then $(c_1,d_1+(v-b_1))\in Pr$, a contradiction to the minimality of $|v|$. So $v\leq b_1$ and we have $(c_1,d_1-(b_1-\frac{q}{2}))\in Pr$. Let $v'\geq 0$ be the remainder of $b_1-\frac{q}{2}$ divided by $q$. Then $(c_1,d_1-v')\in Pr$ and we must have $v'=0$ or $v'=\frac{q}{2}$ by the minimality of $|v|$. In any case, $b_1$ is divisible by $\frac{q}{2}$, so $(c_2,d_2-(v'+b_2))\in Pr$. We can apply the same argument to each $b_i$ and conclude that $\frac{q}{2}|b_i$ for each $i$, thus $\frac{q}{2}|d$. Therefore, $r=\gcd(d,Q_1,v)=\frac{q}{2}=v$. 
Therefore $v=r$ in any cases. So $r=v\geq\frac{q}{2}\geq \frac{Q_2-Q_1}{2}$ and thus $Q_2-Q_1\leq 2r$. 
\end{proof}

Since $d|Q_1+Q_2$ , we have $r|\gcd(Q_2,Q_1)$ by \Cref{rotation} and the following discussion. If $Q_2-Q_1\leq 2r$, then $\gcd(Q_1,Q_2)=\gcd(Q_2-Q_1,Q_1)\leq 2r$ and thus $\gcd(Q_1,Q_2) =r$ or $2r$.  Moreover, if $\gcd(Q_1,Q_2)=2r$, then we also have $Q_2-Q_1=2r$. 

We need the following lemma for the third step in the proof of \Cref{main2minimal}. 

\begin{lemma} \label{Creduction}
Let $X(t,Id, {\bf C},[(0,v)])\in \partial\overline{\cR}_1(\mu)$ with rotation number $r$ and suppose that $Q_1\neq Q_2$ and $\gcd(Q_1,Q_2)=r$. For any choice of the integers $1\leq C'_i\leq b_i-1$, such that $\sum_{i=1}^t C'_i=Q_1$ and $C'_i=C_i$ for each $t<i\leq n$, we have $X(t,Id, {\bf C'},[(0,v)])\sim X(t,Id, {\bf C},[(0,v)])$.
\end{lemma}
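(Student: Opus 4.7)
I will prove \Cref{Creduction} by reducing to elementary moves on $(C_1,\ldots,C_t)$ and then applying Lemma \ref{move12} repeatedly. First, observe that the set
\[
\cL=\{(C_1,\ldots,C_t)\in\ZZ^t : \textstyle\sum_{\ell=1}^t C_\ell=Q_1,\ 1\le C_\ell\le b_\ell-1\}
\]
consists of the integer points of a transportation-type polytope, which is well-known to be connected under elementary moves of the form $(C_i,C_j)\mapsto(C_i\pm1,C_j\mp1)$ keeping the tuple inside $\cL$. Thus it suffices to establish the equivalence $\sim$ when ${\bf C}$ and ${\bf C'}$ differ by $C'_i=C_i+1$, $C'_j=C_j-1$ for some $i,j\le t$ with $C_i\le b_i-2$ and $C_j\ge 2$.

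Such an elementary move is exactly what Lemma \ref{move12} produces, after cyclically relabeling the saddle connections to place $i$ in the index position ``$t$'' of that lemma --- a relabeling that does not change the multi-scale differential by \Cref{expression1}. The lemma then applies provided we can exhibit a representative $(u_0,v_0)\in\ZZ/Q_1\ZZ\times\ZZ/Q_2\ZZ$ of the prong-matching class $[(0,v)]$ with $u_0\in(c_{i-1},c_i]$ and $v_0\in[d_{j-1},d_j)$. The orbit of $(0,v)$ under the level rotation action $k\cdot(u,v)=(u+k,v-k)$ coincides, by the Chinese remainder theorem together with $\gcd(Q_1,Q_2)=r$, with the set
\[
\{(u_0,v_0):u_0+v_0\equiv v\pmod{r}\}.
\]
Since the rotation number is $r=\gcd(d,Q_1,v)$ we have $r\mid v$, so the condition simplifies to $u_0+v_0\equiv 0\pmod{r}$. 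Writing $u_0=c_i-a$ and $v_0=d_{j-1}+b$ with $0\le a<C_i$ and $0\le b<D_j$, the admissible sums $u_0+v_0$ form a range of $C_i+D_j-1$ consecutive integers, which covers every residue mod $r$ whenever $C_i+D_j\ge r+1$. In this generic case the desired representative exists and the elementary move is realized directly, so ${\bf X}\sim {\bf X'}$.

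The remaining case is $C_i+D_j\le r$, which forces both $C_i$ and $D_j$ to be small. Here the plan is to route the unit through an intermediate index $k\le t$, performing first the sub-move $j\to k$ and then $k\to i$ (or the analogous pair using the second half of Lemma \ref{move12}), where $k$ is chosen so that each sub-move satisfies the cardinality bound $C_\cdot+D_\cdot\ge r+1$ and can therefore be executed by the previous paragraph. Proving the existence of such a $k$ is the main obstacle: one must use that $Q_1,Q_2$ are positive multiples of $r$ with $\gcd(Q_1,Q_2)=r$ and $Q_2-Q_1\in\{r,2r\}$ (as guaranteed in the context of \Cref{Qreduction}), so not all of $C_1,\ldots,C_t$ and $D_1,\ldots,D_t$ can simultaneously be small compared to $r$. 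In the most degenerate configurations the elementary move may need to be broken into more than two successive sub-moves, and a careful case analysis --- distinguishing $Q_2-Q_1=r$ from $Q_2-Q_1=2r$, and handling tuples where many $C_\ell$ equal $1$ or $b_\ell-1$ --- will be required to complete the argument.
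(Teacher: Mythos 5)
Your framework---reduce to elementary moves $(C_i,C_j)\mapsto(C_i+1,C_j-1)$ inside the box-constrained lattice slice, and realize each one via \Cref{move12} after cyclically relabeling---is exactly the paper's framework, and your computation of when a suitable prong-matching representative exists (namely when $C_i+D_j\ge r+1$) is correct. But the proof has a genuine gap that you yourself acknowledge: the degenerate case $C_i+D_j\le r$ is not handled. ``Route the unit through an intermediate index $k$'' is stated as a plan, with ``proving the existence of such a $k$ is the main obstacle'' and ``a careful case analysis\ldots will be required''; no such $k$ is produced and no case analysis is carried out.

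The crucial ingredient you are missing is that since the rotation number $r$ divides $d=\gcd(a,b_1,\dots,b_n)$, every pole order $b_\ell$ is a multiple of $r$, so $b_\ell\ge r$. In the degenerate case one gets both $C_i+D_j\le r$ and $D_i+C_j\le r$, hence $b_i+b_j\le 2r$; together with $b_i,b_j\ge r$ this forces $b_i=b_j=r$, $C_i=D_j$ and $D_i=C_j$, and the prong-matching constraints then pin $C_i=D_i=C_j=D_j=r/2$. Propagating this rigidity along adjacent poles (the paper does this via a nested induction: outer induction on $D=\sum_\ell|C_\ell-C'_\ell|$, inner induction on the index gap between the two poles being adjusted, with a separate short argument for $r=2$) forces $Q_1=Q_2$, contradicting the hypothesis. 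This contradiction is what makes the base case work; without it, the ``degenerate configurations'' you flag are not actually resolved. Note also that your parenthetical appeal to $Q_2-Q_1\in\{r,2r\}$ is not part of the hypotheses of \Cref{Creduction} and is not needed; the paper's proof only uses $Q_1\ne Q_2$.
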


\begin{proof}
The difference between $X(t,Id, {\bf C},Pr)$ and $X(t,Id, {\bf C'},Pr)$ can be measured by $D \coloneqq \sum_{i=1}^t |C_i - C'_i|$. It is obvious that $D=0$ if and only if two multi-scale differentials are equal. If $t=1$, then $C_1=C'_1=Q_1$, thus $D=0$. We use the induction on $D$. If $D>0$, then we can find $j,k$ such that $C_j<C'_j$ and $C_k>C'_k$. In particular, $C'_j, C_k>1$. By relabeling the poles if necessary, we may assume that $1=j<k\leq t$. If we can increase $C_1$ and decrease $C_k$ by one within $\overline{\cC}$, the difference $D$ is decreased and thus $X(t,Id, {\bf C'},Pr)\in \partial\overline{\cC}$ by induction hypothesis. 

First, suppose that $r>2$. We use the induction on $k>1$. Assume that $k=2$ and consider $V\coloneqq \{v|(u,v)\in Pr, 0<u\leq c_1\}$. If $v\in V$ for some $d_1\leq v<d_2$, then by \Cref{move12}, we can increase $C_1$ and decrease $C_2$ by one within $\overline{\cC}$. So assume that $v\notin V$ for any $d_1\leq v<d_2$. Since $\gcd(Q_1,Q_2)=r$, we have $C_1+D_2\leq r$. Similarly, if $U\coloneqq \{u|(u,v)\in Pr, 0< v\leq d_1\}$ and $u\notin U$ for any $c_1\leq u< c_2$, then $D_1+C_2\leq r$. Therefore $b_1+b_2 =C_1+D_2+D_1+C_2\leq 2r$, thus the equalities hold and $C_1+D_2=D_1+C_2=b_1=b_2=r$. Now we have $C_1=b_1-D_1=r-D_1=C_2$ and similarly $D_1=D_2$. Moreover, $(0,d_1),(c_1,0)\in Pr$. Therefore, $C_1=D_1=C_2=D_2=\frac{r}{2}$. If we can apply \Cref{move12} to increase $C_1$ and decrease $C_3$, and also to decrease $C_2$ and increase $C_3$, then we are done. If not, then by the argument as above, we must have $C_3=D_3=\frac{r}{2}$. By repeating this to other poles on the top level component, we have $Q_1=Q_2=\frac{rt}{2}$, a contradiction. 

Now assume that $k>2$. If $C_2>1$, then as above, we can increase $C_1$ and decrease $C_2$ by one. By induction hypothesis, we can now increase $C_2$ and decrease $C_k$ by one. This two steps correspond to increasing $C_1$ and decreasing $C_k$ by one, as desired. If $C_2=1$, then $C_2<b_2-1$ and we can decrease $C_k$ and increase $C_2$ by one, by induction hypothesis. Also we can increase $C_1$ and decrease $C_2$ by one, obtaining the desired result. 

Now suppose that $r=2$. By level rotation action, we have $(u,v+2\ell)\in Pr$ for each $\ell$ and $(u,v)\in Pr$. Since $C_k\geq 2$, we can choose $(0,v)\in Pr$ such that $c_{k-1}< v\leq  c_k$. Therefore by \Cref{move12}, we can increase $C_1$ and decrease $C_k$ by one. 
\end{proof}

The following proposition gives step (3) in the proof of \Cref{main2minimal}. It states that whenever $Q_1$ is fixed, the connected component is independent of the choice of ${\bf C}$.

\begin{proposition} \label{Cchoice}
Let ${\bf X}=X(\tau,{\bf C},[(0,v)])\in \partial\overline{\cR}_1(\mu)$ with rotation number $r$, satisfying $0<Q_2-Q_1 \leq 2r$. Then for any choice of the integers $1\leq C'_i\leq b_i-1$ such that $\sum_i C'_i=Q_1$, we have ${\bf X'}=X(\tau,{\bf C'},[(0,v)])\sim {\bf X}$. 
\end{proposition}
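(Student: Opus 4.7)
The plan is to split into two cases based on $\gcd(Q_1,Q_2)$ and reduce the first to the already-proved \Cref{Creduction}, while the second requires a more delicate navigation.

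Setting up the dichotomy. By \Cref{rotation}, the rotation number is $r=\gcd(d,Q_1,v)$, so $r\mid Q_1$ and $r\mid v$. Since $d\mid a=Q_1+Q_2$ and $r\mid d$, also $r\mid Q_2$, hence $r\mid \gcd(Q_1,Q_2)$. On the other hand $\gcd(Q_1,Q_2)\mid Q_2-Q_1\leq 2r$, so $\gcd(Q_1,Q_2)\in\{r,2r\}$. This dichotomy is the backbone of the argument.

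Case 1: $\gcd(Q_1,Q_2)=r$. This is precisely the hypothesis of \Cref{Creduction} taken with $t=n$. Since $Q_1<Q_2$ (ensured by $0<Q_2-Q_1$), the lemma applies and yields ${\bf X}\sim {\bf X'}$ for any admissible ${\bf C'}$ with $\sum_i C'_i=Q_1$. Nothing more is needed.

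Case 2: $\gcd(Q_1,Q_2)=2r$. Then $Q_2-Q_1=2r$; moreover $v\not\equiv 0\pmod{2r}$ (else the level rotation orbit of $(0,v)$ contains $(0,0)$, contradicting $\gcd=2r$ via the pigeonhole used in \Cref{insertunbalance}), so combined with $r\mid v$ one gets $v=r$ by the minimality discussion immediately after \Cref{Qreduction}. The level rotation orbit of $(0,r)$ is $\{(k,r-k)\}$, so the prong-matchings of $Pr$ are exactly the pairs $(u,v')$ with $u+v'\equiv r\pmod{2r}$. The plan is to mimic the induction of \Cref{Creduction} but adapted to this more restricted orbit. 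Let $D\coloneqq \sum_i |C_i-C'_i|$ and induct on $D$. For $D>0$ pick $j<k$ with $C_j<C'_j$ and $C_k>C'_k$; the goal is to apply \Cref{move12} with a prong-matching of the form $(c_i-1,d_i+1)$ or $(c_i+1,d_i-1)$ at an index $i$ that lies in an interval adjacent to $j$ (or $k$), allowing a single-unit transfer; by chaining such transfers across a sequence of intermediate indices one realizes the full one-unit swap between positions $j$ and $k$.

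The main obstacle is the compatibility condition that the coordinate sum $c_i+d_i\equiv r\pmod{2r}$, which may fail at the indices actually needed. The plan to handle this is: first, exploit that the $n$ indices give $n$ distinct residues of $c_i+d_i$ modulo $2r$, so unless a strong symmetry forces all of them into the same residue class (in which case $\mu$ becomes one of the \textit{special strata} already excluded), at least one workable transfer site is available; second, when a direct route between $j$ and $k$ is blocked, apply \Cref{insert} or $T_1^{(0,r)}$ to move one pole temporarily to the bottom level, where the gcd of the resulting configuration drops to $r$ and Case 1 applies to rearrange freely, and then invert the move to return to $t=n$. The combinatorial bookkeeping for excluding the bad symmetric configurations parallels the arguments used in the proof of \Cref{unbalance}, which is where the real technical work lies.
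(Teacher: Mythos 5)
Your Case~1 ($\gcd(Q_1,Q_2)=r$) matches the paper's reduction to \Cref{Creduction} exactly. Case~2 ($\gcd(Q_1,Q_2)=2r$), however, contains a concrete error and is otherwise too vague to carry the load.

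The error: you claim that $v \not\equiv 0 \pmod{2r}$, hence $v=r$, arguing that $(0,0)\in Pr$ would ``contradict $\gcd=2r$.'' There is no such contradiction. When $\gcd(Q_1,Q_2)=2r$ there are $2r$ prong-matching classes, and $(0,0)$ is a perfectly legitimate representative of one of them; the rotation number $\gcd(d,Q_1,v)$ can equal $r$ with $v\equiv 0 \pmod{2r}$ (take any $\mu$ with $d=r$). Indeed the paper's proof explicitly allows both: ``The prong-matching class $Pr$ contains exactly one of two prong-matchings $(0,0)$ and $(0,r)$,'' and the subsequent analysis handles both cases. Your adapted induction is pinned to the specific orbit $\{u+v'\equiv r\}$, so it simply doesn't cover the $(0,0)$ situation, and the pigeonhole argument of \Cref{insertunbalance} (which is about the existence of a prong-matching $(c_i,v)$ with $v\neq d_j$) says nothing about this.

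Beyond that, your fallback plan for Case~2 diverges from, and is weaker than, the paper's argument. The claim that ``the $n$ indices give $n$ distinct residues of $c_i+d_i$ modulo $2r$'' is false: $c_i+d_i=\sum_{j\le i}b_j$, and whenever some $b_j\equiv 0\pmod{2r}$ consecutive sums collide. The appeal to ``special strata already excluded'' also has no counterpart here --- the proposition as stated (and the paper's proof) makes no such exclusion; the paper instead rules out the bad terminal configuration $b_i=2r,\ C_i=r$ for all $i$ using the hypothesis $Q_1<Q_2$ (it would force $Q_1=Q_2$). What the paper actually does in Case~2 is introduce the notion of an \emph{adjustable pair} $(p_i,p_j)$, prove a quantitative obstruction ($b_i+b_j\le 4r$ with equality characterized, and unconditional adjustability when $b_i\ge 4r$ or $b_j\ge 4r$), and then chain adjustable pairs by induction on the index $k$, handling the low cases $k=2$, $b_i=2$, $r=1$, $r=2$ one by one. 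Your proposal recognizes that some chaining is needed but supplies neither the obstruction lemma nor the case analysis; the $T_1/T_2$ ``drop to the bottom level and rearrange'' workaround is not worked out and would require tracking how those moves transform ${\bf C}$ and $Pr$ on the way back to $t=n$, which is nontrivial. As written, Case~2 is a gap.
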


\begin{proof}
By the discussion following \Cref{Qreduction}, we have $\gcd(Q_1,Q_2)=r$ or $2r$. If $\gcd(Q_1,Q_2)=r$, then the proposition follows from \Cref{Creduction}. So assume that $\gcd(Q_1,Q_2)=Q_2-Q_1=2r$. By relabeling the poles, we may assume that $\tau=Id$. The prong-matching class $Pr$ contains exactly one of two prong-matchings $(0,0)$ and $(0,r)$. We will use \Cref{move12} to ${\bf X}$ in order to prove ${\bf X'}\sim {\bf X}$. For convenience, we will say a pair of poles $(p_i,p_j)$ of ${\bf X}$ is {\em adjustable} if we can increase $C_i$ by one and decrease $C_j$ by one by applying \Cref{move12}.

We will find a sufficient condition for a pair $(p_i,p_j)$ to be adjustable. Assume that $D_i>1$ and $(p_i,p_j)$ is not adjustable. By relabeling the poles, we may assume that $i=1$. Consider $V\coloneqq \{v|(u,v)\in Pr, 0<u\leq c_1\}$. If $d_{j-1}\leq v< d_j$, then $v\notin V$ by assumption. We can find $(0,v-1)\in Pr$ such that $v-1< d_j\leq v-1+2r$. Then $v-1< d_{j-1}$. Since $(u,v-1-u)\in Pr$ for each $0<u\leq c_1$, we can conclude that $d_j\leq v+2r-C_1$. Therefore, $D_j=d_j-d_{j-1} \leq 2r-C_1$. Similarly, if we consider $U\coloneqq\{u|(u,v)\in Pr, 0<v\leq D_1\}$, then we can obtain $C_j \leq 2r-D_1$. So $b_1+b_j= C_1+C_2+C_j+D_j\leq 4r$. The equality holds if and only if $C_1+D_j=D_1+C_j=2r$, $b_1+b_j=4r$, and $(0,d_{j-1})\in Pr$. If $b_1,b_j>r$, then the equality holds and we have $b_1=b_j=C_1+D_j=D_1+C_j=2r$. That is, $C_1=b_1-D_1=2r-(2r-C_j)=C_j$ and $D_1=b_1-C_1=2r-(2r-D_j)=D_j$. Also, if $b_1\geq 4r$ or $b_j\geq 4r$, then the pair $(p_1,p_j)$ is always adjustable. 

Now relabeling the poles again, we may assume that $C_1<C'_1$. Let $1<k\leq n$ be the smallest number such that $C'_k>C_k$. We use the induction on $k>1$ to prove that $(p_1,p_k)$ is adjustable. If this is possible, we can repeat changing $C_1$ and $C_k$ until we reach to ${\bf X'}$. Assume the contrary that we cannot achieve this. In particular, $(p_1,p_k)$ is not an adjustable pair. 

If $b_i\geq 4r$ for some $i$, then two pairs $(p_1,p_i)$ and $(p_i,p_k)$ are adjustable. By applying \Cref{move12} to these pairs, we can increase $C_1$ and decrease $C_k$ by one. So assume that $b_i\leq 3r$ for each $i$. If $k=2$, then we have $(0,d_1)\in Pr$.  That is, $r|d_1$ and $b_1>r$. We have $D_2=D_1\geq r$ and thus $b_2=C_2+D_2>r$. The only possible case is when $b_1=b_2=2r$, $C_1=C_2=r$ and $(0,r)\in Pr$. Since $(0,d_2)=(0,2r)\notin Pr$, $(p_1,p_3)$ is adjustable. If $b_3\neq 2r$ or $C_3\neq C_2=r$, then $(p_2,p_3)$ are also adjustable, a contradiction. So we have $b_3=2r$ and $C_3=r$. Now $(c_1,d_3)=(r,3r)\notin Pr$ and $(p_4,p_2)$ is adjustable. If $(p_3,p_4)$ is adjustable, then we have a chain $p_1$-$p_3$-$p_4$-$p_2$ of poles where consecutive pairs is adjustable. That is, we can increase $C_1$ and decrease $C_2$ by one, a contradiction. By repeating this argument, we can finally get $b_i=2r$ and $C_i=r$ for any $i$, which is also a contradiction because then $Q_1=Q_2=nr$. 

Now assume that $k>2$. Then consider two pairs $(p_1,p_i)$ and $(p_i,p_k)$ for some $1<i<k$. If $b_i\neq 2$, then we can use induction hypothesis to these two pairs to increase $C_1$ and decrease $C_k$ as desired. The third number $C_i$ will be decreased (or increased) by one, and then increased (or decreased) to its original position. If $b_i=2$, then $r=1$ or $r=2$. If $r=1$, then note that $b_1,b_k\geq 3$ because $C'_1,C_k>1$. In particular, $b_1+b_k\geq 6>4r$ and therefore $(p_1,p_k)$ is adjustable. If $r=2$, then $b_1=b_k=2r=4$ and $C_1=C_k=r=2$. However, since $Q_1>Q_2$, there exists some $p_i$ such that $b_i>4$ or $b_i=4$ and $C_i=1$. We can use two adjustable pairs $(p_1,p_i)$ and $(p_i,p_k)$ now. 
\end{proof}

The following proposition gives the fourth step in the proof of \Cref{main2minimal}. It states that whenever ${\bf C}$ is fixed, the connected component is independent of the choice of the prong-matching class $Pr$.

\begin{proposition} \label{Pchoice}
Let ${\bf X}=X(\tau,{\bf C},[(0,v)])\in \partial\overline{\cR}_1(\mu)$ with rotation number $r$, satisfying $0<Q_2-Q_1 \leq 2r$. Suppose that ${\bf X'}=X(\tau, {\bf C'},[(0,v')])$ with ${\bf C}={\bf C'}$ also has rotation number $r$. Then ${\bf X} \sim {\bf X'}$.
\end{proposition}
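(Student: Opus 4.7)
The plan is to identify when two prong-matching representatives $(0,v)$ and $(0,v')$ yield the same equivalence class. Recall the level rotation $\ZZ$-action $k\cdot(u,v)=(u+k,v-k)$ on $\ZZ/Q_1\ZZ\times\ZZ/Q_2\ZZ$: two representatives $(0,v)$ and $(0,v')$ lie in the same orbit iff $v\equiv v'\pmod{\gcd(Q_1,Q_2)}$. By \Cref{rotation}, the rotation number $r=\gcd(d,Q_1,v)=\gcd(d,Q_1,v')$ divides both $v$ and $v'$, and by the discussion following \Cref{Qreduction}, the hypothesis $0<Q_2-Q_1\leq 2r$ forces $\gcd(Q_1,Q_2)\in\{r,2r\}$, the latter requiring $Q_2-Q_1=2r$. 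If $\gcd(Q_1,Q_2)=r$, then $v\equiv v'\equiv 0\pmod r$ gives $[(0,v)]=[(0,v')]$ and so ${\bf X}={\bf X'}$ in $\partial\overline{\cR}_1(\mu)$; nothing to prove.

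The substantive case is $\gcd(Q_1,Q_2)=2r$, in which the two distinct classes $[(0,0)]$ and $[(0,r)]$ both give rotation number $r$; after relabeling we may assume ${\bf X}=X(\tau,{\bf C},[(0,0)])$ and ${\bf X'}=X(\tau,{\bf C},[(0,r)])$. The strategy is to use the navigation operators from \Cref{genus1connect} to detour through a multi-scale differential where the analogous gcd drops to $r$. By \Cref{Cchoice} I may freely adjust ${\bf C}$ inside $\overline{\cC}$; the aim is to engineer a prong-matching representative $(u,w)\in[(0,0)]$ at which applying $T_1^{(u,w)}$ (or $T_2^{(u,w)}$) produces $\overline{Y}\in\partial\overline{\cC}$ whose new top-level data $(Q'_1,Q'_2)$ satisfies $\gcd(Q'_1,Q'_2)=r$. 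Such a move is possible whenever some pole $p_j$ has $b_j$ not divisible by $2r$: pushing $p_j$ to the bottom level via $T_1$ shifts $Q_1$ by a quantity not divisible by $2r$ and thereby breaks the $2r$-divisibility of $\gcd(Q_1,Q_2)$. At the intermediate multi-scale differential $\overline{Y}$, the trivial Case 1 identification lets me freely swap between prong-matching classes that were previously distinct; running $T_2$ back with the alternative representative lands on an element $X(\tau,{\bf C'''},[(0,r)])\in\partial\overline{\cC}$, and a final application of \Cref{Cchoice} restores ${\bf C}$ and identifies ${\bf X'}$ with a point of $\partial\overline{\cC}$.

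The main obstacle is twofold. First, one must verify via the explicit formulas in \Cref{genus1connect} that the composition $T_2\circ T_1$, performed with the differing intermediate prong-matching representatives made available by the Case 1 reduction on $\overline{Y}$, genuinely shifts the representative from $[(0,0)]$ to $[(0,r)]$ in $\ZZ/Q_1\ZZ\times\ZZ/Q_2\ZZ$; this is a direct but delicate bookkeeping of residues modulo $Q'_1,Q'_2$ and $Q_1,Q_2$. Second, the auxiliary requirement that some $b_j$ fail to be divisible by $2r$ breaks down exactly when every $b_i$ is a multiple of $2r$; in this corner case I would exploit the constraint $Q_2-Q_1=2r$ together with $r\mid Q_1$, $r\mid d$ to show that the situation either contradicts the non-hyperellipticity of $\cC$ via \Cref{genus1hyper} or is one of the special strata already handled in \Cref{subsec:special}, where the proposition is known.
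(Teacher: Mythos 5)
Your high-level plan — detour through $T_1/T_2$ to reach a multi-scale differential where the gcd of the two nodal angles drops to $r$, then use \Cref{Creduction} — is indeed the paper's route. But there are two real problems.

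First, your treatment of the corner case $2r\mid d$ (all $b_i$ divisible by $2r$) is wrong-headed, not just incomplete. You propose to derive a contradiction with non-hyperellipticity or defer to the special strata; neither is relevant, and in fact the proposition is a statement about arbitrary boundary points, not about a fixed non-hyperelliptic component. The correct observation is much simpler: since $d\mid Q_1+Q_2$ one always has $\gcd(d,Q_1)\mid\gcd(Q_1,Q_2)$, so when $\gcd(Q_1,Q_2)=2r$ the integer $\gcd(d,Q_1)$ lies in $\{r,2r\}$. If $2r\mid d$ then $\gcd(d,Q_1)=2r$, and the rotation number of the class $[(0,0)]$ is $\gcd(d,Q_1,0)=2r\ne r$; hence $[(0,r)]$ is the \emph{only} class with rotation number $r$, forcing ${\bf X}={\bf X'}$ outright. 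There is no detour to make — your "substantive case $\gcd(Q_1,Q_2)=2r$" in which both $[(0,0)]$ and $[(0,r)]$ have rotation number $r$ is precisely the complement, $2r\nmid d$, and you should say so.

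Second, in the case $2r\nmid d$ the step "the trivial Case 1 identification lets me freely swap between prong-matching classes that were previously distinct" does not describe a well-defined move: once you have landed at $\overline Y$, its prong-matching class is fixed, not a free parameter. What actually has to happen — and what the paper does — is to produce, by separate applications of $T_1$ or $T_2$ to ${\bf X}$ and to ${\bf X'}$ (after using \Cref{Cchoice} to adjust ${\bf C}$ so the operators are applicable), two boundary differentials $\tilde X\sim{\bf X}$ and $\tilde X'\sim{\bf X'}$ with the pole $p_1$ of order $b_1=(2k+1)r$ on the bottom level, the same $\tilde C_1$, and $\tilde Q_2-\tilde Q_1=\tilde Q'_2-\tilde Q'_1=r$. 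Only then is $\gcd(\tilde Q_1,\tilde Q_2)=r$ so that the prong-matching class is unique with the given rotation number, and \Cref{Creduction} (applied to the top-level $C_i$'s, with $t=n-1$ and $p_1$ excluded) finishes the identification. This is the "delicate bookkeeping" you flag; it is the content of the proof, not an afterthought. In particular the edge cases $b_1=r=2$ and $r=1$, $b_1=2$ (where $C_1$ is pinned at $1$ and the free adjustment by \Cref{Cchoice} collapses) each need a separate argument, which the paper gives — e.g.\ via \Cref{insert} when $b_2=4$ — and which your sketch does not anticipate.
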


\begin{proof}
By the discussion following \Cref{Qreduction}, we have $\gcd(Q_1,Q_2)=r$ or $2r$. If $\gcd(Q_1,Q_2)=r$, then there exists a unique prong-matching class. So we assume that $\gcd(Q_1,Q_2)=Q_2-Q_1=2r$. If $2r| d$ and $(0,0)\in Pr$, then $r=\gcd(d,Q_1,0)$ is divisible by $2r$, a contradiction. So there exists unique prong-matching class. So we assume that $2r\nmid d$. That is, there exists $b_i$ such that $2r\nmid b_i$. By relabeling the poles, we may assume that $\tau=Id$ and $b_1=(2k+1)r$ for some $k$. There are exactly two prong-matching equivalence classes, shifted by $r$ from each other. So we may assume that $(0,(k+1)r)\in Pr$ and $(0,kr)\in Pr'$. 

Our goal is to find multi-scale differentials $\overline{X} \sim {\bf X}$ and $\overline{X'} \sim {\bf X'}$ containing all poles but $p_1$ in the top level component, satisfying $\tilde{Q}_2-\tilde{Q}_1=\tilde{Q'}_2-\tilde{Q'}_1=r$ and $\Tilde{C}_1=\Tilde{C'}_1$. Then we can apply \Cref{Creduction} to conclude that ${\bf X} \sim {\bf X'}$. 

Suppose that $b_i>2$ for each $i$. By \Cref{Cchoice}, the connected component of ${\bf X}$ is independent of the choice of ${\bf C}$. So we may assume that $C_1=(k+1)r-1$ and $C_2<b_2-r+1$. Also, we can assume that $C'_1=kr+1$ and $C'_n<b_n-1$. Since $b_2,b_n>2$, we can take $\tilde{X}=T_1^{(0,(k+1)r)} {\bf X}$ and $\tilde{X'}=T_2^{(kr+1,-1)} {\bf X'}$.

Suppose that $b_1=r=2$. Assume that $(0,2)\in Pr$ and $(0,0)\in Pr'$. Since $\mu\neq (2n,-2^n)$, by relabeling the poles of necessary, we may assume that $b_2>3$ and $C_2=1$. We take $\tilde{X}=T_1^{(0,2)} {\bf X}$. Similarly, if $b_2>4$, then we can take $\tilde{X'}=T_1^{(0,4)} {\bf X'}$. Assume that $b_2=4$. Then $T_1^{(0,4)} {\bf X'}$ contains two poles $p_1,p_2$ in the bottom level component. Since $Q'_2-Q'_1=4$ and $2|b_i$ for each $i$, we can find $j$ such that $D_j>2$. Then there exists $v$ such that $(0,v)\in Pr$ such that $d_{j-1}<v<d_j$. By \Cref{insert}, we can obtain a multi-scale differential $\tilde{X'}$ with all poles but $p_1$ in the top level component. We can conclude that ${\bf X} \sim {\bf X'}$.

Suppose that $r=1$ and $b_1=2$. By relabeling the poles, we may assume that $b_2>2$ and $C_2=1$. Suppose that $(0,1)\in Pr$ and $(0,2)\in Pr'$. Then we can take $\tilde{X}=T_1^{(0,1)}{\bf X}$ and $\tilde{X'}=T_1^{(0,2)}{\bf X'}$. Since $\gcd(\tilde{Q}_2,\tilde{Q}_1)=\gcd(\tilde{Q'}_2,\tilde{Q'}_1)=1$, we can apply \Cref{Creduction} and therefore ${\bf X} \sim {\bf X'}$.
\end{proof}

The following proposition gives the last step in the proof of \Cref{main2minimal}. It states that whenever ${\bf C}$ and $Pr$ are fixed, the connected component is independent of the choice of the permutation $\tau$.

\begin{proposition} \label{Tchoice}
Let ${\bf X}=X(Id, {\bf C},[(0,v)])\in \partial\overline{\cR}_1(\mu)$ with rotation number $r$, satisfying $0<Q_2-Q_1 \leq 2r$. Then $X(\tau, {\bf C},[(0,v)]))\sim {\bf X}$ for any permutation $\tau\in Sym_n$.
\end{proposition}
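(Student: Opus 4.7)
The plan is to reduce to swapping two cyclically-adjacent labels in $\tau$ and to realize such a swap by degenerating a pole to the bottom level and re-smoothing it at the neighboring position. Since $Sym_n$ is generated by adjacent transpositions and since the cyclic shift equivalence $X(\tau,{\bf C},Pr)=X(\tau\circ \tau_1,{\bf C},Pr)$ of Remark~\ref{expression1} lets us freely cyclically rotate $\tau$, it suffices to show $X(\sigma,{\bf C},[(0,v)])\sim X(Id,{\bf C},[(0,v)])$ where $\sigma$ is the transposition interchanging two specified cyclically-adjacent positions, which we may take to be positions $n-1$ and $n$.

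To realize this swap, I would apply $T_2^{(u,v')}$ for a suitable $(u,v')\in Pr$ so that the resulting $\overline{Y}\in\partial\overline{\cC}$ has one of the two poles, say $p_{\tau(n)}$, transferred to the bottom level while the others remain on top. I would then apply $T_1$ with a carefully chosen prong-matching to re-smooth $p_{\tau(n)}$ back into the top level, but now inserted between positions $n-2$ and $n-1$, thereby effecting the desired adjacent transposition. The composition yields a new multi-scale differential of the form $X(\sigma\circ Id,{\bf C}',[(0,v'')])\in\partial\overline{\cC}$ with possibly altered angle tuple ${\bf C}'$ and prong-matching. To conclude, I would invoke \Cref{Cchoice} to restore the angle tuple to the prescribed ${\bf C}$, and then \Cref{Pchoice} to restore the prong-matching class to $[(0,v)]$.

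Throughout, the hypothesis $0<Q_2-Q_1\leq 2r$ must remain valid in order to apply \Cref{Cchoice} and \Cref{Pchoice}; if an intermediate configuration violates this (for instance, $T_2$ may produce a differential with $Q_2''=Q_1''$ or $Q_2''-Q_1''>2r$), I would first invoke \Cref{unbalance} or \Cref{Qreduction} to restore the inequality before continuing. The main obstacle is the bookkeeping: the operators $T_1,T_2$ simultaneously alter $\tau$, ${\bf C}$, and $Pr$ in the intricate manner recorded in \Cref{genus1connect}, so one must choose $(u,v')$ so that the net effect is \emph{exactly} the desired adjacent transposition rather than some more complicated rearrangement. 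Edge cases include when the intermediate differential has only one pole on the top level (where the $T_1$ inversion may need to be replaced by a suitable $T_2$, or combined with a preliminary $T_1$ moving a different pole to the bottom to enlarge the choice of prong-matchings), and one must verify that the argument never passes through the special strata of \Cref{subsec:special} where the general machinery fails.
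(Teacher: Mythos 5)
Your sketch has the right general idea---pass a pole through the bottom level via $T_2$ and re-insert it in a different cyclic slot via $T_1$, then repair the angles and prong-matching---but this is precisely the strategy the paper follows, and the substance of the proof is exactly the piece you defer as ``bookkeeping.'' The proof hinges on a case analysis on $q=\gcd(Q_1,Q_2)$ versus $r$ and on the divisibility of $b_1+b_2$ by $2r$, because whether a prong-matching of the required form exists in $Pr$ depends on $q$. Without this, you cannot actually exhibit the pair $(u,v')$ that makes your composition come out to the desired adjacent transposition; the paper handles this by first using \Cref{Pchoice}/\Cref{Cchoice} to normalize $v$ and $C_1,C_2$ (rather than to repair afterwards), and then verifying explicit identities such as $T_2^{(c_2,0)}{\bf X'}=T_1^{(0,c_2)}{\bf X}$ and $T_1^{(0,c_2)}T_1^{(0,r)}{\bf X'}=T_1^{(0,d_2)}{\bf X}$.

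Your proposed repair mechanism also has a logical problem. If the $T_2$-then-$T_1$ composition yields a differential with $Q''_2-Q''_1$ out of the range $(0,2r]$, you say you would invoke \Cref{unbalance} or \Cref{Qreduction}; but those propositions only assert existence of \emph{some} multi-scale differential with better $Q$-balance in the boundary of the component, and they give you no control over $\tau$. Applying them could change the permutation again, and then you are back to trying to prove the very statement you set out to prove. The paper's normalization-first structure is designed specifically to avoid this: it arranges the $C_i$'s and $v$ so that the identity holds with the $Q$-balance already in place, so no post-hoc repair is ever needed. As written, your proposal is a reasonable plan but leaves the central verification (and the mechanism for keeping the hypotheses satisfied through the composition) completely open.
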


\begin{proof}
If $n\leq 2$, there is nothing to prove because we have only one choice of permutation $\tau$ as a cyclic order. So assume that $n>2$. Since the transpositions of the form $(i,i+1)$ generates the symmetric group $Sym_n$, it suffices to prove for the case $\tau=(i,i+1)$. By relabeling the saddle connections, we may assume that $\tau=(1,2)$. Let ${\bf X'}=X((1,2),{\bf C},Pr)$.

Case (1): Suppose that $q=r$, or $q=2r$ and $2r\nmid d$. 

Then by \Cref{Pchoice}, the connected component of ${\bf X}$ is independent of the choice of prong-matching as long as the rotation number is not changed. By assumption, $X(Id, {\bf C},[(0,0)])$ has rotation number $r$, so we may assume that $v=0$. If $2r|b_1+b_2$, then by \Cref{Cchoice}, we can change $C_1$ and $C_2$ within the connected component so that $c_2=d_2=\frac{b_1+b_2}{2}$. Then we have a prong-matching $(0,c_2)\in Pr$. It is straightforward that $T_2^{(c_2,0)} {\bf X'}=T_1^{(0,c_2)}{\bf X}$. Therefore ${\bf X'} \sim {\bf X}$. 

If $2r\nmid b_1+b_2$, then $b_1+b_2\geq 3r$. So $b_1\geq 2r$ or $b_2\geq 2r$. Assume that $b_2\geq 2r$. The case when $b_1 \geq 2r$ can be treated similarly. Then by \Cref{Cchoice}, we can change $C_1$ and $C_2$ within the connected component so that $d_2-c_2=r$, and also $D_2>r$. We have a prong-matching $(0,d_2)\in Pr$. Now we consider ${\bf X'}$. By \Cref{Cchoice}, we may assume that $C'_1=D_1$, $D'_1=C_1$, $C'_2=D_2-r$ and $D'_2=C_2+r$. We have a prong-matching $(0,r)\in Pr'$. Since $0<r<d'_1=C_2+r$, we can take $T_1^{(0,r)}{\bf X'}$. It has a prong-matching $(0,c_2)$ and we can take $T_1^{(0,c_2)} T_1^{(0,r)}{\bf X'}$. It is straightforward that $T_1^{(0,c_2)} T_1^{(0,r)}{\bf X'}=T_1^{(0,d_2)}{\bf X}$. Therefore ${\bf X'}\sim {\bf X}$.

Case (2): Suppose that $q=2r$ and $2r|d$. 

Then $(c_i,d_i+r)\in Pr$ for each $i=0,\dots, n-1$. Since $2r|b_i$ for each $i$, we have $b_i \geq 2r$. If $\frac{b_1+b_2}{2r}$ is odd, then by \Cref{Cchoice}, we may assume that $d_2=c_2=\frac{b_1+b_2}{2}$. We have a prong-matching $(0,d_2)\in Pr$ and we can deduce that ${\bf X'}\sim {\bf X}$ by the same argument as in Case (1). If $\frac{b_1+b_2}{2r}$ is even, then by \Cref{Cchoice}, we may assume that $d_2-c_2=2r$ and $D_1,D_2>r$. Then $d_2=\frac{b_1+b_2}{2}+r$ is an odd multiple of $r$, so $(0,d_2)\in Pr$. We can deduce that ${\bf X'}\sim {\bf X}$ by the same argument as in Case (1).
\end{proof}

\section{Classification of hyperelliptic components} \label{sec:chc}

In this section, we will complete the proof of \Cref{mainhyper}. First, we deal with genus one \MIN residueless strata $\cR_1(\mu)$. 

\begin{proposition} \label{hyperclass1}
Let $\cR_1 (\mu)$ be a genus one \MIN stratum. For each ramification profile $\cP$ of $\cR_1 (\mu)$, there exists a unique hyperelliptic component $\cC_{\cP}$ of $\cR_1 (\mu)$.
\end{proposition}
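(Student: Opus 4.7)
The plan is to leverage \Cref{hyperprofile1} for existence and establish uniqueness by producing canonical multi-scale differentials in $\partial\overline{\cC}$ for each hyperelliptic component $\cC$ with the given ramification profile $\cP$, and then showing any two such differentials lie in the boundary of the same component. I would split the argument based on the number of marked points fixed by $\cP$, treating the cases of fewer than and equal to $2g+2 = 4$ fixed marked points separately, in parallel with the dichotomy between \Cref{simplehyper} and \Cref{double}.

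If $\cP$ fixes at most three marked points, then by \Cref{simplehyper} each hyperelliptic component $\cC$ with profile $\cP$ contains a flat surface with a multiplicity one saddle connection, so $\partial\overline{\cC}$ contains a multi-scale differential of the form $X(\tau,{\bf C},Pr)$. After relabeling so that $\tau = Id$, \Cref{genus1hyper1} (for 2 or 3 fixed marked points) or \Cref{genus1hyper0} (for 1 fixed marked point) forces the prong-matching class and imposes the rigid constraint $C_i + C_{\cP(i)} = b_i$ (or its shifted analog) on ${\bf C}$. To connect any two valid choices, I would adapt \Cref{Cchoice} and \Cref{Tchoice} by applying \Cref{move12} in $\cP$-symmetric pairs: whenever one modifies the pair $(C_j, C_k)$, one simultaneously modifies the mirror pair $(C_{\cP(j)}, C_{\cP(k)})$ to preserve the hyperelliptic constraint. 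The operators $T_1^{(u,v)}$ and $T_2^{(u,v)}$ from \Cref{genus1connect}, when applied with prong-matchings fixed by the hyperelliptic involution, stay within the boundary of a single hyperelliptic component, so this pairwise adjustment suffices.

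If $\cP$ fixes exactly four marked points (the unique zero and three poles), then \Cref{simplehyper} does not apply, but by \Cref{double} each such component $\cC$ contains a flat surface with a multiplicity-two pair of parallel saddle connections $\gamma_1,\gamma_2$ bounding the polar domain of a fixed pole $p_1$. Shrinking this pair via \Cref{shrink} yields a multi-scale differential $\overline{Y}\in\partial\overline{\cC}$ with two nodes between the levels, as in \Cref{hyper2} and \Cref{fig502}. Since $g_0 + g_{-1} = 0$, both irreducible components $Y_0, Y_{-1}$ are $\PP^1$ and therefore lie in zero-dimensional (projectivized) generalized strata described by \Cref{zerodim1} or \Cref{zerodim2}; in particular $Y_0$ and $Y_{-1}$ are each uniquely determined (up to scaling) by their combinatorial data. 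The hyperelliptic involution fixes both nodes and the marked points, which severely restricts both the allowed tuples $(\tau, {\bf C})$ for each component and the compatible prong-matching equivalence class by \Cref{hyper2}. After induction on the number of fixed poles (with the base case being either $n$ small enough that $\cR_1(\mu)$ lies among the special strata of \Cref{subsec:special}, or direct enumeration), this forces $\overline{Y}$ to be uniquely determined up to the moves already handled in the previous case, completing the proof.

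The main obstacle is verifying in Case~1 that the $\cP$-symmetric versions of \Cref{move12} are always realizable inside $\partial\overline{\cC}$ --- that is, that after the symmetric modification the resulting multi-scale differential still satisfies the rigid hyperelliptic conditions of \Cref{genus1hyper1}/\Cref{genus1hyper0} and that the required prong-matchings survive the level rotation action. A secondary difficulty is the case analysis in Case~2: when $n$ is small the induction base coincides with the exceptional strata in \Cref{subsec:special}, and one must cross-check with \Cref{exception0}, \Cref{exception1}, and \Cref{exception2} to ensure there is no duplication or omission of hyperelliptic components with a given ramification profile.
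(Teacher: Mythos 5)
Your overall strategy -- use \Cref{hyperprofile1} for existence, then characterize the multi-scale differentials in $\partial\overline{\cC}$ via \Cref{genus1hyper0}/\Cref{genus1hyper1}/\Cref{hyper2} and connect the possible combinatorial data using the operators $T_1,T_2$ from \Cref{genus1connect} -- is exactly the paper's strategy, and your Case 1 (at most three fixed marked points) matches the paper's Cases (1)--(2). You correctly identify the main obstacle there: showing that the $\cP$-symmetric moves stay inside the boundary of a single hyperelliptic component and remain compatible with the forced prong-matching class. The paper addresses this by writing down explicit $T_1,T_2$ identities for the generators $(i,i+1)(1-i,-i)$ and $(i,1-i)$ of the group of admissible relabelings (and their analogues), which is exactly the concrete verification your proposal would need.

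Your Case 2 (four fixed marked points) is where you depart from the paper and also where you go wrong. First, a factual slip: for the two-node multi-scale differentials $\overline{Y}$ that arise here, the hyperelliptic involutions on $Y_0$ and $Y_{-1}$ do \emph{not} fix the two nodes -- they \emph{interchange} $s_1$ and $s_2$, as stated explicitly in \Cref{hyper2} and in the proof of \Cref{genus1hyper0}/\Cref{genus1hyper1}. This matters, because the constraint $Q_1=Q_2$ (and the forced prong-matching) comes precisely from that interchange. Second, the induction on the number of fixed poles that you invoke for uniqueness is not needed and does not clearly terminate with a usable base case: the strata with small $n$ you point to (those of \Cref{subsec:special}) are about existence/absence of \emph{non}-hyperelliptic components and do not give you uniqueness of hyperelliptic components for a fixed $\cP$. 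The paper avoids this entirely: once $\overline{Y}$ is recognized as $X(n-1,\tau,{\bf C},Pr)$ with its data constrained by the hyperelliptic symmetry, the admissible $\tau$ form a subgroup generated by $(i,i+1)(2-i,1-i)$ and $(i,n+1-i)$, and the argument is literally the same chain of $T_1,T_2$ identities as in Case 1, just with $t=n-1$ instead of $t=n$. So the fix to your proposal is simple: drop the induction, correct the node-interchange statement, and run the Case 1 argument verbatim on $X(n-1,\tau,{\bf C},Pr)$.
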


\begin{proof}
Case (1): First, suppose that $\cP$ fixes one marked point. 

After relabeling the poles, we may assume $\cP(i)=1-i$ for each $i$. By \Cref{genus1hyper0}, the boundary of any hyperelliptic component with ramification profile $\cP$ contains $X(\tau,{\bf C},Pr)$ for some $\tau$ satisfying $\cP(\tau(i))=\tau(1-i)$ and $C_{\tau(i)}+C_{\tau(1-i)}=b_{\tau(i)}$. That is, $\tau$ sends a pair of poles interchanged by $\cP$ to another such pair. Such permutations $\tau$ can be generated by the permutations of the form $(i,i+1)(1-i,-i)$, interchanging two pairs, and $(i,1-i)$, interchanging two poles in a pair, for $1\leq i\leq \frac{n}{2}$. Let ${\bf X}=X(Id,{\bf C},Pr)$ and ${\bf X'}=X(\tau,{\bf C},Pr)$. It suffices to show that ${\bf X} \sim {\bf X'}$ for each $\tau=(i,i+1)(1-i,-i)$ or $\tau=(i,1-i)$.

First, let $\tau=(i,i+1)(-i+1,-i)$. Then $T_2^{(C_{i+1},-C_{i+1})}T_1^{(-d_i,d_i)}{\bf X}=T_2^{(C_{i+1},-C_{i+1})}T_1^{(-d_{i-1},d_{i-1})}{\bf X'}$. The path in $\overline{\cR}_1(\mu)$ connecting ${\bf X}$ and ${\bf X'}$ is illustrated in \Cref{fig801}. Thus ${\bf X} \sim {\bf X'}$.

\begin{figure}
    \centering
    \input{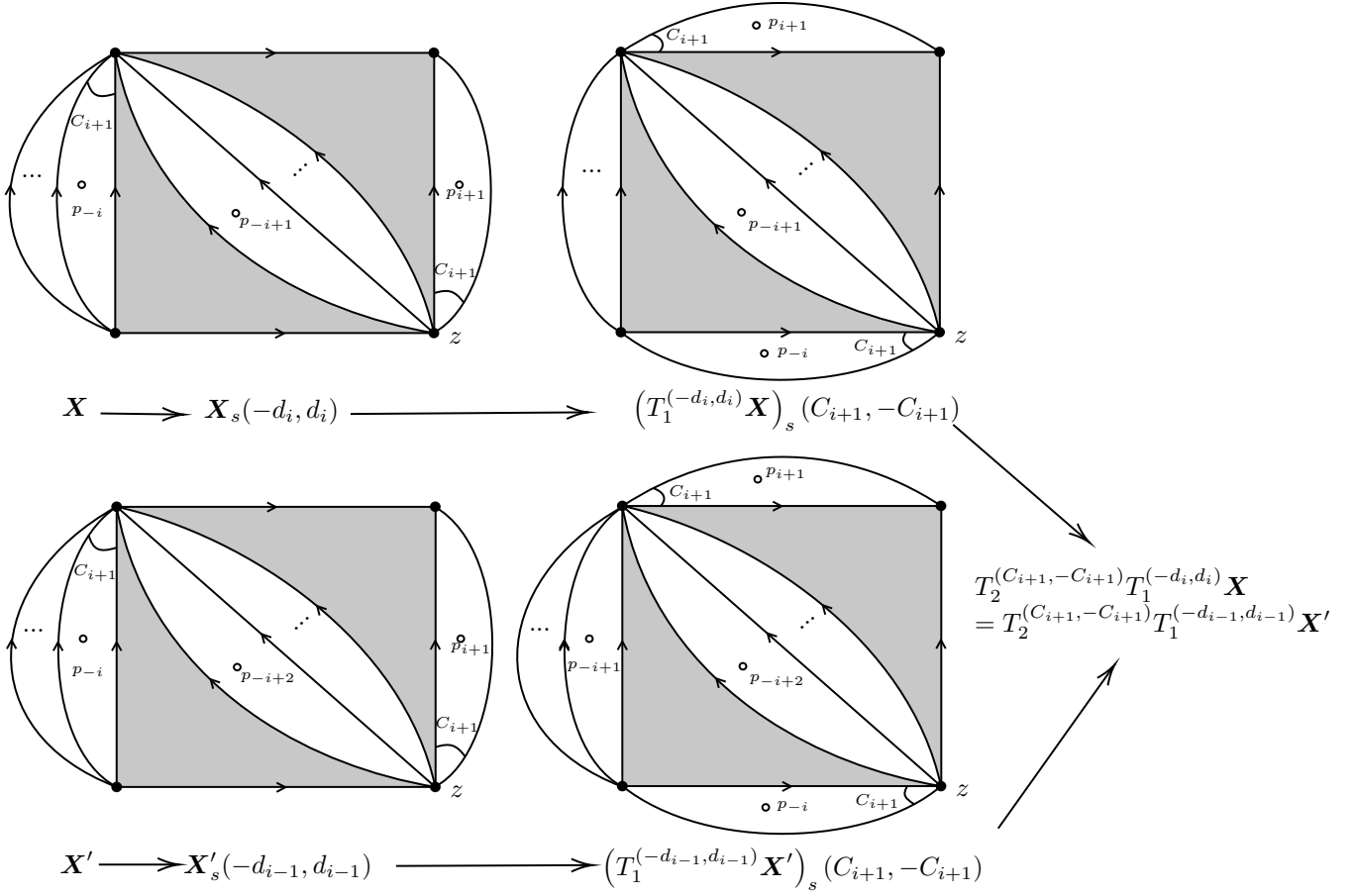} 
    \caption{The path in $\overline{\cR}_1(\mu)$ when $\tau=(i,i+1)(1-i,-i)$} \label{fig801}
\end{figure}

Now let $\tau=(i,-i+1)$. By applying \Cref{move12} to ${\bf X'}$ if necessary, we may assume that $D'_i=C'_{-i+1}=C_i$. Then $T_2^{(C_i,-C_i)} T_1^{(-d_{i-1},d_{i-1})} {\bf X} = T_1^{(-D_i,D_i)}T_2^{(c_{i-1},-c_{i-1})}{\bf X'}$. The path in $\overline{\cR}_1(\mu)$ connecting ${\bf X}$ and ${\bf X'}$ is illustrated in \Cref{fig802}. Thus ${\bf X} \sim {\bf X'}$.

\begin{figure}
    \centering
    \input{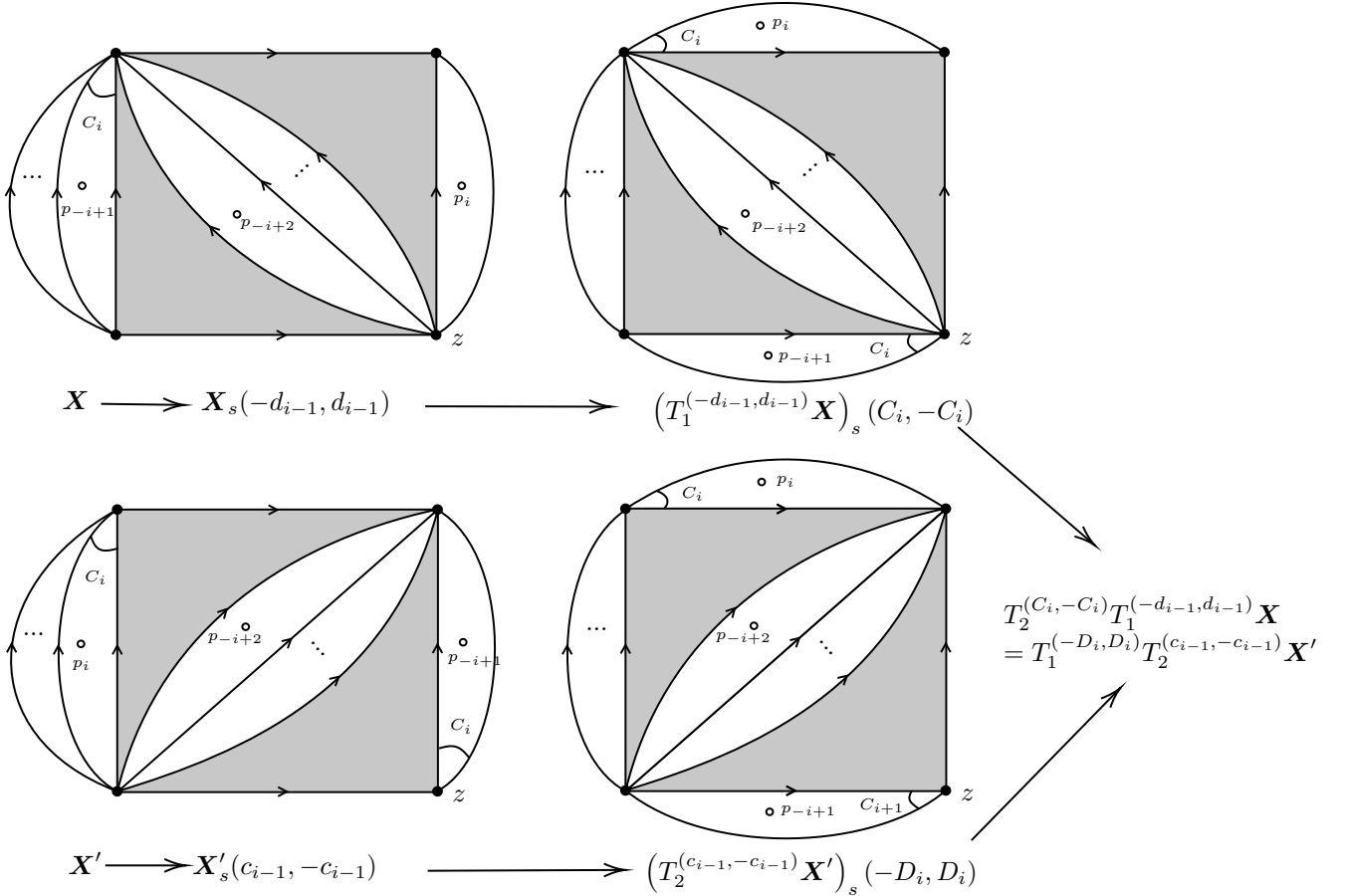} 
    \caption{The path in $\overline{\cR}_1(\mu)$ when $\tau=(i,1-i)$} \label{fig802}
\end{figure}

Case (2): Now suppose that $\cP$ fixes two or three marked points. 

After relabeling the poles, we may assume $\cP(i)=2-i$ for each $i$. The set of fixed marked poles is $\{p_1, p_{\frac{n+2}{2}}\}$ if $n$ is even, or $\{p_1\}$ if $n$ is odd. \Cref{genus1hyper0}, the boundary of any hyperelliptic component with ramification profile $\cP$ contains $X(\tau,{\bf C},Pr)$ for some $\tau$ satisfying $\cP(\tau(i))=\tau(2-i)$ and $C_{\tau(i)}+C_{\tau(2-i)}=b_{\tau(i)}$. Such permutations $\tau$ form a subgroup of $Sym_n$, generated by the permutations of the form $(i,i+1)(2-i,1-i)$ and $(i,2-i)$, for $2\leq i\leq \frac{n}{2}$. The rest of the proof is completely analogous to the proof of Case (1). 

Case (3): Finally suppose that $\cP$ fixes four marked points. 

By relabeling the poles, we may assume that $p_1,p_{\frac{n+1}{2}}$ and $p_n$ are the fixed poles and $\cP(i)=2-i$ for $i=1,\dots,n-1$. By \Cref{genus1hyper0}, the boundary of any hyperelliptic component with ramification profile $\cP$ contains $X(n-1,\tau,{\bf C},Pr)$ for some $\tau$ satisfying $\cP(\tau(i))=\tau(2-i)$ and $C_{\tau(i)}+C_{\tau(2-i)}=b_{\tau(i)}$ for each $i=1,\dots,n-1$. Such permutations $\tau$ form a subgroup of $Sym_n$, generated by the permutations of the form $(i,i+1)(2-i,1-i)$ and $(i,n+1-i)$ for $2\leq i\leq \frac{n+1}{2}$. Let ${\bf X}=X(n-1,Id,{\bf C},Pr)$ and ${\bf X'}=X(n-1,\tau,{\bf C},Pr)$. The rest of the proof is completely analogous to the proof of Case (1). 
\end{proof}

Now we prove \Cref{mainhyper} for any $\cR_g(\mu)$ with $g>0$ inductively. 

\begin{proof}[Proof of \Cref{mainhyper}]
We use the induction on $\dim \cR_g(\mu)=2g+m-1$. The base case is $g=m=1$ and this is already treated in \Cref{hyperclass1}. So we assume that $2g+m>3$. Fix a ramification profile $\cP$ of $\cR_g(\mu)$ and let $\cC$ be any hyperelliptic component of $\cR_g(\mu)$ with ramification profile $\cP$.

If $\cP$ fixes less than $2g+2$ marked points, then $\cC$ contains a flat surface with a multiplicity one saddle connection. By shrinking this saddle connection, we obtain a two level multi-scale differential $\overline{X}$ and the top level component $X_0$ is a hyperelliptic component with ramification profile $\cP$ contained in a stratum of dimension $2g+m>2$. By induction hypothesis, the connected component containing $X_0$ is unique. If $m=2$, then the bottom level component $X_{-1}$ is contained in a connected stratum $\cH_0(a,a,-2a-2)$, and $X_0$ and $X_{-1}$ intersect at a unique node. Thus there is a unique prong-matching equivalence class of $\overline{X}$ and therefore $\cC$ is unique. If $m=1$, then $X_{-1}$ is still contained in a connected stratum $\cH_0(a,-\frac{a}{2}-1,-\frac{a}{2}-1)$, and $X_0$ and $X_{-1}$ intersect at two nodes. However, by hyperellipticity of $\cC$, there is still a unique possible prong-matching equivalence class of $\overline{X}$ because the prong-matching commutes with hyperelliptic involutions. Therefore $\cC$ is also unique in this case. 

Now suppose that $\cP$ fixes $2g+2$ marked points. By relabeling the poles if necessary, we may assume that $p_1$ is one of the fixed poles. First, assume that $\cR_g(\mu)$ is a \MIN stratum. So it is $2g$-dimensional. By \Cref{double}, there exists a flat surface $X\in \cC$ that contains a pair of parallel saddle connections with multiplicity two bounding the polar domain of $p_1$. By shrinking them, we obtain a multi-scale differential $\overline{X}\in \partial\overline{\cC}$. The bottom level component $X_{-1}$ contains $p_1$, and is contained in the stratum $\cR_0(a,-b_1;-\frac{a-b_1}{2}-1,-\frac{a-b_1}{2}-1)$. By hyperellipticity of $\overline{X}$, the component $X_{-1}$ is uniquely determined up to re-scaling the differential. Specifically, an element of this stratum is determined by two angles of the polar domain of $p_1$, and it is hyperelliptic if and only if two angles are equal to each other. 

The top level component $X_0$ is a \NMIN hyperelliptic flat surface of genus $g-1$ with $2g$ fixed marked points. It is contained in a hyperelliptic component of a stratum of dimension $2g-1$, whose ramification profile $\cP_0$ is induced by $\cP$. By induction hypothesis, $X_0$ is contained in a unique connected component with ramification profile $\cP_0$. By hyperellipticity of $\cC$, there is a unique possible prong-matching equivalence class of $\overline{X}$ because the prong-matching commutes with hyperelliptic involutions. Combining this with the uniqueness of the connected components containing $X_0$ and $X_{-1}$, we can conclude that $\cC$ is also unique for given $\cP$.
\begin{figure}
    \centering
    \tikzset{every picture/.style={line width=0.75pt}} 

\begin{tikzpicture}[x=0.75pt,y=0.75pt,yscale=-1,xscale=1]

\draw    (324.67,126.83) -- (335.67,111.83) ;
\draw    (345.17,126.33) -- (335.67,111.83) ;
\draw    (335.64,70.34) -- (335.36,103.8) ;
\draw    (319.17,25.33) -- (335.92,44.33) ;
\draw    (349.17,24.83) -- (335.92,44.33) ;
\draw   (331.67,107.83) .. controls (331.67,105.62) and (333.46,103.83) .. (335.67,103.83) .. controls (337.88,103.83) and (339.67,105.62) .. (339.67,107.83) .. controls (339.67,110.04) and (337.88,111.83) .. (335.67,111.83) .. controls (333.46,111.83) and (331.67,110.04) .. (331.67,107.83) -- cycle ;
\draw    (136.17,26.33) -- (152.92,45.33) ;
\draw    (166.17,25.83) -- (152.92,45.33) ;
\draw    (140.67,86.83) -- (151.67,71.83) ;
\draw    (161.17,86.33) -- (151.67,71.83) ;
\draw    (186.36,60.3) -- (251.36,59.81) ;
\draw [shift={(253.36,59.8)}, rotate = 179.57] [color={rgb, 255:red, 0; green, 0; blue, 0 }  ][line width=0.75]    (10.93,-3.29) .. controls (6.95,-1.4) and (3.31,-0.3) .. (0,0) .. controls (3.31,0.3) and (6.95,1.4) .. (10.93,3.29)   ;
\draw    (319.93,284.07) -- (330.93,269.07) ;
\draw    (340.43,283.57) -- (330.93,269.07) ;
\draw    (330.91,227.58) -- (330.63,261.04) ;
\draw    (314.43,182.57) -- (331.18,201.57) ;
\draw    (344.43,182.07) -- (331.18,201.57) ;
\draw   (326.93,265.07) .. controls (326.93,262.86) and (328.72,261.07) .. (330.93,261.07) .. controls (333.14,261.07) and (334.93,262.86) .. (334.93,265.07) .. controls (334.93,267.28) and (333.14,269.07) .. (330.93,269.07) .. controls (328.72,269.07) and (326.93,267.28) .. (326.93,265.07) -- cycle ;
\draw    (135.43,182.24) -- (152.18,201.24) ;
\draw    (165.43,181.74) -- (152.18,201.24) ;
\draw    (139.93,242.74) -- (150.93,227.74) ;
\draw    (160.43,242.24) -- (150.93,227.74) ;
\draw    (344.75,255.03) -- (330.93,261.07) ;
\draw    (191.96,214.54) -- (256.96,214.05) ;
\draw [shift={(258.96,214.04)}, rotate = 179.57] [color={rgb, 255:red, 0; green, 0; blue, 0 }  ][line width=0.75]    (10.93,-3.29) .. controls (6.95,-1.4) and (3.31,-0.3) .. (0,0) .. controls (3.31,0.3) and (6.95,1.4) .. (10.93,3.29)   ;
\draw    (39.25,304.44) -- (682.45,306.04) ;
\draw    (40.45,151.64) -- (683.65,153.24) ;

\draw    (335.07, 57.57) circle [x radius= 12.9, y radius= 12.9]   ;
\draw (335.07,57.57) node  [font=\Large] [align=left] {$\displaystyle g$};
\draw (335.16,30.33) node   [align=left] {...};
\draw (322.49,133.57) node    {$z_{1}$};
\draw (346.49,134.07) node    {$z_{2}$};
\draw (295.24,58.57) node    {$\sigma _{0} \curvearrowright $};
\draw    (152.07, 58.57) circle [x radius= 12.9, y radius= 12.9]   ;
\draw (152.07,58.57) node  [font=\Large] [align=left] {$\displaystyle g$};
\draw (152.16,31.33) node   [align=left] {...};
\draw (114.24,59.57) node    {$\sigma \curvearrowright $};
\draw (138.49,93.57) node    {$z_{1}$};
\draw (162.49,94.07) node    {$z_{2}$};
\draw (366,49.75) node [anchor=north west][inner sep=0.75pt]    {$\in \mathcal{C}_{\mathcal{P}_{0}}$};
\draw (367,101.25) node [anchor=north west][inner sep=0.75pt]    {$\in \mathcal{H}_{0}( a,a,-2a-2)$};
\draw (294.58,107.24) node    {$\sigma _{-1} \curvearrowright $};
\draw (198.87,42.98) node [anchor=north west][inner sep=0.75pt]   [align=left] {shrink};
\draw    (330.34, 214.81) circle [x radius= 12.9, y radius= 12.9]   ;
\draw (330.34,214.81) node  [font=\Large] [align=left] {$\displaystyle g$};
\draw (330.42,187.58) node   [align=left] {...};
\draw (317.76,290.81) node    {$z_{1}$};
\draw (341.76,291.31) node    {$z_{2}$};
\draw    (151.34, 214.48) circle [x radius= 12.9, y radius= 12.9]   ;
\draw (151.34,214.48) node  [font=\Large] [align=left] {$\displaystyle g$};
\draw (151.42,187.24) node   [align=left] {...};
\draw (113.51,215.48) node    {$\sigma \curvearrowright $};
\draw (137.76,249.48) node    {$z_{1}$};
\draw (161.76,249.98) node    {$z_{2}$};
\draw (347.03,249.67) node  [font=\scriptsize]  {$p_{1}$};
\draw (205.93,196.49) node [anchor=north west][inner sep=0.75pt]   [align=left] {shrink};
\draw (293.48,214.72) node    {$\sigma _{0} \curvearrowright $};
\draw (290.64,268.22) node    {$\sigma _{-1} \curvearrowright $};
\draw (361.73,205.7) node [anchor=north west][inner sep=0.75pt]    {$\in \mathcal{C}_{\mathcal{P}_{0}}$};
\draw (360.57,259.03) node [anchor=north west][inner sep=0.75pt]    {$\in \mathcal{R}_{0}( a,a,-b_{1} ,b_{1} -2a-2)$};
\draw (42.93,281.6) node [anchor=north west][inner sep=0.75pt]    {$2g+2$};
\draw (88.13,281) node [anchor=north west][inner sep=0.75pt]   [align=left] {fixed marked points};
\draw (39.73,128.8) node [anchor=north west][inner sep=0.75pt]    {$< 2g+2$};
\draw (103.33,128.2) node [anchor=north west][inner sep=0.75pt]   [align=left] {fixed marked points};
\draw (219.3,70.5) node   [align=left] {simple s.c.};
\draw (224.97,224.5) node   [align=left] {pair of s.c.};

\end{tikzpicture} 
    \caption{Navigating in the boundary of a \MIN hyperelliptic component} \label{fig803}
\end{figure}
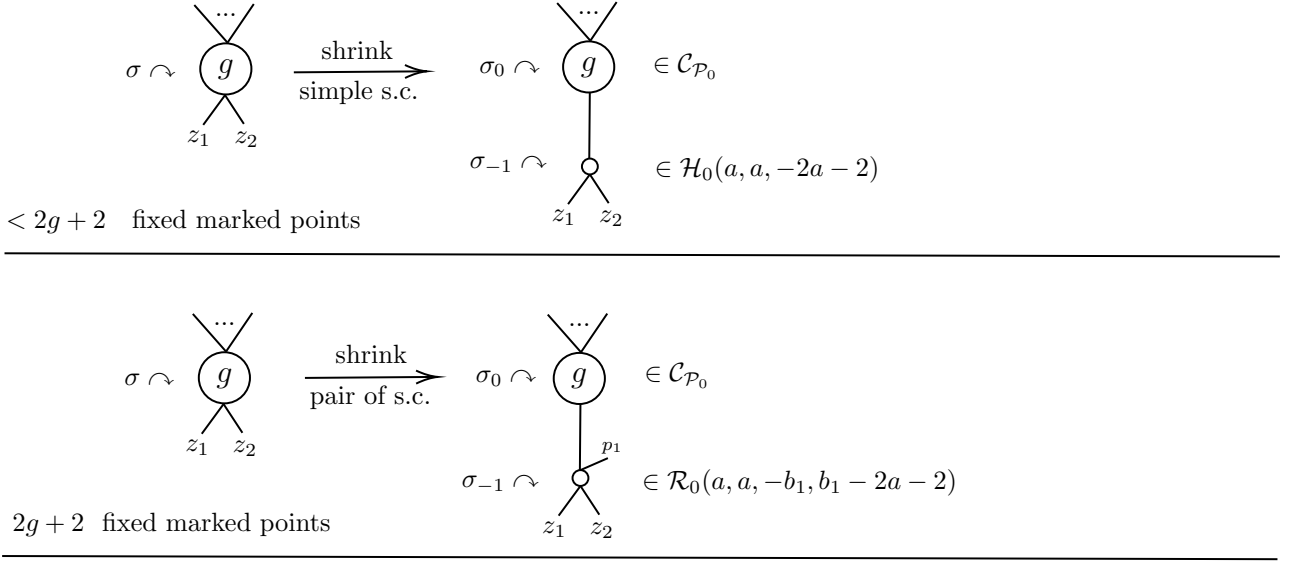

Now assume that $\cR_g(\mu)$ is a \NMIN stratum. So it is $(2g+1)$-dimensional. By \Cref{double}, there exists a flat surface $X\in \cC$ that contains a pair of parallel saddle connections with multiplicity two bounding the polar domain of $p_1$. By shrinking them, we obtain $\overline{X}\in \partial\overline{\cC}$. See \Cref{fig803} for the description of the level graph of $\overline{X}$. The bottom level component $X_{-1}$ contains $p_1$, and is contained in the stratum $\cR_0(a,a,-b_1,b_1-2a-2)$. Up to re-scaling the differential, $X_{-1}$ is a unique hyperelliptic flat surface in $\cR_0(a,a,-b_1,b_1-2a-2)$. The top level component $X_0$ is a \MIN hyperelliptic flat surface of genus $g$ with $2g+2$ fixed marked points. It is contained in a hyperelliptic component of dimension $2g$, whose ramification profile $\cP_0$ is induced by $\cP$. By induction hypothesis, there exists a unique connected component with ramification profile $\cP_0$. Since there is a unique node, there is also a unique prong-matching equivalence class. Therefore we can conclude that $\cC$ is unique for given $\cP$. 
\begin{figure}
    \centering
    \tikzset{every picture/.style={line width=0.75pt}} 

\begin{tikzpicture}[x=0.75pt,y=0.75pt,yscale=-1,xscale=1]

\draw   (372.17,306.51) .. controls (372.17,304.3) and (373.96,302.51) .. (376.17,302.51) .. controls (378.38,302.51) and (380.17,304.3) .. (380.17,306.51) .. controls (380.17,308.72) and (378.38,310.51) .. (376.17,310.51) .. controls (373.96,310.51) and (372.17,308.72) .. (372.17,306.51) -- cycle ;
\draw    (376.17,327.51) -- (376.17,310.51) ;
\draw    (374.58,260.84) .. controls (362.85,273.77) and (363.52,285.77) .. (376.17,302.51) ;
\draw    (374.58,260.84) .. controls (387.52,271.77) and (387.52,289.11) .. (376.17,302.51) ;
\draw    (358.83,212.84) -- (375.58,231.84) ;
\draw    (388.83,212.34) -- (375.58,231.84) ;
\draw    (138.17,212.67) -- (154.92,231.67) ;
\draw    (168.17,212.17) -- (154.92,231.67) ;
\draw    (153.83,274.77) -- (153.83,257.77) ;
\draw    (188.7,246.64) -- (253.7,246.15) ;
\draw [shift={(255.7,246.14)}, rotate = 179.57] [color={rgb, 255:red, 0; green, 0; blue, 0 }  ][line width=0.75]    (10.93,-3.29) .. controls (6.95,-1.4) and (3.31,-0.3) .. (0,0) .. controls (3.31,0.3) and (6.95,1.4) .. (10.93,3.29)   ;
\draw    (387.82,295.46) -- (376.17,302.51) ;
\draw   (369.57,121.51) .. controls (369.57,119.3) and (371.36,117.51) .. (373.57,117.51) .. controls (375.78,117.51) and (377.57,119.3) .. (377.57,121.51) .. controls (377.57,123.72) and (375.78,125.51) .. (373.57,125.51) .. controls (371.36,125.51) and (369.57,123.72) .. (369.57,121.51) -- cycle ;
\draw    (373.57,142.51) -- (373.57,125.51) ;
\draw    (371.98,75.84) .. controls (360.25,88.77) and (360.92,100.77) .. (373.57,117.51) ;
\draw    (371.98,75.84) .. controls (384.92,86.77) and (384.92,104.11) .. (373.57,117.51) ;
\draw    (356.23,27.84) -- (372.98,46.84) ;
\draw    (386.23,27.34) -- (372.98,46.84) ;
\draw    (138.23,25.68) -- (154.98,44.68) ;
\draw    (168.23,25.18) -- (154.98,44.68) ;
\draw    (153.9,87.77) -- (153.9,70.77) ;
\draw    (186.43,59.97) -- (251.43,59.49) ;
\draw [shift={(253.43,59.47)}, rotate = 179.57] [color={rgb, 255:red, 0; green, 0; blue, 0 }  ][line width=0.75]    (10.93,-3.29) .. controls (6.95,-1.4) and (3.31,-0.3) .. (0,0) .. controls (3.31,0.3) and (6.95,1.4) .. (10.93,3.29)   ;
\draw    (29.33,383.24) -- (672.53,384.84) ;
\draw    (30.05,198.44) -- (673.25,200.04) ;

\draw (374.82,216.34) node   [align=left] {...};
\draw (376.82,333.51) node    {$z$};
\draw (301.08,243.91) node    {$\sigma _{0} \curvearrowright $};
\draw (412.33,235.09) node [anchor=north west][inner sep=0.75pt]    {$\in \mathcal{C}_{\mathcal{P}_{0}}$};
\draw    (154.07, 244.91) circle [x radius= 12.9, y radius= 12.9]   ;
\draw (154.07,244.91) node  [font=\Large] [align=left] {$\displaystyle g$};
\draw (154.16,217.68) node   [align=left] {...};
\draw (116.24,245.91) node    {$\sigma \curvearrowright $};
\draw (154.49,280.77) node    {$z$};
\draw (219.52,231.59) node   [align=left] {shrink};
\draw (303.24,298.91) node    {$\sigma _{-1} \curvearrowright $};
\draw (392.1,288.1) node  [font=\scriptsize]  {$p_{1}$};
\draw (410.67,287.42) node [anchor=north west][inner sep=0.75pt]    {$\in \mathcal{R}_{0}\left( a,-b_{1} ;-\frac{a-b_{1}}{2} -1,\frac{a-b_{1}}{2} -1\right)$};
\draw    (372.77, 61.24) circle [x radius= 14.12, y radius= 14.12]   ;
\draw (372.77,61.24) node  [font=\scriptsize,color={rgb, 255:red, 255; green, 255; blue, 255 }  ,opacity=1 ] [align=left] {$\displaystyle g-1$};
\draw (372.22,31.34) node   [align=left] {...};
\draw (374.22,148.51) node    {$z$};
\draw    (154.14, 57.91) circle [x radius= 12.9, y radius= 12.9]   ;
\draw (154.14,57.91) node  [font=\Large] [align=left] {$\displaystyle g$};
\draw (154.22,30.68) node   [align=left] {...};
\draw (116.31,58.91) node    {$\sigma \curvearrowright $};
\draw (154.56,93.77) node    {$z$};
\draw (217.25,46.93) node   [align=left] {shrink};
\draw (336,86.14) node [anchor=north west][inner sep=0.75pt]    {$Pr$};
\draw (300.88,60.31) node    {$\sigma _{0} \curvearrowright $};
\draw (411.13,49.49) node [anchor=north west][inner sep=0.75pt]    {$\in \mathcal{C}_{\mathcal{P}_{0}}$};
\draw (304.04,115.31) node    {$\sigma _{-1} \curvearrowright $};
\draw (412.47,108.82) node [anchor=north west][inner sep=0.75pt]    {$\in \mathcal{H}_{0}\left( a,-\frac{a}{2} -1,-\frac{a}{2} -1\right)$};
\draw (54.59,365.4) node    {$2g+2$};
\draw (135.03,365.8) node   [align=left] {fixed marked points};
\draw (58.97,180.6) node    {$< 2g+2$};
\draw (146.96,181) node   [align=left] {fixed marked points};
\draw (170,106) node [anchor=north west][inner sep=0.75pt]    {$X$};
\draw (367,167) node [anchor=north west][inner sep=0.75pt]    {$\overline{X}$};
\draw (180,289) node [anchor=north west][inner sep=0.75pt]    {$X$};
\draw (370,353) node [anchor=north west][inner sep=0.75pt]    {$\overline{X}$};
\draw (220.3,69.83) node   [align=left] {simple s.c.};
\draw (223.3,259.23) node   [align=left] {pair of s.c.};
\draw    (374.84, 246.47) circle [x radius= 14.12, y radius= 14.12]   ;
\draw (374.84,246.47) node  [font=\scriptsize,color={rgb, 255:red, 255; green, 255; blue, 255 }  ,opacity=1 ] [align=left] {$\displaystyle g-1$};
\draw (376.95,61) node    {$-1$};
\draw (365.95,62) node    {$g$};
\draw (377.95,245) node    {$-1$};
\draw (366.95,246) node    {$g$};
\draw (337,273.14) node [anchor=north west][inner sep=0.75pt]    {$Pr$};

\end{tikzpicture} 
    \caption{Navigating in the boundary of a \NMIN hyperelliptic component} \label{fig804}
\end{figure}
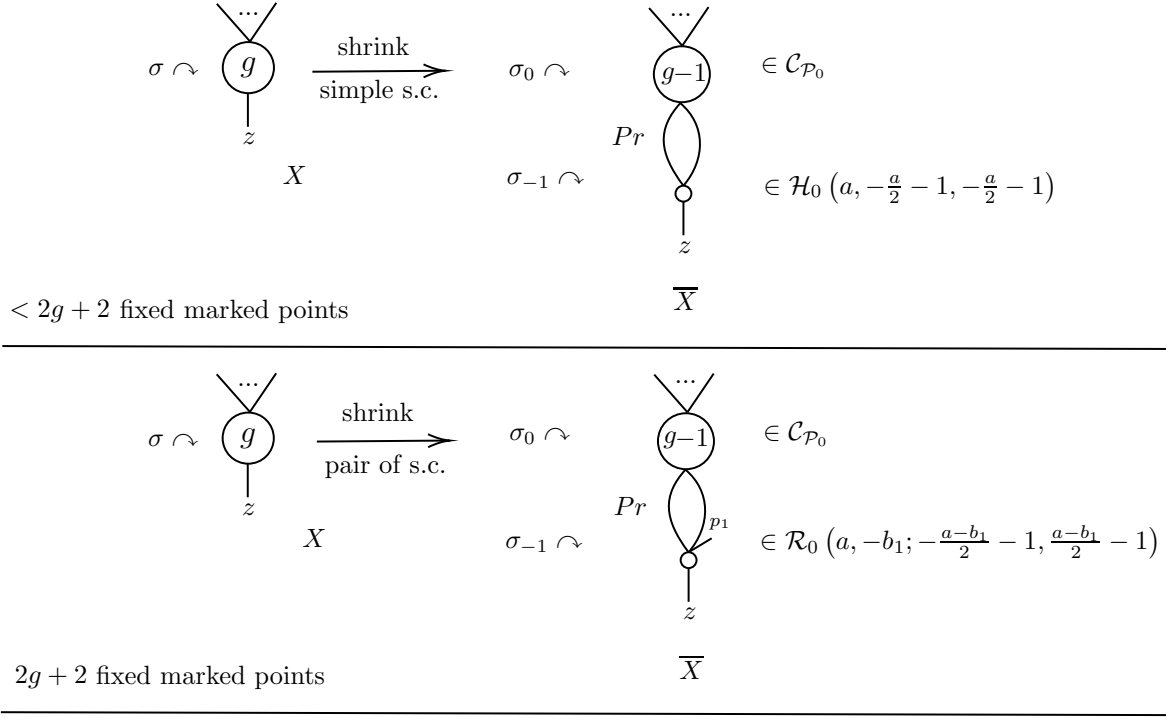
\end{proof}

\section{Genus one multiple-zero strata} \label{sec:g1n}

In this section, we will complete the proof of \Cref{main2}. In \cite{kozo1} and \cite{boissy}, the first step in enumerating the connected components of multiple-zero strata is proving that each of their connected components is adjacent to a connected component of the \MIN stratum, obtained by merging all zeroes. Since merging zeroes can be done by a combination of a certain $GL^{+}(2,\mathbb{R})$ action and a local isoperiodic surgery, it does not affect the residue conditions. So we will follow the same strategy here for the residueless stratum $\cR_g(\mu)$. In \Cref{subsec:merge}, we will prove that every non-hyperelliptic component of $\cR_g(\mu)$, except for the special case $\cR_1(2n,-2^n)$, is adjacent to a non-hyperelliptic component of a \MIN stratum. Then we will classify all non-hyperelliptic components of genus one multiple-zero strata in \Cref{subsec:g1n}.

\subsection{Merging zeroes} \label{subsec:merge}

For a given $\mu=(a_1,\dots,a_m,-b_1,\dots,-b_n)$, we denote $a\coloneqq a_1+\dots+a_m$ and $\mu'\coloneqq (a, -b_1,\dots, -b_n)$. Recall that we assume $b_1\leq \dots\leq b_n$ by default. Also, recall that a connected component $\cD$ of $\cR_g(\mu)$ is {\em adjacent} to a connected component $\cC$ of $\cR_g(\mu')$ if some flat surface in $\cD$ can be obtained by breaking up a zero from some flat surface in $\cC$. In other words, $\cD$ is adjacent to $\cC$ if some flat surface in $\cC$ can be obtained by merging all zeroes of some flat surface in $\cD$. 

We can define a {\em breaking up the zero} map
$$
B:\{\text{non-hyperelliptic components of }\cR_g(\mu')\} \to \{\text{non-hyperelliptic components of }\cR_g(\mu)\}
$$ 
by $B(\cC)=\cD$ if $\cD$ is adjacent to $\cC$. The goal of this subsection is to prove that $B$ is surjective when $g>1$, or $g=1$ and $b_n>2$. In other words,

\begin{proposition} \label{nonhypermerge}
Assume that $g>1$, or $b_n>2$. Let $\cD$ be a non-hyperelliptic component of a stratum $\cR_g (\mu)$ of genus $g>0$. Then $\cD$ is adjacent to a non-hyperelliptic component of the \MIN stratum $\cR_g (\mu')$.
\end{proposition}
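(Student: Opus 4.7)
The plan is to proceed by induction on $m$, the number of marked zeros. The base case $m=1$ is immediate, as $\mu=\mu'$ and $\cD$ is its own non-hyperelliptic \MIN component. In the inductive step, I will find a pair of zeros $z_i, z_j$ and a flat surface $X \in \cD$ with a multiplicity-one saddle connection between them such that merging $z_i, z_j$ via \Cref{merging} produces a flat surface in a \emph{non-hyperelliptic} component $\cD'' \subset \cR_g(\mu'')$, where $\mu''$ is obtained from $\mu$ by replacing $a_i, a_j$ with their sum. The inductive hypothesis applied to $\cD''$ (which has $m-1$ zeros) then yields a non-hyperelliptic component of $\cR_g(\mu')$ adjacent to $\cD''$, hence to $\cD$. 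The multiplicity-one saddle connection exists by \Cref{simple}: since $\cD$ is non-hyperelliptic, it is in particular not a hyperelliptic component with $2g+2$ fixed marked points.

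The content is in arranging that $\cD''$ is non-hyperelliptic, and this splits into cases. When $m>3$, the stratum $\cR_g(\mu'')$ has more than two zeros, so by \Cref{hyperprofile1} it admits no ramification profile and hence no hyperelliptic components; any choice of merge works. When $m=3$, the stratum $\cR_g(\mu'')$ has two zeros and a hyperelliptic component exists only if those two orders are equal. A combinatorial check over the three possible pairs (using the fact that not all $a_\ell$ can vanish when $g>0$) shows at least one pair yields a target with distinct orders and hence no hyperelliptic component.

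The subtle case is $m=2$. Assume first $a_1=a_2$. Then the unique (up to scaling) bottom-level $\PP^1$-component in $\cH_0(a_1,a_1,-a-2)$ used in the plumbing of \Cref{subsec:breakmerge} carries an involution swapping the two equal-order zeros and fixing the pole, satisfying $\sigma^\star \eta_{-1}=-\eta_{-1}$. Gluing this with the hyperelliptic involution of any hyperelliptic $\cC_h \subset \cR_g(\mu')$ (which fixes the zero identified with the node) yields a hyperelliptic involution on the plumbed surface, so the break-up of $\cC_h$ lies in the hyperelliptic component $\cD_h \subset \cR_g(\mu)$ (which exists by \Cref{mainhyper} since $a_1=a_2$). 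Since merging and breaking up are mutual inverses at the level of (surface, saddle connection) pairs, the merge of our non-hyperelliptic $\cD \neq \cD_h$ cannot be $\cC_h$, so it is non-hyperelliptic, as required.

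The hardest sub-case is $m=2$ with $a_1\neq a_2$ and $a=a_1+a_2$ even, so that $\cR_g(\mu')$ may carry a hyperelliptic $\cC_h$ even though $\cR_g(\mu)$ itself has none. The inverse-pairing argument above no longer forces the merge to avoid $\cC_h$: indeed breaking up $\cC_h$ with unequal parameters $(a_1,a_2)$ already yields a non-hyperelliptic component of $\cR_g(\mu)$, because an involution on the bottom $\PP^1$ would need to fix three distinct points (pole and two unequal-order zeros) and therefore be the identity, incompatible with $\sigma^\star\omega=-\omega$. Here the hypothesis ``$g>1$ or $b_n>2$'' is decisive: it rules out $\cR_g(\mu') = \cR_1(2n,-2^n)$ (the special case where \emph{every} component is hyperelliptic by \Cref{exception0}) and hence guarantees non-hyperelliptic targets exist. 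To exhibit an adjacency to such a target, the plan is to vary the choice of multiplicity-one saddle connection inside $\cD$ using \Cref{simple}: produce two homologically independent such saddle connections between $z_1$ and $z_2$, shrink each to obtain two distinct two-level multi-scale differentials in $\partial\overline{\cD}$, and argue that their top-level components cannot both carry a hyperelliptic involution. This symmetry-breaking step is the main technical obstacle, and I expect it to be handled by a direct geometric analysis of the prong-matchings between the top-level hyperelliptic involution and the polar domain structure of the bottom $\PP^1$, as in the proofs of \Cref{hyper1} and \Cref{hyper2}.
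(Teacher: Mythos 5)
Your approach for the cases $m>3$, $m=3$, and $m=2$ with $a_1=a_2$ matches the paper's reasoning in substance: merge two zeros via \Cref{merging}, and observe that the resulting single-zero stratum has no ramification profile (if $m-1>2$ or the two remaining zero orders differ), or that equal orders together with a hyperelliptic target would force $\cD$ hyperelliptic by \Cref{hyper1}. These cases are fine, and your combinatorial check over the three pairs when $m=3$ is a clean way to organize what the paper does slightly more cursorily.

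The genuine gap is the case you yourself flag: $m=2$ with $a_1\neq a_2$, where the merge may land in a genuine single-zero hyperelliptic component $\cC_h$. You propose a symmetry-breaking argument — produce two homologically independent multiplicity-one saddle connections, shrink each, and argue that both top-level components cannot be hyperelliptic — and then say explicitly that you do not know how to carry this out. The paper does not do this at all. Its actual argument has two pieces you have no analogue for: first, a dedicated genus-one base case (\Cref{nonhypermerge1}, supported by \Cref{breakinglemma1}, \Cref{breakinglemma2}, \Cref{breakinglemma3}) which proves the statement directly for $g=1$ via a detailed analysis of genus-one multi-scale differentials and their prong-matchings; and second, for $g>1$, a further degeneration of the hyperelliptic single-zero component $\cC_h$ to a two-level differential whose bottom component $Y_{-1}$ has genus one and carries the pole $p_n$, after which one breaks up the zero of $Y_{-1}$ and invokes the induction hypothesis applied to the lower-dimensional genus-one target. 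Crucially, this is why the paper inducts on $\dim_{\mathbb{C}}\cR_g(\mu)=2g+m-1$ rather than on $m$: the reduction drops the genus from $g$ to $1$ while leaving $m=2$, so an induction on $m$ alone — as you set up — would not apply. As written, your proposal therefore establishes the proposition only for $m\geq 3$ (and the degenerate $m=2$, $a_1=a_2$ subcase), and leaves the hardest case unresolved both in content and in the shape of the induction.
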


As a result of \Cref{nonhypermerge}, we obtain an upper bound for the number of non-hyperelliptic components.

\begin{corollary}
Assume that $g>1$ or $b_n>2$. The number of non-hyperelliptic components of $\cR_g (\mu)$ is less than or equal to the number of non-hyperelliptic components of the corresponding \MIN stratum $\cR_g (\mu')$.
\end{corollary}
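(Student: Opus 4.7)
The corollary is essentially an immediate consequence of Proposition \ref{nonhypermerge}, so my approach is a short deduction rather than a new argument.

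The first step is to verify that the map $B$ is well-defined with the stated domain and codomain. Section \ref{subsec:breakmerge} already records that the component resulting from breaking up a zero depends only on the starting component $\cC$ and the chosen integer parameter $a_1$, so iterating the construction (to go from one zero of order $a$ to zeros of orders $a_1,\dots,a_m$) gives a genuine function $B$ rather than merely a relation. Next, I would argue that $B$ sends non-hyperelliptic components to non-hyperelliptic components by a descent of involutions: a hyperelliptic involution of the broken surface $X'$ permutes the new zeros, and merging them back together produces a well-defined involution $\sigma$ of the \MIN surface with $\sigma^{*}\omega = -\omega$, contradicting that $\cC$ was non-hyperelliptic.

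The second step is to invoke Proposition \ref{nonhypermerge}: every non-hyperelliptic component $\cD$ of $\cR_g(\mu)$ is adjacent to some non-hyperelliptic component of $\cR_g(\mu')$, which is exactly the statement that $\cD$ lies in the image of $B$. Hence $B$ is surjective onto the non-hyperelliptic components of $\cR_g(\mu)$, and a surjection between finite sets yields
$$|\{\text{non-hyp.\ components of } \cR_g(\mu)\}| \;\leq\; |\{\text{non-hyp.\ components of } \cR_g(\mu')\}|,$$
as asserted.

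The only minor obstacle in this deduction is the well-definedness of $B$ outlined above; the real difficulty, which I would address in proving Proposition \ref{nonhypermerge} itself, is to exhibit for each non-hyperelliptic $\cD$ a flat surface in $\cD$ admitting a multiplicity-one saddle connection joining two distinct zeros (so that \Cref{merging} applies), iterate to merge all zeros, and finally verify that the resulting component in $\cR_g(\mu')$ cannot be hyperelliptic. This last verification should force the hypothesis $g>1$ or $b_n>2$, with the case $\cR_1(2n,-2^n)$ excluded by \Cref{exception0}.
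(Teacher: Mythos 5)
Your proposal is correct and follows the same route the paper takes: the corollary is a one-line consequence of \Cref{nonhypermerge} together with the fact that the breaking-up-a-zero map $B$ is well defined on connected components (recorded in \Cref{subsec:breakmerge}), since a surjection between finite sets gives the inequality. You have correctly identified where the real work lives, namely in \Cref{nonhypermerge}; the paper offers no separate proof of the corollary beyond the sentence preceding it.

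One small point worth tightening: the ``descent of involutions'' argument for why $B$ lands in non-hyperelliptic components is a bit informal, since a hyperelliptic involution of a broken surface $X'$ is not literally an involution of a different surface $X$ obtained by merging. The clean mechanism is \Cref{hyper1}: if $B(\cC)=\cD$ were hyperelliptic, the two-level multi-scale differential $\overline{Y}\in\partial\overline{\cD}$ obtained by shrinking the merging saddle connection would have both levels hyperelliptic, and the top level lies in $\cC$, a contradiction. (When $m>2$ or $a_1\neq a_2$ the stratum $\cR_g(\mu)$ has no ramification profile, hence no hyperelliptic components, so the claim is vacuous there.) Alternatively, one can sidestep this entirely: \Cref{nonhypermerge} assigns to each non-hyperelliptic $\cD$ some non-hyperelliptic $\cC$ with $B(\cC)=\cD$, and well-definedness of $B$ forces this assignment to be injective, which already gives the count without ever discussing the codomain of $B$. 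Either phrasing is fine; your overall deduction is sound.
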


We will prove \Cref{nonhypermerge} by induction on $\dim \cR_g (\mu) = 2g+m-1>2$. First, we deal with the base case, when $(g,m)=(1,2)$ and $b_n>2$. So $B$ is given by breaking up the zero of order $a$ into two zeroes of orders $a_1$ and $a_2$. 

\begin{proposition} \label{nonhypermerge1}
Let $\cD$ be a non-hyperelliptic component of a genus one stratum $\cR_1 (a_1,a_2,-b_1,\dots,-b_n)$ with $b_n>2$. Then $\cD$ is adjacent to a non-hyperelliptic component of the \MIN stratum $\cR_1 (a,-b_1,\dots,-b_n)$.
\end{proposition}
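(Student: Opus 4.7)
The plan is to exhibit a multiplicity one saddle connection joining $z_1$ and $z_2$ in some flat surface of $\cD$, shrink it to merge the two zeroes, and show that the resulting component of the \MIN stratum $\cR_1(\mu')$ is non-hyperelliptic.

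First I would apply \Cref{simple} to produce a flat surface $X \in \cD$ carrying a multiplicity one saddle connection $\gamma$ joining $z_1$ and $z_2$; this is legitimate because $\cD$ is non-hyperelliptic and therefore in particular not a hyperelliptic component with $2g+2 = 4$ fixed marked points. By \Cref{merging}, shrinking $\gamma$ merges the two zeroes into a single zero of order $a = a_1 + a_2$ and produces a flat surface $X' \in \cR_1(\mu')$. Simultaneously, this shrinking yields a two-level multi-scale differential $\overline{X} \in \partial\overline{\cD}$ whose top level is $X'$ and whose bottom level is the unique element (up to scaling) of $\cH_0(a_1,a_2,-a-2)$, joined at a single node with a unique prong-matching equivalence class (as in the breaking-up-a-zero picture of \Cref{subsec:breakmerge}). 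Let $\cC$ be the connected component of $\cR_1(\mu')$ containing $X'$; then by construction $\cD$ is adjacent to $\cC$.

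It remains to show $\cC$ is non-hyperelliptic, which I would prove by contradiction. Suppose $\cC$ is hyperelliptic with involution $\sigma$ fixing the unique zero of $X'$. In the symmetric case $a_1 = a_2$, the bottom component $(\PP^1,\eta_{-1}) \in \cH_0(a_1,a_1,-2a_1-2)$ is itself hyperelliptic via $w \mapsto -w$ on $\PP^1$, which interchanges $z_1,z_2$ and fixes the pole. Both involutions fix the node and are automatically compatible with the unique prong-matching class, so by \Cref{hyper1} the multi-scale differential $\overline{X}$ lies in the boundary of a hyperelliptic component of $\cR_1(\mu)$ with ramification profile interchanging $z_1$ and $z_2$. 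Since $\overline{\cR}_1(\mu)$ is a smooth compactification with normal-crossings boundary, closures of connected components are connected, and therefore $\cD$ must coincide with that hyperelliptic component, contradicting the hypothesis. In the asymmetric case $a_1 \neq a_2$, by \Cref{def:ramiprof} the stratum $\cR_1(\mu)$ admits no ramification profile, so every component of $\cR_1(\mu)$ is automatically non-hyperelliptic and the preceding argument no longer yields a contradiction. Here I would instead use that the rotation number is preserved under merging zeroes, together with the explicit double-cover description of hyperelliptic components of $\cR_1(\mu')$ from the proof of \Cref{hyperprofile1}, to compare the rotation numbers of $\cD$ and of any hypothetical hyperelliptic $\cC$. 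The assumption $b_n > 2$ enters at this point to exclude the exceptional strata of \Cref{main2} such as $\cR_1(2n,-2^n)$ in which the rotation numbers of hyperelliptic and non-hyperelliptic components could collide.

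The main obstacle is the asymmetric case $a_1 \neq a_2$: when the bottom genus-zero component carries no involution, the clean symmetry argument via \Cref{hyper1} fails, and ruling out hyperellipticity of the merged component $\cC$ instead requires carefully tracking the rotation number through the merging degeneration and comparing against the hyperelliptic invariants coming from \Cref{hyperprofile1}, using $b_n > 2$ to avoid the exceptional coincidences in \Cref{main2}.
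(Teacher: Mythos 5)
Your setup is fine for the symmetric case $a_1 = a_2$, and it matches the paper's: shrink a multiplicity-one saddle connection via \Cref{simple} and \Cref{merging}, then invoke \Cref{hyper1} to contradict non-hyperellipticity of $\cD$. But your handling of the asymmetric case $a_1 \neq a_2$ rests on a contradiction that does not exist, and the rotation-number idea you sketch will not rescue it.

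The difficulty is that there is nothing inconsistent about the component $\cC$ obtained by merging being hyperelliptic while $\cD = B(\cC)$ is non-hyperelliptic: when $a_1 \neq a_2$ the stratum $\cR_1(\mu)$ admits no ramification profile, so \emph{every} component of $\cR_1(\mu)$ is non-hyperelliptic, and breaking up the zero of a hyperelliptic flat surface of $\cR_1(\mu')$ into two zeroes of distinct orders genuinely destroys the involution. So you cannot hope to prove $\cC$ is non-hyperelliptic; the statement you are trying to establish is the weaker one that $\cD$ is adjacent to \emph{some} non-hyperelliptic component. The paper's proof reflects this: when $\cC$ turns out to be hyperelliptic, it does not derive a contradiction but instead \emph{reroutes}, exhibiting a different component $\cC'$ of $\cR_1(\mu')$ with $B(\cC') = B(\cC) = \cD$ that is non-hyperelliptic. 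This is where the real work lies, and it depends on \Cref{breakinglemma1}, \Cref{breakinglemma2}, and \Cref{breakinglemma3}, which give sufficient combinatorial conditions on the data $X(t,\tau,{\bf C},Pr)$ for two distinct boundary components of $\cR_1(\mu')$ to have the same image under $B$. The hypothesis $b_n > 2$ enters not to exclude rotation-number collisions, but to ensure the ramification profile of a putative hyperelliptic $\cC$ falls into one of the three explicit cases where a breaking lemma produces such a $\cC'$. Your proposed rotation-number comparison also cannot produce a contradiction on its own: rotation numbers are perfectly well defined for hyperelliptic flat surfaces (cf.\ the proof of \Cref{exception1}, where the hyperelliptic component of $\cR_1(2r,-2r)$ has rotation number $r$), and there is no general obstruction to a hyperelliptic component of $\cR_1(\mu')$ sharing a rotation number with a non-hyperelliptic one.
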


Before proving this proposition, we explain the strategy of the proof. Again we move around in the boundary of $\overline{\cD}$, until we end up in $\overline{B(\cC)}$ for some non-hyperelliptic component $\cC$ of $\cR_1 (a,-b_1,\dots,-b_n)$. By \Cref{simple} and \Cref{merging}, we already know that $\cD=B(\cC)$ for some (possibly hyperelliptic) $\cC$. To ensure that we can find a non-hyperelliptic $\cC$, we first need to prove several lemmas that describe the conditions when $B(\cC_1)=B(\cC_2)$ for distinct connected components $\cC_1,\cC_2$ of $\cR_1 (a,-b_1,\dots,-b_n)$. Then for each hyperelliptic component $\cC_1$, we will find a non-hyperelliptic component $\cC_2$ such that $B(\cC_1)=B(\cC_2)$. 

Choose a two-level multi-scale differential ${\bf X}=X(Id,{\bf C},[u,v])\in \partial\overline{\cC}_1$. Then the bottom level component $X_{-1}$ is contained in the stratum $\cH_0(a,-Q_1-1,-Q_2-1)$. This flat surface has distinguished prongs $v^-_1$ and $w^-_1$, at two poles $s_1$ and $s_2$, respectively (See \Cref{fig601}). By breaking up the zero, we obtain $X'_{-1}\in \cH_0(a_1,a_2,-Q_1-1,-Q_2-1)$, and it still has the prongs $v'$ and $w'$, deformed from $v^-_1$ and $w^-_1$. As $X'_{-1}$ deforms in the stratum $\cH_0(a_1,a_2,-Q_1-1,-Q_2-1)$, we can keep track of the prongs at the poles. In other words, we keep the information $(X',v',w')$, where $v',w'$ are the prongs at the poles. The moduli space of this data is called the moduli space of {\em differentials with marked prongs (separatrices)}, which is defined and studied in full generality in \cite{boissy}. Now assume that $X'_{-1}$ has a multiplicity one saddle connection joining $z_1$ and $z_2$, then we can shrink the saddle connection and merge two zeroes, obtaining a flat surface in $\cH_0(a,-Q_1-1,-Q_2-1)$ isomorphic to $X_{-1}$ up to scaling. As a result, the prongs $v'$ and $w'$ will deform to $v^-_i$ and $w^-_j$ of $X_{-1}$ for some $i,j$. Thus we can obtain a multi-scale differential $X(Id,{\bf C},[u-i,v-j])$, in the boundary of a stratum adjacent to $B(\cC_1)$. See \Cref{fig901}.

\begin{figure}
    \centering
    \input{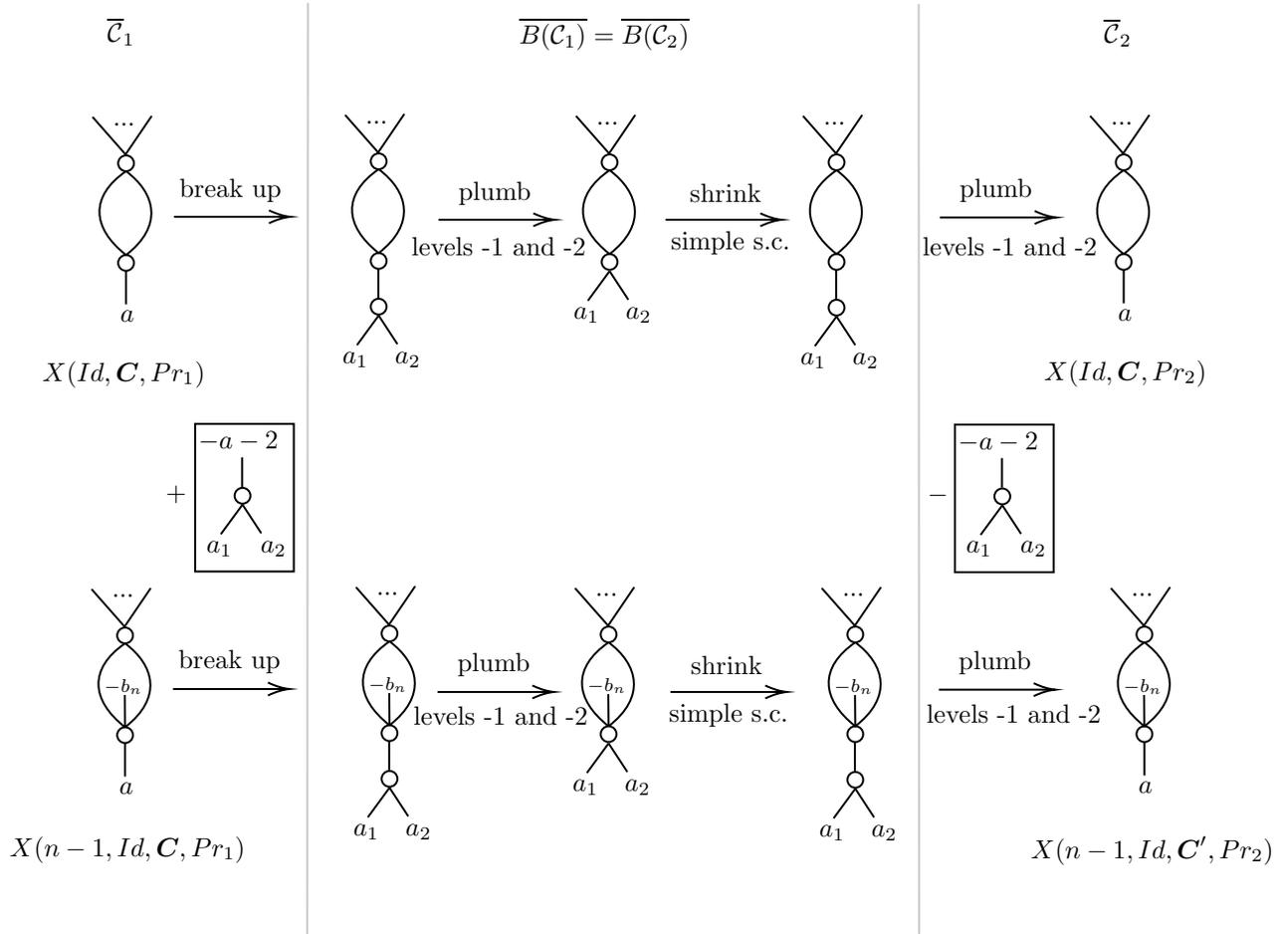} 
    \caption{Strategy of proving $\overline{B(\cC_1)}=\overline{B(\cC_2)}$} \label{fig901}    
\end{figure}

\begin{lemma} \label{breakinglemma1}
Let $\cC_1, \cC_2$ be two connected components of a \MIN stratum $\cR_1(a,-b_1,\dots,-b_n)$. Suppose that $X(Id,{\bf C},[(0,v)])\in \partial\overline{\cC}_1$ and $X(Id,{\bf C},[(0,v+a_1)])\in \partial\overline{\cC}_2$. Then $B(\cC_1)=B(\cC_2)$. 
\end{lemma}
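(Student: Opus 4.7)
The strategy is to construct a continuous path in $\partial \overline{B(\cC_1)}$ that joins $\overline{X}_1 := X(\mathrm{Id}, {\bf C}, [(0,v)])$ to $\overline{X}_2 := X(\mathrm{Id}, {\bf C}, [(0, v+a_1)])$ after breaking up the zero at the bottom level. This will force $B(\cC_1) = B(\cC_2)$.

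Starting from $\overline{X}_1 \in \partial\overline{\cC}_1$, I would apply the zero-breaking surgery of Section \ref{subsec:breakmerge} to the unique zero of the bottom level component $X_{-1} \in \cH_0(a, -Q_1-1, -Q_2-1)$. This yields a three-level multi-scale differential whose bottom level is $(\PP^1, \eta_{-1}) \in \cH_0(a_1, a_2, -a-2)$; plumbing the transition between the two lowest levels produces a two-level multi-scale differential $\overline{Y} = (X_0, Y_{-1})$ in $\partial\overline{B(\cC_1)}$, with $Y_{-1} \in \cH_0(a_1, a_2; -Q_1-1, -Q_2-1)$ inheriting its residues from $X_{-1}$. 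Throughout, I would track the reference prongs $v_1^-$ and $w_{Q_2}^-$ of $X_{-1}$ at the nodes $s_1, s_2$ using the moduli of differentials with marked prongs of \cite{boissy}; they deform to distinguished prongs $v', w'$ on $Y_{-1}$.

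The crucial geometric observation is that $Y_{-1}$ carries exactly two multiplicity-one parallel saddle connections $\gamma_1, \gamma_2$ joining $z_1$ to $z_2$: the short one $\gamma_1$ produced by the breaking, and a second one $\gamma_2$ that appears automatically, arising from the polar-domain decomposition of $Y_{-1}$ (the two poles contribute two saddle connections bounding their polar domains, by the analog of \Cref{zerodim1} for a two-pole stratum with prescribed residues). Shrinking $\gamma_1$ and merging the zeros recovers $\overline{X}_1$ exactly, while shrinking $\gamma_2$ instead produces another two-level multi-scale differential $\overline{X}'$ in $\partial \overline{B(\cC_1)}$ whose bottom level is again isomorphic to $X_{-1}$ up to scaling, by rigidity of the projectivized zero-dimensional residue-prescribed stratum $\cH_0(a, -Q_1-1, -Q_2-1)$.

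The last task is combinatorial: to identify $\overline{X}'$. Tracking the reference prongs through the swap $\gamma_1 \rightsquigarrow \gamma_2$, the prong $v'$ still identifies with $v_1^-$ but $w'$ identifies with $w^-_{Q_2 - a_1}$, because the angle swept inside the polar domain of $s_2$ between $\gamma_1$ and $\gamma_2$ equals $2\pi a_1$ while the corresponding angle at $s_1$ vanishes (the polar domain of $s_1$ bounded by $\gamma_1$ and $\gamma_2$ is empty). Consequently $\overline{X}' = X(\mathrm{Id}, {\bf C}, [(0, v+a_1)]) = \overline{X}_2 \in \partial\overline{\cC}_2$, so $B(\cC_1) = B(\cC_2)$. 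The main obstacle will be verifying the precise shift $2\pi a_1$ in the polar domain of $s_2$; this should reduce to an explicit calculation of the cone angles at $z_1, z_2$ in the breaking-up differential $(\PP^1, \eta_{-1}) \in \cH_0(a_1, a_2, -a-2)$ (whose unique saddle connection has angles $2\pi(a_1+1)$ and $2\pi(a_2+1)$ at the two zeros) and their propagation through the plumbing, using the explicit standard coordinates around the plumbed node from Section \ref{sec:msc}.
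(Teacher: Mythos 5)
Your overall strategy is the same as the paper's: break up the zero of $X_{-1}$, use \Cref{suitableprong} to arrange the prong-matching so that both the new saddle connection $\alpha=\gamma_1$ (from the breaking piece in $\cH_0(a_1,a_2,-a-2)$) and the old one $\beta=\gamma_2$ (from $X_{-1}$) join $z_1$ to $z_2$ in the plumbed surface, shrink $\gamma_2$ instead of $\gamma_1$, and track the reference prongs using the moduli of differentials with marked separatrices from \cite{boissy}. This is exactly the paper's proof, and your final conclusion that one lands on $X(\mathrm{Id},{\bf C},[(0,v+a_1)])$ is correct.

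However, the geometric justification you offer for the prong shift contains a genuine error. You claim that ``the polar domain of $s_1$ bounded by $\gamma_1$ and $\gamma_2$ is empty'' and that the corresponding swept angle vanishes. This is impossible. In $Y_{-1}\in\cH_0(a_1,a_2,-Q_1-1,-Q_2-1)$, the two saddle connections $\gamma_1,\gamma_2$ bound precisely the two polar domains of $s_1$ and of $s_2$, and each contains a pole of order at least $2$. Writing $2\pi C_i$, $2\pi D_i$ for the angles at $z_i$ inside the $s_1$- and $s_2$-polar domains respectively, one has $C_1+C_2=Q_1+1$, $D_1+D_2=Q_2+1$, $C_1+D_1=a_1+1$, $C_2+D_2=a_2+1$, with every $C_i,D_i\ge 1$; none of these angles can vanish. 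Moreover, the paper (see \Cref{fig902}) records the shift at $s_1$, namely $v^-_1\mapsto v^-_{a_1+1}$, with the $s_2$-prong unchanged — not at $s_2$ as you assert. Your version happens to produce the same prong-matching equivalence class $(0,v+a_1)$ under the level rotation action, so the conclusion survives, but the intermediate claim driving it is false. A correct bookkeeping should trace the $2\pi(a_1+1)$ worth of cone angle at $z_1$ that gets inserted between the two sides of $\beta$ during the breaking surgery, and deduce the $a_1$-prong shift directly at $s_1$.
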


\begin{proof}
Let $\overline{X}=X(Id,{\bf C},[(0,v)])$ and $\beta$ be the unique saddle connection in $X_{-1}$. By breaking up the zero of $X_{-1}$, we obtain a flat surface with a new saddle connection $\alpha$ joining the two distinct zeroes $z_1,z_2$. The saddle connection $\beta$ also becomes a saddle connection, also denoted by $\beta$, in the new flat surface. By \Cref{suitableprong}, we may assume that $\beta$ is joining $z_1,z_2$. By shrinking $\beta$, we obtain a flat surface isomorphic to $X_{-1}$, up to scaling. Along this deformation, we obtain $(X_{-1},v^-_{a_1+1},w^-_1)$ from $(X_{-1},v^-_1,w^-_1)$, as illustrated in \Cref{fig902}. Therefore, we obtain $X(Id,{\bf C},[(0,v+a_1)])$ in the boundary of a stratum adjacent to $B(\cC_1)$. Thus $B(\cC_1)=B(\cC_2)$.
\end{proof}

\begin{figure}
    \centering
    \input{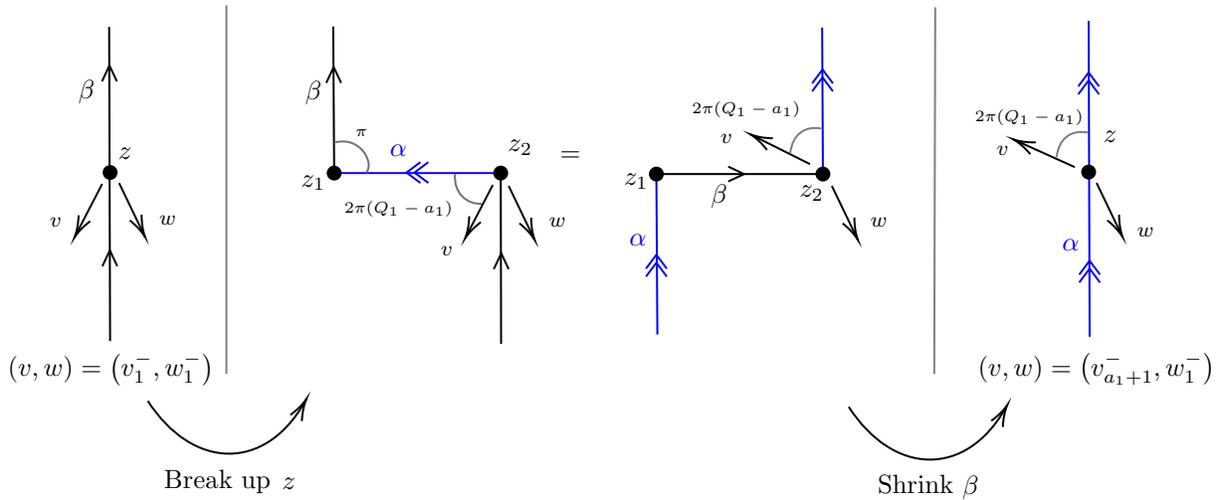} 
    \caption{Prongs and saddle connections along the deformation in $\overline{B(\cC)}$} \label{fig902}
\end{figure}

\begin{lemma} \label{breakinglemma2}
Let $\cC_1, \cC_2$ be two connected components of a \MIN stratum $\cR_1(a,-b_1,\dots,-b_n)$. Suppose that $b_n=2$ and $X(n-1,Id,{\bf C},[(0,v)])\in \partial\overline{\cC}_1$. Note that the pole $p_n$ of order $2$ is contained in the bottom level component.

(1) Suppose that $a_1<\frac{a}{2}-1$ or $Q_1< Q_2$. If $X(n-1,Id,{\bf C},[(0,v+a_1)])\in \partial\overline{\cC}_2$, then $B(\cC_1)=B(\cC_2)$. 

(2) Suppose that $a_1=\frac{a}{2}-1$ and $Q_1= Q_2=\frac{a}{2}-1$. If $X(n-1,Id,{\bf C},[(0,v-2)])\in \partial\overline{\cC}_2$, then $B(\cC_1)=B(\cC_2)$. 
\end{lemma}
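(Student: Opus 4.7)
The proof adapts the strategy of \Cref{breakinglemma1} to account for the residueless pole $p_n$ of order $b_n=2$ residing on the bottom level. The bottom level component is $X_{-1}\in\cR_0(a,-b_n;-Q_1-1,-Q_2-1)$, which by \Cref{zerodim2} is zero-dimensional projectively and hence uniquely determined up to scaling: it consists of two parallel saddle connections $\beta_1,\beta_2$ joining $z$ to itself and bounding the polar domain of $p_n$. Breaking up the zero $z$ yields a flat surface $X'_{-1}\in\cR_0(a_1,a_2,-b_n;-Q_1-1,-Q_2-1)$, which is a one-parameter projectivized stratum in which a new short saddle connection $\alpha$ joins $z_1$ and $z_2$, while each $\beta_i$ persists as a saddle connection of the broken-up surface.

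For case (1), the plan is to follow \Cref{breakinglemma1} verbatim. Using \Cref{suitableprong}, we plumb the top component $X_0$ with $X'_{-1}$ so that one of the $\beta_i$, say $\beta_1$, becomes a saddle connection joining $z_1$ and $z_2$ in the nearby smooth surface in $B(\cC_1)$. Shrinking $\beta_1$ recovers $X_{-1}$ up to scaling, with the marked prong $v^-_1$ at $s_1$ replaced by $v^-_{a_1+1}$ while the prong at $s_2$ is unchanged. This produces the multi-scale differential $X(n-1,Id,\boldsymbol{C},[(0,v+a_1)])$ in the boundary of a stratum adjacent to $B(\cC_1)$, so $B(\cC_1)=B(\cC_2)$. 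The hypothesis $a_1<\frac{a}{2}-1$ or $Q_1<Q_2$ is what prevents the symmetric configuration from case (2), guaranteeing that this is the unique way to shrink back to $X_{-1}$.

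For case (2), the plan is more subtle. Under the symmetric hypothesis $a_1=\frac{a}{2}-1$ and $Q_1=Q_2=\frac{a}{2}-1$, the equality $a_1=Q_1=Q_2$ combined with the $\ZZ/2$ symmetry swapping $s_1\leftrightarrow s_2$ of $X_{-1}$ gives the one-parameter family $\PP\cR_0(a_1,a_2,-2;-Q_1-1,-Q_2-1)$ an additional degeneration direction. We will identify a saddle connection in $X'_{-1}$, distinct from $\beta_1$ and $\beta_2$ and crossing the polar domain of $p_n$, whose shrinking also recovers $X_{-1}$ (up to scaling) but with a different prong shift: at $s_2$ the prong is shifted by $-a_2\equiv -2\pmod{Q_2}$ rather than by $a_1$. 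Plumbing and then shrinking this alternative saddle connection yields $X(n-1,Id,\boldsymbol{C},[(0,v-2)])$ in the boundary of a stratum adjacent to $B(\cC_1)$, and thus $B(\cC_1)=B(\cC_2)$.

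The main obstacle is case (2): explicitly locating the alternative saddle connection in the broken-up surface $X'_{-1}$ and verifying that its shrinking produces $X_{-1}$ with exactly the prong shift $-2$. The role of the order-2 pole $p_n$ is crucial, as it provides the geometric room between $\beta_1$ and $\beta_2$ through which the alternative saddle connection can pass, and the precise shift of $-a_2$ depends on counting prongs around $s_2$ after this shrinking operation. The numerical coincidence $a_1=Q_1$ is what makes this alternative degeneration exist in the first place, paralleling the degenerate hyperelliptic-type phenomena seen elsewhere in the classification.
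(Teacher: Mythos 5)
Your case (1) is essentially the paper's argument: after breaking up the zero with a suitable prong-matching, one of the $\beta_i$ becomes a multiplicity-one saddle connection joining $z_1$ and $z_2$ while the other joins $z_2$ to itself; shrinking the former recovers $X_{-1}$ with a prong shift, giving $[(0,v+a_1)]$. (The paper shrinks $\beta_2$ and tracks the shift at $s_2$ rather than at $s_1$, but that is equivalent up to the level rotation action.)

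Your case (2) has a genuine gap. You propose finding a saddle connection in $X'_{-1}$, \emph{distinct} from $\beta_1,\beta_2$, \emph{crossing} the polar domain of $p_n$, whose single shrinking recovers $X_{-1}$ with a prong shift of $-2$. No such saddle connection exists. First, saddle connections lie in the core $C(X'_{-1})$, and by \Cref{degenerate} the core is disjoint from the interiors of the polar domains; nothing can ``cross'' the polar domain of $p_n$. Second, shrinking a single multiplicity-one saddle connection of $X'_{-1}$ recovers $X_{-1}$ precisely when its class is the vanishing class of the break-up, and that class is represented only by $\alpha$; shrinking $\alpha$ simply undoes the break-up with the prong shift you already used, not a shift of $-2$. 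Third, and this is exactly what distinguishes case (2) from case (1): under $a_1=Q_1=Q_2=\frac{a}{2}-1$, for \emph{every} prong-matching $t\neq\frac{a}{2}$ the two saddle connections $\beta_1,\beta_2$ both join $z_1$ and $z_2$ and are homologous, so they form a multiplicity-two family. You cannot shrink one while keeping the other — the degeneration is forced to collapse both simultaneously. The paper's proof does exactly this: shrinking $\beta_1$ and $\beta_2$ together produces a deeper boundary stratum $\overline{Y}$ whose top two levels form $X(n-1,Id,\mathbf{C},[(0,v+1)])\in\partial\overline{\cR}_1(a-2,-b_1,\dots,-b_{n-1})$ (the pole $p_n$ has been ejected) and whose bottom level is the unique point of $\PP\cR_0(a_1,a_2,-2,-a)$. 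It then checks that the \emph{same} boundary point is reached starting from $X(n-1,Id,\mathbf{C},[(0,v+2)])$, so $\overline{B(\cC_1)}$ and $\overline{B(\cC_2)}$ share a common boundary point. You correctly sensed that the symmetry of case (2) makes the naive argument fail and that $p_n$ plays a special role, but the fix is a deeper degeneration through a three-level graph, not an alternative single saddle connection.
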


\begin{proof}
There are $a+1$ outgoing prongs in $X_{-1}$ at $z$, denoted by $u^+_1,\dots, u^+_{a+1}$. We can label them in the clockwise order so that $\beta_1$ encloses from $u^+_1$ to $u^+_{Q_1}$ and $\beta_2$ encloses from $u^+_{Q_1+2},u^+_a$. When we break up the zero $z$, we identify $z$ with the pole of a flat surface in $\cH_0(a_1,a_2,-a-2)$. There are $a+1$ incoming prongs at the pole, denoted by $u^-_1,\dots,u^-_{a+1}$. We can label them in the counterclockwise order so that $u^-_1,\dots, u^-_{a_1}$ are coming from $z_1$, and the others are coming from $z_2$. The direction of breaking up the zero is equivalent to the choice of a prong-matching at $z$. 

First we prove (1). Suppose that $a_1<\frac{a}{2}-1$ or $Q_1< Q_2$. 

By breaking up the zero, we obtain a flat surface with new saddle connection $\alpha$ joining two distinct zeroes $z_1,z_2$. The saddle connections $\beta_1,\beta_2$ also deform to saddle connections in the new flat surface, also denoted by $\beta_1,\beta_2$. Since $Q_1+Q_2+2=a_1+a_2=a$, we have $a_1<Q_2$ in particular. Choose a prong-matching that sends $u^-_1$ to $u^+_{Q_1+1}$. Then breaking up the zero, $\beta_2$ is joining $z_1,z_2$ and $\beta_1$ is joining $z_2$ to itself. So $\beta_2$ is a multiplicity one saddle connection. By shrinking $\beta_2$, we obtain a flat surface isomorphic to $X_{-1}$, up to scaling. Along this deformation, we obtain $(X_{-1},v^-_1,w^-_{Q_2-a_1})$ from $(X_{-1},v^-_1,w^-_1)$, as illustrated in \Cref{fig903}. Therefore, we obtain $X(n-1,Id,{\bf C},[(0,v+a_1)])$ in the boundary of a stratum adjacent to $B(\cC_1)$. Thus $B(\cC_1)=B(\cC_2)$.

\begin{figure}
    \centering
    \input{diagram903} 
    \caption{} \label{fig903}
\end{figure}

Now we prove (2). Suppose that $a_1=Q_1=Q_2=\frac{a}{2}-1$. As before, we break up the zero of $X_{-1}$. Choose a prong-matching that sends $u^-_1$ to $u^+_t$, such that $t\neq \frac{a}{2}$. Then $\beta_1,\beta_2$ are parallel saddle connections, joining $z_1,z_2$. By shrinking $\beta_1$ and $\beta_2$, we obtain $\overline{Y}$. The top level component is contained in $\cH_0 (a-2,-\frac{a}{2},-\frac{a}{2})$. This has two poles of order $-\frac{a}{2}$ at the nodes, as in $X_{-1}$. However, under this deformation, we obtain $(Y_{-1},v^-_1{Q_1},w^-_1)$ from $(X_{-1},v^-_1,w^-_1)$, as illustrated in \Cref{fig904}. So the levels 0 and -1 form $X(n-1,Id,{\bf C},[0,v+1])\in \partial\overline{\cR}_1(a-2,-b_1,\dots,-b_{n-1})$. The bottom level component is contained in $\PP\cR_0(a_1,a_2,-2,-a)$, which is a singleton. Similarly, if we start from $X(n-1,Id,{\bf C},[(0,v+2)])$, then we can obtain the same multi-scale differential as above. See \Cref{fig906}. Therefore, the boundaries of $B(\cC_1)$ and $B(\cC_2)$ have a common multi-scale differential and thus $B(\cC_1)=B(\cC_2)$.
\begin{figure}
    \centering
    \input{diagram904} 
    \caption{} \label{fig904}
\end{figure}
\begin{figure}
    \centering
    \input{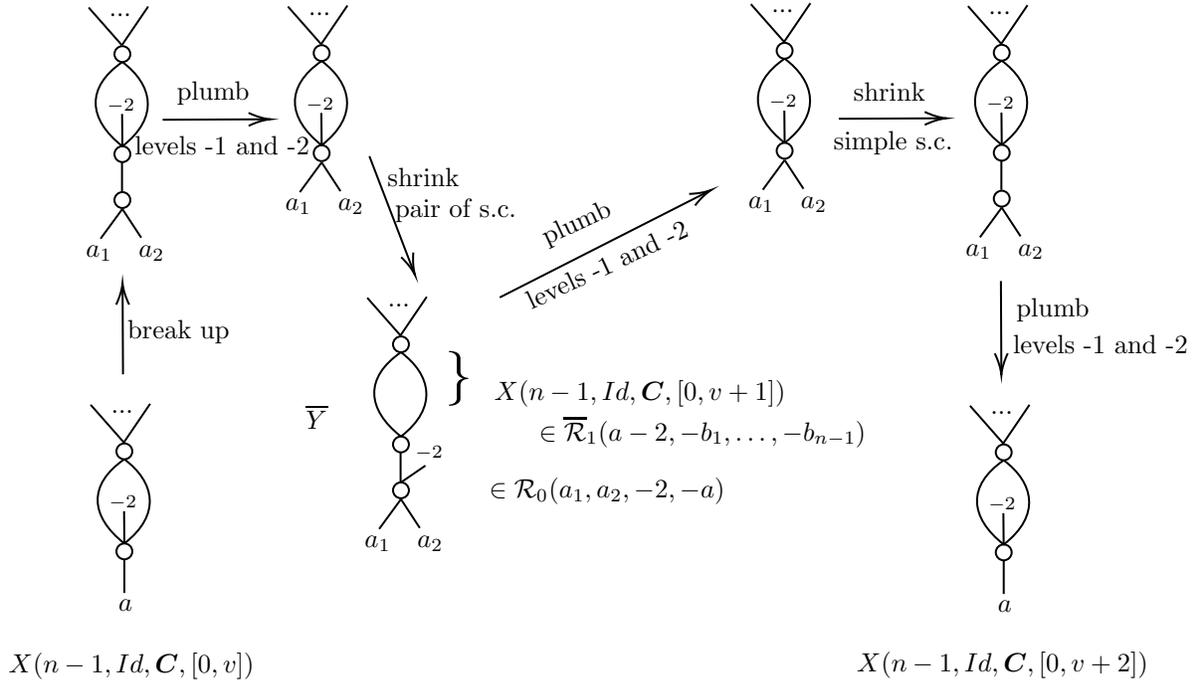} 
    \caption{Navigating the boundary of $B(\cC_1)$, $b_n=2$} \label{fig906}
\end{figure}
\end{proof}

\begin{lemma} \label{breakinglemma3}
Let $\cC_1$ be a connected components of a \MIN stratum $\cR_1(a,-b_1,\dots,-b_n)$. Suppose $b_n>2$ and $a_1<a_2$. Also suppose that $\overline{X}=X(n-1,Id,{\bf C},[(0,v)])\in \partial\overline{\cC}_1$. Then there exists a connected component $\cC_2$ such that $B(\cC_1)=B(\cC_2)$, and there exists $\overline{X'}=X(n-1,Id,{\bf C'},[(0,v')])\in \partial\overline{\cC}_2$, where $C'_n\neq C_n$ and $C'_i=C_i$ for each $i=1,\dots, n-1$.
\end{lemma}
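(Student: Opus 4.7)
The plan is to mimic the strategy of \Cref{breakinglemma1} and \Cref{breakinglemma2}, but instead of varying the prong-matching parameter $v$, we aim to vary the bottom-level angular parameter $C_n$. The key observation is that while the rigid bottom level $X_{-1}\in\PP\cR_0(a,-b_n;-Q_1-1,-Q_2-1)$ lies in a $0$-dimensional projectivized stratum and is uniquely determined by $C_n$, after breaking up its unique zero the bottom stratum becomes $1$-dimensional, providing enough room to deform to a distinct configuration and then re-merge to reach $C'_n\neq C_n$.

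\textbf{Break up the bottom zero.} Since the unique zero $z$ of $X_{-1}$ has order $a=a_1+a_2$, we break up $z$ inside the multi-scale compactification as in \Cref{subsec:breakmerge}, producing a three-level multi-scale differential $\overline{Z}\in\partial\overline{B(\cC_1)}$ whose level $0$ and $-1$ components are the original $X_0$ and $X_{-1}$, and whose level $-2$ component is the unique (up to scaling) element of $\cH_0(a_1,a_2,-a-2)$. Plumbing the level transition between levels $-1$ and $-2$ yields a two-level multi-scale differential $\overline{Y}\in\partial\overline{B(\cC_1)}$ with unchanged top level $X_0$ and new bottom level $Y_{-1}\in\cR_0(a_1,a_2,-b_n;-Q_1-1,-Q_2-1)$. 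A direct dimension count shows that this projectivized stratum is $1$-dimensional, so $Y_{-1}$ can be continuously deformed while the resulting multi-scale differential remains in the same boundary divisor of $\overline{B(\cC_1)}$.

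\textbf{Reach an alternative re-merging.} Within the $1$-dimensional family $\PP\cR_0(a_1,a_2,-b_n;-Q_1-1,-Q_2-1)$, deform $Y_{-1}$ to a flat surface $Y'_{-1}$ admitting a multiplicity-one saddle connection $\alpha'$ joining $z_1$ and $z_2$ distinct from the one produced by the initial break-up. By \Cref{shrink} applied to the generalized stratum containing $Y'_{-1}$, shrinking $\alpha'$ produces a three-level multi-scale differential $\overline{Z'}\in\partial\overline{B(\cC_1)}$ whose middle level is a new rigid flat surface $\widetilde{X}_{-1}\in\PP\cR_0(a,-b_n;-Q_1-1,-Q_2-1)$ determined by some angular parameter $C'_n$, and whose bottom level is again the unique element of $\cH_0(a_1,a_2,-a-2)$. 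The hypothesis $b_n>2$ provides angular room in the type-II polar domain of $p_n$ to realize a value $C'_n\in\{1,\dots,b_n-1\}$ different from $C_n$, while the hypothesis $a_1<a_2$ rules out the $a_1\leftrightarrow a_2$ symmetry that would otherwise conflate the merged configurations and ensures $C'_n$ is a genuinely new parameter.

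\textbf{Consolidate and conclude.} Reversing the break-up in $\overline{Z'}$, i.e., merging its middle and bottom levels via \Cref{merging}, produces a two-level multi-scale differential $\overline{X'}=X(n-1,Id,\mathbf{C'},[(0,v')])$ whose top level is the original $X_0$ and whose bottom level is $\widetilde{X}_{-1}$; because the top level is unchanged, $C'_i=C_i$ for $i=1,\dots,n-1$, while by construction $C'_n\neq C_n$. Letting $\cC_2$ be the connected component of $\cR_1(\mu')$ whose closure contains $\overline{X'}$, the component $B(\cC_2)$ must contain the smoothings of $\overline{Z'}$, which lie in $B(\cC_1)$, so $B(\cC_1)=B(\cC_2)$ as required. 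The principal obstacle is the explicit geometric analysis in the second step: one must describe the deformations of $Y_{-1}$ through $\PP\cR_0(a_1,a_2,-b_n;-Q_1-1,-Q_2-1)$, verify that under $b_n>2$ an alternative multiplicity-one saddle connection joining $z_1$ and $z_2$ genuinely exists, and carry out the bookkeeping of prongs at the nodes so that the resulting prong-matching class takes the claimed normalized form $[(0,v')]$ after appropriate relabeling of saddle connections.
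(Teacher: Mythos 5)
Your overall scaffold --- break up the bottom zero, locate an alternative multiplicity-one saddle connection joining $z_1$ and $z_2$, shrink it, and re-merge --- is the same strategy the paper uses. But you have deferred the actual content of the lemma to ``explicit geometric analysis'' that you do not carry out, and that analysis is precisely where the hypotheses $b_n>2$ and $a_1<a_2$ do their work. The paper does not pass through a one-dimensional deformation of $Y_{-1}$ at all: it works directly with the two saddle connections $\beta_1,\beta_2$ of the rigid bottom level $X_{-1}\in\cR_0(a,-b_n;-Q_1-1,-Q_2-1)$, which persist after breaking up the zero. One first assumes (WLOG by relabeling) that $D_n\le C_n$, which together with $Q_1\le Q_2$ gives $C_n+Q_2\ge a/2>a_1$. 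That inequality is what $a_1<a_2$ buys you: it is an arithmetic constraint, not a symmetry-breaking device. It guarantees the existence of a prong-matching sending $u^-_1\mapsto u^+_{Q_1+s}$ with $1\le s<C_n$ for which $\beta_2$ joins $z_1$ to $z_2$ and $\beta_1$ is a loop at $z_2$, so $\beta_2$ is multiplicity one. Shrinking $\beta_2$ then yields a new boundary stratum whose angular parameter is $C'_n=s<C_n$, so $C'_n\neq C_n$ by construction, and tracking the prongs $(v^-,w^-)$ produces the explicit new prong-matching $[(0,v+C_n-a_1-s)]$.

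Your proposal has two concrete gaps. First, you never exhibit the alternative multiplicity-one saddle connection: you assert that one exists somewhere in the one-dimensional family $\PP\cR_0(a_1,a_2,-b_n;-Q_1-1,-Q_2-1)$, but that family also contains configurations with no such saddle connection, and a priori its closure could only hit the original boundary stratum with parameter $C_n$. Second, and more seriously, you never show $C'_n\neq C_n$; you gesture at ``angular room'' from $b_n>2$, but without the inequality $C_n+Q_2>a_1$ and the resulting bound $1\le s<C_n$ there is no reason the new parameter is different. Finally, your reading of the role of $a_1<a_2$ (ruling out a $z_1\leftrightarrow z_2$ symmetry) is not how the hypothesis enters; it enters via $a_1<a/2$ so that the prong-matching range $\{Q_1+s : 1\le s<C_n\}$ reaches past $Q_1+C_n$ once shifted by $a_1$. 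To repair the proof you should drop the detour through the one-dimensional family and instead analyze the prong structure at $z$ of $X_{-1}$ explicitly, following \Cref{zerodim2} to identify $\beta_1,\beta_2$ and their angles $2\pi C_n, 2\pi D_n$, and then choose the break-up direction as above.
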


\begin{proof}
The bottom level component $X_{-1}$ is contained in $\cR_0(a,-b_n;-Q_1-1,-Q_2-1)$, for $Q_1\leq Q_2$. First, we assume that $D_n\leq C_n$. In particular, we have $C_n+Q_2\geq \frac{a}{2}> a_1$. 

By \Cref{zerodim2}, $X_{-1}$ has two saddle connections, denoted by $\beta_1$ and $\beta_2$. They are bounding the polar domain of $p_n$, so form two angles equal to $2\pi C_n$ and $2\pi D_n$ at $z$. There are $a+1$ outgoing prongs in $X_{-1}$ at $z$, denoted by $u^+_1,\dots, u^+_{a+1}$. We can label them in the clockwise order so that $\beta_1$ encloses from $u^+_1$ to $u^+_{Q_1}$ and $\beta_2$ encloses from $u^+_{Q_1+C_n+1},u^+_{Q_1+Q_2+C_n}$. When we break up the zero $z$, we identify $z$ with the pole of a flat surface in $\cH_0(a_1,a_2,-a-2)$. There are $a+1$ incoming prongs at the pole, denoted by $u^-_1,\dots,u^-_{a+1}$. We can label them in the counterclockwise order so that $u^-_1,\dots, u^-_{a_1}$ are coming from $z_1$, and the others are coming from $z_2$. The direction of breaking up the zero is equivalent to the choice of a prong-matching at $z$. 

By breaking up the zero, we obtain a flat surface with a new saddle connection $\alpha$ joining two distinct zeroes $z_1,z_2$. The saddle connections $\beta_1,\beta_2$ also become saddle connections in the new flat surface, also denoted by $\beta_1,\beta_2$. Since $C_n+Q_2>a_1$, we can choose a prong-matching that sends $u^-_1$ to $u^+_{Q_1+s}$ for some $1\leq s <C_n$, such that $Q_1+C_n+1\leq Q_1+a_1+s-1\leq Q_1+Q_2+C_n$. Then $\beta_2$ is joining $z_1,z_2$ and $\beta_1$ is joining $z_2$ to itself. So $\beta_2$ is a multiplicity one saddle connection. By shrinking $\beta_2$, we obtain a flat surface $X'_{-1}$ in $\cR_0(a,-b_n;-Q_1-1,-Q_2-1)$. The angles formed by $\beta_2$ and $\alpha$ are equal to  $2\pi s$ and $2\pi (b_n-s)$. Under this deformation, we obtain $(X'_{-1},v^-_1,w^-_1)$ from $(X'_{-1},v^-_1,w^-_{1+a_1+s-C_n})$, as illustrated in \Cref{fig905}. Therefore, we obtain $X(n-1,Id,{\bf C'},[(0,v+C_n-a_1-s)])\in \overline{\cC}_2$ .
\begin{figure}
    \centering
    \input{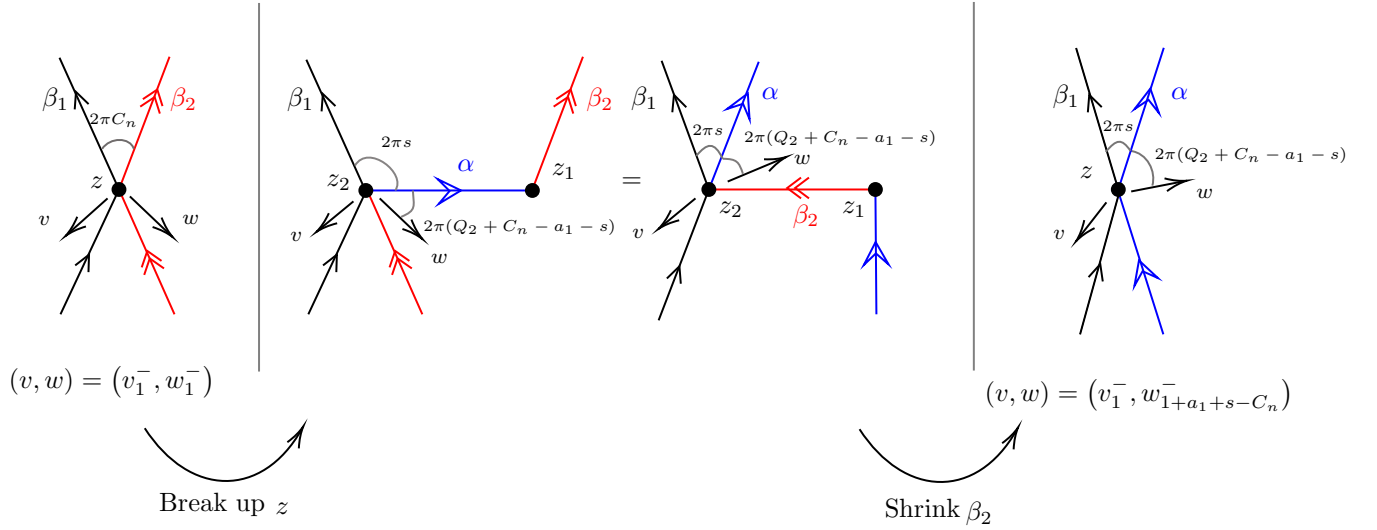} 
    \caption{Navigating the boundary of $B(\cC_1)=B(\cC_2)$} \label{fig905} 
\end{figure}

Now we assume that $C_n<D_n$. Then $Q_2+D_n>a_1$. By the similar argument as above, we can obtain $X(n-1,Id,{\bf C''},[(0,v'')])\in \overline{\cC}_2$ with $C''_n>C_n$, such that $B(\cC_1)=B(\cC_2)$. 
\end{proof}

\begin{proof}[Proof of \Cref{nonhypermerge1}]
By \Cref{simple}, $\cD$ is adjacent to some connected component $\cC$ of $\cR_1 (a_1+a_2,-b_1,\dots,-b_n)$. If $\cC$ is non-hyperelliptic, there is nothing to prove. So assume that $\cC$ is a hyperelliptic component. If $a_1=a_2$, then $\cD$ is also hyperelliptic by \Cref{hyper1}. Thus we assume that $a_1<a_2$. Since $b_n>2$, the ramification profile $\cP$ of $\cC$ satisfies at least one of the following three possibilities:

(1) $\cP$ fixes less than $4$ marked points.

(2) $\cP$ fixes three poles of order $-2$, and interchanges at least one pair of poles of order $-b<-2$. 

(3) $\cP$ fixes some pole of order $-b<-2$.

We will deal with each of these cases separately. 
    
Case (1) --- By \Cref{simple1}, we obtain $X(Id,{\bf C},[(0,v)])\in \partial \overline{\cC}$. Each component of this multi-scale differential has a hyperelliptic involution, and the prong-matching $(0,v)$ is compatible with the involutions. Let $\cC'$ be the connected component of $\cR_1 (a_1+a_2,-b_1,\dots,-b_n)$ containing $X(Id,{\bf C},[(0,v-a_1)])$ in the boundary. By \Cref{breakinglemma1}, we have $\cD=B(\cC)=B(\cC')$. Since $a_1<\frac{a}{2}$, the prong-matching $(0,v-a_1)$ is not compatible with the hyperelliptic involution. Thus $\cC'$ is non-hyperelliptic and $\cD$ is adjacent to some non-hyperelliptic component of $\cR_1 (a_1+a_2,-b_1,\dots,-b_n)$. 

Case (2) --- By relabeling the poles, we may assume that $b_n=2$ and $p_n$ is a fixed pole. By \Cref{double}, We obtain $X(n-1,Id,{\bf C},[(0,v)])\in \partial \overline{\cC}$. Suppose that $a_1< \frac{a}{2}-1$. Let $\cC'$ be the connected component of $\cR_1 (a_1+a_2,-b_1,\dots,-b_n)$ containing $X(Id,{\bf C},[(0,v-a_1)])$ in the boundary. By \Cref{breakinglemma2}, we have $\cD=B(\cC)=B(\cC')$. By the same argument as in the previous case, $\cC'$ is a non-hyperelliptic component. If $a_1=\frac{a}{2}-1$, then we can take $\cC'$ so that $X(Id,{\bf C},[(0,v-2)])\in \partial \overline{\cC'}$. Again by \Cref{breakinglemma2}, we have $\cD=B(\cC)=B(\cC')$, and $\cC'$ is a non-hyperelliptic component.

Case (3) --- By relabeling the poles, we may assume that $b_n>2$ and $p_n$ is a fixed pole. We obtain $X(n-1,Id,{\bf C},[(0,v)])\in \partial \overline{\cC}$. Then $C_n=\frac{b_n}{2}>1$. By \Cref{breakinglemma3}, there exists $X(n-1,Id,{\bf C'},[(0,v')])\in \cC'$ for component $\cC'$ with $C'_n\neq C_n=\frac{b_n}{2}$, such that $\cD=B(\cC)=B(\cC')$. Then the bottom level component of $X(n-1,Id,{\bf C'},[(0,v')])$ does not have an involution. So $\cC'$ is a non-hyperelliptic component. 
\end{proof}

\begin{proof}[Proof of \Cref{nonhypermerge}]
We use the induction on $\dim_{\mathbb{C}} \cR_g(\mu)=2g+m-1\geq 2$. The case $m=1$ is trivial, so assume $m>1$. By \Cref{simple}, there exists a flat surface $X\in \cD$ with a multiplicity one saddle connection $\gamma$ joining two zeroes $z_1$ and $z_2$. By shrinking $\gamma$, we obtain $\overline{X}$. The bottom level component is in a connected stratum $\cH_0 (a_1,a_2,-(a_1+a_2+2))$. The top level component $X_1$ is the flat surface with a zero of order $a_1+a_2$ at the node. Let $\cC$ be the connected component of $\cR_g(a_1+a_2,a_3,\dots,a_m,-b_1,\dots,-b_n)$ containing $X_1$. If $\cC$ is non-hyperelliptic, then we can merge all zeroes by the induction hypothesis. So we may assume that $\cC$ is a hyperelliptic component. If $\cC$ is a \NMIN hyperelliptic component, then $m=3$ and $a_3=a_1+a_2$. By \Cref{simple}, we can deform $X$ continuously so that it contains a multiplicity one saddle connection joining $z_1$ and $z_3$. We merge $z_1$ and $z_3$ by shrinking the multiplicity one saddle connection, obtaining a flat surface $X'_1$ with two zeroes of different orders $a_2$ and $a_1+a_3$. In particular, $X'_1$ is non-hyperelliptic, thus we can merge all zeroes by induction hypothesis. 

Now suppose that $\cC$ is a \MIN hyperelliptic component. In particular, $m=2$. If $a_1=a_2$, then by \Cref{hyper1}, $\cD$ is also hyperelliptic. This is a contradiction to the assumption, and thus we must have $a_1 < a_2$. The base case $g=1$ and $b_n>2$ is already proven in \Cref{nonhypermerge1}. By \Cref{mainhyper}, we can find a multi-scale differential $\overline{Y}\in \partial \overline{\cC}$ with two components intersecting at one node, such that the bottom level component $Y_{-1}$ is a \MIN hyperelliptic flat surface of genus $g_{-1}=1$ and containing the pole $p_n$. By breaking up the zero of $Y_{-1}$, we can obtain an element of $\partial \overline{\cD}$. Let $\cC_{-1}$ be the connected component of a stratum containing $Y_{-1}$. We can apply induction hypothesis to $B(\cC_{-1})$, so there exists a non-hyperelliptic component $\cC'_{-1}$ such that $B(\cC'_{-1})=B(\cC_{-1})$. In other words, $\partial \overline{\cD}$ is adjacent to the boundary of some non-hyperelliptic component $\cC'$. 
\end{proof}

\subsection{Connected components of genus one multiple-zero strata} \label{subsec:g1n}

In this subsection, we will prove that non-hyperelliptic connected components of genus one strata are classified by rotation number. Assume that $\cR_1 (\mu)$ is a genus one \MIN stratum and $\mu\neq(12,-3^4),(2n,-2^n),(2r,-r,-r)$ and $(2r,-2r)$. Recall from \Cref{main2minimal} that the non-hyperelliptic connected components of $\cR_1 (\mu)$ are classified by rotation number, a topological invariant recalled in \Cref{sec:g1m}. For each $r|d\coloneqq\gcd(b_1,\dots,b_n)$, there exists a unique non-hyperelliptic component $\cC_r$ with rotation number $r$. 

Consider a map $$B:\{\cC_r:r|d\} \to\{\text{non-hyperelliptic components of }\cR_1 (a_1,\dots,a_m,-b_1,\dots,-b_n)\}$$ obtained by breaking up the zero. We can easily compute the rotation number of $B(\cC_r)$. For a flat surface $X\in \cC_r$ with symplectic basis $\{\alpha,\beta\}$, the rotation number $r$ is given by $\gcd(d,\Ind\alpha,\Ind\beta)$. While breaking up the zero, $\{\alpha,\beta\}$ still remains to be a symplectic basis, and their indices are not changed. So the rotation number of $B(\cC_r)$ is equal to $\gcd(d,a_1,\dots,a_m,\Ind\alpha,\Ind\beta)=\gcd(a_1,\dots,a_m,r)$. By \Cref{nonhypermerge1}, $B$ is surjective. In order to prove \Cref{main2} for $\cR_1(\mu)$, we need to show that $B(\cC_{r_1})=B(\cC_{r_2})$ if $\gcd(a_1,\dots,a_m,r_1)=\gcd(a_1,\dots,a_m,r_2)$. Instead of proving this directly, we first deal with the simplest case --- double-zero strata $\cR_1(a_1,a_2,-b_1,\dots,-b_n)$. 

\begin{proposition} \label{nonminimal1}
Let $\cR_1 (\mu)$ be a genus one \MIN stratum and $\mu\neq (12,-3^4),(2n,-2^n),(2r,-r,-r)$ and $(2r,-2r)$. Let $r_1,r_2$ be positive integer divisors of $d$. Suppose that $B$ is the map given by breaking up the zero into two zeroes of orders $a_1,a_2$. Then $B(\cC_{r_1})=B(\cC_{r_2})$ if and only if $\gcd(a_1,r_1)=\gcd(a_1,r_2)$.
\end{proposition}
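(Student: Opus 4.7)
The forward implication is immediate from the rotation number computation: if $X \in \cC_r$ has symplectic basis $\{\alpha, \beta\}$, then breaking up the zero does not change $\Ind\alpha$ or $\Ind\beta$, so the rotation number of $B(\cC_r)$ equals $\gcd(a_1, a_2, d, \Ind\alpha, \Ind\beta) = \gcd(a_1, a_2, r)$. Since $g=1$ gives $a = a_1+a_2 = \sum b_j$, we have $r \mid d \mid a$, so $\gcd(a_1, a_2, r) = \gcd(a_1, r)$. Hence $B(\cC_{r_1}) = B(\cC_{r_2})$ forces $\gcd(a_1, r_1) = \gcd(a_1, r_2)$.

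For the reverse implication, set $\delta \coloneqq \gcd(a_1, r_1) = \gcd(a_1, r_2)$. The plan is to exhibit a common multi-scale differential of the form $X(Id, {\bf C}, [(0,v)])$ whose prong-matching can be shifted by $a_1$ (via iterated application of \Cref{breakinglemma1}) to realize both rotation numbers $r_1$ and $r_2$. Choose ${\bf C}$ so that $Q_1 \coloneqq \sum C_i$ is a multiple of $\operatorname{lcm}(r_1, r_2)$; such ${\bf C}$ exists in the admissible range $n \le Q_1 \le a - n$ because our hypotheses on $\mu$ exclude precisely the strata where this is obstructed (the same obstructions that appear in the proof of \Cref{nonhyperexist}). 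Set $s \coloneqq \gcd(d, Q_1)$, which is then divisible by $\operatorname{lcm}(r_1, r_2)$, and take $v = r_1$, so that $\gcd(d, Q_1, v) = r_1$ and the corresponding multi-scale differential lies in $\partial\overline{\cC}_{r_1}$. By iterating \Cref{breakinglemma1}, the components containing $X(Id, {\bf C}, [(0, v + k a_1)])$ for all $k \in \ZZ$ have the same image under $B$ as $\cC_{r_1}$.

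The key number-theoretic step is to show that $r_2$ appears in the set $\{\gcd(s, v + ka_1) : k \in \ZZ\}$. Let $e \coloneqq \gcd(a_1, s)$; then shifts by $a_1$ traverse the coset $v + e\ZZ$ modulo $s$, and for any $w$ in this coset we have $\gcd(a_1, \gcd(s, w)) = \gcd(a_1, s, v) = \delta$. Conversely, given any divisor $r \mid s$ with $\gcd(a_1, r) = \delta$, one solves the congruence $rm \equiv v \pmod e$ (solvable because $\delta \mid v$) and then invokes coprimality of $v/\delta$ with $e/\delta$ to select $m$ with $\gcd(m, s/r) = 1$, yielding $w = rm$ with $\gcd(s, w) = r$. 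Applying this with $r = r_2$ completes the reverse implication for generic cases.

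The main obstacle is handling the situations where \Cref{breakinglemma1} alone is insufficient, namely strata with $b_n = 2$ (where the corresponding multi-scale differential is constrained, since $C_n = D_n = 1$) and hyperelliptic boundary strata where the natural ${\bf C}$ is obstructed. In the former case, we appeal to \Cref{breakinglemma2}, which provides a shift of $v$ by $2$ (or by $a_1$, depending on whether $a_1 < \frac{a}{2} - 1$ or $a_1 = \frac{a}{2} - 1$) that plays the role of the $a_1$-shift in the argument above; the resulting orbit of rotation numbers is analyzed in the same way. In the latter case, \Cref{breakinglemma3} allows us to change $C_n$ while keeping $B$ constant, producing the auxiliary freedom needed to guarantee a valid choice of ${\bf C}$. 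The excluded strata $(12, -3^4)$, $(2n, -2^n)$, $(2r, -r, -r)$, and $(2r, -2r)$ are exactly those where this combinatorial maneuvering fails, as expected from the list of exceptions in \Cref{main2}.
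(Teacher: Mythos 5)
Your forward implication matches the paper's. For the converse, however, you take a genuinely more complicated route than the paper does, and that extra complexity introduces a real gap.

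The paper's key move is to observe that it suffices to show $B(\cC_r) = B(\cC_R)$ for the single value $r = \gcd(a_1, r_1) = \gcd(a_1, r_2)$ and any $R$ with $\gcd(a_1, R) = r$: applying this with $R = r_1$ and $R = r_2$ gives $B(\cC_{r_1}) = B(\cC_r) = B(\cC_{r_2})$. To prove this one statement, the paper starts from $X(\tau, {\bf C}, [(0,0)]) \in \partial\overline{\cC}_{r_2}$ (so $r_2 = \gcd(d, Q_1)$ by \Cref{rotation}) and applies a \emph{single} instance of \Cref{breakinglemma1}: the shifted differential $X(\tau, {\bf C}, [(0, a_1)])$ has rotation number $\gcd(d, Q_1, a_1) = \gcd(r_2, a_1) = r$, so it lies in $\partial\overline{\cC}_r$. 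Starting from the prong-matching $(0,0)$ makes the rotation-number computation trivial and avoids any orbit analysis.

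You instead try to realize both $r_1$ and $r_2$ inside the $a_1$-shift orbit of a single base differential, which forces you into the arithmetic-progression computation. That computation is actually correct once unfolded (the base solution $m_0$ of $rm \equiv v \pmod e$ is automatically coprime to $e/\delta$ because $(r/\delta)$ is invertible modulo $e/\delta$ and $\gcd(v/\delta, e/\delta) = 1$, so the progression $m_0 + (e/\delta)\ZZ$ meets every reduced residue class), but you state it much more tersely than it deserves. More importantly, your approach requires choosing ${\bf C}$ with $Q_1$ a multiple of $\lcm(r_1, r_2)$ lying in the admissible range $n \le Q_1 \le a - n$, which is a strictly stronger requirement than the divisibility-by-$r$ achieved in the proof of \Cref{nonhyperexist}. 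You assert this is possible ``because our hypotheses on $\mu$ exclude precisely the strata where this is obstructed,'' but that is not something \Cref{nonhyperexist} actually shows, and it is not clear it holds in general; the paper sidesteps the issue entirely by needing only $\gcd(d,Q_1) = r_2$. Finally, your closing paragraph invokes \Cref{breakinglemma2} and \Cref{breakinglemma3} to patch the $b_n = 2$ and hyperelliptic situations, but the paper's proof of this proposition never needs them: the $b_n = 2$ case is covered by the exclusion of $\mu' = (2n,-2^n)$ (which has no non-hyperelliptic components, so $\cC_{r_i}$ does not exist and the statement is vacuous), and the non-hyperellipticity of both sides is part of the hypothesis.
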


\begin{proof}
One direction of the proposition is immediate. If $B(\cC_{r_1})=B(\cC_{r_2})$, then rotation numbers of $B(\cC_{r_1})$ and $B(\cC_{r_2})$ must be equal, thus $\gcd(a_1,r_1)=\gcd(a_1,r_2)$. 

Conversely, assume that $r=\gcd(a_1,r_1)=\gcd(a_1,r_2)$. Once we prove that $B(\cC_r)=B(\cC_R)$ for any $R|d$ such that $r=\gcd(a_1,R)$, then we can conclude that $B(\cC_r) = B(\cC_{r_1})=B(\cC_{r_2})$. So we can reduce to the case when $r_1=r$. Consider a two-level multi-scale differential $X(\tau,{\bf C},[(0,0)])\in \partial \overline{\cC}_{r_2}$. By \Cref{rotation}, we have $r_2=\gcd(d,Q_1)$. Another multi-scale differential $X(\tau,{\bf C},[(0,a_1)]$ is contained in $\partial \overline{\cC}_r$, since $\gcd(d,Q_1,a_1)=\gcd(a_1,r_2)=r$. By \Cref{breakinglemma1}, we have $B(\cC_r)=B(\cC_{r_2})$ as desired. 
\end{proof}

\subsection{Proof of \Cref{main2}}

We finally complete the proof of \Cref{main2} by using \Cref{nonhypermerge} and \Cref{nonminimal1}. First, we are dealing with the components adjacent to the special strata from \Cref{specialmerge1} to \Cref{nonexistspecial}. 

\begin{proposition} \label{specialmerge1}
The stratum $\cR_1(6^2,-3^4)$ has a unique non-hyperelliptic component with rotation number $3$.
\end{proposition}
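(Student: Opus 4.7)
My plan is to establish an upper bound of two non-hyperelliptic components with rotation number $3$ in $\cR_1(6^2, -3^4)$, then collapse them to one by an explicit boundary deformation inside the three-dimensional stratum.

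For the upper bound, by \Cref{nonhypermerge1} the zero-merging map $B$ sends non-hyperelliptic components of $\cR_1(6^2, -3^4)$ surjectively onto non-hyperelliptic components of the minimal stratum $\cR_1(12, -3^4)$, and the rotation number transforms as $r(B(\cC)) = \gcd(6, r(\cC))$, hence is preserved on rotation numbers in $\{1, 3\}$. By \Cref{special1}, the rotation-$3$ non-hyperelliptic components of $\cR_1(12, -3^4)$ are precisely $\cC_3^1$ and $\cC_3^2$, so $\cR_1(6^2, -3^4)$ has at most two non-hyperelliptic components of rotation $3$, and the proposition reduces to proving $B(\cC_3^1) = B(\cC_3^2)$.

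To prove this equality, I would start from a representative multi-scale differential $X_\tau = X(\tau, (1,1,2,2), [(0,3)]) \in \partial\overline{\cC}_3^1$ with $\tau \in \operatorname{Alt}_4$ (as in the proof of \Cref{special1}), smooth it into a flat surface $Y \in B(\cC_3^1) \subset \cR_1(6^2, -3^4)$, and use \Cref{simple} applied to $B(\cC_3^1)$ to deform $Y$ so that it contains a second multiplicity-one saddle connection $\gamma$ joining $z_1$ and $z_2$ in a direction independent from the one produced by breaking up the zero of $X_\tau$. Shrinking $\gamma$ gives a new two-level multi-scale differential $\overline{Y}$ whose top-level component lies in some rotation-$3$ non-hyperelliptic component of $\cR_1(12, -3^4)$. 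I then track the combinatorial data of $\overline{Y}$ through the deformations, using the operators $T_1, T_2$ of \Cref{genus1connect}, and verify that the corresponding permutation $\tau'$ lies in the nontrivial coset of $\operatorname{Alt}_4$. This places $\overline{Y}$ in $\partial\overline{\cC}_3^2$, whence $B(\cC_3^1) = B(\cC_3^2)$.

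The main obstacle is that the straightforward prong-shift of \Cref{breakinglemma1} does not apply here: merging via the second saddle connection would shift the prong-matching by $a_1 = 6$, which is trivial modulo $Q_2 = 6$, so it does not cross the $\operatorname{Alt}_4$-coset. The argument must instead exploit the genuine three-dimensionality of $\cR_1(6^2, -3^4)$ to produce a non-parallel multiplicity-one saddle connection whose degeneration permutes two adjacent poles in the cyclic order, and then verify this permutation effect by a careful tracking of the prongs at the node together with the period coordinates of the modified bottom-level component in $\cH_0(6, 6, -14)$. A detailed analysis of this kind, for the companion exceptional stratum $\cR_1(12, -3^4)$ and its immediate neighbors, is carried out in \cite{LT}, on which the present argument relies.
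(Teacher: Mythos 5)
Your setup is correct as far as it goes: the reduction via \Cref{nonhypermerge1} and \Cref{special1} to the single claim $B(\cC_3^1)=B(\cC_3^2)$ is exactly the paper's, and you correctly diagnose that the naive prong-shift of \Cref{breakinglemma1} is inert here because $a_1 = 6 \equiv 0 \pmod{Q_2}$. But you then leave the actual equality $B(\cC_3^1)=B(\cC_3^2)$ unproved, deferring to \cite{LT}. That is a genuine gap: in this paper \cite{LT} is invoked only to supply the \emph{lower} bound in \Cref{special1} (i.e.\ that $\cC_3^1 \neq \cC_3^2$ in $\cR_1(12,-3^4)$), not to treat the double-zero stratum $\cR_1(6^2,-3^4)$. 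The present proposition has a self-contained proof in the paper, and your plan never reaches it.

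The mechanism you would need, and which the paper supplies, is different from what you sketch. You propose finding a single multiplicity-one saddle connection in $B(\cC_3^1)$ and shrinking it; the paper instead passes through a codimension-two degeneration. Concretely: apply $T_2^{(0,3)}$ to $X(Id,(1,1,2,2),[(0,3)])$ to move to $\overline{Y}=X(2,Id,(1,2,2,2),[(2,0)])$, whose bottom-level component $Y_{-1}\in\cR_0(12,-3,-3;-4,-4)$ has three parallel saddle connections. Break up the zero of $Y_{-1}$ into two zeroes of order $6$ with a prong-matching for which two of those three saddle connections, say $\beta_2,\beta_3$, stay parallel, then shrink the resulting multiplicity-\emph{two} collection. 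The outcome is a multi-scale differential whose levels $0$ and $-1$ assemble into $X(2,Id,(1,2,1),[(2,0)])\in\overline{\cR}_1(9,-3^3)$, a stratum where \Cref{main2minimal} already gives a \emph{unique} non-hyperelliptic component of rotation number $3$. The key trick you are missing is the label-symmetry step: transposing the two poles $p_1\leftrightarrow p_2$ acts trivially on the resulting component of $\cR_1(9,-3^3)$ (by that uniqueness), yet this same transposition interchanges $\cC_3^1$ and $\cC_3^2$ upstairs in $\cR_1(12,-3^4)$. That is what forces $B(\cC_3^1)=B(\cC_3^2)$. Without an argument of this shape --- or at least a concrete verification that your single-saddle-connection degeneration lands a boundary point in $\partial\overline{\cC}_3^2$, which you only assert --- the proof is not complete.
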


\begin{proof}
Let $\cD$ be a non-hyperelliptic component of $\cR_1(6^2,-3^4)$ with rotation number $3$. By \Cref{nonhypermerge1}, $\cD$ is adjacent to some non-hyperelliptic component $\cC$ of $\cR_1(12,-3^4)$. By \Cref{special1}, $\cC$ is equal to one of two possibilities: $\cC_3^1$ containing $\overline{X}=X(Id,(1,1,2,2),[(0,3)])$, or $\cC_3^2$ containing $\overline{X'}=X((1,2),(1,1,2,2),[(0,3)])$ in their boundary. It is sufficient to prove that $B(\cC_3^1)=B(\cC_3^2)$ as connected components of $\cR_1(6^2,-3^4)$. We consider $\overline{Y}=T_2^{(0,3)}\overline{X}=X(2,Id,(1,2,2,2),[(2,0)])$. The bottom level component $Y_{-1}$ is contained in $\cR_0(12,-3,-3;-4,-4)$, with three saddle connections $\beta_1,\beta_2,\beta_3$. By breaking up the zero of $Y_{-1}$ to obtain $Y'_{-1}$. With proper choice of prong-matching, two of them, $\beta_2$ and $\beta_3$, remain to be parallel. By shrinking them, we can further degenerate $Y'_{-1}$ into two-level multi-scale differential $\overline{Z}$. See \Cref{fig904}. The top level component $Z_0$ is contained in $\cR_0(9,-3;-4,-4)$ and the bottom level component $Z_{-1}$ is contained in $\cR_0(6^2,-3,-11)$. By keeping track of the prongs, $(Y_{-1},v^-_1,w^-_1)$ degenerates to $(Z_0,v^-_1,w^-_1)$. Thus the levels 0 and -1 form a multi-scale differential $X(2,Id,(1,2,1),[(2,0)])\in \overline{\cR}_1(9,-3^3)$, containing $p_1,p_2,p_3$. By plumbing the transition between the level 0 and -1, we obtain a flat surface in $\cR_1(9,-3^3)$ with rotation number $\gcd(3,3,3)=3$ by \Cref{rotation}. By swapping the labeling of $p_1$ and $p_2$, we obtain the flat surface in $\cR_1(9,-3^3)$ with the same rotation number. By \Cref{main2minimal}, two flat surfaces are contained in the same connected component of $\cR_1(9,-3^3)$. However, remark that swapping $p_1$ and $p_2$ also swaps the connected components $\cC_3^1$ and $\cC_3^2$ in $\cR_1(12,-3^4)$. Therefore, $B(\cC_3^1)=B(\cC_3^2)$.
\end{proof}

\begin{proposition} \label{specialmerge2}
The stratum $\cR_1(3,9,-3^4)$ has a unique non-hyperelliptic component with rotation number $3$.
\end{proposition}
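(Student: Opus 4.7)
The plan is to follow the template of \Cref{specialmerge1}, adapted to the asymmetric splitting of the zero of order $12$ into zeros of orders $3$ and $9$. Let $\cD$ be a non-hyperelliptic component of $\cR_1(3,9,-3^4)$ with rotation number $3$. By \Cref{nonhypermerge1}, $\cD$ is adjacent to a non-hyperelliptic component $\cC$ of $\cR_1(12,-3^4)$, and by \Cref{special1}, $\cC \in \{\cC_3^1, \cC_3^2\}$. Denoting by $B$ the breaking-up map that splits the zero of order $12$ into zeros of orders $3$ and $9$, it suffices to prove that $B(\cC_3^1) = B(\cC_3^2)$.

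Starting from the representatives $\overline{X} = X(Id,(1,1,2,2),[(0,3)]) \in \partial\overline{\cC}_3^1$ and $\overline{X'} = X((1,2),(1,1,2,2),[(0,3)]) \in \partial\overline{\cC}_3^2$ used in \Cref{specialmerge1}, apply $T_2^{(0,3)}$ to each to obtain $\overline{Y}$ and $\overline{Y'}$ whose bottom-level components lie in the zero-dimensional stratum $\cR_0(12,-3,-3;-4,-4)$. Break the zero of order $12$ in the bottom component into zeros of orders $3$ and $9$, choosing the prong-matching so that two of the three saddle connections $\beta_1,\beta_2,\beta_3$ of the bottom surface remain parallel in the resulting flat surface in $\cR_0(3,9,-3,-3;-4,-4)$. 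Shrinking these two parallel saddle connections produces a three-level multi-scale differential whose top two levels combine into a multi-scale differential in $\overline{\cR}_1(9,-3^3)$.

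Using \Cref{rotation}, one verifies that the two multi-scale differentials in $\overline{\cR}_1(9,-3^3)$ obtained from $\overline{X}$ and $\overline{X'}$ both have rotation number $3$ and are related by the swap of the residueless poles $p_1$ and $p_2$. By \Cref{main2minimal}, $\cR_1(9,-3^3)$ has a unique non-hyperelliptic component of rotation number $3$, so these two multi-scale differentials lie in the same connected component. Since swapping $p_1$ and $p_2$ interchanges $\cC_3^1$ and $\cC_3^2$ in $\cR_1(12,-3^4)$, this gives $B(\cC_3^1) = B(\cC_3^2)$, as desired.

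The main obstacle will be the explicit prong-matching analysis for the asymmetric splitting. In \Cref{specialmerge1} the symmetric decomposition $12 = 6 + 6$ makes it transparent that a prong-matching preserving parallelism of two of the $\beta_i$ exists; for the split $12 = 3 + 9$ the candidate prong-matching is more constrained, and one must verify both its existence and that the resulting combinatorial data in $\cR_1(9,-3^3)$ yields rotation number exactly $3$. Once this local computation at the node is handled, the remainder of the argument transfers essentially verbatim from \Cref{specialmerge1}.
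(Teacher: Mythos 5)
Your proposal matches the paper's own proof of this proposition essentially step for step: start from the representatives $\overline{X}, \overline{X'}$ of $\cC_3^1, \cC_3^2$, pass to $\overline{Y} = T_2^{(0,3)}\overline{X} = X(2,Id,(1,2,2,2),[(2,0)])$ whose bottom level lies in $\cR_0(12,-3,-3;-4,-4)$, break the order-$12$ zero into zeros of orders $3$ and $9$ with a prong-matching keeping two of $\beta_2,\beta_3,\beta_4$ parallel, shrink those to further degenerate, and observe that the resulting top two levels form a rotation-number-$3$ multi-scale differential in $\overline{\cR}_1(9,-3^3)$ where swapping $p_1$ and $p_2$ exchanges $\cC_3^1$ and $\cC_3^2$ while preserving the component by \Cref{main2minimal}. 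The prong-matching verification you flag as an obstacle is indeed the crux, but the paper also handles it only by asserting "with proper choice of prong-matching" and referring back to the pattern of \Cref{specialmerge1}, so your proposal is at the same level of rigor as the paper's own argument.
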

\begin{proof}
We consider two components $\cC_3^1$ and $\cC_3^2$ of $\cR_1(12,-3^4)$, as in the proof of \Cref{specialmerge1}. Again we need to proof $B(\cC_3^1)=B(\cC_3^2)$ as connected components of $\cR_1(3,9,-3^4)$. Let $\overline{Y}=X(2,Id,(1,2,2,2),[(2,0)])\in \overline{\cC}_3^1$. By breaking up the zero with proper choice of prong-matching, two saddle connections $\beta_2$ and $\beta_3$ remain to be parallel. By shrinking $\beta_2,\beta_3$, we can further degenerate into two-level multi-scale differential $\overline{Z}$. As in the proof of \Cref{specialmerge1}, the top level component $Z_0$ is contained in $\cR_0(9,-3;-4,-4)$. By keeping track of the prongs, $(Y_{-1},v^-_1,w^-_1)$ degenerates to $(Z_0,v^-_1,w^-_1)$. Thus the levels 0 and -1 form a multi-scale differential with rotation number $3$ in $\overline{\cR}_1(9,-3^3)$, containing $p_1,p_2,p_3$. By swapping the labeling of $p_1$ and $p_2$, we obtain the flat surface in $\cR_1(9,-3^3)$ with the same rotation number. By \Cref{main2minimal}, two flat surfaces are contained in the same connected component in $\cR_1(9,-3^3)$. However, remark that swapping $p_1$ and $p_2$ also swaps the connected components $\cC_3^1$ and $\cC_3^2$ in $\cR_1(12,-3^4)$. Therefore, $B(\cC_3^1)=B(\cC_3^2)$.
\end{proof}

If $b_n=2$, then $\mu'=(2n,-2^n)$ and $\cR_1(\mu')$ does not have any non-hyperelliptic component by \Cref{exception0}. So the map $B$ does not give any useful information. In order to analyze the double-zero stratum $\cR_1(a_1,a_2,-2^n)$, we need to consider a slightly modified map $$B':\{\text{connected components of }\cR_1(2n,-2^n)\}\to\{\text{connected components of }\cR_1 (a_1,a_2,-2^n)\}$$ also given by breaking up the zero. If $a_1=a_2=n$, we have the following result. 

\begin{proposition} \label{nnspecial}
The stratum $\cR_1(n,n,-2^n)$ does not have {\em any} non-hyperelliptic connected component.
\end{proposition}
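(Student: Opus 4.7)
The plan is to argue by contradiction using the one-node principal boundary machinery together with the fact (already established in \Cref{exception0}) that every flat surface in the \MIN stratum $\cR_1(2n,-2^n)$ is hyperelliptic. Specifically, I would suppose that $\cD$ is a non-hyperelliptic component of $\cR_1(n,n,-2^n)$ and derive a contradiction by producing an element of $\partial\overline{\cD}$ that is forced into the boundary of a hyperelliptic component via the converse direction of \Cref{hyper1}.

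First, I would apply \Cref{simple} to pick a flat surface $X\in\cD$ carrying a multiplicity one saddle connection joining $z_1$ and $z_2$. Shrinking this saddle connection by \Cref{shrink} produces a two-level multi-scale differential $\overline{X}\in\partial\overline{\cD}$ whose top-level component $X_0$ lies in $\cR_1(2n,-2^n)$, whose bottom-level component $X_{-1}$ lies in $\cH_0(n,n,-2n-2)$, and which has a single node $q$ joining the two components. By \Cref{exception0}, $X_0$ is hyperelliptic, and its involution $\sigma_0$ must fix the unique zero of $X_0$ (which is the node $q$) because that zero is the only marked point of its type. On the other hand, $X_{-1}\in\cH_0(n,n,-2n-2)$ is a genus zero flat surface with two zeroes of the same order $n$ and a single pole: it carries a tautological involution $\sigma_{-1}$ which interchanges $z_1,z_2$ and fixes the pole, i.e.\ fixes the node $q$.

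Next I would observe that the pair $(\sigma_0,\sigma_{-1})$ induces a valid ramification profile $\cP$ of $\cR_1(n,n,-2^n)$ in the sense of \Cref{def:ramiprof}: it interchanges $z_1$ and $z_2$, acts as an order-two permutation on the order-$(-2)$ poles coming from $\sigma_0$, and since all poles have the even order $2$ there is no parity obstruction and the number of fixed poles can be arranged to be at most $2g+2=4$. Both involutions fix the node $q$ by the previous step, so the hypotheses of the converse direction of \Cref{hyper1} are satisfied, and $\overline{X}$ lies in the principal boundary of \emph{some} hyperelliptic component of $\cR_1(n,n,-2^n)$ with ramification profile $\cP$. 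Since $\overline{X}\in\partial\overline{\cD}$, this forces $\cD$ to be hyperelliptic, contradicting the assumption.

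The main thing to verify carefully is the applicability of the converse in \Cref{hyper1}: the involutions on the two components must glue to an involution of $\overline{X}$ compatible with the prong-matching equivalence class. In the one-node case this should be automatic because the level rotation action $\ZZ\to\ZZ/\kappa\ZZ$ is surjective, so every prong-matching class at $q$ is equivalent to one through which $\sigma_0$ and $\sigma_{-1}$ are compatible; this is the only potentially delicate point and I would just invoke \Cref{hyper1} after checking that the setup matches its hypotheses verbatim.
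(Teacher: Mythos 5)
Your proof is correct and takes essentially the same route as the paper: shrink a multiplicity-one saddle connection between $z_1,z_2$ (via \Cref{simple} and \Cref{shrink}), observe that the top-level component is hyperelliptic by \Cref{exception0} and the bottom-level component $X_{-1}\in\cH_0(n,n,-2n-2)$ is tautologically hyperelliptic, and then invoke the converse direction of \Cref{hyper1} to conclude $\cD$ is hyperelliptic. Your extra remark about prong-matching is a correct but automatic detail — with a single node the level rotation action $\ZZ\to\ZZ/\kappa\ZZ$ is surjective, so there is only one prong-matching equivalence class — and the paper takes this for granted without comment.
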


\begin{proof}
Assume the contrary --- let $\cD$ be a non-hyperelliptic component of $\cR_1(n,n,-2^n)$. Then by \Cref{simple}, $\cD$ contains a flat surface with a multiplicity one saddle connection joining $z_1$ and $z_2$. By shrinking the saddle connection, we obtain a two-level multi-scale differential $\overline{X}$. The top level component is contained in some connected component $\cC$ of $\cR_1(2n,-2^n)$. By \Cref{exception0}, $\cC$ is hyperelliptic. The bottom level component is contained in the stratum $\cR_0(n,n,-2n-2)$, which is connected and hyperelliptic. By \Cref{hyper1}, we can conclude that $\cD$ is also hyperelliptic, a contradiction. 
\end{proof}

Recall that the hyperelliptic components of $\cR_1(n,n,-2^n)$ are classified by its ramification profiles by \Cref{mainhyper} proved in \Cref{sec:chc}. 
Now suppose that $a_1 < a_2$. Since $\cR_1 (a_1,a_2,-2^n)$ does not have any ramification profiles, it does not have any hyperelliptic component. So $B'$ is surjective by \Cref{simple} and \Cref{merging}. By \Cref{hyperclass1} the domain of $B'$ is equivalent to the set of ramification profiles of $\cR_1(2n,-2^n)$. 

\begin{proposition}  \label{nonminimalexception}
Let $\cC_1,\cC_2$ be the (hyperelliptic) connected components of $\cR_1(2n,-2^n)$ corresponding to the ramification profiles $\cP_1,\cP_2$, respectively. Then $B'(\cC_1)=B'(\cC_2)$ if and only if $a_1$ is odd, or $a_1$ is even and $\cP_1,\cP_2$ fix the same number of marked points. 
\end{proposition}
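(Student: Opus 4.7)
The plan is to extract the rotation number of $B'(\cC_i)$ in the target stratum $\cR_1(a_1, a_2, -2^n)$ as the key topological invariant. Since breaking up a zero preserves any symplectic basis of $H_1(X, \ZZ)$ together with the indices of its cycles, one obtains the identity
\[
r(B'(\cC_i)) \;=\; \gcd\bigl(\gcd(a_1, a_2, 2),\, r(\cC_i)\bigr) \;=\; \gcd\bigl(a_1, a_2,\, r(\cC_i)\bigr),
\]
where $r(\cC_i) \in \{1, 2\}$ is the rotation number of $\cC_i$ inside the minimal stratum $\cR_1(2n, -2^n)$. Because $a_1 + a_2 = 2n$ forces $a_1$ and $a_2$ to have the same parity, $\gcd(a_1, a_2, 2)$ equals $1$ when $a_1$ is odd and $2$ when $a_1$ is even; hence $r(B'(\cC_i))$ is always $1$ in the odd case and equals $r(\cC_i)$ in the even case.

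To compute $r(\cC_i)$ I would apply \Cref{rotation} to an explicit two-level boundary differential of $\cC_i$, produced by combining \Cref{mainhyper} with \Cref{genus1hyper0}, \Cref{genus1hyper1}, and (for the four-fixed-point case of odd $n$) the two-node analysis in \Cref{hyperclass1} Case~(3). A direct case-by-case calculation then shows that $r(\cC_i)$ depends only on the number of marked points fixed by $\cP_i$. This immediately yields the ``only if'' direction: when $a_1$ is even and $\cP_1, \cP_2$ fix different numbers of marked points, the rotation numbers of $B'(\cC_1)$ and $B'(\cC_2)$ are different, so the two components of the target stratum are distinct.

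For the ``if'' direction I would exhibit an explicit deformation in the boundary of $\cR_1(a_1, a_2, -2^n)$ connecting the breaking-ups of $\overline{X}_1$ and $\overline{X}_2$. First I would navigate inside $\partial\overline{\cC}_i$ via the operators $T_1, T_2$ of \Cref{genus1connect} so that the pole $p_n$ is placed on the bottom level, putting $\overline{X}_i$ in the form $X(n-1, Id, \mathbf{C}_i, [(0, v_i)])$ required by \Cref{breakinglemma2}. Part (1) of that lemma then supplies a shift of the prong-matching $v$ by $a_1 \pmod{n}$ that is invisible to $B'$, and part (2) contributes an additional shift by $-2$ in the critical case $a_1 = n-1$. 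Iterating these shifts --- combined with changes of $\mathbf{C}$ and $\tau$ within the hyperelliptic component (using the freedom described in \Cref{genus1hyper0}, \Cref{genus1hyper1}) and with pole relabelings (exploiting that any two ramification profiles with the same fixed-marked-point count are conjugate by a relabeling of poles, which preserves rotation number in the target) --- should produce the required path $v_1 \leadsto v_2$.

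The main obstacle is establishing the connectivity for the ``if'' direction when $\gcd(a_1, n)$ is large. For instance, when $n = 9$ and $a_1 = 3$, the shift-by-$a_1$ of \Cref{breakinglemma2}(1) alone generates only the size-$3$ subgroup $\{0, 3, 6\} \subset \ZZ/9\ZZ$, which cannot reach $v_2 - v_1$ on its own. Closing the orbit in such cases requires a careful combination of the $(-2)$-shift of \Cref{breakinglemma2}(2), internal reshuffling of $(\mathbf{C}, \tau)$ inside $\partial\overline{\cC}_i$, and pole-relabeling symmetry of the target stratum; I expect the case-by-case verification of these combinatorial moves to be the most delicate step of the argument.
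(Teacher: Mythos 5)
Your ``only if'' direction is correct and matches the paper: under $B'$ a symplectic basis and its indices are preserved, so the rotation number of $B'(\cC_i)$ is $\gcd(a_1,a_2,2,r(\cC_i))$, which is always $1$ when $a_1$ is odd and equals $r(\cC_i)$ when $a_1$ is even, and $r(\cC_i)$ is indeed a function only of the number of fixed marked points.

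The ``if'' direction, however, has a genuine gap, and your framing of it as ``producing a path $v_1\leadsto v_2$'' by iterating shifts of the prong-matching is the wrong organizing principle. First, a small technical point: for $X(n-1,Id,{\bf 1},[(0,v)])$ the prong-matching lives in $\ZZ/Q_2\ZZ=\ZZ/(n-1)\ZZ$, not $\ZZ/n\ZZ$, so the quantity you should be worrying about is $\gcd(a_1,n-1)$; your example $n=9$, $a_1=3$ actually gives $\gcd(3,8)=1$ and does not exhibit the obstruction at all. More substantively, what needs to be shown is not that a shift orbit in a cyclic group covers $\ZZ/(n-1)\ZZ$ -- the prong-matching class of a hyperelliptic boundary point is \emph{forced} by the rest of the combinatorial data -- but rather that $B'$ is constant on the orbit of $\cP_1$ under conjugation by permutations fixing $n$. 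The paper's proof is built around this: it picks $\tau$ cleverly so that a \emph{single} application of \Cref{breakinglemma2} realizes the effect of a single transposition $(1,j)$ or $(\frac{n+1}{2},j)$ on the ramification profile, then observes that these transpositions generate the needed conjugation group. There is no $\gcd$ obstruction because each step is one transposition, not one shift of $v$.

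Your fallback to ``pole-relabeling symmetry of the target stratum'' is circular. Relabeling poles does preserve rotation number (trivially, since the flat surface is unchanged), but to upgrade this to ``preserves the connected component $B'(\cC_1)$'' you would need to know that rotation number classifies the non-hyperelliptic components of $\cR_1(a_1,a_2,-2^n)$ -- which is \Cref{main2}, whose proof depends on this very proposition via \Cref{nonminimalbase}. The paper sidesteps this by exhibiting a common multi-scale differential in $\partial\overline{B'(\cC_1)}\cap\partial\overline{B'(\cC_2)}$ directly, so that no prior classification of the target is invoked. Finally, you do not address the $a_1$ odd case, which in the paper requires a separate argument showing $B'(\cC_1)=B'(\cC_2)$ when $\cP_1$ fixes two marked points and $\cP_2$ fixes four (the key point there being that an odd shift by $a_1$ moves a boundary differential from the two-fixed-point configuration to a four-fixed-point one).
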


\begin{proof}
Note that for a given $n$, there are two possible number $k$ of fixed marked points, since $k\equiv n+1 (\textrm{mod}\ 2)$. For example, if $n$ is even, then $k$ is either one or three. 

Suppose that $B'(\cC_1)=B'(\cC_2)$ and $a_1$ is even. We need to prove $\cP_1,\cP_2$ fix the same number of marked points. Note that the rotation numbers of $\cC_1$ and $\cC_2$ are determined by the number of fixed marked points. Let $\cP_1$ fixes $k$ marked points. If $k=1$, then after relabeling the poles, we have $X(Id,{\bf 1}, [(0,0)])\in \partial\overline{\cC}_1$. Here, ${\bf 1}$ means $(1,\dots,1)$. So by \Cref{rotation}, the rotation number of $\cC_1$ is equal to $\gcd(2,n)=2$. If $k=2,3$, then after relabeling the poles, we have $X(Id,{\bf 1}, [(0,1)])\in \partial\overline{\cC}_1$. The rotation number is equal to $\gcd(2,n,1)=1$. If $k=4$, then after relabeling the poles, we have $X(n-1,Id,{\bf 1},[(0,1)])\in \partial\overline{\cC}_1$. The rotation number is equal to $\gcd(2,n-1,n+1)=2$. Since the rotation number of $B'(\cC_1)$ is equal to the rotation number of $\cC_1$, it is also determined by the number of fixed marked points. So if $B'(\cC_1)=B'(\cC_2)$, then $\cP_1,\cP_2$ fix the same number of marked points. 

Conversely, suppose first that $a_1$ is even and $\cP_1,\cP_2$ fix the same number of marked points. 

First, suppose that $a_1<n-1$. We will deal with the case when $\cP_1$ and $\cP_2$ fix four marked points. This is the most complicated case, and the other cases will follow more easily by the same argument. By relabeling the poles, we may assume that $\cP_1$ fixes $n$. Consider a multi-scale differential $X(n-1,\tau, {\bf 1}, [(0,1)])\in \partial\overline{\cC}_2$ containing only one (fixed) pole $p_{\tau(n)}$ in the bottom level component. The other fixed poles are labeled by $\tau(1)$ and $\tau(\frac{n+1}{2})$. Suppose that $\cP_2$ does not fix $n$ (So $n>3$). Then $p_n$ is contained in the top level component. By relabeling the saddle connections, we may assume that $\tau(\frac{a_1}{2}+1)=n$ since $\frac{a_1}{2}+1\neq 1,\frac{n+1}{2}$. Consider another multi-scale differential $X(n-1,\tau, {\bf 1}, [(0,a_1+1)])$. It has a ramification profile $\cP_3$ that fixes $n$. By (1) of \Cref{breakinglemma2}, we have $B'(\cC_2)=B'(\cC_3)$. So we can reduce to the case when $\cP_2$ fixes $n$. By relabeling the poles other than $p_n$, we may assume that $\cP_1(1)=\cP_1(1)$ and $\cP_1(i)=\cP_1(n+1-i)$ for each $i=2,\dots, n-1$. Since $\cP_1$ and $\cP_2$ have the same cycle type, there exists a permutation $\sigma$ fixing $n$, such that $\cP_2 = \sigma\circ \cP_1 \circ \sigma^{-1}$. Therefore, it is enough to show that $B'(\cC_1)=B'(\cC_2)$ whenever $\cP_2 = \sigma\circ \cP_1 \circ \sigma^{-1}$ for each transposition $\sigma=(i,j)$, $1\leq i<j\leq n-1$. It is obvious that $\sigma\circ \cP_1 \circ \sigma^{-1} =\cP_1$ for each $\sigma=(j,n+1-j)$, $2\leq j\leq n-1$. So it remains to show $B'(\cC_1)=B'(\cC_2)$ when $\cP_2 = (1,j)\circ \cP_1 \circ (1,j)$ for each $1<j\leq \frac{n+1}{2}$. If $j=\frac{n+1}{2}$, then we can take $\tau=(j,a_1+1)(n+1-j,n-a_1)$. In particular, $\tau(a_1+1)=j$. We have $X(n-1,\tau, {\bf 1}, [(0,1)])\in \partial\overline{\cC}_1$. Consider a multi-scale differential $X(n-1,\tau, {\bf 1}, [(0,a_1+1)])$, with ramification profile $\cP_4$, interchanging $\tau(1)=1$ and $\tau(a_1+1)=j$. Then by \Cref{breakinglemma2}, we have $B'(\cC_1)=B'(\cC_4)$. Similarly, we can deduce $B'(\cC_2)=B'(\cC_4)$. Therefore $B'(\cC_1)=B'(\cC_2)=B'(\cC_4)$. Symmetrically, we also have $B'(\cC_1)=B'(\cC_2)$ for each $\cP_2 = (\frac{n+1}{2},j)\circ \cP_1 \circ (\frac{n+1}{2},j)$, $1\leq j<\frac{n+1}{2}$. Note that $(1,\frac{n+1}{2})=(1,2)(2,\frac{n+1}{2})(1,2)$. Therefore, we can conclude that $B'(\cC_1)=B'(\cC_2)$.

Now we assume that $a_1=n-1$. In this case, $n$ is odd. We will deal with the case when $\cP_1$ and $\cP_2$ fix four marked points. By relabeling the poles, we may assume that $\cP_1$ fixes $n$. Consider a multi-scale differential $X(n-1,\tau, {\bf 1}, [(0,1)])\in \partial\overline{\cC}_2$ containing only one (fixed) pole $p_{\tau(n)}$ in the bottom level component. The other fixed poles are labeled by $\tau(1)$ and $\frac{n+1}{2}$. Suppose that $\cP_2$ does not fix $n$ (So $n>3$). Then $p_n$ is contained in the top level component. By relabeling the saddle connections, we may assume that $\tau(n-1)=n$ since $n-1\neq 1,\frac{n+1}{2}$. Consider another multi-scale differential $X(n-1,\tau, {\bf 1}, [(0,-1)])$. It has a ramification profile $\cP_3$ that fixes $\tau(n-1)=n$. By (2) of \Cref{breakinglemma2}, we have $B'(\cC_2)=B'(\cC_3)$. So we can reduce to the case when $\cP_2$ fixes $n$. By relabeling the poles other than $p_n$, we may assume that $\cP_1(1)=\cP_1(1)$ and $\cP_1(i)=\cP_1(n+1-i)$ for each $i=2,\dots, n-1$. Since $\cP_1$ and $\cP_2$ have the same cycle type, there exists a permutation $\sigma$ fixing $n$, such that $\cP_2 = \sigma\circ \cP_1 \circ \sigma^{-1}$. Therefore, it is enough to show that $B'(\cC_1)=B'(\cC_2)$ whenever $\cP_2 = \sigma\circ \cP_1 \circ \sigma^{-1}$ for each transposition $\sigma=(i,j)$, $1\leq i<j\leq n-1$. It is obvious that $\sigma\circ \cP_1 \circ \sigma^{-1} =\cP_1$ for each $\sigma=(j,n+1-j)$, $2\leq j\leq n-1$. So it remains to show $B'(\cC_1)=B'(\cC_2)$ when $\cP_2 = (1,j)\circ \cP_1 \circ (1,j)$ for each $1<j\leq \frac{n+1}{2}$. If $j=\frac{n+1}{2}$, then we can take $\tau=(j,n-2)(n+1-j,2)$. In particular, $\tau(n-2)=j$. We have $X(n-1,\tau, {\bf 1}, [(0,1)])\in \partial\overline{\cC}_1$. Consider a multi-scale differential $X(n-1,\tau, {\bf 1}, [(0,-1)])$, with ramification profile $\cP_4$, interchanging $\tau(1)=1$ and $\tau(n-2)=j$. Then by \Cref{breakinglemma2}, we have $B'(\cC_1)=B'(\cC_4)$. Similarly, we can deduce $B'(\cC_2)=B'(\cC_4)$. Therefore $B'(\cC_1)=B'(\cC_2)=B'(\cC_4)$. Symmetrically, we also have $B'(\cC_1)=B'(\cC_2)$ for each $\cP_2 = (\frac{n+1}{2},j)\circ \cP_1 \circ (\frac{n+1}{2},j)$, $1\leq j<\frac{n+1}{2}$. Note that $(1,\frac{n+1}{2})=(1,2)(2,\frac{n+1}{2})(1,2)$. Therefore, we conclude that $B'(\cC_1)=B'(\cC_2)$.

Finally, assume that $a_1$ is odd. We will show that $B'(\cC_1)=B'(\cC_2)$ when $\cP_1$ fixes two marked points and $\cP_2$ fixes four marked points. 

If $\cP_1$ fixes two marked points and $a_1$ is odd, then we consider $X(n-1,\tau, {\bf 1}, Pr)\in \partial\overline{\cC}_1$ containing the only fixed pole in the bottom level component. By relabeling the saddle connections, we may assume that $\cP_1 (\tau(i))=\cP_1(\tau(n+1-i))$ for $i=1,\dots, n$ and $Pr=[(0,0)]$. Consider a multi-scale differential $X(n-1,\tau, {\bf 1}, [(0,a_1)])$ with ramification profile $\cP_3$. Since $a_1$ is odd, $\tau(\frac{a_1+1}{2})$ and $\tau(\frac{a_1+n}{2})$ are also fixed by $\cP_3$. By (1) of \Cref{breakinglemma2}, we have $B'(\cC_1)=B'(\cC_3)$. We already have $B'(\cC_3)=B'(\cC_2)$ since $\cP_3$ fixes four marked points. Therefore, $B'(\cC_1)=B'(\cC_2)=B'(\cC_3)$.
\end{proof}

\begin{proposition}\label{nonexistspecial}
    The strata $\cR_1(r,r,-2r)$ and $\cR_1(r,r,-r,-r)$ does not have any non-hyperelliptic connected component with rotation number $r$. 
\end{proposition}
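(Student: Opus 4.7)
The plan is to invoke the breaking up the zero map $B$ from \Cref{subsec:merge} together with the classifications of rotation-number-$r$ components in the corresponding \MIN strata given by \Cref{exception1} and \Cref{exception2}. Recall from the discussion preceding \Cref{nonminimal1} that if $\cC$ is a non-hyperelliptic component of the relevant \MIN stratum with rotation number $r'$, then $B(\cC)$ has rotation number $\gcd(r,r,r')=\gcd(r,r')$. The argument then reduces to showing that the only $r'$ for which $\gcd(r,r')=r$ can occur are those in which the source component $\cC$ is forced to be hyperelliptic (or does not exist).

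For $\cR_1(r,r,-2r)$, when $r=1$ the stratum coincides with $\cR_1(n,n,-2^n)$ for $n=1$, which by \Cref{nnspecial} has no non-hyperelliptic component at all. For $r\geq 2$ we have $b_n=2r>2$, so by \Cref{nonhypermerge1} any non-hyperelliptic component $\cD$ satisfies $\cD=B(\cC)$ for some non-hyperelliptic $\cC\subset\cR_1(2r,-2r)=\cH_1(2r,-2r)$. Writing $r'$ for the rotation number of $\cC$, the rotation number of $\cD$ is $\gcd(r,r')$. If this equals $r$, then $r\mid r'$, and since $r'\mid d=2r$ we conclude $r'\in\{r,2r\}$. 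The case $r'=r$ contradicts \Cref{exception1}, which asserts that the rotation-number-$r$ component of $\cR_1(2r,-2r)$ is hyperelliptic; the case $r'=2r$ contradicts the first exceptional case of \Cref{main2} applied to $\cR_1(2r,-2r)$, known from \cite{boissymero}.

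For $\cR_1(r,r,-r,-r)$, simple poles cannot be residueless so $r\geq 2$. When $r=2$ the stratum is $\cR_1(n,n,-2^n)$ with $n=2$, which again has no non-hyperelliptic component by \Cref{nnspecial}. For $r\geq 3$ we have $b_n=r>2$, and the same scheme applies: any non-hyperelliptic $\cD$ equals $B(\cC)$ for some non-hyperelliptic $\cC\subset\cR_1(2r,-r,-r)$ of rotation number $r'$, and $\cD$ has rotation number $\gcd(r,r')$. Equating this to $r$ and using $r'\mid d=r$ forces $r'=r$, contradicting \Cref{exception2}.

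The only point that requires care is the hypothesis $b_n>2$ of \Cref{nonhypermerge1}, which is why the small values $r=1$ (resp.\ $r=1,2$) must be dispatched separately via \Cref{nnspecial}. Beyond that, the argument is a short bookkeeping computation of how the rotation number transforms under breaking up the zero, so no significant obstacle is anticipated.
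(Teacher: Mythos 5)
Your proposal is correct, and the overall strategy — break up the zero, track how rotation number transforms, and invoke \Cref{exception1} / \Cref{exception2} to rule out the source component — is the same one the paper uses. The only genuine difference is in how you establish that the source component $\cC$ of the \MIN stratum is non-hyperelliptic: you invoke \Cref{nonhypermerge1}, which requires $b_n>2$, and therefore dispatch the small cases ($r=1$ for $\cR_1(r,r,-2r)$; $r\leq 2$ for $\cR_1(r,r,-r,-r)$) separately via \Cref{nnspecial} (and emptiness for $r=1$ in the second family). The paper instead cites \Cref{simple} and leaves implicit the step that, because $a_1=a_2$ forces the bottom level component in $\cH_0(r,r,-2r-2)$ to be hyperelliptic, a hyperelliptic top level component would force $\cD$ hyperelliptic by \Cref{hyper1} — an argument that works uniformly in $r$ without a $b_n>2$ hypothesis. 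Your version is slightly more bureaucratic but also slightly more explicit about which hypotheses are in force; both routes are sound, and your bookkeeping ($r\mid r'$, $r'\mid d$, and the elimination of $r'=2r$ via the first exceptional case of \Cref{main2} from \cite{boissymero}) is exactly right.
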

\begin{proof}
    If $\cR_1(r,r,-2r)$ has a non-hyperelliptic component $\cD$ with rotation number $r$, then by \Cref{simple}, this component is adjacent to a non-hyperelliptic component $\cC$ of $\cR_1(2r,-2r)$ with rotation number $R$. Then $\gcd(R,r)=r$, so $R=r$ or $2r$. This is contradiction to \Cref{exception1}. Therefore $\cD$ does not exist. The same argument works for $\cR_1(r,r,-r,-r)$ with \Cref{exception2}. 
\end{proof}

By combining above propositions and \Cref{nonminimal1}, we have the following

\begin{lemma} \label{nonminimalbase}
Let $D\coloneqq \gcd(a_1,d)$ and $r|D$. Suppose that $\mu\neq (r,r,-2r), (r,r,-r,-r)$ or $(n,n,-2^n)$. The stratum $\cR_1(\mu)=\cR_1 (a_1, a_2, -b_1,\dots, -b_n)$ has a unique non-hyperelliptic connected component with rotation number $r$.
\end{lemma}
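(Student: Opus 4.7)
The plan is to push forward the classification of non-hyperelliptic components of the underlying \MIN stratum $\cR_1(\mu')$, where $\mu' = (a_1+a_2, -b_1, \dots, -b_n)$, to $\cR_1(\mu)$ via the breaking-up-a-zero map $B$ (or its analogue $B'$ from hyperelliptic components when $b_n = 2$). The key observation, already recorded in the discussion preceding \Cref{nonminimal1}, is that the rotation number of $B(\cC_R)$ equals $\gcd(a_1, a_2, R) = \gcd(a_1, R)$, so as $R$ ranges over divisors of $d$ this image ranges exactly over divisors of $D = \gcd(a_1, d)$. The lemma will follow once one establishes: (i) surjectivity of $B$ (resp.\ $B'$) onto the set of non-hyperelliptic components of $\cR_1(\mu)$; and (ii) the fiber identification $B(\cC_{R_1}) = B(\cC_{R_2}) \iff \gcd(a_1, R_1) = \gcd(a_1, R_2)$.

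For the generic case in which $\mu'$ is none of the four special \MIN strata listed in \Cref{subsec:special}, surjectivity is \Cref{nonhypermerge1}, uniqueness of $\cC_R$ on the \MIN side is \Cref{main2minimal}, and the fiber identification is \Cref{nonminimal1}. Combining these three immediately gives a bijection between non-hyperelliptic components of $\cR_1(\mu)$ and divisors of $D$, parametrized by rotation number.

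The remaining work is a case-by-case treatment of special $\mu'$. If $\mu' = (12,-3^4)$, so $\mu \in \{(3,9,-3^4),(6,6,-3^4)\}$, then $r = 3$ is handled by \Cref{specialmerge1} and \Cref{specialmerge2}; for $r = 1$ only the unique $\cC_1 \subset \cR_1(12,-3^4)$ produces a $B$-image of rotation number $1$ (both $\cC_3^1,\cC_3^2$ yield rotation number $\gcd(a_1,3) = 3$), so its image is automatically unique. If $\mu' \in \{(2s,-s,-s),(2s,-2s)\}$, the exclusion $\mu \neq (s,s,-s,-s),(s,s,-2s)$ forces $a_1 < s$, hence $D \leq a_1 < s$ and every $r \mid D$ satisfies $r < s$; the only inaccessible non-hyperelliptic component of $\cR_1(\mu')$ is the one of rotation number $s$ (by \Cref{exception1} and \Cref{exception2}), which never arises, so the proof of \Cref{nonminimal1} (which invokes only \Cref{breakinglemma1} and \Cref{rotation}) applies verbatim. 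If $\mu' = (2N,-2^N)$, the hypothesis $\mu \neq (N,N,-2^N)$ gives $a_1 < a_2$, so $\cR_1(\mu)$ admits no ramification profile and thus no hyperelliptic components; since $\cR_1(2N,-2^N)$ is entirely hyperelliptic by \Cref{exception0}, I would replace $B$ by $B'$, which is surjective onto $\cR_1(\mu)$ by \Cref{simple} and \Cref{merging}, and then invoke \Cref{nonminimalexception}, which yields a single image of rotation number $1$ when $a_1$ is odd ($D = 1$) and exactly two images of rotation numbers $1,2$ when $a_1$ is even ($D = 2$).

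The main obstacle is verifying in each special case that the non-hyperelliptic components of $\cR_1(\mu')$ which fail to exist or fail to be unique lie outside the range of $\gcd(a_1,\cdot)$ relevant to the divisors of $D$ under consideration; once this exclusion is checked, the fiber analysis of \Cref{nonminimal1} and \Cref{nonminimalexception} transports cleanly through $B$ and $B'$ to give both existence and uniqueness of a non-hyperelliptic component of $\cR_1(\mu)$ for each $r \mid D$.
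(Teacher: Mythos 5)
Your proposal mirrors the paper's decomposition precisely: reduce via $B$ to the minimal stratum, invoke \Cref{nonhypermerge1} for surjectivity, \Cref{main2minimal} for uniqueness on the minimal side, \Cref{nonminimal1} for the fiber identification, and then patch the four special $\mu'$ by hand. That much is sound, and your observations about $(2s,-2s)$, $(2s,-s,-s)$ (where $a_1 < s$ forces $r < s$, so the missing $\cC_s$ never arises), and the $(2N,-2^N)$ case (replacing $B$ by $B'$ and citing \Cref{nonminimalexception}) are all correct.

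The gap is in the $\mu' = (12,-3^4)$ case. You write ``so $\mu \in \{(3,9,-3^4),(6,6,-3^4)\}$,'' but this follows from $\mu' = (12,-3^4)$ only when one also requires $3 \mid a_1$, i.e.\ only when $r = 3$ is in play. For $r = 1$, every $\mu = (a_1, 12-a_1, -3^4)$ with $a_1 \le 6$ must be handled, including $(1,11)$, $(2,10)$, $(4,8)$, $(5,7)$, where $3 \nmid a_1$ and hence $D = 1$. In those cases your assertion that ``both $\cC_3^1, \cC_3^2$ yield rotation number $\gcd(a_1,3) = 3$'' is false: $\gcd(a_1,3) = 1$, so \emph{all three} of $B(\cC_1), B(\cC_3^1), B(\cC_3^2)$ land on rotation number $1$, and uniqueness is not automatic. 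One must still verify $B(\cC_1) = B(\cC_3^1) = B(\cC_3^2)$. This does follow by exactly the \Cref{breakinglemma1}-based computation you invoke for the $(2s,-2s)$ and $(2s,-s,-s)$ cases: for $\overline{X} = X(\tau,\mathbf{C},[(0,0)]) \in \partial\overline{\cC_3^i}$, since $3 \mid Q_1$ the shifted prong-matching $[(0,a_1)]$ has rotation number $\gcd(3,Q_1,a_1) = 1$ and thus lies in $\partial\overline{\cC_1}$, giving $B(\cC_3^i) = B(\cC_1)$. You should run this argument explicitly rather than restricting $\mu$ to the two tuples with $a_1$ divisible by $3$. (The paper's own proof is also silent on this subcase.)
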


\begin{proof}
If $n=1$, then this is a usual meromorphic stratum and therefore proven in \cite{boissymero}. So assume that $n>1$. First, suppose that $\cR_1(\mu')$ is not one of the special strata dealt in \Cref{subsec:special}. Fix any positive integer $R_1|d$ such that $\gcd(a_1,R_1)=r$. Then by \Cref{main2minimal}, there exists a unique non-hyperelliptic component $\cC_{R_1}$ of $\cR_1(\mu')$ with rotation number $R_1$. Let $\cD_1=B(\cC_{R_1})$. Then the rotation number of $\cD_1$ is equal to $r$. Assume the contrary --- that there exists another non-hyperelliptic connected component $\cD_2$ of $\cR_1(\mu)$ with rotation number $r$. Then by \Cref{nonhypermerge}, $\cD_2=B(\cC_{R_2})$ for some $R_2|d$. We have $r=\gcd(a_1,R_2)=\gcd(a_1,R_1)$. By \Cref{nonminimal1}, we have $\cD_1=B(\cC_{R_1})=B(\cC_{R_2})=\cD_2$. 

Now suppose that $\mu'=(2n,-2^n)$. If $a_1$ is odd, then $\cR_1 (a_1,a_2, -2^n)$ is connected by \Cref{nonminimalexception} and the rotation number is equal to $\gcd(a_1,2)=1$. If $a_1$ is even, then $\cR_1(a_1,a_2,-2^n)$ has at most two connected components by \Cref{nonminimalexception}. Since $\gcd(a_1,2)=2$, there are at least two connected components, corresponding to rotation numbers $r=1,2$.  

If $\mu'=(12,-3^4)$ and $r=3$, then there are exactly two possible cases $\mu=(3,9,-3^4)$ and $(6,6,-3^4)$, which is proven by \Cref{specialmerge1} and \Cref{specialmerge2}. If $\mu'=(2r,-2r)$ or $(2r,-r,-r)$, then $r|D$ if and only if $m=2$ and $a_1=a_2=r$, which is excluded by assumption. 
\end{proof}

Now we are ready to prove \Cref{main2}.

\begin{proof}[Proof of \Cref{main2}]
Let $\cR_1(\mu)=\cR_1(a_1,\dots,a_m,-b_1,\dots,-b_n)$. We denote $a=a_1+\dots+a_m$, $d=\gcd(a,b_1,\dots,b_n)$ and $D=\gcd(a_1,\dots,a_m,d)$. For each $r|D$, we need to prove that there exists a unique non-hyperelliptic component of $\cR_1(\mu)$ with rotation number $r$. Denote the corresponding \MIN stratum by $\cR_1(\mu')$.

If $\mu'=(r,-r)$, then $r|D$ only if $m=1$. If $\mu'=(2r,-2r)$ or $(2r,-r,-r)$, then $r|D$ if and only if $m=1$ or $m=2$ and $a_1=a_2=r$. These cases are exceptional cases and proven in \cite{boissymero}, \Cref{exception1}, \Cref{exception2} and \Cref{nonexistspecial}. 

If $\mu'=(12,-3^4)$ and $r=3$, then the case $m=1$ is exceptional and there are in fact exactly two non-hyperelliptic components. This is proven in \cite{LT}. The cases when $m=2$ are proven in \Cref{specialmerge1} and \Cref{specialmerge2}. If $m>2$, then $\cR_1(\mu)$ has at least one zero of order 3 if $3|D$. By \Cref{simple} and \Cref{merging}, any non-hyperelliptic component $\cD$ of $\cR_1(\mu)$ with rotation number 3 is adjacent to the unique non-hyperelliptic component of $\cR_1(3,9,-3^4)$ with rotation number 3. So $\cD$ is unique. 

Let $\mu'=(2n,-2^n)$. If $m=2$ and $a_1=a_2=n$, then $\cR_1 (\mu)$ has no non-hyperelliptic component by \Cref{nnspecial}. First, suppose that $a_i$ is odd for some $i$. Then $D=1$ and 1 is the only possible rotation number. Let $\cD$ be a non-hyperelliptic component of $\cR_1 (\mu)$. By \Cref{nonhypermerge}, $\cD$ is adjacent to a connected component of $\cC$ of $\cR_1(\mu')$. We can break up the zero of $\cC$ to obtain the stratum $\cR_1(a_i, a-a_i,-2^n)$, which is connected by \Cref{nonminimalbase}. So $\cR_1 (\mu)$ is connected. Now suppose that all $a_i$ are even. Then $D=2$ and there are two possible rotation numbers $r=1,2$. Since $m>2$, there exists $i$ such that $a_i\neq a-a_i$. By \Cref{nonminimalbase}, the stratum $\cR_1(a_i, a-a_i,-2^n)$ has two (non-hyperelliptic) connected components, thus $\cR_1 (\mu)$ has two (non-hyperelliptic) connected components corresponding to $r=1,2$.

Now assume that $\mu'\neq (12,-3^4),(2n,-2^n),(2r,-2r)$ or $(2r,-r,-r)$. Then there exists a unique non-hyperelliptic component $\cC_r$ of $\cR_1 (\mu')$ with rotation number $r$ by \Cref{main2minimal}. By breaking up the zero from $\cC_r$, we obtain a non-hyperelliptic component of $\cR_1(\mu)$ with rotation number $r$, proving the existence. We need to prove the uniqueness. Let $\cD$ be a non-hyperelliptic component of $\cR_1 (\mu)$ with rotation number $r$. By \Cref{nonhypermerge}, $\cD$ is adjacent to a non-hyperelliptic component $\cC_R$ of $\cR_1 (\mu')$ with rotation number $R|d$. Assume that $R$ is the smallest among the components that $\cD$ is adjacent to. Then the rotation number $r$ of $\cD$ is equal to $\gcd(R,D)$. It suffices to prove that $R=r$ because then we have $\cC=B(\cC_r)$ and $\cC$ is unique. For each $i=1,\dots,m$, we can break up the zero of $\cC_R$ into two zeroes of orders $a_i$ and $a-a_i$ to obtain a non-hyperelliptic component $\cD^i$ of $\cR_1 (a_i,a-a_i,-b_1,\dots,-b_n)$. By \Cref{nonminimalbase}, $\cD^i$ is a unique non-hyperelliptic component with rotation number $R_i\coloneqq \gcd(R,a_i)\leq R$. If $R_i<R$, then by breaking up the zero of $\cC_{R_i}$, we can again obtain $\cD^i$. So $\cC$ is adjacent to $\cC_{R_i}$ and this contradicts to the minimality of $R$. Thus $R_i=R$ for each $i$ and therefore $R=\gcd(R,a_1,\dots,a_m)=\gcd(R,D)=r$, as desired.  
\end{proof}

\section{Higher genus strata} \label{sec:hg}

In this section, we will classify the connected components of strata $\cR_g (\mu)$ for genus $g>1$, completing the proof of \Cref{main1}

\subsection{Higher genus \MIN strata} \label{sec:hgm}

First, we work with a \MIN stratum of genus $g>1$. We will prove that any non-hyperelliptic connected component of $\cR_g (\mu)$ can be obtained by a surgery called {\em bubbling a handle} from a connected component of a genus $g-1$ \MIN stratum. Here is one difference from \cite{boissymero} and \cite{kozo1}: a flat surface in the genus one residueless \MIN stratum $\cR_1(\mu)$ cannot be obtained by bubbling a handle from a genus zero residueless flat surface (because there do not exist such flat surfaces). So our base case of the induction has to be $g=1$, not $g=0$. This is why we treated genus one strata separately in \Cref{sec:g1m}. Even though we have different base cases, bubbling machinery will still allow us to enumerate the connected components of $\cR_g (\mu)$ similarly to \cite{boissymero} and \cite{kozo1}. 

\subsection{Unbubbling a handle}

Recall that {\em bubbling a handle at} $z$ operation $\oplus_{z}$ is given in \Cref{subsec:bubble}, as in \cite{boissy}. Since we are dealing with \MIN strata in this section, we will drop $z$ in the notation and simply write it as $\oplus$. Recall that for a connected component $\cC$ of a \MIN stratum $\cR_g(\mu)$, we have $\cC\oplus s_1 = \cC\oplus s_2$ if $\gcd(a+2,s_1)=\gcd(a+2,s_2)$. This is because the multi-scale differential used for bubbling a handle with angle $2\pi s$ is contained in $\overline{\cH}_1(a+2,-a-2)$, and its rotation number is equal to $\gcd(a+2,s)$. So we can always assume that $s|a+2$, by replacing $s$ by $\gcd(a+2,s)$. 

In fact, we can further extend the range of multi-scale differentials used for bubbling a handle. Consider a two-level multi-scale differential $X(Id,C_1,[(0,v)])\in \overline{\cH}_1(a+2,-a-2)$. By \Cref{rotation}, the rotation number of this multi-scale differential is given by $\gcd(a+2,C_1,v)$. If $\gcd(a+2,C_1,v)=s$, then the component $\cC\oplus s$ can be obtained by gluing the pole of $X(Id,C_1,[(0,v)])$ to the zero of a flat surface in $\cC$ and plumbing all the nodes. 

Here we give the residueless version of \cite[Lemma~14]{kozo1} and \cite[Prop~6.1]{boissymero}.

\begin{lemma} \label{unbubble}
Let $\cC$ be a non-hyperelliptic connected component of a \MIN stratum $\cR_g (\mu)$ of genus $g>1$. Then there exists a connected component $\cC'$ of $\cR_{g-1} (a-2,-b_1,\dots,-b_n)$ such that $\cC = \cC' \oplus s$ for some $1\leq s\leq a-1$. If $b_n>2$, then $\cC'$ can be chosen to be non-hyperelliptic.
\end{lemma}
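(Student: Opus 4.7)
The plan is to exhibit a multi-scale differential $\overline{X} \in \partial\overline{\cC}$ that realizes the middle diagram of \Cref{fig407}: a two-level differential whose bottom-level component is $(\PP^1, \eta_{-1}) \in \cR_0(a, -a; -1, -1)$ (a zero of order $a$ at the marked zero $z$, a residueless pole of order $a$ at the vertical node, and two simple poles identified as a horizontal self-loop), and whose top-level component lies in $\cR_{g-1}(a-2, -b_1, \dots, -b_n)$ with its zero of order $a-2$ at the vertical node. Plumbing the horizontal edge and the level transition of such an $\overline{X}$ is, by construction, the bubbling operation of \Cref{subsec:bubble}; thus reading the top-level component off $\overline{X}$ produces the sought $\cC'$, while the prong-matching class encodes the parameter $s$.

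To produce such an $\overline{X}$, I degenerate a flat surface $X \in \cC$ in two stages. By \Cref{simple}, $\cC$ contains a flat surface carrying a multiplicity-one saddle connection $\gamma$; shrinking $\gamma$ via \Cref{shrink} lands in the principal boundary of a type II configuration with $k = 1$. Because $Q_1 + Q_2 = a > 0$ and the mixed $Q_1 = 0$, $Q_2 > 0$ case is ruled out in \Cref{subsec:gt2}, we are in case (iii) of that subsection: the resulting two-level differential $\overline{X}_0$ has a genus-zero bottom component carrying $z$ and two unmarked poles of orders $Q_1+1$ and $Q_2+1$, and a unique top-level component of genus $g-1$ carrying all marked poles. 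I then degenerate the top component further: since it has positive genus and lies in a stratum of strictly smaller dimension, \Cref{break} yields a two-level degeneration of the top itself. Grafting this onto $\overline{X}_0$ produces a three-level differential, and plumbing the transition between the two lowest levels collapses them into a single $\PP^1$ piece that, after a suitable choice of parameters, realizes $(\PP^1, \eta_{-1}) \in \cR_0(a, -a; -1, -1)$ with the desired horizontal-edge structure.

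The main obstacle is the prong-matching bookkeeping needed to guarantee that this collapsed middle level is isomorphic to $\cR_0(a, -a; -1, -1)$ rather than to some other genus-zero residueless stratum, and that the horizontal self-loop survives the collapse. I will control prong matchings at each node using \Cref{suitableprong} together with the level-rotation action developed in \Cref{sec:g1m}; the remaining freedom in the prong-matching at the vertical node separating the top and middle levels corresponds precisely to the freedom of choosing $s \in \{1, \dots, a-1\}$. The rotation-number computation in \Cref{rotation}, applied to the genus-one subdifferential formed by the middle-level $\PP^1$ together with the cylinder obtained by plumbing the horizontal edge, identifies $s$ with the invariant of the same name in \Cref{subsec:bubble}.

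For the refinement when $b_n > 2$: if the $\cC'$ produced above is hyperelliptic, its ramification profile must fix the pole $p_n$ (of order $b_n > 2$), by the classification in \Cref{mainhyper}. Following the strategy of \Cref{breakinglemma3} and the argument in \Cref{nonhypermerge1}, I apply \Cref{double} to $\cC'$ to produce a pair of multiplicity-two parallel saddle connections bounding the polar domain of $p_n$, shrink them, and reshuffle the prong matchings at the vertical node so that the new top-level component lies in a non-hyperelliptic component $\cC''$ with $\cC = \cC'' \oplus s''$ for a suitable $s''$. The delicate point throughout is the first-paragraph alignment, ensuring that the genus-zero piece produced by the two-stage degeneration has exactly the singularity data of $\cR_0(a, -a; -1, -1)$.
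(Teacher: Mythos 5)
Your overall strategy — shrink a multiplicity-one saddle connection from \Cref{simple}, land in the type II principal boundary with $k=1$ and $Q_1, Q_2 > 0$, and then manufacture the middle diagram of \Cref{fig407} — is the intuition the paper formalizes in \Cref{fig1001}. Two steps in your execution would fail, however. First, you propose to further degenerate the top-level component $X_0 \in \cR_{g-1}(Q_1-1, Q_2-1, -b_1, \dots, -b_n)$ via \Cref{break}, but \Cref{break} only guarantees a bottom component containing the designated pair of zeroes; it does not ensure that this component has genus zero, carries no marked poles, or lies in $\cH_0(Q_1-1, Q_2-1, -a)$, all of which you need. The correct tool is \Cref{merging}, which requires a multiplicity-one saddle connection joining the two zeroes of $X_0$ --- supplied by \Cref{simple} precisely when $X_0$ is not in a hyperelliptic component with $2g$ fixed marked points. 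You do not address the hyperelliptic case for $X_0$, which is exactly where the paper's proof spends two further paragraphs navigating $\partial\overline\cC$ before merging becomes available.

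Second, plumbing the transition between the two lowest levels cannot produce a genus-zero component with a horizontal self-loop. There are two vertical edges between those levels, and plumbing a level transition plumbs all its nodes, so the collapsed component has genus one and lies in $\cH_1(a, -a)$; a horizontal edge would arise only from a \emph{further} degeneration of that genus-one piece, not from plumbing. You do not actually need it: once you have a two-level differential with top level in $\cR_{g-1}(a-2, -b_1, \dots, -b_n)$ and a genus-one bottom level meeting it in one node, the conclusion $\cC = \cC' \oplus s$ already follows, with $s$ read off as the rotation number of the bottom piece per \Cref{subsec:bubble}; the prong-matching there is unique and is not what encodes $s$. Finally, your last paragraph's claim that a hyperelliptic $\cC'$ "must fix $p_n$" is false: \Cref{def:ramiprof} only constrains $\cP$ to fix marked points of even order, and even when $b_n$ is even a ramification profile may exchange $p_n$ with another pole of the same order. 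The $b_n>2$ refinement is instead an immediate consequence of \Cref{nonhypermerge} applied to $X_0$.
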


\begin{proof}
By \Cref{simple}, there exists a flat surface $X\in \cC$ that has a multiplicity one saddle connection $\gamma$. By shrinking $\gamma$, we obtain $\overline{X}\in \partial\overline{\cC}$. The top level component $X_0$ has zeroes at the two nodes and its genus is equal to $g-1$. If $X_0$ is contained in a non-hyperelliptic stratum, then we can merge two zeroes of $X_0$ and obtain a curve $X'$ in a component $\cC'$ of $\cR_{g-1} (a-2,-b_1,\dots,-b_n)$. If $b_n>2$, then $\cC'$ can be chosen to be non-hyperelliptic by \Cref{nonhypermerge}. Therefore $\cC=\cC'\oplus s$ for some $s$, see \Cref{fig1001}. 

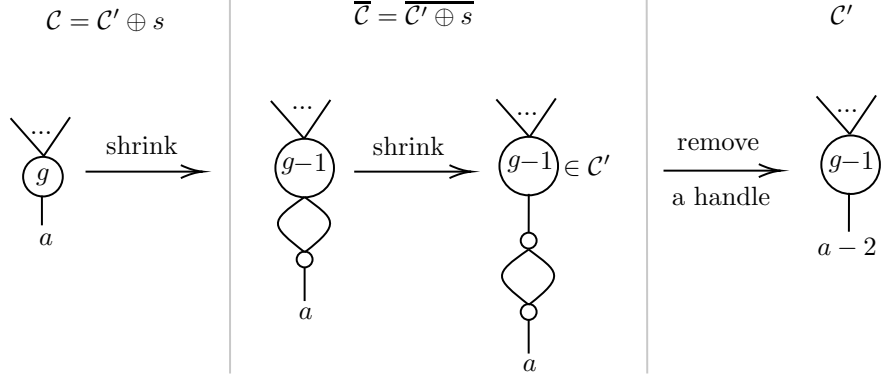
\begin{figure}
    \centering
    \tikzset{every picture/.style={line width=0.75pt}} 

\begin{tikzpicture}[x=0.75pt,y=0.75pt,yscale=-1,xscale=1]

\draw    (45.64,120.23) -- (45.64,135.23) ;
\draw    (29.83,81.17) -- (46.58,100.17) ;
\draw    (59.83,80.67) -- (46.58,100.17) ;
\draw    (67.64,108.13) -- (124.64,108.13) ;
\draw [shift={(126.64,108.13)}, rotate = 180] [color={rgb, 255:red, 0; green, 0; blue, 0 }  ][line width=0.75]    (10.93,-3.29) .. controls (6.95,-1.4) and (3.31,-0.3) .. (0,0) .. controls (3.31,0.3) and (6.95,1.4) .. (10.93,3.29)   ;
\draw    (203.14,108.13) -- (260.14,108.13) ;
\draw [shift={(262.14,108.13)}, rotate = 180] [color={rgb, 255:red, 0; green, 0; blue, 0 }  ][line width=0.75]    (10.93,-3.29) .. controls (6.95,-1.4) and (3.31,-0.3) .. (0,0) .. controls (3.31,0.3) and (6.95,1.4) .. (10.93,3.29)   ;
\draw    (274.83,71.67) -- (291.58,90.67) ;
\draw    (304.83,71.17) -- (291.58,90.67) ;
\draw   (174.13,152.45) .. controls (174.13,150.25) and (175.92,148.45) .. (178.13,148.45) .. controls (180.34,148.45) and (182.13,150.25) .. (182.13,152.45) .. controls (182.13,154.66) and (180.34,156.45) .. (178.13,156.45) .. controls (175.92,156.45) and (174.13,154.66) .. (174.13,152.45) -- cycle ;
\draw    (178.13,173.45) -- (178.13,156.45) ;
\draw    (178.13,120.95) .. controls (159.66,133.68) and (160.71,135.07) .. (178.18,148.84) ;
\draw    (178.13,120.95) .. controls (193.16,132.18) and (197.21,134.07) .. (178.18,148.84) ;
\draw    (161.13,72.45) -- (177.88,91.45) ;
\draw    (191.13,71.95) -- (177.88,91.45) ;
\draw   (287.13,142.95) .. controls (287.13,140.75) and (288.92,138.95) .. (291.13,138.95) .. controls (293.34,138.95) and (295.13,140.75) .. (295.13,142.95) .. controls (295.13,145.16) and (293.34,146.95) .. (291.13,146.95) .. controls (288.92,146.95) and (287.13,145.16) .. (287.13,142.95) -- cycle ;
\draw    (291.18,119.34) -- (291.13,138.95) ;
\draw   (287.13,178.95) .. controls (287.13,176.75) and (288.92,174.95) .. (291.13,174.95) .. controls (293.34,174.95) and (295.13,176.75) .. (295.13,178.95) .. controls (295.13,181.16) and (293.34,182.95) .. (291.13,182.95) .. controls (288.92,182.95) and (287.13,181.16) .. (287.13,178.95) -- cycle ;
\draw    (291.13,199.95) -- (291.13,182.95) ;
\draw    (291.13,147.45) .. controls (272.66,160.18) and (273.71,161.57) .. (291.18,175.34) ;
\draw    (291.13,147.45) .. controls (306.16,158.68) and (310.21,160.57) .. (291.18,175.34) ;
\draw    (359.14,107.63) -- (416.14,107.63) ;
\draw [shift={(418.14,107.63)}, rotate = 180] [color={rgb, 255:red, 0; green, 0; blue, 0 }  ][line width=0.75]    (10.93,-3.29) .. controls (6.95,-1.4) and (3.31,-0.3) .. (0,0) .. controls (3.31,0.3) and (6.95,1.4) .. (10.93,3.29)   ;
\draw    (436.33,70.17) -- (453.08,89.17) ;
\draw    (466.33,69.67) -- (453.08,89.17) ;
\draw    (452.68,118.84) -- (452.63,138.45) ;
\draw [color={rgb, 255:red, 200; green, 200; blue, 200 }  ,draw opacity=1 ]   (350.65,20.55) -- (350.93,210.02) ;
\draw [color={rgb, 255:red, 200; green, 200; blue, 200 }  ,draw opacity=1 ]   (140.45,20) -- (140.53,210.02) ;

\draw    (46.16, 110.67) circle [x radius= 10, y radius= 10]   ;
\draw (46.16,110.67) node   [align=left] {$\displaystyle g$};
\draw (45.32,90.67) node [anchor=south] [inner sep=0.75pt]   [align=left] {...};
\draw (47.78,142.17) node    {$a$};
\draw (96.13,94.5) node   [align=left] {shrink};
\draw (230.63,95) node   [align=left] {shrink};
\draw (290.32,81.17) node [anchor=south] [inner sep=0.75pt]   [align=left] {...};
\draw (46.6,24.38) node [anchor=north west][inner sep=0.75pt]    {$\mathcal{C} =\mathcal{C} '\oplus s$};
\draw (202,20.48) node [anchor=north west][inner sep=0.75pt]    {$\overline{\mathcal{C}} =\overline{\mathcal{C} '\oplus s}$};
\draw (442.1,22.28) node [anchor=north west][inner sep=0.75pt]    {$\mathcal{C} '$};
\draw (177.12,75.96) node   [align=left] {...};
\draw (178.79,179.45) node    {$a$};
\draw (291.79,205.95) node    {$a$};
\draw (386.63,94.5) node   [align=left] {remove};
\draw (451.82,79.67) node [anchor=south] [inner sep=0.75pt]   [align=left] {...};
\draw (451.79,146.45) node    {$a-2$};
\draw (307.1,98.28) node [anchor=north west][inner sep=0.75pt]    {$\in \mathcal{C} '$};
\draw (388.13,120.36) node   [align=left] {a handle};
\draw    (291.17, 105.31) circle [x radius= 14.77, y radius= 14.77]   ;
\draw (291.17,105.31) node  [font=\scriptsize,color={rgb, 255:red, 255; green, 255; blue, 255 }  ,opacity=1 ] [align=left] {$\displaystyle g-1$};
\draw    (453.5, 104.64) circle [x radius= 14.77, y radius= 14.77]   ;
\draw (453.5,104.64) node  [font=\scriptsize,color={rgb, 255:red, 255; green, 255; blue, 255 }  ,opacity=1 ] [align=left] {$\displaystyle g-1$};
\draw    (178.17, 106.31) circle [x radius= 14.77, y radius= 14.77]   ;
\draw (178.17,106.31) node  [font=\scriptsize,color={rgb, 255:red, 255; green, 255; blue, 255 }  ,opacity=1 ] [align=left] {$\displaystyle g-1$};
\draw (180.95,104) node    {$-1$};
\draw (169.95,105) node    {$g$};
\draw (294.95,103) node    {$-1$};
\draw (283.95,104) node    {$g$};
\draw (456.95,103) node    {$-1$};
\draw (445.95,104) node    {$g$};

\end{tikzpicture} 
    \caption{Unbubbling a handle} \label{fig1001}
\end{figure}

If $X_0$ is hyperelliptic, we will move around more in $\partial\overline{\cC}$ until we land on the case when $X_0$ is non-hyperelliptic. First, assume that $X_0$ still has a multiplicity one saddle connection. Then by shrinking this saddle connection, we can merge two zeroes of $X_0$ and degenerate to a \MIN hyperelliptic component $\cC_1$ of $\cR_{g-1} (a-2,-b_1,\dots,-b_n)$. Therefore, $\cC=\cC_1\oplus s$. If $a$ is even and $s=\frac{a}{2}$, then the rotation number of the flat surface $E$ in $\overline{\cR}_1(a,-a)$ used for bubbling a handle is equal to $\frac{a}{2}$. In particular, the flat surface in hyperelliptic. So $\cC$ is hyperelliptic since it is obtained by gluing two hyperelliptic components. This is a contradiction, so $s\neq \frac{a}{2}$. In particular, $E$ is non-hyperelliptic and thus we can degenerate $E$ into a two-level multi-scale differential in $\overline{\cR}_1(a,-a)$ such that $Q_1<Q_2$ by \Cref{unbalance}. By plumbing the level transition between the levels 0 and -1, we obtain a two-level multi-scale differential with the top level component contained in the stratum $\cR_{g-1} (Q_1-1,Q_2-2,-b_1,\dots,-b_n)$. This is exactly the case when $X_0$ is non-hyperelliptic. See the upper line of \Cref{fig1002}.

\begin{figure}
    \centering
    \input{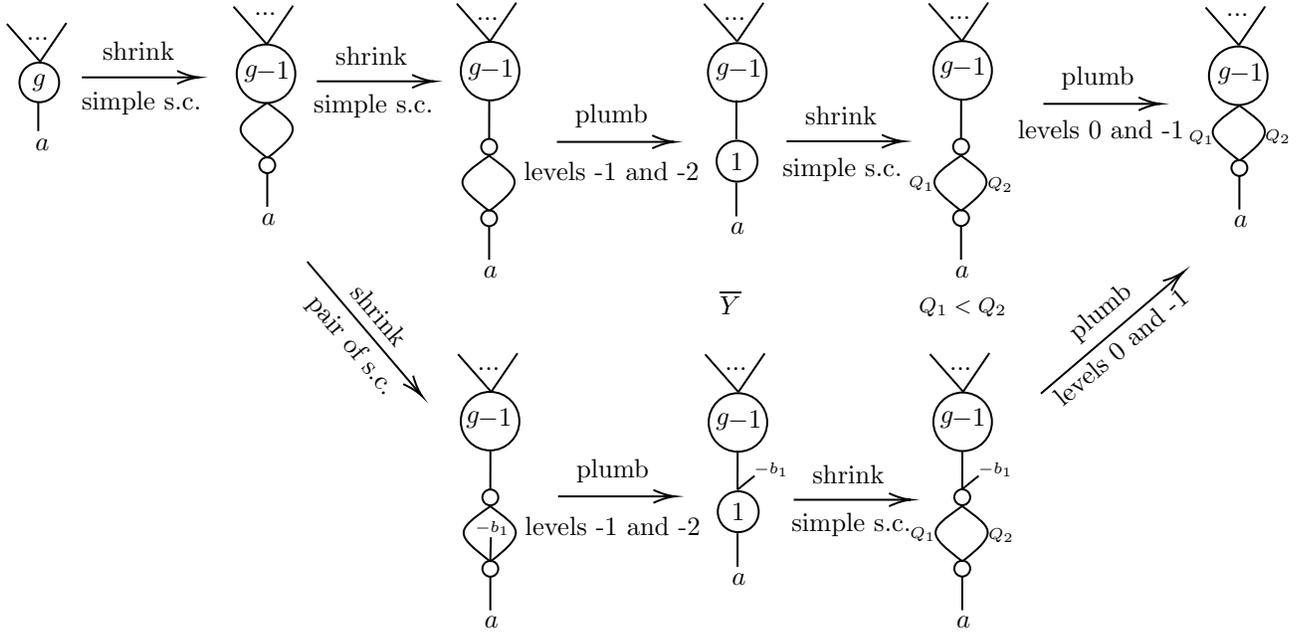} 
    \caption{Finding a non-hyperelliptic $X_0$} \label{fig1002}
\end{figure}

Now we assume that any continuous deformation of $X_0$ does not have a multiplicity one saddle connection. By \Cref{simple}, this means $X_0$ is contained in a \NMIN hyperelliptic component with $2g+2$ fixed marked points. Let $p_1$ be a fixed pole of the smallest order. By \Cref{double}, we may assume that $X_0$ has a pair of parallel saddle connections with multiplicity two bounding the polar domain of $p_1$. By shrinking this pair of saddle connections, we obtain a three-level multi-scale differential in $\partial\overline{\cC}$. By plumbing the level transition between the levels -1 and -2, we obtain a two-level multi-scale differential $\overline{Y}$ whose bottom level component $Y_{-1}$ is contained in $\cR_1 (a,-b_1,-(a-b_1))$. By assumption on $p_1$, we have $a-b_1>b_1$. Therefore, if the bottom level component $Y_{-1}$ is hyperelliptic, then the ramification profile of $Y_{-1}$ must fix both poles. In particular, it fixes the node between the two levels. Since the top level component $Y_0$ is still hyperelliptic, and two components of $\overline{Y}$ intersect at one node, we can conclude that $\cC$ is hyperelliptic by \Cref{hyper1}. This is a contradiction, so $Y_{-1}$ is not hyperelliptic. Then by \Cref{unbalance}, $Y_{-1}$ degenerates into a two-level differential such that $Q_1< Q_2$. Again, by plumbing the level transition between the levels 0 and -1, we land on the case when $X_0$ is non-hyperelliptic. See the lower line of \Cref{fig1002}.
\end{proof}

By applying this lemma repeatedly, we obtain the following

\begin{corollary} \label{unbubblebubble}
Assume that $\mu \neq (2n+2g-2,-2^n)$. Let $\cC$ be a non-hyperelliptic component of a \MIN stratum $\cR_g (\mu)$ of genus $g>1$. Then there exists a non-hyperelliptic component $\cC'$ of $\cR_1 (a-2(g-1), -b_1,\dots, -b_n)$ and integers $1\leq s_i \leq a-2(g-1)+2i-1$ for $i=1,\dots,g-1$ such that $$\cC=\cC' \oplus s_1 \oplus \dots \oplus s_{g-1}.$$
\end{corollary}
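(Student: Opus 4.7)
The plan is to iterate \Cref{unbubble} a total of $g-1$ times. The key observation is that the excluded singularity type $(2n+2g-2,-2^n)$ coincides precisely with the locus where $b_n=2$: since $\mu$ is a \MIN partition of $2g-2$ with $b_1\leq\dots\leq b_n$, having $b_n=2$ forces every $b_j=2$ and therefore $a=2g-2+2n$, the excluded pattern. So the hypothesis of the corollary translates exactly to $b_n>2$, which is the condition under which \Cref{unbubble} supplies a \emph{non-hyperelliptic} predecessor rather than merely some connected component.

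First I would set up the descending iteration. Setting $\cC_g:=\cC$, I construct a chain $\cC_g, \cC_{g-1},\dots, \cC_1$ of non-hyperelliptic components together with positive integers $t_h$, such that $\cC_h\subset \cR_h(a-2(g-h),-b_1,\dots,-b_n)$ and $\cC_{h+1}=\cC_h\oplus t_h$ with $1\leq t_h\leq (a-2(g-h-1))-1$. This follows directly from \Cref{unbubble}: since the orders of the poles do not change under unbubbling, the condition $b_n>2$ persists at every intermediate stratum, so whenever the current genus exceeds one, the lemma produces both the non-hyperelliptic $\cC_{h-1}$ and the bubbling parameter $t_{h-1}$. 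The iteration halts at genus one, giving the desired $\cC':=\cC_1$ in $\cR_1(a-2(g-1),-b_1,\dots,-b_n)$.

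To match indices with the statement of the corollary, I reindex by $s_i:=t_i$, so that $s_i$ records the bubbling from the genus-$i$ to the genus-$(i+1)$ component. The bound $t_i\leq (a-2(g-i-1))-1$ simplifies to $a-2(g-1)+2i-1$, reproducing the claim verbatim. Chaining the identities $\cC_g=\cC_{g-1}\oplus t_{g-1}=(\cC_{g-2}\oplus t_{g-2})\oplus t_{g-1}=\dots=\cC'\oplus s_1\oplus\dots\oplus s_{g-1}$ finishes the proof.

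There is no substantive obstacle in this argument; the whole content is already packaged in \Cref{unbubble}, and the only input beyond that is the elementary observation that the forbidden singularity type is exactly the obstruction to $b_n>2$.
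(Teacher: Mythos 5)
Your proof is correct and is essentially the paper's intended argument — the paper states only ``By applying this lemma repeatedly, we obtain the following.'' Your unpacking of the exclusion $\mu\neq(2n+2g-2,-2^n)$ as precisely the condition $b_n>2$, which is unchanged by unbubbling and hence propagates down the whole chain, is exactly the observation needed for \Cref{unbubble} to return a non-hyperelliptic predecessor at every step; the index bookkeeping checks out.
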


Non-hyperelliptic components of a genus one stratum $\cR_1(\mu)$ are classified by rotation number by \Cref{main2}, which we already proved in \Cref{sec:g1n}. So the formula in the above corollary can be rewritten as 
\begin{equation} \label{unbubblebubbleformula}
\cC=\cC_r \oplus s_1 \oplus \dots \oplus s_{g-1}.  
\end{equation}
where $\cC_r$ is the unique non-hyperelliptic component with rotation number $r$.

We prove the following proposition, which is the residueless version of \cite[Prop.~6.2]{boissymero}.

\begin{proposition} \label{bubblereduction}
Assume that $b_n>2$. Then any non-hyperelliptic component of the \MIN stratum $\cR_g(\mu)$ of genus $g>1$ is obtained by repeatedly bubbling a handle as one of the following two possibilities:
$$\cC_1 \oplus 1 \oplus \dots \oplus 1\oplus 1\,,$$
$$\cC_1 \oplus 2 \oplus \dots \oplus 1\oplus 1\,,$$ 
where $\cC_1$ is the unique non-hyperelliptic component of $\cR_1 (a-2(g-1), -b_1,\dots, -b_n)$ with rotation number one. Moreover, the two components above are the same if and only if any $b_i$ is odd. 
\end{proposition}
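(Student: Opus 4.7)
The plan is to start from \Cref{unbubblebubble}, which writes any non-hyperelliptic component as $\cC = \cC_r \oplus s_1 \oplus \cdots \oplus s_{g-1}$, and then normalize the parameters $(r; s_1, \ldots, s_{g-1})$ down to one of the two canonical forms via a sequence of elementary moves. After the $i$-th bubbling the zero has some order $a_i = a - 2(g-1) + 2(i-1)$, and since the component of the multi-scale differential used for bubbling depends only on $\gcd(a_i+2, s_i)$, I may assume $s_i \mid a_i+2$ throughout.

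The first reduction I would establish is a \emph{commutativity lemma}: bubbling a handle with parameter $s$ followed by one with parameter $t$ gives the same connected component as performing them in the opposite order, i.e.\ $(\cD \oplus s) \oplus t = (\cD \oplus t) \oplus s$. Geometrically this is seen by shrinking both new multiplicity-one saddle connections simultaneously to obtain a three-level multi-scale differential in $\partial\overline{\cC}$, and observing (via the plumbing description of \Cref{sec:msc}) that plumbing the two level transitions can be performed in either order to recover $\cC$. The second reduction is an \emph{absorption lemma}: $\cC_r \oplus s = \cC_{\gcd(r,s)} \oplus 1$, which is deduced by invoking the genus one classification \Cref{main2minimal} to move between different representatives of $\cC_r$ in $\partial\overline{\cR}_1(a_1,-a_1)$ and absorbing the rotation number of $\cC_r$ into the bubbling parameter.

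Using these two lemmas iteratively, I reduce each $s_i$ to its parity: odd values collapse to $1$ and even values (when admissible) collapse to $2$. Commutativity then lets me push all $1$'s to the later positions and all $2$'s to the earlier ones, producing a string $\cC_1 \oplus 2 \oplus \cdots \oplus 2 \oplus 1 \oplus \cdots \oplus 1$. A further identity $\cC_1 \oplus 2 \oplus 2 \oplus 1 = \cC_1 \oplus 2 \oplus 1 \oplus 1$, obtained by a geometric surgery analogous to \Cref{breakinglemma1} applied at the first pair of handles, collapses any consecutive pair of $2$'s. Iterating yields at most one $2$, giving one of the two claimed canonical forms.

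For the final claim, I must show that the two forms coincide precisely when some $b_i$ is odd. One direction (when all $b_i$ are even) will follow from the spin parity calculation in the next proposition: an explicit index computation shows that $\oplus 2$ flips the spin parity of the underlying symplectic basis while $\oplus 1$ preserves it, so the two forms are distinct. For the converse, when some $b_i$ is odd I would construct an explicit flat-geometric identification $\cC_1 \oplus 2 \oplus 1 \oplus \cdots \oplus 1 = \cC_1 \oplus 1 \oplus \cdots \oplus 1$ using a saddle connection passing near the odd pole $p_i$; concretely, a curve winding around $p_i$ has index differing by an odd integer from its reroutings, which permits modifying a symplectic basis to change the bubbling parameter by one. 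The main obstacle I anticipate is making this odd-pole surgery rigorous without appealing circularly to spin parity; this will likely require a direct comparison of two explicit multi-scale representatives sharing a common further degeneration, in the spirit of the parity-number moves in \cite{boissymero}.
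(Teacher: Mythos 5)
Your proposal diverges from the paper's argument in a way that introduces two concrete errors, and it also misses the key structural reduction the paper actually uses.

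First, the ``absorption lemma'' $\cC_r \oplus s = \cC_{\gcd(r,s)} \oplus 1$ is false as stated. Taking $r=1$, $s=2$ it would give $\cC_1 \oplus 2 = \cC_1 \oplus 1$, but by \Cref{bubblespin} the operation $\oplus 2$ flips spin parity while $\oplus 1$ preserves it, so these are distinct components whenever all $b_i$ are even. This is precisely the dichotomy the proposition is trying to capture, so a lemma that collapses it cannot be part of the proof. The correct phenomenon is subtler: in the paper's $g=2$ argument one shows $\cC_r \oplus s = \cC_{r'} \oplus s'$ for various $(r',s')$ obtained by shrinking a multiplicity one saddle connection of controlled index, and the parameters $(r',s')$ are governed by $\gcd$s with the new zero order $a+2$, not by $\gcd(r,s)$.

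Second, your collapse identity $\cC_1 \oplus 2 \oplus 2 \oplus 1 = \cC_1 \oplus 2 \oplus 1 \oplus 1$ also contradicts spin parity: the left side changes parity by $(2+1)+(2+1)+(1+1) \equiv 0 \pmod 2$ relative to $\cC_1$, while the right side changes it by $(2+1)+(1+1)+(1+1) \equiv 1 \pmod 2$. What one would want instead is a relation that changes the number of $2$'s by an \emph{even} amount, e.g.\ $\cC_1 \oplus 2 \oplus 2 = \cC_1 \oplus 1 \oplus 1$; without that correction the normalization to the two canonical strings cannot go through.

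Finally, the paper does not rely on a commutativity lemma for $\oplus$ at all. Instead it reduces to $g=2$ by a different mechanism: after writing $\cC = \cC_1 \oplus s_1 \oplus \cdots \oplus s_{g-1}$, one considers the associated $g$-level degeneration, plumbs all level transitions \emph{except} the top one, and observes that the resulting level $-1$ component is a non-hyperelliptic flat surface in the ordinary (non-residueless) stratum $\cH_{g-1}(a, 2g-2-a)$. Boissy's classification \cite{boissymero} of those strata then forces the $s_2,\ldots,s_{g-1}$ to be $(1,\ldots,1)$ or $(2,1,\ldots,1)$ up to connected component, and it remains only to handle the single $g=2$ bubbling $\cC_r \oplus s$. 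Your string-rewriting approach could in principle work, but the two identities it rests on would need to be replaced by parity-correct versions and then actually proved; as written they are not fixable by small adjustments, and you would still need some input (such as the Boissy reduction) to control the nonresidueless part of the picture.
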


\begin{proof}
First, we will prove that we can reduce to the case $g=2$. Assume that in genus 2, $\cC_r\oplus s$ for any $r|\gcd(b_i)$, $1\leq s\leq a-2g+2$ is equal to one of the two possibilities ---$\cC_1 \oplus 1$ and $\cC_1 \oplus 2$. By \Cref{unbubblebubbleformula}, then in any genus $g>1$, we can write $\cC=\cC_1\oplus 1\oplus s_2\oplus \dots \oplus s_{g-1}$ or $\cC_1\oplus 2\oplus s_2\oplus \dots \oplus s_{g-1}$. Consider the $g$-level graph we obtain when bubbling $g-1$ handles from $\cC_1$, depicted in \Cref{fig1004}. Let $\overline{X}$ be a $g$-level multi-scale differential corresponding to the graph. The top level component of $\overline{X}$ is contained in $\cC_1$, a connected component of $\cR_1(a-2(g-1),-b_1,\dots,-b_n)$. If $a-2(g-1)\leq 4$, then this stratum is either $\cR_1(2,-2)$, $\cR_1(3,-3)$, $\cR_1(4,-4)$ or $\cR_1(4,-2^2)$. Each of these strata are already treated in \cite{boissymero} (for the first three strata) and in \Cref{exception0} (for the last one). So we may assume that $a-2(g-1)>4$. At level -1 of $\overline{X}$, we have a flat surface in $\cH_1(a-2(g-1),-a+2(g-1))$ of rotation number one or two. So this is not hyperelliptic, as the rotation number of the hyperelliptic component of $\cH_1(a-2(g-1),-a+2(g-1))$ is equal to $\frac{a-2(g-1)}{2}>2$. We can plumb the all level transitions of $\overline{X}$ except for the top level. Then we obtain at level -1 a non-hyperelliptic flat surface $Y$ in $\cH_{g-1}(a,2g-2-a)$. Again by \cite{boissymero}, we can conclude that the connected component of $\cH_{g-1}(a,2g-2-a)$ containing $Y$ is one of the two possibilities: $$\cH_0 \oplus 1\oplus\dots\oplus 1\,,$$ $$\cH_0 \oplus 2\oplus\dots\oplus 1\,,$$ where $\cH_0$ denotes the connected stratum $\cH_0(a-2g,-a+2(g-1))$. This ends the proof of the reduction to $g=2$. 

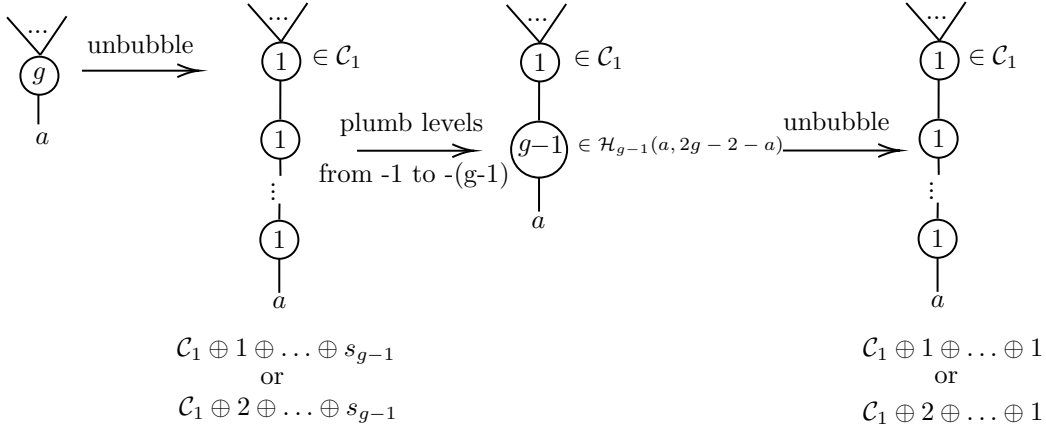
\begin{figure}
    \centering
    \tikzset{every picture/.style={line width=0.75pt}} 

\begin{tikzpicture}[x=0.75pt,y=0.75pt,yscale=-1,xscale=1]

\draw    (45.64,67.23) -- (45.64,82.23) ;
\draw    (29.83,28.17) -- (46.58,47.17) ;
\draw    (59.83,27.67) -- (46.58,47.17) ;
\draw    (67.64,55.13) -- (124.64,55.13) ;
\draw [shift={(126.64,55.13)}, rotate = 180] [color={rgb, 255:red, 0; green, 0; blue, 0 }  ][line width=0.75]    (10.93,-3.29) .. controls (6.95,-1.4) and (3.31,-0.3) .. (0,0) .. controls (3.31,0.3) and (6.95,1.4) .. (10.93,3.29)   ;
\draw    (151.33,21.47) -- (168.08,40.47) ;
\draw    (181.33,20.97) -- (168.08,40.47) ;
\draw    (167.68,60.15) -- (167.63,79.76) ;
\draw    (167.73,108.22) -- (167.63,99.76) ;
\draw    (206.34,95.53) -- (263.34,95.53) ;
\draw [shift={(265.34,95.53)}, rotate = 180] [color={rgb, 255:red, 0; green, 0; blue, 0 }  ][line width=0.75]    (10.93,-3.29) .. controls (6.95,-1.4) and (3.31,-0.3) .. (0,0) .. controls (3.31,0.3) and (6.95,1.4) .. (10.93,3.29)   ;
\draw    (421.14,95.43) -- (478.14,95.43) ;
\draw [shift={(480.14,95.43)}, rotate = 180] [color={rgb, 255:red, 0; green, 0; blue, 0 }  ][line width=0.75]    (10.93,-3.29) .. controls (6.95,-1.4) and (3.31,-0.3) .. (0,0) .. controls (3.31,0.3) and (6.95,1.4) .. (10.93,3.29)   ;
\draw    (167.23,122.22) -- (167.13,130.26) ;
\draw    (167.13,167.26) -- (167.13,150.26) ;
\draw    (281.83,21.97) -- (298.58,40.97) ;
\draw    (311.83,21.47) -- (298.58,40.97) ;
\draw    (298.18,60.65) -- (298.13,80.26) ;
\draw    (298.13,126.26) -- (298.13,109.26) ;
\draw    (483.33,20.97) -- (500.08,39.97) ;
\draw    (513.33,20.47) -- (500.08,39.97) ;
\draw    (499.68,59.65) -- (499.63,79.26) ;
\draw    (499.73,107.72) -- (499.63,99.26) ;
\draw    (499.23,121.72) -- (499.13,129.76) ;
\draw    (499.13,166.76) -- (499.13,149.76) ;

\draw    (46.16, 57.67) circle [x radius= 9.6, y radius= 9.6]   ;
\draw (46.16,57.67) node   [align=left] {$\displaystyle g$};
\draw (45.32,37.67) node [anchor=south] [inner sep=0.75pt]   [align=left] {...};
\draw (47.78,89.17) node    {$a$};
\draw (97.13,41.5) node   [align=left] {unbubble};
\draw (114.1,188.38) node [anchor=north west][inner sep=0.75pt]    {$\mathcal{C}_{1} \oplus 1\oplus \dotsc \oplus s_{g-1}$};
\draw (166.82,30.97) node [anchor=south] [inner sep=0.75pt]   [align=left] {...};
\draw (167.29,172.26) node    {$a$};
\draw    (167.66, 89.97) circle [x radius= 9.6, y radius= 9.6]   ;
\draw (167.66,89.97) node   [align=left] {$\displaystyle 1$};
\draw (233.83,82.41) node   [align=left] {plumb levels};
\draw (182,41.33) node [anchor=north west][inner sep=0.75pt]    {$\in \mathcal{C}_{1}$};
\draw    (168.16, 50.47) circle [x radius= 9.6, y radius= 9.6]   ;
\draw (168.16,50.47) node   [align=left] {$\displaystyle 1$};
\draw    (167.16, 140.47) circle [x radius= 9.6, y radius= 9.6]   ;
\draw (167.16,140.47) node   [align=left] {$\displaystyle 1$};
\draw (297.32,31.47) node [anchor=south] [inner sep=0.75pt]   [align=left] {...};
\draw (297.79,131.76) node    {$a$};
\draw    (298.66, 50.97) circle [x radius= 9.6, y radius= 9.6]   ;
\draw (298.66,50.97) node   [align=left] {$\displaystyle 1$};
\draw (161.82,116.17) node [anchor=south] [inner sep=0.75pt]  [rotate=-90] [align=left] {...};
\draw (448.13,80.5) node   [align=left] {unbubble};
\draw (498.82,30.47) node [anchor=south] [inner sep=0.75pt]   [align=left] {...};
\draw (499.29,171.76) node    {$a$};
\draw    (499.66, 89.47) circle [x radius= 9.6, y radius= 9.6]   ;
\draw (499.66,89.47) node   [align=left] {$\displaystyle 1$};
\draw    (500.16, 49.97) circle [x radius= 9.6, y radius= 9.6]   ;
\draw (500.16,49.97) node   [align=left] {$\displaystyle 1$};
\draw    (499.16, 139.97) circle [x radius= 9.6, y radius= 9.6]   ;
\draw (499.16,139.97) node   [align=left] {$\displaystyle 1$};
\draw (493.82,115.67) node [anchor=south] [inner sep=0.75pt]  [rotate=-90] [align=left] {...};
\draw (314,41.83) node [anchor=north west][inner sep=0.75pt]    {$\in \mathcal{C}_{1}$};
\draw (316,86.83) node [anchor=north west][inner sep=0.75pt]  [font=\scriptsize]  {$\in \mathcal{H}_{g-1}( a,2g-2-a)$};
\draw (460.1,188.53) node [anchor=north west][inner sep=0.75pt]    {$\mathcal{C}_{1} \oplus 1\oplus \dotsc \oplus 1$};
\draw (513,41.33) node [anchor=north west][inner sep=0.75pt]    {$\in \mathcal{C}_{1}$};
\draw (460.1,221.03) node [anchor=north west][inner sep=0.75pt]    {$\mathcal{C}_{1} \oplus 2\oplus \dotsc \oplus 1$};
\draw (496.14,205.31) node [anchor=north west][inner sep=0.75pt]   [align=left] {or};
\draw (115.1,218.38) node [anchor=north west][inner sep=0.75pt]    {$\mathcal{C}_{1} \oplus 2\oplus \dotsc \oplus s_{g-1}$};
\draw (156.14,206.05) node [anchor=north west][inner sep=0.75pt]   [align=left] {or};
\draw (235.13,108.14) node   [align=left] {from -1 to -(g-1)};
\draw    (298.17, 94.97) circle [x radius= 14.12, y radius= 14.12]   ;
\draw (298.17,94.97) node  [font=\scriptsize,color={rgb, 255:red, 255; green, 255; blue, 255 }  ,opacity=1 ] [align=left] {$\displaystyle g-1$};
\draw (301.95,93) node    {$-1$};
\draw (290.95,94) node    {$g$};

\end{tikzpicture} 
    \caption{Reduction to $g=2$} \label{fig1004}
\end{figure}

Therefore, if we prove that in genus 2 any component $\cC_r \oplus s$ is equal to one of the two possibilities $\cC_1 \oplus 1$ and $\cC_1 \oplus 2$, then we can complete the proof. In fact, we will prove that for $g=2$, any nonhyperelliptic component is equal to $\cC_r\oplus1$ for some $r=1,2$, and $\cC_2\oplus 1=\cC_1\oplus 2$ when $\cC_2$ exists. 

Let $\cC=\cC_r\oplus s$. Since $\cC_r \oplus s_1=\cC_r \oplus s_2$ if $\gcd(s_1,a+2)=\gcd(s_2,a+2)$ (see \Cref{subsec:bubble}), we may assume that $s|a+2$. Thus $\gcd(d,s)\leq \gcd(a,s)\leq 2$. 

First, assume that $\cC_r$ is not a connected component of the special strata $\cR_1(12,-3^4)$ or $\cR_1(2n+2,-2^{n-1},-4)$ for odd $n$, studied in \Cref{subsec:special}. By \Cref{unbalance}, then there exists a multi-scale differential $\overline{X}\in \partial\overline{\cC_r}$ with $Q_1<Q_2$. By plumbing the nodes of $\overline{X}$, we obtain a flat surface $X\in \cC_r$ with a multiplicity one saddle connection $\alpha$ of index $R=Q_1$. In particular, $R\neq\frac{a}{2}$. We can find another simple closed curve $\gamma$ so that $\{\alpha,\gamma\}$ form a symplectic basis of $X$. Then $r=\gcd(d,\Ind\alpha,\Ind\gamma)$ and $R=\Ind\alpha$ is divisible by $r$. 

Since bubbling a handle is a local surgery, the surface $X'\in \cC_r \oplus s$ still has a multiplicity one saddle connection $\alpha'$ deformed from $\alpha$. The index of $\alpha'$ depends on the direction that we bubble the handle, i.e., on the choice of prong-matching used to plumb the node in the middle of \Cref{fig406}. Let $\beta$ be the cross curve of the cylinder that we bubbled at $z$. The index of $\beta$ is equal to $s$. After bubbling a handle, $\beta$ deforms to a saddle connection in $X'$, denoted by $\beta'$. So $\alpha'$ and $\beta'$ intersect at $z$. Since $R\neq \frac{a}{2}$ and $s\leq \frac{a+2}{2}$, it is possible to choose a direction of bubbling a handle so that $\alpha'$ and $\beta'$ intersect non-transversely, as in \Cref{fig1005}. This is not possible only when $\cC_r$ is a special connected component introduced in \Cref{subsec:special}, and $s=\frac{a+2}{2}$. These cases will be treated separately in \Cref{subsec:specialbubble}.

\begin{figure}
    \centering
    \tikzset{every picture/.style={line width=0.75pt}} 

\begin{tikzpicture}[x=0.75pt,y=0.75pt,yscale=-1,xscale=1]

\draw [color={rgb, 255:red, 255; green, 0; blue, 0 }  ,draw opacity=1 ]   (209.09,45.5) -- (257.34,104.88) ;
\draw [color={rgb, 255:red, 255; green, 0; blue, 0 }  ,draw opacity=1 ]   (257.34,104.88) -- (212.29,163.1) ;
\draw [color={rgb, 255:red, 0; green, 0; blue, 255 }  ,draw opacity=1 ]   (305.09,46.7) -- (257.34,104.88) ;
\draw [color={rgb, 255:red, 0; green, 0; blue, 255 }  ,draw opacity=1 ]   (257.34,104.88) -- (305.6,164.26) ;
\draw  [color={rgb, 255:red, 255; green, 0; blue, 0 }  ,draw opacity=1 ] (231.93,77.97) -- (230.23,71.88) -- (235.9,74.7) ;
\draw  [color={rgb, 255:red, 255; green, 0; blue, 0 }  ,draw opacity=1 ] (232.72,131.98) -- (238.54,129.48) -- (236.49,135.48) ;
\draw  [color={rgb, 255:red, 0; green, 0; blue, 255 }  ,draw opacity=1 ] (290.15,72.47) -- (284.03,73.06) -- (284.85,66.96)(287.27,75.24) -- (281.15,75.83) -- (281.97,69.74) ;
\draw  [color={rgb, 255:red, 0; green, 0; blue, 255 }  ,draw opacity=1 ] (282.83,130.3) -- (282.56,136.44) -- (276.64,134.77)(285.18,133.54) -- (284.9,139.68) -- (278.98,138.01) ;
\draw  [fill={rgb, 255:red, 0; green, 0; blue, 0 }  ,fill opacity=1 ] (254.85,104.88) .. controls (254.85,103.53) and (255.97,102.43) .. (257.34,102.43) .. controls (258.72,102.43) and (259.84,103.53) .. (259.84,104.88) .. controls (259.84,106.23) and (258.72,107.33) .. (257.34,107.33) .. controls (255.97,107.33) and (254.85,106.23) .. (254.85,104.88) -- cycle ;
\draw    (251.38,97.77) .. controls (244.58,100.17) and (244.98,110.17) .. (251.38,112.17) ;
\draw    (262.8,98.51) .. controls (269.8,101.01) and (271.8,107.01) .. (262.8,111.51) ;

\draw (258.16,120.2) node    {$z$};
\draw (239.55,62.92) node  [color={rgb, 255:red, 255; green, 0; blue, 0 }  ,opacity=1 ]  {$\alpha '$};
\draw (280.21,62.92) node  [color={rgb, 255:red, 0; green, 0; blue, 255 }  ,opacity=1 ]  {$\beta '$};
\draw (222.02,105.27) node  [font=\scriptsize]  {$2\pi R+\pi $};
\draw (293.02,105.27) node  [font=\scriptsize]  {$2\pi s+\pi $};

\end{tikzpicture} 
    \caption{Saddle connections $\alpha'$ and $\beta'$ in $X'$} \label{fig1005}
\end{figure}
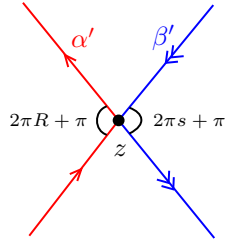

Then $\Ind \alpha' =R$, since the angle at $z$ on the left side of $\alpha$ is unchanged. By shrinking $\alpha'$, we obtain a two-level multi-scale differential $\overline{X}\in \partial \overline{\cC_r\oplus s}$. The top level component $X_0$ has two zeroes of orders $R-1$ and $a+1-R$ at the nodes $s_1,s_2$, respectively. The index of $\beta'$ is equal to $s$. So the rotation number of $Y_0$ divides $\gcd(d,R-1,s)$. If $Y_0$ is contained in a hyperelliptic component, then two zeroes are interchanged by the involution $\sigma$. Therefore, $R-1=a+1-R$, a contradiction. So $Y_0$ is contained in a non-hyperelliptic component. Therefore by \Cref{nonminimal1}, the connected component containing $Y_0$ is adjacent to $\cC_{r'}$, where $r'|\gcd(d,R-1,s)$. Thus $\cC_r \oplus s = \cC_{r'}\oplus s'$ for some $s'|\gcd(R,a+2)$. Note that $r'\leq \gcd(d,s)\leq \gcd(a,a+2)\leq 2$. So we always have $\cC=\cC_r\oplus s$ for some $r=1,2$ and $s|a+2$. However, if $r=2$, then $R$ were also even throughout the discussion. So $r'|\gcd(2,R-1)=1$ and finally we always have $\cC=\cC_1\oplus s$ for some $s|a+2$. See \Cref{fig1006}.

\begin{figure}
    \centering
    \tikzset{every picture/.style={line width=0.75pt}} 

\begin{tikzpicture}[x=0.75pt,y=0.75pt,yscale=-1,xscale=1]

\draw    (52.64,96.23) -- (52.64,111.23) ;
\draw    (36.83,57.17) -- (53.58,76.17) ;
\draw    (66.83,56.67) -- (53.58,76.17) ;
\draw    (67.64,86.13) -- (124.64,86.13) ;
\draw [shift={(126.64,86.13)}, rotate = 180] [color={rgb, 255:red, 0; green, 0; blue, 0 }  ][line width=0.75]    (10.93,-3.29) .. controls (6.95,-1.4) and (3.31,-0.3) .. (0,0) .. controls (3.31,0.3) and (6.95,1.4) .. (10.93,3.29)   ;
\draw    (130.33,47.47) -- (147.08,66.47) ;
\draw    (160.33,46.97) -- (147.08,66.47) ;
\draw    (146.68,86.15) -- (146.63,105.76) ;
\draw    (146.63,130.26) -- (146.63,113.26) ;
\draw    (171.34,86.53) -- (228.34,86.53) ;
\draw [shift={(230.34,86.53)}, rotate = 180] [color={rgb, 255:red, 0; green, 0; blue, 0 }  ][line width=0.75]    (10.93,-3.29) .. controls (6.95,-1.4) and (3.31,-0.3) .. (0,0) .. controls (3.31,0.3) and (6.95,1.4) .. (10.93,3.29)   ;
\draw    (244.03,48.07) -- (260.78,67.07) ;
\draw    (274.03,47.57) -- (260.78,67.07) ;
\draw    (260.33,103.86) -- (260.33,86.86) ;
\draw   (363.73,106.16) .. controls (363.73,103.95) and (365.52,102.16) .. (367.73,102.16) .. controls (369.94,102.16) and (371.73,103.95) .. (371.73,106.16) .. controls (371.73,108.37) and (369.94,110.16) .. (367.73,110.16) .. controls (365.52,110.16) and (363.73,108.37) .. (363.73,106.16) -- cycle ;
\draw    (367.73,127.16) -- (367.73,110.16) ;
\draw    (367.73,74.66) .. controls (349.26,87.39) and (350.31,88.77) .. (367.78,102.55) ;
\draw    (367.73,74.66) .. controls (382.76,85.89) and (386.81,87.77) .. (367.78,102.55) ;
\draw    (350.73,47.16) -- (367.48,66.16) ;
\draw    (380.73,46.66) -- (367.48,66.16) ;
\draw    (279.64,85.93) -- (336.64,85.93) ;
\draw [shift={(338.64,85.93)}, rotate = 180] [color={rgb, 255:red, 0; green, 0; blue, 0 }  ][line width=0.75]    (10.93,-3.29) .. controls (6.95,-1.4) and (3.31,-0.3) .. (0,0) .. controls (3.31,0.3) and (6.95,1.4) .. (10.93,3.29)   ;
\draw    (421.94,86.53) -- (478.94,86.53) ;
\draw [shift={(480.94,86.53)}, rotate = 180] [color={rgb, 255:red, 0; green, 0; blue, 0 }  ][line width=0.75]    (10.93,-3.29) .. controls (6.95,-1.4) and (3.31,-0.3) .. (0,0) .. controls (3.31,0.3) and (6.95,1.4) .. (10.93,3.29)   ;
\draw    (143.63,107.76) .. controls (124.01,95.55) and (126.46,125.47) .. (144.08,113.18) ;
\draw   (142.63,109.76) .. controls (142.63,107.55) and (144.42,105.76) .. (146.63,105.76) .. controls (148.84,105.76) and (150.63,107.55) .. (150.63,109.76) .. controls (150.63,111.97) and (148.84,113.76) .. (146.63,113.76) .. controls (144.42,113.76) and (142.63,111.97) .. (142.63,109.76) -- cycle ;
\draw    (251.13,74.76) .. controls (231.51,62.55) and (233.96,92.47) .. (251.58,80.18) ;
\draw   (363.48,70.16) .. controls (363.48,67.95) and (365.27,66.16) .. (367.48,66.16) .. controls (369.69,66.16) and (371.48,67.95) .. (371.48,70.16) .. controls (371.48,72.37) and (369.69,74.16) .. (367.48,74.16) .. controls (365.27,74.16) and (363.48,72.37) .. (363.48,70.16) -- cycle ;
\draw    (364.48,67.16) .. controls (344.86,54.95) and (347.31,84.87) .. (364.93,72.58) ;
\draw   (503.73,117.16) .. controls (503.73,114.95) and (505.52,113.16) .. (507.73,113.16) .. controls (509.94,113.16) and (511.73,114.95) .. (511.73,117.16) .. controls (511.73,119.37) and (509.94,121.16) .. (507.73,121.16) .. controls (505.52,121.16) and (503.73,119.37) .. (503.73,117.16) -- cycle ;
\draw    (507.73,138.16) -- (507.73,121.16) ;
\draw    (507.73,85.66) .. controls (489.26,98.39) and (490.31,99.77) .. (507.78,113.55) ;
\draw    (507.73,85.66) .. controls (522.76,96.89) and (526.81,98.77) .. (507.78,113.55) ;
\draw    (490.73,47.16) -- (507.48,66.16) ;
\draw    (520.73,46.66) -- (507.48,66.16) ;
\draw    (553.14,85.93) -- (610.14,85.93) ;
\draw [shift={(612.14,85.93)}, rotate = 180] [color={rgb, 255:red, 0; green, 0; blue, 0 }  ][line width=0.75]    (10.93,-3.29) .. controls (6.95,-1.4) and (3.31,-0.3) .. (0,0) .. controls (3.31,0.3) and (6.95,1.4) .. (10.93,3.29)   ;
\draw   (634.73,143.14) .. controls (634.73,140.93) and (636.52,139.14) .. (638.73,139.14) .. controls (640.94,139.14) and (642.73,140.93) .. (642.73,143.14) .. controls (642.73,145.35) and (640.94,147.14) .. (638.73,147.14) .. controls (636.52,147.14) and (634.73,145.35) .. (634.73,143.14) -- cycle ;
\draw    (638.73,164.14) -- (638.73,147.14) ;
\draw    (638.73,111.64) .. controls (620.26,124.37) and (621.31,125.75) .. (638.78,139.53) ;
\draw    (638.73,111.64) .. controls (653.76,122.87) and (657.81,124.75) .. (638.78,139.53) ;
\draw    (621.73,47.14) -- (638.48,66.14) ;
\draw    (651.73,46.64) -- (638.48,66.14) ;
\draw   (634.73,107.64) .. controls (634.73,105.43) and (636.52,103.64) .. (638.73,103.64) .. controls (640.94,103.64) and (642.73,105.43) .. (642.73,107.64) .. controls (642.73,109.85) and (640.94,111.64) .. (638.73,111.64) .. controls (636.52,111.64) and (634.73,109.85) .. (634.73,107.64) -- cycle ;
\draw    (638.73,103.64) -- (638.73,86.64) ;

\draw    (53.16, 86.67) circle [x radius= 9.6, y radius= 9.6]   ;
\draw (53.16,86.67) node   [align=left] {$\displaystyle 1$};
\draw (52.32,66.67) node [anchor=south] [inner sep=0.75pt]   [align=left] {...};
\draw (54.78,118.17) node    {$a$};
\draw (96.13,72.5) node   [align=left] {bubble};
\draw (47.1,12.38) node [anchor=north west][inner sep=0.75pt]    {$\mathcal{C}_{r}$};
\draw (615.5,9.98) node [anchor=north west][inner sep=0.75pt]    {$\overline{\mathcal{C}_{1} \oplus s'}$};
\draw (145.82,56.97) node [anchor=south] [inner sep=0.75pt]   [align=left] {...};
\draw (147.29,136.26) node    {$a+2$};
\draw (198.83,73.41) node   [align=left] {plumb};
\draw (259.52,57.57) node [anchor=south] [inner sep=0.75pt]   [align=left] {...};
\draw (260.99,109.86) node    {$a+2$};
\draw (366.72,50.66) node   [align=left] {...};
\draw (368.39,133.16) node    {$a+2$};
\draw (298.63,74.81) node   [align=left] {shrink};
\draw (342.9,85.3) node [anchor=north west][inner sep=0.75pt]  [font=\tiny]  {$R$};
\draw (392.16,88.8) node  [font=\tiny]  {$ \begin{array}{l}
a+2\\
-R
\end{array}$};
\draw (449.43,73.41) node   [align=left] {plumb};
\draw    (147.16, 76.67) circle [x radius= 9.6, y radius= 9.6]   ;
\draw (147.16,76.67) node   [align=left] {$\displaystyle 1$};
\draw    (260.66, 77.17) circle [x radius= 9.6, y radius= 9.6]   ;
\draw (260.66,77.17) node   [align=left] {$\displaystyle 1$};
\draw (506.72,50.66) node   [align=left] {...};
\draw (508.39,144.16) node    {$a+2$};
\draw (482.9,96.3) node [anchor=north west][inner sep=0.75pt]  [font=\tiny]  {$R$};
\draw (532.16,99.8) node  [font=\tiny]  {$ \begin{array}{l}
a+2\\
-R
\end{array}$};
\draw    (507.66, 76.17) circle [x radius= 9.6, y radius= 9.6]   ;
\draw (507.66,76.17) node   [align=left] {$\displaystyle 1$};
\draw (583.63,75.31) node   [align=left] {shrink};
\draw (637.72,50.64) node   [align=left] {...};
\draw (639.39,170.14) node    {$a+2$};
\draw (613.9,122.28) node [anchor=north west][inner sep=0.75pt]  [font=\tiny]  {$R$};
\draw (663.16,125.78) node  [font=\tiny]  {$ \begin{array}{l}
a+2\\
-R
\end{array}$};
\draw    (638.66, 76.15) circle [x radius= 9.6, y radius= 9.6]   ;
\draw (638.66,76.15) node   [align=left] {$\displaystyle 1$};
\draw (126.1,9.5) node [anchor=north west][inner sep=0.75pt]    {$\overline{\mathcal{C}_{r} \oplus s}$};
\draw (315.73,67) node [anchor=north west][inner sep=0.75pt]    {$\alpha '$};
\draw (201.13,98.05) node   [align=left] {levels 0 and -1};
\draw (448.13,104.05) node   [align=left] {horizontal\\node};
\draw (586.63,96.5) node   [align=left] {simple s.c.};

\end{tikzpicture} 
    \caption{Navigating the boundary of $\cC_r\oplus s$} \label{fig1006}
\end{figure}
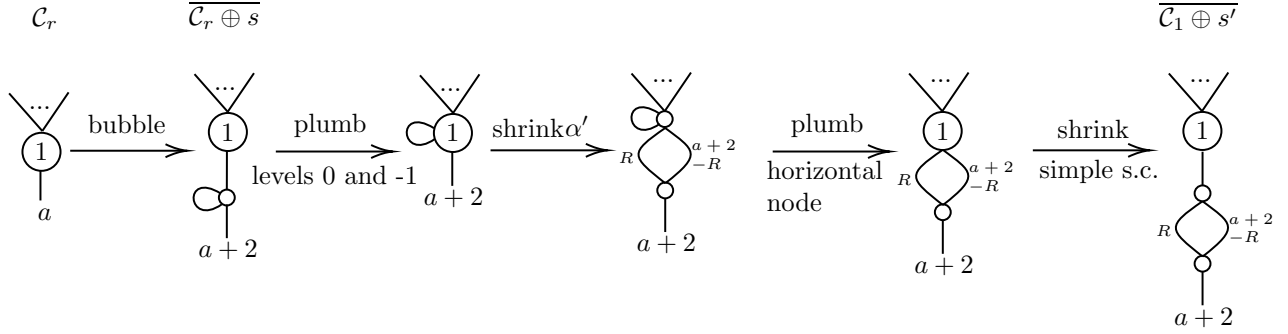

For any $n\leq R\leq a-n$, there exists a multi-scale differential $X(Id,{\bf C},[(0,1)])\in \partial\overline{\cC}_1$ such that $Q_1=\sum_{i=1}^n C_i=R$. Therefore, we can find a flat surface $X\in \cC_1$ with a multiplicity one saddle connection of index $R$. Similarly for $\cC_2$, for any even $R$ such that $n\leq R\leq a-n$, we can find a flat surface $X\in \cC_2$ with a multiplicity one saddle connection of index $R$. 

Assume that $\cC=\cC_1\oplus s$ and $a$ is even. If $s=\frac{a+2}{2}$, then we choose $R=\frac{a}{2}-1$ and by the same argument in the previous paragraphs, $\cC=\cC_r\oplus s'$ for some $r=1,2$ and  $s'|\gcd(R,a+2)|4$. We claim that $s'<s$ for any case. If $s'=s=\frac{a+2}{2}$, then $a=0,2$ or $6$. First two cases are impossible ($a=0$) and has only one trivial case $\cH(2,-2)$ which is already excluded ($a=2)$.  So $a=6$, but this is also impossible because $\gcd(R,a+2)=\gcd(2,8)=2$ and therefore $a=2s'-2\leq 2$. So we can assume that $\cC=\cC_r\oplus s$ for some $r=1,2$ and $s<\frac{a+2}{2}$. 

If $\frac{a}{2}$ is even, then we can choose $R=\frac{a}{2}$. Then $\gcd(d,R-1,s)=1$ and $\gcd(R,a+2)=2$. Therefore $\cC= \cC_1\oplus s'$ for some $s'=1,2$. Now for $\cC_1\oplus 2$, by taking $R=\frac{a}{2}-1$, we have $\gcd(R,a+2)=1$ since $R$ is odd. Thus $\cC_1\oplus 2 = \cC_r\oplus 1$ for some $r=1,2$. 

If $\frac{a}{2}$ is odd, then we first consider $\cC_2\oplus s$. We choose even $R=\frac{a}{2}-1$, then $\gcd(d,R-1,s)=1$ and $\gcd(R,a+2)=4$. Thus $\cC_2\oplus s= \cC_1\oplus s'$ for some $s'=1,2,4$. For
$\cC_1\oplus s$, we choose $R=\frac{a}{2}$, then $\gcd(R,a+2)=1$. Thus $\cC_1\oplus s=\cC_r\oplus 1$ for some $r=1,2$. 

So in any case, we can reduce to $\cC_r\oplus 1$ for $r=1,2$. If all $b_i$ are even, then both $\cC_1$ and $\cC_2$ exist. Furthermore two components $\cC_1\oplus 1$ and $\cC_2\oplus 1$ are distinguished by spin parity which will be recalled in \Cref{subsec:spin}. Moreover by \Cref{bubblereduction}, we have $\cC_2\oplus 1=\cC_1\oplus 2$. If any $b_i$ is odd, then $\cC_2$ does not exists because $d$ is odd. Therefore we always have $\cC=\cC_1\oplus 1$. 
\end{proof}

\subsection{Spin structure of flat surfaces}\label{subsec:spin}

Recall that in the classification of the connected components of usual strata in \cite{boissymero},\cite{kozo1} with $g\geq 2$, the non-hyperelliptic connected components are distinguished by {\em spin parity}, which can be expressed in terms of the flat geometry as follows by the work of Johnson \cite{johnsonspin}.

Let $\alpha_1,\beta_1,\dots,\alpha_g,\beta_g$ be simple closed curves on a flat surface $X$ that form a symplectic basis of $H_1 (X,\ZZ)$. Then the {\em spin parity} of $X$ is:

\begin{equation}\label{roteq} 
\sum_{i=1}^g (\Ind \alpha_i +1)(\Ind \beta_i +1)\in \ZZ/2\ZZ.
\end{equation}

\begin{proposition} \label{rotationspin}
Suppose for $X\in\cR_1$ that all poles are of even order, i.e. all $b_i$ are even. then the spin parity of $X$ is equal to 1+(the rotation number of $X$) in $\ZZ/2\ZZ$.
\end{proposition}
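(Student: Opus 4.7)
The plan is to reduce the statement to a short parity case analysis, using only the formula~\eqref{roteq} for spin parity and the formula for the rotation number recalled in Section~\ref{subsec:rot}. For $g=1$, the sum in~\eqref{roteq} collapses to a single term, so for any symplectic basis $\{\alpha,\beta\}$ of $H_1(X,\ZZ)$ the spin parity is
$$\phi(X) \equiv (\Ind\alpha+1)(\Ind\beta+1) \pmod 2.$$
Inspection of the four possible parity combinations of $(\Ind\alpha,\Ind\beta)$ shows $\phi(X)\equiv 1 \pmod 2$ precisely when both $\Ind\alpha$ and $\Ind\beta$ are even, and $\phi(X)\equiv 0 \pmod 2$ otherwise. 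So the proposition is equivalent to the assertion that the rotation number $r$ is even if and only if both indices are even.

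I will then combine this with $r=\gcd(d,\Ind\alpha,\Ind\beta)$, where $d$ is the gcd of the orders of all zeros and poles of $\omega$. The standing hypothesis that all $b_i$ are even, together with the relation $\sum a_i=\sum b_i$ forced by $2g-2=0$, implies that $d$ is even whenever the spin parity is well-defined (i.e.\ whenever the $a_i$ are also even, which is the only setting in which the left-hand side of the claimed identity makes invariant sense).

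With $d$ even, the case analysis is immediate. If $r$ is even, then $r\mid\Ind\alpha$ and $r\mid\Ind\beta$ force both indices to be even, so $\phi(X)\equiv 1\equiv 1+r\pmod 2$. If $r$ is odd, then $r=\gcd(d,\Ind\alpha,\Ind\beta)$ odd together with $d$ even forces at least one of $\Ind\alpha,\Ind\beta$ to be odd (otherwise all three arguments of the gcd would be even, contradicting $r$ odd), so $\phi(X)\equiv 0\equiv 1+r\pmod 2$. In either case the two sides of the claimed identity agree.

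There is no substantial obstacle here: the proof is essentially a one-line parity observation once the two quantities are written in terms of a common symplectic basis. The only point requiring care is checking that the hypothesis on even pole orders indeed makes $d$ even in all cases where the statement is meant to apply, which is where the hidden assumption of even zero orders (required for spin parity to be a deformation invariant in the first place) enters the argument.
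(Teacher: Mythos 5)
Your proof is correct and takes essentially the same approach as the paper's: both boil down to the observation that, since $d$ is even by the hypothesis on the $b_i$ (forcing $a=\sum b_i$ to be even in the single-zero genus-one case), the rotation number $r=\gcd(d,\Ind\alpha,\Ind\beta)$ is even precisely when both $\Ind\alpha$ and $\Ind\beta$ are even, which is precisely when $(\Ind\alpha+1)(\Ind\beta+1)$ is odd. The paper compresses this to a single sentence; you spell out the parity case split, which is the intended argument.
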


\begin{proof} 
Let $\alpha,\beta$ be simple closed curves in $X$ that form a symplectic basis of $H_1(X,\ZZ)$. Then the rotation number $r$ of $X$ is equal to $\gcd(d,\Ind a, \Ind b)$ where $d=\gcd(a,b_1,\dots,b_n)$ is even by assumption. By \eqref{roteq}, the spin parity is equal to 1+(the rotation number of $X$) in $\ZZ/2\ZZ$.
\end{proof}

\begin{lemma} \label{bubblespin}
Let $\cC$ be a non-hyperelliptic component of $\cR_g(\mu)$. Then the spin parity of $\cC \oplus s$ is equal to $s+1+$(the spin parity of $\cC$) in $\ZZ/2\ZZ$.
\end{lemma}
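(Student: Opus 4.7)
The plan is to compute the spin parity of a flat surface $(X',\omega') \in \cC \oplus s$ directly, by exhibiting a symplectic basis of $H_1(X',\ZZ)$ adapted to the bubbling construction of Subsection~\ref{subsec:bubble}. First I note that spin parity is defined only when $\cR_g(\mu)$ is of even type, and since bubbling changes the order of the zero $z$ from $a$ to $a+2$ while leaving the orders of the poles unchanged, the bubbled stratum is of even type precisely when $\cR_g(\mu)$ is. In particular $a$ is even, so $\gcd(a+2,s) \equiv s \pmod{2}$.

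Fix $(X,\omega) \in \cC$ and let $(X',\omega') \in \cC \oplus s$ be produced by plumbing the vertical node over $z$ and the horizontal node in the two-level differential depicted in the middle of Figure~\ref{fig407}. The modification is supported in a small neighborhood $U$ of $z$, in which a genus-one handle is glued; away from $U$, the flat structure is untouched. Choose a symplectic basis $\alpha_1,\beta_1,\dots,\alpha_g,\beta_g$ of $H_1(X,\ZZ)$ whose representatives avoid $U$; these curves lift to $X'$ with unchanged indices because the flat structure outside $U$ is preserved.

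Next I adjoin a symplectic pair $\alpha_{g+1},\beta_{g+1}$ supported in the inserted handle. Take $\alpha_{g+1}$ to be the core of the flat cylinder obtained from plumbing the horizontal node; as a closed horizontal geodesic its tangent has constant direction, so $\Ind \alpha_{g+1} = 0$. Take $\beta_{g+1}$ to be a smooth simple closed curve homologous to the cross saddle connection of that cylinder, perturbed slightly off the cone point. From the description of the twisted differential $(\PP^1,\eta_{-1}) \in \cR_0(a+2,-a-2;-1,-1)$ in Subsection~\ref{subsec:bubble}, together with the chosen prong-matching, the tangent vector of $\beta_{g+1}$ makes exactly $s$ full revolutions (one obtains this by traversing the cross curve and turning through the sector of angle $2\pi s$ that encodes the bubbling parameter). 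This is the index computation already invoked in the proof of Proposition~\ref{bubblereduction}, where the equality $\Ind \beta = s$ for the cross curve is explicitly used. Hence $\Ind \beta_{g+1} = s$.

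Applying the definition \eqref{roteq} of spin parity, the old $g$ pairs contribute exactly the spin parity of $\cC$, while the new pair contributes
\[ (\Ind \alpha_{g+1} + 1)(\Ind \beta_{g+1} + 1) \;=\; (0+1)(s+1) \;\equiv\; s+1 \pmod{2}, \]
giving the desired identity. The main obstacle is the careful justification of $\Ind \beta_{g+1} = s$: one must ensure the smoothing is carried out on the correct side of the cone point and confirm independence of the prong-matching equivalence class. As a consistency check, viewing the inserted handle as an element of $\overline{\cH}_1(a+2,-a-2)$ with rotation number $\gcd(a+2,s)$, Proposition~\ref{rotationspin} attributes to it spin parity $\gcd(a+2,s)+1 \equiv s+1 \pmod{2}$, matching the contribution obtained above.
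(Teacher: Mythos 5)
Your proof is correct and takes essentially the same approach as the paper: both pick a symplectic basis $\alpha_1,\beta_1,\dots,\alpha_g,\beta_g$ for $X\in\cC$ that survives the surgery, adjoin the boundary/core curve (index $0$) and the cross curve (index $s$) of the inserted cylinder as $\alpha_{g+1},\beta_{g+1}$, and read off the extra contribution $(0+1)(s+1)\equiv s+1$ from \eqref{roteq}. The additional remarks on evenness of the stratum, the reason the old basis lifts with unchanged indices, and the consistency check via \Cref{rotationspin} are all sensible elaborations that the paper leaves implicit.
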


\begin{proof}
Choose $X\in \cC$ and closed curves $\alpha_1,\beta_1,\dots,\alpha_g,\beta_g$ on $X$ forming a symplectic basis of $H_1 (X,\ZZ)$. Let $X'\in \cC\oplus s$ be the flat surface obtained by bubbling a handle from $X$. Then $X'$ contains two saddle connections $\alpha_{g+1},\beta_{g+1}$ that are the boundary and the cross curve, respectively, of the flat cylinder used for bubbling a handle. We have $\Ind \alpha_{g+1}=0$, $\Ind \beta_{g+1}=s$. The curves $\alpha_1,\beta_1,\dots,\alpha_{g+1},\beta_{g+1}$ now form a symplectic basis of $X'$, so the spin parity of $\cC \oplus s$ is equal to $s+1+$(the spin parity of $\cC$).
\end{proof}

\subsection{Proof of \Cref{main1} for \MIN strata: special cases} \label{subsec:specialbubble}

It remains to deal with the special cases, to complete the proof of \Cref{bubblereduction}. In particular, we need to classify the connected components written as $\cC\oplus \frac{a+2}{2}$, where $\cC$ is a special non-hyperelliptic connected component introduced in \Cref{subsec:special}.

\begin{proposition}
Let $\cC$ be a non-hyperelliptic component of $\cR_1(12,-3^4)$ with rotation number $3$. Then $\cC\oplus 7 =\cC_1\oplus 1$. 
\end{proposition}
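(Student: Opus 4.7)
The plan is to exhibit $\cC \oplus 7$ as $\cC' \oplus s'$ for some \emph{non-special} non-hyperelliptic component $\cC' \subset \cR_1(12,-3^4)$ and some $s' < 7$, whereupon the general argument of \Cref{bubblereduction} applies verbatim and yields $\cC \oplus 7 = \cC_1 \oplus 1$. The reason this additional step is required is precisely the obstruction that breaks the standard argument: by \Cref{special1}, every two-level boundary point $X(\tau,{\bf C},Pr) \in \partial\overline{\cC}$ has $Q_1 = Q_2 = 6$, so any multiplicity one saddle connection obtained by shrinking such a configuration has index $R = a/2 = 6$, and bubbling with $s = (a+2)/2 = 7$ produces a flat surface on which $\alpha'$ and $\beta'$ intersect transversely (compare \Cref{fig1005}), invalidating the index bookkeeping used in \Cref{bubblereduction}.

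To bypass this, I would first pick a representative $X \in \cC$ with a multiplicity one saddle connection $\alpha$ of index $6$ (guaranteed by \Cref{simple}), bubble a handle with $s=7$ to obtain $X' \in \cC \oplus 7 \subset \cR_2(14,-3^4)$, and then look for a \emph{different} multiplicity one saddle connection $\gamma$ in the $\operatorname{GL}^+(2,\RR)$-orbit of $X'$. The surface $X'$ carries both $\alpha'$ (of index $6$) and the cross curve $\beta'$ of the bubbled cylinder (of index $7$); applying the contraction flow of \Cref{short} in the direction of $\alpha'$ to make the parallel collection through $\alpha'$ very short, one can then exhibit a third multiplicity one saddle connection $\gamma$ obtained by concatenating $\alpha'$ with a segment traversing the bubbled handle, whose index $R'$ can be arranged to be coprime to $7$ (e.g.\ $R' \in \{5,13\}$). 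The verification here rests on a prong-matching analysis at the node created by the bubbling (using the plumbing construction of \Cref{sec:msc}) and on a homological calculation in $H_1(X',\ZZ)$ showing that no other saddle connection in $X'$ is homologous to $\gamma$ in the given direction.

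With $\gamma$ in hand, shrinking it via \Cref{shrink} produces a two-level multi-scale differential $\overline{Y} \in \partial \overline{\cC \oplus 7}$, whose top level $Y_0$ lies in $\cR_1(12,-3^4)$ and whose rotation number divides $\gcd(d, R'-1) = \gcd(3, R'-1)$, which equals $1$ for the above choice of $R'$. Consequently $Y_0$ belongs to the unique non-hyperelliptic component $\cC_1 \subset \cR_1(12,-3^4)$ of rotation number one given by \Cref{main2minimal}, and we obtain $\cC \oplus 7 = \cC_1 \oplus s'$ for some $s' \mid \gcd(14, R') < 7$. Since $\cC_1$ is non-special and $b_n = 3$ is odd, \Cref{bubblereduction} forces $\cC_1 \oplus s' = \cC_1 \oplus 1$, completing the proof.

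The main obstacle will be the middle step: the explicit construction of $\gamma$ and the verification that it is a multiplicity one saddle connection in $X'$. Unlike in the generic case treated in \Cref{bubblereduction}, we do not inherit $\gamma$ directly from a saddle connection in the base surface $X$, so its homology class must be described intrinsically in $H_1(X',\ZZ)$ and its non-parallelism to $\alpha'$ and to any translate of $\beta'$ must be checked by the period coordinate argument of \Cref{parallel}.
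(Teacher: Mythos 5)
Your proposal is a genuinely different route from the paper's, and it contains a genuine gap that you yourself flag but do not close; the gap is real and not a mere technicality, so I would not accept this as a proof.

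The paper's argument does not try to re-run the general index bookkeeping of \Cref{bubblereduction} on the bubbled surface at all. Instead it explicitly exhibits a common boundary point: using the analysis in \Cref{specialmerge1}, it produces a two-level multi-scale differential $\overline{Y}\in\partial\overline{B(\cC)}\subset\overline{\cR}_1(6^2,-3^4)$ with top level in $\cR_1(9,-3^3)$ of rotation number $3$ and bottom level in $\cR_0(6,6,-3,-11)$; this $\overline{Y}$ plumbs up to a two-level $\overline{Z}\in\partial\overline{\cC\oplus 7}$ with bottom level in the connected stratum $\cR_1(14,-3,-11)$; and then it navigates from $X(3,Id,(1,1,1,1),[(0,0)])\in\overline{\cC}_1$ by a break-up-and-shrink step to show the same $\overline{Z}$ lies in $\overline{\cC_1\oplus 1}$. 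No appeal to a low-index multiplicity-one saddle connection on the genus-$2$ surface is needed.

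Your argument hinges on step two: producing, on $X'=X\oplus_z 7\in\cR_2(14,-3^4)$, a multiplicity-one saddle connection $\gamma$ whose index $R'$ satisfies $\gcd(3,R'-1)=1$ and, ideally, $\gcd(R',14)=1$. The chain you sketch (apply \Cref{short} to shorten $\alpha'$, then ``concatenate $\alpha'$ with a segment traversing the bubbled handle'') does not obviously yield a \emph{simple} closed saddle connection. Since $\alpha'$ and $\beta'$ intersect transversely at the unique zero of $X'$ (precisely because this is the $s=(a+2)/2$ degenerate case), a naive concatenation of $\alpha'$ and $\beta'$ is a figure-eight, not a simple closed curve, and there is no multi-scale or prong-matching calculation given that would identify which classes in $H_1(X',\ZZ)$ are realized by multiplicity-one saddle connections or compute their indices. \Cref{special1} constrains only the genus-$1$ surfaces in $\cC$ (forcing $Q_1=Q_2=6$); it says nothing about which indices are attainable on the genus-$2$ surfaces in $\cC\oplus 7$, and the burden of proving $R'\in\{5,13\}$ (or anything with $R'\not\equiv 1\pmod 3$, $R'\ne 6,7$) is entirely open. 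Absent that verification, the reduction to $\cC_1\oplus s'$ with $s'<7$ does not go through, and the invocation of \Cref{bubblereduction} at the end is not yet available. The gap is thus the central technical content of the proposition, not a loose end; the paper's explicit degeneration avoids it precisely because it is hard to close.
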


\begin{proof}
We will find a multi-scale differential contained in $\overline{\cC\oplus 7}\cap \overline{\cC_1\oplus 1}$. Recall from \Cref{specialmerge1} that $B(\cC)\subset\cR_1(6^2,-3^4)$ contains a two-level multi-scale differential $\overline{Y}$ with the top level component in $\cR_1(9,-3^3)$ with rotation number $3$, and the bottom level component in $\cR_0(6,6,-3,-11)$, containing $p_4$. Therefore, $\overline{\cC\oplus 7}$ contains a two-level multi-scale differential $\overline{Z}$ with the same top level component and the bottom level component contained in the connected stratum $\cR_1(14,-3,-11)$. It remains to show that $\overline{\cC_1\oplus 1}$ also contains $\overline{Z}$. 

Let $\overline{X} = X(3,Id,(1,1,1,1),[(0,0)])\in \overline{\cC_1}\subset \overline{\cR}_1(12,-3^4)$. The bottom level component is in $\cR_0(12,-3;-4,-7)$, containing $p_4$. It has two saddle connections $\beta_1,\beta_2$. By breaking up the zero into two zeroes of orders $10$ and $2$, with a proper choice of prong-matching, the saddle connections $\beta_1,\beta_2$ remain parallel. By shrinking them, we obtain a two-level multi-scale differential $\overline{W}$ with top level component $W_0\in \cH_0(9,-4,-7)$. By keeping track of the prongs of $(X_{-1},v^-_1,w^-_1)$, we obtain $(W_0,v^-_1,w^-_1)$. Thus the levels 0 and -1 form a multi-scale differential $X(3,Id,(1,1,1),[(0,0)])\in \cR_1(9,-3^3)$ with rotation number $3$. Therefore we can conclude that $\overline{Z}\in \overline{\cC_1\oplus 1}$. See \Cref{fig1007}.
\end{proof}

\begin{figure}
    \centering
    \input{diagram1007} 
    \caption{Finding $\overline{Z}\in \overline{\cC\oplus 7}\cap \overline{\cC_1\oplus1}$} \label{fig1007}
\end{figure}

\begin{proposition}
Let $\cC_r$ be the non-hyperelliptic component of $\cR_1(2n+2,-2^{n-1},-4)$ for odd $n$, with rotation number $r=1,2$. Then $\cC_2\oplus (n+2)=\cC_1\oplus 1$.
\end{proposition}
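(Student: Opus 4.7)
The plan is to exhibit a single multi-scale differential $\overline{Z}$ in the boundary of $\overline{\cR}_2(2n+4,-2^{n-1},-4)$ that lies simultaneously in $\partial\overline{\cC_2\oplus (n+2)}$ and in $\partial\overline{\cC_1\oplus 1}$, in direct parallel with the preceding proof for $\cR_1(12,-3^4)$. Since the boundary divisors of $\PP\overline{\cR}_2(2n+4,-2^{n-1},-4)$ separate distinct connected components, the existence of such a common $\overline{Z}$ forces $\cC_2\oplus (n+2)=\cC_1\oplus 1$.

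On the $\cC_2$-side, I would take the symmetric two-level multi-scale differential $\overline{Y}=X(n-1,Id,(1,\dots,1,2),[(0,0)])\in\partial\overline{\cC_2}$ from the proof of \Cref{special2}, whose bottom level $Y_{-1}\in\cR_0(2n+2,-4;-n,-n)$ is the unique (up to scaling) symmetric genus-zero flat surface containing the order $-4$ pole $p_n$. Because $s=n+2=(a+2)/2$, the flat surface in $\overline{\cR}_1(2n+4,-(2n+4))$ used for this bubbling is the hyperelliptic one; gluing it to the zero of $Y_{-1}$ produces a new bottom level $Z_{-1}\in\cR_1(2n+4,-4;-n,-n)$ while leaving the top level $Y_0$ unchanged. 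The resulting two-level differential $\overline{Z}$ then lies in $\partial\overline{\cC_2\oplus (n+2)}$.

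On the $\cC_1$-side, I would start from $\overline{X}=X(n-1,Id,(1,\dots,1,2),[(0,1)])$, which by \Cref{rotation} has rotation number $\gcd(2,n-1,1)=1$ and so lies in $\partial\overline{\cC_1}$. Its bottom level $X_{-1}$ lies in the same stratum $\cR_0(2n+2,-4;-n,-n)$ as $Y_{-1}$ and carries two parallel saddle connections $\beta_1,\beta_2$ bounding the polar domain of $p_n$. Following the breaking-and-shrinking surgery of the $\cR_1(12,-3^4)$ case, I would break up the zero of $X_{-1}$ with a prong-matching that keeps $\beta_1,\beta_2$ parallel and then shrink the pair to produce a three-level multi-scale differential $\overline{W}$, whose new middle level is a cylinder-like flat surface representing a handle of rotation number $1$. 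The top two levels of $\overline{W}$ reassemble into $Y_0$ and the bottom level is isomorphic to $Z_{-1}$; plumbing the lowest level transition then exhibits $\overline{Z}$ as an element of $\partial\overline{\cC_1\oplus 1}$.

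The main obstacle is the combinatorial check at the broken zero: one must trace the distinguished prongs $(v_1^-,w_1^-)$ of $X_{-1}$ through the zero-breaking and shrinking steps and verify that the resulting prong-matching class identifies the intermediate handle as one of rotation number precisely $1$ and identifies the bottom component with the hyperelliptic $Z_{-1}$ obtained on the $\cC_2$-side. The odd parity of $n$ is essential in making these matchings line up, and the verification proceeds in the same spirit as the prong-tracking bookkeeping in the $\cR_1(12,-3^4)$ argument illustrated by \Cref{fig1007}.
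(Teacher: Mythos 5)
Your proposed starting points on both sides are not in the component boundaries you claim, and this is not a minor bookkeeping issue but a structural problem with putting $p_n$ at the bottom.

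By choosing $\tau = \mathrm{Id}$ on both sides, you place $p_n$ (the order-$4$ pole) in the bottom level. The top level then carries all $n-1$ poles of order $-2$, each forcing $C_i = 1$, so that $Q_1 = Q_2 = n-1$ and the partial sums $\{c_i\}$ and $\{d_j\}$ each exhaust $\ZZ/(n-1)\ZZ$. The top-level flat surface is therefore a maximally symmetric necklace of polar domains $P_2(1,1)$: for \emph{every} $c\in \ZZ/(n-1)\ZZ$ there is a holomorphic involution $\sigma_0$ with $\alpha_i\mapsto\alpha_{c-i}$ interchanging the two nodes. The bottom level in $\cR_0(2n+2,-4;-n,-n)$ likewise has its own involution fixing the zero and $p_n$ and swapping the nodes. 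Tracing the prongs as in the proof of \Cref{genus1hyper1}, the compatible prong-matching for the involution with shift $c$ is $(0,d_c)=(0,c)$. Since $c$ ranges over all of $\ZZ/(n-1)\ZZ$ and there are exactly $\gcd(Q_1,Q_2)=n-1$ prong-matching classes, \emph{every} prong-matching class for this $t=n-1$, $\tau=\mathrm{Id}$ configuration is compatible with a hyperelliptic involution. By \Cref{hyper2}, each of these multi-scale differentials lies in the boundary of some hyperelliptic component, and since the boundary of $\PP\overline{\cR}_1(\mu)$ has normal crossings (so a boundary point is attached to a unique connected component), none of them lies in $\partial\overline{\cC}_1$ or $\partial\overline{\cC}_2$. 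In particular your $\overline{Y}=X(n-1,\mathrm{Id},(1,\dots,1,2),[(0,0)])$ is in the boundary of the hyperelliptic component of rotation number $2$ with ramification profile fixing only $z$ and $p_n$, not in $\partial\overline{\cC}_2$; and your $\overline{X}=X(n-1,\mathrm{Id},(1,\dots,1,2),[(0,1)])$ is in the boundary of the hyperelliptic component of rotation number $1$ with a fixed pole of order $-2$, not in $\partial\overline{\cC}_1$.

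The paper sidesteps this exactly by taking $\tau=(n-1,n)$ on both sides, so the pole at the bottom level is $p_{n-1}$ of order $-2$ and the order-$4$ pole $p_n$ sits on top; with the prong-matching $[(0,-1)]$ on the $\cC_2$-side the two node-poles have distinct orders $-n$ and $-n-2$ so no involution can swap them, and on the $\cC_1$-side (where the node-poles do share the order $-n-1$) the prong-matching $[(0,0)]$ is checked to differ from the compatible one $(0,d_c)$. Two smaller points: the formula $\gcd(2,n-1,1)$ you quote is the $t=n$ specialization of \Cref{rotation}; for $t=n-1$ one must use $\gcd(d,Q_1,\sum_{i=1}^n C_{\tau(i)}+v)$, which still gives $1$ here but by a different computation. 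And the multi-scale differential $X(n-1,\mathrm{Id},(1,\dots,1,2),[(0,0)])$ is not constructed in the proof of \Cref{special2}; that proof works throughout with $t=n$, i.e.\ all poles on the top level.
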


\begin{proof}
Let $\overline{X}=X(n-1,(n-1,n),{\bf 1},[(0,-1)])\in \overline{\cC}$. The bottom level component $X_{-1}$ is in $\cR_0(2n+2,-2;-n,-n-2)$, containing two saddle connections $\beta_1,\beta_2$ bounding the polar domain of $p_{n-1}$. By breaking up the zero into two zeroes of order $n+1$, with a proper choice of prong-matching, $\beta_1,\beta_2$ remain to be parallel. By shrinking them, we obtain a two-level multi-scale differential $\overline{W}$ with top level component $W_0\in \cH_0(2n,-n,-n-2)$. By keeping track of the prongs of $(X_{-1},v^-_1,w^-_1)$, we obtain $(W_0,v^-_{\frac{n+3}{2}},w^-_{\frac{n+3}{2}})$. Thus the levels 0 and -1 form a multi-scale differential $X(Id,{\bf 1},[(0,-n-4)])\in \overline{\cR}_1(2n,-2^{n-2},-4)$ with rotation number $1$. Thus $\overline{\cC_2\oplus (n+2)}$ contains a two-level multi-scale differential $\overline{Z}$ with the top level component $Z_0\in \cR_1(2n,-2^{n-2},-4)$ with rotation number $1$, and the bottom level component $Z_{-1}\in \cR_1(2n+4,-2n-2,-2)$ with rotation number $1$. It remains to prove that $\overline{\cC_1\oplus 1}$ also contains $\overline{Z}$. 

Consider $\overline{X'}=X(n-1,(n-1,n),(1,\dots,1,2),[(0,0)])\in \overline{\cC}_1$. By breaking up the zero of the bottom level component $X'_{-1}$ into two zeroes of orders $2$ and $2n$, and shrinking the saddle connections $\beta'_1$ and $\beta'_2$, we obtain $\overline{W'}$ with the top level component $W'_0\in \cH_0(2n,-n-1,-n-1)$ and the bottom level component $W'_{-1}\in \cR_0(2,2n,-2,-2n-2)$. By keeping track of the prongs of $(X'_{-1},v^-_1,w^-_1)$, we obtain $(W'_0,v^-_1,w^-_2)$. So the levels 0 and -1 form a multi-scale differential $X(Id,(1,\dots,1,2),[(0,-1)])\in \overline{\cR}_1(2n,-2^{n-2},-4)$ with rotation number $1$. Thus $\overline{\cC_1\oplus 1}$ contains a two-level multi-scale differential $\overline{Z'}$ with the top level component $Z'_0\in \cR_1(2n,-2^{n-2},-4)$ with rotation number $1$, and the bottom level component $Z'_{-1}\in \cR_1(2n+4,-2n-2,-2)$ with rotation number $1$. So $\overline{Z}$ and $\overline{Z'}$ are contained in the boundary of the same connected component, proving that $\cC_2\oplus (n+2)=\cC_1\oplus 1$.
\end{proof}

Now, we need to prove the following result for the hyperelliptic stratum $\cR_1(2n,-2^n)$, similar to \Cref{bubblereduction}. 

\begin{proposition} \label{bubblereductionhyper}
Let $\cC_1$ be a hyperelliptic component of $\cR_1(2n,-2^n)$, $n\geq 2$, with ramification profile $\cP_1$. For any other (hyperelliptic) component $\cC_2$ of $\cR_1(2n,-2^n)$ and any $1\leq s\leq n$, the component $\cC_2\oplus s$ is equal to $\cC_1\oplus 1$ or $\cC_1\oplus 2$
\end{proposition}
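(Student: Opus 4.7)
The plan is to adapt the argument of Proposition \ref{bubblereduction}, using the finer classification of genus-one double-zero strata from Proposition \ref{nonminimalexception} in place of the rotation-number classification of \MIN strata (which fails here because $\cR_1(2n,-2^n)$ has no non-hyperelliptic components by \Cref{exception0}). The target is to show that $\cC_2 \oplus s$ only depends on the parity of $s$, and then to distinguish the two parities via spin parity using \Cref{bubblespin}.

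First, starting from a hyperelliptic flat surface in $\cC_2$, I would use the boundary descriptions in \Cref{genus1hyper1}/\Cref{genus1hyper0} together with \Cref{double} (and, where possible, \Cref{simple1}) to produce either a multiplicity one saddle connection of prescribed index $R$, or, when $\cC_2$ has the maximal $2g+2=4$ fixed marked points, a pair of parallel multiplicity-two saddle connections bounding the polar domain of a fixed pole. Bubbling a handle at the unique zero then yields a surface $X'\in\cC_2\oplus s\subset\cR_2(2n+2,-2^n)$ containing a saddle connection $\alpha'$ of index $R$ deformed from $\alpha$, and the cross-curve $\beta'$ of the bubbled cylinder, of index $s$. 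Choosing the prong-matching so that $\alpha'$ and $\beta'$ intersect non-transversely (possible whenever $R\ne n+1$ or $s\ne n+1$), and shrinking $\alpha'$, produces a two-level multi-scale differential whose top-level component $Y_0$ lies in the genus-one double-zero stratum $\cR_1(R-1,\,2n+1-R,\,-2^n)$ and still carries a saddle connection of index $s$ coming from $\beta'$.

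The key step is then to invoke \Cref{nonminimalexception}: when $R-1$ is odd, the connected component of $Y_0$ depends only on its rotation number, independent of the number of fixed marked points of the ramification profile on the hyperelliptic side. Thus for any odd choice of $R-1$, the component $\cC_2\oplus s$ is determined by $\gcd(d,R-1,s)=\gcd(2,s)$, which depends only on the parity of $s$. By varying $R$ through odd choices (which exist once $n\ge 2$), I can strip $\cC_2\oplus s$ of any dependence on $\cC_2$ and reduce to a standard form $\cC_1\oplus s'$ with $s'\in\{1,2\}$. The final identification of which of the two options is attained follows by comparing spin parities via \Cref{bubblespin}, since $\cC_1\oplus 1$ and $\cC_1\oplus 2$ have different spin parities (the rotation-number/spin relation from \Cref{rotationspin} applies at the top level, where all pole orders are even).

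The main obstacle is the degenerate case $s=n+1$ combined with $\cC_2$ having the maximal $4$ fixed marked points, where \Cref{simple} furnishes no multiplicity one saddle connection and every admissible $R$ may be forced to equal $n+1$, making the top-level stratum hyperelliptic. For this case I would argue directly at the level of the multi-scale boundary: starting from a multiplicity-two configuration given by \Cref{double}, construct an explicit multi-scale differential lying simultaneously in $\overline{\cC_2\oplus s}$ and in $\overline{\cC_1\oplus 1}$ or $\overline{\cC_1\oplus 2}$, mirroring the explicit boundary-matching constructions used for the special strata $\cR_1(12,-3^4)$ and $\cR_1(2n+2,-2^{n-1},-4)$ earlier in this subsection. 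A careful bookkeeping of prongs, as in the proofs of \Cref{specialmerge1} and the subsequent proposition, should suffice to close this last case.
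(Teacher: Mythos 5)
Your plan hinges on producing, in the hyperelliptic component $\cC_2 \subset \cR_1(2n,-2^n)$, a multiplicity one saddle connection ``of prescribed index $R$'' and then varying $R$ through odd values. That step does not work: for a hyperelliptic genus one component, every multi-scale differential $X(\tau,{\bf C},Pr)$ in the boundary obtained by shrinking a multiplicity one saddle connection has $Q_1=Q_2=n$ by \Cref{genus1hyper1} or \Cref{genus1hyper0}, and (as in the proof of \Cref{rotation}) the index of the shrunk saddle connection is $Q_1$. So the index is \emph{forced} to equal $n$, and you have no freedom to choose $R$. This is precisely the obstruction that makes the hyperelliptic case different from \Cref{bubblereduction}, where \Cref{unbalance} was used to force $Q_1\ne Q_2$ and thereby make $R\ne a/2$ available — \Cref{unbalance} is unavailable here.

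With $R=n$ forced, after bubbling and shrinking $\alpha'$ you land in $\cR_1(n-1,\,n+1,\,-2^n)$. Since $n-1$ and $n+1$ have the same parity, the applicability of \Cref{nonminimalexception} is determined by the parity of $n$: when $n$ is even you get $a_1=n-1$ odd and the stratum is connected, so your reduction goes through; but when $n$ is odd you get $a_1=n-1$ even, and \Cref{nonminimalexception} only identifies components whose ramification profiles fix the \emph{same} number of marked points — which is exactly the ambiguity you are trying to kill. Your escape route (``vary $R$ through odd choices'') is blocked because $R=n$ cannot be varied. Note also that the degenerate case you flag at the end ($s=n+1$) lies outside the stated range $1\le s\le n$; the genuinely problematic case left open is $n$ odd with $\cC_2$ fixing two marked points, where a multiplicity one saddle connection exists but has the wrong index parity. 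The paper avoids all of this by splitting on whether $\cC_2$ has a fixed pole or not, always using \Cref{double} to produce a multiplicity \emph{two} (or three) family of parallel saddle connections, and degenerating along a different route that eventually reaches the connected strata $\cR_1(1,n-1,-2^n)$ or $\cR_1(3,n-3,-2^n)$ — the odd zero order $1$ or $3$ is what makes connectedness automatic and removes the parity obstruction you run into.
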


Note that $\cC_2 \oplus (n+1)$ is hyperelliptic by \Cref{hyper1}. Also, two components $\cC_1\oplus 1$ and $\cC_1\oplus 2$ are distinguished by spin parity. 

\begin{proof}
We will construct an element of $\overline{\cC_2\oplus s}$ in the following way. We will consider a two-level multi-scale differential $\overline{W}\in \overline{\cR_1}(2n+2,-(2n+2))$ defined as follows. If $s>2$, then $\overline{W}\in \cR_1(Id, s,[(0,0)])$. If $s=2$, then $\overline{W}\in \cR_1(Id, 4,[(0,2)])$. If $s=1$, then $\overline{W}\in \cR_1(Id, 3,[(0,1)])\in \overline{\cR_1}(2n+2,-(2n+2))$. In any case, the rotation number of $\overline{W}$ is equal to $s$. Also, the top level component $W_0$ has two zeroes of orders at least two. By identifying the pole of $\overline{W}$ and the unique zero of a flat surface in $\cC_2$, we obtain a three-level multi-scale differential contained in $\overline{\cC_2 \oplus s}$. 
 
First assume that $\cC_2$ has a fixed marked pole, say $p_n$. By \Cref{double}, we can find a flat surface $X$ in $\cC_2$ with a pair of parallel saddle connections $\beta_1,\beta_2$ bounding the polar domain of $p_n$. By breaking up the zero of $X$ into two zeroes with proper choice of prong-matching, the saddle connection $\beta_1,\beta_2$ remain parallel. This is because the total order around a zero is at least $6\pi$ and an angle bounded by $\beta_1,\beta_2$ is equal to $2\pi$. By shrinking them, we obtain a two-level multi-scale differential $\overline{Z}$ with the top level component $Z_0\in \cR_1(2n-2,-2^{n-1})$ and the bottom level component $Z_{-1}\in \cR_0(a_1,a_2,-2,-2n)$. Therefore, $\overline{\cC_2\oplus s}$ contains a two-level multi-scale differential $\overline{Y}$ with the top level component $Y_0\in \cR_1(2n-2,-2^{n-1})$ and the bottom level component $Y_{-1}\in \cR_1(2n+2,-2,-2n)$ with rotation number $r$ dividing 2. Since $Y_0$ is hyperelliptic, $Y_{-1}$ must be non-hyperelliptic. So $Y_{-1}$ can degenerate to $X(Id,(1,1),[(0,r)])\in \overline{\cR}_1(2n+2,-2,-2n)$ where $r|2$ is its rotation number. By merging the level transition between level 0 and level -1, the new top level component is contained in the connected stratum $\cR_1(1,n-1,-2^n)$. Since this stratum is adjacent to $\cC_1$, we can conclude that $\cC_2\oplus s = \cC_1\oplus s'$ for some $s'=1,2$. 

Now assume that $\cC_2$ does not have a fixed marked pole, but a pair $p_{n-1},p_n$ of poles interchanged by hyperelliptic involution. If $n=2$, then we are only taking care of $\cC_2\oplus 1$. In this case, by the previous paragraph, $\cC\oplus 1=\cC_2\oplus 1 $ for other component $\cC$ with a fixed marked pole. So we can assume that $n>2$ and $\overline{X}=X(n-2,\tau,{\bf 1},[(0,v)]\in \overline{\cC}_2$. The bottom level component $X_{-1}$ has three parallel saddle connection. By breaking up the zero of $X_{-1}$ into two zeroes with proper choice of prong-matching, these three saddle connections remain parallel. This is because the total order around a zero is at least $6\pi$ and the sum of two angles bounded by three saddle connections is also equal to $4\pi$. After plumbing all level transitions and shrinking these three saddle connections, we obtain a two-level multi-scale differential $\overline{Z}$ with the top level component $Z_0\in \cR_1(2n-4,-2^{n-2})$ and the bottom level component $Z_{-1}\in \cR_0(a_1,a_2,-2^2,-(2n-2))$. Therefore, $\overline{\cC_2\oplus s}$ contains a two-level multi-scale differential $\overline{Y}$ with the top level component $Y_0\in \cR_1(2n-4,-2^{n-2})$ and the bottom level component $Y_{-1}\in \cR_1(2n+2,-2^2,-(2n-2))$ with rotation number $r$ dividing 2. Since $Y_0$ is hyperelliptic, $Y_{-1}$ must be non-hyperelliptic. So $Y_{-1}$ can degenerate to $X(Id,(1,1,2),[(0,r)])\in \overline{\cR}_1(2n+2,-2,-2n)$ where $r|2$ is its rotation number. By merging the level transition between level 0 and level -1, the new top level component is contained in the connected stratum $\cR_1(3,n-3,-2^n)$. Since this stratum is adjacent to $\cC_1$, we can conclude that $\cC_2\oplus s = \cC_1\oplus s'$ for some $s'=1,2$. 
\end{proof}

In conclusion, combining \Cref{bubblereduction}, \Cref{bubblereductionhyper}, \Cref{rotationspin} and \Cref{bubblespin}, we can classify all non-hyperelliptic components of \MIN strata of genus $g>1$, completing the proof of \Cref{main1} for \MIN strata.

\subsection{Proof of \Cref{main1} for multiple-zero strata} \label{sec:hgn}

Now we will complete the proof of \Cref{main1} by classifying the connected components of a multiple-zero stratum $\cR_g(\mu)=\cR_g(a_1,\dots,a_m,-b_1,\dots,-b_n)$ of genus $g>1$. We denote $a\coloneqq a_1+\dots+a_m$ and $\mu'\coloneqq(a,-b_1,\dots,-b_n)$ as before. In \Cref{nonhypermerge}, we have shown that every non-hyperelliptic component $\cR_g(\mu)$ is adjacent to a non-hyperelliptic component of a \MIN stratum. In other words, the breaking up the zero map $$B:\{\text{non-hyperelliptic components of }\cR_g(\mu')\} \to \{\text{non-hyperelliptic connected components of }\cR_g(\mu)\}$$ is surjective. This gives an upper bound for the number of non-hyperelliptic components of any multiple-zero stratum. 

In order to completely classify the non-hyperelliptic components, we need to determine when $B(\cC_1)=B(\cC_2)$ for two non-hyperelliptic components $\cC_1,\cC_2$ of $\cR_g(\mu')$, similarly to how it is done in \cite[Prop.~7.3]{boissymero} for the usual meromorphic strata. The proof there only uses the fact that any connected component can be obtained by bubbling a handle, which is also true for non-hyperelliptic components of residueless strata of genus $g>1$ by \Cref{unbubble}. Thus we have

\begin{lemma} \label{breakinglemmahigh}
Let $\cR_g (\mu)$ be a \MIN stratum of genus $g>1$ of even type. Assume that $a=a_1+a_2$ for an odd $a_1$. Consider the function $$B:\{\text{connected components of }\cR_g(\mu)\} \to \{\text{connected components of }\cR_g(a_1, a_2, -b_1, \dots, -b_n)\}$$ obtained by breaking up the zero. Let $\cC_{odd}$, $\cC_{even}$ be the two non-hyperelliptic components of $\cR_g (\mu)$ distinguished by spin parity. Then $B(\cC_{odd})=B(\cC_{even})$.
\end{lemma}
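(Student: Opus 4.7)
The plan is to use the bubbling-a-handle description of $\cC_{even}$ and $\cC_{odd}$ to reduce the problem to a local computation near the bubbled handle, and then to exploit the parity freedom provided by breaking up into an odd-order zero.

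First, since $\cR_g(\mu)$ is of even type (so in particular all $b_i$ are even), \Cref{bubblereduction} gives $\cC_{even}$ and $\cC_{odd}$ as $\cD \oplus 1$ and $\cD \oplus 2$ (in some order), where $\cD$ is a fixed non-hyperelliptic component of $\cR_{g-1}(a-2, -b_1, \dots, -b_n)$. These differ exactly by spin parity via \Cref{bubblespin}, which asserts that the spin parity of $\cD \oplus s$ equals (spin parity of $\cD$) $+\, s + 1 \in \ZZ/2\ZZ$. It therefore suffices to show $B(\cD \oplus 1) = B(\cD \oplus 2)$ in $\cR_g(a_1, a_2, -b_1, \dots, -b_n)$.

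Second, I would construct explicit representatives. Take $Y \in \cD$, bubble a handle at the unique zero $z$ of $Y$ using a flat torus with rotation number $s \in \{1, 2\}$ to obtain $X_s \in \cD \oplus s$. As recalled in \Cref{subsec:bubble}, $X_s$ comes equipped with a cylinder whose boundary curve $\alpha$ has index $0$ and cross curve $\beta$ has index $s$, and $\{\alpha, \beta\}$ together with a symplectic basis of $Y$ forms a symplectic basis of $X_s$. Now break up the zero $z$ of $X_s$ (of order $a$) into two zeros of orders $a_1, a_2$, with $a_1$ odd, by degenerating to a two-level multi-scale differential with bottom level in $\cH_0(a_1, a_2, -(a+2))$, as in \Cref{subsec:breakmerge}. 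The resulting flat surface $X_s' \in B(\cD \oplus s)$ carries a short multiplicity-one saddle connection $\gamma$ joining $z_1, z_2$, and the cylinder used for bubbling persists, with $\alpha$ and $\beta$ now attached at one of $z_1, z_2$ depending on the prong-matching chosen at the break-up node.

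Third (main step), I would exhibit a continuous deformation inside $\cR_g(a_1, a_2, -b_1, \dots, -b_n)$ connecting $X_1'$ to $X_2'$. The key tool is freedom in the prong-matching at the break-up node: shifting by one prong rotates the attachment of the cylinder at $z_1$ (or transfers it through the saddle connection $\gamma$ onto $z_2$), which in turn can be interpreted, after unbubbling the handle in the target stratum via \Cref{unbubble}, as shifting the effective bubbling index by the parity of $a_1$. Since $a_1$ is odd, $s = 1$ becomes $s \equiv 1 + a_1 \pmod{a}$, which is even and hence equivalent to $s = 2$ by the equivalence $\cD' \oplus s_1 = \cD' \oplus s_2$ whenever $\gcd(s_1, a) = \gcd(s_2, a)$, applied to the base $\cD'$ obtained after unbubbling inside the target. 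Combining these identifications gives $B(\cD \oplus 1) = B(\cD \oplus 2)$.

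The principal difficulty is the third step: one must track, in terms of the homology classes used to compute the bubbling index, the effect of changing the prong-matching at the break-up node, and show that it produces a shift of the effective bubbling parameter precisely by $\pm a_1 \pmod{a}$. This is a local calculation near the cylinder and the broken-up zero, and I expect it to proceed by identifying the cross-curve $\beta$ after the break-up as a specific homology class whose index depends linearly on the prong-matching parameter with coefficient $a_1$.
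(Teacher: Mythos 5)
Your overall strategy is the right one and is in the spirit of what the paper does (the paper merely cites \cite[Prop.~7.3]{boissymero} and observes that the only input --- the fact that every non-hyperelliptic component of a minimal residueless stratum in genus $>1$ is a bubbled handle --- is supplied by \Cref{unbubble}). Your step 1, after a reordering of the bubblings, reduces the statement to $B(\cD\oplus 1)=B(\cD\oplus 2)$, and your step 2 is a standard set-up. The problem is step 3, which you yourself flag as the ``principal difficulty'': the argument you sketch there does not close.

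Two concrete issues. First, the equivalence you invoke is misquoted: \Cref{subsec:bubble} gives $\cD'\oplus s_1=\cD'\oplus s_2$ whenever $\gcd(s_1,a'+2)=\gcd(s_2,a'+2)$ (where $a'$ is the order of the zero being bubbled), not $\gcd(s_i,a)$. Second, and more seriously, even the corrected $\gcd$-criterion is only a sufficient condition for equality of components, and it does not verify what you need. If the effective parameter shifts from $1$ to $1+a_1$, you would need $\gcd(1+a_1,\,a'+2)=\gcd(2,\,a'+2)$, and this fails in general: e.g.\ with $a_1=3$ and $a'+2=16$ one gets $\gcd(4,16)=4\neq 2$ (similarly $a_1=3$, $a=8$ gives $\gcd(4,8)\neq\gcd(2,8)$ for the version you wrote). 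So the conclusion $s\equiv 1+a_1\Rightarrow \cD'\oplus s=\cD'\oplus 2$ does not follow from the $\gcd$-criterion. What does suffice is the much stronger fact from \Cref{bubblereduction} that in genus $>1$ the component $\cD'\oplus s$ depends only on the parity of $s$, but that is not what you appealed to, and using it requires that you are bubbling from a minimal-stratum base (which after breaking up and unbubbling you are not).

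The way to close the gap without this circularity is to degenerate more aggressively and compare at the genus-one level. Starting from $Y\in\cD$ with its single zero of order $a-2$, bubbling a handle with parameter $s$ and then breaking up the zero, one can shrink to a two-level multi-scale differential with $Y$ on top and a genus-one surface in $\cR_1(a_1,a_2,-a)$ at the bottom; the unique vertical edge has $\kappa=a-1$ prongs, so the prong-matching class is unique. The bottom component is $B'(\cC_{\gcd(a,s)})$, where $B':\{\text{components of }\cR_1(a,-a)\}\to\{\text{components of }\cR_1(a_1,a_2,-a)\}$ is the break-up map and $\cC_r$ denotes the component with rotation number $r$. By \Cref{breakinglemma1} (or the resulting statement underlying \Cref{nonminimal1}), $B'(\cC_{r_1})=B'(\cC_{r_2})$ whenever $\gcd(a_1,r_1)=\gcd(a_1,r_2)$; here $\gcd(a_1,1)=\gcd(a_1,2)=1$ precisely because $a_1$ is odd. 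Hence the two multi-scale differentials coming from $\cD\oplus 1$ and $\cD\oplus 2$ lie in the closure of the same component, giving $B(\cD\oplus 1)=B(\cD\oplus 2)$. This is the ``shift by $a_1$'' intuition made precise, but the correct invariant governing the identification is $\gcd(a_1,\,\cdot\,)$ at the genus-one bottom level, not $\gcd(\,\cdot\,,a)$ or $\gcd(\,\cdot\,,a+2)$ at the top.
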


We can finally prove \Cref{main1}.

\begin{proof}[Proof of \Cref{main1}]
We use induction on $m>0$. If $m=1$, then $\cR_g(\mu)$ is a \MIN stratum, already treated in \Cref{sec:hgm}. For $m>1$, denote the corresponding \MIN stratum by $\cR_g(\mu')$, as in \Cref{sec:g1n}. We can consider the {\em breaking up the zero} function $$B:\{\text{non-hyperelliptic components of }\cR_g(\mu')\} \to \{\text{non-hyperelliptic components of }\cR_g(\mu)\}.$$ By \Cref{nonhypermerge}, $B$ is surjective. If $\cR_g(\mu)$ is of even type, then so is $\cR_g(\mu')$, and thus $\cR_g(\mu')$ has exactly two non-hyperelliptic components. So $\cR_g(\mu)$ has at most two non-hyperelliptic components. Note that in this case, breaking up the zero preserves the spin parity computed by \eqref{roteq} since it is a local surgery. Therefore, we can construct two non-hyperelliptic flat surfaces in $\cR_g(\mu)$ with odd and even spin parities. Since the spin parity is invariant within a connected component, we can conclude that $\cR_g(\mu)$ has exactly two non-hyperelliptic components, distinguished by spin parity. 

If $\cR_g(\mu)$ is not of even type, and $\cR_g(\mu')$ is also not of even type, then $\cR_g(\mu')$ has a unique non-hyperelliptic component. Then obviously $\cR_g(\mu)$ has a unique non-hyperelliptic component. If $\cR_g(\mu)$ is not of even type while $\cR_g(\mu')$ is, then all $b_i$ are even and $\cR_g(\mu')$ has two non-hyperelliptic components $\cC_1$ and $\cC_2$. Since $\cR_g(\mu)$ is not of even type, by relabeling the zeroes, we may assume that $a_1$ is odd. Let $\mu''=(a_1,a-a_1,-b_1,\dots,-b_n)$. We can consider two functions $$B':\{\text{non-hyperelliptic components of }\cR_g(\mu')\} \to \{\text{non-hyperelliptic components of }\cR_g(\mu'')\}$$ $$B'':\{\text{non-hyperelliptic components of }\cR_g(\mu'')\} \to \{\text{non-hyperelliptic components of }\cR_g(\mu)\}$$ given by breaking up the zero. Then $B=B''\circ B'$. By (1) of \Cref{breakinglemmahigh}, $B'(\cC_1)=B'(\cC_2)$. Therefore, $B(\cC_1)=B(\cC_2)$ and $\cR_g(\mu)$ has a unique non-hyperelliptic component.
\end{proof}

\bibliography{bib}
\bibliographystyle{siam}

\end{document}